\newtheorem{maintheorem}{Theorem}
\newtheorem{theorem}{Theorem}[subsection]
\newtheorem{lemma}[theorem]{Lemma}
\newtheorem{proposition}[theorem]{Proposition}
\newtheorem{proposition-definition}[theorem]{Proposition-Definition} 
\newtheorem{corollary}[theorem]{Corollary}
\newtheorem{assumption}[theorem]{Assumption} 
\theoremstyle{definition} 
\newtheorem{definition}[theorem]{Definition}
\newtheorem{remark}[theorem]{Remark}
\newtheorem{example}[theorem]{Example}
\newtheorem{notation}[theorem]{Notation}
\newtheorem{recapitulation}[theorem]{Recapitulation} 
\newcommand{\Proj}{\mathbb P}
\newcommand{\bt}{\mathbf{t}}
\newcommand{\bq}{\mathbf{q}}
\newcommand{\hbq}{\hat{\bq}}
\newcommand{\ba}{\mathbf{a}} 
\newcommand{\hba}{\hat{\ba}} 
\DeclareMathOperator{\SL}{SL}
\DeclareMathOperator{\PSL}{PSL}
\DeclareMathOperator{\NE}{NE}
\DeclareMathOperator{\Id}{Id}
\newcommand{\cT}{\mathcal{T}}
\DeclareMathOperator{\sHom}{\mathscr{H}\text{\kern -3pt {\calligra\large om}}\,}
\newcommand{\Ybar}{\ov{Y}}
\newcommand{\cXbar}{\ov{\cX}}
\newcommand{\fry}{\mathfrak{y}}
\newcommand{\frd}{\mathfrak{d}}
\newcommand{\frD}{\mathfrak{D}}
\newcommand{\cMbar}{\ov{\cM}}
\newcommand{\cMA}[1]{\cM_{{\rm A},#1}}
\newcommand{\DA}[1]{D_{{\rm A},#1}}
\newcommand{\cMB}{\cM_{\rm B}}
\newcommand{\cMBbig}{\cMB^{\rm big}}
\newcommand{\Dbig}{D^{\rm big}} 
\newcommand{\cR}{\mathcal{R}}
\DeclareMathOperator{\Hess}{Hess}
\newcommand{\FA}[1]{\sfF_{{\rm A}, #1}}
\newcommand{\cFA}[1]{\cF_{{\rm A}, #1}}
\newcommand{\tcFA}[1]{\widetilde{\cF}_{{\rm A},#1}}
\newcommand{\VA}[1]{V_{{\rm A},#1}}
\newcommand{\nablaA}[1]{\nabla^{{\rm A}, #1}}
\newcommand{\pairingA}[1]{(\cdot,\cdot)_{{\rm A}, #1}}
\newcommand{\FB}{\sfF_{\rm B}}
\newcommand{\FBbig}{\FB^{\rm big}}
\newcommand{\cFB}{\cF_{\rm B}}
\newcommand{\cFBbig}{\cF_{\rm B}^{\rm big}}
\newcommand{\cFGKZ}{\cF_{\rm GKZ}^\times}
\newcommand{\cFbar}{\widebar{\cF}}
\newcommand{\nablaB}{\nabla^{\rm B}}
\newcommand{\pairingB}{(\cdot,\cdot)_{\rm B}}
\DeclareMathOperator{\mir}{mir}
\DeclareMathOperator{\Mir}{Mir}
\newcommand{\tss}{\rm ss}
\DeclareMathOperator{\Gr}{Gr}
\newcommand{\orb}{{\rm orb}}
\newcommand{\LR}{{\rm LR}}
\newcommand{\alg}{{\rm alg}} 
\newcommand{\cc}{{\rm cc}} 
\newcommand{\HX}{H_{\cX}}
\newcommand{\HXbar}{H_{\cXbar}}
\newcommand{\HY}{H_Y}
\newcommand{\HYbar}{H_{\Ybar}}
\newcommand{\cI}{\mathcal{I}}
\newcommand{\cIX}{\mathcal{I}(\cX)}
\newcommand{\cIXbar}{\mathcal{I}(\cXbar)}
\newcommand{\logDTEP}{$\log$-TEP\ }
\newcommand{\logDtrTLEP}{$\log$-trTLEP\ }
\newcommand{\logDtrTLEPns}{$\log$-trTLEP}
\DeclareMathOperator{\Liesp}{\mathfrak{sp}}
\renewcommand{\vec}{{\text{\rm vec}}}
\newcommand{\aroundinfinity}{\Proj^1\setminus\{0\}}
\newcommand{\bP}{\mathbf{P}}
\newcommand{\bF}{\mathbf{F}}
\newcommand{\Haff}{H_{\text{\rm aff}}}
\newcommand{\Hvec}{H_{\text{\rm vec}}}
\newcommand{\cB}{\mathcal{B}}
\newcommand{\hZ}{\widehat{Z}} 
\newcommand{\tcA}{\widetilde{\cA}} 
\DeclareMathOperator{\gr}{\mathrm{gr}}
\newcommand{\C}{\mathbb C}
\newcommand{\R}{\mathbb R}
\newcommand{\Z}{\mathbb Z}
\newcommand{\Q}{\mathbb Q}
\newcommand{\N}{\mathbb N} 
\newcommand{\F}{\mathbb F} 
\newcommand{\A}{\mathbb A} 
\newcommand{\hA}{\widehat{\A}} 
\newcommand{\hE}{\widehat{E}}
\newcommand{\HH}{\mathbb{H}} 
\newcommand{\LL}{\boldsymbol{\mathsf L}}  
\newcommand{\LLo}{\LL^\circ}
\newcommand{\bx}{\boldsymbol{x}} 
\newcommand{\bY}{\boldsymbol{Y}} 
\newcommand{\bR}{\boldsymbol{R}} 
\newcommand{\bOmega}{\boldsymbol{\Omega}}
\newcommand{\bOmegao}{\bOmega_{\circ}}
\newcommand{\bTheta}{\boldsymbol{\Theta}} 
\newcommand{\bThetao}{\bTheta_{\circ}} 
\newcommand{\unit}{\boldsymbol{1}} 
\newcommand{\Nabla}{\boldsymbol{\nabla}} 
\newcommand{\cFCY}{\cF_{\rm CY}} 
\newcommand{\ev}{\operatorname{ev}}
\newcommand{\Hom}{\operatorname{Hom}}
\newcommand{\Ext}{\operatorname{Ext}}
\newcommand{\Ker}{\operatorname{Ker}}
\newcommand{\Cok}{\operatorname{Cok}}
\newcommand{\Image}{\operatorname{Im}}
\newcommand{\rank}{\operatorname{rank}} 
\newcommand{\id}{\operatorname{id}}
\newcommand{\End}{\operatorname{End}}
\newcommand{\Res}{\operatorname{Res}}
\newcommand{\Spf}{\operatorname{Spf}}   
\newcommand{\Sym}{\operatorname{Sym}}
\newcommand{\pr}{\operatorname{pr}} 
\newcommand{\KS}{\operatorname{KS}}
\newcommand{\Cont}{\operatorname{Cont}}
\newcommand{\Aut}{\operatorname{Aut}} 
\newcommand{\con}{{\operatorname{con}}}
\newcommand{\pt}{{\operatorname{pt}}} 
\newcommand{\inv}{\operatorname{inv}} 
\newcommand{\ch}{\operatorname{ch}} 
\newcommand{\tch}{\widetilde{\ch}}
\newcommand{\VC}{\operatorname{VC}}
\newcommand{\Fun}{\operatorname{Fun}} 
\newcommand{\Eaff}{E^\circ} 
\newcommand{\tor}{\mathtt{t}}
\newcommand{\cA}{\mathcal{A}}
\newcommand{\cC}{\mathcal{C}} 
\newcommand{\cO}{\mathcal{O}}
\newcommand{\bcO}{\boldsymbol{\cO}}
\newcommand{\cD}{\mathcal{D}}
\newcommand{\cL}{\mathcal{L}}
\newcommand{\hcL}{\widehat{\cL}}
\newcommand{\cX}{\mathcal{X}}
\newcommand{\cZ}{\mathcal{Z}} 
\newcommand{\cE}{\mathcal{E}} 
\newcommand{\cM}{\mathcal{M}} 
\newcommand{\cF}{\mathcal{F}} 
\newcommand{\cG}{\mathcal{G}} 
\newcommand{\cW}{\mathcal{W}}
\newcommand{\tnabla}{\widetilde{\nabla}}
\newcommand{\sfF}{\mathsf{F}}
\newcommand{\hGamma}{\widehat{\Gamma}} 
\newcommand{\hq}{\hat{q}} 
\newcommand{\ha}{\hat{a}} 
\newcommand{\tp}{\tilde{p}} 
\newcommand{\tx}{\tilde{x}}
\newcommand{\tit}{\tilde{t}} 
\newcommand{\tL}{\widetilde{L}} 
\newcommand{\tA}{\widetilde{A}} 
\newcommand{\tB}{\widetilde{B}} 
\newcommand{\tC}{\widetilde{C}} 
\newcommand{\tE}{\widetilde{E}} 
\newcommand{\hC}{\widehat{C}} 
\newcommand{\hcW}{\widehat{\cW}}
\newcommand{\hotimes}{\mathbin{\widehat\otimes}}
\newcommand{\sfP}{\mathsf{P}}
\newcommand{\sx}{\mathsf{x}}
\newcommand{\sy}{\mathsf{y}}
\newcommand{\frt}{\mathfrak{t}} 
\newcommand{\Fock}{\mathfrak{Fock}}
\newcommand{\ov}{\overline}
\newcommand{\surj}{\twoheadrightarrow} 
\newcommand{\vPi}{\overrightarrow{\Pi}}
\newcommand{\iu}{\mathtt{i}} 
\newcommand{\ceil}[1]{\lceil #1\rceil}
\renewcommand{\projlim}{\varprojlim}
\renewcommand{\injlim}{\varinjlim} 
\def\pair#1#2{\langle #1,#2\rangle}
\def\parfrac#1#2{\frac{\partial{#1}}{\partial #2}}
\def\corr#1{\left\langle #1 \right\rangle} 
\def\fracp#1{\left\langle #1 \right\rangle}
\newcommand{\widebar}{\overline} 
\newcommand{\CY}{\rm CY}
\newcommand{\cMCY}{\cM_{\CY}}
\newcommand{\DCY}{D_{\CY}}
\font\rsfs=rsfs10
\newcommand{\marsfs}[1]{\mbox{\rsfs #1}}
\newcommand{\wave}{\marsfs{C}}
\newcommand{\sFCY}{\sfF_{\CY}}
\title[Gromov--Witten Invariants of Local $\Proj^2$ and Modular Forms]{Gromov--Witten Invariants of Local $\Proj^2$\\ and Modular Forms}
\author{Tom Coates}
\address{Department of Mathematics, Imperial College London, 
180 Queen's Gate, London SW7 2AZ, United Kingdom}
\email{t.coates@imperial.ac.uk}
\author{Hiroshi Iritani}
\address{Department of Mathematics, Graduate School of Science, 
Kyoto University, 
Kitashirakawa-Oiwake-cho, Sakyo-ku, Kyoto, 606-8502, Japan}
\email{iritani@math.kyoto-u.ac.jp}
\keywords{Gromov--Witten invariants, 
geometric quantization, 
modular form, 
mirror symmetry, 
toric Calabi--Yau 3-fold} 
\subjclass[2010]{Primary: 14N35, Secondary: 14J33, 53D45, 53D50}
\begin{document}

\begin{abstract}
We construct a sheaf of Fock spaces over the moduli space 
of elliptic curves $E_y$ with $\Gamma_1(3)$-level structure, 
arising from geometric quantization of $H^1(E_y)$, and 
a global section of this Fock sheaf.  The global section coincides, near appropriate limit points, 
with the Gromov--Witten potentials of local $\Proj^2$ 
and of the orbifold $[\C^3/\mu_3]$. This proves 
that the Gromov--Witten potentials of local $\Proj^2$ are 
quasi-modular functions for the group $\Gamma_1(3)$, as predicted by 
Aganagic--Bouchard--Klemm,
and proves the Crepant Resolution Conjecture for $[\C^3/\mu_3]$ 
in all genera. 
\end{abstract}

\maketitle 

\let\oldtocsection=\tocsection
\let\oldtocsubsection=\tocsubsection

\renewcommand{\tocsection}[2]{\hspace{0em}\oldtocsection{#1}{#2}}
\renewcommand{\tocsubsection}[2]{\hspace{1em}\oldtocsubsection{#1}{#2}}
\tableofcontents 

\raggedbottom

\section{Introduction}

Let $Y$ be the total space $K_{\Proj^2}$ of the canonical line bundle 
of $\Proj^2$, and let $\cX$ denote the orbifold 
$\big[\C^3/\mu_3\big]$, 
where the group $\mu_3$ of third roots of unity acts 
with weights $(1,1,1)$.  
Let 
\begin{align*}
  F_Y^g= -\frac{t^3}{18} \delta_{g,0} 
  - \frac{t}{12} \delta_{g,1} 
  + \sum_{d=0}^\infty n_{g,d} e^{dt}, 
  &&
     F_\cX^g =\sum_{k=1}^\infty  
     n^{\orb}_{g,k}
     \frac{\frt^k}{k!} 
\end{align*}
denote the genus-$g$ Gromov--Witten potentials of $Y$ and $\cX$ 
respectively.  Here $n_{g,d}$ is the genus-$g$, degree-$d$ Gromov--Witten invariant of $Y$, and $n^{\orb}_{g,k}$ is the genus-$g$ Gromov--Witten invariant of $\cX$ with $k$ insertions of the age-$1$ orbifold class.
 We regard $F_Y^g$ as a function of $t\in H^2(Y)$, and $F^g_{\cX}$ as a function of $\frt \in H^2_\orb(\cX)$.  The main result of this paper is:

\begin{maintheorem}[see Corollary~\ref{cor:analytic_HH}, 
Theorem~\ref{thm:quasi-modularity}, 
Theorem~\ref{thm:CRC_explicit} 
for precise statements]
\label{thm:quasi_modularity_introd} 
Introduce modular parameters $\tau$,~$\tau_{\orb}$ by 
\begin{align*}
  \tau & = -\frac{1}{2} - \frac{3}{2\pi\iu} \parfrac{^2 F^0_Y}{t^2} 
  & \tau_{\orb} & = 3 \parfrac{^2 F^0_{\cX}}{\frt^2} \\ 
  & = -\frac{1}{2} + \frac{t}{2\pi\iu} + O(e^t) & & = \frt + O(\frt^4)
\end{align*}
Then: 
\begin{itemize} 
\item[(1)] when regarded as a function of $\tau$, 
$F^g_Y$ extends to a holomorphic function
on the upper half-plane $\HH$; 

\item[(2)] when regarded as a function of $\tau_{\orb}$, $F^g_{\cX}$ extends to a holomorphic function on the disc $|\tau_{\orb}| < r$, where $r= \Gamma(\frac{1}{3})^3/\Gamma(\frac{2}{3})^3$;

\item[(3)] for $g\ge 2$, $F^g_Y$ is a quasi-modular function with respect 
to the congruence subgroup$:$ 
\[
\Gamma_1(3) = 
\left\{\begin{pmatrix} a & b \\ c & d \end{pmatrix}  
\in \SL(2,\Z) :  a \equiv d \equiv 1,\  c \equiv 0 \bmod 3 \right\}; 
\]
\item[(4)] $($\emph{Crepant Resolution Conjecture}$)$ 
$\{F^g_Y\}$ and 
$\{F^g_\cX\}$ are related by an explicit Feynman diagram 
expansion, which takes the following form for $g\ge 2$$:$ 
\[
F^g_\cX = F^g_Y + (\text{polynomial expressions in 
$\{\partial_t^k F_Y^h: 0\le h<g, 1\le k\le 3g-3\}$})
\]
where $\tau_{\orb} = r \cdot \frac{3 \tau + 1-\xi}{3 \tau + 1-\xi^2}$.
\end{itemize} 
\end{maintheorem}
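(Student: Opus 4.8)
The plan is to realize all four statements as consequences of a single geometric object: a distinguished global section of the Fock sheaf obtained by geometric quantization of the variation of Hodge structure $H^1(E_y)$ over the moduli space $\cMbar$ of elliptic curves with $\Gamma_1(3)$-level structure. Since $H^1(E_y)$ is a rank-two symplectic local system, its quantization is governed by a single modular parameter, namely the period ratio $\tau$, and the Gauss--Manin connection glues the fibrewise Fock spaces into a sheaf over $\cMbar$. The first task is to construct this sheaf and exhibit the section $\mathscr{Z}=\exp\!\big(\sum_g \mathscr{F}^g\hbar^{g-1}\big)$; I would build it so that its genus-$g$ components $\mathscr{F}^g$ solve the holomorphic anomaly equation along $\cMbar$ and have controlled behaviour at the two special points, the large-radius cusp and the orbifold point. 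Note that $\tau = -\tfrac12 - \tfrac{3}{2\pi\iu}\partial_t^2 F^0_Y$ is, up to normalization, exactly the genus-zero mirror map, so it is genuinely the modular coordinate on $\cMbar$.

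Once the global section is in hand, statements (1) and (2) reduce to local analysis. A choice of polarization (Lagrangian splitting of $H^1(E_y)_\C$) identifies the Fock space with a space of formal functions; in the polarization adapted to the large-radius limit the component $\mathscr{F}^g$ becomes $F^g_Y$, while in the polarization adapted to the orbifold point it becomes $F^g_\cX$. Because each $\mathscr{F}^g$ is, in any fixed polarization, a finite polynomial in the single propagator built from the almost-holomorphic completion $\widehat{E}_2(\tau)$ with coefficients that are honest holomorphic modular forms, its holomorphic limit extends over all of $\HH$ in the $\tau$-coordinate and over the disc $|\tau_{\orb}|<r$ in the orbifold coordinate, where the radius $r=\Gamma(\tfrac13)^3/\Gamma(\tfrac23)^3$ is the location of the nearest singularity (the conifold point) read off from the period map. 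Statement (3) is then structural: quantizing over the modular curve for $\Gamma_1(3)$ forces the holomorphic limit of $\mathscr{F}^g$ to lie in the ring of quasi-modular forms for $\Gamma_1(3)$, the quasi-modularity being the shadow of the almost-holomorphic dependence on $\mathrm{Im}\,\tau$ through $\widehat{E}_2$; this is the rigorous counterpart of the Yamaguchi--Yau polynomiality, and the exclusion of $g=0,1$ reflects the degenerate form of the anomaly equation in those genera.

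Finally, statement (4) expresses the fact that $F^g_Y$ and $F^g_\cX$ are two polarizations of the same section $\mathscr{F}^g$. The change of polarization between the large-radius cusp and the orbifold point is a symplectic, indeed metaplectic, transformation of $H^1(E_y)_\C$, and Givental's quantization formalism turns such a transformation into a sum over Feynman graphs whose vertices are the lower-genus potentials $\partial_t^k F^h_Y$ with $h<g$ and whose propagator is the difference of the two polarization two-point functions; the valence constraints on stable genus-$g$ graphs produce the bound $1\le k\le 3g-3$. The explicit coordinate change $\tau_{\orb}=r\cdot\frac{3\tau+1-\xi}{3\tau+1-\xi^2}$ is precisely the modular transformation carrying the large-radius cusp to the orbifold point, extracted by comparing the two genus-zero flat coordinates $\tau$ and $\tau_{\orb}$ defined in the statement.

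The hard part will be the construction and global control of the section $\mathscr{Z}$: one must prove existence and (up to the expected ambiguities) uniqueness of an anomaly solution that is simultaneously compatible with genus-zero mirror symmetry for both $Y$ and $\cX$, holomorphic on the interior of $\cMbar$, and tame at the cusps, and then verify that its large-radius expansion reproduces the genuine Gromov--Witten potential $F^g_Y$ rather than merely an abstract anomaly solution. Matching the abstract Fock-space section to actual Gromov--Witten theory at higher genus is the crux of the argument, and is where information beyond genus-zero mirror symmetry, encoded in the descendant/quantization structure of the Fock sheaf, must be brought to bear.
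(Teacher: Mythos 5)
You have the right architecture --- a Fock sheaf over the $\Gamma_1(3)$ modular curve built by quantizing $H^1(E_y)$, a global section whose expressions in the cusp-adapted polarizations are $F^g_Y$ and $F^g_\cX$, and a metaplectic/Feynman change-of-polarization rule --- and this is indeed the skeleton of the paper's argument. But the step you flag as ``the hard part'' and then defer is precisely where your proposal has a genuine gap, and the route you gesture at would not close it. You propose to obtain the global section as a solution of the holomorphic anomaly equation that is holomorphic in the interior, tame at the cusps, and compatible with genus-zero mirror symmetry at both ends. The anomaly equation determines $\mathscr{F}^g$ only up to a holomorphic ambiguity: for each $g$ this is a meromorphic function on $\Proj(3,1)$ with a pole of bounded order at the conifold, a finite-dimensional but nonzero space of unknowns, and neither genus-zero mirror symmetry nor tameness at the cusps pins it down (indeed the ``conifold gap'' condition that would fix it is stated in the paper as an open conjecture, verified only up to $g=7$). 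So existence of \emph{some} anomaly solution is easy, but uniqueness fails with your boundary conditions, and there is no argument that any such solution computes the actual Gromov--Witten invariants of $Y$ and $\cX$ beyond genus one.

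The paper's resolution is a different mechanism entirely: it does not solve the anomaly equation. Instead it passes to the toric compactifications $\Ybar=\Proj_{\Proj^2}(\cO(-3)\oplus\cO)$ and $\cXbar=\Proj(1,1,1,3)$, whose quantum cohomology is generically semisimple, so that the Givental--Teleman classification produces their all-genus potentials from genus-zero data; these glue (via mirror symmetry for the Landau--Ginzburg model and a universal unfolding over a six-dimensional base) into a global section of an \emph{infinite-dimensional} Fock sheaf, which is then pushed down to the rank-two Fock sheaf over $\Proj(3,1)$ by a ``conformal limit'' $y_2\to 0$. The identification with $F^g_Y$ and $F^g_\cX$ is then automatic from Teleman's theorem at the two large-radius points, and the $2g-2$ pole bound at the conifold --- which you also need but do not address, since it is what makes the correlation functions for the algebraic polarization rational and hence makes quasi-modularity and finite generation come out --- is proved by a stationary-phase estimate on the $R$-matrix in Givental's formula, not by fiat. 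Your claim that ``each $\mathscr{F}^g$ is a finite polynomial in the propagator $\widehat{E}_2$ with holomorphic modular coefficients'' is the \emph{conclusion} of that analysis, not an input you may assume. Without the compactification/semisimplicity step (or an equivalent source of higher-genus information), the proof does not go through.
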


\noindent This proves conjectures of Aganagic--Bouchard--Klemm~\cite{ABK}.

\subsection{Geometric Quantization and the Fock Sheaf}
Aganagic--Bouchard--Klemm's prediction was based on 
Witten's discovery~\cite{Witten:background} 
that a topological string partition function 
\begin{equation} 
\label{eq:wave_function} 
Z = \exp\left(\sum_{g=0}^\infty \hbar^{g-1} F^g\right) 
\end{equation} 
can be understood as a `wave function' of a quantum-mechanical system  
that arises from geometric quantization of the state space of 
the theory. In the present setting, the state space is given by 
$H^1(E_y)$, where
\begin{equation}
  \label{eq:Ey_introduction}
  E_y = \text{compactification of } \left \{(x_1,x_2) \in (\C^\times)^2 :   \textstyle     x_1 + x_2 + \frac{y}{x_1 x_2} + 1 =0 \right\} 
\end{equation}
is the family of elliptic curves, parametrised by $y \in \ov{\HH/\Gamma_1(3)} \cong \Proj(3,1)$, that corresponds to $Y$ under mirror symmetry.
Quasi-modularity follows by `quantizing' monodromies of this family. 
The aim of the present paper is to verify this physics picture 
for local $\Proj^2$ mathematically. 

Let us begin with the genus-zero part of the story. 
The graph of $dF_Y^0$ defines a Lagrangian submanifold 
$\cL$ 
in the cotangent bundle $T^*H^2(Y) \cong H^2(Y) \oplus H^4(Y)$ 
of $H^2(Y)$, 
where we regard $H^4(Y) = H^4_c(Y)$ as the dual of $H^2(Y)$ 
via the intersection pairing: 
\[
\cL = \Gamma(dF_Y^0) = 
\left\{ (t, p) \in H^2(Y)\oplus H^4(Y) : p = \parfrac{F^0_Y}{t} 
\right\}.  
\]
The Crepant Resolution Conjecture at genus zero -- proved in this case by \cite{CIT:wall-crossings, Coates:local} -- 
says that the graphs of $dF_Y^0$ and $dF_\cX^0$ 
coincide under an affine symplectic transformation 
$U \colon T^* H^2(Y) \to T^* H^2_\orb(\cX)$:
see Figure~\ref{fig:genus_zero_CRC}. 
The family $\{T_t \cL\}$ of tangent spaces to 
$\cL$ defines a \emph{variation of Hodge structure} 
(VHS) of weight 1; 
under mirror symmetry, 
this is identified with the VHS on $H^1(E_y)$ of the mirror family. 
The mirror curve $E_y$ is parameterized 
by $y\in \Proj(3,1) \cong  \ov{\HH/\Gamma_1(3)}$ and 
the Gromov--Witten potentials $F^0_Y$, $F^0_\cX$ 
describe the local behaviour of the VHS near $y=0$ 
(the large-radius limit) and $y=\infty$ (the orbifold point) respectively. 
An important observation here is that the directions
of the `$y$-axes' $H^4(Y)$ and $H^4_\orb(\cX)$ 
do not coincide. In higher genus,  
these $y$-axes play the role of a polarization in geometric 
quantization. 

\begin{figure}[ht]
\includegraphics[bb=200 600 400 720]{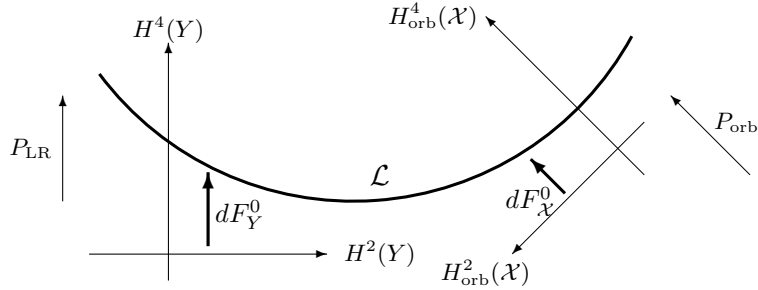}
\caption{Genus-zero Crepant Resolution Conjecture 
for $\cX=[\C^3/\mu_3]$}
\label{fig:genus_zero_CRC} 
\end{figure}

Geometric quantization (see~\cite{Kirillov:quantization}) 
associates to a symplectic vector space $H$ a Hilbert space, called the Fock space,
which is an irreducible representation of the Heisenberg algebra 
associated with $H$. To construct such a representation, 
we need the data of a \emph{polarization}, that is, a Lagrangian subspace 
$P$ of $H$. 
Given a polarization $P$, the Fock space is a space $\Fun(H/P)$ 
of functions on $H/P$ of an appropriate class
($C^\infty$, $L^2$, Schwartz, etc).  It carries an action of the Heisenberg algebra given by 
`canonical quantization'. 
For instance, if $H$ is a 2-dimensional symplectic vector space with 
Darboux co-ordinates $(p,x)$, and if we choose $P$ to be the subspace 
$\langle \partial/\partial p\rangle$, then the corresponding 
Fock space is a space of functions of $x$, and the Heisenberg algebra 
acts by operators $x$ and $\hbar \partial/\partial x$. 
If we have two different polarizations $P_1$, $P_2$, the corresponding 
Fock spaces are canonically isomorphic 
as projective representations of the Heisenberg algebra: 
\[
T(P_1,P_2) \colon \Fun(H/P_1) \xrightarrow{\cong} \Fun(H/P_2) 
\]
by the Stone--von~Neumann theorem.
Such an isomorphism is given by an integral transformation of Fourier type. 

We are interested in Fock space elements of the form 
\eqref{eq:wave_function},
which can be viewed as asymptotic series in $\hbar$. 
Agangagic--Bouchard--Klemm \cite{ABK} 
described the isomorphism $T(P_1,P_2)$ for such 
asymptotic functions using a sum over Feynman diagrams. 
Using their Feynman rule, we construct in \S\ref{sec:Fock_sheaf_fd} a 
sheaf
$\Fock_{\CY}$ 
of Fock spaces\footnote{This is a sheaf of sets, not of vector spaces, 
as functions of the form 
\eqref{eq:wave_function} are not closed under addition.}  over the base $\Proj(3,1)$ of the mirror family.  Note that we need to construct a Fock \emph{sheaf} here, instead of a 
Fock \emph{space}, because there is no globally defined, single-valued, flat polarization over the moduli space $\Proj(3,1) \cong \ov{\HH/\Gamma_1(3)}$, because the mirror family has non-trivial monodromies.  Roughly speaking, we:
\begin{itemize} 
\item[(a)] choose an open covering $\{U_\alpha\}$ of $\Proj(3,1)$ 
by sufficiently small open sets $U_\alpha$;   
\item[(b)] choose a Gauss--Manin flat polarization $P_\alpha 
\subset H^2(E_y)$ 
such that $P_\alpha \pitchfork \cL$ over $U_\alpha$, 
i.e.~$P_\alpha \oplus H^{1,0}(E_y) = H^1(E_y)$ for each $y\in U_\alpha$;  
\item[(c)]  
define $\Gamma(U_\alpha, \Fock_{\CY})$ 
to be the space of asymptotic series 
$\exp(\sum_{g=1}^\infty \hbar^{g-1} F^g)$, where 
$F^g$ is a holomorphic function on $U_\alpha$; 
\end{itemize}
and then patch local Fock spaces $\Gamma(U_\alpha,\Fock_{\CY})$ 
over overlaps $U_\alpha \cap U_\beta$ using the Feynman rule 
of Aganagic--Bouchard--Klemm. Theorem~\ref{thm:quasi_modularity_introd} above is a consequence of
the following more fundamental result:
 
\begin{maintheorem}[Theorem \ref{thm:wave_CY}, 
Theorem \ref{thm:conifold_estimate}] \label{thm:wave_CY_introduction}
There exists a global section $\wave_{\CY}$ 
of the Fock sheaf $\Fock_{\CY}$ such that:
\begin{itemize} 
\item[(1)] in a neighbourhood of the large-radius limit point 
$y=0$ and with respect to the polarization $P_\LR = H^4(Y)$, 
$\wave_{\CY}$ is represented by the Gromov--Witten potentials  
of $Y$; 
\item[(2)] in a neighbourhood of the orbifold point $y=\infty$ 
and with respect to the polarization $P_\orb =H^4_\orb(\cX)$, 
$\wave_{\CY}$ is represented by the Gromov--Witten potentials 
of $\cX$;
\item[(3)] in a neighbourhood of the conifold point $y=-\frac{1}{27}$ 
and with respect to a polarization $P_\con$, 
$\wave_{\CY}$ is represented by a collection $\{F^g_\con\}$ 
of functions such that $F^g_\con$ has poles of order $2g-2$ 
at $y_1=-\frac{1}{27}$ for $g\ge 2$.  
\end{itemize} 
\end{maintheorem}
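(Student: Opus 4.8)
The plan is to construct $\wave_{\CY}$ by taking the Gromov--Witten potentials of $Y$ as the distinguished local representative near the large-radius point and propagating them across the base $\Proj(3,1)$ by the very transition maps that define $\Fock_{\CY}$. Recall that a section of $\Fock_{\CY}$ over the cover $\{U_\alpha\}$ is a collection of formal series $\exp(\sum_{g\ge 1}\hbar^{g-1}F^g_\alpha)$, one per chart, whose representatives are matched on overlaps by the Aganagic--Bouchard--Klemm Feynman rule, and that the genus-zero data is already global: the weight-$1$ VHS on $H^1(E_y)$ is defined over all of $\Proj(3,1)$, its monodromy lands in $\Gamma_1(3)\subset\SL(2,\Z)$, and the genus-zero Crepant Resolution Conjecture identifying $dF^0_Y$ with $dF^0_\cX$ under the affine symplectic map $U$ is already established. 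So the higher-genus problem is to upgrade this global genus-zero datum to a global section.

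First I would fix, near $y=0$, the local section whose representative in the polarization $P_\LR=H^4(Y)$ is $\exp(\sum_{g\ge1}\hbar^{g-1}F^g_Y)$; holomorphicity of each $F^g_Y$ in the large-radius flat coordinate is the known convergence of the genus-$g$ Gromov--Witten generating function, so this is a genuine local section, proving (1) by construction. I would then define the section on every other chart $U_\alpha$ by applying the transition map $T(P_\LR,P_\alpha)$. The transversality condition $P_\alpha\pitchfork\cL$ imposed in the construction of $\Fock_{\CY}$ guarantees that the relevant propagator is invertible over $U_\alpha$, so the Feynman-transformed potentials $F^g_\alpha$ are holomorphic there, while the Stone--von~Neumann cocycle identity $T(P_\beta,P_\gamma)\circ T(P_\alpha,P_\beta)=T(P_\alpha,P_\gamma)$ makes these representatives compatible on triple overlaps and hence glue to a single section. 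The subtle point is \emph{global single-valuedness}: because I work over a cover of the \emph{quotient} $\Proj(3,1)\cong\ov{\HH/\Gamma_1(3)}$, the transition maps of $\Fock_{\CY}$ already include the $\Gamma_1(3)$ monodromy identifications, so consistency on overlaps \emph{is} single-valuedness; concretely, the section is assembled from the monodromy-equivariant VHS and the germ at $y=0$, which is invariant under the local large-radius monodromy, and the cocycle condition propagates this to invariance under all of $\Gamma_1(3)$. It is exactly this invariance, read back in a fixed holomorphic frame, that later yields the quasi-modularity of $F^g_Y$.

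For (2) I would use a rigidity argument. Both $\{F^g_Y\}$ near $y=0$ and $\{F^g_\cX\}$ near $y=\infty$ are local sections of $\Fock_{\CY}$ (again by convergence of the respective Gromov--Witten potentials, now in the orbifold flat coordinate and the polarization $P_\orb=H^4_\orb(\cX)$), and by the genus-zero CRC their genus-zero data agree after the identification $U$. Since $\Proj(3,1)$ minus the conifold point is connected, and a global section is rigid---determined by its genus-zero VHS together with any single higher-genus germ via analytic continuation---the continuation of the large-radius section to a neighbourhood of $y=\infty$ must land on the unique section extending the same genus-zero datum there; comparing with the orbifold germ, which also extends it, forces the two to coincide. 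This identifies the orbifold representative of $\wave_{\CY}$ with $\{F^g_\cX\}$ and simultaneously produces the Feynman-diagram form of the Crepant Resolution Conjecture stated in Theorem~\ref{thm:quasi_modularity_introd}.

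The conifold estimate (3) is, I expect, the main analytic obstacle, and I would treat it separately. Near $y=-\frac{1}{27}$ the VHS degenerates with a single vanishing cycle and unipotent log-monodromy; choosing $P_\con$ adapted to this vanishing cycle, the local flat coordinate $\mu$ vanishes at the conifold and the propagator transporting the section into $P_\con$ acquires a pole driven by the degenerating period. The task is to show this produces poles of order exactly $2g-2$ with a universal polar part: I would match the leading singular behaviour against the local constant-map model at the conifold, where the all-genus contribution is $\sum_{g\ge2}\frac{B_{2g}}{2g(2g-2)}\mu^{2-2g}$, and control the subleading terms using the nilpotent-orbit description of the degeneration together with the polynomiality (finite generation) of the Feynman-transformed potentials in the propagators. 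Establishing that the recursion generates no poles of higher order, and that the polar part is precisely the gap-condition one, is the step I anticipate being the hardest, since it demands uniform control of the propagation as $y\to-\frac{1}{27}$ rather than merely the chart-by-chart holomorphicity used for (1) and (2).
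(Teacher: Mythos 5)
Your construction never establishes the one thing the theorem really asserts: that the section you build by propagating $\{F^g_Y\}$ away from $y=0$ is single-valued on $\Proj(3,1)\setminus\DCY$. The cocycle identity for the transformation rules only guarantees that a \emph{consistent} family of local representatives glues; it does not produce consistency. Propagating the large-radius germ gives a section on the universal cover, and its invariance under the full monodromy group $\Gamma_1(3)$ --- which is precisely the quasi-modularity you defer to ``later'' --- does not follow from invariance under the local monodromy at $y=0$. The paper closes this gap by a detour for which you offer no substitute: it passes to the toric compactifications $\Ybar$ and $\cXbar$, whose quantum cohomology \emph{is} generically semisimple (unlike that of $Y$ and $\cX$), builds a manifestly global section $\wave_{\rm B}$ of an infinite-dimensional Fock sheaf from Givental's formula on the tame-semisimple locus (where the local representative is canonically determined by the Frobenius structure, hence single-valued by construction), identifies it with the Gromov--Witten potentials of $\Ybar$ and $\cXbar$ via Teleman's theorem, and only then takes the conformal limit $y_2\to 0$ to land in $\Fock_{\CY}$. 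Your rigidity argument for part (2) compounds the problem: a section of the Fock sheaf is \emph{not} determined by its genus-zero data, since every section has the same Yukawa coupling by the (Yukawa) axiom, so the fact that $\{F^g_\cX\}$ and the continuation of $\{F^g_Y\}$ share a genus-zero germ does not force them to agree --- that agreement is the higher-genus Crepant Resolution Conjecture itself, and in the paper it again comes from Givental--Teleman applied near the large-radius point of $\cXbar$.

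For part (3), your plan is both circular and too ambitious. The finite-generation/polynomiality statement you invoke to ``control the subleading terms'' is derived in the paper \emph{from} the conifold pole bound (via Lemma~\ref{lem:regularity_curved} and Proposition~\ref{prop:corr_alg}), so it cannot serve as an input to it. And matching the polar part against the model $\sum_{g\ge 2}\frac{B_{2g}}{2g(2g-2)}\mu^{2-2g}$ is exactly the Huang--Klemm gap condition, which the paper explicitly does not prove (it is only verified numerically up to genus $7$); the theorem claims only the pole-order bound $2g-2$, and the actual proof obtains it by estimating the $T$-order of the stationary-phase asymptotics of the oscillating integrals entering Givental's $R$-matrix, critical point by critical point, and then summing the resulting orders over the edges, vertices and legs of each Feynman graph.
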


There are various possible choices of polarization, which are summarized in  Table~\ref{tab:polarizations}.  
The VHS of the mirror family has singularities at the large-radius 
$(y=0)$, conifold $(y=-\frac{1}{27})$, and orbifold $(y=\infty)$ 
points. Near these points, there are unique flat polarizations 
$P_\LR$, $P_\con$, $P_\orb$ characterized by invariance under 
local monodromy (Notation~\ref{nota:opposite_fd}, Proposition~\ref{prop:opposite_cusps_flat}).  These polarizations become multi-valued when 
they are analytically continued. 

On the other hand, we can also consider polarizations which are 
\emph{not} Gauss-Manin flat, but are \emph{single-valued}.  Expressing the global section $\wave_{\CY}$ with respect to a single-valued polarization yields $\exp(\sum_{g=1}^\infty \hbar^{g-1} F^g)$, where the correlation functions $F^g$ are single-valued  on $\Proj(3,1)$.
The polarization $P_\cc$ defined by 
$H^{0,1}(E_y) = \ov{H^{1,0}(E_y)}$, which we call the complex-conjugate polarization, is single-valued, and
coincides with $P_\LR$ at $y=0$, 
$P_\con$ at $y=-\frac{1}{27}$, and $P_\orb$ at $y=\infty$.  It varies non-holomorphically 
along $\Proj(3,1)$, and correlation functions for $\wave_{\CY}$ with respect to $P_\cc$ satisfy the 
Bershadsky--Cecotti--Ooguri--Vafa 
holomorphic anomaly equation~\cite{BCOV}  (Proposition \ref{prop:HAE}).  We can also obtain single-valued polarizations by considering the algebraic structure of the bundle $H^1(E_y)$ over $\Proj(3,1)$: 
$\bigcup_y H^1(E_y) \cong \cO(1) \oplus \cO(-1)$.  The algebraic polarization $P_\alg$ is a single-valued holomorphically-varying 
non-flat polarization, corresponding to $\cO(-1)$ over $\Proj(3,1)$; 
correlation functions for $P_\alg$ can be 
thought of as the `holomorphic ambiguity' in the holomorphic anomaly 
equation.  Correlation functions for $P_\alg$ are rational functions, 
and it follows that the Gromov--Witten potentials $F^g_Y$, 
$F^g_\cX$ belong to certain polynomial rings 
(see Theorem~\ref{thm:finite_generation}). 

\begin{table}[ht] 
\begin{tabular}{cccl} \toprule  
  \multicolumn{1}{c}{polarization} & 
  \multicolumn{1}{c}{flat/curved} & 
  \multicolumn{1}{c}{global behaviour} &
  \multicolumn{1}{c}{correlation functions} \\ \midrule 
  $P_\LR$ & flat & multi-valued & $F^g_Y$, quasi-modular \\ 
  $P_\orb$ & flat & multi-valued &  $F^g_\cX$ \\ 
  $P_\con$ & flat & multi-valued &  quasi-modular \\ 
  $P_\cc$ & curved & single-valued & almost-holomorphic modular \\
  $P_\alg$ & curved & single-valued & holomorphic modular 
                                      (rational functions) \\ \bottomrule \\[-0.5ex]
\end{tabular} 
\caption{Various Polarizations} 
\label{tab:polarizations}
\end{table} 

\begin{remark}
  Polarizations are called `opposite line bundles' in the main body of the text.  
\end{remark}

\begin{remark} 
Lho--Pandharipande 
\cite{Lho-Pandharipande:SQ_HAE, Lho-Pandharipande:CRC} 
also proved a similar finite generation result, 
and a version of the Crepant Resolution Conjecture for $Y$ and $\cX$. 
We give a proof of their version of Crepant Resolution 
Conjecture using our method below (Theorem~\ref{thm:finite_generation})
but we learned its elegant formulation from them.
\end{remark} 

\begin{remark}
It was conjectured by Huang--Klemm~\cite{Huang--Klemm, Huang--Klemm--Quackenbush} that 
the correlation function $F^g_\con$ with respect to $P_\con$ 
should satisfy a certain `gap condition' -- see \eqref{eq:gap}. 
We do not have a proof of this conjecture, but verify it up to 
genus $g=7$.  See~\S\ref{sec:calculation}. 
\end{remark}

\subsection{Summary of the Argument} 
In outline: we pass from $Y$ and $\cX$ to their toric compactifications 
$\Ybar = \Proj_{\Proj^2}(\cO(-3) \oplus \cO)$ and 
$\cXbar = \Proj(1,1,1,3)$.  
These have generically semisimple quantum cohomology, 
which is not true for $Y$ or $\cX$.
We determine the Gromov--Witten potentials of $\Ybar$ and $\cXbar$ 
using the Givental--Teleman formula; this requires semisimplicity.  
We relate the two potentials via mirror symmetry for $\Ybar$ and $\cXbar$. 
The Gromov--Witten potentials of $\Ybar$ and $\cXbar$ glue 
together to give a single-valued section $\wave_{\rm B}$ of 
an infinite-dimensional version of the Fock sheaf, 
which we constructed in~\cite{Coates--Iritani:Fock}; 
this is a higher-genus version of the Crepant Resolution Conjecture 
for $\Ybar$ and $\cXbar$.  
The finite-dimensional version of the Fock sheaf, and the
global section $\wave_{\CY}$, 
emerge from their infinite-dimensional counterparts 
by taking a certain `conformal limit' or `local limit'.  
In this limit, the volume of the fiber of $\Ybar \to \Proj^2$ 
becomes infinitely large, and the Gromov--Witten theory of 
$\Ybar$ reduces to that of $Y$. 

Let us explain some more details. 
We consider the Landau--Ginzburg model 
that is mirror to the small quantum cohomology of $\Ybar$.  
This is given by 
  \begin{align} \label{eq:LG_introduction}
    W_{y_1,y_2}  =\left( x_1+x_2+\frac{y_1}{x_1 x_2} + 1\right) x_3 
+\frac{y_2}{x_3} && (x_1,x_2,x_3) \in \big(\C^\times\big)^3
  \end{align}
where $(y_1,y_2) \in (\C^\times)^2$.  
This family of Laurent polynomials extends 
over a partial compactification $\cMB$ of $(\C^\times)^2$, 
where the limits 
\begin{align*}
  (y_1,y_2) \to (0,0) && (y_1^{-1/3}, y_1^{1/3}y_2) \to (0, 0)
\end{align*}
correspond respectively to the large-radius limit for~$\Ybar$ 
and the large-radius limit for~$\cXbar$: see Figure~\ref{fig:B-model_moduli}. 
\begin{figure}[ht]
\includegraphics[bb=200 590 400 710]{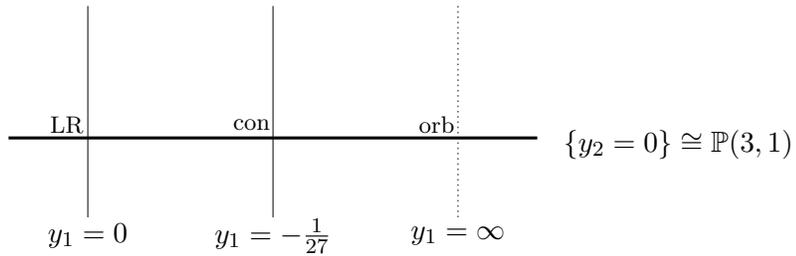} 
\caption{The B-model moduli space $\cMB$: this is the base 
space of the family $W_{y_1,y_2}$ of Landau--Ginzburg potentials.} 
\label{fig:B-model_moduli}
\end{figure}

The Landau--Ginzburg mirror determines an infinite-dimensional 
symplectic vector bundle over $\cMB$, with fiber over 
$(y_1,y_2) \in (\C^\times)^2$ equal to 
  \[
    H^3\big(\Omega_{(\C^\times)^3}^\bullet(\!(z)\!), zd+dW_{y_1,y_2}\wedge\big)
  \]
This vector bundle carries a flat Gauss--Manin connection, 
which has logarithmic singularities along $y_1=0$, $y_2=0$, 
and $y_1 = {-\frac{1}{27}}$, and has a Lagrangian subbundle
  \[
   \FB = H^3\big(\Omega_{(\C^\times)^3}^\bullet[\![z]\!], zd+dW_{y_1,y_2}\wedge\big)
  \]
Such structures have been studied by K.~Saito~\cite{SaitoK:higher_residue} in the context of 
singularity theory. 
By transporting the Lagrangian subspaces $\FB$ in the fibers 
to a fixed fiber\footnote
{We are hiding some technical details here.  To obtain a moving subspace realization, we need to analytify $\FB$ in the $z$-direction. 
The analytification is denoted by $\cFB$ in the main body of the text.},
using the Gauss--Manin connection, we obtain a moving family of 
semi-infinite Lagrangian subspaces.  
This is an example of a \emph{variation of semi-infinite Hodge 
structure} (VSHS)~\cite{Barannikov}. 

On the other side of mirror symmetry, we consider 
the descendant potentials $\cF^g_{\Ybar}$ and $\cF^g_{\cXbar}$, which are generating functions for genus-$g$ 
Gromov--Witten invariants of $\Ybar$ and $\cXbar$ with descendants.  
$\cF^g_{\Ybar}$ and $\cF^g_{\cXbar}$ are functions on an infinite-dimensional space, and 
the graph of the differential $d\cF^0_{\Ybar}$ 
defines a Lagrangian submanifold 
\[
\mathfrak{L}_{\Ybar} \subset H^\bullet(\Ybar) \otimes \C(\!(z^{-1})\!) 
\]
in Givental's symplectic space $H^\bullet(\Ybar) \otimes \C(\!(z^{-1})\!)$. 
Under mirror symmetry, the tangent spaces to the Givental cone $\mathfrak{L}_{\Ybar}$ are identified with the VSHS determined by the Landau--Ginzburg mirror, near the large-radius limit point for $\Ybar$.  Analogous statements hold for $\cXbar$.

The Landau--Ginzburg model \eqref{eq:LG_introduction} is a mirror to the \emph{small} quantum cohomology of $\Ybar$, rather than the full big quantum cohomology, and therefore the VSHS that it determines is not miniversal.  In \S\ref{sec:enlarge_base} we construct a miniversal unfolding of this semi-infinite variation, over a six-dimensional base $\cMBbig$, that is a mirror to the big quantum cohomology of $\Ybar$.  The base $\cMBbig$ is a thickening of $\cMB \setminus \{{y_1 = -\frac{1}{27}}\}$.  It carries an infinite-dimensional version $\Fock_{\rm B}$ of the Fock sheaf, which we constructed in~\cite{Coates--Iritani:Fock} and review in \S \ref{sec:Fock_sheaf} below, and furthermore there is a distinguished global section $\wave_{\rm B}$ of this Fock sheaf.  The global section $\wave_{\rm B}$ is constructed using Givental's formula for higher-genus potentials~\cite{Givental:semisimple}; see~\cite[\S7.2]{Coates--Iritani:Fock}.  It coincides under mirror symmetry, near the large-radius limit points for $\Ybar$ and $\cXbar$, with the total descendant potentials $\cZ_{\Ybar} = \exp(\sum_{g \geq 0} \hbar^{g-1} \cF^g_{\Ybar})$ of $\Ybar$ and $\cZ_{\cXbar}$ of $\cXbar$.  This follows from Teleman's theorem~\cite{Teleman} \cite[Theorem~7.15]{Coates--Iritani:Fock}.

Now we take the local limit.  
Observe that the Landau--Ginzburg potential $W_{y_1,y_2}$ 
with $y_2$ set to zero defines the family \eqref{eq:Ey_introduction} 
of elliptic curves $E_{y_1}$; thus the divisor $(y_2=0)$ 
in $\cMB$ can be identified with the base $\Proj(3,1) \cong 
\ov{\HH/\Gamma_1(3)}$ of the mirror family of $Y$. 
A key step in the argument is the construction of a 
``restriction map'' from the infinite-dimensional Fock sheaf 
$\Fock_{\rm B}$ over $\cMBbig$ to the finite-dimensional Fock sheaf 
$\Fock_{\CY}$ over $\Proj(3,1)$. 
This requires care, as the VSHS 
associated with $W_{y_1,y_2}$ has logarithmic singularities 
along $y_2=0$. 
We also need a result comparing polarizations for the VSHS 
with polarizations for the VHS. 

\begin{maintheorem}[see 
Propositions~\ref{pro:correspondence_of_opposites},~\ref{pro:extending_trivialization},~\ref{pro:existence_of_compatible_opposites} and~\ref{pro:LR_conifold_orbifold}, Notation~\ref{nota:opposite_fd}, Theorem~\ref{thm:Fock_restriction}]
Let $y_1 \in \Proj(3,1)$. 
There is a one-to-one correspondence between:
\begin{itemize} 
\item[(a)] flat polarizations near $y_1$  for the VSHS associated with the Landau--Ginzburg mirror that are compatible with the Deligne extension;
\item[(b)] flat polarizations near $y_1$ for the VHS associated with the mirror family $\{E_y\}$ of elliptic curves.
\end{itemize} 
Let $\Fock_{\rm B}$ denote the infinite-dimensional Fock sheaf 
over $\cMBbig$ and let $\Fock_{\CY}$ denote the finite-dimensional 
Fock sheaf over $\Proj(3,1)$. 
Write $\Proj(3,1)^\circ = \Proj(3,1) \setminus \{-\tfrac{1}{27}\}$ for the complemement of the conifold point, and let $i\colon \Proj(3,1)^\circ \hookrightarrow 
 \cMBbig$  denote the inclusion map. 
There is a restriction map: 
\[
i^{-1} \Fock_{\rm B} \longrightarrow \Fock_{\CY}\big|_{\Proj(3,1)^\circ}
\]
\end{maintheorem}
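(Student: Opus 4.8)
The plan is to establish the two assertions—the bijection on polarizations and the Fock-sheaf restriction—separately, and then to observe that the former is exactly what makes the latter well defined. The geometric source of both is the behaviour of $W_{y_1,y_2}$ along the divisor $y_2 = 0$: when $y_2 = 0$ the variable $x_3$ enters $W_{y_1,0}$ only through the factor $(x_1 + x_2 + y_1/(x_1 x_2) + 1)\,x_3$, so that $\partial_{x_3}W_{y_1,0} = 0$ cuts out exactly the mirror curve $E_{y_1}$ of \eqref{eq:Ey_introduction}. First I would make this precise at the level of Hodge theory. Since the Gauss--Manin connection of the VSHS has logarithmic poles along $y_2 = 0$, the Deligne extension canonically prolongs the semi-infinite structure across the divisor; I would then show that restricting this extension to $y_2 = 0$, passing to the local limit, and extracting its rank-two weight-one part recovers the VHS on $H^1(E_{y_1})$, with its intersection pairing induced by the higher-residue pairing of the VSHS.

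Granting this comparison, I would build the bijection (a) $\leftrightarrow$ (b) out of it. A flat opposite subspace for the VSHS compatible with the Deligne extension is a Gauss--Manin-flat splitting of $\FB$ that prolongs across $y_2 = 0$; restricting it as above produces a flat splitting of the Hodge line in $H^1(E_{y_1})$, i.e. a flat polarization for the VHS, giving the forward map (the content I would package into Proposition \ref{pro:correspondence_of_opposites}). For the inverse I would use rigidity: a flat polarization of the weight-one VHS is determined by its value at one point, so flat transport together with the logarithmic structure along $y_2 = 0$ forces a unique Deligne-compatible flat opposite subspace of the VSHS lifting it. Here Proposition \ref{pro:extending_trivialization} supplies the lift, Proposition \ref{pro:existence_of_compatible_opposites} guarantees the source class (a) is non-empty, and Proposition \ref{pro:LR_conifold_orbifold} (with Notation \ref{nota:opposite_fd}) matches the distinguished monodromy-invariant representatives $P_\LR$, $P_\con$, $P_\orb$.

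I would then define the restriction of Fock sheaves locally. Over a small $U \subset \Proj(3,1)^\circ$ pick a flat, Deligne-compatible polarization $P_{\rm B}$ for the VSHS and let $P$ be its partner for the VHS under the bijection. A local section of $i^{-1}\Fock_{\rm B}$ is an asymptotic series $\exp\!\big(\sum_{g} \hbar^{g-1} F^g_{\rm B}\big)$ with $F^g_{\rm B}$ holomorphic near the image of $U$ in $\cMBbig$; I would send it to the section of $\Fock_{\CY}$ over $U$ whose correlation functions are the restrictions of the $F^g_{\rm B}$ along $i$ in the local limit. The real work is independence of the choice of $P_{\rm B}$, i.e. well-definedness on overlaps: the transition maps gluing $\Fock_{\rm B}$ are Givental's quantized symplectic transformations built from the semi-infinite propagator of a change of polarization, while those gluing $\Fock_{\CY}$ are the Aganagic--Bouchard--Klemm Feynman transforms built from the finite propagator. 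I would show they intertwine by proving that, under (a) $\leftrightarrow$ (b), the semi-infinite propagator restricts term-by-term to the finite one, so the two sums over Feynman graphs agree; assembling the local maps then yields Theorem \ref{thm:Fock_restriction}.

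The principal obstacle is precisely this propagator comparison across the logarithmic divisor. The restriction is not a naive evaluation at $y_2 = 0$: one must control how the $z$-graded semi-infinite symplectic data truncates to the weight-one pairing on $H^1(E_{y_1})$ and verify that the two-point functions encoding changes of polarization match after truncation, not merely to leading order in $z$. I expect the delicate behaviour to concentrate at the conifold $y_1 = -\tfrac{1}{27}$, where both structures acquire an extra degeneration and the flat polarizations and propagators develop poles; this is exactly why the map is built only over $\Proj(3,1)^\circ = \Proj(3,1)\setminus\{-\tfrac{1}{27}\}$, the excision of the conifold point being what keeps the local-limit restriction finite and the gluing consistent.
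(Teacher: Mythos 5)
Your plan follows the same overall architecture as the paper: establish the polarization bijection first, then use it to transport local Fock spaces and check that the two gluing rules (Givental's quantized transformations upstairs, the Aganagic--Bouchard--Klemm Feynman transform downstairs) are intertwined by matching propagators. That said, the proposal leaves the two steps that actually carry the proof as acknowledged placeholders, and I want to flag concretely what is missing.

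First, the mechanism producing the rank-two VHS from the VSHS is not ``extracting the weight-one part'' in any soft sense: it runs entirely through the residue endomorphism $N = \nabla^{\rm B}_{y_2\partial_{y_2}}|_{y_2=0}$ of the logarithmic connection along the divisor. The paper takes the degree-one part $H$ (rank $6$) of the restricted module, and sets $\Hvec = \Ker N/(\Image N \cap \Ker N)$; the bijection (a) $\leftrightarrow$ (b) is then a genuinely nontrivial piece of linear algebra (Proposition~\ref{pro:correspondence_of_opposites}) matching lines $P \subset \Hvec$ opposite to $F^2_\vec$ with filtrations $U_\bullet$ of $H$ satisfying $N(U_p)\subset U_{p-1}$, $U_0^\perp = U_2$, $U_1^\perp = U_1$. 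Without $N$ you also lose the canonical lift $F^2\cap\Ker N \xrightarrow{\sim} F^2_\vec$, which is what makes the choice of splitting of $0 \to \Omega^1_{\cMCY^\circ}(\log\{0\}) \to \Omega^1_{\cMB^\circ}(\log D)|_{\cMCY^\circ} \to \cO \to 0$ canonical; your proposal never addresses the fact that logarithmic forms do not naively restrict to the logarithmic locus, and without a distinguished splitting the ``restriction of correlation functions'' is not even defined. Second, the propagator comparison you correctly identify as ``the real work'' is exactly the step you do not supply: the paper pins down the restricted bivector $V_{\CY}$ by combining the grading constraint $(\Gr\otimes 1 + 1\otimes\Gr)V_{\CY} = 2V_{\CY}$ with the identity $(D_2\otimes 1 - 1\otimes D_2)V_{\CY}=0$ (valid because $D_2=-zN$ commutes with both projections and is self-adjoint for $\Omega$), forcing $V_{\CY}\in (F^2\cap\Ker N)^{\otimes 2}$; this is what reduces the semi-infinite propagator to the finite one.

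Finally, your explanation of the excision of the conifold point is not quite right. The finite-dimensional Fock sheaf is defined over all of $\Proj(3,1)$, with a pole-order condition at $y_1=-\tfrac{1}{27}$, and the global section is later shown to extend across the conifold point. The restriction map is built only over $\Proj(3,1)^\circ$ because the \emph{source} is unavailable there: Reichelt's generation condition fails along the conifold locus, so the miniversal unfolding $\cM_{\rm B}^{\rm big}$ --- and hence $\Fock_{\rm B}$ --- is constructed only over the complement of $y_1=-\tfrac{1}{27}$. It is not that the local limit diverges there.
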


By applying the restriction map to the global section $\wave_{\rm B}$ 
of $\Fock_{\rm B}$, we obtain a section $\wave_{\CY}$ of 
$\Fock_{\CY}$ over $\Proj(3,1)^\circ$. 
It is then easy to check that $\wave_{\CY}$ corresponds to the 
Gromov--Witten potentials of $Y$ and $\cX$, respectively, near 
$y_1=0$ and $y_1=\infty$.  
In \S \ref{sec:conifold_estimate}, we show that 
the genus-$g$ potential of $\wave_{\CY}$ has poles of order 
$2g-2$ at the conifold point $y_1=-\frac{1}{27}$, 
by analysing the pole order of the ingredients in 
Givental's formula for higher-genus potentials. 
This proves Theorem~\ref{thm:wave_CY_introduction}. 

\begin{remark} 
  In the main body of the text, we consider various versions of VSHS but do not use the term `VSHS' itself, instead using the equivalent notions of  TEP structures, \logDTEP structures, and $\log$-cTEP structures.
\end{remark}

\begin{remark}[Related work]  Higher-genus Gromov--Witten invariants of local $\Proj^2$ and $\big[\C^3/\mu_3\big]$ have been studied by many authors.   In string theory, Alim--Scheidegger--Yau--Zhou~\cite{ASYZ}, Huang--Klemm~\cite{Huang--Klemm}, and Huang--Klemm--Quackenbush~\cite{Huang--Klemm--Quackenbush} have emphasized the importance of special geometry and the holomorphic anomaly equations.  On the mathematics side, Bouchard--Cavalieri have computed Gromov--Witten invariants of $\big[\C^3/\mu_3\big]$ at genus~$2$ and~$3$ using Hodge and Hurwitz--Hodge integrals~\cite{Bouchard--Cavalieri}.  Lho--Pandharipande have recently established the holomorphic anomaly equation for local $\Proj^2$, in the precise form predicted by physicists, and used this to prove a higher-genus Crepant Resolution Conjecture for $\big[\C^3/\mu_3\big]$~\cite{Lho-Pandharipande:SQ_HAE, Lho-Pandharipande:CRC}.  Another approach goes via the Remodelling Conjecture of Bouchard--Klemm--Mari\~no--Pasquetti~\cite{BKMP} and Eynard--Orantin recursion~\cite{Eynard--Orantin}.  Fang--Liu--Zong~\cite{Fang--Liu--Zong} have established the Remodelling Conjecture for all toric Calabi--Yau 3-orbifolds, and this should lead to a proof of modularity and the holomorphic anomaly equation in our setting. 
\end{remark}

\subsection*{Acknowledgements}  The authors thank Mina Aganagic, Vincent Bouchard, Andrea Brini, Albrecht Klemm, Hyenho Lho,  Rahul Pandharipande, Yongbin Ruan, and Hsian-Hua Tseng for useful conversations during the course of this work.  H.I.~thanks Atsushi Kanazawa for inviting Hyenho Lho to Kyoto, and providing an occasion to discuss the Crepant Resolution Conjecture.  We are grateful to Lho and Pandharipande for sharing a draft of their paper~\cite{Lho-Pandharipande:CRC} before publication.  We also thank the anonymous referee for their careful reading of the paper and insightful comments.  This research was partially supported by a Royal Society University Research Fellowship; the Leverhulme Trust; ERC Starting Investigator Grant number~240123; ERC Consolidator Grant number~682602; EPSRC Mathematics Platform grant EP/I019111/1; Inoue Research Award for Young Scientists; EPSRC grant EP/E022162/1; and JSPS Kakenhi Grants Number 19740039, 22740042, 23224002, 24224001, 25400069, 26610008, 
16K05127, 16H06335, 16H06337, and 17H06127. 
This material is based upon work supported by the National Science 
Foundation under Grant No.~DMS-1440140 while H.I.~was in residence 
at the Mathematical Sciences Research Institute in Berkeley, California, 
during the Spring semester of 2018. It completes a project that was begun during the workshop `New Topological Structures in Physics' at the Mathematical Sciences Research Institute in 2006.  We thank the organisers  and participants of the workshop for many inspiring discussions.

\bigskip

\bigskip

\noindent 
{\bf List of Notation} 

\renewcommand{\arraystretch}{1.15} 

\begin{table}[h] 
\begin{tabular}{ll}  
\toprule 
$\VA{X}\subset H_X$
& open subset of the form \eqref{eq:LRL_nbhd}.  
\\ 
$\cMA{X}^\times=\frac{\VA{X}}{2\pi\iu H^2(X,\Z)}$ 
& the base of the A-model TEP structure; see Example~\ref{ex:AmodelTEP}.  
\\ 
$(\cMA{X}, \DA{X})$ 
& the base of the A-model \logDTEP structure; 
\\  
&  $\cMA{X}^\times = \cMA{X} \setminus \DA{X}$; 
see Example~\ref{ex:log_TEP_A}, 
Theorem~\ref{thm:big_mirror_symmetry_Ybar}. 
\\ 
$(\tcFA{X}, \nablaA{X}, \pairingA{X})$ 
& 
see Example~\ref{ex:AmodelTEP}. 
\\
$(\cFA{X}^\times,\nablaA{X},\pairingA{X})$ 
& A-model TEP structure; 
see Example~\ref{ex:AmodelTEP}, 
Theorems~\ref{thm:mirrorsymmetryforYbar},~\ref{thm:mirrorsymmetryforXbar}.  
\\ 
 $(\cFA{X}, \nablaA{X}, \pairingA{X})$ 
& A-model \logDTEP structure; 
see Theorem~\ref{thm:big_mirror_symmetry_Ybar}.  
\\ 
$(\FA{X},\nablaA{X}, \pairingA{X})$ 
& A-model $\log$-cTEP structure; see Example~\ref{ex:log_cTEP_A}. 
\\ 
$\bP_{\rm A}$ 
& canonical opposite module for the A-model TEP structure; 
\\ 
& see Example \ref{ex:opposite_A}. 
\\ 
$\sfP_{\rm A}$ 
& canonical opposite module for the A-model 
$\log$-cTEP structure; 
\\
& see Example~\ref{ex:opposite_A_log-cTEP}. 
\\ 
$\Fock_{{\rm A},X}$ & A-model Fock sheaf;  
see Definition~\ref{def:GW-wave}.  
\\
\midrule
$(\cMB, D)$
& the base of the B-model \logDTEP structure; 
\\ 
& 
see \eqref{eq:MBsm_coordinates} and 
Proposition-Definition~\ref{pro-def:2d_log_TEP}.
\\ 
$\cMB^\times$ 
& $\cM_{\rm B} \setminus D$; 
the base of the B-model TEP structure; see \eqref{eq:MBsm_coordinates}, 
Definition~\ref{def:BmodelconformalTEP}. 
\\ 
$\cMB^\circ$ 
& $\cMB\setminus \{y_1 = -1/27\}$; see Theorem~\ref{thm:unfolding}. 
\\ 
$(\cMBbig,\Dbig)$
& the base of the big B-model \logDTEP structure; 
see Theorem~\ref{thm:unfolding}; 
\\
&  
this contains $\cMB^\circ$ but not $\cMB$. 
\\
$(\cMCY,\DCY)$ 
& $\left(\Proj(3,1), \{0,-\frac{1}{27}\}\right)$; 
the base of $\cFCY$, $\sFCY$ 
(\S\ref{sec:six}) and 
$\Fock_{\CY}$ (\S\ref{sec:Fock_fd}).
\\
$\cMCY^\circ$ 
& $\Proj(3,1) \setminus \{-\frac{1}{27}\}$; the base of $\Fock_{\CY}^\circ$ (\S\ref{sec:conformal_limit_Fock}).  
\\  
 $(\cFB^\times,\nablaB,\pairingB )$ 
& the B-model TEP structure 
(Definition~\ref{def:BmodelconformalTEP}) 
with base $\cMB$.  
\\ 
$(\cFGKZ, \nabla^{\rm GKZ})$
& the GKZ system, isomorphic to $(\cFB^\times,\nablaB)$; 
see \S\ref{sec:GKZ}. 
\\ 
$(\cFB,\nablaB,\pairingB)$ 
& the B-model \logDTEP structure 
with base $(\cMB,D)$; 
\\
& see Proposition-Definition~\ref{pro-def:2d_log_TEP}.  
\\
$(\cFBbig, \nablaB,\pairingB)$ 
& the big B-model \logDTEP structure; 
see Theorem~\ref{thm:unfolding}. 
\\ 
$(\FBbig, \nabla,(\cdot,\cdot))$ 
& the big B-model $\log$-cTEP structure;
see Example~\ref{ex:log_cTEP_B}.  
\\ 
$(\cFCY,\nabla, (\cdot,\cdot))$ & the restriction of $\cFB$ to $\cMCY$;
see \S\ref{sec:six}. \\ 
$H$,~$\widebar{H}$,~$\Hvec$ & 
vector bundles (of rank $6$, $3$, $2$) on $\cMCY$ obtained from $\cFCY$;
see \S\ref{sec:six}--\ref{sec:two}.
\\ 
$\bP_\LR$,~$\bP_\con$,~$\bP_\orb$ & unique 
Deligne-extension-compatible opposite modules 
for $\cFB$ \\ 
& near $y=0$, $y=-\frac{1}{27}$, and $y=\infty$ respectively; 
see Proposition~\ref{pro:LR_conifold_orbifold}.
\\
$\Fock_{\rm B}$ & the B-model Fock sheaf over $\cMBbig$; 
see Definition~\ref{def:Fock_B}. \\ 
$\Fock_{\CY}$ & the finite-dimensional Fock sheaf over $\cMCY
= \Proj(3,1)$; see Definition~\ref{def:Fock_CY}. 
\\
\midrule
$\cF^\divideontimes$
& restriction of a sheaf $\cF$ 
over $\cM \times \C$ to $\cM\times \C^\times$; 
see Notation \ref{notation:x}. 
\\ \bottomrule
\end{tabular}
\end{table}

\vfill 

\renewcommand{\arraystretch}{1} 
\normalsize 

\clearpage 

\section{Notation and Preliminaries} 

\subsection{Bases for Cohomology and Orbifold Cohomology}
\label{sec:bases}
Let $X$ denote one of $\cX$, $\cXbar$, $Y$ and $\Ybar$. 
We fix bases $\{\phi_0,\phi_1,\dots,\phi_N\}$ 
for the (orbifold) cohomology $H_X$ of $X$ such that 
\begin{itemize} 
\item $\phi_0$ is the identity class 
\item writing $r$ for the dimension of the (untwisted) degree two cohomology 
group $H^2(X)$ -- so that $r=0$, $1$, $1$, $2$ respectively for $X=\cX$, $\cXbar$, $Y$ and $\Ybar$ -- the classes $\phi_1,\dots,\phi_r$ form a nef integral basis of $H^2(X)$; 
\item if $X$ is compact, $\{\phi^0,\dots,\phi^N\}$ is a basis dual to 
$\{\phi_0,\dots,\phi_N\}$ with respect to the (orbifold) 
Poincar\'{e} pairing. 
\end{itemize} 
More specifically we choose the following explicit bases. 
Let $\HX$ denote the Chen--Ruan orbifold
cohomology $H^\bullet_\orb(\cX;\C)$. We fix the basis:
\begin{align*}
  \phi_0 = \unit_0 &&
  \phi_1 = \unit_{\frac{1}{3}} &&  
  \phi_2 = \unit_{\frac{2}{3}} 
\end{align*}
for $\HX$, where $\unit_k$, $k \in \{0,\frac{1}{3},\frac{2}{3}\}$,
denotes the fundamental class of the component of the inertia stack
$\cIX$ corresponding to the element $\exp(2 \pi \iu k) \in \mu_3$.
The age of $\unit_k$ is $3k$. 

Let $\HXbar$ denote the Chen--Ruan orbifold cohomology
$H^\bullet_\orb(\cXbar;\C)$.  Let $h \in H^2(\cXbar;\C)$ denote the first
Chern class of the line bundle $\cO(1) \to \cXbar$, and regard
elements of $H^\bullet(\cXbar;\C)$ as orbifold cohomology classes via
the canonical inclusion of $\cXbar$ into the inertia stack $\cIXbar$.
We fix the basis:
\begin{align*}
  \phi_0 = \unit_0 &&
  \phi_1 = h &&
  \phi_2 = h^2 &&
  \phi_3 = h^3 &&
  \phi_4 = \unit_{\frac{1}{3}} &&  
  \phi_5 = \unit_{\frac{2}{3}} 
\end{align*}
for $\HXbar$, where $\unit_0$, $\unit_{\frac{1}{3}}$,
$\unit_{\frac{2}{3}}$ denote the fundamental classes of the components
of the inertia stack $\cIXbar$, ordered so that the age of $\unit_k$
is $3k$.
The orbifold Poincar\'{e} pairing on $\cXbar$ satisfies
\[
(1,h^3)= (h,h^2) = \left(\unit_{\frac{1}{3}},\unit_{\frac{2}{3}}\right)=\frac{1}{3} 
\]
and that all other pairings among basis elements are zero.

Let $\HY = H^\bullet(Y;\C)$.  Let $\unit \in \HY$ denote the unit
class, let $\pi\colon Y \to \Proj^2$ denote the projection, and let $h \in
H^2(Y;\C)$ denote the first Chern class of the line bundle $\pi^\star
\cO(1) \to Y$.  We fix the basis:
\begin{align*}
  \phi_0 = \unit &&
  \phi_1 = h &&
  \phi_2 = h^2
\end{align*}
for $\HY$. 

Let $\HYbar = H^\bullet(\Ybar;\C)$.  Let $h_1, h_2 \in H^2(\Ybar)$ be
such that, regarding $\Ybar$ as the projective compactification of the
line bundle $\cO(-3) \to \Proj^2$, the zero section
is Poincar\'e dual to $h_2-3h_1$, the infinity section is dual to
$h_2$ and the fiber is dual to $h_1$.  
With these conventions, $h_1$ and $h_2$ are rays of the
K\"ahler cone for $\Ybar$.  We fix the basis:
\begin{align*}
  \phi_0 = 1 &&
  \phi_1 = h_1 &&
  \phi_2 = h_2 &&
  \phi_3 = h_1^2 &&
  \phi_4 = h_1(h_2-3h_1) &&
  \phi_5 = h_1^2 h_2
\end{align*}
for $\HYbar$.

\subsection{Gromov--Witten Invariants}
\label{sec:GW}

Let $X$ denote one of $\cX$, $\cXbar$, $Y$, $\Ybar$.  
Let $X_{g,n,d}$ denote the moduli space of $n$-pointed
genus-$g$ stable maps to $X$ of degree $d \in H_2(X;\Q)$.  If $X$ is a
smooth algebraic variety (so $X = Y$ or $X = \Ybar$) then there are
evaluation maps:
\begin{align*}
  & \ev_k \colon X_{g,n,d} \to X
  & k \in \{1,2,\ldots,n\} \\
  \intertext{If $X$ is an orbifold (so $X = \cX$ or $X = \cXbar$) then there are
    evaluation maps to the rigidified cyclotomic inertia stack:}
  & \ev_k \colon X_{g,n,d} \to \overline{\cI}(X)
  & k \in \{1,2,\ldots,n\} \\
  \intertext{and a canonical isomorphism $H^\bullet(\overline{\cI} X;\Q) \cong H^\bullet(\cI X;\Q)$, so we get cohomological pullbacks}
  & \ev_k^\star \colon H_X \to H^\bullet(X_{g,n,d};\C)
  & k \in \{1,2,\ldots,n\} 
\end{align*}
that behave like the pullbacks via evaluation maps: see
\cite{AGV} or \cite{CCLT}*{\S2.2.2}.  Write:
\begin{align}
  \label{eq:primary_correlator}
  \corr{\alpha_1,\ldots,\alpha_n}^{X}_{g,n,d}
  =
  \int_{[X_{g,n,d}]^{\text{vir}}}
  \prod_{k=1}^{k=n} \ev_k^\star(\alpha_k) 
\end{align}
where $\alpha_1,\ldots,\alpha_n \in H_X$; the integral denotes cap product with
the virtual fundamental class \citelist{\cite{Behrend--Fantechi} 
\cite{Li--Tian}} followed by push-forward (in homology) along the
map from $X_{g,n,d}$ to a point; 
if $X$ is non-compact (i.e.~$X=\cX$ or $X=Y$), 
we require that $d\neq 0$ or that at least one of the classes 
$\alpha_1,\dots,\alpha_n$ has a compact support, 
so that the integral \eqref{eq:primary_correlator} is well-defined\footnote
{Here we use the property that the evaluation maps for $\cX$ and $Y$ are proper; this will also appear in \S\ref{subsec:QC}.}. 
The right-hand
side of \eqref{eq:primary_correlator} is a rational number 
when $\alpha_1,\dots,\alpha_n$ are rational, called a
\emph{Gromov--Witten invariant} of $X$.

Let $\psi_1,\ldots,\psi_n \in H^2\big(X_{g,n,d};\Q\big)$ denote the
universal cotangent line classes \cite{AGV}*{\S8.3}.  Write:
\begin{align}
  \label{eq:descendant_correlator}
  \corr{\alpha_1 \psi_1^{i_1},\ldots, \alpha_n
    \psi_n^{i_n}}^{X}_{g,n,d}
  =
  \int_{[X_{g,n,d}]^{\text{vir}}}
  \prod_{k=1}^{k=n} \ev_k^\star(\alpha_k) \cup \psi_k^{i_k}
\end{align}
where $\alpha_1,\ldots, \alpha_n \in H_X$; $i_1, \ldots, i_n$ are non-negative
integers; the integral denotes cap product with the virtual
fundamental class followed by push-forward to a point; 
and as before we insist that $d \ne 0$ or that one of $\alpha_1,\dots,\alpha_n$ 
has compact support.  
The right-hand side
of \eqref{eq:descendant_correlator} is a rational number 
when $\alpha_1,\dots,\alpha_n$ are rational, called a
\emph{gravitational descendant} of $X$.

Consider now the morphism $p_m \colon X_{g,m+n,d} \to \cMbar_{g,m}$
that forgets the map, forgets the last $n$ marked points, forgets any
stack structure at the marked points (if $X$ is an orbifold), and then
stabilises the resulting prestable curve.  Let $\psi_{m|i} \in
H^2(X_{g,n+m,d};\Q)$ denote the pullback along $p_m$ of the $i$th
universal cotangent line class on $\cMbar_{g,m}$.  Write:
\begin{multline}
  \label{eq:ancestor_correlator}
  \corr{\alpha_1 \bar{\psi}_1^{i_1},\ldots, \alpha_m
    \bar{\psi}^{i_m}:\beta_1,\ldots,\beta_n}^{X}_{g,m+n,d} \\
  =
  \int_{[X_{g,m+n,d}]^{\text{vir}}}
  \prod_{k=1}^{k=m} 
  \Big(\ev_k^\star(\alpha_k) \cup \psi_{m|k}^{i_k}\Big)
  \cdot
  \prod_{l=m+1}^{l=m+n} \ev_{l}^\star (\beta_{l-m})  
\end{multline}
where $\alpha_1,\ldots,\alpha_m \in H_X$; 
$\beta_1,\ldots,\beta_n \in H_X$;
$i_1,\ldots,i_m$ are non-negative integers; the integral denotes cap
product with the virtual fundamental class followed by push-forward to
a point; and as before we insist that $d \ne 0$ or that one of $\alpha_1,\dots,\alpha_n$ 
has compact support.
We insist also that $m \geq 3$ if $g=0$ and that $m \geq 1$ if $g=1$,
so that the map $p_m$ is well-defined.  The right-hand side of
\eqref{eq:ancestor_correlator} is a rational number, called an
\emph{ancestor invariant} of $X$.

\subsection{Gromov--Witten Potentials}

Let $X$ denote one of $\cX$, $\cXbar$, $Y$, $\Ybar$.  Let $r$ be the
rank of $H_2(X)$.  In \S\ref{sec:bases} we fixed a basis
$\phi_0,\ldots,\phi_N$ for $H_X$ such that $\phi_1,\ldots,\phi_r$ is a
nef basis for $H^2(X;\C) \subset H_X$.  For $d \in H_2(X;\Q)$, write:
\[
Q^d = Q_1^{d_1} \cdots Q_r^{d_r}
\]
where $d_i = d\cdot \phi_i$.  Let $t^0,\ldots,t^N$ be the co-ordinates
on $H_X$ defined by the basis $\phi_0,\ldots,\phi_N$, so that 
$t \in H_X$ satisfies $t = t^0 \phi_0 + \ldots + t^N \phi_N$.  
The \emph{genus-$g$ Gromov--Witten potential} is:
\begin{equation} 
\label{eq:GW_potential} 
F^g_X = \sum_{d \in \NE(X)} 
\sum_{n = 0}^\infty \frac{Q^d}{n!}
\corr{\vphantom{\big\vert}t, \ldots,t}^X_{g,n,d}
\end{equation} 
where the first sum is over the set $\NE(X)$ of degrees of effective
curves in $X$.  This is a generating function for genus-$g$
Gromov--Witten invariants.  The genus-$g$ Gromov--Witten potential is
\emph{a priori} a formal power series in variables $Q_i$ and $t^j$:
\begin{align*}
  & F^g_X \in \C[\![Q_1,\ldots,Q_r]\!][\![t^0,\ldots,t^N]\!] \\
  \intertext{but the Divisor Equation \cite{AGV}*{Theorem~8.3.1}
    implies that:}
  & F^g_X \in \C[\![t^0,Q_1 e^{t^1},\ldots,Q_r e^{t^r},t^{r+1}\ldots,t^N]\!] \\
  \intertext{It thus makes sense to set $Q_1 = \cdots = Q_r = 1$, obtaining an
    element:}
  & \left. F^g_X \right|_{Q_1 = \cdots = Q_r = 1} \in \C[\![t^0,e^{t^1},\ldots,e^{t^r},t^{r+1}\ldots,t^N]\!]
\end{align*}
There is an open region $\VA{X} \subset H_X$ of the form:
\begin{align}
  \label{eq:LRL_nbhd}
  \begin{cases}
    |t^i| < \epsilon_i & \text{$i=0$ or $r<i\leq N$} \\
    \Re t^i <-M_i & 1 \leq i \leq r
  \end{cases}
\end{align}
such that all of the power series $\left.F^g_X \right|_{Q_1 = \cdots =
  Q_r = 1}$, $g \geq 0$, converge on $\VA{X}$
\cite{Coates--Iritani:convergence}. In the rest of this paper we will
write $F^g_X$ for the analytic function $\left. F^g_X \right|_{Q_1 =
  \cdots = Q_r = 1}$ defined on $\VA{X}$, so that:
\begin{align*}
  & F^g_X(t) = \sum_{d \in \NE(X)} \sum_{n = 0}^\infty 
  \frac{e^{d\cdot t^{(2)}}}{n!}  
  \corr{\vphantom{\big\vert}t', \ldots,t'}^X_{g,n,d}
  & t \in \VA{X}
\end{align*}
where we write $t^{(2)} = \sum_{i=1}^r t^i \phi_i$ for  
the degree two part of $t$ and $t' = t -t^{(2)}$. 
We refer to the limit point:
\begin{align*}
  \begin{cases}
    t^i =0  & \text{$i=0$ or $r<i\leq N$} \\
    \Re t^i \to -\infty & 1 \leq i \leq r
  \end{cases}
\end{align*}
as the \emph{large-radius limit point} for $X$.  

\subsection{Quantum Cohomology} 
\label{subsec:QC} 
Let $X$ be one of $\cX$, $\cXbar$, $Y$, $\Ybar$. 
When $X$ is compact, i.e.~$X$ is either $\Ybar$ or $\cXbar$, 
we define the quantum product $*$ on $H_X$ by the formula: 
\begin{equation} 
\label{eq:quantum_product} 
(\phi_i * \phi_j, \phi_k)_X = 
\parfrac{^3 F^0_X}{t^i \partial t^j\partial t^k} (t) 
\bigg|_{Q_1 = \cdots = Q_r= 1} 
\end{equation}  
where the pairing $(\cdot,\cdot)_X$ on the left-hand side is the 
(orbifold) Poincar\'{e} pairing. 
The product $*$ defines a family of commutative ring structures 
on $H_X$ parameterized by $t \in \VA{X}$, called the 
\emph{quantum cohomology} of $X$. 
When $X$ is not compact, i.e.~$X$ is either $\cX$ or $Y$, 
we define the quantum product by using the push-forward by 
the last marked point 
\[
\phi_i * \phi_j = 
\sum_{d\in \NE(X)} \sum_{n=0}^\infty 
\frac{e^{d \cdot t^{(2)}} }{n!} 
(\ev_{n+3})_{\star}\left(\ev_1^\star (\phi_i) \ev_2^\star (\phi_j) \prod_{k=1}^n 
\ev_{k+2}^\star (t') \cap [X_{0,n+3,d}]^{\rm vir}\right) 
\]
Here we write $t^{(2)} = \sum_{i=1}^r t^i \phi_i$ 
for the degree two part of $t$ and $t' = t - t^{(2)}$. 
This makes sense because the evaluation map $\ev_{n+3}$ is proper. 
The quantum products for $\cX$ and $Y$ can be 
obtained as the limits of the quantum products for $\cXbar$ 
and $\Ybar$ respectively. 
We have 
\begin{align*} 
\lim_{\Re(t^1) \to -\infty} \iota^\star(\phi_i *_t^{\cXbar} \phi_j ) 
& = \iota^\star (\phi_i) *_{\iota^\star (t)}^{\cX} \iota^\star( \phi_j )\\ 
\lim_{\Re(t^2) \to -\infty} 
\iota^\star(\phi_i *_t^{\Ybar} \phi_j ) 
& = \iota^\star(\phi_i)*_{\iota^\star( t)}^{Y} \iota^\star(\phi_j) 
\end{align*} 
where $\iota$ denotes the natural inclusion of $\cX$ into $\cXbar$ 
or $Y$ into $\Ybar$ and $*^X_t$ denotes the quantum product 
of $X$ at the parameter $t$. 
In particular, the quantum products for $\cX$ and $Y$ are also convergent 
on regions of the form \eqref{eq:LRL_nbhd}. 

\subsection{Dubrovin Connection, Fundamental Solution and 
$J$-Function} 
Let $X$ be one of $\cX$, $\cXbar$, $Y$, $\Ybar$. 
Write $c_1(X) = \rho^1 \phi_1 + \cdots
  + \rho^r \phi_r$.  
Define the \emph{Euler vector field} $E$ on $H_X$ by: 
  \begin{equation}
    \label{eq:Euler_field}
    E = {t^0 \parfrac{}{t^0}} + \sum_{i=1}^r \rho^i \parfrac{}{t^i} +
    \sum_{i=r+1}^N \big(1 - \textstyle\frac{1}{2}{\deg \phi_i} \big) 
    t^i \parfrac{}{t^i}
  \end{equation}
  and the \emph{grading operator} $\mu \colon H_X \to H_X$ by:
  \[
  \mu(\phi_i) = \left(\textstyle\frac{1}{2} \deg \phi_i -
    \textstyle\frac{1}{2} \dim_{\C} X\right) \phi_i
  \]
  Let $\pi \colon \VA{X} \times \C \to \VA{X}$ 
denote projection to the first factor. 
The \emph{Dubrovin connection}\footnote
{The sign of $z$ is often flipped in the literature: see e.g.~\cite{Iritani:integral}.} 
is a meromorphic flat connection $\nabla$ on 
$\pi^\star \big(T\VA{X}\big) \cong 
H_X\times (\VA{X}\times \C)$, 
defined by: 
\begin{align} 
\label{eq:Dubrovin_connection} 
\begin{split} 
\nabla_{\parfrac{}{t^i}} &=  \parfrac{}{t^i} 
- \frac{1}{z}\big(\phi_i {\ast}\big)
\qquad \qquad 0 \leq i \leq N \\
\nabla_{z \parfrac{}{z}} &=  z \parfrac{}{z} + \frac{1}{z} 
\big({E\ast}\big) + \mu 
\qquad  \text{where $z$ is the co-ordinate on $\C$} 
\end{split} 
\end{align}
The Dubrovin connection defines the A-model TEP structure 
in Example \ref{ex:AmodelTEP} below. 

The Dubrovin connection admits the following fundamental 
solution $L(t, -z)$ 
\citelist{\cite{Givental:equivariantGW}*{Corollary 6.2}
\cite{Iritani:integral}*{Proposition 2.4}}. 
Suppose that $X$ is compact, i.e.~$X$ is either $\cXbar$ or $\Ybar$. 
Then the fundamental solution is an $\End(H_X)$-valued function 
of $(t,z) \in \VA{X}\times \C^\times$ defined by 
\begin{equation}
\label{eq:GW_fundsol}
L(t,-z) \alpha = e^{t^{(2)}/z}  
\alpha + \sum_{\substack{
d\in \NE(X), n \ge 0 \\ (n,d) \neq (0,0)}} \sum_{i=0}^N 
\frac{e^{d\cdot t^{(2)}}}{n!} 
\corr{\phi^i,t',\dots,t', \frac{e^
{t^{(2)}/z}\alpha}{z-\psi}}_{0,n+2,d} \phi_i 
\end{equation} 
which satisfies the differential equation: 
\begin{equation} 
\label{eq:L_diffeq}
\nabla_{\parfrac{}{t^i}} \left( L (t,-z) \alpha \right)= 0 \qquad 
i=0,\dots, N 
\end{equation} 
and preserves the (orbifold) Poincar\'{e} pairing 
\begin{align} 
\label{eq:L_unitarity}
(L(t,- z) \alpha, L(t,z)\beta)_X = (\alpha,\beta)_X && \text{for all $\alpha,\beta \in H_X$.} 
\end{align}
Givental's $J$-function is defined to be 
\begin{align} 
\label{eq:J_function} 
\begin{split} 
J(t, -z) & = L(t,-z)^{-1} \unit \\
& = e^{-t^{(2)}/z}\left( \unit - \frac{t'}{z} + 
\sum_{\substack{d\in \NE(X), n \ge 0 \\ (n,d) \neq (0,0), (1,0)}}
\sum_{i=0}^N \frac{e^{d\cdot t^{(2)}}}{n!} 
\corr{t',\dots,t', \frac{\phi^i}{z(z+\psi)}}_{0,n+1,d} \phi_i\right). 
\end{split} 
\end{align} 
When $X$ is non-compact, the fundamental solution $L(t,z)$ 
and the $J$-function $J(t,z)$ are defined similarly, replacing $\{\phi^i\}$ above with the dual basis 
of $\{\phi_i\}$ in the compactly-supported cohomology group.  See \cite{Iritani:Ruan}*{\S 2.5} for more details. 

\subsection{Descendant Potentials and Ancestor Potentials} \label{subsec:descendant_and_ancestor}

Let $X$ be one of $\cX$, $\cXbar$, $Y$, $\Ybar$.  Let
$\phi_0,\ldots,\phi_N$ be the basis for $H_X$ defined in
\S\ref{sec:bases}.  Let $(t_0,t_1,t_2,\ldots)$ be an infinite sequence of
elements of $H_X$, and write $t_n= t_n^0 \phi_0 + \cdots + t_n^N
\phi_N$.  Set $\bt(z) = \sum_{n=0}^\infty t_n z^n 
\in H_X[\![z]\!]$. 
The \emph{genus-$g$ descendant potential} of $X$ is: 
\begin{equation} 
\label{eq:genus_g_descendant_potential}
\cF^g_X = \sum_{d \in \NE(X)} \sum_{n = 0}^{\infty} 
\frac{Q^d}{n!} \corr{\bt(\psi_1),\dots,\bt(\psi_n)}_{g,n,d}^X. 
\end{equation} 
This is a formal power series\footnote{\label{footnote:powerseries} 
See \cite{Coates--Iritani:convergence}*{\S2.5} for a precise 
statement.}  in variables $Q_i$, $1 \leq i \leq r$, and
$t^j_n$, $0 \leq j \leq N$, $0 \leq n < \infty$; it is a
generating function for genus-$g$ gravitational descendants of $X$.
The \emph{total descendant potential} is:
\begin{equation*} 
\cZ_X = \exp \Bigg(\sum_{g=0}^\infty \hbar^{g-1} \cF^g_X\Bigg). 
\end{equation*}
This is a formal power series\footnotemark[\value{footnote}] in
variables $\hbar$, $\hbar^{-1}$, $Q_i$, $1 \leq i \leq r$, and
$t^j_n$, $0 \leq j \leq N$, $0 \leq n < \infty$; it is a
generating function for all gravitational descendants of $X$. 

Let $t \in H_X$, let $(a_0, a_1, a_2, \ldots)$ be an infinite sequence
of elements of $H_X$, and write:
\begin{align*}
  t = t^0 \phi_0 + \cdots + t^N \phi_N
  &&
  a_n = a_n^0 \phi_0 + \cdots + a_n^N \phi_N  
  && 
  \ba(z) = \sum_{i=0}^\infty a_n z^n \in H_X[\![z]\!] 
\end{align*}
The \emph{genus-$g$ ancestor potential} of $X$ is:
\begin{equation}
  \label{eq:ancestor}
  \bar{\cF}^g_{X} = 
  \sum_{d \in \NE(X)}
  \sum_{n=0}^\infty
  \sum_{m=0}^\infty
  \frac{Q^d}{n!m!}
 \corr{\ba(\bar\psi_1),\ldots, \ba(\bar\psi_m): 
    \overbrace{t,\ldots, t}^n}^X_{g,m+n,d}
\end{equation}
This is a formal power series\footnotemark[\value{footnote}] in
variables $Q_i$, $1 \leq i \leq r$; $t^j$, $0 \leq j \leq
N$; and $a^k_n$, $0 \leq k \leq N$, $0 \leq n < \infty$.  It
is a generating function for genus-$g$ ancestor invariants of~$X$.
The \emph{total ancestor potential} is:
\begin{equation*}
  \cA_{X} = \exp \Bigg(\sum_{g=0}^\infty \hbar^{g-1} \bar{\cF}^g_{X}\Bigg)
\end{equation*}
This is a formal power series\footnotemark[\value{footnote}] in
variables $\hbar$; $\hbar^{-1}$; $Q_i$, $1 \leq i \leq r$; 
$t^j$,
$0 \leq j \leq N$; and $a^k_n$, $0 \leq k \leq N$, 
$0 \leq n < \infty$.  It is a generating function for all ancestor
invariants of~$X$.  

\subsection{TEP Structures and \logDTEP Structures} 

\begin{definition}
  \label{def:TEP} 
  Let $\cM$ be a complex manifold.  Let $z$ denote the standard
  co-ordinate on $\C$, let $(-)\colon \cM \times \C \to \cM \times
  \C$ be the map sending $(t, z)$ to $(t,-z)$, and let $\pi \colon
  \cM\times \C \to \cM$ be the projection.  A \emph{TEP structure}
  $\big(\cF, \nabla, (\cdot,\cdot)_{\cF}\big)$ with base $\cM$ consists of a
  locally free $\cO_{\cM\times \C}$-module $\cF$ of rank $N+1$, a
  meromorphic flat connection:
  \[ 
  \nabla \colon \cF \to 
  \big(\pi^\star\Omega_{\cM}^1 \oplus \cO_{\cM\times \C} z^{-1}dz\big)
  \otimes_{\cO_{\cM}} \cF(\cM\times \{0\}) 
  \]  
  and a non-degenerate pairing:
  \[
  (\cdot,\cdot)_{\cF} \colon 
  (-)^\star \cF \otimes_{\cO_{\cM\times \C}} \cF \to \cO_{\cM\times \C}
  \]
  which satisfies:
  \begin{align}
\label{eq:TEP-properties} 
    \begin{split} 
      \big((-)^\star s_1,s_2\big)_{\cF} 
      & = (-)^\star \big((-)^\star s_2, s_1\big)_{\cF}   \\ 
      d \big((-)^\star s_1, s_2\big)_{\cF}  
      & = \big((-)^\star \nabla s_1, s_2\big)_{\cF} 
      + \big((-)^\star s_1,\nabla s_2\big)_{\cF}  
    \end{split} 
  \end{align}
  for local sections $s_1\in \cF((-)^\star V)$, $s_2 \in \cF(V)$, 
  where $V \subset \cM\times \C$ is an open subset. 
Here $\cF(\cM\times \{0\})$ denotes the
  sheaf of sections of $\cF$ with poles of order at most $1$ along the
  divisor $\cM\times \{0\} \subset \cM \times \C$.
\end{definition} 

\begin{definition} 
\label{def:logDTEP}
Let $D \subset \cM$ be a normal crossing divisor. 
A \emph{\logDTEP structure} with base $(\cM,D)$ 
is a tuple $(\cF, \nabla, (\cdot,\cdot)_{\cF})$ 
consisting of a locally free sheaf $\cF$ of rank $N+1$ over $\cM$, 
a meromorphic flat connection $\nabla$ 
\[
\nabla \colon \cF \to \Omega^1_{\cM\times \C}(\log Z) 
\otimes_{\cO_{\cM\times \C}}
\cF(\cM\times \{0\}) 
\]
where $Z = (\cM \times \{0\}) \cup (D \times \C)$ is a normal crossing 
divisor in $\cM \times \C$,  
and a non-degenerate pairing 
\[
(\cdot,\cdot)_{\cF} \colon 
(-)^\star \cF \otimes_{\cO_{\cM\times \C}} \cF \to \cO_{\cM\times \C}
\]
which satisfies the same properties \eqref{eq:TEP-properties} as 
TEP structure.  
Here $\Omega^1_{\cM\times \C}(\log Z)$ 
denotes the sheaf of differential forms with 
logarithmic singularities along $Z$. 
\end{definition} 

\begin{remark} 
  The notion of TEP structure is due to Hertling \cite{Hertling:ttstar}: `TEP' stands
  for Twister, Extension, and Pairing.  This gives us a
  co-ordinate-free language in which to discuss mirror symmetry.  More
  precisely, a TEP structure in our sense is what Hertling would call
  a TEP($0$)-structure; for us all TEP structures have weight zero.  A
  \logDTEP structure is a TEP structure with logarithmic
  singularities; cf.~Reichelt's notion of \logDtrTLEP structure
  \cite{Reichelt}*{Definition~1.8}.  When $D=\varnothing$, a \logDTEP structure is the
  same thing as a TEP structure.
\end{remark} 

\begin{definition} 
\label{def:miniversal} 
A \logDTEP structure $(\cF,\nabla,(\cdot,\cdot))$ with base $(\cM,D)$ 
is said to be \emph{miniversal} if for every point $x\in \cM$, there 
exists a section $\xi$ of $\cF|_{z=0}$ on a neighbourhood $U_x$ 
of $x$ such that the map 
\begin{align*} 
\Theta_\cM(\log D) & \longrightarrow \cF|_{z=0} \\
X & \longmapsto z \nabla_X \xi
\end{align*} 
is an isomorphism over $U_x$. Here $\Theta_\cM(\log D)$ denotes 
the sheaf of logarithmic vector fields, that is, the subsheaf of $\Theta_\cM$ 
consisting of vector fields tangent to the divisor $D$. (When $\cM$ has an orbifold singularity at $x$, we take 
$U_x$ above to be a uniformizing chart near $x$.) 
By taking $D = \varnothing$, this also defines miniversality for TEP structures. 
\end{definition} 

\begin{example}[A-model TEP structure] 
  \label{ex:AmodelTEP}
An important class of examples of miniversal TEP structures is
provided by the quantum cohomology of a smooth algebraic variety 
or orbifold $X$.  We will need this only when $X$ is one of $\cX$,
$\cXbar$, $Y$, $\Ybar$, but the definition here makes sense whenever the 
genus-zero Gromov--Witten potential $F^0_X$ defines an analytic 
function on a region $\VA{X} \subset H_X$ of the form 
\eqref{eq:LRL_nbhd}.  
The Dubrovin connection \eqref{eq:Dubrovin_connection} 
defines a TEP structure $(\tcFA{X}, 
\nablaA{X}, \pairingA{X})$ with base $\VA{X}$, 
where 
\begin{itemize} 
\item $\tcFA{X}$ is the locally free sheaf corresponding to the 
trivial $H_X$-bundle over $\VA{X} \times \C$; 
\item $\nablaA{X}$ is the Dubrovin connection; 
\item $\pairingA{X}$ is the pairing induced by the orbifold 
Poincar\'e pairing. 
\end{itemize} 
When $X$ is a smooth variety, the Divisor Equation 
implies that the Dubrovin connection descends 
to $\cMA{X}^\times  \times \C$, where
\[
\cMA{X}^\times := \VA{X}/2\pi \iu H^2(X,\Z).     
\]
and $2\pi\iu H^2(X,\Z)$ acts on $\VA{X}$ by translation. 
When $X$ is an orbifold, and we interpret $H^2(X,\Z)$ 
as the sheaf cohomology\footnote{An element of $H^2(X,\Z)$ 
corresponds to an isomorphism class of 
a topological orbi-line bundle on $X$.}
of the topological stack $X$, we again have 
that the Dubrovin connection descends to 
$\cMA{X}^\times\times \C$. 
In this case, $2\pi \iu H^2(X,\Z)$ acts on the vector bundle 
$H_X \times (\VA{X} \times \C) \to (\VA{X}\times \C)$ 
by the so-called \emph{Galois action}, which is also nontrivial 
in the fibre direction. 
We refer the reader to 
\cite[Proposition 2.3]{Iritani:integral} for details; see also Example~\ref{ex:log_TEP_A}. 
The TEP structure $(\tcFA{X},\nablaA{X}, \pairingA{X})$ described above descends, via the Galois action, to a TEP structure $(\cFA{X}^\times, \nablaA{X}, \pairingA{X})$ with base $\cMA{X}^\times$.  This is the
\emph{A-model TEP structure}. 
\end{example}

\begin{example}[A-model \logDTEP structure] 
\label{ex:log_TEP_A} 
The quotient space $\cMA{X}^\times$ 
has a natural partial compactification defined by our choice of nef basis for
$H^2(X)$; this compactification, which we denote by 
$\cMA{X}$, adds a normal crossing divisor $\DA{X}$ at infinity. 
The A-model TEP structure extends to the partial 
compactification to give a miniversal \logDTEP structure 
\[
\left( \cFA{X}, \nablaA{X}, \pairingA{X}\right) 
\]
with base $\big(\cMA{X},\DA{X}\big)$, 
called the \emph{A-model \logDTEP structure}: 
see \cite{Iritani:Ruan}*{\S 2.2}. 
Concretely, this amounts to the following.  
Suppose first that $X$ is a smooth variety. 
Recall that we have fixed 
a basis $\phi_0,\ldots,\phi_N$ for $H_X$ such that $\phi_0 \in H^0_X$
is the unit class and that $\phi_1,\ldots,\phi_r$ is a nef basis for $H^2(X)$ 
in \S\ref{sec:bases}.  
This defines co-ordinates $t^0,\ldots,t^N$ on $H_X$.  Set
$q_i = e^{t^i}$, $1 \leq i \leq r$, and consider $\C^{N+1} = \C \times
\C^r \times \C^{N-r}$ with co-ordinates
$(t^0,q_1,\ldots,q_r,t^{r+1},\ldots,t^N)$.  
The partial compactification $\cMA{X}$ 
is a neighbourhood of the origin in $\C^{N+1}$. 
The locally free sheaf $\cFA{X}$ is given by the trivial 
$H_X$-bundle over $\cMA{X}\times \C$. 
The divisor $\DA{X}$ is the locus $q_1q_2\cdots q_r =
0$, the pairing is as in Example~\ref{ex:AmodelTEP}, and the
meromorphic flat connection is:
\begin{eqnarray}
\label{eq:log_Dubrovin_connection} 
\begin{split} 
\nabla_{\parfrac{}{t^i}} & = \textstyle \parfrac{}{t^i} 
  - \frac{1}{z}\big(\phi_i {\ast}\big)
&& \text{$i=0$ or $r < i \leq N$} \\
\nabla_{q_i \parfrac{}{q_i}} & = \textstyle q_i \parfrac{}{q_i} 
  - \frac{1}{z}\big(\phi_i {\ast}\big)
&& \text{$1 \leq i \leq r$} \\
\nabla_{z \parfrac{}{z}} & = \textstyle z \parfrac{}{z} + \frac{1}{z} 
\big({E\ast}\big) + \mu \qquad 
&& 
\end{split} 
\end{eqnarray}
where (as before) $z$ is the standard co-ordinate on $\C$ and $E$ is
the Euler vector field:
\[
 E = {t^0 \parfrac{}{t^0}} + \sum_{i=1}^r \rho^i q_i\parfrac{}{q_i} +
    \sum_{i=r+1}^N \big(1 - \textstyle\frac{1}{2}{\deg \phi_i} \big) 
    \displaystyle 
    t^i \parfrac{}{t^i}
\]
When $X$ is an orbifold, $\cMA{X}$ has orbifold 
singularities along the divisor $\DA{X}$ 
and $\cFA{X}$ is defined as an orbi-sheaf over $\cMA{X}$. 
We shall describe the structure explicitly for $X= \cXbar
=\Proj(1,1,1,3)$ (and this is the only case we need). 
In this case we have co-ordinates $t^0,\dots,t^5$ on $H_X$ 
dual to the basis 
$\unit_0,h,h^2,h^3,\unit_{\frac{1}{3}}, \unit_{\frac{2}{3}}$ 
from \S\ref{sec:bases}. 
Set $q = e^{t^1}$ and 
consider the space $\C^6$ with co-ordinates 
$(t^0, \sqrt[3]{q} = e^{t^1/3}, t^2,t^3,t^4,t^5)$. 
By the Divisor Equation, 
the Dubrovin connection \eqref{eq:Dubrovin_connection} 
for $X= \cXbar$ 
induces a meromorphic flat connection of the form 
\eqref{eq:log_Dubrovin_connection}
on the trivial $H_X$-bundle 
over $V \times \C$, where $V$ is a small open neighbourhood 
of the origin in the $\C^6$. 
Let $\mu_3$ act\footnote{
The group $\mu_3$ here arises as $H^2(X,\Z)/H^2(|X|,\Z)$, where 
$|X|$ denotes the coarse moduli space of $X=\cXbar$. } 
on the trivial bundle 
$H_X \times (V \times \C) \to (V \times \C)$ 
by 
\begin{align*} 
\xi \cdot 
\left(\alpha, (t^0, \sqrt[3]{q}, t^2,t^3,t^4, t^5), z\right) 
= 
\left(G(\xi) \alpha, (t^0, \xi^{-1} \sqrt[3]{q}, t^2, t^3, \xi t^4, 
\xi^{-1} t^5),z \right)  
\end{align*} 
where $G(\xi)$ is the endomorphism of $H_X$ represented by 
the matrix 
\[
\left( 
\begin{array}{c|cc} 
I & & \\  \hline 
& \xi & 0 \\ 
& 0 & \xi^{-1} 
\end{array} 
\right) 
\] 
in the basis $\unit_0,h,h^2,h^3,\unit_{\frac{1}{3}}, \unit_{\frac{2}{3}}$ and 
$I$ is the identity matrix of size $4$. 
The $\mu_3$-action here preserves the Dubrovin connection and the 
orbifold Poincar\'e pairing. The base of the A-model \logDTEP structure 
is given by:
\[
(\cMA{X},\DA{X}) = \left(
\left[V/\mu_3\right], \left[\{\sqrt[3]{q}=0\}/\mu_3\right]\right)
\]
$\cFA{X}$ is the orbi-sheaf corresponding to the orbi-vector 
bundle:
\[
[(H_X \times (V\times \C))/\mu_3] \to [V/\mu_3]  \times \C
\] 
$\nablaA{X}$ is the meromorphic flat connection 
induced by the Dubrovin connection, 
and $\pairingA{X}$ is the pairing on $\cFA{X}$ 
induced by the orbifold Poincar\'e pairing. 
\end{example} 

\begin{notation} 
As found in the notation 
$\cMA{X}^\times = \cMA{X}\setminus \DA{X}$, 
$\cFA{X}^\times$, we often put a cross ``$\times$'' to denote 
spaces (or sheaves) obtained by deleting normal crossing divisors 
from other spaces (or by restricting to the complement of these divisors). 
\end{notation} 

\subsection{From TEP Structures to trTLEP Structures via  
Opposite Modules}
\label{sec:trTLEP}

Hertling has defined the notion of a trTLEP structure with
base $\cM$.  This consists of a TEP structure $\big(\cF, \nabla,
(\cdot,\cdot)_{\cF}\big)$ with base $\cM$ together with certain
extension data for $\cF$,~$\nabla$,~and $(\cdot,\cdot)_{\cF}$ across
$\cM \times \{\infty\} \subset \cM \times\Proj^1$. 
We encode these extension data using a subsheaf of 
$\pi_\star(\cF|_{\cM\times \C^\times})$ of semi-infinite rank called an \emph{opposite module}
(Definition~\ref{def:opposite}). 
A TEP structure equipped with an opposite module is 
equivalent to a trTLEP structure, so the reader who prefers sheaves of finite rank 
can translate statements about opposite modules into statements about trTLEP structures. 
We will use both languages since 
opposite modules fit well with Givental quantization.

\begin{definition}[Hertling \cite{Hertling:ttstar}*{\S 5.2}]
  \label{def:logDtrTLEP}
  Let $\cM$ be a complex manifold and let $(-)\colon \cM \times
  \Proj^1 \to \cM \times \Proj^1$ be the map sending $(t, z)$ to
  $(t,-z)$.  A trTLEP structure $\big(\cE,\nabla,(\cdot,\cdot)_{\cE}
  \big)$ with base $\cM$ consists of:
  \begin{itemize}
  \item a locally free sheaf $\cE$ on $\cM \times \Proj^1$ such that 
  $\cE|_{\{y\}\times \Proj^1}$ is a free $\cO_{\Proj^1}$-module 
  for each $y\in \cM$; 
  \item a meromorphic flat connection $\nabla$ on $\cE$ with poles 
  along $Z= \cM\times \{0\} \cup \cM\times \{\infty\}$: 
  \[
  \nabla \colon \cE \to \Omega^1_{\cM\times \Proj^1}(\log Z) 
  \otimes \cE(\cM\times \{0\}) ;
  \]
  \item a non-degenerate pairing:
  \[
  (\cdot,\cdot)_{\cE} \colon (-)^\star \cE \otimes_{\cO_{\cM\times \Proj^1}} \cE
  \to \cO_{\cM\times \Proj^1}
  \]
  which satisfies:
  \begin{align*} 
    \begin{split} 
      \big((-)^\star s_1,s_2\big)_{\cE} 
      & = (-)^\star \big((-)^\star s_2, s_1\big)_{\cE}   \\ 
      d \big((-)^\star s_1, s_2\big)_{\cE}  
      & = \big((-)^\star \nabla s_1, s_2\big)_{\cE} 
      + \big((-)^\star s_1,\nabla s_2\big)_{\cE}  
    \end{split} 
  \end{align*}
  for $s_1,s_2 \in \cE$. 
  \end{itemize}
  Note that $\nabla$ has logarithmic singularities 
  along $\cM\times \{\infty\}$, and that
  the restriction of a trTLEP structure $\big(\cE,\nabla,(\cdot,\cdot)_{\cE}
  \big)$ to $\cM \times \C$ is a TEP structure.
\end{definition}

\begin{remark} 
  The `L' in `trTLEP structure' stands for logarithmic (along $\cM\times\{\infty\})$ and the `tr' stands for trivial (along $\{y\}\times \Proj^1$). 
  Our trTLEP structure is what Hertling would call a trTLEP(0) structure: for
  us all trTLEP structures are of weight zero.
\end{remark}

\begin{notation}
  \label{notation:x}
  Let $\cF$ be a sheaf on $\cM \times \C$.  We write $\cF^\divideontimes$ for
  the restriction $\cF\big|_{\cM \times \C^\times}$. 
\end{notation}

\begin{definition}
  \label{def:symplectic}
  Let $\big(\cF, \nabla, (\cdot,\cdot)_{\cF}\big)$ be a TEP structure
  with base $\cM$.  The pairing $(\cdot,\cdot)_{\cF}$ induces a
  symplectic pairing:
  \begin{align*}
    \Omega: \pi_\star \cF^\divideontimes \otimes_{\cO_\cM}
    \pi_\star \cF^\divideontimes & \longrightarrow \cO_{\cM} \\
    s_1 \otimes s_2 & \longmapsto \Res_{z=0} \big((-)^\star s_1,s_2\big)_{\cF}
    \, dz
  \end{align*}
  The connection $\nabla$ induces an operator 
  \[
  \nabla \colon \pi_\star \cF^\divideontimes \to (\Omega^1_{\cM} 
\oplus \cO_{\cM} dz)  
\otimes_{\cO_{\cM}} \pi_\star \cF^\divideontimes 
  \]
which preserves the symplectic pairing $\Omega$.
\end{definition}

\begin{definition} 
  \label{def:opposite} 
Let $\big(\cF, \nabla, (\cdot,\cdot)_{\cF}\big)$ 
be a TEP structure with base $\cM$.
Recall that $\pi_\star \cF^\divideontimes$ is a 
$\pi_\star (\cO_{\cM \times \C^\times})$-module. 
This contains a locally free $\pi_\star(\cO_{\cM\times \C})$-module 
$\bF := \pi_\star \cF$ as a subsheaf.  
Let $\bP$ be a locally free 
$\pi_\star (\cO_{\cM \times(\aroundinfinity)})$-submodule 
of $\pi_\star \cF^\divideontimes$. 
We say that:
  \begin{enumerate}
  \item $\bP$ is opposite to $\bF$ if $\pi_\star \cF^\divideontimes =
    \bF \oplus \bP$;
  \item $\bP$ is isotropic if $\Omega(s_1,s_2) = 0$ for all $s_1, s_2 \in \bP$;
  \item $\bP$ is parallel if $\nabla_X \bP \subset \bP$ for all $X \in
    T\cM$;
  \item $\bP$ is homogeneous if $\nabla_{z \partial_{z}} \bP \subset \bP$.
  \end{enumerate}
  An \emph{opposite module} for $\big(\cF, \nabla,
  (\cdot,\cdot)_{\cF}\big)$ is a locally free $\pi_\star (\cO_{\cM
    \times (\aroundinfinity)})$-submodule $\bP$ of $\pi_\star
  \cF^\divideontimes$ such that $\bP$ is opposite to $\bF$, isotropic,
  parallel, and homogeneous.
\end{definition} 

\begin{example}[the A-model trTLEP structure and 
canonical opposite module] 
  \label{ex:opposite_A}
  Recall that the A-model TEP structure $\cFA{X}^\times$ 
  with base $\cMA{X}^\times$ is 
  given as the quotient of the trivial $H_X$-bundle 
  over $\VA{X} \times \C$ 
  by the Galois action (see Example~\ref{ex:AmodelTEP}). 
  The Dubrovin connection on the trivial $H_X$-bundle 
  over $\VA{X}\times \C$ extends to 
  the trivial $H_X$-bundle over 
  $\VA{X} \times \Proj^1$ with only logarithmic poles 
  along $\VA{X} \times \{\infty\}$, 
  and yields a trTLEP structure with base $\VA{X}$. 
  This trTLEP structure descends, via the Galois action, to give a trTLEP structure with base $\cMA{X}^\times$ called the \emph{A-model 
    trTLEP structure}.  This is an extension of the A-model 
  TEP structure. 
  
  The corresponding opposite module can be described as follows. 
  Consider the sheaf  
  \[
  \widetilde{\bP}_{\rm A} = 
  z^{-1} H_X \otimes \pi_\star (\cO_{\VA{X}\times (\aroundinfinity)}) 
  \subset H_X\otimes \pi_\star(\cO_{\VA{X}\times \C^\times}) 
  \]
  over $\VA{X}$, where $\pi \colon \VA{X} \times \Proj^1
  \to \VA{X}$ is the projection. 
  The sheaf $\widetilde{\bP}_{\rm A}$ gives an opposite module 
  for the TEP structure $(\tcFA{X},\nablaA{X},\pairingA{X})$ 
  introduced in Example \ref{ex:AmodelTEP}.  
  It descends to an opposite module $\bP_{\rm A}$ 
  of the A-model TEP structure via the Galois action. 
  We call $\bP_{\rm A}$ the \emph{canonical opposite module} 
  of the A-model TEP structure. 
  Alternatively, $z \bP_{\rm A}$ can be described as the push-forward 
  along $\pi$ of the restriction of the A-model trTLEP structure 
  to $\cMA{X}^\times \times (\aroundinfinity)$. 
\end{example}

\begin{remark} 
  The subsheaf $\bF = \pi_\star \cF$ of $\pi_\star \cF^\divideontimes$ in the
  above definition gives a \emph{variation of semi-infinite Hodge
    structure} (VSHS) in the sense of Barannikov \cite{Barannikov}.  It is
  maximally isotropic with respect to $\Omega$ and satisfies the
  Griffiths transversality condition $\nabla_X \bF \subset z^{-1} \bF$ for $X
  \in T\cM$.  It also satisfies $\nabla_{z^2\partial_z} \bF \subset
  \bF$.  See \cite{CIT:wall-crossings, Iritani:Ruan} for an
  exposition.
\end{remark} 

We now recall how an opposite module $\bP$ for a TEP structure
$\big(\cF, \nabla, (\cdot,\cdot)_{\cF}\big)$ with base $\cM$
determines a trTLEP structure with base $\cM$.  To give an extension
of the locally free sheaf $\cF^\divideontimes$ on $\cM \times \C^\times$ to a
locally free sheaf on $\cM \times \C$ is the same thing as to give a
locally free $\pi_\star(\cO_{\cM \times \C})$-submodule $\bF$ of
$\pi_\star \cF^\divideontimes$ such that $\pi_\star \cF^\divideontimes = \bF
\otimes_{\pi_\star (\cO_{\cM \times \C})} \pi_\star(\cO_{\cM \times
  \C^\times})$.  The submodule $\bF$ consists of those sections which
extend holomorphically to $z=0$; in the situation at hand the
extension is given by the TEP structure $\cF$ itself, so $\bF =
\pi_\star \cF$.  To give an extension of $\cF^\divideontimes$ to a locally
free sheaf over $\cM \times (\aroundinfinity)$ is the same thing as to
give a locally free $\pi_\star(\cO_{\cM \times
  (\aroundinfinity)})$-submodule $\bF'$ of $\pi_\star \cF^\divideontimes$ such
that $\pi_\star \cF^\divideontimes = \bF' \otimes_{\pi_\star(\cO_{\cM \times
    (\aroundinfinity)})} \pi_\star \cO_{\cM \times \C^\times}$.  The
submodule $\bF'$ consists of those sections which extend
holomorphically to $z=\infty$; in the situation at hand we take $\bF'
= z \bP$.  Thus the opposite module $\bP$ determines an extension of
the locally free sheaf $\cF$ on $\cM \times \C$ to a locally free
sheaf $\cE$ on $\cM \times \Proj^1$.  The restriction
$\cE|_{\{y\}\times \Proj^1}$ is a free $\cO_{\Proj^1}$-module because
$\bP_y$ is opposite to $\bF_y$: the space of global sections of
$\cE|_{\{y\}\times \Proj^1}$ is $z\bP_y \cap \bF_y$, and the
projection $z\bP_y \cap \bF_y \xrightarrow{\scriptscriptstyle \, \sim
  \,} z\bP_y/\bP_y$ gives a trivialization of $\cE|_{\{y\}\times
  \Proj^1}$ (see \cite{Iritani:Ruan}*{Lemma 3.8}).  The pairing
$(\cdot,\cdot)_{\cF}$ on $\cF$ extends holomorphically and
non-degenerately across $z = \infty$ to a pairing on $\cE$ because
$\bP$ is isotropic.  The connection $\nabla$ on $\cF$ induces a
connection on $\cE$ with logarithmic singularity along $z=\infty$
because $\bP$ is homogeneous and parallel.  Thus an opposite module $\bP$ for the
TEP structure $\big(\cF, \nabla, (\cdot,\cdot)_{\cF}\big)$ determines
a trTLEP structure $(\cE,\nabla,(\cdot,\cdot)_{\cE})$.  Conversely, a
trTLEP structure $(\cE,\nabla,(\cdot,\cdot)_{\cE})$ determines an
opposite module $\bP = z^{-1} \pi_\star(\cE|_{\cM\times
  (\Proj^1\setminus \{0\})})$ of the underlying TEP structure.  We
have thus proved:
\begin{proposition} 
\label{pro:opposite_trTLEP}  There is a bijective correspondence between opposite modules for a TEP structure  
and trTLEP structures which extend that TEP structure. 
\end{proposition}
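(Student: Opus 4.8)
The plan is to establish the correspondence by showing that each of the two constructions --- producing a trTLEP structure from an opposite module, and recovering an opposite module from a trTLEP structure --- is well-defined, and that the two are mutually inverse. The conceptual backbone is the standard dictionary between extensions of a locally free sheaf across a divisor and submodules of the pushforward of its sections: to extend $\cF^\divideontimes$ across $\cM\times\{0\}$ is the same as to specify the $\pi_\star(\cO_{\cM\times\C})$-submodule of $\pi_\star\cF^\divideontimes$ consisting of sections that extend holomorphically to $z=0$, and likewise across $\cM\times\{\infty\}$ with $\pi_\star(\cO_{\cM\times(\aroundinfinity)})$-submodules. In the case at hand the extension across $z=0$ is already prescribed by the given TEP structure, so $\bF=\pi_\star\cF$ is fixed; the freedom lies entirely in the choice of extension across $z=\infty$.

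First I would take an opposite module $\bP$ and build $\cE$ by declaring $\bF=\pi_\star\cF$ to be the local sections near $z=0$ and $z\bP$ to be the local sections near $z=\infty$. The decomposition $\pi_\star\cF^\divideontimes=\bF\oplus\bP$ (oppositeness) is exactly what guarantees that these two extensions are compatible and patch to a locally free sheaf $\cE$ on $\cM\times\Proj^1$. To check that $\cE|_{\{y\}\times\Proj^1}$ is $\cO_{\Proj^1}$-free I would compute its space of global sections as $z\bP_y\cap\bF_y$ and exhibit the projection $z\bP_y\cap\bF_y\xrightarrow{\sim}z\bP_y/\bP_y$ as a trivialization, which is precisely \cite{Iritani:Ruan}*{Lemma 3.8}. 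Next I would transport the remaining structures: isotropy of $\bP$ ensures the pairing $(\cdot,\cdot)_{\cF}$ extends holomorphically and non-degenerately across $z=\infty$, while the parallel and homogeneous conditions ($\nabla_X\bP\subset\bP$ for $X\in T\cM$ and $\nabla_{z\partial_z}\bP\subset\bP$) ensure that $\nabla$ extends to a flat connection on $\cE$ with at worst logarithmic poles along $\cM\times\{\infty\}$. Together these verify that $(\cE,\nabla,(\cdot,\cdot)_{\cE})$ is a trTLEP structure extending the given TEP structure.

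For the reverse direction I would start from a trTLEP structure $(\cE,\nabla,(\cdot,\cdot)_{\cE})$ and set $\bP=z^{-1}\pi_\star\bigl(\cE|_{\cM\times(\aroundinfinity)}\bigr)$. The fiberwise triviality of $\cE$ gives oppositeness, the compatibility of the pairing across $z=\infty$ gives isotropy, and the logarithmic connection gives the parallel and homogeneous conditions, so $\bP$ is an opposite module. Finally I would check that the two assignments are mutually inverse: starting from $\bP$, forming $\cE$, and reading off $z^{-1}\pi_\star(\cE|_{\cM\times(\aroundinfinity)})$ returns $\bP$ by construction, and the analogous round trip in the other order recovers the original $\cE$, since an extension of $\cF^\divideontimes$ across $\cM\times\Proj^1$ is determined by the two submodules $\bF$ and $z\bP$.

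I expect the main obstacle to be the verification that the patched sheaf $\cE$ is genuinely locally free over $\cM\times\Proj^1$ with $\cO_{\Proj^1}$-trivial restriction to each fiber $\{y\}\times\Proj^1$: this is the only step where the opposite condition is used in an essential, non-formal way, and it is where one must be careful about the semi-infinite nature of the modules $\bF$ and $\bP$. Granting the fiberwise splitting of \cite{Iritani:Ruan}*{Lemma 3.8}, the remaining checks --- extension of the pairing and of the connection, and the mutual-inverse property --- are direct translations of the isotropic, parallel, and homogeneous conditions and should be routine.
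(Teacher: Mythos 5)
Your proposal is correct and follows essentially the same route as the paper: the extension-across-$z=\infty$ dictionary with $\bF=\pi_\star\cF$ and $\bF'=z\bP$, fiberwise triviality via the projection $z\bP_y\cap\bF_y\xrightarrow{\sim}z\bP_y/\bP_y$ from \cite{Iritani:Ruan}*{Lemma 3.8}, and the isotropic/parallel/homogeneous conditions giving the pairing and logarithmic connection, with the inverse assignment $\bP=z^{-1}\pi_\star\bigl(\cE|_{\cM\times(\aroundinfinity)}\bigr)$. Your explicit mention of checking that the two constructions are mutually inverse is a point the paper leaves implicit, but it is immediate from the construction, exactly as you say.
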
 

Let $\bP$ be an opposite module for a TEP structure 
$(\cF,\nabla,(\cdot,\cdot)_\cF)$ with base $\cM$. 
It defines a locally free sheaf $z\bP/\bP$ of rank $N+1 = \rank \cF$ 
on $\cM$. 
This is identified with the restriction to $z = \infty$ of the 
corresponding trTLEP structure $\cE$, and is equipped with a flat connection 
\[
\nabla \colon z\bP/\bP \to \Omega_{\cM}^1 \otimes (z\bP/\bP)
\]
since $\nabla_X$ with $X\in T\cM$ preserves $\bP$. 
Therefore $z\bP/\bP$ defines a flat vector bundle over $\cM$. 
The trivialization $\cE|_{\{y\}\times \Proj^1} 
\cong \cO_{\Proj^1} \otimes (z\bP_y/\bP_y)$ 
discussed before Proposition \ref{pro:opposite_trTLEP} 
yields an isomorphism: 
\begin{equation} 
\label{eq:flat_trivialization} 
\cF \cong \pi^\star( z\bP/\bP) 
\end{equation} 
\begin{definition} 
\label{def:flat_trivialization}
We call the isomorphism \eqref{eq:flat_trivialization} the 
\emph{flat trivialization} associated to the opposite module $\bP$. 
Over a simply-connected base, we can take a flat frame of $z\bP/\bP$ 
that yields a trivialization of  $\cF$. 
This is also called a \emph{flat trivialization}. 
\end{definition} 

\begin{remark} 
The flat trivialization gives rise to a Frobenius-type structure. 
See Hertling \cite{Hertling:ttstar}*{Theorem~5.7} 
and Coates--Iritani--Tseng \cite{CIT:wall-crossings}*{Proposition~2.11}. 
\end{remark} 

\begin{example}
The flat trivialization associated to the canonical opposite module 
$\bP_{\rm A}$ in
Example~\ref{ex:opposite_A} corresponds to the standard trivialization of
$\tcFA{X}$ in Example~\ref{ex:AmodelTEP}.
\end{example}

\section{The Mirror Landau--Ginzburg Model for $\Ybar$ and $\cXbar$}

Mirror symmetry associates to each toric variety a
\emph{Landau--Ginzburg model} \citelist{\cite{Givental:toric}
  \cite{Hori--Vafa}}.  In this context, a Landau--Ginzburg model
consists of:
\begin{itemize}
\item a holomorphic family $\pi\colon Z\rightarrow \cMB^\times$ of
  algebraic tori;
\item a function $W\colon Z\rightarrow \C$, called the {\it
    superpotential};
\item a section $\omega$ of the relative canonical sheaf
  $K_{Z/\cMB^\times}$ which gives a holomorphic volume form $\omega_q$
  on each fibre $Z_q=\pi^{-1}(q)$.  
\end{itemize}
The base space $\cMB^\times$ of the family is called the {\it B-model
  moduli space}.  In this section we define the
Landau--Ginzburg model that corresponds to $\Ybar$ under mirror
symmetry (\S\ref{sec:LG_Ybar}) and use it to construct a TEP
structure, called the \emph{B-model TEP structure}
(\S\ref{sec:B_model_TEP}).  We formulate mirror symmetry for $\Ybar$
as an equivalence of TEP structures (\S\ref{sec:mirror_symmetry_TEP})
between the A-model TEP structure -- or rather its restriction to the
small quantum cohomology locus $H^2(\Ybar) \subset H^\bullet(\Ybar)$ --
and the B-model TEP structure defined from the Landau--Ginzburg model.
We then give an alternative construction of the B-model TEP structure,
in terms of the so-called \emph{GKZ system}, which is useful in
computations (\S\ref{sec:GKZ}).  The B-model TEP structure is defined
over a non-compact base $\cMB^\times$, but computations with the GKZ system
allow us to define an extension of the B-model TEP structure over a
toric partial compactification $\cMB$ of $\cMB^\times$, such
that the extension has logarithmic singularities along the
partially-compactifying divisor (\S\ref{sec:2d_log_TEP}).

\begin{remark}
  The Landau--Ginzburg model that we consider in this section provides a mirror to the \emph{small} quantum cohomology of $\Ybar$: an open subset in the base $\cMB^\times$
  corresponds to a relatively open subset in the small quantum
  cohomology locus $H^2(\Ybar) \subset \HYbar$.  We will construct a mirror to
  \emph{big} quantum cohomology, over a larger base $\cMB$, in
  \S\ref{sec:enlarge_base} below.  
\end{remark}

\subsection{The Mirror Landau--Ginzburg Model} 

\label{sec:LG_Ybar}

The toric variety $\Ybar$ is the GIT quotient of $\C^5$ by
$(\C^\times)^2$ where $(\C^\times)^2$ acts via the inclusion
\begin{equation}
  \label{eq:Ybar_weights}
  (\C^\times)^2 \hookrightarrow (\C^\times)^5, \quad 
  (s,t) \mapsto (s,s,s,s^{-3}t,t). 
\end{equation}
Consider the map $\pi$ given by restricting the dual of this inclusion
\begin{align*}
  \pi\colon (\C^\times)^5 & \longrightarrow (\C^\times)^2 \\
  (w_1,\dots,w_5) &\longmapsto (w_1w_2w_3w_4^{-3},w_4w_5) 
\end{align*}
to the following open subset of $(\C^\times)^2$:
\begin{equation}
  \label{eq:open_subset}
  \Big\{(y_1,y_2) \in (\C^\times)^2 : \textstyle y_1 \neq {-\frac{1}{27}}
  \Big\}
\end{equation}
The superpotential $W$ is:
\[
W=w_1+w_2+w_3+w_4+w_5
\]
and the holomorphic volume form $\omega_y$ on the fibre
$Z_y=\pi^{-1}(y_1,y_2)$ is:
\[
\omega_y =\frac{d\log w_1\wedge \cdots \wedge d\log w_5}
{d\log y_1\wedge d\log y_2}
\] 
We delete the locus $y_1 = {-\frac{1}{27}}$ in \eqref{eq:open_subset}
because critical points of $W|_{Z_y}$ escape to infinity there: see
\cite[\S3.1]{CIT:wall-crossings}.

We now consider a partial compactification $\cMB^\times$ of the open subset
\eqref{eq:open_subset} and extend the Landau--Ginzburg model
considered above to a Landau--Ginzburg model over this larger base.
Consider the secondary fan (Figure~\ref{fig:Ybarsecondaryfan}) for the
toric variety $\Ybar$; this records the weight data
\eqref{eq:Ybar_weights} defining the toric variety $\Ybar$.
\begin{figure}[t!]
\centering 
\includegraphics[bb= 210 644 384 726]{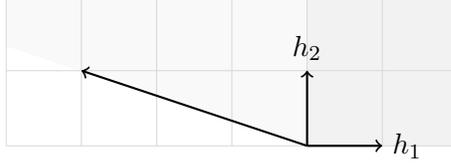} 
\caption{The secondary fan for ${\Ybar}$.}
\label{fig:Ybarsecondaryfan}
\end{figure}
The toric orbifold $\cMB$ associated to the secondary fan
gives a partial compactification of the open set
\eqref{eq:open_subset}.  The two cones in the secondary fan define
toric co-ordinate patches on $\cMB$.  Let $y_1, y_2$ be
the co-ordinates dual respectively to $h_1$ and to $h_2$, and let
$\fry_1$ and $\fry_2$ be the co-ordinates dual respectively to
$h_2-3h_1$ and to $h_2$.  The two co-ordinate systems are related by:
\begin{equation}
  \label{eq:MBsm_coordinates}
  \begin{aligned}
    &\fry_1=y_1^{-1/3}& \hspace{3.5cm}& y_1 = \fry_1^{-3} \\
    &\fry_2= y_1^{1/3} y_2 && y_2 = \fry_1 \fry_2
  \end{aligned}
\end{equation}
Note that $\cMB$ is an orbifold with a $\Z/3\Z$ quotient
singularity at $(\fry_1,\fry_2)=0$, and $(\fry_1,\fry_2)$ is a
uniformizing system near this orbifold point.

We define the base $\cMB^\times$ of our new Landau--Ginzburg model to be
\[
\cMB^\times := \cMB \setminus \overline{\left\{(y_1,y_2) \in \C^2 : \text{$y_1 y_2 = 0$ or $y_1 = {-\tfrac{1}{27}}$}  \right\}} 
\]
Taking $w_1,w_2,w_5$ as co-ordinates on the
fibre $Z_y \subset Z$, we see that:
\begin{equation}
  \label{eq:fibre_coordinates}
  \begin{aligned}
    W_y & =w_1+w_2+\frac{y_1y_2^3}{w_1w_2w_5^3}+\frac{y_2}{w_5}+w_5 \\
    &=w_1+w_2+\frac{\fry_2^3}{w_1w_2w_5^3}+\frac{\fry_1\fry_2}{w_5}+w_5 \\
    \omega_y &= d \log w_1 \wedge d \log w_2 \wedge d \log w_5 
  \end{aligned}
\end{equation}
We can therefore extend the family of tori $\pi$, the superpotential
$W_y$, and the section $\omega$ across the locus $\{\fry_1=0\}$.  These
extensions define a new Landau--Ginzburg model with base $\cMB^\times$.

\begin{notation}
  We refer to the point $(y_1,y_2) = (0,0)$ as the \emph{large-radius
    limit point} and to the point $(\fry_1,\fry_2) = (0,0)$ as the
  \emph{orbifold point}.  We refer to the locus $y_1 =
  {-\frac{1}{27}}$ as the \emph{conifold locus}.
\end{notation}

\begin{remark}
  The right-hand cone in Figure~\ref{fig:Ybarsecondaryfan} is
  canonically identified with the K\"ahler cone of $\Ybar$, and under
  this identification the cohomology classes $h_1$, $h_2$ defined in
  \S\ref{sec:bases} are as pictured.
\end{remark}

\begin{remark}
  The Landau--Ginzburg model described here is discussed in more
  detail in \cite[\S\S2--3]{CIT:wall-crossings}.
\end{remark}

\subsection{The B-Model TEP structure}
\label{sec:B_model_TEP}

We now use the Landau--Ginzburg model $\big(\pi:Z \to \cMB^\times,W,\omega\big)$ to
define a TEP structure, called the B-model TEP structure.  This is
almost the same as the discussions in
\citelist{\cite{CIT:wall-crossings}*{\S2.5.1}
  \cite{Iritani:integral}*{\S3.3}}, with the main difference\footnote
{A minor difference is that the sign of $z$ is flipped compared to 
\cite{CIT:wall-crossings,Iritani:integral}.} being
that there the (equivalent) language of variations of semi-infinite
Hodge structure is used.  Consider the locally free sheaf $\cR$ over
$\cMB^\times \times \C^\times$ with fibre over $(y,z)$ equal to the
relative cohomology group $H^3 \big(Z_y, \{x \in Z_y : 
\Re(W_y(x)/z)\gg 0\}\big)$.  This sheaf carries a flat Gauss--Manin connection
$\nabla^{\rm GM}$, and there is a distinguished global section of
$\cR$ given by:
\begin{equation*}
(y,z) \longmapsto \exp\big({-W_y}/z\big) \, \omega_y .
\end{equation*}
Let $\cO_{Z\times \C}$ denote the analytic structure sheaf.  
Consider the $\cO_{\cMB^\times \times \C}$-module 
$\cF^\times$ consisting of sections of $\cR$
of the form:
\begin{align*}
\Big[ f(x,z) \exp\big({-W}(x)/z\big) \, \omega\Big] 
&& \text{where $f(x,z) \in 
(\pi\times \id)_\star\cO_{Z\times \C}$} 
\end{align*}
such that, for each $z \in \C$, the function $x \mapsto f(x,z)$ is
algebraic on each fibre $Z_y$.  
The sheaf $\cF^\times$ is a locally free
extension of $\cR$ to $\cMB^\times \times \C$
\cite[Proposition~3.14]{Iritani:integral}.  The B-model TEP structure
will, roughly speaking, be the twist of $\cF^\times$ by a factor of
$z^{-3/2}$: this twist will ensure that the pairing on the B-model TEP
structure behaves correctly.

\begin{lemma}[see \protect{\cite[Lemma~2.19]{CIT:wall-crossings}}]
  The intersection pairing:
  \[
  I: H^3\big(Z_y, \{x \in Z_y : \Re(W_y(x)/z)\ll 0\}\big)
  \otimes
  H^3\big(Z_y, \{x \in Z_y : \Re(W_y(x)/z)\gg 0\}\big)
  \to 
  \C
  \]
  induces a pairing:
  \[
  I:(-)^\star \cF^\times \otimes \cF^\times 
\to (2 \pi \iu z)^3 \cO_{\cMB^\times \times \C}
  \]
\end{lemma}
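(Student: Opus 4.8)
The plan is to match the two tensor factors with the two relative cohomology groups that form the source of the topological intersection pairing $I$, and then to analyse the dependence of $I$ on $z$; the crucial point will be the holomorphic extension across $z=0$ together with the prefactor $(2\pi\iu z)^3$. First I would describe the sections concretely. A local section of $\cF^\times$ is $s_2=[f(x,z)\exp(-W(x)/z)\,\omega]$ with $f$ fibrewise algebraic, and at a point $(y,z)\in\cMB^\times\times\C^\times$ it represents a class in $H^3\big(Z_y,\{x\in Z_y:\Re(W_y(x)/z)\gg 0\}\big)$, since $\exp(-W_y/z)$ decays where $\Re(W_y/z)\to+\infty$. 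Applying $(-)^\star$ sends $z\mapsto -z$, so a section of $(-)^\star\cF^\times$ takes the form $s_1=[g(x,z)\exp(+W(x)/z)\,\omega]$ and represents a class in $H^3\big(Z_y,\{x\in Z_y:\Re(W_y(x)/z)\ll 0\}\big)$. Thus the two factors land precisely in the source of $I$.

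Next I would recall why $I$ is well defined and holomorphic away from $z=0$. For $y\in\cMB^\times$ the Laurent polynomial $W_y$ is cohomologically tame — this is exactly why the conifold locus, at which a critical point escapes to infinity, was deleted — so the two relative cohomology groups above are Poincar\'e--Lefschetz dual to the Lefschetz-thimble homologies with rapid decay toward the $\{\Re(W_y/z)\gg 0\}$ and $\{\Re(W_y/z)\ll 0\}$ ends respectively, and $I$ is induced by the perfect intersection form on thimbles. The pairing is $\cO$-bilinear by construction, and flatness of the Gauss--Manin connection together with the fibrewise algebraicity of $f$ and $g$ shows that $I(s_1,s_2)$ is single-valued and holomorphic on $\cMB^\times\times\C^\times$.

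The crux, and the step I expect to be the \textbf{main obstacle}, is to show that $I(s_1,s_2)$ extends holomorphically across $z=0$ and in fact lies in $(2\pi\iu z)^3\,\cO_{\cMB^\times\times\C}$. Here I would realise $I$ through oscillatory period integrals. Working at a point where $W_y$ is Morse and choosing, for generic $z$, the steepest descent and ascent thimbles $\Gamma^-_p,\Gamma^+_p$ through the critical points $p$ so that the thimble intersection matrix is the identity, Poincar\'e--Lefschetz duality gives
\[
I(s_1,s_2)=\sum_{p}\Big(\int_{\Gamma^+_p} g\,e^{W/z}\,\omega\Big)\Big(\int_{\Gamma^-_p} f\,e^{-W/z}\,\omega\Big).
\]
The stationary-phase expansion on the complex $3$-fold $Z_y$ gives $\int_{\Gamma^-_p} f\,e^{-W/z}\,\omega\sim(2\pi z)^{3/2}f(p)(\det\Hess W(p))^{-1/2}e^{-W(p)/z}$, with a conjugate expansion (with $z$ replaced by $-z$) for the other factor; on the diagonal the exponentials $e^{\mp W(p)/z}$ cancel and the two half-integer powers combine, after tracking branches, to the prefactor $(2\pi\iu z)^3$, leaving an asymptotic series in nonnegative powers of $z$ whose leading coefficient is the Grothendieck residue pairing $\sum_p f(p)g(p)/\det\Hess W(p)$.

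Finally I would upgrade these sectorial asymptotics to a genuine holomorphic extension. Since dual thimbles exist for $z$ in each sector and give $I(s_1,s_2)=O(|z|^3)$ there, locally uniformly in $y$, and since $I(s_1,s_2)$ is single-valued and holomorphic on $\cMB^\times\times\C^\times$ by the previous step, the bound $I(s_1,s_2)=O(|z|^3)$ holds uniformly as $z\to 0$; Riemann's removable singularity theorem then shows that $I(s_1,s_2)/(2\pi\iu z)^3$ extends holomorphically across $z=0$, and the extension over the non-Morse locus follows by Hartogs. The delicate points are the exact branch and orientation bookkeeping that pins the constant down to exactly $(2\pi\iu)^3$, and the passage from sectorial asymptotics to a single-valued boundary value. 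An alternative, more computational route that avoids the Stokes analysis is to evaluate $I$ on an explicit $\cO_{\cMB^\times\times\C}$-frame using the identification of the B-model TEP structure with the GKZ system, where the periods are Mellin--Barnes integrals and the factor $(2\pi\iu)^3$ emerges from the reflection formula $\Gamma(s)\Gamma(1-s)=\pi/\sin(\pi s)$ applied in the three torus directions.
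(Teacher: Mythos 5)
Your proposal is correct and follows essentially the same route as the paper: identify the two tensor factors with the two relative cohomology groups, decompose $I$ as a sum over critical points of products of oscillating integrals over dual Lefschetz thimbles, and extract the $(2\pi\iu z)^3$ prefactor from the two $(\pm 2\pi z)^{3/2}$ factors in the stationary phase expansion, with the leading term given by the residue pairing. Your additional care about passing from sectorial asymptotics to a genuine holomorphic extension, and the alternative GKZ route, go slightly beyond what the paper records but do not change the argument.
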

\begin{proof}
  Observe that, on the one hand:
  \[
  I\bigg(\Big[f(x,{-z}) e^{W(x)/z} \, \omega\Big], 
  \Big[g(x,z) e^{{-W}(x)/z} 
\, \omega\Big] \bigg) \in \cO_{\cM \times \C^\times}
  \]
  and on the other hand:
  \begin{multline*}
    I\bigg(\Big[f(x,{-z}) e^{W(x)/z} \, \omega\Big], 
    \Big[g(x,z) e^{{-W}(x)/z} \, \omega\Big] \bigg) = \\
    \sum_{\text{critical points $\sigma$}} 
    \Bigg(
    \int_{\Gamma_-(\sigma)} f(x,{-z}) e^{W(x)/z} \, \omega
    \Bigg) \cdot 
    \Bigg(
    \int_{\Gamma_+(\sigma)} g(x,z) e^{{-W}(x)/z} \, \omega
    \Bigg)
  \end{multline*}
  where 
  \begin{equation}
    \label{eq:Lefschetz}
    \begin{aligned}
      & \Gamma_+(\sigma) \in H_3\big(Z_y, \{x \in Z_y : \Re(W_y(x)/z)\gg
      0\}\big) \\
      & \Gamma_-(\sigma) \in H_3\big(Z_y, \{x \in Z_y : \Re(W_y(x)/z)\ll
      0\}\big)
    \end{aligned}
  \end{equation}
  are the Lefschetz thimbles given by upward (for $\Gamma_+$) and
  downward (for $\Gamma_-$) gradient flow of the function $x \mapsto \Re\big(
  \frac{W(x)}{z}\big)$ from the critical point $\sigma \in Z_y$ of
  $W|_{Z_y}$.  Stationary phase approximation gives that, as $z \to 0$
  in some angular sector:
  \begin{multline}
    \label{eq:stationary_phase}
    I\bigg(\Big[f(x,{-z}) e^{W(x)/z} \, \omega\Big], 
    \Big[g(x,z) e^{{-W}(x)/z} \, \omega\Big] \bigg) \sim \\
    \sum_{\text{critical points $\sigma$}} 
    (-2 \pi z)^{3/2} \Bigg(
    \frac{f(\sigma)}{\sqrt{\Hess_\sigma(W)}} + O(z)
    \Bigg) 
    \cdot
    (2 \pi z)^{3/2}
    \Bigg(
    \frac{g(\sigma)}{\sqrt{\Hess_\sigma(W)}} + O(z)
    \Bigg) 
  \end{multline}
  Thus the function 
  \[
  I\bigg(\Big[f(x,{-z}) e^{W(x)/z} \, \omega\Big], 
  \Big[g(x,z) e^{{-W}(x)/z} \, \omega\Big] \bigg) 
\in \cO_{\cM \times \C^\times}
  \]
  is in fact regular at $z=0$ and lies in $(2 \pi \iu z)^3 \cO_{\cMB^\times \times \C}$.
\end{proof} 

\begin{definition}
  \label{def:BmodelconformalTEP}
  The \emph{B-model TEP structure} $\big(\cFB^\times, \nablaB,
  \pairingB\big)$ consists of:
  \begin{itemize}
  \item the locally free $\cO_{\cMB^\times \times \C}$-module 
$\cFB^\times :=\cF^\times$;
  \item the flat connection:
    \[
    \nablaB \colon \cFB^\times \to
    \big(\pi^\star\Omega_{\cMB^\times}^1 \oplus 
\cO_{\cMB^\times\times \C} z^{-1}dz\big)
    \otimes_{\cO_{\cMB^\times\times \C}} \cFB^\times(\cMB^\times\times \{0\})
    \]
    defined by:
    \[
    \nablaB := \nabla^{\rm GM} - \frac{3}{2}\frac{dz}{z}
    \]
 \item the pairing:
    \[
    \pairingB := \frac{1}{(2 \pi \iu z)^3} I(\cdot,\cdot)
    \]
  \end{itemize}
\end{definition}

It is proven in \cite{Iritani:integral}*{\S3.3} that the B-model TEP
structure is, in fact, a TEP structure.

\begin{remark}
  The connection $\nablaB$ is compatible with the pairing $\pairingB$,
  whereas the connection $\nabla^{\rm GM}$ is compatible with the
  pairing $I(\cdot,\cdot)$.
\end{remark}

\subsection{Mirror Symmetry as an Isomorphism of TEP Structures}
\label{sec:mirror_symmetry_TEP}

Let $X$ denote $\cXbar$ or $\Ybar$.  Let
$\big(\cFA{X}^\times,\nablaA{X},\pairingA{X}\big)$ be the A-model TEP
structure for $X$, as defined in Example~\ref{ex:AmodelTEP}. 
This is a TEP structure with base $\cMA{X}^\times = 
\VA{X}/2\pi\iu H^2(X,\Z)$, where $\VA{X}\subset H_X$ is 
an open subset of the form \eqref{eq:LRL_nbhd}. 
Recall also that $\cMA{X}^\times = \cMA{X} \setminus \DA{X}$, 
where $(\cMA{X},\DA{X})$ is the base of the A-model \logDTEP 
structure in Example~\ref{ex:log_TEP_A}. 
With notation as in Example~\ref{ex:log_TEP_A}, we have 
\begin{align*} 
\cMA{\Ybar} & = \left\{
(t^0,q_1,q_2,t^3,t^4,t^5) \in \C^6: |t^i|<\epsilon_i, |q_i|<\epsilon_i
\right\} 
&& 
\DA{\Ybar}  = \{q_1q_2 = 0\}\\ 
\intertext{for $X= \Ybar$ and} 
\cMA{\cXbar} & = 
\Big[ \left \{(t^0, \sqrt[3]{q},t^2,t^3,t^4,t^5) \in \C^6: 
|t^i|<\epsilon_i, |\sqrt[3]{q}|<\epsilon_1\right\}\big/\mu_3\Big] 
&& 
\DA{\cXbar} = \{q=0\} 
\end{align*} 
for $X = \cXbar$. 

\begin{theorem}[Mirror Symmetry for $\Ybar$] 
\label{thm:mirrorsymmetryforYbar}
Let $(y_1,y_2)$ be the co-ordinates defined in \S\ref{sec:LG_Ybar}.  
Let $h_1, h_2 \in
H^2(\Ybar)$ be as in \S\ref{sec:bases}. 
There are real numbers $\epsilon_1, \epsilon_2 > 0$ 
such that if:
  \[
  U^\times = \big\{(y_1,y_2) \in \cMB^\times : |y_i| < \epsilon_i \big\}
  \]
and the map 
$\mir_{\Ybar} \colon U^\times \to \cMA{X}^\times$ 
is 
\[
\mir_{\Ybar}(y_1,y_2) = (0,y_1 e^{-3g(y_1)}, 
y_2 e^{g(y_1)}, 0,0,0), \qquad 
g(y_1) = \sum_{d=1}^\infty \frac{(3d-1)!}{(d!)^3}(-1)^{d+1} y_1^d   
\]
then there is an isomorphism of TEP structures 
    \begin{equation*}
      \left(\cFB^\times,\nablaB,\pairingB\right) \Big|_{U^\times \times \C} 
\cong 
      \mir_{\Ybar}^\star\bigg(\cFA{\Ybar},\nablaA{\Ybar},\pairingA{\Ybar}\bigg) 
    \end{equation*}
    where on the left we have the B-model TEP structure and on the
    right we have the A-model TEP structure.
\end{theorem}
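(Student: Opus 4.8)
The plan is to reduce the statement to the classical (genus-zero) toric mirror theorem and then to promote the resulting matching of flat sections to an isomorphism of the full TEP data. Both sides are cyclic $\cD$-modules: over $U^\times$ the connection $\nablaB$ generates $\cFB^\times$ from the oscillatory section $[e^{-W_y/z}\omega_y]$, while on the small quantum cohomology locus the Dubrovin connection generates $\mir_{\Ybar}^\star\cFA{\Ybar}$ from the unit $\unit=\phi_0$ (by the Divisor Equation and reconstruction). I would build the isomorphism by sending $[e^{-W_y/z}\omega_y]\mapsto \unit$ and extending $\cO$-linearly using $\nabla$; this is well defined precisely when the two cyclic generators satisfy the same flat equations after the change of variables $\mir_{\Ybar}$, which is the content of mirror symmetry at genus zero.

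First I would make the B-model side explicit. By the GKZ presentation of \S\ref{sec:GKZ}, $(\cFB^\times,\nablaB)$ is isomorphic to the GKZ system $(\cFGKZ,\nabla^{\rm GKZ})$ attached to the weight data \eqref{eq:Ybar_weights}, and the periods of $[e^{-W_y/z}\omega_y]$ against the Lefschetz thimbles \eqref{eq:Lefschetz} give flat sections solving this system. Solving by the Frobenius method produces Givental's I-function $I_{\Ybar}(y_1,y_2,z)$, a hypergeometric series in $(y_1,y_2)$ with values in $\HYbar$ whose components are these periods. I would then extract its small-$z$ asymptotics, $I_{\Ybar}=1+z^{-1}\tau(y)+O(z^{-2})$; since this Landau--Ginzburg model is mirror to small quantum cohomology, $\tau(y)$ has no $H^0$-component and lies in $H^2(\Ybar)$. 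Reading off the coefficients of $h_1$ and $h_2$ gives $q_1=y_1e^{-3g(y_1)}$ and $q_2=y_2e^{g(y_1)}$, which is exactly $\mir_{\Ybar}$; the function $g(y_1)$ is thereby identified with the logarithmic mirror map of the base $\Proj^2$, whose coefficients $(3d-1)!/(d!)^3$ are visible in the hypergeometric series.

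Next I would invoke Givental's mirror theorem for toric varieties \cite{Givental:toric}, in the cone formulation of \cite{CIT:wall-crossings}: $I_{\Ybar}(y,-z)$ lies on the Lagrangian cone $\mathfrak{L}_{\Ybar}$, and since its $z^0$-coefficient is the scalar $1$, it coincides with $J_{\Ybar}(\mir_{\Ybar}(y),-z)$ of \eqref{eq:J_function}. Because $J_{\Ybar}=L_{\Ybar}^{-1}\unit$, this identity matches the two cyclic generators and, upon differentiating, all of their covariant derivatives; the fundamental solution $L_{\Ybar}$ of \eqref{eq:GW_fundsol} furnishes the flat trivialization of Definition~\ref{def:flat_trivialization} on the A-side, against which the thimble periods trivialize the B-side. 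Composing these trivializations yields an $\cO_{U^\times\times\C^\times}$-linear isomorphism intertwining the connections $\nablaB$ and $\nablaA{\Ybar}$, between two flat bundles of the same rank $6$.

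The hard part will be upgrading this to an isomorphism of the full TEP structures over $U^\times\times\C$, i.e.\ controlling both the holomorphic extension across $z=0$ (the E-part) and the pairing (the P-part). That the period isomorphism extends across $z=0$ as a map of locally free sheaves follows because the $I=J$ identity matches, on each side, the sections that extend holomorphically to $z=0$. For the pairing I would use the stationary-phase expansion \eqref{eq:stationary_phase} underlying the Lemma above: the leading term of the intersection pairing of two thimble integrals is controlled by $1/\sqrt{\Hess_\sigma W}$, and the normalizations in Definition~\ref{def:BmodelconformalTEP} (dividing by $(2\pi\iu z)^3$ and twisting by $-\tfrac32\,dz/z$) are designed precisely so that $\pairingB$ becomes the Poincar\'e pairing on $\HYbar$ in the period frame. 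On the A-side the corresponding statement is the unitarity \eqref{eq:L_unitarity} of $L_{\Ybar}$, so the two pairings agree under the isomorphism. Matching the grading operator $\mu$ and the $\nabla_{z\partial_z}$-action then pins down the remaining scalar normalization, and non-degeneracy of the pairing forces the extension to be an isomorphism of locally free sheaves. Finally I would record that all the hypergeometric and Gromov--Witten series converge on a neighbourhood $U^\times$ of the prescribed form, so that the construction gives an isomorphism of analytic TEP structures.
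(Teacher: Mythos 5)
Your proposal is correct and takes essentially the same route the paper relies on: the paper's own proof of this theorem is a one-line citation to \cite{CIT:wall-crossings}*{Conjecture~2.21 and \S3.2} (with the pairing and lattice comparison worked out in \cite{Iritani:integral}*{\S\S3--4}), and the argument there is precisely your $I=J$ period-map argument — GKZ presentation, extraction of the mirror map from the $1/z$-asymptotics of $I_{\Ybar}$, cyclic generation on both sides, and matching of the pairing via the stationary-phase normalisation. No gaps.
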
 

\begin{proof}
  This is an example of \cite[Conjecture~2.21]{CIT:wall-crossings},
  and is proven in \cite[\S3.2]{CIT:wall-crossings}.
\end{proof}

\begin{theorem}[Mirror symmetry for $\cXbar$]
  \label{thm:mirrorsymmetryforXbar}
  Let $(\fry_1,\fry_2)$ be the co-ordinates defined in
  \S\ref{sec:LG_Ybar}.  Let $h \in H^2(\cXbar)$
  be the first Chern class of $\cO(1)$ as in \S\ref{sec:bases}. 
  There are real numbers $\epsilon_1,\epsilon_2>0$ such that if: 
  \[
  U^\times = \big\{(\fry_1,\fry_2) \in \cMB^\times : 
|\fry_i|<\epsilon_i \big\}
  \] 
and the map 
$\mir_{\cXbar} \colon U^\times \to \cMA{\cXbar}^\times$ 
is 
\[
\mir_{\cXbar}(\fry_1,\fry_2) = (0,\fry_2,0,0,\frt(\fry_1),0), 
\qquad 
\frt(\fry_1) = \sum_{n=0}^\infty (-1)^n 
\frac{\prod_{j=0}^{n-1}(\frac{1}{3}+j)^3}{(3n+1)!}
\fry_1^{3n+1} 
\]
then there is an isomorphism of TEP structures:
\begin{equation*}
\bigg(\cFB^\times,\nablaB,\pairingB\bigg) \Big|_{U^\times \times \C} 
\cong  \mir_{\cXbar}^\star
\bigg(\cFA{\cXbar},\nablaA{\cXbar},\pairingA{\cXbar}\bigg)
\end{equation*}
where on the left we have the B-model TEP structure and on the right 
we have the A-model TEP structure.
\end{theorem}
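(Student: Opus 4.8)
The plan is to follow the same strategy as in the proof of Theorem~\ref{thm:mirrorsymmetryforYbar} -- identifying the B-model TEP structure with an A-model TEP structure by matching flat sections -- but now working in the orbifold co-ordinate chart $(\fry_1,\fry_2)$ near the orbifold point $(\fry_1,\fry_2)=(0,0)$ rather than near the large-radius limit. This is again an instance of \cite{CIT:wall-crossings}*{Conjecture~2.21}. Flat sections of the B-model connection $\nablaB$ are given by oscillatory integrals $\int_\Gamma \exp(-W_y/z)\,\omega_y$ over Lefschetz thimbles $\Gamma$ as in \eqref{eq:Lefschetz}; these satisfy the GKZ system reviewed in \S\ref{sec:GKZ}, and so admit a basis of hypergeometric solutions that can be expanded around any of the special points of $\cMB$. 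Flat sections of the A-model connection $\nablaA{\cXbar}$ are, by contrast, encoded by the fundamental solution $L(\mir_{\cXbar}(\fry),-z)$, and hence by the $J$-function \eqref{eq:J_function} of $\cXbar$. The isomorphism of TEP structures therefore reduces to the assertion that the B-model oscillatory integrals, expanded about the orbifold point, reproduce the orbifold $J$-function of $\cXbar=\Proj(1,1,1,3)$ after the change of variables $\mir_{\cXbar}$.

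The central computation is thus to expand the hypergeometric GKZ solutions at the orbifold point and to match them against this orbifold $J$-function. First I would write down the component $I$-function of the Landau--Ginzburg model \eqref{eq:fibre_coordinates} in the orbifold variables $(\fry_1,\fry_2)$; since the orbifold point is a $\Z/3\Z$-quotient singularity of $\cMB$ with uniformizing system $(\fry_1,\fry_2)$, the natural local solutions are fractional power series whose leading exponents record the age grading of $\HXbar$. Next I would compare this with the orbifold $J$-function of $\Proj(1,1,1,3)$, which is known explicitly from the mirror theorem for weighted projective spaces \cite{CCLT}: it is a hypergeometric series whose coefficients involve the Pochhammer products $\bigl[(\tfrac13)_n\bigr]^3=\prod_{j=0}^{n-1}(\tfrac13+j)^3$. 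Reading off the coefficient of the age-one class $\unit_{\frac13}$ in the $z^{-1}$-term of \eqref{eq:J_function} produces exactly the mirror map $\frt(\fry_1)=\sum_{n\ge0}(-1)^n\frac{\prod_{j=0}^{n-1}(\frac13+j)^3}{(3n+1)!}\fry_1^{3n+1}$, while the divisor direction $\sqrt[3]{q}=e^{t^1/3}$ is matched to $\fry_2$; this fixes the map $\mir_{\cXbar}$ in the statement. Once the flat sections agree, the connections $\nablaB$ and $\mir_{\cXbar}^\star\nablaA{\cXbar}$ agree, and compatibility of the pairings follows as in the $\Ybar$ case by identifying $\pairingB=(2\pi\iu z)^{-3}I(\cdot,\cdot)$ with the orbifold Poincar\'e pairing, using the stationary-phase estimate \eqref{eq:stationary_phase} of the pairing Lemma in \S\ref{sec:B_model_TEP}.

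The main obstacle is the careful treatment of the orbifold structure at the cusp $(\fry_1,\fry_2)=(0,0)$. On the A-model side the base $\cMA{\cXbar}$ carries a $\mu_3$-orbifold structure, and the Galois action of Example~\ref{ex:log_TEP_A} acts nontrivially in the fibre direction, rotating $\unit_{\frac13}$ and $\unit_{\frac23}$ by $\xi^{\pm1}$ via the matrix $G(\xi)$; this must be matched with the residual $\mu_3$-symmetry of the Landau--Ginzburg model at the orbifold point, so that the two orbi-sheaves are identified equivariantly. Concretely, the difficulty is to show that the fractional GKZ solutions organise themselves according to the age grading and transform under $\mu_3$ exactly as the classes $\unit_0,\unit_{\frac13},\unit_{\frac23}$ do under $G(\xi)$, and that the flat trivialization of the B-model structure, analytically continued from the large-radius chart, produces the Deligne extension compatible with this orbifold structure. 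This is where the hypergeometric coefficients $\prod_{j}(\tfrac13+j)$ and the precise normalisation of $\frt(\fry_1)$ must be pinned down; granting the explicit orbifold mirror theorem of \cite{CCLT}, the remaining analysis is a direct adaptation of \cite{CIT:wall-crossings}*{\S3.2}.
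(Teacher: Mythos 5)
Your proposal is a reasonable reconstruction of the underlying argument, but you should know that the paper's own ``proof'' of this theorem is a two-line citation: it is an instance of \cite{CIT:wall-crossings}*{Conjecture~2.21}, proved in \cite{CIT:wall-crossings}*{\S3.4} along $\fry_1=0$, with the $\fry_1$-direction supplied by \cite{Iritani:integral}*{Proposition~4.8}. Your sketch is essentially the content of those references, so the approach is the same in spirit. One substantive correction: the $\fry_1$-direction is \emph{not} covered by the small $J$-function of weighted projective spaces from \cite{CCLT}, because $\unit_{\frac{1}{3}}$ is an age-one (twisted-sector) class lying outside the untwisted $H^2(\cXbar)$ locus; the deformation in that direction requires the \emph{extended} $I$-function of \cite{CCIT:mirror_theorem} (equivalently the argument of \cite{Iritani:integral}*{Proposition~4.8}), which is precisely the series displayed in the Remark following the theorem and is what produces the mirror map $\frt(\fry_1)=\sum_{n\ge 0}(-1)^n\frac{\prod_{j=0}^{n-1}(\frac{1}{3}+j)^3}{(3n+1)!}\fry_1^{3n+1}$. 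This is exactly the gap the paper is flagging by splitting the citation into two parts, so if you were to write the proof out in full you would need to invoke the toric-stack mirror theorem rather than \cite{CCLT}. Your treatment of the $\mu_3$-equivariance and of the pairing via stationary phase is consistent with how the paper handles these points elsewhere (Example~\ref{ex:log_TEP_A} and the Lemma in \S\ref{sec:B_model_TEP}).
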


\begin{proof}
This is an example of \cite[Conjecture~2.21]{CIT:wall-crossings},
and is proven in \cite[\S3.4]{CIT:wall-crossings} 
along $\fry_1=0$. 
A proof including the $\fry_1$-direction is given in 
\cite[Proposition 4.8]{Iritani:integral}. 
\end{proof}

\begin{remark} 
The map $\mir_{\Ybar}(y_1,y_2) = (0,q_1,q_2,0,0,0)$ 
is determined by the $I$-function \cite{Givental:toric} 
of $\Ybar$ 
via the asymptotics $I_{\Ybar}(y_1,y_2,z) 
= 1 + (h_1 \log q_1+h_2 \log q_2) /z + o(z^{-1})$: 
see \eqref{eq:IYbar}. 
On the other hand, $\mir_{\cXbar}(\fry_1,\fry_2)$ is determined 
by the extended $I$-function \cite{CCIT:mirror_theorem} 
of $\cXbar$ 
\[
I_{\cXbar}(\fry_1,\fry_2,z) = \sum_{k_1,k_2\ge 0} 
\fry_1^{k_1} \fry_2^{k_2+3h/z} 
\frac{\prod_{c\le 0, \fracp{c}=\fracp{\frac{k_2-k_1}{3}}} (h+cz)^3}
{\prod_{c\le \frac{k_2-k_1}{3}, \fracp{c}=\fracp{\frac{k_2-k_1}{3}}}(h+cz)^3}
\frac{1}{k_1! z^{k_1} \prod_{c=1}^{k_2}(3h + cz)} 
\unit_{\fracp{\frac{k_1-k_2}{3}}} 
\]
via the expansion $I_{\cXbar}(\fry_1,\fry_2,z)
= 1 + (\frt \unit_{\frac{1}{3}} + (\log q) h)/z + o(1/z)$. 
\end{remark} 

\subsection{The GKZ System and the B-Model TEP Structure}
\label{sec:GKZ}

We now give an alternative construction of the B-model TEP structure,
which is very convenient for calculations.  This
construction is in terms of the so-called GKZ system, due to
Gelfand--Kapranov--Zelevinsky \cite{GKZ}.

\begin{definition}
Let $\cD^z\subset \End_\C(\cO_{\cMB^\times\times \C})$ 
denote the subsheaf of the sheaf of differential
  operators on $\cMB^\times \times \C$ generated, as a sheaf of rings, by
  $\cO_{\cMB^\times \times \C}$ and $\{ z X : \text{$X$ is a vector field
    on $\cMB^\times$}\}$, where $z$ is the standard co-ordinate on $\C$. 
\end{definition}

\begin{remark}
\label{rem:Dmoduleisconnection} 
Let $\cE$ be a $\cD^z$-module.  The action of $(1/z) \cdot zX$ defines
a map
\[
\nabla_X \colon \cE \to z^{-1} \cE = \cE(\cMB^\times \times \{0\}) 
\]
When $\cE$ is a coherent $\cO_{\cMB^\times\times \C}$-module, 
one may view $\nabla_X$ as a flat connection 
in the direction of $\cMB^\times$ with poles along $z=0$. 
\end{remark}

\begin{definition}
  By \emph{the GKZ system} we mean the $\cD^z$-module $\cFGKZ$ on
  $\cMB^\times \times \C$ defined as follows.  Recall from
  \eqref{eq:MBsm_coordinates} that $\cMB^\times$ is covered by two
  co-ordinate patches $(y_1,y_2)$ and $(\fry_1,\fry_2)$ related by:
  \begin{align*}
    &\fry_1=y_1^{-1/3}&& y_1 = \fry_1^{-3} \\
    &\fry_2= y_1^{1/3} y_2 && y_2 = \fry_1 \fry_2
  \end{align*}
  Define charts $U_\LR^\times$ and $U_\orb^\times$ on $\cMB^\times$ by:
\begin{equation} 
\label{eq:charts}  
 U_\LR^\times = \Big\{(y_1,y_2) \in (\C^\times)^2: 
y_1 \ne {-\tfrac{1}{27}} \Big\} \qquad
U_\orb^\times = \Big\{(\fry_1,\fry_2)\in \C\times \C^\times : \fry_1^3 \ne {-27} \Big\}
\end{equation} 
Let $z$ denote the standard co-ordinate on $\C$.  Consider the 
left ideal $\cI_\LR\subset \cD^z|_{U_\LR^\times\times \C}$ generated by:
\begin{equation}
    \label{eq:GKZ_chart_1}
    \begin{aligned}
      & D_2(D_2-3D_1) - y_2, \\
      & D_1^3 - y_1(D_2-3D_1)(D_2-3D_1+z)(D_2-3D_1+2z)
    \end{aligned}
\end{equation}
where $D_1 = -z y_1 \partial_{y_1}$, 
$D_2 = -z y_2 \partial_{y_2}$.
Consider the left ideal $\cI_\orb\subset \cD^z|_{U_\orb^\times\times \C}$
generated by:
  \begin{equation}
    \label{eq:GKZ_chart_2}
    \begin{aligned}
      & \frD_2 \frd_1 - \fry_2, \\
      & (\frD_2-\fry_1\frd_1)^3 - 27 (\frd_1)^3
    \end{aligned}
  \end{equation} 
where $\frd_1 = -z \partial_{\fry_1}$, 
$\frD_2 = -z \fry_2\partial_{\fry_2}$. 
The ideals $\cI_\LR$ and $\cI_\orb$ coincide on the overlap 
$(U_\LR^\times \cap U_\orb^\times)\times \C$ and define a left ideal 
$\cI\subset \cD^z$ over $\cMB^\times \times \C$. 
The GKZ system $\cFGKZ$ 
is defined to be the $\cD^z$-module $\cD^z/\cI$. 
\end{definition}

\begin{definition}[Grading operator] 
  \label{def:GKZ_grading}
Define the \emph{Euler vector field} $E$ on $\cMB$ by: 
\begin{equation}
    \label{eq:Euler_GKZ}
    E = 2 y_2 \partial_{y_2}
    = 2 \fry_2 \partial_{\fry_2}
\end{equation} 
This matches up with the Euler vector field \eqref{eq:Euler_field} 
on the A-model under the mirror maps $\mir_{\cXbar}$, $\mir_{\Ybar}$. 
Consider the endomorphism $\Gr \in \End_\C(\cD^z)$ defined by 
the commutator: 
\begin{equation} 
\label{eq:GKZ_grading}
\Gr(P) = [z\partial_z + E, P] 
\end{equation} 
This preserves the GKZ ideal $\cI$ and induces an endomorphism $\Gr
\in \End_\C(\cFGKZ)$ of the GKZ system, called the \emph{grading
  operator}.
\end{definition}

Setting:
\begin{equation}
  \label{eq:GKZz}
  \begin{aligned}
    \nabla_{z \partial_{z}} = \Gr - E - \frac{3}{2} 
    & = \Gr + 2 z^{-1} D_2 - \frac{3}{2} \\
    & = \Gr + 2 z^{-1} \frD_2 - \frac{3}{2} 
  \end{aligned}
\end{equation}
defines a meromorphic connection on $\cFGKZ$ in the direction of
$z$.  Combining this with the connection defined in
Remark~\ref{rem:Dmoduleisconnection}, we obtain a meromorphic flat
connection on $\cFGKZ$:
\begin{equation}
  \label{eq:meromorphic_flat_connection_on_GKZ}
  \nabla \colon \cFGKZ \to 
  \Big(\pi^\star\Omega_{\cMB^\times}^1 \oplus \cO_{\cMB^\times \times \C}
  \textstyle \frac{dz}{z}\Big) 
  \otimes_{\cO_{\cMB^\times}} \cFGKZ\big(\cMB^\times\times \{0\}\big) . 
\end{equation}

\begin{remark}
  The GKZ system is a version of what is sometimes referred to as the
  \emph{Horn system}, homogenized by including the variable $z$.
\end{remark}

\begin{remark}
  Recall from Definition~\ref{def:TEP} that we consider TEP structures
  in the category of complex manifolds and holomorphic maps.  The
  A-model TEP structure is naturally a holomorphic object, as the
  structure constants of quantum cohomology are transcendental rather
  than algebraic functions.  The GKZ system and the B-model TEP
  structure can most naturally be defined in the algebraic category
  but, for simplicity of exposition, in this paper we will regard them
  as holomorphic objects.
\end{remark}

\subsubsection{The GKZ System is Isomorphic to the B-Model TEP
  Structure}

The B-model TEP structure $\big(\cFB^\times, \nablaB, \pairingB\big)$ defines
another $\cD^z$-module $\big(\cFB^\times, \nablaB\big)$ on $\cMB^\times \times
\C$, which we call the \emph{B-model $\cD^z$-module}.  Recall that
there is a distinguished global section of $\cFB^\times$:
\begin{equation}
  \label{eq:distinguished_section}
  (y,z) \longmapsto \exp\big({-W}_y/z\big) \, \omega_y .
\end{equation}
Oscillating integrals:
\[
\int_{\Gamma_+(\sigma)} e^{{-W}_y/z} \, \omega_y
\]
over the Gauss--Manin-flat cycles (Lefschetz thimbles)
$\Gamma_+(\sigma)$ defined in \eqref{eq:Lefschetz} are annihilated by
the differential operators \eqref{eq:GKZ_chart_1}, \eqref{eq:GKZ_chart_2},
where we take:
\begin{align*}
  & D_1 = - z \nablaB_{y_1 \partial_{y_1}}
& \frd_1 = -z \nablaB_{\partial_{\fry_1}} \\
  & D_2 = -z \nablaB_{y_2 \partial_{y_2}} 
& \frD_2 = - z \nablaB_{\fry_2 \partial_{\fry_2}}
\end{align*}
It is proven in \cite{Iritani:integral}*{\S 4} that we have a $\cD^z$-module 
isomorphism: 
\[
\varphi \colon (\cFGKZ, \nabla^{\rm GKZ}) 
\overset{\cong}{\longrightarrow} (\cFB^\times, \nablaB) 
\]
defined by sending the distinguished section $1 \in \cFGKZ$
to the distinguished section \eqref{eq:distinguished_section} of $\cFB^\times$. 

\subsubsection{The Pairing on the GKZ System}
We can use the $\cD^z$-module isomorphism between the GKZ system and
the B-model $\cD^z$-module to define a pairing on the GKZ system:
\[
(\cdot,\cdot)_{\rm GKZ} \colon 
({-})^\star \cFGKZ \otimes_{\cO_{\cMB^\times \times \C}} \cFGKZ
\to \cO_{\cMB^\times \times \C}
\]
by pulling back the pairing $\pairingB$ on the B-model $\cD^z$-module along
the isomorphism $\varphi$.  
This pairing can be computed using mirror symmetry: 
the isomorphisms in Theorem~\ref{thm:mirrorsymmetryforYbar}
and Theorem~\ref{thm:mirrorsymmetryforXbar} 
intertwine the pairings $\pairingB$ and $\pairingA{-}$; 
moreover the pairing $\pairingA{-}$ can be computed through 
Givental's $I$-function. 
For example if $f(z,y_1,y_2,D_1,D_2)$ and $g(z,y_1,y_2,D_1,D_2)$ 
are elements of the GKZ system defined near $(y_1,y_2) = (0,0)$,  
then their pairing can be written in terms of the A-model 
pairing: 
\begin{multline*}
\Big( (-)^\star 
f(z,y_1,y_2,D_1,D_2),g(z,y_1,y_2,D_1,D_2)\Big)_{\rm GKZ}  \\  
 = \Big( f(-z,y_1,y_2,  z\nabla_1, z \nabla_2) \unit,  
g(z,y_1,y_2, -z \nabla_1, -z\nabla_2) 
\unit \Big )_{{\rm A}, \Ybar} 
\end{multline*} 
where $\nabla_i 
= (\mir_{\Ybar}^\star \nablaA{\Ybar})_{y_i\partial_{y_i}}$ 
is the Dubrovin connection pulled back by the mirror map 
$\mir_{\Ybar}$. 
By applying the inverse $L(t,-z)^{-1}$ of the fundamental solution 
\eqref{eq:GW_fundsol} to the sections of the A-model TEP structure 
in the right-hand side and using the properties \eqref{eq:L_diffeq}, 
\eqref{eq:L_unitarity} of $L(t,-z)$ and the definition 
\eqref{eq:J_function} of the $J$-function, we find that 
the pairing equals 
\begin{multline*}
  \Big(f(-z,y_1,y_2,  zy_1\partial_{y_1}, 
  zy_2 \partial_{y_2}) 
  J(\mir_{\Ybar}(y_1,y_2), z), \\
  g(z,y_1,y_2, -z y_1\partial_{y_1}, -z y_2 \partial_{y_2}) 
  J(\mir_{\Ybar}(y_1,y_2), -z) \Big )_{\Ybar} 
\end{multline*}
The mirror theorem of Givental \cite{Givental:toric} 
says that $J(\mir_{\Ybar}(y_1,y_2), -z)$ equals the 
cohomology-valued power series $I_{\Ybar}(y_1,y_2, -z)$: 
\begin{equation}
  \label{eq:IYbar}
  I_{\Ybar}(y_1,y_2, -z) = 
  \sum_{d_1,d_2 \geq 0}
  \frac{y_1^{d_1-h_1/z} y_2^{d_2-h_2/z}}
  {\prod_{m=1}^{d_1} (h_1-m z)^3
    \prod_{m=1}^{d_2} (h_2-m z)
  }
  \frac
  {\prod_{m =-\infty}^0 (h_2 - 3 h_1 - m z)}
  {\prod_{m =-\infty}^{d_2-3d_1}(h_2 - 3 h_1- m z)}
\end{equation}
Here we expand the right-hand side as a Taylor series in the
(nilpotent) cohomology classes $h_1$, $h_2$ from
\S\ref{sec:bases}; note that all but finitely many terms
in the infinite products on the right-hand side cancel. 
Hence we obtain 
\begin{multline} 
\label{eq:GKZ_pairing}
\Big((-)^\star f(z,y_1,y_2,D_1,D_2), 
g(z,y_1,y_2,D_1,D_2)\Big)_{\rm GKZ}  \\  
= \Big(f({-z},y_1,y_2, z y_1 \partial_{y_1},
z y_2\partial_{y_2}) I_{\Ybar}(y_1,y_2,z),
  g(z,y_1,y_2, -z y_1 \partial_{y_1}, -zy_2\partial_{y_2}) 
I_{\Ybar}(y_1,y_2,-z)\Big)_{\Ybar} 
\end{multline} 
Equations \eqref{eq:GKZ_pairing} and \eqref{eq:IYbar} together make
clear that the pairing:
\[
\left((-)^\star f(z,y_1,y_2,D_1,D_2), g(z,y_1,y_2,D_1,D_2) 
\right)_{\rm GKZ} 
\]
extends holomorphically across the locus $y_1y_2 = 0$ if 
$f$ and $g$ depend polynomially on $(y_1,y_2)$. 

\subsection{The B-Model \logDTEP Structure}
\label{sec:2d_log_TEP}

Recall that the B-model TEP structure has base $\cMB^\times$, 
which is the open subset of the toric variety $\cMB$ 
obtained by deleting the divisor $D = \overline{( y_1y_2 = 0 ) } 
\cup (y_1 ={-\frac{1}{27}})$ from $\cMB$. 
Here we construct a logarithmic extension of the B-model TEP structure 
across $D$, which we call the \emph{B-model \logDTEP structure}. 
This is a \logDTEP structure in the sense of 
Definition \ref{def:logDTEP}. 

\begin{proposition} 
\label{pro:GKZ}
The flat connection and the pairing of the GKZ system are 
described explicitly as follows. 
\begin{enumerate}
\item[(a)] In the chart near $(y_1,y_2) = (0,0)$ with $y_1 \ne
    {-\frac{1}{27}}$, writing $D_1 = -z y_1 \partial_{y_1}$, 
   $D_2 = -z y_2 \partial_{y_2}$, 
the GKZ system has basis:
    \begin{equation}
      \label{eq:GKZ_basis_LR_chart}
      1, D_2, D_2^2, D_2^3, D_1, (1+27y_1)D_1^2
    \end{equation}
    With respect to this basis, we have:
    \begin{align*}
      & D_1 = 
      \begin{pmatrix}
        0 & 
        {-\frac{1}{3}} y_2 
        & \frac{1}{3}z y_2
        & 18y_2^2\big(y_1-\frac{1}{54}\big)
        & 0 
        & 6zy_1 y_2 \\
        0
        & 0
        & {-\frac{1}{3}} y_2
        & 9z y_1 y_2
        & 0
        & 2z^2 y_1 \\
        0
        & \frac{1}{3}
        & 0
        & \frac{1}{3} y_2(1-27y_1)
        & 0
        & -3 z y_1 \\
        0
        & 0
        & \frac{1}{3}
        & 0
        & 0
        & y_1 \\
        1
        & 0
        & 0
        & -z y_2(1+27y_1) 
        & 0
        & {-3} y_1(3y_2+2z^2) \\
        0
        & 0
        & 0
        & 3y_2
        & \frac{1}{1+27y_1}
        & 0
      \end{pmatrix} \\
      & D_2 =
      \begin{pmatrix}
        0
        & 0
        & 0
        & y_2(54 y_1 y_2 - y_2 + z^2)
        & {-\frac{1}{3} y_2}
        & \frac{1}{9} z y_2 (1+27y_1)\\
        1
        & 0
        & 0
        & -z y_2(2-27y_1)
        & 0
        & {-\frac{1}{9} y_2} (1+27y_1)\\
        0
        & 1
        & 0
        & y_2(2-27y_1)
        & \frac{1}{3}
        & 0 \\
        0
        & 0
        & 1
        & 0
        & 0
        & \frac{1}{9}(1+27y_1)\\
        0
        & 0
        & 0
        & -3 z y_2 (1+27y_1)
        & 0
        & {- \frac{1}{3}}y_2(1+27y_1)\\
        0
        & 0
        & 0
        & 9 y_2
        & 0
        & 0
      \end{pmatrix}
    \end{align*}
    and the Gram matrix of the pairing is:
    \[
    \begin{pmatrix}
      0 &
      0 &
      0 &
      9 &
      0 &
      0 \\
      0 &
      0 &
      9 &
      0 &
      0 &
      (1+27y_1) \\
      0 &
      9 &
      0 &
      27y_2 &
      3 &
      0 \\
      9 &
      0 &
      27y_2 &
      0 &
      0 &
      y_2(1+27y_1) \\
      0 &
      0 &
      3 &
      0 &
      0 &
      9 y_1 \\
      0 &
      1+27y_1 &
      0 &
      y_2(1+27y_1) &
      9 y_1 &
      0 \\
    \end{pmatrix}
    \]
  \item[(b)] In the chart near $(y_1,y_2) = ({-\frac{1}{27}},0)$, 
writing $t =y_1+\frac{1}{27}$ and $D_t = z t \partial_{t}$, 
the following relations define the GKZ system: 
   \begin{align*}
   9ty & = D_2(9 t D_2-(27t-1)D_t) \\ 
729 t^2 D_t^3 & = 
      \big[9tD_2-(27t-1)(D_t+2z)\big] \\ 
      & \quad \times
      \big[9t(D_2+z)-(27t-1)(D_t +z)\big] 
      \big[9t(D_2+2z)-(27t-1)D_t\big]
    \end{align*}
    and the GKZ system has basis:
    \begin{equation}
      \label{eq:GKZ_basis_conifold_chart}
      1, D_2, D_2^2, D_2^3, \big(1-\textstyle\frac{1}{27t}\big)D_t,
      \textstyle\frac{1}{27t}\big(   (27t-1)^2 D_t^2 + (27t-1)D_t \big) 
    \end{equation}
    (This is the same basis as Part (a).)\phantom{.}  With respect to
    this basis, we have:
    \begin{align*}
      & D_t = 
      \begin{pmatrix}
        0 & 
        \frac{9 t y_2}{1-27 t} & 
        \frac{-9 t z y_2}{1-27 t} & 
        \frac{27 t (18 t-1) y_2^2}{27 t-1} & 
        0 & 
        6 t z y_2 \\
        0 & 
        0 & 
        \frac{9 t y_2}{1-27 t} & 
        9 t z y_2 & 
        0 & 
        2 t z^2 \\
        0 & 
        \frac{9 t}{27 t-1} & 
        0 & 
        \frac{9 (2-27 t) t y_2}{27 t-1} & 
        0 & 
        -3 t z \\
        0 & 
        0 & 
        \frac{9 t}{27 t-1} & 
        0 & 
        0 & 
        t \\
        \frac{27t}{27t-1} & 
        0 & 
        0 &
        \frac{-729 t^2 z y_2}{27 t-1} & 
        0 & 
        -3  t \left(2 z^2+3 y_2\right) \\
        0 & 
        0 & 
        0 & 
        \frac{81 t  y_2}{27t-1} & 
        \frac{1}{27  t-1} & 
        0 
      \end{pmatrix} \\ 
      & D_2 =
      \begin{pmatrix}
        0
        & 0
        & 0
        & y_2(54 t y_2 - 3 y_2 + z^2)
        & {-\frac{1}{3} y_2}
        & 3 z y_2 t \\
        1
        & 0
        & 0
        & -z y_2(3-27t)
        & 0
        & {-3 y_2 t} \\
        0
        & 1
        & 0
        & y_2(3-27t)
        & \frac{1}{3}
        & 0 \\
        0
        & 0
        & 1
        & 0
        & 0
        & 3t\\
        0
        & 0
        & 0
        & -81 z y_2 t 
        & 0
        & {- 9}y_2 t\\
        0
        & 0
        & 0
        & 9 y_2
        & 0
        & 0
      \end{pmatrix}
    \end{align*}
    and the Gram matrix of the pairing is:
    \[
    \begin{pmatrix}
      0 &
      0 &
      0 &
      9 &
      0 &
      0 \\
      0 &
      0 &
      9 &
      0 &
      0 &
      27t \\
      0 &
      9 &
      0 &
      27y_2 &
      3 &
      0 \\
      9 &
      0 &
      27y_2 &
      0 &
      0 &
      27 t y_2 \\
      0 &
      0 &
      3 &
      0 &
      0 &
      9 t-\frac{1}{3} \\
      0 &
      27t &
      0 &
      27 t y_2 &
      9 t - \frac{1}{3} &
      0 \\
    \end{pmatrix}
    \]
   
  \item[(c)] In the chart near $(\fry_1,\fry_2) = (0,0)$ with
    $\fry_1^3 \ne {-27}$, writing $\frd_1 = - z \partial_{\fry_1}$,
    $\frD_2 = -z \fry_2\partial_{\fry_2}$, 
   the GKZ system has basis:
    \begin{equation}
      \label{eq:GKZ_basis_orbifold_chart}
      1, \frD_2, \frD_2^2, \frD_2^3, \frd_1, \frd_1^2
    \end{equation}
    With respect to this basis, we have:
    \begin{align*}
      & \frd_1 = 
      \begin{pmatrix}
        0 & 
        \fry_2 &
        -z \fry_2 & 
        z^2 \fry_2 & 
        0 & 
        -\frac{3 z \fry_1 \fry_2}{\fry_1^3+27} \\
        0 &
        0 &
        \fry_2 &
        -2 z \fry_2 &
        0 &
        {-\frac{3 \fry_1 \fry_2}{\fry_1^3+27}}\\
        0 &
        0 &
        0 &
        \fry_2 &
        0 &
        0 \\
        0 &
        0 &
        0 &
        0 &
        0 &
        \frac{1}{\fry_1^3+27}\\
        1 &
        0 &
        0 &
        0 &
        0 &
        {-\frac{z^2 \fry_1}{\fry_1^3+27}}\\
        0 &
        0 &
        0 &
        0 &
        1 &
        \frac{3 z \fry_1}{\fry_1^3+27}
      \end{pmatrix} \\
      & \frD_2 = 
      \begin{pmatrix}
        0 &
        0 &
        0 &
        {-2} z^2 \fry_1 \fry_2&
        \fry_2 &
        0 \\
        1&
        0 &
        0 &
        0 &
        0 &
        0 \\
        0 &
        1 &
        0 &
        3\fry_1 \fry_2 &
        0 &
        0 \\
        0 &
        0 &
        1 &
        0 &
        0 &
        0 \\
        0 &
        0 &
        0 &
        -3z\fry_1^2 \fry_2 &
        0 &
        \fry_2\\
        0 &
        0 &
        0 &
        \fry_2(\fry_1^3+27) &
        0 &
        0 
      \end{pmatrix}
    \end{align*}
    and the Gram matrix of the pairing is:
    \[
    \begin{pmatrix}
      0 &
      0 &
      0 &
      9 &
      0 &
      0 \\
      0 &
      0 &
      9 &
      0 &
      0 &
      0 \\
      0 &
      9 &
      0 &
      27 \fry_1 \fry_2 &
      0 &
      0 \\
      9 &
      0 &
      27 \fry_1 \fry_2 &
      0 &
      0 &
      0 \\
      0 &
      0 &
      0 &
      0 &
      0 &
      \frac{9}{27+\fry_1^3} \\
      0 &
      0 &
      0 &
      0 &
      \frac{9}{27+\fry_1^3} &
      0 \\
    \end{pmatrix}
    \]
  \end{enumerate} 
\end{proposition}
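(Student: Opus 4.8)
The plan is to prove parts (a), (b) and (c) by direct computation in the three coordinate charts, treating in turn the basis, the connection matrices for the relevant operators, and the Gram matrix of the pairing. The first task is to show that the six listed operators form an $\cO$-basis of the GKZ system $\cFGKZ = \cD^z/\cI$ in each chart. In the large-radius chart the two generators \eqref{eq:GKZ_chart_1} allow one to rewrite, modulo $\cI$, the product $D_1 D_2$ in terms of $D_2^2$ and $y_2$, and the cube $D_1^3$ in terms of lower monomials; iterating these rewritings reduces any monomial in the commuting operators $D_1, D_2$ to an $\cO$-linear combination of $1, D_2, D_2^2, D_2^3, D_1, D_1^2$, and multiplying the last by $(1+27y_1)$ produces the spanning set \eqref{eq:GKZ_basis_LR_chart}. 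The orbifold chart is handled identically using \eqref{eq:GKZ_chart_2}. Linear independence then follows from a rank count: by the isomorphism $\varphi$ of \S\ref{sec:GKZ} together with mirror symmetry (Theorem~\ref{thm:mirrorsymmetryforYbar}), $\cFGKZ$ is isomorphic to the rank-$6$ A-model TEP structure of $\Ybar$, so its holonomic rank equals $6 = \dim H^\bullet(\Ybar)$, and a spanning set of six elements must be a basis.

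Next I would compute the connection matrices. For each basis element $e_j$ I apply $D_1$ and $D_2$ on the left, commute the operators past the coefficient functions using $[D_i, f] = -z y_i \partial_{y_i} f$, and reduce modulo $\cI$ by the two relations above; reading off the coefficients in the basis yields the matrix entries. Several columns serve as sanity checks: $D_1 \cdot 1 = D_1$, $D_2 \cdot 1 = D_2$, and $D_2 \cdot D_1 = \tfrac{1}{3}(D_2^2 - y_2)$ already reproduce the first and fifth columns of the tabulated matrices. The Gram matrix is computed from \eqref{eq:GKZ_pairing}: for basis elements $f, g$ I substitute the corresponding operators into $I_{\Ybar}(y_1,y_2,\pm z)$ from \eqref{eq:IYbar}, expand in the nilpotent classes $h_1, h_2$, and evaluate the $\Ybar$-Poincar\'e pairing. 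Since the basis elements depend polynomially on $(y_1,y_2)$, the remark following \eqref{eq:GKZ_pairing} guarantees that the answer extends holomorphically across $y_1 y_2 = 0$, giving the displayed polynomial Gram matrix.

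Parts (b) and (c) follow the same template after transporting the problem to the new coordinates. Near the orbifold point the generators \eqref{eq:GKZ_chart_2} are already written in terms of $\frd_1 = -z\partial_{\fry_1}$ and $\frD_2 = -z\fry_2\partial_{\fry_2}$, so the computation proceeds as before with the basis \eqref{eq:GKZ_basis_orbifold_chart}. Near the conifold I set $t = y_1 + \tfrac{1}{27}$ and $D_t = zt\partial_t$, so that $D_1 = -\big(1-\tfrac{1}{27t}\big)D_t$; substituting this into \eqref{eq:GKZ_chart_1} and clearing the resulting powers of $t$ yields the two relations displayed in part (b), while the chart-(a) basis element $(1+27y_1)D_1^2 = 27t\,D_1^2$ becomes the sixth generator in \eqref{eq:GKZ_basis_conifold_chart}. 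One then repeats the connection and pairing computations in this frame.

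The main obstacle is the conifold chart (b). The Landau--Ginzburg model genuinely degenerates at $y_1 = -\tfrac{1}{27}$, where the critical points of $W_y$ escape to infinity (\S\ref{sec:LG_Ybar}), so the real content of (b) is to exhibit a frame in which the connection and the pairing remain holomorphic, i.e.\ in which the conifold is a regular (logarithmic) singular point. This is exactly why the sixth basis vector carries the factor $(1+27y_1) = 27t$: one must verify that the apparent poles $\tfrac{1}{27t}$ in \eqref{eq:GKZ_basis_conifold_chart} cancel modulo $\cI$, so that the matrices of $D_t$ and $D_2$ and the Gram matrix all have entries regular at $t=0$. Establishing these cancellations — equivalently, that the large-radius basis extends across the conifold divisor to the Deligne extension — is the delicate step; the remaining manipulations are lengthy but mechanical.
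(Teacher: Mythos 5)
Your overall architecture matches the paper's: establish the basis, compute the connection matrices by reduction modulo the GKZ ideal, and compute the Gram matrix via the mirror/$I$-function description of the pairing, with (b) and (c) obtained by transporting (a). Two of your choices diverge from the paper in ways worth noting. For the basis, you argue spanning by rewriting monomials modulo $\cI_\LR$ and independence by the rank count $\rank\cFGKZ=6$ coming from the isomorphism with $\cFB^\times$; this is legitimate and non-circular (the rank is established in \cite{Iritani:integral} independently of Proposition~\ref{pro:GKZ}). The paper instead first shows that the Gram matrix on the six candidate sections is invertible, so that they span a locally free rank-$6$ subsheaf, and then observes from the computed connection matrices that this subsheaf is closed under $D_1,D_2$ and hence is all of $\cFGKZ$. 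Both routes work; the paper's has the advantage of not needing the rewriting argument at all. You are also right that part (b) reduces to part (a), but it is less delicate than you suggest: the basis \eqref{eq:GKZ_basis_conifold_chart} is literally the basis \eqref{eq:GKZ_basis_LR_chart} re-expressed via $D_1=-\bigl(1-\tfrac{1}{27t}\bigr)D_t$ and $1+27y_1=27t$, so the matrices of (b) are obtained from those of (a) by explicit substitution, and regularity at $t=0$ is visible by inspection of the resulting entries (the only non-polynomial entry $\tfrac{1}{1+27y_1}$ of the $D_1$-matrix is absorbed by the factor $\tfrac{27t}{1-27t}$ relating $D_t$ to $D_1$). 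There is no hidden cancellation to establish beyond this bookkeeping.

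The genuine gap is in your Gram matrix computation. Substituting the operators into $I_{\Ybar}(y_1,y_2,\pm z)$ and pairing produces, for a general pair of basis elements, a \emph{doubly infinite} power series in $y_1,y_2$; the remark after \eqref{eq:GKZ_pairing} only gives holomorphy across $y_1y_2=0$, not the asserted polynomiality, and you give no mechanism for seeing that the series truncates to the displayed polynomial entries. The paper supplies two such mechanisms, both of which you would need (or substitutes for them). First, the pairing is homogeneous of degree $-3$ for the grading \eqref{eq:GKZ_grading}, under which $\deg z=1$, $\deg y_2=2$, $\deg y_1=0$; since each basis element is homogeneous, every Gram entry is constrained to be a polynomial of bounded degree in $z$ and $y_2$ whose coefficients are functions of $y_1$ alone. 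Second, the entries not divisible by $y_2$ are then computed by restricting $I_{\Ybar}$ to $y_2=0$, where the series collapses using $h_2(h_2-3h_1)=0$, while the remaining $z$-independent, $y_2$-divisible entries (e.g.\ $\bigl((-)^\star D_2^2,D_2^3\bigr)_{\rm GKZ}=27y_2$) are evaluated not from the $I$-function at all but from the leading term of the stationary phase expansion \eqref{eq:stationary_phase}, i.e.\ the residue pairing $\sum_\sigma f(\sigma)g(\sigma)/\Hess_\sigma(W)$ over the critical points of $W_y$. Without the homogeneity constraint and without some closed-form evaluation of the $y_1$-dependence, your plan for the Gram matrix does not terminate; you should incorporate both ingredients.
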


\begin{proof}
  We will prove only part (a).  Part (b) follows trivially from part
  (a), and part (c) is very similar.  Consider first the subsheaf of
  the GKZ system spanned, over $\cO_{U_\LR^\times \times \C}$, by:
  \[
  1, D_2, D_2^2, D_2^3, D_1, (1+27y_1)D_1^2
  \]
  This subsheaf is locally free over $U_\LR^\times \times \C$; to see this,
  it suffices to show that the Gram matrix of the pairing is as
  claimed, for this matrix is invertible for all $y_1$, $y_2$.

  To compute the Gram matrix, observe that the pairing is homogeneous
  of degree ${-3}$ with respect to the grading \eqref{eq:GKZ_grading}
  and that, in view of the discussion immediately above, only
  non-negative powers of $y_1$, $y_2$, and $z$ can occur.  Thus the
  Gram matrix takes the form:
  \[
  \begin{pmatrix}
    0 & 0 & 0 & * & 0 & 0 \\
    0 & 0 & * & * z & 0 & * \\
    0 & * & * z & * z^2 + * y_2 & * & * z \\
    * & * z & * z^2 + * y_2 & * z^3 + * z y_2 & * z & * z^2 + * y_2 \\
    0 & 0 & * & * z & 0 & * \\
    0 & * & * z & * z^2 + * y_2 & * & * z 
  \end{pmatrix}
  \]
  where each asterisk denotes an unknown function of $y_1$.  Consider
  now the matrix entry $((-)^\star D_2,D_1^2)_{\rm GKZ}$.  
Combining equation \eqref{eq:GKZ_pairing} with the equality:
  \[
  I_{\Ybar}(y_1,y_2,-z)\Big|_{y_2=0} = 
  e^{-h_1/z} e^{-h_2/z} 
 \left(1 + \sum_{k>0}
  y_1^k
 \frac
 {\prod_{-3k<m \leq 0}(h_2 - 3 h_1 - m z)}
 {\prod_{1 \leq m \leq k} (h_1-mz)^3}
 \right)
 \]
 yields:
 \begin{multline*}
   ((-)^\star D_2,D_1^2)_{\rm GKZ} = \\
   \left(
     h_2  + \textstyle \sum_{k>0}
     y_1^k h_2
     \frac
     {\prod_{-3k<m \leq 0}(h_2 - 3 h_1 +m z)}
     {\prod_{1 \leq m \leq k} (h_1 +mz )^3},
     h_1^2 + 
     \sum_{l>0}
     y_1^l (h_1 -l z)^2
     \frac
     {\prod_{-3l<m \leq 0}(h_2 - 3 h_1-m z)}
     {\prod_{1 \leq m \leq l} (h_1-mz)^3}
   \right)_{{\rm A},\Ybar}
 \end{multline*}
and hence:
\begin{align*}
  & ((-)^\star D_2,D_1^2)_{\rm GKZ} \\
  & = \int_{\Ybar}
   \left(
     h_2  + \textstyle \sum_{k>0}
     y_1^k h_2
     \frac
     {\prod_{-3k<m \leq 0}(h_2 - 3 h_1 + m z)}
     {\prod_{1 \leq m \leq k} (h_1+mz)^3}\right)
   \left(\textstyle
     h_1^2 + 
     \sum_{l>0}
     y_1^l (h_1-l z)^2
     \frac
     {\prod_{-3l<m \leq 0}(h_2 - 3 h_1 - m z)}
     {\prod_{1 \leq m \leq l} (h_1 -mz)^3}
   \right) \\
   & = \int_{\Ybar} h_2 h_1^2 = 1
 \end{align*}
 where for the second equality we used the relation
 $h_2(h_2-3h_1) = 0$ in $H^\bullet(Y)$.  Thus:
 \[
 \big((-)^\star D_2,(1+27y_1) D_1^2 \big)_{\rm GKZ} = 1+27y_1
 \]

 The same reasoning allows us to fill in almost all terms in the Gram
 matrix that are not divisible by $y_2$:
 \[
 \begin{pmatrix}
   0 & 0 & 0 & 9 & 0 & 0 \\
   0 & 0 & 9 & 0 & 0 & 1+27y_1 \\
   0 & 9 & 0 & * y_2 & 3 & 0 \\
   9 & 0 & * y_2 & * z y_2 & 0 & * y_2 \\
   0 & 0 & 3 & 0 & 0 & * \\
   0 & 1+27y_1 & 0 & * y_2 & * & 0 
 \end{pmatrix}
 \]
 Furthermore the symmetry:
 \[
 \big((-)^\star s_1, s_2 \big)_{\rm B} = 
 (-)^\star \big((-)^\star s_2,s_1\big)_{\rm B}
 \]
 gives a corresponding symmetry of the GKZ pairing, which in
 particular implies that 
 \[
 ((-)^\star D_2^3,D_2^3)_{\rm GKZ} = 0.
\]
All remaining
 terms in the Gram matrix are therefore independent of~$z$.  These can
 be calculated using the principal term of the stationary phase
 approximation \eqref{eq:stationary_phase}, where we see the residue
 pairing:
 \[
 \bigg(\Big[f(x,{-z}) e^{W(x)/z} \, \omega\Big], 
 \Big[g(x,z) e^{{-W}(x)/z} \, \omega\Big] \bigg)_{\rm B} =
 \sum_{\text{critical points $\sigma$}} 
 \frac{f(\sigma,0) g(\sigma,0)}{\Hess_\sigma(W)} + O(z)
 \]
 Thus:
 \begin{align*}
   \big((-)^\star D_2^2,D_2^3\big)_{\rm GKZ} &=
   \sum_{\text{critical points $\sigma$}} 
   \frac{\big(y_2 \frac{\partial W}{\partial y_2}(\sigma)\big)^2 
     \big(y_2 \frac{\partial W}{\partial y_2}(\sigma)\big)^3}{\Hess_\sigma(W)} + O(z) \\
   &= \sum_{\text{critical points $\sigma$}}  {\big({3y_1 y_2^3 \over w_1 w_2 w_5^3} + {y_2 \over w_5}\big)^5
     \over w_1^2 w^5} + O(z) \\
   &= 27 y_2 + O(z)
 \end{align*}
 where we use co-ordinates $(w_1,w_2,w_5)$ on the fibre of the
 Landau--Ginzburg model as in \eqref{eq:fibre_coordinates}, and at the
 last step we used the critical point equations:
 \begin{align*}
   w_1 - {y_1 y_2^3 \over w_1 w_2 w_5^3} =0 &&
   w_2 - {y_1 y_2^3 \over w_1 w_2 w_5^3} =0 &&
   w_5 - {3y_1 y_2^3 \over w_1 w_2 w_5^3} - {y_2 \over w_5} = 0
 \end{align*}
 On the other hand we know that 
$((-)^\star D_2^2,D_2^3\big)_{\rm GKZ}$ is
 independent of $z$, so:
 \[
 \big((-)^\star D_2^2,D_2^3\big)_{\rm GKZ} = 27 y_2
 \]
 The same reasoning yields $(D_2^3,D_2^2)_{\rm GKZ} = 27 y_2$ and:
 \begin{align*}
   & \big((-)^\star D_2^3,(1+27y_1)D_1^2\big)_{\rm GKZ} = y_2(1+27y_1) 
   && \big((-)^\star D_1,(1+27y_1)D_1^2\big)_{\rm GKZ} = 9 y_1 \\
   & \big((-)^\star (1+27y_1)D_1^2,D_2^3\big)_{\rm GKZ} = y_2(1+27y_1) 
   && \big((-)^\star (1+27y_1)D_1^2, D_1\big)_{\rm GKZ} = 9 y_1 
 \end{align*}
 This completes the calculation of the Gram matrix.

 We now compute the connection matrices, i.e. the matrices for the
 action of $D_1$ and $D_2$ on the elements:
 \[
 1, D_2, D_2^2, D_2^3, D_1, (1+27y_1)D_1^2
 \]
 This is routine, involving repeated application of the equations
 \eqref{eq:GKZ_chart_1}; one can do this systematically using
 Gr\"obner basis methods as in \cite{Guest}.  In particular we
 discover that the subsheaf of the GKZ system spanned over $\cO_{U_\LR^\times
   \times \C}$ by the above elements is closed under the action of
 $D_1$ and $D_2$.  It follows that this subsheaf is in fact the entire
 GKZ system over $U_\LR^\times \times \C$, and hence that
 \eqref{eq:GKZ_basis_LR_chart} is a basis for the GKZ system over
 $U_\LR^\times \times \C$, as claimed.
\end{proof}

With these explicit connection matrices in hand, we now construct a
logarithmic extension of the B-model TEP structure to all of
$\cMB$.

\begin{definition}[\cite{Deligne}*{Proposition 5.2}]
\label{def:Deligne}
Let $(\cG^\times, \nabla)$ be a locally free sheaf with flat connection on
$\cM \setminus D$, where $D$ is a normal crossing divisor in $\cM$.  
Let $\cG$ be a locally free  
extension of $\cG^\times$ to $\cM$ such that $\nabla$ is extended to a 
meromorphic flat connection on $\cG^\times$ with logarithmic singularities
along $D$.  We say that $\big(\cG, \nabla \big)$
is \emph{the Deligne extension} of $(\cG^\times,\nabla)$ across $D$ if the
residue endomorphisms of $\nabla$ along $D$ are nilpotent. 
Let $(\cF^\times,\nabla, (\cdot,\cdot))$ be a TEP structure with base $\cM\setminus D$. 
We say that a \logDTEP structure 
$(\cF, \nabla, (\cdot,\cdot))$ 
with base $(\cM,D)$ is \emph{the Deligne extension} of 
$(\cF^\times,\nabla,(\cdot,\cdot))$ 
if $(\cF, \nabla,(\cdot,\cdot))$ restricts to 
$(\cF^\times,\nabla,(\cdot,\cdot))$ over $\cM\setminus D$ 
and for each $z\in \C^\times$,  
$(\cF,\nabla)|_{\cM\times \{z\}}$ is the Deligne 
extension of 
$(\cF^\times,\nabla)|_{(\cM\setminus D) \times \{z\}}$ 
\end{definition}

\begin{remark}
\label{rem:Deligne_extension}
Deligne \cite{Deligne} called this logarithmic extension 
``prolongement canonique". 
The Deligne extension of a flat connection on $\cM\setminus D$ 
across $D$ exists if and only if 
the local monodromy around $D$ is unipotent, 
and is unique if it exists. 
When the local monodromy around $D$ is not unipotent, 
a logarithmic extension is given by the choice of a determination 
of logarithm, i.e.~a section of $\C \to \C/\Z$ 
\cite{Deligne}*{Proposition 5.4}. 
\end{remark}
\begin{proposition-definition}
  \label{pro-def:2d_log_TEP}
  Recall from \S\ref{sec:LG_Ybar} that the toric variety
  $\cMB$ is covered by two toric co-ordinate patches, with
  co-ordinate systems $(y_1,y_2)$ and $(\fry_1,\fry_2)$.  
Let $U_\LR$ and $U_\orb$ denote 
the following co-ordinate patches of $\cMB$ 
(see equation~\eqref{eq:charts} for $U_\LR^\times$ and $U_\orb^\times$) 
\[
U_\LR = \{(y_1,y_2)\in \C^2\} \quad \quad  
U_\orb = \{(\fry_1,\fry_2)\in \C^2: \fry_1^3 \neq -27\} 
\] 
Specifying that the following generators of $\cFB^\times=\cFGKZ$:
  \begin{align*}
    & 1, D_2, D_2^2, D_2^3, D_1, (1+27y_1)D_1^2 &
    \text{over $U_\LR^\times \times \C$, as in \eqref{eq:GKZ_basis_LR_chart}} \\
    & 1, \frD_2, \frD_2^2, \frD_2^3, \frd_1, \frd_1^2 &
    \text{over $U_\orb^\times \times \C$, as in \eqref{eq:GKZ_basis_orbifold_chart}}
  \end{align*}
  form locally free bases for $\cFB$ over (respectively)
  $U_\LR \times \C$ and $U_\orb \times \C$ defines
  a locally free sheaf $\cFB$ over $\cMB \times \C$.
  The sheaf $\cFB$ carries a meromorphic flat connection $\nablaB$
  and a pairing $\pairingB$ and the triple 
  $(\cFB,\nablaB, \pairingB)$ 
  forms a \logDTEP structure with base $(\cMB,D)$ 
  in the sense of Definition \ref{def:logDTEP}, 
  where 
\begin{equation} 
\label{eq:log_divisor} 
D = \overline{(y_1y_2=0)} \cup (y_1 = -1/27). 
\end{equation} 
We call the triple $(\cFB,\nablaB, \pairingB)$ 
the \emph{B-model \logDTEP structure}. 
The restriction of the B-model \logDTEP structure to 
$\cMB^\times \times \C$ is canonically isomorphic to the B-model TEP structure, 
and the B-model \logDTEP structure is the Deligne extension of 
the B-model TEP structure. 
\end{proposition-definition}

\begin{proof}
  We need to check that the generators specified give locally free
  bases for $\cFB^\times$ over (respectively) $U_\LR^\times \times \C$ and $U_\orb^\times
  \times \C$, that the connection matrices with respect to these bases
  have logarithmic singularities along the divisor $D \times \C$, that
  the residue endomorphisms of the connection along $D$ are nilpotent, and
  that the pairing extends holomorphically across $D$.  These
  statements follow easily from Proposition~\ref{pro:GKZ}.
\end{proof}

\begin{remark} 
\label{rem:orbisheaf} 
The locally free sheaf $\cFB$ should be understood 
as an orbi-vector bundle on the orbifold chart, cf.~Example \ref{ex:log_TEP_A}. 
In other words, on the chart $U_\orb$, $\cFB$ is 
a $\mu_3$-equivariant sheaf equipped with $\mu_3$-invariant 
connection and pairing. The $\mu_3$-action is given on the frame by 
$(1,\frD_2,\frD_2^2,\frD_2^3,\frd_1,\frd_2^2) 
\mapsto (1,\frD_2,\frD_2^2,\frD_2^3,e^{2\pi\iu/3} \frd_1, 
e^{4\pi\iu/3} \frd_2^2)$. 
\end{remark}

\section{The Conformal Limit}
\label{sec:conformal_limit}
Let $\cMCY = \Proj(3,1)$, and let $\DCY$ be the divisor
$\{0,{-\frac{1}{27}}\} \subset \cMCY$.  A key ingredient in
Aganagic--Bouchard--Klemm's modularity argument is the family of
elliptic curves:
\begin{equation}
  \label{eq:mirror_family}
  \big\{[X:Y:Z] \in \Proj^2 : X^3+Y^3+Z^3+y^{-1/3} XYZ=0\big\}
\end{equation}
parametrized by $y \in \cMCY \setminus \DCY$, and the corresponding
variation of Hodge structure.  This variation of Hodge structure is a
two-dimensional vector bundle over $\cMCY \setminus \DCY$ equipped
with a flat connection and a Hodge filtration.  We will see in this
section how this finite-dimensional variation of Hodge structure
arises from the B-model TEP structure, by taking the \emph{conformal
  limit} $y_2 \to 0$ of the Deligne extension $\cFB$.

\subsection{A Vector Bundle of Rank 6 on $\cMCY$ with a Logarithmic
  Connection}
\label{sec:six}
The closure of the locus $\{y_2=0\}$ in $\cMB$ is a copy
of $\cMCY$.  Consider the restriction
\[
\cFCY:=\cFB|_{\cMCY\times \C}
\]
of the B-model \logDTEP structure $\cFB$ (Proposition-Definition
\ref{pro-def:2d_log_TEP}) to $\cMCY \times \C \subset
\cMB\times \C$.  The sheaf $\cFCY$ has the structure of a
\logDTEP structure with base $(\cMCY,\DCY)$ together with the
endomorphism $N \colon \cFCY\to z^{-1} \cFCY$ 
defined as the residue of $\nablaB$
along the divisor $\cMCY \times \C \subset \cMB\times \C$
and the grading operator $\Gr$.  More precisely we have:
\begin{itemize} 
\item a meromorphic flat connection with poles along 
$Z = (\DCY \times \C) \cup (\cMCY\times \{0\})$
\[
\nabla \colon \cFCY \to \Omega^1_{\cMCY\times \C}(\log Z) \otimes
\cFCY(\cMCY\times \{0\})
\]
defined by 
\begin{align*} 
\nabla\left( s|_{y_2=0}\right) 
& = \left.\left( 
\nablaB_{y_1\parfrac{}{y_1} - \frac{1}{3} y_2 \parfrac{}{y_2}} s
\right)\right|_{y_2=0} \frac{dy_1}{y_1} 
+ \left.\left(\nablaB_{z\parfrac{}{z}} s \right)\right|_{y_2=0} 
\frac{dz}{z}  \\ 
& = \left.\left( 
\nablaB_{\fry_1\parfrac{}{\fry_1}} s
\right)\right|_{y_2=0} \frac{d\fry_1}{\fry_1} 
+ \left.\left(\nablaB_{z\parfrac{}{z}} s \right)\right|_{y_2=0} 
\frac{dz}{z}
\end{align*}
for a local section $s$ of $\cFB$; 
\item a flat non-degenerate pairing 
\[
(\cdot,\cdot): ({-})^\star \cFCY \otimes \cFCY \to \cO_{\cMCY \times
  \C}
\]
induced by $\pairingB$;

\item the residue endomorphism 
$N\colon \cFCY \to z^{-1} \cFCY$: 
\[
N = \nablaB_{y_2 \partial_{y_2}}\bigr|_{y_2=0} 
= \nablaB_{\fry_2 \partial_{\fry_2}}\bigr|_{\fry_2 =0} 
= -z^{-1} D_2 = -z^{-1} \frD_2  
\] 
which is flat for $\nabla$ and satisfies $((-)^\star N s_1, s_2) 
= - ((-)^\star s_1, N s_2)$ for $s_1, s_2 \in \cFCY$; 

\item the grading operator $\Gr \colon \cFCY \to \cFCY$ 
induced from the grading operator \eqref{eq:GKZ_grading} of 
the GKZ system: this is related to $\nabla_{z\partial_z}$ 
by 
\begin{align}
  \label{eq:restricted_GKZz}
  \nabla_{z \partial_{z}} = \Gr - 2 N - \frac{3}{2} && 
\text{(cf.~equation~\eqref{eq:GKZz})}.
\end{align}
\end{itemize} 
\begin{remark} 
Let $\nabla$ be a flat connection on $\cM$ with logarithmic singularities 
along a smooth divisor $D \subset \cM$. 
In order to obtain a flat connection along $D$ from $\nabla$, 
we need to choose a splitting of the sequence $0 \to \Omega^1_D \to 
\Omega^1_{\cM}(\log D)|_D \xrightarrow{\Res} \cO_D \to 0$ 
(see Example~\ref{ex:no_canonical_map} below) otherwise 
the induced connection along $D$ is defined only 
`up to the residue endomorphism'.  
This choice is not canonical in general, and we chose a particular 
splitting when defining the connection $\nabla$ on $\cFCY$. 
The splitting does not play an important role in this section, 
but will appear again in 
\S\S\ref{sec:correlation_functions_conformal_limit}--\ref
{sec:opposites_conformal_limit} 
and will be important there. 
\end{remark} 
The triple $(\cFCY, \nabla, (\cdot,\cdot))$ is a \logDTEP structure
with base $(\cMCY,\DCY)$ in the sense of Definition \ref{def:logDTEP}.
The grading operator $\Gr$ on $\cFCY$ is $\pi^{-1}\cO_{\cMCY}$-linear
since the variable $y_1$ of the base is of degree zero. Thus it serves
as another connection in the $z$-direction.  The GKZ description also
passes to $\cFCY$: on the chart $\cMCY\setminus
\{0,-\frac{1}{27},\infty\}$, it is defined by the relations
\begin{align*} 
& D_2(D_2-3D_1)=0, \\ 
& D_1^3 - y_1 (D_2-3D_1) (D_2- 3D_1 +z) 
(D_2 - 3D_1+2z) =0 
\end{align*} 
where $D_1 = -z y_1 \partial{}{y_1}$ 
is as before and $D_2 = [-zy_2\partial_{y_2}]_{y_2=0} 
= -z N$ is now an $\cO_{\cMCY\times\C}$-linear 
endomorphism commuting with $D_1$.  It is extended
across the three points $\{0,-\frac{1}{27},\infty\}$ by the bases
specified in Proposition-Definition \ref{pro-def:2d_log_TEP}.

\subsubsection{The Rank 6 Vector Bundle $H$} 
Consider now the push-forward $\pi_\star (\cFCY^\divideontimes)$ where
$\pi\colon \cMCY \times \C^\times \to \cMCY$ is the projection; see
Notation~\ref{notation:x} for the notation here.  Consider the
subsheaf of $\pi_\star ( \cFCY^\divideontimes)$ consisting of homogeneous
sections of degree~$1$ with respect to $\Gr$; this subsheaf is locally
free of rank~6 over $\cMCY$, and thus defines a rank-$6$ vector bundle
$H \to \cMCY$.  The vector bundle $H$ carries the following
structures:
\begin{itemize}
\item a logarithmic flat connection $\nabla\colon \cO(H) \to 
\Omega^1_{\cMCY}(\log \DCY) \otimes \cO(H)$, 
induced from the meromorphic flat connection on $\cFCY$;
\item a $\nabla$-flat endomorphism $N\in \End(H)$ of vector bundles, 
induced by the residue endomorphism $N \colon \cFCY\to z^{-1} \cFCY$; 
\item an $\cO_{\cMCY}$-bilinear symplectic pairing 
$\Omega \colon \cO(H)\otimes \cO(H) \to \cO_{\cMCY}$, 
induced by the pairing $(\cdot,\cdot)$ on $\cFCY$. 
\end{itemize}
The pairing $(\cdot,\cdot)$ induces a symplectic pairing $\Omega$ 
on $\cO(H)$ because, when restricted to $\cO(H)$, $(\cdot,\cdot)$
takes values in $z^{-1} \cO_{\cMCY}$; we set:
\begin{align*}
  \Omega(s_1,s_2) = \Res_{z=0} \big((-)^\star s_1,s_2) \, dz
  &&
  \text{for $s_1$, $s_2 \in \cO(H)$.}
\end{align*}
The connection $\nabla$ on $H$ preserves the symplectic form, 
and $N:H \to H$ is infinitesimally symplectic, i.e.~$\Omega(Nv, w) + 
\Omega(v, Nw ) =0$. 
In view of Proposition-Definition \ref{pro-def:2d_log_TEP}, 
local frames of $H$ over the manifold chart 
$\cMCY \setminus \{y_1= \infty\}$ and the orbifold chart 
$\cMCY\setminus \{y_1 = 0\}$ 
are given respectively by:  
\begin{align}
\label{eq:basis_for_H}
\begin{aligned} 
& \text{$-z$, $D_2$, $z^{-1}D_2^2$, $z^{-2}D_2^3$, 
$D_1-\tfrac{1}{3}D_2$, 
$z^{-1}(1+27 y_1)(D_1-\tfrac{1}{3}D_2)^2$} 
\\
& 
\text{$-z$, $\frD_2$, $z^{-1}\frD_2^2$, $z^{-2}\frD_2^3$, 
$\frd_1$, $z^{-1} (1+\tfrac{1}{27}\fry_1^3) \frd_1^2$} 
\end{aligned} 
\quad 
\begin{aligned} 
& \text{on $\cMCY \setminus \{y_1=\infty\}$} \\ 
& \text{on $\cMCY\setminus \{y_1 = 0\}$} 
\end{aligned} 
\end{align}
These two bases are related by the transition matrix 
\[
\left( 
\begin{array}{c|cc}
I &  & \\ \hline 
 & -\frac{1}{3}\fry_1 & -\frac{1}{9}\fry_1 (1+27 \fry_1^{-3}) \\ 
&  0& 3\fry_1^{-1} 
\end{array} 
\right) 
\] 
where $I$ is the identity matrix of rank 4. 
This implies that
$\cO(H) \cong \cO^{\oplus 4} \oplus \cO(1) \oplus \cO(-1)$ 
as a bundle on $\cMCY = \Proj(3,1)$. 

\subsubsection{The Hodge Filtration}
The vector bundle $H$ carries a `Hodge filtration' given 
by pole order at $z=0$:
\[
F^p = \Bigg[\pi_\star \Big(z^{p-2} \cFCY \Big)\Bigg]_{\deg
  1}
\]
where $\pi\colon \cMCY\times \C \to \cMCY$ is the projection 
and the subscript indicates that we take the subsheaf consisting of
homogeneous elements of degree~1.  This is a decreasing filtration 
by subbundles: 
\[
0 \subset F^3 \subset F^2 \subset F^1 \subset F^0 = H
\]
such that one has: 
\begin{align*}
  \nabla_v F^p \subset F^{p-1}
  \qquad 
  N F^p \subset F^{p-1} 
  \qquad 
  \Omega(F^p, F^{4-p}) =0 
\end{align*}
for any vector field $v \in \Theta_{\cMCY}(\log\DCY\})$. 
Explicit bases of the subbundles $F^p$ 
on the manifold chart $\cMCY\setminus \{y_1=\infty\}$ 
are given by: 
\begin{align*} 
F^3 : & && {-z}  \\
F^2 : & && {-z}, D_2, D_1-\tfrac{1}{3}D_2 \\
F^1 : & && {-z}, D_2, D_1 - \tfrac{1}{3}D_2, z^{-1} D_2^2, 
z^{-1} (1+27y_1)(D_1-\tfrac{1}{3}D_2)^2\\ 
F^0 : & && {-z}, D_2, D_1 - \tfrac{1}{3}D_2, z^{-1} D_2^2, 
z^{-1} (1+27y_1)(D_1-\tfrac{1}{3}D_2)^2, z^{-2} D_2^3
\end{align*} 
There is a `primitive section' $\zeta\in F^3$ of $H$, represented by 
$-z$ in the GKZ system.  
This satisfies $N^3 \zeta \ne 0$, and $N^3 \zeta$ is flat. 

\subsubsection{The Kernel and the Image of $N$} 
\label{subsubsec:ker_im_N}
The endomorphism $N$ is flat for $\nabla$, and therefore 
the kernel and image of $N$ are preserved by $\nabla$. 
By examining the action of $N=-z^{-1}D_2$ on the basis 
\eqref{eq:basis_for_H}, 
we know that both $\Ker N$ and $\Image N$ are of rank $3$ 
and have the following explicit bases (on the manifold 
chart $\cMCY\setminus \{y_1= \infty\}$): 
\begin{align*} 
\Ker N: && & z^{-2} D_2^3, \ D_1- \tfrac{1}{3}D_2,\ 
z^{-1} (1+27y_1) (D_1- \tfrac{1}{3}D_2)^2  \\
\Image N : &&  D_2, \ z^{-1}D_2^2, \ & z^{-2} D_2^3 
\end{align*}

\subsection{A Vector Bundle of Rank 2 on $\cMCY$ with a
  Logarithmic Connection}
\label{sec:two}

We now pass from $H$, which is a six-dimensional symplectic
vector bundle over $\cMCY$, to a two-dimensional symplectic
vector bundle $\Hvec$ over $\cMCY$. 
The bundle $\Hvec$ is obtained from $H$ via the infinitesimally 
symplectic endomorphism $N$. 
A similar construction appears in the work of Konishi--Minabe 
\cite{Konishi--Minabe}*{\S 8} in the A-model.

\subsubsection{The Rank 3 Vector Bundle $\widebar{H}=\Cok N$ 
and Quantum D-Module of $K_{\Proj^2}$} 
Consider the cokernel $\widebar{H}$ of the map $N:H \to H$.  
This carries a flat connection $\nabla$ with logarithmic poles along $\DCY$ induced by $\nabla$ on $H$. 
Write $\theta = \nabla_{y_1\partial_{y_1}} = -z^{-1} D_1$ 
for the operator\footnote{As $z^{-1}D_2$ acts trivially on 
$\cO(\widebar{H})$, we have $\theta= -z^{-1}
(D_1-\frac{1}{3}D_2)= -\frac{1}{3}\fry_1\frd_1$.} 
acting on $\cO(\widebar{H})$. 
Local frames for $\widebar{H}$ on the manifold chart 
$\cMCY\setminus \{y_1= \infty\}$ and 
the orbifold chart 
$\cMCY\setminus \{y_1 = 0\}$ are given respectively by: 
\begin{equation} 
\label{eq:frame_Hbar} 
\zeta =[-z], \quad \theta\zeta= [D_1- \tfrac{1}{3}D_2], \quad
-(1+27y_1) \theta^2 \zeta
=  [z^{-1} (1+27 y_1) (D_1- \tfrac{1}{3}D_2)^2] 
\end{equation} 
and 
\[
\zeta =[-z],\quad  -3\fry_1^{-1} \theta \zeta = [\frd_1], \quad 
\tfrac{1}{3}\fry_1(1+27\fry_1^{-3})
\theta(\theta+\tfrac{1}{3})\zeta 
= [-z^{-1} (1+\tfrac{1}{27}\fry_1^3) \frd_1^2] 
\]
We have $\cO(\widebar{H}) \cong \cO \oplus \cO(1) \oplus \cO(-1)$. 
The differential operator  
\begin{equation}
  \label{eq:QDE_for_Y}
\theta^3 - y_1 (-3\theta)(-3\theta-1)(-3\theta-2) 
\end{equation}
annihilates the 
primitive section $\zeta \in \cO(\widebar{H})$. 
Hence the D-module $(\cO(\widebar{H}),\nabla)$ is isomorphic to 
the quantum D-module for $Y = K_{\Proj^2}$; 
equation \eqref{eq:QDE_for_Y} is the Picard--Fuchs equation 
for the family of elliptic curves \eqref{eq:mirror_family} mirror to $K_{\Proj^2}$. 
With respect to the frame $\{\zeta, \theta\zeta, 
(1+27y_1)\theta^2\zeta\}$ \eqref{eq:frame_Hbar} in the manifold chart, 
the action of $\theta$ is represented by the matrix: 
\[
\theta = 
\begin{pmatrix} 
0 & 0 & 0 \\ 
1 & 0 & -6y_1 \\ 
0 & \frac{1}{1+27y_1} & 0 
\end{pmatrix} 
\]
\subsubsection{Affine Subbundle $\Haff$ of $\widebar{H}$}
\label{subsubsec:Haff} 
Any local function $\psi(y_1)$ annihilated by the 
differential operator \eqref{eq:QDE_for_Y} 
gives a D-module homomorphism 
$\psi^\sharp \colon \cO(H) \to \cO_{\cMCY}$ 
sending $\zeta$ to the function $\psi(y_1)$. 
The constant function $1$ is a solution to the equation \eqref{eq:QDE_for_Y} 
and thus defines a homomorphism $1^\sharp \colon \cO(H)\to \cO_{\cMCY}$. 
Consider the slice (affine subbundle) $\Haff$ of $\widebar{H}$ 
given by:
\[
\Haff = \{ v \in \widebar{H} : 1^\sharp(v) =1 \}
\]
(cf.~the dilaton shift in equation~\eqref{eq:Dilaton_shift}).  
Elements of $\Haff$ take the form:
\begin{align}
\label{eq:coordinates_Haff} 
\zeta + x \cdot \theta \zeta - p \cdot  (1+27y_1) \theta^2 \zeta 
&& x, \ p \in \C;
\end{align}
on the manifold chart $\cMCY \setminus \{y_1 = \infty\}$. 
As we see in \S \ref{subsubsec:Hvec} below, 
each fibre of the affine bundle $\Haff$ is naturally 
equipped with an affine symplectic structure. 
The affine bundle $\Haff$ is preserved by the connection $\nabla$ on 
$\widebar{H}$. 

\subsubsection{Rank 2 Vector Subbundle $\Hvec$ of $\widebar{H}$ 
Parallel to $\Haff$} 
\label{subsubsec:Hvec}
Consider the canonical projection $\Ker N \to \Cok N = \widebar{H}$. 
This induces an embedding of vector bundles
$\Ker N/(\Image N \cap \Ker N) \to \widebar{H}$. 
Let $\Hvec$ denote the image of $\Ker N/(\Image N \cap \Ker N)$ in
$\widebar{H}$.  
From the description of $\Haff \subset 
\widebar{H}$ in \S \ref{subsubsec:Haff} and 
$\Ker N$ in \S \ref{subsubsec:ker_im_N}, 
there is a canonical identification between the tangent space 
to the affine space $\Haff|_y$ and the fibre $\Hvec|_y$. 
In other words, $\Hvec$ is a vector subbundle of $\widebar{H}$ 
parallel to $\Haff$.
The bundle $\Hvec$ carries a flat connection $\nabla$ with 
logarithmic poles along $\DCY$ 
and one has $\cO(\Hvec) \cong \cO(1) \oplus\cO(-1)$. 

The symplectic structure on $H$ descends to a symplectic 
structure on $\Hvec$. 
Given a finite-dimensional symplectic vector space $(H,\Omega)$ and an
infinitesimal symplectic transformation $N \in \Liesp(H)$, the
symplectic orthogonal $(\Ker N)^\perp$ coincides with $\Image N$:
since $\Omega(Nv,w) = {-\Omega(v,Nw)}$ we have that $\Image N \subset
(\Ker N)^\perp$, and the two spaces have the same dimension.  The
symplectic pairing $\Omega$ thus induces a symplectic pairing on the
quotient space $\Ker N / (\Image N \cap \Ker N)$.  Applying this
construction to the (six-dimensional, flat) symplectic vector bundle
$H$ and the bundle map $N:H \to H$ yields a (two-dimensional, flat)
symplectic vector bundle $\Hvec = \Ker N / (\Image N \cap \Ker N)$.  
The symplectic pairing is given by:
\begin{equation} 
\label{eq:symplectic_pairing}
  \Omega\big(\theta \zeta, -(1+27y_1)\theta^2\zeta \big) 
= - \frac{1}{3} 
= \Omega \big([\frd_1], [-z^{-1}(1+\tfrac{1}{27}\fry_1^3) 
\frd_1^2] \big) 
\end{equation}
and therefore the symplectic form on $\Haff$ is given by 
$\frac{1}{3} dp \wedge dx$ in the co-ordinates 
\eqref{eq:coordinates_Haff}. 

\subsection{ Opposite Filtrations on $H$, $\overline{H}$, and $\Hvec$}
\label{sec:opposite_filtrations}

The Hodge filtration $F^\bullet$ on $H$ induces a filtration:
\[
0 \subset \widebar{F}^3 \subset \widebar{F}^2 \subset \widebar{F}^1 = \widebar{H}
\]
on $\widebar{H}$, 
where $\widebar{F}^k := F^k/(\Image N \cap
F^k)$. They are spanned by the bases 
\begin{align*} 
& \widebar{F}^3 : \quad  \zeta = [-z]\\
& \widebar{F}^2 : \quad  \zeta = [-z], 
\theta\zeta = [D_1-\tfrac{1}{3}D_2] \\ 
& \widebar{F}^1 : \quad \zeta =[-z], 
\theta\zeta = [D_1 - \tfrac{1}{3}D_2], 
-(1+27y_1)\theta^2 \zeta = 
[z^{-1} (1+27 y_1)(D_1- \tfrac{1}{3} D_2)^2] 
\end{align*} 
on the manifold chart $\cMCY\setminus \{y_1 = \infty\}$. 
This restricts to a filtration:
\[
0 = F^3_\vec \subset F^2_\vec \subset F^1_\vec = \Hvec
\]
on $\Hvec$, where $F^k_\vec = \Hvec \cap \widebar{F}^k$.  

\subsubsection{Opposite Filtration}
Opposite filtrations are decreasing filtrations which are complementary 
to the Hodge filtration. We study a well-behaved class of opposite 
filtrations which yield trivializations of $\cFCY|_{\{y\}\times \C}$ 
(i.e.~extensions of $\cFCY|_{\{y\}\times \C}$ to a free 
$\cO_{\Proj^1}$-module) with good properties. 
See \citelist{\cite{SaitoM}*{\S 3}\cite{Hertling:book}*{\S 7}} 
for a closely related discussion. 

\begin{proposition}
  \label{pro:correspondence_of_opposites}
  Let $y \in \cMCY$.  
Let $z$ denote the standard co-ordinate on
  $\{y\} \times \Proj^1 \cong \Proj^1$. There is a one-to-one
  correspondence between:
  \begin{itemize}
  \item[(A)] subspaces $P$ of $\Hvec|_y$ such that $F^2_\vec \oplus P
    = \Hvec|_y$;
  \item[(B)] filtrations:
    \[
    0 = 
    \widebar{U}_0 \subset 
    \widebar{U}_1 \subset 
    \widebar{U}_2 \subset 
    \widebar{U}_3 =
    \widebar{H}|_y
    \]
    satisfying $\widebar{F}^p \oplus \widebar{U}_{p-1} =
    \widebar{H}|_y$ and $\widebar{U}_2 = \Hvec|_y$.
  \item[(C)] filtrations:
    \[
    0 \subset U_0 \subset U_1 \subset U_2 \subset U_3=H|_y
    \]
    satisfying $F^p \oplus U_{p-1} = H|_y$, $N(U_p) \subset U_{p-1}$,
    $U_0^\perp = U_2$, and $U_1^\perp = U_1$; 
  \item[(D)] extensions of $\cFCY|_{\{y\}\times \C} = 
\cFB\big|_{\{y\} \times \C}$ to a locally free
    sheaf $\cE$ on $\{y\} \times \Proj^1$ such that:
    \begin{itemize}
    \item the corresponding holomorphic vector bundle on $\{y\} \times
      \Proj^1$ is trivial;
    \item the pairing $\pairingB$ extends holomorphically across
      $z=\infty$ and is non-degenerate there;
    \item the connection $\nabla$ has a logarithmic pole at
      $z=\infty$; 
    \item the map $N$ defined in \S\ref{sec:six} extends
      holomorphically across $z=\infty$ and vanishes there. 
    \end{itemize}
  \end{itemize}
  This correspondence satisfies:
  \begin{align*}
    & \widebar{U}_k = U_k/(\Image N \cap U_k) \\
    & P = \widebar{U}_1 = U_1/(\Image N \cap U_1)
  \end{align*}
\end{proposition}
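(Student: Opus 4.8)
The plan is to establish the four-way correspondence by treating (C) as the central object and exhibiting bijections (C)$\leftrightarrow$(D), (C)$\to$(B), and (B)$\to$(A), then verifying the reverse passages. The guiding principle is that each condition expresses a compatibility of a splitting with the three structures carried by $H$: the connection (through $\nabla_v F^p \subset F^{p-1}$), the residue endomorphism $N$, and the symplectic pairing $\Omega$. First I would set up the dictionary between opposite filtrations and extensions across $z=\infty$, which is essentially the content of Proposition~\ref{pro:opposite_trTLEP} specialized to a single fibre $\{y\}\times\Proj^1$. Given an extension $\cE$ as in (D), the space of global sections $z\bP_y \cap \bF_y$ trivializes $\cE$, and setting $U_p := $ (the image in $H|_y$ of the degree-one part of $z^{p-1}\bP$) produces a filtration; the four listed properties of $\cE$ translate respectively into triviality (so that $U_\bullet$ is genuinely opposite to $F^\bullet$), isotropy of the opposite module (giving $U_0^\perp = U_2$ and $U_1^\perp = U_1$ via the residue pairing), logarithmic pole (homogeneity, controlling the grading), and the vanishing of $N$ at $z=\infty$ (which is exactly $N(U_p)\subset U_{p-1}$). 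Conversely a filtration as in (C) reconstructs $\bP := \bigoplus z^{-p}(U_p/U_{p-1})$ and hence $\cE$.

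Next I would pass from (C) to (B) by applying $\overline{H} = \Cok N$: set $\overline{U}_k = U_k/(\Image N \cap U_k)$ as prescribed. The condition $N(U_p)\subset U_{p-1}$ is precisely what guarantees that this quotient is well-behaved and that $\overline{U}_\bullet$ splits $\overline{F}^\bullet$; the self-orthogonality $U_1^\perp = U_1$ together with $(\Ker N)^\perp = \Image N$ (established in \S\ref{subsubsec:Hvec}) forces $\overline{U}_2 = \Hvec|_y$, matching the requirement in (B). For the reverse direction (B)$\to$(C) I would lift: a filtration on $\overline{H}$ satisfying $\overline{U}_2 = \Hvec|_y$ determines $U_2 = $ preimage under $H \to \overline{H}$ intersected with the $N$-compatibility constraint, and $U_0 := N(U_2)$, $U_1 := $ the Lagrangian sitting between; one checks this is the unique lift obeying all four conditions in (C), using that $N$ has rank $3$ and $\Ker N$, $\Image N$ are the explicit rank-$3$ subbundles of \S\ref{subsubsec:ker_im_N}. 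Finally (B)$\leftrightarrow$(A) is immediate: since $\overline{U}_2 = \Hvec|_y$ is fixed, the only remaining datum is $\overline{U}_1 \subset \Hvec|_y$ complementary to $\overline{F}^2 \cap \Hvec|_y = F^2_\vec$, which is exactly a subspace $P$ with $F^2_\vec \oplus P = \Hvec|_y$; one reads off $P = \overline{U}_1 = U_1/(\Image N \cap U_1)$, giving the asserted formulas.

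The main obstacle I expect is the bookkeeping in the lift (B)$\to$(C): one must show that the extra conditions $U_0^\perp = U_2$ and $U_1^\perp = U_1$ in (C) are \emph{automatically} satisfied by the lifted filtration, rather than being independent choices, so that the map (C)$\to$(B) is genuinely injective and the correspondence is one-to-one. This requires a careful use of the infinitesimally symplectic property $\Omega(Nv,w) + \Omega(v,Nw) = 0$ to relate orthogonals in $H$ to the quotient pairing on $\Hvec$, and the observation that $U_0 = \Image N \cap U_2$ is forced (so that $U_0$ is not free data). The remaining compatibilities are formal consequences of the structure relations $\nabla_v F^p \subset F^{p-1}$, $NF^p \subset F^{p-1}$, and $\Omega(F^p, F^{4-p}) = 0$ recorded in \S\ref{sec:six}, together with the fibrewise Stone--von~Neumann-style bijection between opposite subspaces and extensions to $\Proj^1$.
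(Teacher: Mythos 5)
Your overall architecture is the paper's: (A)$\leftrightarrow$(B) is a triviality, (B)$\leftrightarrow$(C) is the quotient by $\Image N$ together with an explicit lift, and (C)$\leftrightarrow$(D) is the Rees construction relating opposite filtrations to opposite modules and hence to trivial extensions over $\Proj^1$ (the fibrewise case of Proposition~\ref{pro:opposite_trTLEP}), with the four conditions in (D) translating exactly as you say. Your identification of the delicate point --- that $U_0$ and $U_2$ carry no free data and that the orthogonality conditions must come out automatically, so that $P=\widebar{U}_1$ is the only parameter --- is also correct.

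There is, however, one concrete error in your lift (B)$\to$(C): the prescription $U_0:=N(U_2)$ fails. Since $U_2=\Ker N+\Image N$, one has $N(U_2)=N(\Image N)=\langle N^2\zeta,\,N^3\zeta\rangle$, which is two-dimensional; but $F^1\oplus U_0=H|_y$ with $\dim F^1=5$ forces $\dim U_0=1$. The correct choice is $U_0=\langle N^3\zeta\rangle$ (equivalently $N^2(U_2)$ or $N(U_1)$). Relatedly, ``the Lagrangian sitting between'' underdetermines $U_1$: there is a positive-dimensional family of Lagrangians between $U_0$ and $U_2$, and the one you need is pinned down by the input datum, namely $U_1=\{s\in\Ker N:\ s+\Image N\in\widebar{U}_1\}+\langle N^2\zeta\rangle$. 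With that formula, isotropy of $U_1$ is manifest and maximality follows from a dimension count, while $F^2\oplus U_1=H|_y$ requires the small argument that every class in $\Image N$ splits as $N\zeta\in F^2$ plus $N^2\zeta,N^3\zeta\in U_1$. For the injectivity of (C)$\to$(B) the paper puts $N$ into a normal form on a basis $N^3\zeta,N^2\zeta,e_1,N\zeta,e_2,\zeta$ adapted to $U_\bullet$, using that $\Ker N$ is three-dimensional to force $Ne_1=Ne_2=0$; some such argument is needed to make your ``unique lift'' claim precise. These are repairable slips rather than a wrong route, but as written the construction of $U_\bullet$ does not produce a filtration opposite to $F^\bullet$.
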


\begin{proof} \ 
  \subsubsection*{($\text{A} \iff \text{B}$)} To give a subspace $P$ as in (A) is
  exactly the same as to give a filtration $\widebar{U}_\bullet$ as in
  (B) such that $\widebar{U}_1 = P$.

  \subsubsection*{($\text{B} \implies \text{C}$)} Suppose that
  $\widebar{U}_\bullet$ is a filtration as in (B).  Set:
  \begin{align*}
    & U_0 = \langle N^3 \zeta \rangle \\
    & U_1 = \big\{s \in \Ker N : s + \Image N \in \widebar{U}_1\big\}
    + \langle N^2 \zeta \rangle \\
    & U_2 = \Ker N + \Image N 
  \end{align*}
  where recall that $\zeta = -z$ and $N=-z^{-1}D_2$. 
  It is clear that $F^1 \oplus U_0 = H$, that $F^3 \oplus U_2 = H$,
  that $N(U_p) \subset U_{p-1}$, that $U_0^\perp = U_2$, and that
  $\widebar{U}_k = U_k/(\Image N \cap U_k)$.  It remains to show that
  $F^2 \oplus U_1 = H$ and that $U_1^\perp = U_1$.  The space $U_1$ is
  certainly isotropic, and:
  \[
  \dim U_1 = \dim \widebar{U}_1 + \dim (\Ker N \cap \Image N) + 1 = 3
  \]
  so $U_1$ is maximal isotropic: $U_1^\perp = U_1$.  Both $F^2$ and
  $U_1$ have dimension~$3$, so to show that $F^2 \oplus U_1 = H$ it
  suffices to show that $F^2 + U_1 = H$.  Let $v \in H$ be arbitrary,
  and let $\widebar{v}$ denote the equivalence class of $v$ in
  $\widebar{H}$.  Since $\widebar{F}^2 \oplus \widebar{U_1} =
  \widebar{H}$, there exist $\widebar{f} \in \widebar{F}^2$ and
  $\widebar{u} \in \widebar{U}_1$ such that $\widebar{v} = \widebar{f}
  + \widebar{u}$.  Let $f \in F^2$ and $u \in U_1$ be lifts of
  $\widebar{u}$ and $\widebar{f}$ respectively.  Then $v - f - u \in
  \Image N = \langle N \zeta, N^2 \zeta, N^3\zeta \rangle$.  Since $N
  \zeta \in F^2$ and $N^2 \zeta$,~$N^3 \zeta \in U_1$, it follows that
  $v \in F^2 + U_1$.  Thus if $\widebar{U}_\bullet$ is a filtration as
  in (B), we can define $U_\bullet$ as above to obtain a filtration as
  in (C) which satisfies $\widebar{U}_k = U_k/(\Image N \cap U_k)$.

  \subsubsection*{($\text{C} \implies \text{B}$)} Suppose that
  we are given a filtration $U_\bullet$ as in (C).  The filtration
  $U_\bullet$ is opposite to $F^\bullet$, and counting dimensions
  gives:
  \begin{align*}
    \dim U_0 = 1 &&
    \dim U_1 = 3 &&
    \dim U_2 = 5 
  \end{align*}
  The elements $\zeta \in U_3$, $N \zeta \in
  U_2$, $N^2 \zeta \in U_1$, and $N^3 \zeta \in U_0$ are non-zero 
and linearly independent; in particular $U_0 = \langle N^3 \zeta \rangle$.  We
  have $U_1 = \langle N^3 \zeta, N^2 \zeta, e_1 \rangle$ for some
  $e_1$.  Since $N e_1 \in U_0$ is a scalar multiple of $N^3 \zeta$ we
  may, by replacing $e_1$ with a linear combination of $e_1$ and $N^2
  \zeta$, without loss of generality assume that $N e_1 = 0$.  We have
  $U_2 = \langle N^3 \zeta, N^2 \zeta, e_1, N \zeta, e_2 \rangle$ for
  some $e_2$.  Replacing $e_2$ with a linear combination of $e_2$,
  $N\zeta$, and $N^2 \zeta$ we may without loss of generality assume
  that $N e_2 \in U_1$ is a scalar multiple of $e_1$.  Thus, with
  respect to the basis $N^3 \zeta$,~$N^2 \zeta$,~$e_1$,~$N
  \zeta$,~$e_2$,~$\zeta$ for $\cO(H)$, the matrix of $N$ has the form:
  \[
  \begin{pmatrix}
    0 & 1 & 0 & 0 & 0 & 0 \\
    0 & 0 & 0 & 1 & 0 & 0 \\
    0 & 0 & 0 & 0 & \ast & 0 \\
    0 & 0 & 0 & 0 & 0 & 1 \\
    0 & 0 & 0 & 0 & 0 & 0 \\
    0 & 0 & 0 & 0 & 0 & 0 
  \end{pmatrix}
  \]
  We know that the kernel of $N$ is three-dimensional
  (\S\ref{subsubsec:ker_im_N}), so $\ast$ must be zero and $N e_2 =0$.

  Set $\widebar{U}_k = U_k/(\Image N \cap U_k)$.  We find:
  \begin{align*}
    \widebar{U}_0 = 0 &&
    \widebar{U}_1 = \langle [e_1] \rangle &&
    \widebar{U}_2 = \langle [e_1], [e_2] \rangle  &&
    \widebar{U}_3 = \widebar{U}
  \end{align*}
  Now $Ne_1 = Ne_2 = 0$, so $\widebar{U}_2 \subset \Hvec$, and both
  spaces are two-dimensional, so $\widebar{U}_2 = \Hvec$.  Since $F^p
  \oplus U_{p-1} = H$, it follows that $\widebar{F}^p +
  \widebar{U}_{p-1} = \widebar{H}$.  For dimensional reasons we have
  $\widebar{F}^p \oplus \widebar{U}_{p-1} = \widebar{H}$.  Thus given
  a filtration $U_\bullet$ as in (C), setting $\widebar{U}_k =
  U_k/(\Image N \cap U_k)$ determines a filtration as in (B).

  \subsubsection*{($\text{C} \implies \text{D}$)} We construct the
  extension (D) using an appropriate opposite module, as in
  \S\ref{sec:trTLEP} but taking the base $\cM$ there to be the point
  $\{y\}$.  Let $\pi:\{y\} \times \Proj^1 \to \{y\}$ be the
  projection map and note, for comparison with \S\ref{sec:trTLEP},
  that:
  \begin{align*}
    \pi_\star(\cO_{\{y\} \times (\aroundinfinity)}) =
    \cO_{\Proj^1}\big(\aroundinfinity\big) && \text{and} &&
    \pi_\star(\cO_{\{y\} \times \C}) = \cO_{\Proj^1}(\C) \\
    \intertext{To match with \S\ref{sec:trTLEP}, write:}
    \cF^\divideontimes = \cFCY\big|_{\{y\} \times \C^\times}
    && \text{and} &&
    \bF = \pi_\star
    \big(\cFCY\big|_{\{y\} \times \C}\big)
  \end{align*}
  We construct the opposite module using the Rees construction.
  
  Recall that $\cO(H)$ is the submodule of $\pi_\star \cF^\divideontimes$ 
  consisting of degree one sections. 
  Define $\bP$ to be the
  $\cO_{\Proj^1}\big(\aroundinfinity\big)$-submodule of $\pi_\star
  \cF^\divideontimes$ spanned by:
  \[
  z^{-2} U_3 + z^{-1} U_2 + U_1 + z U_0
  \]
  The submodule $\bP$ is homogeneous.  Recall that $\bF$ is the
  $\cO_{\Proj^1}(\C)$-submodule of $\pi_\star \cF^\divideontimes$ spanned by:
  \[
  z^{-1} F^3 + F^2 + z F^1 + z^2 F^0
  \]
  The fact that $F^p \oplus U_{p-1} = H|_y$ implies that $\pi_\star
  \cF^\divideontimes = \bF \oplus \bP$.  The facts that $U_0^\perp = U_2$ and
  $U_1^\perp = U_1$ imply that $\bP$ is isotropic.  Thus $\bP$ is an
  opposite module.  The discussion in \S\ref{sec:trTLEP} produces from
  $\bP$ an extension of $\cFCY\big|_{\{y\} \times \C}$ to a
  locally free sheaf $\cE$ on $\{y\} \times \Proj^1$ such that: 
  \begin{itemize}
  \item the corresponding holomorphic vector bundle on $\{y\} \times
    \Proj^1$ is trivial;
  \item the pairing $\pairingB$ extends holomorphically across
    $z=\infty$ and is non-degenerate there;
  \item the connection $\nabla$ has a logarithmic pole at
    $z=\infty$.
  \end{itemize} 
  Recall that $z\bP$ consists of sections of $\cE$ over $\aroundinfinity$. 
  The fact that $N(U_p) \subset U_{p-1}$ implies that the map $N$
  extends holomorphically across $z=\infty$ and vanishes there.  Thus
  a filtration $U_\bullet$ as in (C) determines an extension as in
  (D).
    
  \subsubsection*{($\text{D} \implies \text{C}$)}
  Consider again the discussion in \S\ref{sec:trTLEP} with the base
  $\cM$ there taken to be the point $\{y\}$.  An extension $\cE$ 
as in (D) determines an opposite module 
$\bP= z^{-1} \pi_\star\big(\cE|_{\{y\}\times (\aroundinfinity)}\big)$. 
Set: 
\[
U_p = z^{p-1} \bP \cap H 
\]
This defines an increasing filtration. 
Recall that we have 
\[
F^p = z^{p-2} \bF \cap H
\]
The grading operator $\Gr$ preserves $z\bP \cap \bF \cong \bF/z\bF$ 
and is semisimple there, 
and therefore $\bP$ is generated by homogeneous elements 
over $\cO_{\Proj^1}(\aroundinfinity)$. 
Thus the decomposition $z^{p-2} \bF \oplus z^{p-2} \bP 
= \pi_\star \cF^\divideontimes$ restricted to degree one part 
implies $U_{p-1} \oplus F^p = H$. 
The fact that $\bP$ is isotropic implies that 
$\Omega(U_p, U_{2-p})=0$ and thus one has 
$U_p = U_{2-p}^\perp$ for dimension reasons.  
Furthermore $N(U_p) \subset U_{p-1}$ follows from the 
fact that $N$ extends across $z=\infty$ and vanishes there. 
Thus an extension as in (D) determines a filtration $U_\bullet$ as
in (C).
\end{proof}

\subsubsection{Opposite Filtration at the Cusps $y=0,-\frac{1}{27},\infty$} 
At the large-radius point $y=0$, the conifold point $y=-\frac{1}{27}$ and 
the orbifold point $y=\infty$, we have distinguished free 
extensions of $\cFCY|_{\{y\}\times \C }$ to $\{y\}\times \Proj^1$ 
characterized by local monodromy around them. 
By Proposition \ref{pro:correspondence_of_opposites}, 
each of them corresponds to a line $P$ in the fibre of $\Hvec$ at $y$. 

\begin{proposition} 
\label{pro:opposite_cusps} 
Let $y$ be one of the three points $\{0,-\frac{1}{27},\infty\}$ 
in $\cMCY = \Proj(3,1)$. 
There exists a unique extension of $\cFCY|_{\{y\}\times \C}$ 
to a locally free $\cO_{\{y\}\times \Proj^1}$-module $\cE$ 
such that the condition (D) of Proposition \ref{pro:correspondence_of_opposites} 
holds and that, in addition:
\begin{itemize} 
\item when $y$ is the large radius limit point or the conifold point, 
the residue endomorphism $\cFCY|_{\{y\}\times \C} 
\to z^{-1}\cFCY|_{\{y\}\times \C}$ of the connection $\nabla$ at $y$ extends regularly across $z=\infty$ 
and vanishes there; 

\item when $y$ is the orbifold point, the action of  $\Aut(y) = \mu_3$ on $\cFCY|_{\{y\}\times \C}$ 
extends across $z=\infty$.  Here $\mu_3$ acts trivially on the base $\{y\}\times \C$. 
\end{itemize} 
The free extensions of $\cFCY|_{\{y\}\times \C}$ to $\{y\}\times \Proj^1$ 
are given explicitly by the following bases: 
\begin{align*} 
& \text{$1$, $ D_2$, $D_2^2$, $D_2^3$, 
      $D_1$, $ D_1^2$} & & \text{(large radius limit point $y=0$)} \\
& \text{$1$, $D_2$, $D_2^2$, $D_2^3$, 
      $D_1$, $(1+27y_1) D_1^2$} && \text{(conifold point $y=-\tfrac{1}{27}$)} \\
& \text{$1$, $\frD_2$, $\frD_2^2$, $\frD_2^3$, 
 $\frd_1$, $\frd_1^2$} && \text{(orbifold point $y=\infty$)} 
\end{align*} 
Let $P_{\LR}$, $P_{\con}$, $P_{\orb}$ denote the corresponding 
subspace $P$ of $\Hvec|_y$ at the large radius, conifold and orbifold points 
under the correspondence between (A) and (D) in Proposition 
\ref{pro:correspondence_of_opposites}. They are given by: 
\begin{align*} 
P_{\LR} = \langle \theta^2 \zeta \rangle, \qquad 
P_{\con} = \langle (1+27y_1)\theta^2 \zeta \rangle \qquad 
P_{\orb} = \langle z^{-1} \frd_1^2 \rangle = 
\langle \fry_1^{-2} \theta(\theta+\tfrac{1}{3}) \zeta \rangle  
\end{align*} 
\end{proposition}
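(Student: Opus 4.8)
The plan is to use the correspondence of Proposition~\ref{pro:correspondence_of_opposites} to convert the problem into one about lines in the rank-two bundle $\Hvec$. Recall that free extensions $\cE$ of $\cFCY|_{\{y\}\times\C}$ satisfying condition~(D) correspond bijectively, under (A)$\iff$(D), to subspaces $P\subset\Hvec|_y$ with $F^2_\vec\oplus P=\Hvec|_y$. Since $F^2_\vec$ is a line and $\Hvec|_y$ is two-dimensional, such $P$ sweep out an affine chart of $\Proj^1$; the content of the proposition is that the additional monodromy/automorphism clause singles out exactly one of them, and that it is the claimed line. For existence I would exhibit the extension by declaring the six listed sections to be a frame over $\{y\}\times\Proj^1$ and verify, using the explicit connection matrices and Gram matrix of Proposition~\ref{pro:GKZ} specialized to $y_2=0$, that this frame realizes all clauses of~(D): the bundle is trivial (the frame is $\Gr$-homogeneous and a genuine $\cO_{\Proj^1}$-basis at $z=\infty$), the pairing $\pairingB$ extends non-degenerately, $\nabla$ has a logarithmic pole at $z=\infty$, and $N=-z^{-1}D_2$ extends and vanishes there. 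Near the conifold and the orbifold point this requires the $\cO(1)\oplus\cO(-1)$-twists already visible in the frames \eqref{eq:basis_for_H} (the factors $1+27y_1$ and $1+\tfrac{1}{27}\fry_1^3$), so one must check that the chosen elements are regular and independent at the point in question rather than naively vanishing.

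For the large-radius and conifold points the key observation is that the residue endomorphism $R=\Res_{\{y\}\times\C}\nabla$ is flat and commutes with $N$, hence descends to an endomorphism $\bar R$ of $\Hvec|_y$. Because the local monodromy around $y=0$ and $y=-\tfrac{1}{27}$ is unipotent — precisely what permits the Deligne extension in Proposition-Definition~\ref{pro-def:2d_log_TEP} — the map $\bar R$ is nilpotent, so on the two-dimensional space $\Hvec|_y$ it has a unique invariant line $\Image\bar R=\Ker\bar R$. I would then show that, under (A)$\iff$(D), the clause ``$R$ extends regularly across $z=\infty$ and vanishes there'' is equivalent to $P=\Image\bar R$: in the GKZ frame $R=-z^{-1}D_1|_{y}$ is already a pure first-order pole in $z$ times a constant matrix, so the condition holds for the listed extension, whereas any other opposite module differs by a $z$-dependent gauge change that reintroduces a nonzero value of $R$ at $z=\infty$ unless $P$ is the $\bar R$-invariant line. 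This yields existence and uniqueness simultaneously. To identify the line I would use that on $\widebar{H}$ one has $\bar R=\theta=\nabla_{y_1\partial_{y_1}}$ together with the Picard--Fuchs relation \eqref{eq:QDE_for_Y}: at $y=0$ the relation gives $\theta^3\zeta=0$, so $\theta$ sends $\theta\zeta\mapsto\theta^2\zeta\mapsto 0$ and hence $\Image\bar R=\langle\theta^2\zeta\rangle=P_\LR$; the conifold case is identical after inserting the regularizing factor, giving $P_\con=\langle(1+27y_1)\theta^2\zeta\rangle$.

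At the orbifold point the monodromy around $y=\infty$ has order three rather than being unipotent, so the residue argument is replaced by the $\mu_3$-action of Remark~\ref{rem:orbisheaf}. By that remark $\mu_3$ scales $[\frd_1]$ and $[z^{-1}\frd_1^2]$ by $e^{2\pi\iu/3}$ and $e^{4\pi\iu/3}$, so $\Hvec|_\infty$ splits into two distinct $\mu_3$-eigenlines, one of which is the Hodge line $F^2_\vec$. The requirement that the $\mu_3$-action extend across $z=\infty$ forces $P$ to be $\mu_3$-stable and complementary to $F^2_\vec$, hence equal to the remaining eigenline $\langle z^{-1}\frd_1^2\rangle=\langle\fry_1^{-2}\theta(\theta+\tfrac13)\zeta\rangle=P_\orb$; uniqueness is immediate because the two eigenlines are distinct, and existence follows from the orbifold-chart frame $1,\frD_2,\frD_2^2,\frD_2^3,\frd_1,\frd_1^2$.

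The main obstacle I anticipate is making precise, in the large-radius and conifold cases, the equivalence between the analytic condition ``$R$ vanishes at $z=\infty$'' on the rank-six bundle $H$ and the linear-algebra condition ``$P=\Image\bar R$'' on $\Hvec$: one must check that the passage from $H$ to $\Hvec$ via $\Ker N/(\Image N\cap\Ker N)$ as in \S\ref{subsubsec:ker_im_N} is compatible with the opposite-module data, and that no spurious positive powers of $z$ in the gauge transformation can be absorbed while keeping $\nabla_{z\partial_z}$ logarithmic at $z=\infty$. The uniqueness of the Deligne extension recorded in Remark~\ref{rem:Deligne_extension} is the conceptual backstop, but translating it faithfully through the three equivalent descriptions (A), (C), (D) is where the care is needed.
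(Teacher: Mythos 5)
Your overall strategy --- push everything down to the rank-two bundle $\Hvec$ via the (A)$\iff$(D) correspondence and then single out the distinguished line by an invariance argument --- is genuinely different from the paper's. The paper works directly at the rank-six level: for uniqueness it sets $V=\Gamma(\Proj^1,\cE)$, observes that $V$ is graded (because $\Gr=\nabla_{z\partial_z}+2N_2+\tfrac{3}{2}$ is regular at $z=\infty$), contains $1$, and is preserved by the operators $zN_1$ and $zN_2$ (respectively $zN_2$ and $zN_t$ at the conifold, and $\frD_2$ together with the $\mu_3$-action at the orbifold point), and concludes that $V=\C[zN_1,zN_2]\cdot 1$, which is already six-dimensional. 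This never needs the descent to $\Hvec$ and in particular never needs the equivalence you flag as your main obstacle. Your identification of the lines is correct: at $y=0$ the induced residue $\bar R$ on $\Hvec$ sends $\theta\zeta\mapsto\theta^2\zeta\mapsto 0$, and at the conifold it sends $\theta\zeta$ to a nonzero multiple of $(1+27y_1)\theta^2\zeta$ and that to $0$, so $\Image\bar R$ is the claimed line in each case (you should record explicitly that $\bar R\neq 0$ on $\Hvec$, since otherwise ``unique invariant line'' fails). The orbifold eigenline argument is clean, matches the paper's use of the $\mu_3$-action, and only needs the routine check that the $\mu_3$-action extending across $z=\infty$ is equivalent to $\mu_3$ preserving the filtration $U_\bullet$, hence $P$.

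The one step that is asserted rather than proved is the equivalence, at the large-radius and conifold points, between ``the residue $R$ extends across $z=\infty$ and vanishes there'' and ``$P=\Image\bar R$''. The sentence about ``a $z$-dependent gauge change that reintroduces a nonzero value of $R$ at $z=\infty$'' is not an argument. The gap is fillable: exactly as in the (C)$\Rightarrow$(D) step of Proposition~\ref{pro:correspondence_of_opposites}, the $z=\infty$ condition on $R$ translates (using that $R$ preserves the grading, so it restricts to an endomorphism of $H$) into $R(U_p)\subset U_{p-1}$ for the filtration $U_\bullet$ attached to $P$. Since $U_2=\Ker N+\Image N$ is independent of $P$ and $R$ commutes with $N$, the condition $R(U_2)\subset U_1$ forces the class of $R(\Ker N)$ in $\Hvec=\Ker N/(\Image N\cap\Ker N)$ to lie in $\widebar{U}_1=P$, i.e.\ $\Image\bar R\subset P$ and hence $P=\Image\bar R$ for dimension reasons; conversely, for $P=\Image\bar R$ one verifies $R(U_1)\subset U_0$ and $R(U_3)\subset U_2$ directly from the explicit matrices in Proposition~\ref{pro:GKZ}. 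With this inserted your proof closes, but as written the uniqueness in the two unipotent cases rests on an unproved claim.
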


\begin{proof} We discuss the three cases $y=0, -\frac{1}{27}, \infty$ 
separately. 

($y=0$, existence) 
Take the frame of $\cFCY|_{\{0\}\times \C}$ 
described in the proposition. 
Recall that $\cFCY$ is the restriction of the B-model \logDTEP 
structure $\cFB$ to $\cMCY$. 
The connection $\nablaB$ defines two residue endomorphisms $N_i 
\colon \cFCY|_{\{0\}\times \C} \to z^{-1}\cFCY|_{\{0\}\times\C}$ about 
the divisors $y_i=0$, $i=1$,~$2$. 
The map $N_2$ equals $N$ in \S \ref{sec:six}. 
By Proposition \ref{pro:GKZ}, $N_i$
are represented by the matrices: 
\begin{align*}
    -\frac{1}{z}  
    \begin{pmatrix}
        0 & 
        0 &
        0 &
        0 & 
        0 &
        0 \\
        0
        & 0
        & 0
        & 0
        & 0
        & 0 \\
        0
        & \frac{1}{3}
        & 0
        & 0
        & 0
        & 0 \\
        0
        & 0
        & \frac{1}{3}
        & 0
        & 0
        & 0 \\
        1
        & 0
        & 0
        & 0
        & 0
        & 0 \\
        0
        & 0
        & 0
        & 0
        & 1
        & 0
      \end{pmatrix} 
      &&
      -\frac{1}{z}
      \begin{pmatrix}
        0
        & 0
        & 0
        & 0
        & 0
        & 0 \\
        1
        & 0
        & 0
        & 0
        & 0
        & 0 \\
        0
        & 1
        & 0
        & 0
        & \frac{1}{3}
        & 0 \\
        0
        & 0
        & 1
        & 0
        & 0
        & \frac{1}{9}\\
        0
        & 0
        & 0
        & 0
        & 0
        & 0 \\
        0
        & 0
        & 0
        & 0
        & 0
        & 0
      \end{pmatrix}
\end{align*}
respectively for $i=1$ and $i=2$. 
These are regular at $z=\infty$ and vanish there. 
The connection $\nabla_{z\partial_z}$ equals 
$\Gr - 2 N_2 -\frac{3}{2}$ along $\{0\}\times \C$ 
(see equation~\eqref{eq:GKZz}). 
This is regular at $z=\infty$ since the frame is homogeneous. 
The Gram matrix of the pairing $\pairingB$ along $\{0\}\times \C$ 
is independent of $z$ by Proposition \ref{pro:GKZ}. 
The frame thus gives an extension of $\cFCY|_{\{0\}\times \C}$ 
to $\{0\}\times \Proj^1$ satisfying the conditions in the proposition. 

($y=0$, uniqueness) 
Suppose that we have an extension of 
$\cFCY|_{\{0\}\times \C}$ 
to a free $\cO_{\{0\}\times \Proj^1}$-module $\cE$ 
satisfying the conditions in the proposition. 
Set 
\[
V := \Gamma(\Proj^1, \cE) \subset \Gamma(\C, 
\cFCY|_{\{0\}\times \C})
\] 
Recall that $\Gr$ acts on $\cFCY|_{\{0\}\times \C}$. 
It preserves the space $V$ since 
$\Gr = \nabla_{z\partial_z} + 2N_2 + \frac{3}{2}$ 
is regular at $z=\infty$. 
Therefore $V$ is graded. 
We have the graded isomorphism $\cFCY|_{(0,0)} \cong V$. 
Under this isomorphism $1 \in \cFCY|_{(0,0)}$ corresponds to a 
degree-zero global section of $\cFCY|_{\{0\}\times\C}$ which restricts to $1$ at $z=0$,
but $1$ is the only such global section and therefore $1\in V$. 
The operators $zN_1$, $zN_2$ are regular at both $z=0$ and $z=\infty$
and thus they act on $V$. 
Therefore $\C[zN_1,zN_2] \cdot 1 \subset V$. 
On the other hand, $-zN_i$ is given by the multiplication by 
$D_i$ in the GKZ system, and thus 
$\C[zN_1,zN_2] \cdot 1$ contains a 6-dimensional subspace 
spanned by $1,D_2, D_2^2, D_2^3, D_1, D_2^2$. 
Hence $V = \C[z N_1, zN_2]\cdot 1$. The conclusion follows. 

($y=-\frac{1}{27}$, existence) This is essentially identical to ($y=0$, existence).  
We use Proposition \ref{pro:GKZ} again. 
 
($y=-\frac{1}{27}$, uniqueness) 
Suppose that we have an extension of 
$\cFCY|_{\{-\frac{1}{27}\}\times \C}$ 
to a free $\cO_{\Proj^1}$-module $\cE$ 
satisfying the conditions in the proposition.  
Set $V =\Gamma(\Proj^1,\cE) \subset 
\Gamma(\C, \cFCY|_{\{-\frac{1}{27}\}\times \C})$ 
as before. 
For the same reason as in ($y=0$, uniqueness), $V$ is graded 
and is preserved by the operators $zN_2$, $zN_t$, 
where $N=N_2, N_t$ are the residue endomorphisms 
along the divisors $y_2=0$ and $t=y_1 + \frac{1}{27} = 0$ respectively. 
Therefore, under the graded isomorphism 
$\cFCY|_{(-\frac{1}{27},0)}\cong V$, 
the homogeneous basis $1, D_2, D_2^2, D_2^3, 
D_1, (1+27y_1) D_1^2$ of $\cFCY|_{(-\frac{1}{27},0)}$ 
lifts to a basis of $V$ of the form: 
  \[
  \text{$1$, $D_2$, $D_2^2$, $D_2^3$, $D_1+\alpha z 1$, $zN_t(D_1+\alpha z1)$}
  \]
for some $\alpha$, where we used the fact that 
$z N_t (D_1+\alpha z 1)= (1+27y_1) D_1^2$. 
We have $-z N_2(D_1+\alpha z 1) = \frac{1}{3} D_2^2 + 
\alpha z D_2$ by 
Proposition \ref{pro:GKZ} and it has to lie in $V$. 
Therefore $\alpha=0$. The result follows.

($y=\infty$, existence) This is essentially identical to ($y=0$, existence). 
We use Proposition \ref{pro:GKZ} again. 

($y=\infty$, uniqueness) Once again, 
suppose that we have an extension of 
$\cFCY|_{\{\infty\}\times \C}$ 
to a free $\cO_{\{\infty\}\times \Proj^1}$-module $\cE$ 
satisfying the conditions in the proposition. 
Set $V =\Gamma(\Proj^1,\cE) \subset 
\Gamma(\C, \cFCY|_{\{\infty\}\times \C})$. 
As before $V$ is graded and preserved by the residue 
endomorphism $zN = \frD_2$. 
Therefore a homogeneous basis 
$1$, $\frD_2$, $\frD_2^2$, $\frD_2^3$, 
$\frd_1$, $\frd_1^2$ of $\cFCY|_{(\infty,0)}$ 
lifts to a basis of $V$ of the form: 
  \[
  \text{$1$, $\frD_2$, $\frD_2^2$, $\frD_2^3$, $\frd_1 + \alpha' z 1$, 
    $\frd_1^2 + \beta' z \frd_1 + \gamma' z \frD_2 + \delta' z^2 1$}
  \]
for appropriate scalars $\alpha'$,~$\beta'$,~$\gamma'$,~$\delta'$.
The space $V$ is invariant under the $\Z/3Z$-action; 
$\Z/3Z$ acts by $\frd_1 \mapsto e^{2\pi\iu/3} \frd_1$, 
$\frD_2 \mapsto \frD_2$.  
Thus $\alpha'=0$, as otherwise $V$ contains both $1$ and $z$, 
contradicting the fact that $V \cong \cFCY|_{(\infty,0)}$.  
Similarly $\beta'=\gamma'=\delta'=0$.  This completes
  the proof.
\end{proof}

\section{Enlarging the Base of the B-Model \logDTEP Structure}
\label{sec:enlarge_base}

In this section we enlarge the base of the B-model \logDTEP structure
(see \S\ref{sec:2d_log_TEP}) in such a way that the enlarged \logDTEP
structure, which we call the \emph{big B-model \logDTEP structure}, is
miniversal (Definition \ref{def:miniversal}).  The process of
enlarging the base, described below, should be an example of a
universal unfolding of \logDTEP structure.  We prove:

\begin{theorem} 
\label{thm:unfolding} 
Let $(\cFB, \nablaB,\pairingB)$ be the B-model \logDTEP structure 
with base $(\cMB,D)$ in \S \ref{sec:2d_log_TEP}. 
Let $\cMB^\circ := \cMB \setminus \{y_1=-1/27\}$ be the 
complement of the conifold locus. 
We have 
\begin{itemize} 
\item a 6-dimensional complex manifold $\cMBbig$ 
\item a closed embedding $\iota \colon \cMB^\circ \to \cMBbig$

\item a divisor $\Dbig$ in $\cMBbig$ such that $\iota^{-1} \Dbig =
D \cap \cMB^\circ$; 

\item a miniversal \logDTEP structure 
$(\cFBbig, \nablaB, \pairingB)$ with base $(\cMBbig, \Dbig)$ 
such that 
$\iota^\star(\cFBbig, \nablaB, \pairingB)$ is isomorphic 
to $(\cFB, \nablaB, \pairingB)|_{\cMB^\circ\times \C}$. 
\end{itemize} 
We call the triple $(\cFBbig, \nablaB, \pairingB)$ the 
\emph{big B-model \logDTEP structure}. 
\end{theorem}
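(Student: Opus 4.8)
The plan is to realise $(\cFBbig,\nablaB,\pairingB)$ as a universal unfolding of the B-model \logDTEP structure, adapting the unfolding theory of Hertling and Manin for the meromorphic connections underlying TEP structures (see \cite{Hertling:book}) so as to keep track of the logarithmic structure along $D$. The seed of the construction is the distinguished section $\zeta$ of $\cFB$ given by \eqref{eq:distinguished_section}, which corresponds to $1\in\cFGKZ$; this section is cyclic, since $\cFGKZ$ is generated by $1$ as a $\cD^z$-module. I would first isolate the generation condition that makes unfolding possible, namely that the fibre $\cFB|_{z=0}$ at each point of $\cMB^\circ$ is spanned by the images of $\zeta$ under iterated operators $z\nablaB_X$. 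By Proposition~\ref{pro:GKZ} the iterated action of $D_1=-z\nablaB_{y_1\partial_{y_1}}$ and $D_2=-z\nablaB_{y_2\partial_{y_2}}$ on $\zeta$ produces the explicit rank-$6$ bases \eqref{eq:GKZ_basis_LR_chart} and \eqref{eq:GKZ_basis_orbifold_chart}, so this condition holds on all of $\cMB^\circ$. The conifold locus $y_1=-\tfrac{1}{27}$ has to be discarded because there the critical points of $W$ escape to infinity and the GKZ presentation degenerates; this is exactly why $\cMBbig$ is to contain $\cMB^\circ$ but not $\cMB$. As a consistency check, near the large-radius limit the unfolding one expects is, via mirror symmetry for $\Ybar$ (Theorem~\ref{thm:mirrorsymmetryforYbar}), the A-model \logDTEP structure of $\Ybar$ on the full $\HYbar$, which is miniversal of dimension $6$ by Example~\ref{ex:log_TEP_A}.

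Next I would build $\cMBbig$ together with the unfolded connection. The Kodaira--Spencer map $X\mapsto z\nablaB_X\zeta$ sends $\Theta_{\cMB^\circ}(\log D)$ isomorphically onto the two-dimensional subspace $\langle D_1,D_2\rangle$ of the rank-$6$ fibre; a complement is spanned by $\zeta,\,D_2^2,\,D_2^3,\,(1+27y_1)D_1^2$. I would adjoin four new coordinates $s=(s^0,s^3,s^4,s^5)$, take $\cMBbig$ to be a thickening of $\cMB^\circ$ in these directions with $\iota\colon\cMB^\circ\hookrightarrow\cMBbig$ the locus $\{s=0\}$, and let $\Dbig$ be the closure of $(y_1=0)\cup(y_2=0)$, so that $\iota^{-1}\Dbig=D\cap\cMB^\circ$. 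One then extends $(\cFB,\nablaB)|_{\cMB^\circ}$ to a meromorphic flat connection over $\cMBbig\times\C$ by solving the flatness (integrability) equations order by order in $s$, with the prescription that at $s=0$ the derivatives $z\nablaB_{\partial_{s^j}}\zeta$ realise the four chosen complementary vectors. Cyclicity of $\zeta$ makes the order-by-order solution unique, which yields simultaneously the existence of the unfolding and its universal property; the pairing $\pairingB$ extends uniquely and compatibly by the same induction, using flatness of $\pairingB$ and homogeneity of $\zeta$ under $\Gr$.

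It then remains to verify the three asserted properties. The isomorphism $\iota^\star(\cFBbig,\nablaB,\pairingB)\cong(\cFB,\nablaB,\pairingB)|_{\cMB^\circ\times\C}$ is built into the construction, since the unfolding restricts to the original structure at $s=0$. Miniversality is precisely the statement that $X\mapsto z\nablaB_X\zeta$ is an isomorphism $\Theta_{\cMBbig}(\log\Dbig)\to\cFBbig|_{z=0}$: by construction the images of $y_1\partial_{y_1},\,y_2\partial_{y_2},\,\partial_{s^0},\,\partial_{s^3},\,\partial_{s^4},\,\partial_{s^5}$ at $s=0$ are $-D_1,-D_2$ together with the four complementary vectors, hence a basis of the fibre; being an isomorphism is an open condition, so after shrinking $\cMBbig$ the map is an isomorphism everywhere.

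The main obstacle is the logarithmic bookkeeping. I must ensure that the unfolded connection acquires no poles beyond $(\cMBbig\times\{0\})\cup(\Dbig\times\C)$, i.e. that the four new directions are genuinely non-logarithmic and do not interfere with the residues of $\nablaB$ along $y_1=0$ and $y_2=0$. Concretely, at each inductive step one must check that the connection coefficients in the $\partial_{s^j}$-directions stay holomorphic along $\Dbig$, and that the residue endomorphisms along $y_1=0$ and $y_2=0$ vary holomorphically in $s$ while remaining nilpotent, so that $(\cFBbig,\nablaB)$ is honestly a \logDTEP structure rather than a TEP structure that merely happens to be defined off $\Dbig$. This is where the explicit matrices of Proposition~\ref{pro:GKZ} matter: they let one choose the complement $\zeta,\,D_2^2,\,D_2^3,\,(1+27y_1)D_1^2$ inside the logarithmic Deligne lattice, so that the new flat coordinates are transverse to $D$ and the whole induction remains within the category of \logDTEP structures.
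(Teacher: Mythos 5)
You take a genuinely different route from the paper: you try to unfold the \logDTEP structure directly from the cyclic section $\zeta=1\in\cFGKZ$ by solving the integrability equations order by order in the new variables, whereas the paper first promotes the \logDTEP structure to a \logDtrTLEP structure locally by constructing opposite modules compatible with the Deligne extension (Propositions~\ref{pro:extending_trivialization}, \ref{pro:existence_of_compatible_opposites}, \ref{pro:LR_conifold_orbifold} and~\ref{pro:Euler_flow}), then invokes Reichelt's universal unfolding theorem for \logDtrTLEP structures, and finally glues the underlying \logDTEP structures. The central gap in your version is that you treat the existence of the order-by-order solution as automatic: cyclicity gives at best \emph{uniqueness} of a formal unfolding, but at each order in $s$ one must show that the obstruction to extending the flat connection vanishes, then establish convergence, check that the result stays logarithmic along $\Dbig$ with nilpotent residues, and extend the pairing flatly and non-degenerately. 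That is precisely the content of the Hertling--Manin/Reichelt unfolding theorems, and in the logarithmic setting the available result (\cite{Reichelt}*{Theorem~1.12}) is stated for \logDtrTLEP structures, i.e.\ it needs the extension data at $z=\infty$ which the paper supplies via Deligne-compatible opposite modules and which your construction never chooses. Without importing that theorem (and hence doing the paper's Step~1) or proving a logarithmic unfolding theorem from scratch, the existence claim is unsupported. Relatedly, the conifold locus is removed because Reichelt's generation condition fails there --- $D_1^2\cdot 1$ equals $(1+27y_1)^{-1}$ times the lattice generator $(1+27y_1)D_1^2$, so the cyclic span of $\zeta$ under logarithmic derivatives does not generate the Deligne lattice fibre at $y_1=-\tfrac{1}{27}$ --- not because the GKZ presentation degenerates; Proposition~\ref{pro:GKZ}(b) gives a perfectly good basis there.

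The second gap is globalization. You posit a single global coordinate system $s=(s^0,s^3,s^4,s^5)$ on the thickening, which presupposes a global canonical complement to $\langle D_1,D_2\rangle$ in $\cFB|_{z=0}$ and a globally consistent unfolding. Neither holds: the complement you name lives only on the chart $U_\LR$, the local unfoldings are unique only as germs and depend on the auxiliary choices (in the paper, on the opposite module), and they must be glued by the comparison maps of Lemma~\ref{lem:gluing_logTEP}. Showing that the resulting glued space is a Hausdorff complex manifold is not formal --- the paper delegates exactly this point to \cite{Coates--Iritani:gluing}. Your argument needs either a genuinely canonical global construction or an explicit gluing-and-Hausdorffness step.
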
 

We construct the enlarged base for the B-model TEP structure using
Reichelt's universal unfolding for \logDtrTLEP structures.  The
argument is in three steps, as follows.  In the first step we
construct, for each $y \in \cMB$, a \logDtrTLEP structure
on a neighbourhood $U_y$ of $y$.  In the second step we delete the
conifold locus $y_1 = {-\frac{1}{27}}$ (because Reichelt's generation
condition fails there) and apply Reichelt's unfolding result to
construct a miniversal \logDtrTLEP structure on $U_y \times V_y$,
where $V_y$ is a neighbourhood of the origin in $\C^4$, such that the
restriction to $U_y \times \{0\}$ is the \logDtrTLEP structure
constructed in the first step.  In the third step we show that the
\logDTEP structures that underly the \logDtrTLEP structures
constructed in step two are compatible on chart overlaps, and thus
assemble to give a global miniversal \logDTEP structure over a
six-dimensional base $\cMBbig$.  (The \logDtrTLEP structures themselves
are in general not compatible with each other on chart
overlaps.)\phantom{.} The six-dimensional base $\cMBbig$ contains
$\cMB^\circ$ as a subset.

\subsection{Step 1: Constructing \logDtrTLEP Structure Locally} 
We begin with a general method to construct 
a \logDtrTLEP structure near a unipotent monodromy 
point of a \logDTEP structure. 
As we discussed in \S \ref{sec:trTLEP}, 
an opposite module for a TEP structure gives rise 
to a trTLEP structure and a flat trivialization 
(Definition \ref{def:flat_trivialization}). 
Suppose that a \logDTEP structure with base $(\cM,D)$ 
is the Deligne extension of a TEP structure with base 
$\cM\setminus D$ (Definition \ref{def:Deligne}). 
In this case, a flat trivialization of the TEP structure given by 
an opposite module does not necessarily extend to 
the \logDTEP structure. 
We introduce below the notion of 
``compatibility with Deligne extension" for an opposite module.  
This describes a certain special situation 
where the flat trivialization extends to a trivialization of 
the \logDTEP structure and yields a \logDtrTLEP structure. 
The resulting \logDtrTLEP structure is very special: the residue 
endomorphisms are nilpotent and vanish at $z=\infty$.  
We then show that opposite modules near $p$ 
for the \logDTEP structure which is compatible with the Deligne 
extension is uniquely determined by a trivialization of 
the \logDTEP structure over $\{p\}\times \C$ 
satisfying certain conditions. 
Finally we apply this method to the B-model \logDTEP structure 
and construct a \logDtrTLEP structure locally on $\cMB$.

\begin{definition} 
\label{def:compatible_with_Deligne}
Let $\cM$ be a complex manifold with normal crossing 
divisor $D$. 
Let $(\cF, \nabla, (\cdot,\cdot))$ be a \logDTEP 
structure with base $(\cM,D)$ which 
is the Deligne extension of a TEP structure 
$(\cF^\times,\nabla,(\cdot,\cdot))$ with base $\cM \setminus D$ 
(Definition \ref{def:Deligne}). 
Let $p$ be a point in $\cM$ and let 
$U_p$ be a contractible open neighbourhood of $p$ 
such that every (nonempty) irreducible component 
of $D\cap U_p$ contains $p$. 
An opposite module $\bP$ for $(\cF^\times,\nabla,(\cdot,\cdot))$ 
defined over $U_p\setminus D$ 
is said to be \emph{compatible with the Deligne extension} 
$(\cF,\nabla,(\cdot,\cdot))$ 
if the following conditions are satisfied: 
\begin{itemize} 
\item[(1)] the flat trivialization of $\cF^\times|_{(U_p \setminus D)\times \C}$ 
associated to $\bP$ (Definition \ref{def:flat_trivialization}) 
has no monodromy around $D$ 
and thus defines a locally free extension $\cE$ of $\cF$ to 
$U_p\times \Proj^1$ such that the corresponding vector bundle 
over $U_p\times \Proj^1$ is trivial; 

\item[(2)] the connection $\nabla$ defines a meromorphic flat connection 
on $\cE$ with: 
\[
\nabla \colon \cE \to \Omega^1_{U_p\times \Proj^1}(\log Z) 
\otimes \cE(U_p\times \{0\}) 
\]
where $Z = (D \times \Proj^1) \cup (U_p \times \{0\}) 
\cup (U_p \times \{\infty\})$;  

\item[(3)] the pairing $(\cdot,\cdot)$ extends holomorphically 
across $(U_p \times \{\infty\}) \cup ((D\cap U_p) \times \Proj^1)$ 
and is non-degenerate there; 

\item[(4)] the residue endomorphisms of $\nabla$ along 
$(D\cap U_p)\times (\Proj^1\setminus\{0\})$ are nilpotent and 
vanish at $(D\cap U_p) \times \{\infty\}$. 
\end{itemize} 
Condition (4) implies that $(\cE, \nabla, (\cdot,\cdot))$ 
coincides with the Deligne extension $(\cF,\nabla,(\cdot,\cdot))$ over 
$U_p \times \C$, because the Deligne extension is the unique 
logarithmic extension such that the residue endomorphisms are 
nilpotent. 
\end{definition} 

\begin{remark} 
\label{rem:compatibility_orbifoldcase}
When the base $\cM$ has an orbifold singularity at $p$ 
and the Deligne extension $\cF$ is an orbi-sheaf 
(e.g.~the B-model \logDTEP structure, see Remark 
\ref{rem:orbisheaf}), 
we define the compatibility with the Deligne extension near $p$ 
by replacing $U_p$ with the uniformizing chart and 
requiring the same conditions (1)--(4) in 
Definition \ref{def:compatible_with_Deligne} over the uniformizing chart. 
The locally free sheaf $\cE$ on $U_p \times \Proj^1$ in (1) becomes 
$\Aut(p)$-equivariant, where $\Aut(p)$ is the finite automorphism group 
at $p$ which acts on $U_p$. The connection $\nabla$ and the pairing 
$(\cdot,\cdot)$ on $\cE$ are invariant under the $\Aut(p)$-action. 
\end{remark} 

\begin{remark} 
Compatibility with the Deligne extension has been 
discussed in the context of the Crepant Resolution Conjecture 
and mirror symmetry: see 
\cite{CIT:wall-crossings}*{Theorem 3.5} and 
\cite{Iritani:Ruan}*{\S 3.5} where 
a characterization of the A-model opposite module 
is given at certain cusps in the B-model moduli space. 
\end{remark} 

It is convenient to rephrase the above conditions (1)--(4) 
in Definition \ref{def:compatible_with_Deligne} in terms 
of an explicit trivialization. 
Choose local co-ordinates $(x_1,\dots,x_r, y_1,\dots,y_s)$ of 
$\cM$ centred at $p\in \cM$ such that the divisor 
$D\cap U_p$ can be written as $x_1x_2 \cdots x_r =0$.
(We set $r=0$ if $p\notin D$.) 
Then an opposite module $\bP$ 
compatible with the Deligne extension 
yields a trivialization of $\cF|_{U_p\times \C}$ 
with the following properties: 
\begin{itemize}
\item 
the connection in the trivialization takes the form: 
\begin{equation} 
\label{eq:logconn_flat_triv}
d + \frac{1}{z} \left ( 
\sum_{i=1}^r 
A_i(x,y) \frac{dx_i}{x_i} + 
\sum_{i=1}^s B_i(x,y) dy_i  
+ (C_0(x,y) + z C_1(x,y)) \frac{dz}{z} 
\right ) 
\end{equation} 
where $A_i$, $B_i$, $C_0$, $C_1$ are matrix-valued holomorphic functions 
on $U_p$ such that the residue endomorphisms 
$A_i|_{x_i=0}$ are nilpotent; 

\item 
the Gram matrix of the pairing $(\cdot,\cdot)$ 
is constant with respect to the trivialization. 
\end{itemize} 
This trivialization extends the flat trivialization of 
$\cF^\times|_{(U_p \setminus D)\times \C}$ associated to $\bP$, 
and we refer to it as a \emph{flat trivialization} of $\cF$ 
associated to $\bP$. 
Conditions (1)--(3) in Definition~\ref{def:compatible_with_Deligne} 
imply that an opposite module $\bP$ compatible with 
the Deligne extension yields a \logDtrTLEP
structure with base $U_p$ in the sense of Reichelt 
\cite{Reichelt}*{Definition~1.8}. 
Note however that Reichelt's notion of \logDtrTLEP structure is more general 
than our notion of `compatibility with the Deligne extension':  
the connection $\nabla$ of a \logDtrTLEP structure has a 
form similar to \eqref{eq:logconn_flat_triv} but the term 
$A_i$ there can depend linearly on $z$, i.e.~$A_i = A_{i0}(x,y) 
+ A_{i1}(x,y) z$. 

\begin{remark} 
\label{rem:C1} 
In view of the proof of 
Proposition~\ref{pro:Birkhoff_logarithmic_connection}, 
slightly more is true about the connection 
\eqref{eq:logconn_flat_triv}: 
$A_i|_{x_i=0}$ 
is independent of $x_1,\dots,x_{i-1},x_{i+1},\dots,x_r,y_1,\dots,y_s$ 
and $C_1(x,y)$ is independent of $x$ and $y$. 
These follow automatically from the flatness of the connection. 
\end{remark} 

The existence of an opposite module over $U_p\setminus D$ 
which is compatible with the Deligne extension 
is reduced to the existence of a trivialization 
of $\cF$ over $\{p\}\times \C$ (or equivalently, an extension 
of $\cF|_{\{p\}\times \C}$ to a free 
$\cO_{\{p\}\times \Proj^1}$-module) 
satisfying certain properties. 

\begin{proposition} 
\label{pro:extending_trivialization}
Let $D$ be a normal crossing divisor in $\cM$ and 
let $(\cF, \nabla,(\cdot,\cdot))$ be a \logDTEP 
structure with base $(\cM,D)$ which is the Deligne 
extension of a TEP structure 
$(\cF^\times,\nabla,(\cdot,\cdot))$ with base $\cM\setminus D$. 
Let $p$ be a point in $\cM$ and take local co-ordinates  
$(x_1,\dots,x_r, y_1,\dots,y_s)$ centred at $p$ such 
that $D$ can be written as $x_1 x_2 \cdots x_r = 0$ near $p$.  
(We take $r=0$ when $p\notin D$.)
There is a one-to-one 
correspondence between the following: 
\begin{itemize} 
\item[(a)] an extension of $\cF|_{\{p\}\times \C}$ 
to a free $\cO_{\{p\}\times \Proj^1}$-module such that  
\begin{itemize} 
\item the residue endomorphisms 
$\nabla_{x_i \partial_{x_i}} |_{p}  
\colon \cF|_{\{p\}\times \C} \to z^{-1}
\cF|_{\{p\}\times \C}$, $i=1,\dots,r$ extend  
regularly across $z=\infty$ and vanish there; 

\item the connection $\nabla$ on $\cF|_{\{p\}\times \C}$ 
has a logarithmic pole at $z=\infty$, i.e.~$\nabla_{z\partial_z}$ 
is regular at $z=\infty$; 

\item the pairing $(\cdot,\cdot)$ on $\{p\}\times \C$ 
extends regularly across $z=\infty$ and is non-degenerate there; 

\item when $p$ is an orbifold point, 
the $\Aut(p)$-action on $\cF|_{\{p\}\times \C}$ extends across $z=\infty$. 
\end{itemize} 

\item[(b)] an opposite module $\bP$ for $(\cF^\times,\nabla,(\cdot,\cdot))$, 
defined near $p$, which
is compatible with the Deligne extension 
$(\cF,\nabla,(\cdot,\cdot))$.  
\end{itemize} 
\end{proposition}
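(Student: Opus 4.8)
The plan is to realise both (a) and (b) as data equivalent to a single flat trivialisation of $\cF$ near $p$ in the canonical logarithmic form \eqref{eq:logconn_flat_triv}, and then to show that such a trivialisation is uniquely determined by, and conversely determines, its restriction to the central fibre $\{p\}\times\Proj^1$. The bridge between the two descriptions is the discussion preceding Proposition~\ref{pro:opposite_trTLEP}, which converts an opposite module into a locally free extension $\cE$ of $\cF$ across $z=\infty$ together with the flat trivialisation of Definition~\ref{def:flat_trivialization}; in particular a trTLEP-type extension $\cE$ over $U_p\times\Proj^1$ gives back the opposite module by $\bP = z^{-1}\pi_\star\big(\cE|_{(U_p\setminus D)\times(\Proj^1\setminus\{0\})}\big)$.

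The direction (b) $\Rightarrow$ (a) is immediate. An opposite module $\bP$ compatible with the Deligne extension produces, by Definition~\ref{def:compatible_with_Deligne}, a locally free extension $\cE$ of $\cF$ over $U_p\times\Proj^1$ whose underlying bundle is trivial, with logarithmic connection, holomorphic non-degenerate pairing at $z=\infty$, and residue endomorphisms along $D$ that are nilpotent and vanish at $z=\infty$. Restricting $\cE$ to $\{p\}\times\Proj^1$ and reading off the four bulleted conditions in (a) gives the required extension: the vanishing of $\nabla_{x_i\partial_{x_i}}|_p$ at $z=\infty$ comes from condition~(4), the logarithmic pole of $\nabla_{z\partial_z}$ from condition~(2), non-degeneracy of the pairing from condition~(3), and the extension of the $\Aut(p)$-action from Remark~\ref{rem:compatibility_orbifoldcase}.

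The substantive direction is (a) $\Rightarrow$ (b). Starting from an extension as in (a), I would first record the distinguished frame given by the global sections $\Gamma(\{p\}\times\Proj^1,\cE_p)$: a frame of $\cF|_{\{p\}\times\C}$ extending holomorphically across both $z=0$ and $z=\infty$, in which the connection has a logarithmic pole at $z=\infty$, the pairing is constant, and (in the orbifold case) the $\Aut(p)$-action is diagonalised. The key step is to spread this frame out over all of $U_p$: I would invoke the Birkhoff factorisation with parameters of Proposition~\ref{pro:Birkhoff_logarithmic_connection} to produce a trivialisation of $\cF|_{U_p\times\C}$ which restricts to the chosen frame at $p$ and puts the connection into the form \eqref{eq:logconn_flat_triv}, with each $A_i|_{x_i=0}$ nilpotent and constant Gram matrix; by Remark~\ref{rem:C1} the residues $A_i|_{x_i=0}$ are then independent of the remaining coordinates and $C_1$ is constant. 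Letting $\cE$ be the extension over $U_p\times\Proj^1$ determined by this trivialisation and setting $\bP := z^{-1}\pi_\star\big(\cE|_{(U_p\setminus D)\times(\Proj^1\setminus\{0\})}\big)$ yields the opposite module. Conditions~(1)--(3) of Definition~\ref{def:compatible_with_Deligne} hold because the trivialisation is single-valued with trivial associated bundle, logarithmic connection, and constant pairing, while condition~(4) follows from the nilpotency of $A_i|_{x_i=0}$ together with the $1/z$ prefactor in \eqref{eq:logconn_flat_triv}, which forces the residue endomorphism to vanish at $z=\infty$. In the orbifold case the factorisation can be run $\Aut(p)$-equivariantly, since the central-fibre data already is.

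Finally, the two constructions are mutually inverse: restricting the extension $\cE$ built in (a) $\Rightarrow$ (b) back to $\{p\}\times\Proj^1$ recovers the frame one started with, and the trivialisation in the canonical form \eqref{eq:logconn_flat_triv} extending a prescribed central-fibre frame is unique by the uniqueness clause of Proposition~\ref{pro:Birkhoff_logarithmic_connection}, so the opposite module is indeed determined by the data in (a). I expect the main obstacle to lie entirely in this parametrised Birkhoff step: one must extend the central-fibre splitting to a neighbourhood while simultaneously controlling the logarithmic poles along $D$ and at $z=\infty$ and preserving nilpotency of the residues, and establishing convergence of the order-by-order construction in the base coordinates is the delicate point that Proposition~\ref{pro:Birkhoff_logarithmic_connection} is designed to supply.
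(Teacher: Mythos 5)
Your proposal is correct and follows essentially the same route as the paper: (b)\,$\Rightarrow$\,(a) by restriction of the flat trivialization to the central fibre, and (a)\,$\Rightarrow$\,(b) by spreading the central-fibre frame over $U_p$ via the parametrised Birkhoff normal form of Proposition~\ref{pro:Birkhoff_logarithmic_connection} together with Propositions~\ref{pro:fundamental_solution_logarithmic_connection} and~\ref{pro:constant_pairing}. The only caveat is that Proposition~\ref{pro:Birkhoff_logarithmic_connection} has no explicit uniqueness clause, so the uniqueness of the normal-form trivialization extending a prescribed frame on $\{p\}\times\C$ (which the paper establishes by comparing fundamental solutions in the $U$-direction, and which is also what forces the $\Aut(p)$-equivariance you assert) must be argued separately rather than cited.
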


\begin{proof} 
Let $U_p$ be a contractible open neighbourhood of $p$ in $\cM$ 
such that every irreducible component of $D\cap U_p$ contains $p$. 
(If $p$ is an orbifold point, we take $U_p$ to be a uniformizing 
chart.)
In view of the discussion after 
Definition~\ref{def:compatible_with_Deligne}, 
an opposite module $\bP$, defined over $U_p\setminus D$, 
which is compatible with the Deligne extension yields a flat trivialization 
of $\cF$ over $U_p\times \C$ such that 
\begin{itemize} 
\item[(i)] the connection $\nabla$ in the trivialization 
takes the form: 
\begin{equation}
\label{eq:logconn_normalform}
d + \frac{1}{z} \left( 
\sum_{i=1}^r A_i(x,y) \frac{dx_i}{x_i} 
+ \sum_{i=1}^s B_i (x,y) dy_i 
+ (C_0(x,y) + z C_1(x,y)) \frac{dz}{z} 
\right) 
\end{equation} 
where $A_i$, $B_i$, $C_0$, $C_1$ are matrix-valued 
holomorphic functions on $U_p$ and 
$A_i|_{x_i=0}$ is nilpotent;  
\item[(ii)] the pairing $(\cdot,\cdot)$ is constant with respect 
to the trivialization. 
\end{itemize} 
Restricting the trivialization to $\{p\}\times\C$, 
we obtain an extension of $\cF|_{\{p\}\times \C}$ 
to a free $\cO_{\{p\}\times \Proj^1}$-module 
satisfying the conditions in (a). 
When $p$ is an orbifold point, recall from 
Remark~\ref{rem:compatibility_orbifoldcase} that 
$\cF|_{U_p\times \C}$ extends, via the trivialization, 
to an $\Aut(p)$-equivariant free $\cO_{U_p\times \Proj^1}$-module $\cE$.

Conversely, suppose that 
we have an extension of $\cF|_{\{p\}\times \C}$ 
to a free $\cO_{\{p\}\times \Proj^1}$-module satisfying the conditions in (a). 
We take a trivialization of $\cF|_{\{p\}\times \C}$ which 
yields the free extension. 
We shall show that there exists a unique trivialization of $\cF$ 
over $U_p\times \C$ extending the trivialization 
over $\{p\}\times \C$ 
and satisfying the properties (i)--(ii) listed above. 

To see the existence of a trivialization over $U_p\times \C$, 
we first extend the given trivialization of $\cF$ 
along $\{p\}\times \C$ 
to $U_p\times \C$ in an arbitrary way (shrinking $U_p$ if necessary). 
The connection $\nabla$ in the trivialization takes the form 
\[
d + \frac{1}{z} 
\left ( \sum_{i=1}^r A_i(x,y,z) \frac{dx_i}{x_i} 
+ \sum_{i=1}^s B_i(x,y,z) dy_i 
+ C(x,y,z) \frac{dz}{z} \right). 
\] 
where $A_i, B_i, C$ are matrix-valued holomorphic functions 
on $U_p \times \C$, 
$A_i(0,0,z)$, $1\le i\le r$ 
are nilpotent and independent of $z$ 
and $C(0,0,z)$ depends linearly on $z$, i.e. 
$C(0,0,z) = C_0(0,0) + z C_1(0,0)$. 
By Propositions \ref{pro:fundamental_solution_logarithmic_connection} 
and \ref{pro:Birkhoff_logarithmic_connection}, after shrinking $U_p$ 
if necessary, there exists a gauge transformation $L_+$ 
defined on $U_p\times \C$ such that 
$L_+|_{\{p\} \times \C} = \id$ and  that 
this connection is transformed to a connection of the 
form \eqref{eq:logconn_normalform} by $L_+$. 
By Proposition \ref{pro:constant_pairing}, the 
Gram matrix of the pairing $(\cdot,\cdot)_\cF$ is constant 
over $U_p\times \C$ after the gauge transformation. 

Next we show the uniqueness of such a trivialization. 
Suppose we have a gauge transformation 
$G$ such that $G|_{\{p\} \times \C} =\id$ and that 
$G$ transforms the connection \eqref{eq:logconn_normalform} 
to a connection of the same form: 
\begin{equation}
\label{eq:normalform2}
 d + \frac{1}{z} 
\left(\sum_{i=1}^r A'_i(x,y) \frac{dx_i}{x_i} 
+  \sum_{i=1}^s B'_i(x,y) dy_i 
+ (C_0'(x,y) + z C_1'(x,y)) \frac{dz}{z} \right) 
\end{equation} 
where $A_i'(0,y)$, $1\le i\le r$ are nilpotent. 
By Proposition \ref{pro:fundamental_solution_logarithmic_connection}, 
the connections \eqref{eq:logconn_normalform} and \eqref{eq:normalform2} 
admit respectively unique 
``fundamental solutions in the $U$-direction" of the form: 
\[
\tilde{L}(x,y,z)e^{\sum_{i=1}^r A_i(0,0) \log x_i /z}, \quad 
\tilde{L'}(x,y,z)e^{\sum_{i=1}^4 A'_i(0,0)\log x_i /z} 
\] 
satisfying the initial conditions 
$\tilde{L}(0,0,z) = \tilde{L'}(0,0,z)=\id$. 
Then we have 
\[
\tilde{L'}(x,y,z)e^{-\sum_{i=1}^r A'_i(0,0)\log x_i/z} = G(x,y,z) 
\tilde{L}(x,y,z)e^{-\sum_{i=1}^r A_i(0,0) \log x_i/z}  
\]
Since the trivialization along $\{p\}\times \C$ is fixed, 
the residue endomorphisms are the same $A_i(0,0) = A'_i(0,0)$. 
Since the connections \eqref{eq:logconn_normalform}, \eqref{eq:normalform2} 
in the $U$-direction are trivial along $z=\infty$,  
$\tilde{L}$ and $\tilde{L'}$ are regular on 
$U_p \times (\Proj^1\setminus \{0\})$ and 
$\tilde{L}|_{z=\infty} = \tilde{L'}|_{z=\infty}=\id$. 
Therefore $G$ has to be the identity on $U_p \times \C$. 

When $p$ is an orbifold point, we additionally need to check that 
the opposite module corresponding to the trivialization of 
$\cF|_{U_p\times \C}$ is well-defined on 
the quotient $(U_p\setminus D)/\Aut(p)$. 
(The trivialization itself may not descend to the 
quotient.) 
It suffices to show that each $g\in \Aut(p)$ acts 
on the trivializing frame by a constant matrix 
(independent of $z$). 
This follows from the uniqueness statement: 
let $s_0,\dots,s_N$ be the trivializing frame 
of $\cF|_{U_p\times \C}$ 
and define a matrix-valued function $M$ on $U_p\times \C$ 
by 
$[g\cdot s_0,\dots, g\cdot s_N] = [s_0,\dots,s_N] M$. 
By the last condition in (a), $M_p := M|_{\{p\}\times \C}$ is 
a constant matrix independent of $z$. 
The frame $[s_0,\dots,s_N]M_p$ 
yields a trivialization of $\cF|_{U_p\times \C}$ 
satisfying the properties (i)--(ii) above 
since $M_p$ is constant. 
On the other hand, since $\nabla$ and $(\cdot,\cdot)$ are 
$\Aut(p)$-invariant, the connection matrices and 
Gram matrix of the pairing $(\cdot,\cdot)$ do not 
change under the gauge transformation by $M$, 
and hence the trivialization given by the frame 
$[s_0,\dots, s_N]M$ 
also satisfies the properties (i)--(ii) above. 
The uniqueness argument shows that the two trivializing 
frames are the same, 
i.e.~$M=M_p$ is a constant matrix. 
\end{proof} 

We now apply the above general method to 
the B-model \logDTEP structure $(\cFB, \nablaB, \pairingB)$. 
Recall from \S \ref{sec:2d_log_TEP} that $(\cFB, \nablaB,\pairingB)$ is the 
Deligne extension of the B-model TEP structure 
$(\cFB^\times, \nablaB, \pairingB)$ with logarithmic singularities along 
\[
D= \ov{\{y_1y_2 =0\}} \cup \{y_1=-1/27\}
\]
For each point $y$ in $\cMB$, 
we shall construct an opposite module $\bP$ 
for the B-model TEP structure over $U_y\setminus D$ 
for a sufficiently small neighbourhood $U_y$ of $y$, 
which is compatible with the Deligne extension. 
This yields a \logDtrTLEP structure with base $(U_y, U_y\cap D)$ 
which underlies the B-model \logDTEP structure. 

\subsubsection{Step 1, Case 1: 
$y \in \cMCY \setminus (\DCY\cup \{\infty\})$}

Let $y$ be of the form $y =(y_1,y_2)$ such that $y_2 = 0$ and 
$y_1 \ne 0$,~${-\frac{1}{27}}$,~$\infty$.  
In other words, we take $y \in \cMCY \setminus \big(\DCY \cup \{\infty\}\big)$. 
In this case there are many possible choices for $\bP$: 

\begin{proposition}
  \label{pro:existence_of_compatible_opposites}
Let $y \in \cMCY \setminus \big(\DCY \cup \{\infty\}\big)$. 
The following are equivalent:
  \begin{itemize}
  \item[(A)] a subspace $P$ of $\Hvec|_y$ such that $F^2_\vec \oplus P
    = \Hvec|_y$;
  \item[(B)] an opposite module $\bP$ defined on $U_y \setminus D$,
    where $U_y$ is a neighbourhood of $y$ in $\cMB$, such
    that $\bP$ is compatible with the Deligne extension.
  \end{itemize}
\end{proposition}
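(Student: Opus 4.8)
The plan is to deduce the equivalence by splicing together two correspondences that are already available, without any fresh analytic input. On one side, the (A)$\iff$(D) part of Proposition~\ref{pro:correspondence_of_opposites} holds fibrewise over every point of $\cMCY$ and converts a subspace $P \subset \Hvec|_y$ complementary to $F^2_\vec$ into an extension of $\cFCY|_{\{y\}\times\C}$ to a free $\cO_{\{y\}\times\Proj^1}$-module with prescribed behaviour at $z=\infty$. On the other side, Proposition~\ref{pro:extending_trivialization}, applied with $\cM = \cMB$, $D = \ov{(y_1y_2=0)}\cup\{y_1=-\tfrac{1}{27}\}$ and $p=y$, converts exactly such a fibrewise extension into an opposite module near $y$ compatible with the Deligne extension. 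The crux is that the extension data produced by the first proposition is literally the data consumed by the second, so the two bijections compose to give (A)$\iff$(B).

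First I would pin down the local geometry at $y=(y_1,0)$. Because $y_1\neq 0,-\tfrac{1}{27},\infty$, neither $\{y_1=0\}$ nor $\{y_1=-\tfrac{1}{27}\}$ passes through $y$, so $D$ is smooth near $y$ and cut out by $y_2=0$ alone; in the coordinate bookkeeping of Proposition~\ref{pro:extending_trivialization} this is the case $r=1$ with $x_1=y_2$ and a single transverse coordinate $y_1$, and the orbifold clause is vacuous since $y$ is a manifold point. I would also record two identifications: that $\cFB|_{\{y\}\times\C}=\cFCY|_{\{y\}\times\C}$, since $\cFCY$ is the restriction of $\cFB$ to $\cMCY\times\C$ and $y\in\cMCY$; and that the residue endomorphism $\nablaB_{y_2\partial_{y_2}}|_y$ appearing in Proposition~\ref{pro:extending_trivialization}(a) is precisely the endomorphism $N$ of \S\ref{sec:six}, namely $-z^{-1}D_2$.

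With these in hand, I would run Proposition~\ref{pro:extending_trivialization} to match (B) with a free extension of $\cFCY|_{\{y\}\times\C}$ across $z=\infty$ for which the connection is logarithmic at $z=\infty$, the pairing stays non-degenerate at $z=\infty$, and $N$ extends regularly and vanishes at $z=\infty$; then I would observe that these four requirements are, term for term, condition (D) of Proposition~\ref{pro:correspondence_of_opposites}, and invoke its (A)$\iff$(D) clause to reach (A). The step needing genuine care is this matching of conditions in the middle --- in particular that ``the residue of $\nablaB_{y_2\partial_{y_2}}$ vanishes at $z=\infty$'' in (a) is the very same requirement as ``$N$ vanishes at $z=\infty$'' in (D) --- rather than any hard estimate. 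I would emphasise that the hypothesis $y_1\neq 0,-\tfrac{1}{27},\infty$ is exactly what makes $D$ a smooth divisor at $y$ and hence permits the clean, unconstrained choice of $P$; at the excluded points $D$ acquires a normal crossing (at $0$ and $-\tfrac{1}{27}$) or $y$ becomes the orbifold point (at $\infty$), where this freedom collapses to the rigidity recorded in Propositions~\ref{pro:opposite_cusps} and~\ref{pro:LR_conifold_orbifold}.
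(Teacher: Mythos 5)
Your proposal is correct and follows essentially the same route as the paper, which likewise proves this by composing the (A)$\iff$(D) correspondence of Proposition~\ref{pro:correspondence_of_opposites} with Proposition~\ref{pro:extending_trivialization}, after observing that condition (D) of the former is precisely condition (a) of the latter specialised to $r=1$, $x_1=y_2$, with the residue endomorphism being $N=-z^{-1}D_2$. The extra bookkeeping you supply (smoothness of $D$ at $y$, the identification $\cFB|_{\{y\}\times\C}=\cFCY|_{\{y\}\times\C}$, vacuity of the orbifold clause) is exactly the implicit content of the paper's two-line proof.
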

\begin{proof} 
By Proposition~\ref{pro:correspondence_of_opposites}, 
a subspace $P$ of $\Hvec|_y$ such that $F^2_\vec \oplus P = \Hvec|_y$ 
is equivalent to an extension of $\cFB|_{\{y\}\times \C}$ 
to a free $\cO_{\{y\}\times \Proj^1}$-module 
satisfying the condition (a) of Proposition~\ref{pro:extending_trivialization}.  
The conclusion follows from Proposition~\ref{pro:extending_trivialization}. 
\end{proof} 

\subsubsection{Step 1, Case 2: the Large-Radius, Conifold, and 
Orbifold Points. }
We next consider the large-radius, conifold, and orbifold points.  
In each case there is a unique choice for $\bP$. 
For the large-radius and the orbifold points, the uniqueness 
has been shown in \cite{CIT:wall-crossings}*{Theorem 3.5} 
for the case at hand, 
and in \cite{Iritani:Ruan}*{Theorem 3.13} for a more general target.

\begin{proposition} 
\label{pro:LR_conifold_orbifold} 
We have the following: 
  \begin{enumerate}
  \item Suppose that $y$ is the large-radius limit point $y =
    (y_1,y_2) = (0,0)$.  There is a unique opposite module 
  $\bP_\LR$, 
    defined near $y$, 
    which is compatible with the Deligne extension.  
    The corresponding flat trivialization of $\cFB$ along $\{y\}\times \C$ 
is given by the frame: 
 \[
   \text{$1$, $ D_2$, $D_2^2$, $D_2^3$, 
      $D_1$, $ D_1^2$}
\]   
\item Suppose that $y$ is the conifold point $y = (t,y_2) = (0,0)$.
There is a unique opposite module $\bP_\con$, defined near $y$,  
which is compatible with the Deligne extension. 
The corresponding flat trivialization of $\cFB$ along $\{y\}\times \C$ 
is given by the frame: 
\[
\text{$1$, $D_2$, $D_2^2$, $D_2^3$, 
      $D_1$, $(1+27y_1) D_1^2$}
\]
\item Suppose that $y$ is the orbifold point 
$y = (\fry_1,\fry_2) = (0,0)$.  
There is a unique opposite module $\bP_\orb$, defined near $y$, 
which is compatible with the Deligne extension.  
The corresponding flat trivialization of $\cFB$ along $\{y\}\times \C$ 
is given by the frame: 
\[
\text{$1$, $\frD_2$, $\frD_2^2$, $\frD_2^3$, 
 $\frd_1$, $\frd_1^2$}
\]
  \end{enumerate}
\end{proposition}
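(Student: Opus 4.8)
The plan is to deduce Proposition~\ref{pro:LR_conifold_orbifold} from the already-established Proposition~\ref{pro:opposite_cusps}, via the one-to-one correspondence of Proposition~\ref{pro:extending_trivialization} applied to the B-model \logDTEP structure $(\cFB,\nablaB,\pairingB)$ at each of the three points $y=(0,0)$, $y=(t,y_2)=(0,0)$, and $y=(\fry_1,\fry_2)=(0,0)$. Recall from \S\ref{sec:2d_log_TEP} that $(\cFB,\nablaB,\pairingB)$ is the Deligne extension of the B-model TEP structure across $D=\overline{(y_1y_2=0)}\cup\{y_1=-1/27\}$, so Proposition~\ref{pro:extending_trivialization} is available. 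It identifies a compatible opposite module $\bP$ defined near $y$ on $U_y\setminus D$ with a free extension of $\cFB|_{\{y\}\times\C}$ to $\{y\}\times\Proj^1$ satisfying the conditions of its part~(a), and the flat trivialization of $\cFB$ along $\{y\}\times\C$ associated to $\bP$ is exactly the trivialization cut out by that free extension. Thus it suffices to produce, at each point, a \emph{unique} free extension of $\cFB|_{\{y\}\times\C}$ satisfying those conditions, together with the explicit basis asserted.

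First I would note that $\cFB|_{\{y\}\times\C}=\cFCY|_{\{y\}\times\C}$ for $y\in\cMCY$, since $\cFCY=\cFB|_{\cMCY\times\C}$, so the relevant free extensions are precisely those classified in Proposition~\ref{pro:opposite_cusps}. The core of the argument is then to match the conditions of Proposition~\ref{pro:extending_trivialization}(a) for $\cFB$ with those of Proposition~\ref{pro:opposite_cusps}. The conditions on the pairing (non-degeneracy at $z=\infty$) and on $\nabla_{z\partial_z}$ (logarithmic pole at $z=\infty$) agree verbatim. For the residue conditions one must track which divisor components pass through $y$: at the large-radius and conifold points there are two, namely $\{y_2=0\}$ together with $\{y_1=0\}$ (respectively $\{t=0\}$), whereas at the orbifold point only $\{\fry_2=0\}$ passes through $y$, which is moreover a $\mu_3$-orbifold point. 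In every case the residue of $\nablaB$ along $\{y_2=0\}$ (respectively $\{\fry_2=0\}$) is the endomorphism $N=-z^{-1}D_2=-z^{-1}\frD_2$ of \S\ref{sec:six}, so the requirement in Proposition~\ref{pro:extending_trivialization}(a) that this residue vanish at $z=\infty$ is exactly condition~(D) of Proposition~\ref{pro:correspondence_of_opposites} that is built into Proposition~\ref{pro:opposite_cusps}.

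It then remains to match the residue along the transverse divisor. At the large-radius and conifold points I would identify the residue of $\nablaB$ along $\{y_1=0\}$ (respectively $\{t=0\}$) with the residue along $\DCY$ of the $\cFCY$-connection of \S\ref{sec:six}. Although the latter is built from the mixed field $\nablaB_{y_1\partial_{y_1}-\tfrac{1}{3}y_2\partial_{y_2}}|_{y_2=0}$, the correction term $-\tfrac{1}{3}\nablaB_{y_2\partial_{y_2}}$ contributes nothing to the residue along $\{y_1=0\}$: by Proposition~\ref{pro:GKZ} the operator $\nablaB_{y_2\partial_{y_2}}=-z^{-1}D_2$ has connection matrix regular along $\{y_1=0\}$ and hence no $\tfrac{dy_1}{y_1}$-pole, so the two residues coincide. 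Consequently the extra bullet in Proposition~\ref{pro:opposite_cusps} for the large-radius and conifold points is exactly the remaining residue condition of Proposition~\ref{pro:extending_trivialization}(a), and at the orbifold point the $\Aut(y)=\mu_3$-equivariance condition in the two propositions coincides. With the condition sets matched, Proposition~\ref{pro:opposite_cusps} supplies the unique free extension at each point together with the explicit bases $1,D_2,D_2^2,D_2^3,D_1,D_1^2$, then $1,D_2,D_2^2,D_2^3,D_1,(1+27y_1)D_1^2$, then $1,\frD_2,\frD_2^2,\frD_2^3,\frd_1,\frd_1^2$; transporting these through Proposition~\ref{pro:extending_trivialization} yields the unique compatible opposite modules $\bP_\LR$, $\bP_\con$, $\bP_\orb$ and identifies their flat trivialization frames as claimed. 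The main obstacle is precisely the residue bookkeeping just described—confirming that the splitting chosen in \S\ref{sec:six} does not disturb the residue along the transverse divisor—after which the statement is a formal consequence of the two cited propositions.
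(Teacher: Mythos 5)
Your proposal is correct and follows essentially the same route as the paper, which likewise reduces the statement via Proposition~\ref{pro:extending_trivialization} to the classification of free extensions of $\cFB|_{\{y\}\times\C}$ established in Proposition~\ref{pro:opposite_cusps}; your extra paragraphs merely spell out the condition-matching that the paper leaves implicit. (One small slip in that bookkeeping: the residue of the $\cFCY$-connection at $y_1=0$ is $N_1-\tfrac{1}{3}N_2$ rather than $N_1$, so the two residues do not literally coincide, but since the vanishing of $N_2$ at $z=\infty$ is imposed separately in both condition sets the equivalence you need still holds.)
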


\begin{proof}
In all three cases, in view of Proposition~\ref{pro:extending_trivialization}, 
it suffices to check that there exists a unique 
extension of $\cFB|_{\{y\}\times \C}$ 
to a free $\cO_{\{y\}\times \Proj^1}$-module satisfying 
the condition (a) of Proposition~\ref{pro:extending_trivialization} 
and that it is defined by the frame given in the proposition. 
This has been proved in Proposition~\ref{pro:opposite_cusps}. 
\end{proof}

\subsubsection{Step 1, Case 3: $y\not \in
  \cMCY$} \label{sec:step_1_case_3} 
We now turn to the remaining case, where $y \notin \cMCY$.  
This means either that $y =(y_1,y_2)$ with $y_2 \ne 0$, 
or that $y = (\fry_1,\fry_2)$ with $\fry_1=0$ and $\fry_2 \ne 0$. 
We will use the fact that any connection $\nabla$ as in
Definition~\ref{def:compatible_with_Deligne} defined on $U
\times \C$ extends canonically to a connection on $V \times \C$, where
$V$ is the orbit of $U$ under the flow of the Euler field:
see e.g.~Kim--Sabbah~\cite{Kim--Sabbah}*{Example~1.3}.  
In the case at hand, the Euler field is $2 y_2 \partial_{y_2} = 2
\fry_1 \partial_{\fry_2}$.  The opposite submodules constructed in
Step~1, Cases~1 and~2, are defined on neighbourhoods $\{U_y\}$ that
together cover the locus $\cMCY \subset \cMB$
where $y_2 = 0$ or $\fry_2 = 0$, and so the orbits of these
neighbourhoods under the Euler flow cover all of $\cMB$.
Thus we construct, for any $y \in \cMB$ with $y \not \in
\cMCY$, a neighbourhood $U_y$ of $y$ in $\cMB$ and an
opposite module $\bP$ over $U_y\setminus D$ 
which is compatible with the Deligne extension. 

More precisely, we have the following statement: 

\begin{proposition}
  \label{pro:Euler_flow}
  Let $p \colon \cMB \to \cMCY$ be the map that sends
  $(y_1,y_2) \in \cMB$ to the point $(y_1,0) \in
  \cMCY \subset \cMB$, and which sends
  $(\fry_1,\fry_2) \in \cMB$ such that $\fry_1 = 0$ to the
  orbifold point $(\fry_1,\fry_2) = (0,0) \in \cMCY \subset
  \cMB$.  Let $y \in \cMCY$.  For a sufficiently
  small open neighbourhood $U_y$ of $y$, an opposite module defined
  over $U_y \setminus D$ which is compatible with the Deligne
  extension extends to an opposite module over
  $p^{-1}(p(U_y))\setminus D$.
\end{proposition}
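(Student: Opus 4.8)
The plan is to extend the opposite module by parallel transport along the flow of the Euler field $E = 2y_2\partial_{y_2} = 2\fry_2\partial_{\fry_2}$; this is an instance of the canonical prolongation of Kim--Sabbah~\cite{Kim--Sabbah}*{Example~1.3}. First I would record the geometry. Taking $U_y$ to be a small polydisc, its projection $p(U_y)\subset \cMCY$ is contained in $U_y\cap\cMCY$, and $V:=p^{-1}(p(U_y))$ is exactly the saturation of $U_y$ under the flow of $E$: a point $q\in V$ with $y_2\neq 0$ lies on the integral curve $s\mapsto (y_1,e^{2s}y_2)$ of $E$, which stays inside $\cMB$ and enters $U_y$ as $s\to-\infty$ (limiting to $p(q)\in\cMCY\cap U_y$), while points of $V$ with $y_2=0$ already lie in $U_y$. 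Each such orbit is simply connected. Since $E$ has no $\partial_{y_1}$-component and vanishes on $\{y_2=0\}$, it is tangent to every irreducible component of $D=\overline{(y_1y_2=0)}\cup\{y_1=-\tfrac{1}{27}\}$; hence the flow preserves $D$, and $V\setminus D$ is a union of $E$-orbits each limiting into $U_y\setminus D$.

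A compatible opposite module $\bP$ over $U_y\setminus D$ is in particular parallel (Definition~\ref{def:opposite}(3)), so $\nabla_E\bP\subset\bP$; thus $\bP$ is invariant under parallel transport along the orbits of $E$. I would then define the extension of $\bP$ to $V\setminus D$ by transporting $\bP$ along these orbits. Because the orbits are simply connected and each limits into $U_y$, the transport is path-independent and single-valued, and it manifestly agrees with the original $\bP$ on the overlap. Equivalently, passing to the flat trivialization of $\cFB$ associated to $\bP$ (via Proposition~\ref{pro:extending_trivialization}), this is the Kim--Sabbah extension of the underlying connection along $E$.

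Next I would verify that the transported $\bP$ is again a compatible opposite module. Parallel transport preserves the flat connection $\nabla$ and the symplectic pairing $\Omega$, so the extension remains parallel, isotropic and opposite to $\bF$; homogeneity ($\nabla_{z\partial_z}\bP\subset\bP$) is preserved because $z\partial_z$ commutes with the transport, using the relation $\nabla_{z\partial_z}+\nabla_E=\Gr-\tfrac{3}{2}$ from \eqref{eq:GKZz}. For compatibility with the Deligne extension I would check conditions (1)--(4) of Definition~\ref{def:compatible_with_Deligne}. Here the homogeneity relation \eqref{eq:GKZz} is essential: since $E$ is tangent to $D$ and the residue endomorphisms of $\nabla$ along the components of $D$ are $\nabla$-flat, \eqref{eq:GKZz} forces the matrices $A_i$, $B_i$, $C_0$, $C_1$ of the normal form \eqref{eq:logconn_normalform} to be locally constant along each $E$-orbit. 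Consequently the nilpotency of $A_i|_{x_i=0}$, the logarithmic pole and vanishing of residues at $z=\infty$, and the constancy of the Gram matrix of $\pairingB$ all propagate from $U_y$ to the whole of $V\setminus D$. When $y$ is the orbifold point I would run the argument on the $\mu_3$-uniformizing chart equivariantly, which is legitimate because $E=2\fry_2\partial_{\fry_2}$ and $\Gr$ are $\mu_3$-invariant, so the transported trivialization descends to the quotient (cf.~Remark~\ref{rem:compatibility_orbifoldcase}).

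The main obstacle is this last verification: showing that Deligne-compatibility — logarithmic poles with nilpotent, $z=\infty$-vanishing residues along $D$ together with a holomorphically non-degenerate pairing — is preserved all the way out to the far reaches of $V$, rather than the formal act of transporting, which is essentially automatic from flatness. The crux is that the Euler homogeneity \eqref{eq:GKZz} ties the $z$-scaling to the $E$-flow, so that the data controlling conditions (1)--(4) is constant along orbits; once this constancy is established, the extended $\bP$ yields a \logDtrTLEP structure over $V$ and hence, by Proposition~\ref{pro:extending_trivialization}, a compatible opposite module over $V\setminus D=p^{-1}(p(U_y))\setminus D$.
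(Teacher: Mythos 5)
Your overall strategy is the same as the paper's: the paper also extends the opposite module along the flow of the Euler field $E = 2y_2\partial_{y_2}$, citing the same Kim--Sabbah prolongation, and verifies compatibility in the flat trivialization of Proposition~\ref{pro:extending_trivialization}. So the architecture of your argument is sound. However, the key verification step contains a concrete error: the connection matrices in the flat trivialization are \emph{not} ``locally constant along each $E$-orbit''. Writing $\nabla_{\partial_{y_1}} = \partial_{y_1} + z^{-1}A(y_1,y_2)$, $\nabla_{y_2\partial_{y_2}} = y_2\partial_{y_2} + z^{-1}B(y_1,y_2)$ and $\nabla_{z\partial_z} = z\partial_z - 2z^{-1}B + C$, flatness forces $C$ to be constant and gives the ODEs $2y_2\partial_{y_2}B = B - [C,B]$ and $2y_2\partial_{y_2}A = A - [C,A]$, whose solutions satisfy $B(y_1,t^2y_2) = t\cdot t^{-C}B(y_1,y_2)\,t^{C}$ and similarly for $A$. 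The matrices scale and get conjugated along the orbit; they are constant only in the degenerate case $C=0$. What actually makes the argument work is that this explicit scaling--conjugation law provides a single-valued holomorphic continuation of $A$ and $B$ from the neighbourhood $\{|y_2|<\epsilon\}$ to all of $p^{-1}(p(U_y))$; since $D\cap p^{-1}(p(U_y)) = D\cap U_y$, the nilpotency and residue conditions of Definition~\ref{def:compatible_with_Deligne} require no new check, and the Gram matrix stays constant by flatness. Your conclusion is therefore correct, but the justification you give for it would not survive scrutiny as written.

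A smaller point: the $E$-orbit of a point with $y_2\neq 0$ is a copy of $\C^\times$ in the $y_2$-line, not a simply connected set, so path-independence of the transport is not automatic. It follows instead from the fact that $\bP$ is already single-valued and parallel on the full punctured neighbourhood $0<|y_2|<\epsilon$ inside $U_y\setminus D$, hence invariant under the local monodromy around $y_2=0$; equivalently, the linear ODE $2y_2\partial_{y_2}B = B-[C,B]$ has regular singularity at $y_2=0$ and its solution with single-valued initial data on the punctured disc continues single-valuedly to all of $y_2\in\C^\times$. With these two corrections your proof coincides with the paper's.
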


\begin{proof}
Suppose for simplicity that $y \in \cMCY$   
is neither the large radius limit point, nor the conifold point,  nor the orbifold point. 
(The argument in these three cases is essentially identical.)
Then $p(x)$ is
the limit as $t \to -\infty$ of the image of $x$ under the time-$t$ flow 
  of the Euler field.  
With respect to the flat 
  trivialization defined by $\bP$, we have:
  \begin{equation*}
    \begin{aligned}
      & \nabla_{z \partial_z} = z \partial_z - 2 z^{-1} B(y_1,y_2) + C(y_1,y_2)
      \\
      & \nabla_{\partial_{y_1}} = \partial_{y_1} + z^{-1} A(y_1,y_2) \\
      & \nabla_{y_2 \partial_{y_2}} = y_2 \partial_{y_2} + z^{-1} B(y_1,y_2) 
    \end{aligned}
  \end{equation*}
  for $(y_1,y_2)$ in $U_y$, for some regular
  endomorphism-valued functions $A, B, C$ on $U_y$. 
  Flatness of $\nabla$ gives that $C$ is independent of $y_1$ and $y_2$ 
  (see Remark \ref{rem:C1}) 
  and yields the following differential equations: 
  \begin{align*}
  2 y_2 \partial_{y_2} B  & =B - [C,B]   \\
    2 y_2 \partial_{y_2} A & = 2 \partial_{y_1} B 
    = A - [C,A]
  \end{align*}
  These differential equations imply: 
  \begin{equation*}
    \begin{aligned}
      B(y_1,y_2t^2) & = t \cdot t^{-C} B(y_1,y_2) t^C \\
      A(y_1,y_2t^2) & = t \cdot t^{-C} A(y_1,y_2) t^C
    \end{aligned}
    \qquad \qquad t \in \C^\times
  \end{equation*} 
  The right-hand side defines an analytic continuation 
  of $B(y_1,y_2)$, $A(y_1,y_2)$ -- which are originally defined 
  only near $y_2=0$ -- to all of $V_y = p^{-1}(p(U_y))$. 
  By the discussion after Definition \ref{def:compatible_with_Deligne} 
  this yields an opposite module over $V_y \setminus D$ which 
  is compatible with the Deligne extension. 
\end{proof}

\begin{remark}
  This completes Step~1: we have constructed, for each $y \in
  \cMB$, a neighbourhood $U_y$ of $y$ in
  $\cMB$ and an opposite module $\bP$ over $U_y \cap
  \cMB^\times$ which is compatible with the Deligne extension.  In
  particular, $\bP$ determines a \logDtrTLEP structure with base
  $U_y$.
\end{remark}

\subsection{Step 2: Unfolding the \logDtrTLEP Structures Locally} 
\label{sec:unfolding_locally}
We now delete the conifold locus, $y_1 ={-\frac{1}{27}}$, from
$\cMB$, setting:
\[
\cMB^\circ := \big\{(y_1,y_2) \in \cMB : \textstyle
y_1 \ne {-\frac{1}{27}}\big\}
\]
Consider $y \in \cMB^\circ$, a neighbourhood $U_y$ of $y$ in
$\cMB^\circ$, and an opposite module $\bP$ over 
$U_y \setminus D$ such that $\bP$ is compatible with the Deligne extension, as
constructed in Step~1.  The choice of $U_y$ and $\bP$ defines a
\logDtrTLEP structure with base $U_y$ such that the underlying TEP
structure coincides with the B-model \logDTEP structure.  
The section $\xi$ of $\cFB$ corresponding 
to the element $1 \in \cFGKZ$ satisfies the conditions (IC), (GC),
(EC), and flatness in \cite{Reichelt}*{Theorem~1.12}.  We therefore
consider Reichelt's universal unfolding of our \logDtrTLEP structure. 
This is a \logDtrTLEP structure with base 
$(U_y \times V_y, (D\cap U_y)\times V_y)$, 
where $V_y$ is a neighbourhood of the origin in $\C^4$, such that the
restriction to $U_y \times \{0\}$ coincides with the \logDtrTLEP
structure with base $(U_y,D\cap U_y)$ defined by $\bP$. 
The underlying \logDTEP structure is miniversal in the sense of 
Definition \ref{def:miniversal}. 

\begin{remark}
  We delete the conifold locus $y_1 ={-\frac{1}{27}}$ because
  Reichelt's generation condition (GC) fails there.  
\end{remark}

\subsection{Step~3: A Global Miniversal TEP Structure}

Now that we have completed Steps~1 and~2, we are in the following
situation.  Given a sufficiently small open subset $U$ of
$\cMB^\circ$, there exists an opposite module $\bP$ over $U \setminus D$ 
that is compatible with the Deligne extension.  
Thus there exists a \logDtrTLEP structure with base 
$(U \times V, (U\cap D) \times V)$, 
where $V$ is an open neighbourhood of the origin in $\C^4$; 
this \logDtrTLEP structure is constructed as a universal unfolding 
of the \logDtrTLEP structure with base $(U,U\cap D)$ defined by $\bP$.  
The \logDtrTLEP structure with base
$(U \times V, (U\cap D)\times V)$ 
determines a \logDTEP structure with the same base, 
and we now show that all these \logDTEP structures glue together, 
after shrinking the base $U \times V$ if necessary, 
to give a global, miniversal \logDTEP structure, 
defined on a six-dimensional complex manifold $\cMBbig$ 
that contains $\cMB^\circ$ as a closed submanifold. 
This global \logDTEP structure is the \emph{big B-model \logDTEP structure}.

\subsubsection{The Gluing Map} 
\label{sec:gluing_map} 
To simplify the notation, when there is no risk of confusion, 
we denote a \logDTEP (or \logDtrTLEPns) structure 
simply by the corresponding locally free sheaf $\cF$, omitting the flat connection 
$\nabla$ and the pairing $(\cdot,\cdot)_\cF$. 

\begin{lemma} 
\label{lem:gluing_logTEP}
Let $U$ be an open set of $\cMB^\circ$. 
Suppose that we have opposite modules $\bP$, $\bP'$ for 
the B-model TEP structure $\cFB^\times$ over 
$U\setminus D$ that are compatible with the Deligne extension.  
These opposite modules define the \logDtrTLEP structures underlying 
the B-model \logDTEP structure $\cFB$. 
Suppose that $U$ is sufficiently small so that 
the \logDtrTLEP structures admit the following 
universal unfolding as in Step 2: 
\begin{align*} 
& \text{miniversal \logDtrTLEP structure 
$\cE_{\bP}$ with base $(U\times V, (U\cap D) \times V)$} \\  
& \text{miniversal \logDtrTLEP structure 
$\cE_{\bP'}$ with base $(U\times V', (U\cap D) \times V')$} 
\end{align*}  
where $V$, $V'$ are open neighbourhoods of the origin in $\C^4$. 
We write $\cF_{\bP} = \cE_{\bP}|_{(U\times V) \times \C}$ 
and $\cF_{\bP'} = \cE_{\bP'}|_{(U\times V')\times \C}$ for 
the underlying \logDTEP structures. 
Let $\theta_{\bP\bP'}$ denote the canonical isomorphism 
of \logDTEP structures 
\[
\theta_{\bP\bP'} \colon 
\cF_{\bP}|_{U\times \{0\}\times \C} \cong 
\cFB|_{U\times \C} \cong 
\cF_{\bP'}|_{(U\times \{0\}) \times \C} 
\]
given by the construction.  
There exist open sets $O_{\bP \bP'} \subset U \times V$, 
$O_{\bP' \bP} \subset U \times V'$ and 
a biholomorphic map $\varphi_{\bP \bP'} \colon 
O_{\bP \bP'} \to O_{\bP' \bP}$ such that: 
\begin{itemize}
\item $U \times \{0\} \subset O_{\bP \bP'}$ and $U \times \{0\} \subset
  O_{\bP' \bP}$;
\item $\varphi_{\bP \bP'}|_{U \times \{0\}}$ is the identity map; 
\item $\varphi_{\bP\bP'}$ maps the divisor $((U\cap D)\times V) 
\cap O_{\bP\bP'}$
onto $((U\cap D) \times V')\cap O_{\bP'\bP}$; 
\item there is an isomorphism $\Theta_{\bP\bP'} \colon 
\cF_{\bP}|_{O_{\bP\bP'}\times\C} 
\to (\varphi_{\bP\bP'}\times \id)^\star(\cF_{\bP'}|_{O_{\bP'\bP}\times \C})$ 
of \logDTEP structures which restricts to $\theta_{\bP\bP'}$ 
over $(U\times \{0\})\times \C$. 
\end{itemize}
Moreover, the map $\varphi_{\bP\bP'}$ and the isomorphism 
$\Theta_{\bP\bP'}$ are unique as germs. 
\end{lemma}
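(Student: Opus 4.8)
The plan is to deduce the lemma from the universal property enjoyed by the unfoldings constructed in Step~2 (\S\ref{sec:unfolding_locally}). The key observation is that, although $\cE_\bP$ and $\cE_{\bP'}$ are \logDtrTLEPns\ structures built from \emph{different} opposite modules, their underlying \logDTEP structures $\cF_\bP$ and $\cF_{\bP'}$ both restrict over $U\times\{0\}$ to the \emph{same} object: $\cFB|_{U\times\C}$ equipped with its distinguished primitive section $\xi$ (the one corresponding to $1\in\cFGKZ$). Indeed $\theta_{\bP\bP'}$ is by construction the composite of the two canonical identifications with $\cFB|_{U\times\C}$, and hence matches primitive sections. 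Thus $\cF_\bP$ and $\cF_{\bP'}$ are two miniversal \logDTEP unfoldings (Definition~\ref{def:miniversal}) of one and the same germ $(\cFB|_{U\times\C},\xi)$ along $U\times\{0\}$. I would first record that Reichelt's universal unfolding (\cite{Reichelt}, whose hypotheses (IC), (GC), (EC) and flatness hold for $\xi$ as noted in \S\ref{sec:unfolding_locally}) is universal already as an unfolding of the underlying \logDTEP structure carrying a primitive section: the classifying map of any such unfolding is produced by the Hertling--Manin period recursion, which uses only the connection, the pairing and the primitive section, and not the chosen opposite module.

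Granting this, the construction of $(\varphi_{\bP\bP'},\Theta_{\bP\bP'})$ is formal. Applying the universal property of $\cF_{\bP'}$ to the unfolding $\cF_\bP$ of the common germ produces a germ of holomorphic map $\varphi_{\bP\bP'}$ along $U\times\{0\}$ together with a germ of \logDTEP isomorphism $\Theta_{\bP\bP'}\colon\cF_\bP\cong(\varphi_{\bP\bP'}\times\id)^\star\cF_{\bP'}$ whose restriction over $U\times\{0\}$ is $\theta_{\bP\bP'}$; the uniqueness built into the universal property gives the asserted uniqueness of germs. Since the central fibre classifies itself by the identity, this same uniqueness forces $\varphi_{\bP\bP'}|_{U\times\{0\}}=\id$. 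To see that $\varphi_{\bP\bP'}$ is biholomorphic I would apply the universal property in the other direction to obtain $\varphi_{\bP'\bP}$; the composites $\varphi_{\bP'\bP}\circ\varphi_{\bP\bP'}$ and $\varphi_{\bP\bP'}\circ\varphi_{\bP'\bP}$ classify $\cF_\bP$ and $\cF_{\bP'}$ into themselves, so by uniqueness both are the identity. Finally I would choose representatives $O_{\bP\bP'}\subset U\times V$ and $O_{\bP'\bP}\subset U\times V'$ of these germs, both containing $U\times\{0\}$.

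It remains to check that $\varphi_{\bP\bP'}$ carries $((U\cap D)\times V)\cap O_{\bP\bP'}$ onto $((U\cap D)\times V')\cap O_{\bP'\bP}$. This follows because the log divisor is intrinsic to a \logDTEP structure: it is exactly the locus where $\nabla$ acquires a logarithmic pole in the base directions, with nilpotent residue. As $\Theta_{\bP\bP'}$ is an isomorphism of \emph{\logDTEP} structures rather than merely of TEP structures, it intertwines the two connections together with their logarithmic loci, whence $\varphi_{\bP\bP'}^{-1}\big(((U\cap D)\times V')\cap O_{\bP'\bP}\big)=((U\cap D)\times V)\cap O_{\bP\bP'}$; compatibly, $\varphi_{\bP\bP'}$ restricts to the identity on $(U\cap D)\times\{0\}$. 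When $U$ meets the orbifold point, where $\cMB$ carries a $\mu_3$-singularity, I would run the whole argument on the uniformizing chart and invoke the uniqueness clause once more to conclude that the germs $\varphi_{\bP\bP'}$ and $\Theta_{\bP\bP'}$ are $\mu_3$-equivariant and hence descend.

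The main obstacle is the claim of the first paragraph: that Reichelt's universal property, stated for \logDtrTLEPns\ structures, holds at the coarser level of \logDTEP structures with primitive section, independently of the auxiliary opposite module used in Step~2 to build the unfolding. Establishing this is precisely the content of the earlier remark that the enlargement of the base ``should be an example of a universal unfolding of \logDTEP structure''; concretely, one must verify that the Hertling--Manin classifying map and its uniqueness depend only on the data $(\cF_\bP,\nablaB,\pairingB,\xi)$, so that two unfoldings carrying different trTLEP extensions of the same central \logDTEP germ are classified against each other. Once this is in place, the remaining steps are formal manipulations of the universal property.
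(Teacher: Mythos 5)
Your proposal correctly identifies the formal skeleton (apply a universal property, get the classifying map, compose in both directions for biholomorphicity, use intrinsicality of the log locus for the divisor condition), but it rests entirely on the claim in your first paragraph — that Reichelt's universal property descends to the level of \logDTEP structures with primitive section, independently of the opposite module used to build the unfolding. You flag this yourself as ``the main obstacle'' and do not prove it. That claim \emph{is} the content of the lemma: the whole reason the gluing is delicate is that Reichelt's theorem is stated and proved for \logDtrTLEP structures, and $\cE_\bP$, $\cE_{\bP'}$ are unfoldings of \emph{different} \logDtrTLEP structures over $U$. As written, your argument defers the entire difficulty to an unestablished assertion about the Hertling--Manin classifying map, so there is a genuine gap.

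The paper closes exactly this gap without ever proving a \logDTEP-level universal property. The mechanism is to transport the opposite module rather than to weaken the universal property: via $\theta_{\bP\bP'}$ one shifts $\bP$ to an opposite module $\bP''$ for $\cF_{\bP'}|_{U\times\{0\}\times\C}$, and then Proposition~\ref{pro:extending_trivialization} — which characterizes Deligne-extension-compatible opposite modules by their restriction to a single fibre $\{p\}\times\C$ — shows that $\bP''$ extends to a Deligne-compatible opposite module for $\cF_{\bP'}$ over a neighbourhood $O$ of $U\times\{0\}$ in $U\times V'$. This produces a \logDtrTLEP structure $\cE_{\bP''}$ \emph{underlain by the same} \logDTEP structure $\cF_{\bP'}$ but now extending $\cE_\bP|_{(U\times\{0\})\times\Proj^1}$, so Reichelt's universal property applies at the \logDtrTLEP level, where it is actually available, to give $\varphi_{\bP\bP'}$ and $\Theta_{\bP\bP''}$; restricting to $\C\subset\Proj^1$ yields $\Theta_{\bP\bP'}$. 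Uniqueness runs the same transport in reverse: any $(\varphi_{\bP\bP'},\Theta_{\bP\bP'})$ as in the statement induces a \logDtrTLEP structure on $\cF_{\bP'}$ isomorphic to $\cE_\bP$, and uniqueness of Reichelt's unfolding pins down the germ. If you want to salvage your route, you would in effect have to reprove this transport-and-extend step, so you should make Proposition~\ref{pro:extending_trivialization} (or an equivalent statement) the pivot of your argument rather than an unproved universality claim.
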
 
\begin{proof} 
By construction, the \logDTEP structures $\cF_\bP$ and $\cF_{\bP'}$ are 
equipped with natural opposite modules $\bP$ and $\bP'$ 
that are compatible 
with Deligne extensions and give rise to the \logDtrTLEP structures 
$\cE_{\bP}$ and $\cE_{\bP'}$.  
Recall from Proposition~\ref{pro:extending_trivialization} 
that a Deligne-extension-compatible 
opposite module for $\cF_{\bP'}$ near $p\in U\times \{0\}$ 
corresponds bijectively 
to an extension of $\cF_{\bP'}|_{\{p\} \times \C}$ to a free 
$\cO_{\{p\}\times \Proj^1}$-module satisfying certain conditions. 
By the isomorphism  $\theta_{\bP\bP'} \colon 
\cF_{\bP}|_{U \times \{0\} \times \C} 
\cong \cF_{\bP'}|_{U\times \{0\} \times \C}$, one can shift 
the opposite module $\bP$ for $\cF_{\bP}|_{U\times \{0\}\times \Proj^1}$ 
to an opposite module $\bP''$ for $\cF_{\bP'}|_{U\times \{0\} \times \C}$.  
For every point $p\in U\times \{0\}$, $\bP''$ gives rise to an 
extension of $\cF_{\bP'}|_{\{p\}\times \C}$ to a free 
$\cO_{\{p\}\times \Proj^1}$-module  
satisfying the conditions of Proposition~\ref{pro:extending_trivialization}, (a). 
Therefore, the opposite module $\bP''$ for 
$\cF_{\bP'}|_{U\times \{0\} \times \C}$ extends 
to a Deligne-extension-compatible 
opposite module (which we denote by $\bP''$ again) 
for $\cF_{\bP'}$ over an open neighbourhood $O$ of 
$U\times \{0\}$ in $U\times V'$. 
This gives rise to a \logDtrTLEP structure 
$\cE_{\bP''}$ over $O\times \Proj^1$ underlain by the \logDTEP structure 
$\cF_{\bP'}$ and one has an isomorphism 
of \logDtrTLEP structures: 
\[
\theta_{\bP\bP''} \colon 
\cE_{\bP}|_{(U\times \{0\})\times \Proj^1} \cong 
\cE_{\bP''}|_{(U\times \{0\})\times \Proj^1} 
\]
(The isomorphism $\theta_{\bP\bP''}$ is induced from 
$\theta_{\bP\bP'}$.) 
The universal property of Reichelt's unfolding implies 
that there exist a biholomorphic map $\varphi_{\bP\bP'} 
\colon O_{\bP\bP'} \to O_{\bP'\bP}$ between 
an open neighbourhood $O_{\bP\bP'}$ of $U\times \{0\}$ 
in $U\times V$ and an open neighbourhood $O_{\bP'\bP}$ 
of $U\times \{0\}$ in $O \subset U\times V'$ 
such that $\varphi_{\bP\bP'}$ satisfies 
the properties listed in the statement 
and that $\theta_{\bP\bP''}$ extends 
to an isomorphism of \logDtrTLEP structures: 
\[
\Theta_{\bP\bP''} \colon \cE_{\bP}|_{O_{\bP\bP'} \times \Proj^1} 
\cong (\varphi_{\bP\bP'}\times \id)^\star
(\cE_{\bP''}|_{O_{\bP'\bP} \times \Proj^1})
\]
The map $\varphi_{\bP\bP'}$ and isomorphism $\Theta_{\bP\bP''}$ 
are unique as germs. 
Restricting $\Theta_{\bP\bP''}$ to $O_{\bP\bP'}\times \C$, we 
obtain the desired isomorphism $\Theta_{\bP\bP'}$ between 
$\cF_{\bP}$ and $(\varphi_{\bP\bP'}\times \id)^\star(\cF_{\bP'})$. 

We show the uniqueness of $\varphi_{\bP\bP'}$ and $\Theta_{\bP\bP'}$. 
Suppose we have $\varphi_{\bP\bP'}$ and $\Theta_{\bP\bP'}$ satisfying 
the conditions in the statement. 
Then the isomorphism $\Theta_{\bP\bP'}\colon 
\cF_\bP \cong (\varphi_{\bP\bP'}\times \id)^\star \cF_{\bP'}$ 
of \logDTEP structures induces a \logDtrTLEP structure $\cE_{\bP''}$ 
underlain by the \logDTEP structure $\cF_{\bP'}$ which 
is isomorphic to $\cE_{\bP}$ as a \logDtrTLEP structure. 
By the uniqueness of Reichelt's universal unfolding, 
$\varphi_{\bP\bP'}$ and $\Theta_{\bP\bP'}$ should be 
the same (as germs) as what we constructed above. 
\end{proof} 

\subsubsection{The Big B-model \logDTEP Structure} 
The above Lemma \ref{lem:gluing_logTEP} says that 
the underlying \logDTEP structures of the 
miniversal \logDtrTLEP structures constructed locally in Step 2 
do not depend on the choice of opposite modules. 
Therefore, they are glued together to give a global miniversal 
\logDTEP structure over a 6-dimensional base $\cMBbig$. 
At first sight, the gluing construction looks obvious: however 
it is not so straightforward to show that the glued space is Hausdorff. 
We leave this technical (but elementary) problem 
to a separate paper \cite{Coates--Iritani:gluing} 
and adapt the result there to our setting. 

We take an open covering $\{U_i\}_{i\in I}$ of $\cMB^\circ$ such that 
for each $i\in I$ there exists an opposite module $\bP_i$ for $\cFB^\times$ 
over $U_i \setminus D$ which is compatible with the Deligne extension 
and that the \logDtrTLEP structure associated to $\bP_i$ admits Reichelt's 
universal unfolding $\cE_i$ with base $(U_i\times V_i, (U_i\cap D) \times V_i)$ 
for an open neighbourhood $V_i$ of the origin in $\C^4$. 
We write $\cF_i = \cE_i|_{(U_i\times V_i) \times \C}$ 
for the \logDTEP structure underlying $\cE_i$. 
We glue the local charts $U_i\times V_i$ first and then glue
the local \logDTEP structures $\cF_i$.  

First we construct an ambient space $\cMBbig$ containing $\cMB^\circ$. 
Write $\iota \colon U_i \cong U_i\times \{0\} \to U_i\times V_i$ 
for the inclusion map and define the sheaf  
of algebras over $U_i$ by $\cA_i := \iota^{-1}\cO_{U_i\times V_i}$. 
For $i,j\in I$, 
the sheaves $\cA_i$ and $\cA_j$ are canonically 
isomorphic along $U_i\cap U_j$ by the map $\varphi_{\bP_i\bP_j}$ in 
Lemma \ref{lem:gluing_logTEP}. 
The gluing maps $\varphi_{\bP_i\bP_j}$ satisfy the cocycle condition 
by their uniqueness. 
Therefore $\cA_i$ for all $i\in I$ are glued together to give a global sheaf 
$\cA$ of algebras over $\cMB^\circ$. 
The sheaf $\cA$ is naturally equipped with a surjection 
$\cA \to \cO_{\cMB^\circ}$. 
By \cite{Coates--Iritani:gluing}*{Theorem 1}, there exists a global 
6-dimensional complex manifold $\cMBbig$ together with a 
closed embedding $\iota \colon \cMB^\circ \to \cMBbig$ 
such that we have an isomorphism 
$\cA \cong \iota^{-1} \cO_{\cMBbig}$ which commutes 
with the natural surjections to $\cO_{\cMB^\circ}$.  
The space $\cMBbig$ is unique in the sense explained 
in \emph{loc.~cit}. 

Next we construct a \logDTEP structure on $\cMBbig$. 
Consider the inclusion $\iota\times \id \colon\cMB^\circ \times \C 
\to \cMBbig \times \C$ and 
set $\tcA = (\iota\times \id)^{-1} \cO_{\cMBbig\times \C}$. 
This is the sheaf of algebras over $\cMB^\circ \times \C$. 
Consider the pull-back $(\iota\times \id)^{-1} \cF_i$. 
This is a locally free $\tcA|_{U_i\times \C}$-module of rank 6.  
Recall that the gluing maps $\varphi_{\bP_i\bP_j}$ are 
determined so that the \logDTEP structures $\cF_i$ and 
$(\varphi_{\bP_i\bP_j}\times\id )^\star\cF_j$ are isomorphic. 
In view of the construction of $\cA$, this means that 
$(\iota\times \id)^{-1}\cF_i|_{(U_i\cap U_j) \times\C}$ 
and 
$(\iota\times \id)^{-1}\cF_j|_{(U_i\cap U_j)\times \C}$ 
are canonically isomorphic as 
$\tcA|_{(U_i\cap U_j)\times \C}$-modules for each $i,j\in I$. 
Therefore the sheaves $(\iota\times \id)^{-1}\cF_i$ are 
glued together to a locally free $\tcA$-module $\cB$ of rank 6.  
By \cite{Coates--Iritani:gluing}*{Theorem 2, Remark 4}, 
there exists a locally free sheaf $\cFBbig$ of rank 6 over an open 
neighbourhood of $\cMB^\circ\times \C$ in $\cMBbig \times \C$ 
such that $(\iota\times \id)^{-1}\cFBbig \cong \cB$. 
Similarly, we can glue the divisors $(U_i \cap D)\times V_i$ on local 
charts to construct a global divisor $\Dbig$ in $\cMBbig$
by regarding them as a coherent $\tcA$-module and 
applying \cite{Coates--Iritani:gluing}*{Theorem 2}. 
The flat connection and the pairing on the local charts 
are glued to give \emph{germs} of connections and pairings: 
\begin{align*} 
& \nablaB \colon (\iota\times \id)^{-1}\cFBbig 
\to (\iota\times \id)^{-1} \left(
\Omega^1_{\cMBbig \times \C} (\log \hZ) \otimes 
\cFBbig(\cM\times \{0\}) \right) \\ 
& \pairingB \colon (\iota \times \id)^{-1} 
\left((-)^\star \cFBbig \otimes \cFBbig\right)
\to (\iota\times \id)^{-1} \cO_{\cMBbig \times \C} 
\end{align*} 
where $\hZ = \cMBbig \times \{0\} \cup \Dbig \times \C$. 
These germs extend to an actual open neighbourhood 
of $\cMB^\circ \times \C$ and satisfy the properties  
of a miniversal \logDTEP structure. 
Because of the flat connection $\nabla$, the structure 
$(\cFBbig, \nablaB, \pairingB)$ 
extends automatically to an open set of the form 
$O \times \C$, where $O$ is an open neighbourhood 
of $\cMB^\circ$ in $\cMBbig$. 
The proof of Theorem \ref{thm:unfolding} is now complete. 

\subsection{A Mirror Theorem for Big Quantum Cohomology}

The opposite module $\bP$ in
Proposition~\ref{pro:LR_conifold_orbifold}(1) coincides under mirror
symmetry (Theorem~\ref{thm:mirrorsymmetryforYbar}) with the canonical
opposite module for Gromov--Witten theory defined in
Example~\ref{ex:opposite_A}: this is
\cite{CIT:wall-crossings}*{Theorem~3.5}.  Thus in a neighbourhood of
the large-radius limit point $(y_1,y_2) = (0,0)$, the A-model
\logDTEP structure (Example~\ref{ex:log_TEP_A}) is isomorphic to the
big B-model \logDTEP structure.  Since the universal unfolding of a
\logDTEP structure is unique as an analytic germ, this proves:

\begin{theorem}
  \label{thm:big_mirror_symmetry_Ybar}
  Let $(\cMA{\Ybar},\DA{\Ybar})$ 
denote the base of the A-model \logDTEP
  structure for $\Ybar$, as described in Example~\ref{ex:log_TEP_A}. 
  Let $\Dbig$ be the divisor in $\cMBbig$ as above. 
Consider:
  \begin{itemize}
  \item  the A-model \logDTEP structure 
$\big(\cFA{\Ybar},\nablaA{\Ybar}, \pairingA{\Ybar}\big)$ 
for $\Ybar$; this is a \logDTEP structure with base 
$(\cMA{\Ybar}, \DA{\Ybar})$.
\item the big B-model \logDTEP
  structure $\big(\cFBbig,\nablaB,\pairingB\big)$; 
this is a \logDTEP structure with base $(\cMBbig,\Dbig)$.
\end{itemize}
There exist:
\begin{itemize}
\item  an open neighbourhood $U^{\rm big}$ 
of the large-radius limit point in $\cMBbig$;
\item an open embedding of pairs $\Mir \colon 
(U^{\rm big},\Dbig \cap U^{\rm big}) 
\to (\cMA{\Ybar}, \DA{\Ybar})$; and
\item  an isomorphism of \logDTEP structures
  \begin{equation}
    \label{eq:big_log_TEP_isomorphism}
    \big(\cFBbig,\nablaB,\pairingB\big)\Big|_{U^{\rm big}\times \C} 
\cong 
    \Mir^\star
    \big(\cFA{\Ybar},\nablaA{\Ybar}, \pairingA{\Ybar} \big).
  \end{equation}
\end{itemize}
The map $\Mir$ is called the mirror map; it sends the large-radius
limit point in $\cMBbig$ to the origin in $\cMA{\Ybar}$, and
coincides with the map $\mir_{\Ybar}$ 
in Theorem \ref{thm:mirrorsymmetryforYbar} 
when restricted to the small parameter space $U^\times \subset 
 U^{\rm big}$ there.  
The isomorphism \eqref{eq:big_log_TEP_isomorphism}
intertwines the opposite module $\bP_\LR$ 
for $\cFBbig$ defined in 
Proposition~\ref{pro:LR_conifold_orbifold}(1) with 
the canonical opposite module $\bP_{\rm A}$ 
  from Example~\ref{ex:opposite_A}.
\end{theorem}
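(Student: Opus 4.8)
The plan is to reduce the statement to the two-dimensional small quantum cohomology locus near the large-radius limit point and then invoke the uniqueness of Reichelt's universal unfolding. The essential point is to run the comparison at the level of \logDtrTLEP structures — that is, of TEP structures equipped with a Deligne-extension-compatible opposite module — rather than of bare \logDTEP structures, so that the isomorphism produced will automatically intertwine $\bP_\LR$ and $\bP_{\rm A}$.

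First I would promote the small mirror isomorphism of Theorem~\ref{thm:mirrorsymmetryforYbar}, which a priori is an isomorphism of TEP structures over the divisor complement $U^\times$, to an isomorphism of \logDTEP structures on a neighbourhood of the large-radius limit point meeting the divisor. This follows from the uniqueness of the Deligne extension (Remark~\ref{rem:Deligne_extension}): the B-model \logDTEP structure is the Deligne extension of $\cFB^\times$ by Proposition-Definition~\ref{pro-def:2d_log_TEP}, and the A-model \logDTEP structure of Example~\ref{ex:log_TEP_A} is the Deligne extension of $\cFA{\Ybar}^\times$ across $\{q_1 q_2 = 0\}$, the residues $-\tfrac{1}{z}(\phi_i\ast)|_{q=0}$ being nilpotent because cup product by a degree-two class is nilpotent. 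By \cite{CIT:wall-crossings}*{Theorem~3.5} the map $\mir_{\Ybar}$ carries $\bP_\LR$ to the canonical A-model opposite module $\bP_{\rm A}$, and since both are compatible with their Deligne extensions (Proposition~\ref{pro:extending_trivialization}), this upgrades to an isomorphism of the small \logDtrTLEP structures that $\bP_\LR$ and $\bP_{\rm A}$ determine, matching the respective primitive sections $1\in\cFGKZ$ and the unit $\phi_0$.

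Then I would apply uniqueness of universal unfolding. By Step~2 in the proof of Theorem~\ref{thm:unfolding}, the big B-model \logDTEP structure together with the extension of $\bP_\LR$ to $\cMBbig$ is Reichelt's universal unfolding of the small B-model \logDtrTLEP structure $(\cFB,\bP_\LR)$. On the other side, the A-model \logDTEP structure is miniversal (Example~\ref{ex:log_TEP_A}), so equipped with $\bP_{\rm A}$ and the unit section it is the universal unfolding of its restriction to $H^2(\Ybar)\subset\HYbar$, namely the small A-model \logDtrTLEP structure. The previous paragraph identifies these two initial \logDtrTLEP structures, so the uniqueness of the universal unfolding as an analytic germ produces a germ of open embedding $\Mir$ sending the large-radius limit point to the origin in $\cMA{\Ybar}$, together with an isomorphism \eqref{eq:big_log_TEP_isomorphism} restricting to $\mir_{\Ybar}$ on $U^\times$; being an isomorphism of \logDtrTLEP structures, it sends $\bP_\LR$ to $\bP_{\rm A}$. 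The main obstacle is the reconciliation carried out in the second paragraph: verifying that the two six-dimensional miniversal structures are unfoldings of one and the same two-dimensional \logDtrTLEP structure, which requires both the extension of the isomorphism across the divisor and the matching of opposite modules. Once this is in place, the uniqueness of the universal unfolding makes the remainder formal.
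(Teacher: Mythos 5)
Your proposal is correct and follows essentially the same route as the paper: match $\bP_\LR$ with $\bP_{\rm A}$ via \cite{CIT:wall-crossings}*{Theorem~3.5}, identify the small \logDTEP structures near the large-radius limit point, and conclude by uniqueness of the universal unfolding as an analytic germ. The paper's proof is a three-sentence sketch of exactly this argument; your version merely supplies the intermediate details (Deligne-extension uniqueness to cross the divisor, and the \logDtrTLEP packaging that makes the opposite-module intertwining automatic) that the paper leaves implicit.
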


\begin{remark}
  An analogous mirror symmetry statement for \logDtrTLEP structures
  was proved by Reichelt--Sevenheck \cite{Reichelt--Sevenheck}.
\end{remark}

\begin{remark}
\label{rem:big_mirror_symmetry_cXbar} 
  A similar statement holds for the big quantum cohomology of the
  orbifold $\cXbar$: cf.~\cite{CIT:wall-crossings}*{proof of
    Theorem~3.12}.
\end{remark}

\section{Quantization Formalism and Fock Sheaf}
\label{sec:Fock_sheaf}

As discussed in the Introduction, Givental's quantization formalism
has been an essential ingredient in much recent work in
Gromov--Witten theory. Givental's formulation of his quantization
rules depends on a choice of flat co-ordinate system and so, in the
context of mirror symmetry, is applicable only over certain patches of
the moduli space $\cMBbig$. In previous work, we constructed a global and
co-ordinate-free version of Givental's quantization, associating to a
miniversal cTEP structure a \emph{Fock sheaf} on (an open subset of)
the total space of that cTEP structure~\cite{Coates--Iritani:Fock}.
Furthermore we showed that whenever the cTEP structure is semisimple,
such as is the case for the A-model cTEP structure associated to a
target space $X$ with semisimple quantum cohomology, there is a
canonically defined global section of this Fock sheaf. (In the A-model
case, this global section coincides with the total descendant
potential $\cZ_X$.) In this section, we review the construction of the
Fock sheaf.

\subsection{cTEP Structures and $\log$-cTEP Structures}

We will need the notions of cTEP structure and $\log$-cTEP structure.
One can think of these as being obtained from the
notions of TEP structure (Definition~\ref{def:TEP}) and \logDTEP
structure (Definition~\ref{def:logDTEP}) by taking the formal
completion along the divisor $z=0$.

\begin{definition}
  Let $\hA^1 = \Spf \C[\![z]\!]$ denote the formal neighbourhood of
  zero in $\C$.  Recall that a sheaf of modules over $\cM \times
  \hA^1$ is the same thing as a sheaf of $\cO_{\cM}[\![z]\!]$-modules.
  Let $(-)\colon \cM \times \hA^1 \to \cM \times \hA^1$ denote the map
  sending $(t, z)$ to $(t,-z)$; this is consistent with our previous
  definition of $(-)$, in Definition~\ref{def:TEP}.  For an
  $\cO_{\cM}[\![z]\!]$-module $\sfF$, we give the pull-back $(-)^\star
  \sfF$ the structure of an $\cO_\cM[\![z]\!]$-module by setting:
  \begin{align*}
    f(z) (-)^\star \alpha = (-)^\star f(-z) \alpha && \text{for all $f(z) \in
      \cO_{\cM}[\![z]\!]$ and $\alpha \in \sfF$.}
  \end{align*}
  Write $\sfF[z^{-1}]$ for the locally free
  $\cO_{\cM}(\!(z)\!)$-module $\sfF \otimes_{\cO_{\cM}[\![z]\!]}
  \cO_{\cM}(\!(z)\!)$, and $\sfF_0$ for the quotient $\sfF/z\sfF$.
\end{definition}

\begin{notation}
We will use sans serif font ($\sfF$, $\mathsf{G}$, etc.) to denote 
sheaves and similar structures over $\hA^1$ or $\cM \times \hA^1$.
\end{notation}

\begin{definition}[cf.~Definition~\ref{def:TEP}]
\label{def:cTEP}
Let $\cM$ be a complex manifold.  A \emph{cTEP structure} 
$(\sfF, \nabla,(\cdot,\cdot)_{\sfF})$ with base $\cM$ 
consists of a locally free
$\cO_\cM[\![z]\!]$-module $\sfF$ of rank $N+1$, 
a meromorphic flat connection:
\[
\nabla \colon \sfF \to 
(\Omega_{\cM}^1 \oplus \cO_{\cM} z^{-1}dz) 
\otimes_{\cO_{\cM}} z^{-1} \sfF
\]
and a non-degenerate pairing:
\[
(\cdot,\cdot)_{\sfF} \colon 
(-)^\star \sfF \otimes_{\cO_{\cM}[\![z]\!]} \sfF \to \cO_{\cM}[\![z]\!] 
\]
which satisfies:
\begin{align*} 
\begin{split} 
((-)^\star s_1,s_2)_{\sfF} 
& = (-)^\star ((-)^\star s_2, s_1)_{\sfF}   \\ 
d ((-)^\star s_1, s_2)_{\sfF}  
& = ((-)^\star \nabla s_1, s_2)_{\sfF} 
+ ((-)^\star s_1,\nabla s_2)_{\sfF}  
\end{split} 
\end{align*}
for $s_1,s_2 \in \sfF$.  Here we regard $z^{-1} \sfF$ as a subsheaf of
$\sfF[z^{-1}]$; non-degeneracy of the pairing $(\cdot,\cdot)_{\sfF}$
means that the induced pairing on $\sfF_0 = \sfF/z\sfF$
\[
(\cdot,\cdot)_{\sfF_0} \colon \sfF_0\otimes_{\cO_{\cM}} 
\sfF_0 \to \cO_{\cM} 
\]
is non-degenerate. 
\end{definition} 

\begin{definition}[cf.~Definition~\ref{def:logDTEP}]
\label{def:logDcTEP}
Let $D$ be a divisor with normal crossings in a complex manifold
$\cM$.  A \emph{$\log$-cTEP structure} 
$(\sfF, \nabla, (\cdot,\cdot)_{\sfF})$
with base $(\cM,D)$ consists of a locally free $\cO_\cM[\![z]\!]$-module
$\sfF$ of rank $N+1$, a meromorphic flat connection:
\[
\nabla \colon \sfF \to 
\big(\Omega_{\cM}^1(\log D) \oplus \cO_{\cM} z^{-1}dz \big) 
\otimes_{\cO_{\cM}} z^{-1} \sfF
\]
and a pairing:
\[
(\cdot,\cdot)_{\sfF} \colon 
(-)^\star \sfF \otimes_{\cO_{\cM}[\![z]\!]} \sfF \to \cO_{\cM}[\![z]\!] 
\]
which satisfies the properties listed in Definition~\ref{def:cTEP} and
is non-degenerate in the same sense.
\end{definition} 

\begin{example}
  \label{ex:log_cTEP_A}
  Our first key example is the \emph{A-model $\log$-cTEP structure},
  which is the formalization at $z=0$ of the A-model \logDTEP
  structure (Example~\ref{ex:log_TEP_A}). 
Write $\cFA{X}$ for the sheaf underlying the A-model \logDTEP structure and 
$(\cMA{X},\DA{X})$ for its base.  
The A-model $\log$-cTEP structure 
$\big(\FA{X},\nablaA{X}, \pairingA{X})\big)$ has
base $(\cMA{X},\DA{X})$ and:
  \[
  \FA{X} := \cFA{X} 
\otimes_{\cO_{\cMA{X} \times \C}} 
\cO_{\cMA{X}}[\![z]\!]
  \]
  with meromorphic flat connection $\nabla$ and pairing
  $(\cdot,\cdot)$ induced by the meromorphic flat connection and
  pairing on the A-model \logDTEP structure.  We refer to the
  restriction to $\cMA{X} \setminus \DA{X}$ 
of the A-model $\log$-cTEP structure
  as the \emph{A-model cTEP structure}.
\end{example}

\begin{example}
  \label{ex:log_cTEP_B}
  Our second key example is the \emph{big B-model $\log$-cTEP
    structure}, which is the formalization at $z=0$ of the big B-model
  \logDTEP structure from Theorem \ref{thm:unfolding}.  
This is a $\log$-cTEP structure with base
  $(\cMBbig,\Dbig)$:
  \[
  \FBbig
  := \cFBbig \otimes_{\cO_{\cMBbig \times \C}} \cO_{\cMBbig}[\![z]\!]
  \]
  with meromorphic flat connection $\nabla$ and pairing
  $(\cdot,\cdot)$ induced by the meromorphic flat connection and
  pairing on the big B-model \logDTEP structure.
\end{example}

\begin{remark}
  A cTEP structure with base $\cM$ is the same thing as a $\log$-cTEP
  structure with base $(\cM,D)$ where $D = \varnothing$.  Thus the
  definitions of symplectic pairing, miniversality, etc.~for
  $\log$-cTEP structures given below also define the corresponding notions for
  cTEP structures.
\end{remark}

\begin{definition}
  Let $(\sfF,\nabla, (\cdot,\cdot)_\sfF)$ be a $\log$-cTEP structure.
  The pairing $(\cdot,\cdot)_{\sfF}$ induces a symplectic pairing:
  \[
  \Omega \colon \sfF[z^{-1}] \otimes_{\cO_{\cM}} 
  \sfF[z^{-1}] \to \cO_\cM, 
  \]
  defined by:
  \begin{equation}
    \label{eq:symplecticform} 
    \Omega(s_1, s_2) =  
    \Res_{z=0} ((-)^\star s_1,s_2)_{\sfF} \, dz
  \end{equation}
\end{definition} 

\begin{definition}
  Let $n \in \Z$ and let $(\sfF,\nabla, (\cdot,\cdot)_\sfF)$ be a
  $\log$-cTEP structure.  We set:
  \begin{align}
    \label{eq:dualmodules} 
    \begin{split}  
      (z^n \sfF)^\vee &:= 
      \injlim_{l} \sHom_{\cO_{\cM}}
(z^n \sfF/z^l \sfF,\cO_{\cM}), 
      \\
      \sfF[z^{-1}]^\vee &:= \projlim_n 
      \injlim_l \sHom_{\cO_{\cM}}
(z^{-n}\sfF/z^l \sfF,\cO_{\cM}).  
    \end{split}  
  \end{align}  
\end{definition}

There are natural surjections:
\[
\sfF[z^{-1}]^\vee \surj \cdots \surj (z^{-2} \sfF)^\vee 
\surj (z^{-1} \sfF)^\vee \surj \sfF^\vee \surj (z\sfF)^\vee 
\surj \cdots. 
\] 
The dual $(z^n\sfF)^\vee$ has the structure of an
$\cO_{\cM}[\![z]\!]$-module such that the action of $z$ is nilpotent;
it is locally isomorphic to $(\cO_{\cM}(\!(z)\!)/\cO_{\cM}[\![z]\!])^{N+1}$
as an $\cO_{\cM}(\!(z)\!)$-module.  Also
$\sfF[z^{-1}]^\vee$ is locally free as an
$\cO_{\cM}(\!(z)\!)$-module.  The symplectic pairing gives an isomorphism 
\[
\sfF[z^{-1}] \cong \sfF[z^{-1}]^\vee, \quad 
s \mapsto \iota_s \Omega = \Omega(s, \cdot)
\]
and thus a dual symplectic pairing $\Omega^\vee$ on
$\sfF[z^{-1}]^\vee$:
\begin{equation*}
\Omega^\vee \colon \sfF[z^{-1}]^\vee \otimes_{\cO_{\cM}} 
\sfF[z^{-1}]^\vee \to \cO_{\cM}
\end{equation*} 
The dual flat connection $\nabla^\vee$ is defined by:
\[
\nabla^\vee \colon 
(z^{-1}\sfF)^\vee \to \Omega^1_{\cM}(\log D) \otimes_{\cO_{\cM}} \sfF^\vee, 
\quad 
\pair{\nabla^\vee \varphi}{s} := d\pair{\varphi}{s} - 
\pair{\varphi}{\nabla s}
\]

\subsection{The Total Space of a $\log$-cTEP Structure}

We now consider the total space $\LL$ of a $\log$-cTEP structure. This
is an algebraic analogue of Givental's Lagrangian
submanifold~\cite{Givental:symplectic}.

\begin{definition} 
 Let $(\sfF,\nabla, (\cdot,\cdot)_{\sfF})$ be a $\log$-cTEP
  structure.  The \emph{total space} $\LL$ of $(\sfF,\nabla,
  (\cdot,\cdot)_{\sfF})$ is the total space of the infinite
  dimensional vector bundle associated to $z\sfF$.
\end{definition}

As a set, $\LL = \{ (t,\bx) : t\in \cM, \bx\in z \sfF_t\}$. Let
$\pr \colon \LL \to \cM$ denote the natural projection. We regard
$\LL$ as a ``fiberwise algebraic variety'' over $\cM$, endowing it
with the structure of a ringed space exactly as
in~\cite[Definition~4.7]{Coates--Iritani:Fock}. Let $\bcO$ denote the
structure sheaf of $\LL$. For a connected open set $U\subset \cM$ such
that $\sfF|_U$ is a free $\cO_U[\![z]\!]$-module, the ring of regular
functions on $\pr^{-1}(U)$ is the polynomial ring over $\cO(U)$:
\begin{equation}
\label{eq:strsheaf_prinvU}
\bcO(\pr^{-1}(U)) := \Sym_{\cO(U)}^\bullet 
\Gamma(U,(z\sfF)^\vee). 
\end{equation} 
To make this concrete, take a trivialization $\sfF|_U \cong \C^{N+1}
\otimes \cO_U[\![z]\!]$.  Consider the induced trivialization
$\sfF[z^{-1}]|_U \cong \C^{N+1} \otimes \cO_U(\!(z)\!)$, and the dual
frame $x_n^i \in \sfF[z^{-1}]^\vee$, $n\in \Z$, $0\le i\le N$, defined
by:
\begin{align} 
\label{eq:frame_x}
\begin{split}  
  x_n^i \colon \sfF[z^{-1}]\Bigr|_U \cong \C^{N+1} \otimes \cO_U(\!(z)\!)
  & \longrightarrow \cO_U \\
  \sum_{m\in \Z} \sum_{j=0}^N a_m^j e_j z^m
 & \longmapsto a_n^i
\end{split}
\end{align}  
where $e_i$, $0\le i\le N$, denotes the standard basis of $\C^{N+1}$.
Restricting $x_n^i$ to $z \sfF$, we obtain co-ordinates $x_n^i$, $n\ge
1$, $0\le i\le N$, on the fibers of $\LL|_U$.  

\begin{definition} \label{def:algebraic_local_co-ordinates}
  Let $(\sfF,\nabla, (\cdot,\cdot)_{\sfF})$ be a $\log$-cTEP structure
  of rank $N+1$ with base $(\cM,D)$.  
Let $t^0,q_1,\dots,q_r,t^{r+1},\dots, t^R$ be local
  co-ordinates on an open set $U \subset \cM$ such that $D \cap U$ is
  given by $(q_1 q_2 \cdots q_r = 0)$.  We call the co-ordinate system
  \[
\{(t^j, q_k, x_n^i) : \text{$j\in \{0,r+1,\dots,R\}$, 
$1\le k\le r$, $0 \leq i \leq N$, $n\ge
    1$}\}
\] 
an \emph{algebraic local co-ordinate system} on $\LL$. 
We also set $q_k = e^{t^k}$ for $k=1,\dots,r$ so that 
$(t^0,t^1,\dots,t^r, t^{r+1},\dots,t^R)$ gives a 
multi-valued co-ordinate system on $U\setminus D$. 
We write $t$ for a point on $\cM$; this is a slight abuse of notation. 
\end{definition}

We have:
\[
\bcO(\pr^{-1}(U)) = 
\cO(U)\left[x_n^i \, |\, n\ge 1, 0\le i\le N\right]. 
\]
We equip $\bcO(\pr^{-1}(U))$ 
with a grading and filtration as follows.  The grading
on $\bcO(\pr^{-1}(U))$ is given by the degree as polynomials in the
variables $x_n^i$.  The $l$th part of the filtration, $l \geq 0$, is
given by:
\begin{equation} 
\label{eq:filtration} 
\bcO_l(\pr^{-1}(U)) = \Biggl \{\sum_{n\ge 0}  
\sum_{\substack{l_1,\dots,l_n\ge 0 \\ 
l_1 + \cdots + l_n \le l}} 
\sum_{i_1,\dots,i_n \ge 0} 
f^{l_1,\dots,l_n}_{i_1,\dots,i_n}(t) 
x_{l_1+1}^{i_1}\cdots x_{l_n+1}^{i_n}\;\Big | 
\;  
f^{l_1,\dots,l_n}_{i_1,\dots,i_n}(t)
\in \cO(U)\Biggr \}. 
\end{equation} 
This is an increasing filtration $\bcO_l(\pr^{-1}(U)) \subset 
\bcO_{l+1}(\pr^{-1}(U))$. 

\subsection{Miniversality of $\log$-cTEP structures}

Suppose now that $(\sfF,\nabla, (\cdot,\cdot)_{\sfF})$ is a
$\log$-cTEP structure with base $(\cM,D)$.  Writing the connection
$\nabla$ in terms of our trivialization $\sfF|_U \cong \C^{N+1}
\otimes \cO_U[\![z]\!]$ gives:
\begin{equation}
  \label{eq:definition_of_C}
  \nabla s = d s - \frac{1}{z} \cC(t,z) s 
\end{equation}
where $s \in \C^{N+1} \otimes \cO_U[\![z]\!]  \cong \sfF|_U$ and
$\cC(t,z) \in \End(\C^{N+1})\otimes \Omega_U^1(\log D)[\![z]\!]$.  The
residual part $\cC(t,0) = (-z\nabla)|_{z=0}$ determines a section of
$\End(\sfF_0) \otimes \Omega_U^1(\log D)$ which is independent of
choice of trivialization.

\begin{example}
  In the case of the A-model $\log$-cTEP structure 
(Example~\ref{ex:log_cTEP_A}), 
we have $\cC(t,0) = (\phi_0*) 
  dt^0 + \sum_{i=1}^r (\phi_i*) \frac{dq_i}{q_i} + \sum_{i=r+1}^N (\phi_i*)
  dt^i$. 
\end{example}

\begin{definition}
  \label{def:miniversal_cTEP} 
  Let $(\sfF,\nabla, (\cdot,\cdot)_{\sfF})$ be a $\log$-cTEP
  structure.  Let:
  \begin{align*}
    \sfF_{0,t}^\circ & := 
    \{ x_1 \in \sfF_{0,t} \, |\, 
    \Theta_\cM(\log D)_t \to \sfF_{0,t}, \ v\mapsto \iota_v \cC(t,0) x_1 
    \text{ is an isomorphism}\}  \\  
    \LLo & := \{ (t,\bx) \in \LL \,|\, t\in \cM, \ 
    \bx \in z \sfF_t, \ 
    (\bx/z)|_{z=0} \in \sfF_{0,t}^\circ \} \\
    \sfF_0^\circ & := \bigcup_{t\in \cM} \sfF_{0,t}^\circ
  \end{align*}
  These are open subsets of, respectively, $\sfF_{0,t}$, $\LL$, and
  $\sfF_0$.  If for every point $t\in \cM$, $\sfF_{0,t}^\circ$ is a
  non-empty Zariski open subset of $\sfF_{0,t}$, then we say that
  $(\sfF,\nabla, (\cdot,\cdot)_{\sfF})$ is \emph{miniversal}.
\end{definition}

\begin{remark}
  A miniversal $\log$-cTEP structure $(\sfF,\nabla,
  (\cdot,\cdot)_{\sfF})$ with base $(\cM,D)$ satisfies $\dim \cM =
  \rank \sfF$.
\end{remark}

We henceforth assume that our $\log$-cTEP structure is miniversal.  Let
$\{t^j,q_k, x_n^i\}$ be an algebraic local co-ordinate system 
on $\LL$, and write $\cC(t,z) = \sum_{i=0}^N \cC_i(t,z) dt^i$. 
Here recall that $dt^i = \frac{dq_i}{q_i}$ for $1\le i\le r$. 
Consider:
\begin{equation} 
\label{eq:discriminant}
P(t,x_1) := (-1)^{N+1}\det(\cC_0(t,0) x_1, \cC_1(t,0) x_1, \dots, \cC_N(t,0) x_1) 
\end{equation} 
This is a polynomial of degree $N+1$, $P(t,x_1) \in
\cO(U)[x_1^0,\dots,x_1^N]$, called the \emph{discriminant}. The set
$\LLo$ is the complement\footnote{More invariantly, we can think of $P(t,x_1) dt^0\wedge \dots \wedge
  dt^N$ as a section of the line bundle
$\pr^\star(\det(\sfF_0) \otimes \Omega^{N+1}_\cM(\log D))$ over $\LL$, and
of $\LLo$ as the complement to the zero locus of that
section.} of the zero-locus of $P(t,x_1)$.  
  The ring of regular functions over
$\pr^{-1}(U)^\circ := \pr^{-1}(U) \cap \LLo$ is:
\[
\bcO(\pr^{-1}(U)^\circ)  
= \cO(U)[\{x_n^i\}_{n\ge 1,0\le i\le N}, 
P(t,x_1)^{-1}]
\]
Since $P(t,x_1)$ is homogeneous in $x_1$ and lies in the zeroth
filter, the grading and filtration on $\bcO(\pr^{-1}(U))$ 
extend canonically to $\bcO(\pr^{-1}(U)^\circ)$.

\subsection{One-Forms and Vector Fields on $\LL$}

The sheaf $\bOmega^1(\log D)$ of logarithmic one-forms on $\LL$ is
defined in algebraic local co-ordinates $\{t^j,q_k,x_n^i\}$ as:
\[
\bOmega^1(\log D) = 
 \bigoplus_{j=0}^N  \bcO dt^j \oplus
\bigoplus_{n =1}^\infty 
\bigoplus_{i=0}^N  \bcO d x_n^i   
\]
where recall again that $dt^i = \frac{dq_i}{q_i}$ for $1\le i\le r$. 
The grading on $\bOmega^1(\log D)$ is determined by\footnote
{More precisely, the grading and the filtration are defined on the 
module $\bOmega^1(\log D)(\pr^{-1}(U))$ or on 
$\bOmega^1(\log D)(\pr^{-1}(U)^\circ)$ for an 
open set $U\subset \cM$. 
We will omit the domains $\pr^{-1}(U)$, $\pr^{-1}(U)^\circ$, to ease the notation.}:
\begin{align*}
\deg(dt^j) = 0, &&
\deg(d x_n^i) = 1
\end{align*}
The filtration on $\bOmega^1(\log D)$ is determined by putting $dt^i$ in the
$(-1)$st filter but not the zeroth filter, and putting $d x_n^i$ in
the $(n-1)$st filter but not the $n$th filter.  
We write $\bcO^d_e$ and $\bOmega^1(\log D)^d_e$ 
for the $e$th filter of the $d$th graded piece of 
$\bcO$ and $\bOmega^1(\log D)$ respectively, so that
\[
\bOmega^1(\log D)^d_e = \bigoplus_{j=0}^N 
\bcO^d_{e+1} dt^j \oplus 
\bigoplus_{e_1+e_2 \le e} \bigoplus_{i=0}^N 
\bcO^{d-1}_{e_1} dx_{e_2+1}^i 
\]
We also set: 
\[
\big((\bOmega^1(\log D))^{\otimes k}\big)^d_e 
= \sum_{e_1+\cdots+e_k \le e} 
\sum_{d_1+ \cdots + d_k = d} 
\bOmega^1(\log D)^{d_1}_{e_1} \otimes 
\cdots \otimes \bOmega^1(\log D)^{d_k}_{e_k}   
\] 
The sheaf $\bTheta(\log D)$ of logarithmic vector fields on $\LL$ is
defined by 
\[
\bTheta(\log D) = \Hom_{\bcO}(\bOmega^1(\log D), \bcO).
\]
In algebraic local co-ordinates $\{t^j,q_k=e^{t^k}, x_n^i\}$, 
with $\partial_j :=
\frac{\partial}{\partial t^j}$ and $\partial_{n,i} :=
\frac{\partial}{\partial x_n^i}$, we have:
\[
\bTheta(\log D) = 
\prod_{j=0}^N \bcO \partial_j \times 
\prod_{n=1}^\infty
\prod_{i=0}^N \bcO \partial_{n,i}
\]
Note that $\bOmega^1(\log D)$ is the direct sum
whereas $\bTheta(\log D)$ is the direct product.

\subsection{The Yukawa Coupling and the Kodaira--Spencer Map}

As above, let $\{t^j,q_k,x_n^i\}$ be an algebraic local co-ordinate system
on $\LL$, and write $\cC(t,z) = \sum_{i=0}^N \cC_i(t,z) dt^i$.
From flatness of $\nabla$ and flatness of the pairing we have that:
\begin{align*}
  \big[ \cC_i(t,0), \cC_j(t,0) \big] & = 0 \\
  \big( \cC_i(t,0)s_1,s_2 \big)_{\sfF_0} & = \big( s_1, \cC_i(t,0)s_2 \big)_{\sfF_0}
\end{align*}
for all $i$,~$j$ and all $s_1$,~$s_2 \in \sfF_0$.  Thus the
endomorphisms $\cC_i(t,0)$ equip the fibers of $\sfF_0$ with a
structure similar to that of a Frobenius algebra (we need to choose an identity
element here; cf.~\cite[\S4.4]{Coates--Iritani:Fock}).

\begin{definition}
\label{def:Yukawa} 
  The \emph{Yukawa coupling} is a cubic tensor:
  \[
  \bY = \sum_{i,j,k} C^{(0)}_{ijk} dt^i \otimes dt^j \otimes dt^k \in
  \big((\bOmega^1(\log D))^{\otimes 3}\big)^2_{-3}
  \]
  defined in algebraic local co-ordinates $\{t^j,q_k,x_n^i\}$ by:
  \begin{align*}
    C_{ijk}^{(0)}(t,\bx) & = \big( \cC_i(t,0) x_1,
    \cC_j(t,0) \cC_k(t,0) x_1 \big)_{\sfF_0}
    & \text{where $x_1 = (\bx/z)|_{z=0}$}
  \end{align*}
  Recall again that $q_k = e^{t^k}$ for $1\le k\le r$. 
\end{definition}

\begin{remark}
  The Yukawa coupling is a symmetric cubic tensor on $\LL$ that is
  pulled back from $\sfF_0$.
\end{remark}

Let $\pr\colon \LL\to \cM$ denote the natural projection.  We define:
\begin{align*}
  \begin{split}  
    & \pr^\star (z^n\sfF)  := 
    \projlim_{l} \pr^\star (z^n\sfF/z^l\sfF) \cong 
    (\pr^{-1} z^n\sfF)  
    \otimes_{\pr^{-1}\cO_{\cM}[\![z]\!]}\bcO[\![z]\!] \\
    & \pr^\star \sfF[z^{-1}] 
    := \projlim_{l} \pr^\star(\sfF[z^{-1}]/z^l \sfF) \cong 
    (\pr^{-1} \sfF[z^{-1}])\otimes_{\pr^{-1}\cO_\cM(\!(z)\!)} \bcO(\!(z)\!) \\ 
    & \pr^\star (z^n \sfF)^\vee := (\pr^{-1} (z^n\sfF)^\vee)  
    \otimes_{\pr^{-1} \cO_{\cM}} \bcO \\
    & \pr^\star \sfF[z^{-1}]^\vee 
    := \projlim_l \pr^\star (z^{-l}\sfF)^\vee 
    \cong (\pr^{-1} \sfF[z^{-1}]^\vee)
    \otimes_{\pr^{-1}\cO_{\cM}(\!(z)\!)} \bcO(\!(z)\!).  
  \end{split}
\end{align*} 
These are locally free modules over $\bcO[\![z]\!]$, $\bcO(\!(z)\!)$,
$\bcO$, $\bcO(\!(z)\!)$ respectively.  Note
that, with the exception of $\pr^\star (z^n \sfF)^\vee$, these differ from
the standard notion of pullback.  For example, $\pr^\star(z^n\sfF)$ is the
completion of the standard pull-back $\pr^{-1}(z^n \sfF)
\otimes_{\pr^{-1}\cO_{\cM}} \bcO$ of $z^n\sfF$ with respect to the
$z$-adic topology.

The pull-back $\pr^\star\sfF$ admits a flat connection 
$\tnabla:=\pr^\star \nabla$:   
\begin{equation} 
\label{eq:tnabla}
\tnabla \colon \pr^\star \sfF \to \bOmega^1(\log D)
\hotimes \pr^\star (z^{-1}\sfF).  
\end{equation} 
where $\hotimes$ is the completed tensor product:
\[
\bOmega^1(\log D) \hotimes \pr^\star (z^{-1}\sfF) 
:=\projlim_n (\bOmega^1(\log D) \otimes 
\pr^\star (z^{-1}\sfF/z^n \sfF))
\]
Let $\{t^j,q_k,x_n^i\}$ be an algebraic local co-ordinate system on $\LL$,
where $(t^0,q_1,\dots,q_r,t^{r+1},\ldots,t^N)$ 
are local co-ordinates on an open subset $U$
of $\cM$, and consider a local trivialization $\sfF|_U \cong \C^{N+1}
\otimes \cO_U[\![z]\!]$.  The trivialization of $\sfF|_U$ allows us to
write:
\[
\nabla s = ds - \frac{1}{z} \cC(t,z) s
\]
where $\cC(t,z) = \sum_{i=0}^N \cC_i(t,z) dt^i$ 
(recall that $dt^i = \frac{dq_i}{q_i}$ for $1\le i\le r$). 
The trivialization
of $\sfF|_U$ also induces a trivialization $\pr^\star \sfF|_{\pr^{-1}(U)}
\cong \C^{N+1}\otimes \bcO[\![z]\!]$, and with respect to this
trivialization we have:
\begin{align*}
  & \tnabla_{\partial_j} =  
  \partial_i - \frac{1}{z} \cC_i(t,z) && 0\le i\le N \\
  & \tnabla_{\partial_{n,i}} = \partial_{n,i} 
  && \text{$0 \leq i \leq N$, $1 \leq n < \infty$}
\end{align*}

\begin{definition}
  The \emph{tautological section} $\bx$ of $\pr^\star (z\sfF)$ is defined
  by
  \[
  \bx(t,\bx) = \bx   
  \] 
  where $(t,\bx)$ denotes the point $\bx \in z \sfF_t$ 
  on $\LL$. 
\end{definition}

\begin{definition} 
\label{def:KS} 
The \emph{Kodaira--Spencer map} 
$\KS \colon \bTheta(\log D) \to \pr^\star\sfF$ 
is defined by:
\begin{align*} 
\KS(v) = \tnabla_v \bx
\end{align*} 
The \emph{dual Kodaira--Spencer map} 
$\KS^\star \colon \pr^\star\sfF^\vee 
\to \bOmega^1(\log D)$ is defined by:
\[
\KS^\star(\varphi) = \varphi(\tnabla \bx), 
\quad \varphi \in \pr^*\sfF^\vee. 
\]
\end{definition}

\begin{remark}
  The maps $\KS$ and $\KS^\star$ are isomorphisms over $\LLo\subset \LL$.
\end{remark}

\begin{definition}
  Let $\bThetao(\log D)$ denote the restriction of $\bTheta(\log D)$
  to $\LLo \subset \LL$, and let $\bOmegao^1(\log D)$ denote the
  restriction of $\bOmega^1(\log D)$ to $\LLo \subset \LL$.
\end{definition}

\begin{remark} 
Using the connection $\tnabla$ on $\pr^\star \sfF$ and 
the tautological section $\bx \in \pr^\star \sfF$, we can 
write the Yukawa coupling as follows:  
\[
\bY(X,Y,Z) = \Omega(\tnabla_X \tnabla_Y \bx, \tnabla_Z \bx). 
\]
\end{remark} 

\subsection{The Euler Vector Field and Grading Operators}

\begin{definition}
  An \emph{Euler vector field} for a $\log$-cTEP structure
  $(\sfF,\nabla, (\cdot,\cdot)_{\sfF})$ with base $(\cM,D)$ is a
  logarithmic vector field $E$ on $\cM$ such that
  $\nabla_{z \partial_z + E}$ is regular at $z=0$.
\end{definition}

\begin{remark}
  A miniversal $\log$-cTEP structure always admits an Euler vector
  field, and this Euler vector field is unique.
\end{remark}

\begin{definition}
  Suppose that $(\sfF,\nabla, (\cdot,\cdot)_{\sfF})$ is a $\log$-cTEP
  structure with Euler vector field $E$.  Define the grading operator
$\gr \in \End_\C(\sfF[z^{-1}])$ by 
  \begin{align*}
    \gr:=\nabla_{z \partial_z + E}
  \end{align*}
  The grading operator $\gr$ preserves $\sfF\subset \sfF[z^{-1}]$. 
  For $\varphi \in \sfF[z^{-1}]^\vee$, define $\gr^\vee(\varphi)$ by
  $\gr^\vee(\varphi)(x) = E(\varphi(x)) - \varphi(\gr(x))$.
\end{definition}

\begin{lemma}
  $\gr^\vee$ is a well-defined element of $\End_{\C}(\sfF[z^{-1}]^\vee)$.
\end{lemma}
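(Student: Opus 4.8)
The plan is to verify two things: that the formula $\gr^\vee(\varphi)(x) = E(\varphi(x)) - \varphi(\gr(x))$ actually lands in $\sfF[z^{-1}]^\vee$, and that it is $\C$-linear (the $\C$-linearity is immediate from the definition, so the content is well-definedness). Recall that $\sfF[z^{-1}]^\vee = \projlim_n \injlim_l \sHom_{\cO_\cM}(z^{-n}\sfF/z^l\sfF, \cO_\cM)$. A functional $\varphi$ in this inverse limit is a compatible family; to show $\gr^\vee(\varphi)$ again lies in the inverse limit, I must check that the assignment $x \mapsto E(\varphi(x)) - \varphi(\gr(x))$ is $\cO_\cM$-continuous in the appropriate sense, i.e.\ that it factors through $z^{-n}\sfF/z^l\sfF$ for suitable indices and is compatible under the structure maps of the inverse and direct limits.

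First I would unwind the two potentially problematic terms. The term $\varphi(\gr(x))$ is harmless once we observe that $\gr = \nabla_{z\partial_z + E}$ preserves $\sfF[z^{-1}]$ (indeed $\gr$ preserves $\sfF$), so $\gr$ induces maps on the finite quotients $z^{-n}\sfF/z^l\sfF$ compatibly with increasing $n$ and $l$; composing $\varphi$ with these gives an element of $\sfF[z^{-1}]^\vee$ by functoriality. The term $E(\varphi(x))$ requires more care because $E$ is a derivation, not $\cO_\cM$-linear: here $\varphi(x) \in \cO_\cM$ and $E(\varphi(x))$ is its derivative along the logarithmic vector field $E$. The key point is that $x \mapsto E(\varphi(x))$ is no longer $\cO_\cM$-linear but only $\C$-linear, which is exactly why $\gr^\vee$ is asserted only as an element of $\End_\C$ rather than $\End_{\cO_\cM}$. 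So I would compute $E(\varphi(fx)) = E(f\varphi(x)) = (Ef)\varphi(x) + f\, E(\varphi(x))$, and separately $\varphi(\gr(fx))$; since $\gr(fx) = (z\partial_z + E)(f)\,x + f\gr(x) = (Ef)x + f\gr(x)$ (using that $f \in \cO_\cM$ is independent of $z$), the two $(Ef)$-terms cancel, confirming $\gr^\vee(\varphi)(fx) = f\,\gr^\vee(\varphi)(x)$. This $\cO_\cM$-linearity in $x$ is what makes $\gr^\vee(\varphi)$ a genuine functional.

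The main obstacle, and the step I would spend the most care on, is the compatibility with the projective and injective limits defining $\sfF[z^{-1}]^\vee$: I must confirm that $\gr^\vee(\varphi)$ annihilates $z^l\sfF$ for $l$ large enough and is defined on $z^{-n}\sfF$ for appropriate $n$, so that it descends to the requisite finite quotients. Because $\gr$ shifts the $z$-adic filtration in a controlled way (it preserves $\sfF$ and hence maps $z^{-n}\sfF$ into $z^{-n}\sfF$ and $z^l\sfF$ into $z^l\sfF$), and because $E$ acts on the coefficient ring $\cO_\cM$ without disturbing the $z$-grading, the composite respects the filtration levels up to a bounded shift. I would check the structure maps $\injlim_l \sHom(z^{-n}\sfF/z^l\sfF,\cO_\cM) \to \injlim_l \sHom(z^{-n+1}\sfF/z^l\sfF,\cO_\cM)$ commute with the operation $\varphi \mapsto E\circ\varphi - \varphi\circ\gr$, which reduces to the two facts just noted. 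Once this compatibility is in hand, $\gr^\vee$ is seen to be a well-defined $\C$-linear endomorphism of $\sfF[z^{-1}]^\vee$, completing the proof.
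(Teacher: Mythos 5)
Your proposal is correct and its core step is exactly the paper's argument: compute $\gr^\vee(\varphi)(fx)$ using $\gr(fx)=E(f)x+f\gr(x)$ and observe that the two $E(f)$-terms cancel, so that $\gr^\vee(\varphi)$ is $\cO_\cM$-linear even though $E(\varphi(\cdot))$ alone is not. The additional remarks about compatibility with the pro-ind limit defining $\sfF[z^{-1}]^\vee$ are sound (since $\gr$ preserves $\sfF$ and each $z^l\sfF$) but the paper treats this as implicit and only records the linearity check.
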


\begin{proof}
  Let $(\cM,D)$ be the base of the $\log$-cTEP structure and suppose
  that $\varphi \in \sfF[z^{-1}]^\vee$. We need to show that
  $\gr^\vee(\varphi) \in \sfF[z^{-1}]^\vee$, i.e.~that
  $\gr^\vee(\varphi)$ is $\cO_\cM$-linear.  Let $f \in \cO_\cM$ and $x
  \in \sfF[z^{-1}]$.  Then:
  \begin{align*}
    \gr^\vee(\varphi)(f x) & = E(f \varphi(x)) - \varphi(\gr(f x)) \\
    & = E(f) \varphi(x) + f E(\varphi(x)) - E(f) \varphi(\gr(x)) - f
    \varphi(\gr(x)) \\
    & = f \gr^\vee(\varphi)(x)
  \end{align*}
  as required.
\end{proof}

\begin{example}
\label{ex:Gr_gr} 
  Consider the big B-model $\log$-cTEP structure $(\FBbig,\nabla,
  (\cdot,\cdot)_{\sfF})$ (Example \ref{ex:log_cTEP_B}).  Then the
  grading operator $\gr \in \End(\FBbig)$, when restricted to the
  small parameter space $\cMB^\times$, coincides with $\Gr - \frac{3}{2}$
  where $\Gr$ is the grading operator on the GKZ system (Definition~\ref{def:GKZ_grading}).  The shift by $\frac{3}{2}$ here reflects the shift by
  $\frac{3}{2}$ in Definition~\ref{def:BmodelconformalTEP}, which was
  made to ensure that the B-model TEP structure had weight zero.
\end{example}

\begin{definition}
  Let $\sharp \colon \sfF[z^{-1}] \to \sfF[z^{-1}]^\vee$ be the map
  $\alpha \mapsto \Omega(\alpha,{-})$, and let $\flat \colon
  \sfF[z^{-1}]^\vee \to \sfF[z^{-1}]$ be the inverse map.  Write
  $\alpha^\sharp$ for $\sharp(\alpha)$, and $\varphi^\flat$ for
  $\flat(\varphi)$.
\end{definition}
\begin{lemma} 
  \label{lem:grading_things}
  We have:
  \begin{itemize}
  \item[(a)] $\gr^\vee\big(\alpha^\sharp\big) = \big((\gr + 1)\alpha\big)^\sharp$;
  \item[(b)] $\big(\gr^\vee(\varphi)\big)^\flat = (\gr + 1)\big(\varphi^\flat\big)$;
  \item[(c)] $\big(\gr^\vee \otimes 1 + 1 \otimes \gr^\vee\big) \Omega
    = \Omega$;
  \item[(d)] $\big(\gr \otimes 1 + 1 \otimes \gr\big) \Omega^\vee
    = {-\Omega^\vee}$.
  \end{itemize}
\end{lemma}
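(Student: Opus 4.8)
The plan is to prove the four identities of Lemma~\ref{lem:grading_things} by unwinding the definitions of $\gr$, $\gr^\vee$, $\Omega$, $\Omega^\vee$, $\sharp$, and $\flat$, and tracking how the Euler field $E$ interacts with each. The key structural input is that $\gr = \nabla_{z\partial_z + E}$ preserves the symplectic pairing $\Omega$ up to an overall weight. Concretely, since $\nabla$ preserves the pairing $(\cdot,\cdot)_\sfF$ (the second property in Definition~\ref{def:cTEP}) and $\Omega(s_1,s_2) = \Res_{z=0}((-)^\star s_1, s_2)_\sfF\,dz$, I would first establish the basic compatibility
\begin{equation*}
E\big(\Omega(\alpha,\beta)\big) = \Omega(\gr\,\alpha,\beta) + \Omega(\alpha,\gr\,\beta) + \Omega(\alpha,\beta),
\end{equation*}
where the extra $+\Omega(\alpha,\beta)$ arises because taking the residue $\Res_{z=0}(\cdot)\,dz$ shifts the $z$-weight by one (the factor $z^{-1}dz$ in the connection, equivalently the $dz$ in the residue, contributes the $+1$). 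This single identity is the engine behind all four parts.

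Granting that, I would derive (c) directly: $\gr^\vee$ is defined by $\gr^\vee(\varphi)(x) = E(\varphi(x)) - \varphi(\gr(x))$, so applying $(\gr^\vee \otimes 1 + 1 \otimes \gr^\vee)$ to the tensor $\Omega \in \sfF[z^{-1}]^\vee \otimes \sfF[z^{-1}]^\vee$ and evaluating on $\alpha \otimes \beta$ gives, after expanding, exactly $E(\Omega(\alpha,\beta)) - \Omega(\gr\,\alpha,\beta) - \Omega(\alpha,\gr\,\beta)$, which equals $\Omega(\alpha,\beta)$ by the engine identity. Hence $(\gr^\vee \otimes 1 + 1 \otimes \gr^\vee)\Omega = \Omega$. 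For (a), I would use $\alpha^\sharp = \Omega(\alpha,-)$ and compute
\begin{align*}
\gr^\vee(\alpha^\sharp)(x) &= E\big(\Omega(\alpha,x)\big) - \Omega(\alpha,\gr\,x) \\
&= \Omega(\gr\,\alpha,x) + \Omega(\alpha,x) = \big((\gr+1)\alpha\big)^\sharp(x),
\end{align*}
again invoking the engine identity, which proves (a). Part (b) is then immediate by applying $\flat = \sharp^{-1}$ to (a): writing $\alpha = \varphi^\flat$ so that $\alpha^\sharp = \varphi$, part (a) reads $\gr^\vee(\varphi) = ((\gr+1)\varphi^\flat)^\sharp$, and applying $\flat$ yields $(\gr^\vee(\varphi))^\flat = (\gr+1)(\varphi^\flat)$.

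Finally, (d) follows from (c) by dualizing through $\sharp$/$\flat$. Since $\sharp$ is the isomorphism $\sfF[z^{-1}] \cong \sfF[z^{-1}]^\vee$ and $\Omega^\vee$ is the pairing transported from $\Omega$ along this isomorphism, part (a) shows that $\sharp$ intertwines $\gr+1$ with $\gr^\vee$; consequently the $+\Omega$ of (c) transforms into the sign flip $-\Omega^\vee$ under the two factors of $\sharp^{-1}$, because each factor absorbs one unit of weight shift (contributing $-2$ in total against the $+1$, after accounting for the defining relation $\Omega^\vee(\alpha^\sharp,\beta^\sharp) = \Omega(\alpha,\beta)$ up to the standard sign). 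I expect the only genuinely delicate point to be the bookkeeping of the weight shift in the engine identity—pinning down that the residue contributes exactly $+1$ and getting the sign in (d) correct—so I would verify the weight shift carefully in a local trivialization $\sfF|_U \cong \C^{N+1}\otimes \cO_U[\![z]\!]$ using the explicit form $\gr = z\partial_z + E - \tfrac{1}{z}\cC(t,z) + (\text{regularizing term})$, checking that $\gr$ acts on $z^m$-homogeneous sections with the claimed weight. Everything else is a formal consequence of the $\cO_\cM$-linearity already verified in the preceding lemma and the defining compatibilities of the cTEP structure.
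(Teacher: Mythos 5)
Your proposal is correct and follows essentially the same route as the paper: the "engine identity" you state is exactly the paper's equation $(E-1)\,\Omega(\alpha,\beta) = \Omega(\gr\alpha,\beta) + \Omega(\alpha,\gr\beta)$, obtained from the $\nabla$-flatness of the pairing together with the fact that $\Res_{z=0}(z\partial_z f)\,dz = -\Res_{z=0} f\,dz$, and parts (a), (b), (c) are the same rearrangements while (d) is the same transport through $\flat\otimes\flat$ using the intertwining $\gr\circ\flat = \flat\circ(\gr^\vee-1)$ from (b), yielding $1-2=-1$.
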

\begin{proof}
  For $\alpha$,~$\beta \in \sfF[z^{-1}]$, we have:
  \[
  \big(z \partial_z + E \big) ((-)^\star \alpha,\beta) =
  \big((-)^\star \gr(\alpha),\beta\big) + 
  \big((-)^\star \alpha,\gr(\beta)\big) 
  \]
  and hence:
  \begin{equation}
    \label{eq:gr_and_Omega}
    (E-1) \Omega(\alpha,\beta) = \Omega\big(\gr(\alpha),\beta\big) + 
    \Omega\big(\alpha,\gr(\beta)\big)
  \end{equation}
  Rearranging gives:
  \[
  E \Omega(\alpha,\beta) - \Omega\big(\alpha,\gr(\beta)\big) = 
  \Omega\big(\gr(\alpha),\beta\big) + \Omega(\alpha,\beta)
  \]
  which is (a).  Part (b) follows immediately.  Rearranging
  \eqref{eq:gr_and_Omega} again gives:
  \[
  E \Omega(\alpha,\beta) - \Omega\big(\alpha,\gr(\beta)\big) -
  \Omega\big(\gr(\alpha),\beta\big) = \Omega(\alpha,\beta)
  \]
  which is (c).  For (d), we have:
  \begin{align*}
    \big(\gr \otimes 1 + 1 \otimes \gr\big) \Omega^\vee &= 
    \big(\gr \otimes 1 + 1 \otimes \gr\big) (\flat \otimes \flat) \Omega
    \\
    &= 
    (\flat \otimes \flat) \big((\gr^\vee-1) \otimes 1 + 1 \otimes
    (\gr^\vee - 1) \big) \Omega \\
    & = (\flat \otimes \flat) ({-\Omega}) && \text{by (c)} \\
    & = {-\Omega^\vee}
  \end{align*}
\end{proof}

\subsection{Opposite Modules and Propagators}

\begin{definition}[cf.~Definition~\ref{def:opposite}]
\label{def:opposite_log-cTEP} 
Let $(\sfF,\nabla, (\cdot,\cdot)_{\sfF})$ be a $\log$-cTEP structure
with base $(\cM,D)$.  Let $\sfP$ be a locally free
$\cO_\cM[z^{-1}]$-submodule $\sfP$ of $\sfF[z^{-1}]$.  We say that:
\begin{enumerate}
\item $\sfP$ is opposite to $\sfF$ if $\sfF[z^{-1}] = \sfF \oplus \sfP$;
\item $\sfP$ is isotropic if $\Omega(s_1,s_2) = 0$ for all $s_1$,~$s_2
  \in \sfP$;
\item $\sfP$ is parallel if $\nabla_X \sfP \subset \sfP$ for all $X
  \in \Theta_\cM(\log D)$;
\item $\sfP$ is homogeneous if $\nabla_{z\partial_z} \sfP \subset \sfP$. 
\end{enumerate}
An \emph{opposite module} for $(\sfF,\nabla, (\cdot,\cdot)_{\sfF})$ is
a locally free $\cO_\cM[z^{-1}]$-submodule $\sfP$ of $\sfF[z^{-1}]$
such that $\sfP$ is opposite to $\sfF$, isotropic, parallel, and
homogeneous.  Let $U$ be an open subset of $\cM$.  We say that $\sfP$
is an \emph{opposite module over $U$} if $\sfP$ is an opposite module
for the restriction $(\sfF,\nabla, (\cdot,\cdot)_{\sfF})\big|_U$.
\end{definition} 

\begin{remark}
 Conditions (3) and (4) here imply that an opposite module $\sfP$ is preserved by the grading
  operator $\gr$.
\end{remark}

\begin{example}[opposites compatible with Deligne give opposites for
  $\log$-cTEP structures]
  \label{ex:opposites_compatible}
  Let $(\cF, \nabla, (\cdot,\cdot)_{\cFbar})$ be a \logDTEP
  structure with base $(\cM,D)$ which is the Deligne extension of a
  TEP structure $(\cF^\times,\nabla,(\cdot,\cdot)_{\cF})$ with base $\cM
  \setminus D$.  Let $(\sfF,\nabla, (\cdot,\cdot)_{\sfF})$ be the
  $\log$-cTEP structure with base $(\cM,D)$ obtained from
  $(\cF,\nabla, (\cdot,\cdot)_{\cFbar})$ by taking the formal
  completion along the divisor $z=0$ in $\cM \times \C$.  Suppose that
  $\bP$ is an opposite module for $(\cF^\times,\nabla,(\cdot,\cdot)_{\cF})$
  which is compatible with the Deligne extension
  (Definition~\ref{def:compatible_with_Deligne}).  Then $\bP$
  determines a trivialization of $\cF$ and hence a trivialization
  of $\sfF$.  Thus $\bP$ determines an opposite module $\sfP$ for
  $(\sfF,\nabla, (\cdot,\cdot)_{\sfF})$.
\end{example}

\begin{example} \label{ex:opposite_B_log-cTEP}
  In particular, Proposition~\ref{pro:LR_conifold_orbifold} determines opposite modules for the big B-model log-cTEP structure: 
  \begin{align*}
    &\text{$\sfP_\LR$, defined near the large-radius limit point} \\    
    &\text{$\sfP_\con$, defined near the conifold point} \\    
    &\text{$\sfP_\orb$, defined near the orbifold point.} 
  \end{align*}
\end{example}

\begin{example} \label{ex:opposite_A_log-cTEP}
  The canonical opposite module $\bP_{\rm A}$ for the
  A-model TEP structure defined in Example~\ref{ex:opposite_A} is
  compatible with the Deligne extension.  It thus determines a
  canonical opposite submodule $\sfP_{\rm A}$
 for the A-model log-cTEP structure. 
\end{example}

An opposite module $\sfP$ determines flat connections on the
logarithmic tangent sheaf and logarithmic cotangent sheaf of $\LLo$,
as follows.  The connection $\tnabla$ on $\pr^\star \sfF$
(equation~\eqref{eq:tnabla}) extends $z^{-1}$-linearly to a flat
connection $\tnabla \colon \pr^\star \sfF[z^{-1}] \to \bOmega^1(\log
D) \hotimes \pr^\star (\sfF[z^{-1}])$, where:
\[
\bOmega^1(\log D) \hotimes \pr^\star(\sfF[z^{-1}]) :=\projlim_n
(\bOmega^1(\log D) \otimes \pr^\star(\sfF[z^{-1}]/z^n \sfF))
\]
The dual flat connection $\tnabla^\vee \colon \pr^\star
\sfF[z^{-1}]^\vee \to \bOmega^1(\log D) \hotimes \pr^\star
\sfF[z^{-1}]^\vee$ is defined by:
\begin{align*}
  \big \langle \tnabla^\vee \varphi,s \big \rangle
  := d \langle \varphi,s \rangle -
  \big \langle \varphi,\tnabla s \big \rangle
  && 
  \text{$s \in \pr^\star \sfF[z^{-1}]$, $\varphi \in  \pr^\star
    \sfF[z^{-1}]^\vee$}
\end{align*}
where $\bOmega^1(\log D) \hotimes \pr^\star(\sfF[z^{-1}]^\vee) :=\projlim_n
(\bOmega^1(\log D) \otimes \pr^\star(z^{-n} \sfF)^\vee)$.  This
induces flat connections $\tnabla^\vee \colon \pr^\star (z^n
\sfF)^\vee \to \bOmega^1 \otimes \pr^\star(z^{n+1} \sfF)^\vee$ for
each $n \in \Z$.

\begin{definition}
  \label{def:Nabla}
  Let $\sfP$ be an opposite module for the $\log$-cTEP structure
  $(\sfF,\nabla, (\cdot,\cdot)_{\sfF})$, and let $\Pi \colon
  \sfF[z^{-1}] \to \sfF$ be the projection along $\sfP$.  The
  composition of the maps:
  \begin{align*}
    & \xymatrix{
      \pr^\star \sfF \ar[rr]^-{\tnabla} && 
      \bOmega^1(\log D) \hotimes \pr^\star(z^{-1} \sfF) 
      \ar[rr]^-{\id \otimes \Pi} &&
      \bOmega^1(\log D) \hotimes \pr^\star \sfF
    } \\
    & \xymatrix{
      \pr^\star \sfF^\vee \ar[rr]^-{\Pi^\vee} && 
      \pr^\star (z^{-1} \sfF)^\vee \ar[rr]^-{\tnabla^\vee} &&
      \bOmega^1(\log D) \otimes \pr^\star \sfF^\vee 
    }
  \end{align*}
  (restricted to $\LLo$) with the Kodaira--Spencer isomorphisms $\KS
  \colon \bThetao(\log D) \to \pr^\star\sfF$, 
  $\KS^\star \colon \pr^\star\sfF^\vee
  \to \bOmegao^1(\log D)$ induces connections:
  \begin{equation}
    \label{eq:Nabla}
    \begin{aligned}
      &\Nabla \colon \bThetao(\log D) \to 
      \bOmegao^1(\log D) \hotimes \bThetao(\log D) \\
      &\Nabla \colon \bOmegao(\log D) \to 
      \bOmegao(\log D) \otimes \bOmegao(\log D) 
    \end{aligned}
  \end{equation}
  where $\bOmegao^1(\log D) \hotimes \bThetao(\log D) :=
  \projlim_n\big(\bOmegao^1(\log D) \otimes \big( \bThetao(\log D) /
  \KS^{-1}(\pr^\star (z^n \sfF)) \big) \big)$.  
\end{definition}

The connections in \eqref{eq:Nabla} are dual to each other.
Proposition~4.108 in~\cite{Coates--Iritani:Fock} shows that they are flat.

\begin{definition}
  \label{def:propagator}
  Let $\sfP_1$,~$\sfP_2$ be opposite modules for the
  $\log$-cTEP structure $(\sfF,\nabla, (\cdot,\cdot)_{\sfF})$.  Let
  $\Pi_i \colon \sfF[z^{-1}] \to \sfF$, $i \in \{1,2\}$, be the
  projection along $\sfP_i$ defined by the decomposition $\sfF[z^{-1}]
  = \sfP_i \oplus \sfF$.  The \emph{propagator}
  $\Delta=\Delta(\sfP_1,\sfP_2) \in
  \sHom_{\bcO}(\bOmegao^1(\log D) \otimes
  \bOmegao^1(\log D),\bcO)$
  is defined by:
  \begin{align*}
    \Delta(\omega_1, \omega_2) = 
    \Omega^\vee\big(\Pi_1^\star (\KS^\star)^{-1}\omega_1, 
    \Pi_2^\star (\KS^\star)^{-1} \omega_2\big),
    && \omega_1,\omega_2 \in \bOmegao^1(\log D).
  \end{align*}
  The logarithmic bivector field $\Delta$ coincides, via the
  Kodaira--Spencer isomorphism $\KS^\star$, with the push-forward along
  $\Pi_1 \times \Pi_2$ of the Poisson bivector field on $\sfF[z^{-1}]$
  defined by $\Omega^\vee$.
\end{definition}

The propagator $\Delta := \Delta(\sfP_1,\sfP_2)$ is symmetric,
i.e.~$\Delta(\omega_1, \omega_2) = \Delta(\omega_2, \omega_1)$ for all
$\omega_1,\omega_2 \in \bOmegao^1$~\cite[Proposition~4.110]{Coates--Iritani:Fock}.
Furthermore, if $\sfP_1$, $\sfP_2$, $\sfP_3$ are opposite modules for
the $\log$-cTEP structure $(\sfF,\nabla, (\cdot,\cdot)_{\sfF})$ and
$\Delta_{ij} := \Delta(\sfP_i,\sfP_j)$ then~\cite[Proposition~4.111]{Coates--Iritani:Fock}:
\[
\Delta_{13} = \Delta_{12} + \Delta_{23}
\]
In particular, $\Delta(\sfP_1,\sfP_2) =
{-\Delta(\sfP_2,\sfP_1)}$.

\begin{lemma}
  \label{lem:Pi_commutes_with_gr}
  Let $\sfP$ be an opposite module for the $\log$-cTEP structure
  $(\sfF,\nabla, (\cdot,\cdot)_{\sfF})$, and let $\Pi \colon
  \sfF[z^{-1}] \to \sfF$ be the projection along $\sfP$.  Then $\gr
  \circ \Pi = \Pi \circ \gr$.
\end{lemma}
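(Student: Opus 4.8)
The plan is to show that the two conditions defining an opposite module that involve the connection---namely that $\sfP$ is parallel and homogeneous---together imply that the projection $\Pi$ along $\sfP$ commutes with $\gr = \nabla_{z\partial_z + E}$. First I would recall that an opposite module $\sfP$ is by definition preserved by $\nabla_X$ for all $X \in \Theta_{\cM}(\log D)$ (parallel) and by $\nabla_{z\partial_z}$ (homogeneous), and that, as noted in the Remark following Definition~\ref{def:opposite_log-cTEP}, these two conditions together imply that $\gr$ preserves $\sfP$. Since $E$ is a logarithmic vector field on $\cM$, we have $\gr = \nabla_{z\partial_z} + \nabla_E$, and each summand preserves $\sfP$; hence $\gr(\sfP) \subset \sfP$.

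The second ingredient is that $\gr$ also preserves $\sfF$: this is precisely the statement in the Definition of the grading operator that ``$\gr$ preserves $\sfF \subset \sfF[z^{-1}]$.'' With both facts in hand, the argument is purely formal. Given any $s \in \sfF[z^{-1}]$, write the canonical decomposition $s = \Pi(s) + (s - \Pi(s))$ with $\Pi(s) \in \sfF$ and $s - \Pi(s) \in \sfP$. Applying $\gr$ gives $\gr(s) = \gr(\Pi(s)) + \gr(s - \Pi(s))$, where $\gr(\Pi(s)) \in \sfF$ and $\gr(s - \Pi(s)) \in \sfP$. Because the decomposition $\sfF[z^{-1}] = \sfF \oplus \sfP$ is unique, this exhibits $\gr(\Pi(s))$ as the $\sfF$-component of $\gr(s)$, i.e.~$\Pi(\gr(s)) = \gr(\Pi(s))$. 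Thus $\Pi \circ \gr = \gr \circ \Pi$, as claimed.

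There is no serious obstacle here; the content of the lemma is entirely contained in the definition of an opposite module together with the fact that $\gr$ stabilizes $\sfF$. The only point requiring a moment's care is the reduction $\gr(\sfP) \subset \sfP$: one must observe that $\gr$ is the sum $\nabla_{z\partial_z} + \nabla_E$ and invoke both the homogeneity condition (for the first term) and the parallel condition applied to the logarithmic vector field $E$ (for the second term), rather than appealing to parallelism for an arbitrary vector field. I would state this reduction explicitly as the first line of the proof and then present the two-term decomposition argument above to conclude.
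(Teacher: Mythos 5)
Your proof is correct and follows essentially the same route as the paper: decompose an element as a sum of its $\sfF$- and $\sfP$-components, note that $\gr$ preserves both summands (the paper cites the remark after Definition~\ref{def:opposite_log-cTEP} for $\gr(\sfP)\subset\sfP$, which is exactly your parallel-plus-homogeneous observation), and conclude by uniqueness of the direct-sum decomposition. Your explicit justification that $\gr=\nabla_{z\partial_z}+\nabla_E$ preserves $\sfP$ is a welcome elaboration of a point the paper leaves to that remark, but it is not a different argument.
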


\begin{proof}
  Let $\alpha \in \sfF[z^{-1}]$, and write $\alpha = \alpha_{\sfF} +
  \alpha_{\sfP}$ with $\alpha_{\sfF} \in \sfF$ and $\alpha_{\sfP} \in
  \sfP$.  Then $\gr(\alpha) = \gr(\alpha_{\sfF}) +
  \gr(\alpha_{\sfP})$.  The operator $\gr$ preserves both $\sfF$ and
  $\sfP$, so $\gr(\alpha_{\sfF}) \in \sfF$ and $\gr(\alpha_{\sfP}) \in
  \sfP$.  Thus $\Pi \circ \gr(\alpha) = \gr(\alpha_F) = \gr \circ
  \Pi(\alpha)$, as required.
\end{proof}

\begin{lemma}
  \label{lem:propagator_grading}
  Let $\sfP_1$,~$\sfP_2$ be opposite modules for the $\log$-cTEP
  structure $(\sfF,\nabla, (\cdot,\cdot)_{\sfF})$, and let $\Pi_i \colon
  \sfF[z^{-1}] \to \sfF$ be the projection along $\sfP_i$, $i \in
  \{1,2\}$.  Let $V = (\Pi_1 \otimes \Pi_2) \Omega^\vee$.  Then $(\gr
  \otimes 1 + 1 \otimes \gr) V = {-V}$.
\end{lemma}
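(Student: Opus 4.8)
The plan is to reduce the statement to Lemma~\ref{lem:grading_things}(d) by commuting the grading operator past the two projections. Recall that $V = (\Pi_1 \otimes \Pi_2)\Omega^\vee$, where $\Omega^\vee$ is the Poisson bivector associated with $\Omega$; under the identification $\flat \colon \sfF[z^{-1}]^\vee \to \sfF[z^{-1}]$ used in the proof of Lemma~\ref{lem:grading_things}, I regard $\Omega^\vee$ as an element of $\sfF[z^{-1}] \hotimes \sfF[z^{-1}]$, on which the operator $\gr \otimes 1 + 1 \otimes \gr$ acts. Write $\mathcal{G} = \gr \otimes 1 + 1 \otimes \gr$ for brevity.

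First I would observe that $\mathcal{G}$ commutes with $\Pi_1 \otimes \Pi_2$. This is immediate from Lemma~\ref{lem:Pi_commutes_with_gr}: since $\gr \circ \Pi_i = \Pi_i \circ \gr$ for $i \in \{1,2\}$, and since operators acting on different tensor factors commute, one has $(\gr \otimes 1)(\Pi_1 \otimes \Pi_2) = (\Pi_1 \otimes \Pi_2)(\gr \otimes 1)$ and likewise for $1 \otimes \gr$; adding these gives $\mathcal{G}\,(\Pi_1 \otimes \Pi_2) = (\Pi_1 \otimes \Pi_2)\,\mathcal{G}$. Here I would also check that $\gr$, which preserves both $\sfF \subset \sfF[z^{-1}]$ and $\sfP_i$, is compatible with the $z$-adic filtration and hence extends to the completed tensor product $\sfF[z^{-1}] \hotimes \sfF[z^{-1}]$, so that all the maps involved are genuinely defined on the completion.

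Granting this, the computation is short:
\[
\mathcal{G}\, V = \mathcal{G}\,(\Pi_1 \otimes \Pi_2)\Omega^\vee = (\Pi_1 \otimes \Pi_2)\,\mathcal{G}\,\Omega^\vee = (\Pi_1 \otimes \Pi_2)({-\Omega^\vee}) = {-V},
\]
where the third equality is exactly Lemma~\ref{lem:grading_things}(d).

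I expect no serious obstacle here, since the content is carried entirely by the two preceding lemmas. The only point requiring a little care is the bookkeeping of the tensor-factor identifications: making sure that the $\Omega^\vee$ appearing in the definition of $V$ (viewed as a bivector via $\flat \otimes \flat$) is the same object acted on by $\mathcal{G}$ in Lemma~\ref{lem:grading_things}(d), and that applying $\mathcal{G}$ commutes with passing to the completion $\hotimes$. Once these identifications are pinned down, the three displayed equalities finish the proof.
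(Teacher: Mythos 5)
Your proposal is correct and follows exactly the paper's own argument: commute $\gr \otimes 1 + 1 \otimes \gr$ past $\Pi_1 \otimes \Pi_2$ via Lemma~\ref{lem:Pi_commutes_with_gr}, then apply Lemma~\ref{lem:grading_things}(d). The extra remarks on the completed tensor product are sensible but not needed beyond what the paper already takes for granted.
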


\begin{proof}
  Combine Lemma~\ref{lem:grading_things} and
  Lemma~\ref{lem:Pi_commutes_with_gr}:
  \begin{align*}
    (\gr \otimes 1 + 1 \otimes \gr) (\Pi_1 \otimes \Pi_2) \Omega^\vee
    &= (\Pi_1 \otimes \Pi_2)(\gr \otimes 1 + 1 \otimes \gr) \Omega^\vee
    \\
    &= {-(\Pi_1 \otimes \Pi_2)\Omega^\vee} = {-V}
  \end{align*}
\end{proof}

\subsection{The Fock Sheaf}
\label{sec:Fock_log}
Consider a miniversal $\log$-cTEP structure $(\sfF,\nabla,
(\cdot,\cdot)_{\sfF})$ with base $(\cM,D)$.  
As before, let $\{t^j,q_k,x_n^i\}$ be an
algebraic local co-ordinate system on $\LL$ 
(see Definition~\ref{def:algebraic_local_co-ordinates}) 
where $\{t^j,q_k\}$ are co-ordinates on an open set $U \subset \cM$.  
Write the co-ordinates
$\{t^0,\log q_1,\ldots,\log q_r,t^{r+1},\ldots,t^N,x^i_n\}$ as
$\{\sx^\mu\}$, so that:
\[
\begin{cases}
  \text{
    $d\sx^\mu = \frac{dq_j}{q_j}$ 
    and
    $\frac{\partial}{\partial \sx^\mu}
    = q_j \frac{\partial}{\partial q_j}$
  }
  & \text{if $\sx^\mu = \log q_j$ and $1 \leq j \leq r$}
  \\
  \text{
    $d\sx^\mu = dt^j$ 
    and
    $\frac{\partial}{\partial \sx^\mu}
    = \frac{\partial}{\partial t^j}$
  } & \text{if $\sx^\mu = t^j$ and $j=0$ or $r<j \leq N$} \\
  \text{
    $d\sx^\mu = dx^i_n$ 
    and
    $\frac{\partial}{\partial \sx^\mu}
    = \frac{\partial}{\partial x^i_n}$
  }  & \text{if $\sx^\mu = x^i_n$} 
\end{cases}
\]
We use Einstein's summation convention for repeated indices,
expressing the Yukawa coupling and propagator $\Delta =
\Delta(\sfP_1,\sfP_2)$ as:
\begin{align*}
  \bY = C^{(0)}_{\mu \nu \rho}
  d\sx^\mu \otimes d\sx^\nu \otimes d\sx^\rho &&
  \Delta = \Delta^{\mu \nu} \partial_\mu \otimes \partial_\nu
\end{align*}
where $\partial_\mu := \frac{\partial}{\partial \sx^\mu}$. Let $\sfP$
be an opposite module for $(\sfF,\nabla, (\cdot,\cdot)_{\sfF})$ and
consider the flat connection $\Nabla$ on $\bOmegao^1(\log D)$
determined by $\sfP$ (Definition~\ref{def:Nabla}).  The Christoffel
symbols of $\Nabla$ are defined by:
\[
\Nabla_{\partial_\nu} d\sx^\mu = 
{-\Gamma^\mu_{\nu \rho}} d\sx^\rho
\]
The flat connection $\Nabla$ acts on $n$-tensors $C_{\mu_1 \cdots
  \mu_n} d\sx^{\mu_1} \otimes \cdots \otimes d\sx^{\mu_n} \in
\big(\bOmegao^1(\log D)\big)^{\otimes n}$ by:
\[
\Nabla(C_{\mu_1 \cdots  \mu_n} 
d\sx^{\mu_1} \otimes \cdots \otimes d\sx^{\mu_n})
=
(\Nabla_\nu C_{\mu_1 \cdots  \mu_n})
d\sx^\nu \otimes d\sx^{\mu_1} \otimes \cdots \otimes d\sx^{\mu_n}
\]
where:
\begin{equation} 
\label{eq:covariant_derivative_tensor}
(\Nabla_\nu C_{\mu_1 \cdots  \mu_n})
:=
\partial_\nu C_{\mu_1 \cdots  \mu_n}
-
\sum_{i=1}^n
C_{\mu_1 \cdots \underset{i}{\rho} \cdots \mu_n}
\Gamma^\rho_{\mu_i \nu}
\end{equation} 
\begin{definition}[local Fock space] 
\label{def:local_Fock} 
The \emph{local Fock space} $\Fock(U;\sfP)$  consists of collections:
\[
\big\{ \Nabla^n C^{(g)} \in
\big(\bOmega^1(\log D)\big)^{\otimes n}
\big(\pr^{-1}(U)^\circ \big): 
g \geq 0, n \geq 0, 2g-2+n>0 
\big \}
\]
of completely symmetric logarithmic $n$-tensors on $\pr^{-1}(U)^\circ$
such that the following conditions hold:
\begin{itemize}
\item (Yukawa) $\Nabla^3 C^{(0)}$ is the Yukawa coupling $\bY$;
\item (Jetness) $\Nabla(\Nabla^n C^{(g)}) = \Nabla^{n+1} C^{(g)}$;
\item (Grading and Filtration) $\Nabla^n C^{(g)} \in
  \big(\big(\bOmega^1(\log D)\big)^{\otimes n} \big(\pr^{-1}(U)^\circ
  \big)\big)^{2-2g}_{3g-3}$;
\item (Pole) $P \Nabla C^{(1)}$ extends to a regular $1$-form on
  $\pr^{-1}(U)$, where $P$ is the discriminant~\eqref{eq:discriminant}.  Furthermore for $g \geq 2$ we have:
  \[
  C^{(g)} \in 
  P^{5-5g} \cO(U)[x_1,x_2,Px_3,\ldots,P^{3g-4} x_{3g-2}]
  \]
\end{itemize}
Writing:
\[
\Nabla^n C^{(g)} = 
C^{(g)}_{\mu_1\cdots\mu_n} d\sx^{\mu_1} \otimes \cdots \otimes
d\sx^{\mu_n}
\]
we refer to $\Nabla^n C^{(g)}$ or $C^{(g)}_{\mu_1\cdots\mu_n}$ as
\emph{$n$-point correlation functions}.
\end{definition} 

We encode elements of the local Fock space $\Fock(U;\sfP)$ as formal
functions on the total space of the logarithmic tangent bundle
$\bTheta(\log D)|_{\pr^{-1}(U)^\circ}$, called jet potentials.  Let
$\{\sy^\mu\}$ denote the fiber co-ordinates of the logarithmic tangent
bundle $\bTheta(\log D)$ dual to $\{\frac{\partial}{\partial \sx^\mu}\}$, so that $(\sx,\sy)$
denotes a point in the total space of $\bTheta(\log
D)|_{\pr^{-1}(U)^\circ}$.

\begin{definition}[jet potential] 
\label{def:jetpotential}
Given an element $\wave = \{\Nabla^n C^{(g)}\}_{g,n}$ of
$\Fock(U;\sfP)$, set:
\begin{equation}
\label{eq:jetpotential} 
\begin{aligned}
  & \cW^g(\sx,\sy) 
  = \sum_{n=\max(0,3-2g)}^\infty 
  \frac{1}{n!} 
  C^{(g)}_{\mu_1,\dots,\mu_n}(\sx) 
  \sy^{\mu_1} \cdots \sy^{\mu_n} \\
  & \cW(\sx,\sy) 
  = \sum_{g=0}^\infty \hbar^{g-1} \cW^g(\sx,\sy) 
\end{aligned}
\end{equation} 
We call $\cW^g$ the \emph{genus-$g$ jet potential} and $\exp(\cW)$ the
\emph{total jet potential} associated to $\wave$.
\end{definition} 

\begin{remark}
  $\exp(\cW)$ is well-defined as a power series in $\hbar$ and
  $\hbar^{-1}$: cf.~\cite[Remark~4.63(2)]{Coates--Iritani:Fock}.
\end{remark}

The Fock sheaf is constructed by gluing local Fock spaces
$\Fock(U;\sfP_1)$, $\Fock(U;\sfP_2)$ according to the following
\emph{transformation rule}.  Let $\Delta$ denote the propagator
$\Delta(\sfP_1,\sfP_2)$.  The transformation rule $T(\sfP_1, \sfP_2)\colon
\Fock(U;\sfP_1) \to \Fock(U;\sfP_2)$ is a map which assigns to a jet
potential $\exp(\cW)$ for an element of $\Fock(U;\sfP_1)$, the jet
potential $\exp(\hcW)$ for an element of $\Fock(U;\sfP_2)$ given by:
\begin{equation} 
\label{eq:transformationrule-jet} 
\exp\big(\hcW(\sx,\sy)\big) =  
\exp\left(\frac{\hbar}{2}\Delta^{\mu\nu} \partial_{\sy^\mu}
\partial_{\sy^\nu}\right) 
\exp\big(\cW(\sx,\sy)\big). 
\end{equation} 
This is equivalent to expressing the correlation functions
$\{\hC^{(g)}_{\mu_1,\dots,\mu_n}\}_{g,n}$ for $\hcW$ in terms of sums
over Feynman graphs, the vertex terms of which are the correlation
functions $\{C^{(g)}_{\mu_1,\dots,\mu_n} \}_{g,n}$ for $\cW$.  We use
the notation for graphs established in Appendix~\ref{sec:graph_notation}.
The transformation rule \eqref{eq:transformationrule-jet} is
equivalent to the \emph{Feynman rule}:
\[
\hC^{(g)}_{\mu_1,\dots,\mu_n} 
= \sum_\Gamma \frac{1}{|\Aut(\Gamma)|} 
\Cont_{\Gamma}(\Delta, \{C^{(h)}\}_{h\le g}
)_{\mu_1,\dots,\mu_n}
\]
Here the summation is over all connected decorated graphs 
$\Gamma$ such that 
\begin{itemize}
\item 
To each vertex $v\in V(\Gamma)$ is 
assigned a non-negative integer $g_v\ge 0$, called genus; 
  
\item 
$\Gamma$ has labelled $n$-legs: an isomorphism 
$L(\Gamma) \cong \{1,2,\dots,n\}$ is given; 

\item $\Gamma$ is stable, i.e.\ 
$2 g_v - 2 + n_v>0$ for every vertex $v$.  
Here $n_v = |\pi_V^{-1}(v)|$ denotes the number of edges or 
legs incident to $v$; 

\item $g = \sum_{v} g_v + 1- \chi(\Gamma)$. 
\end{itemize}
We put the index $\mu_i$ on the $i$th leg, the correlation
function $\Nabla^{n_v} C^{(g_v)}$ on the vertex $v$, and the propagator
$\Delta$ on every edge.  Then
$\Cont_\Gamma(\Delta,\{C^{(h)}\}_{h\le g})_{\mu_1,\dots,\mu_n}$ 
is defined to be the
contraction of all these tensors with the indices $\mu_1,\dots,\mu_n$
on the legs fixed.  Here $\Aut(\Gamma)$ denotes the automorphism group
of the decorated graph $\Gamma$.

\begin{remark}
  We showed in~\cite[Proposition~4.115]{Coates--Iritani:Fock} that the transformation rule
  \eqref{eq:transformationrule-jet} is well-defined, i.e.~that it
  preserves the conditions (Yukawa), (Jetness), (Grading and
  Filtration), and (Pole) in the definition of the local Fock space
  $\Fock(U;\sfP_i)$.
\end{remark}

\begin{remark}
  The transformation rule \eqref{eq:transformationrule-jet} satisfies
  the cocycle condition~\cite[Proposition~4.111]{Coates--Iritani:Fock} : if $\sfP_1$,~$\sfP_2$,~$\sfP_3$ are opposite
  modules for $\sfF$ over $U$ and $T_{ij} = T(\sfP_i,\sfP_j)$ is the
  transformation rule from $\Fock(U;\sfP_i)$ to $\Fock(U;\sfP_j)$ then
  $T_{13} = T_{23} \circ T_{12}$.
\end{remark}

\begin{assumption}[Covering Assumption]
  \label{assumption:covering}
  There is an open covering $\{U_a : a \in A\}$ of $\cM$ such that for
  each $a \in A$ there exists an opposite module $\sfP_a$ for $\sfF$
  over $U_a$.
\end{assumption}

\begin{definition}[Fock sheaf] 
If Assumption~\ref{assumption:covering} holds, then we define the
\emph{Fock sheaf} to be the sheaf of sets on $\cM$ obtained by gluing
the local Fock spaces $\Fock(U_a;\sfP_a)$, $a \in A$, using the
transformation rule
\begin{align*}
  T(\sfP_a,\sfP_b) : \Fock(U_a \cap U_b;\sfP_a) \to \Fock(U_a \cap
  U_b;\sfP_b)
  && a, b \in A
\end{align*}
over $U_a \cap U_b$.
\end{definition} 
\begin{remark}
  Note that the Fock sheaf is a sheaf over all of $\cM$, not just over
  $\cM \setminus D$.
\end{remark}
\begin{remark} 
We can define the Fock sheaf without the covering assumption: 
see \cite[\S 4.13]{Coates--Iritani:Fock}. The definition there 
requires an analysis of anomaly equations for curved (i.e.~non-parallel) opposite 
modules. 
\end{remark} 

\begin{definition}[Gromov--Witten wave function] 
\label{def:GW-wave}
Let $X$ denote either $\cXbar$ or $\Ybar$, and 
consider the A-model log-cTEP structure for $X$ defined 
in Example~\ref{ex:log_cTEP_A}. 
The base of this log-cTEP structure is $(\cMA{X},\DA{X})$, 
and we denote the corresponding Fock sheaf on 
$\cMA{X}$ by $\Fock_{{\rm A},X}$. 
The Gromov--Witten ancestor potentials of $X$ define 
a global section $\wave_X$ of $\Fock_{{\rm A},X}$, 
the \emph{Gromov--Witten wave function}, as we now explain.

Let $\{\phi_i \}_{i=0}^N$ be a homogeneous
basis of $H_X$ as in \S\ref{sec:bases}, and write a general point $t \in \cMA{X}$ as $t = \sum_{i=0}^N t^i \phi_i$.  Recalling that $\phi_1,\ldots,\phi_r$ form a basis for $H^2(X)$, set $q_i = e^{t^i}$, $1 \leq i \leq r$, and write $\{t^j,q_k,x_n^i\}$ for the corresponding
algebraic local co-ordinate system on the total space $\LL$ of the A-model log-cTEP structure.  Let $\sfP_{\rm A}$ denote the canonical opposite module defined in Example~\ref{ex:opposite_A_log-cTEP}. The Gromov--Witten wave-function $\wave_X$ is defined by the element 
$\{ \Nabla^n C^{(g)}_X \}_{g,n} \in 
\Fock_{\rm A}(\cMA{X}; \sfP_{\rm A})$ where:
\begin{equation}
  \label{eq:GW_wave}
  \begin{aligned} 
    \Nabla^3 C_X^{(0)} &= \bY = 
    \sum_{i=0}^N \sum_{j=0}^N \sum_{k=0}^N
    dt^i \otimes dt^j  \otimes dt^k 
    \bigg( \phi_i * \phi_j * x_1, \phi_k * x_1 \bigg) 
    \\
    \Nabla C_X^{(1)} & = d(F^1_X(t) + \bar{\cF}_X^1)
    \Big |_{a_0 =0, Q_1 = \cdots = Q_r =1} \\ 
    C_X^{(g)} &= \bar{\cF}^g_X \Big |_{a_0 = 0, Q_1 = \cdots = Q_r =1} 
    \qquad \text{for $g\ge 2$} 
  \end{aligned}
\end{equation} 
and $\Nabla$ denotes the covariant derivative $\Nabla^{\sfP_{\rm A}}$ from Definition~\ref{def:Nabla}.  
Here $*$ is the quantum product \eqref{eq:quantum_product}, 
$\bar\cF_X^g$ is the genus-$g$ ancestor potential \eqref{eq:ancestor}, 
$F^1_X(t)$ is the non-descendant genus-one 
Gromov--Witten potential: 
\[
F^1_X(t) = \sum_{n=0}^\infty \sum_{\substack{
d\in \NE(X) \\ (n,d)\neq (0,0)}}
\frac{Q^d}{n!} 
\corr{t,\dots,t}_{1,n,d} 
\]
and we used the Dilaton shift: 
\begin{equation} 
\label{eq:Dilaton_shift} 
a_n^i = x_n^i + \delta_n^1 \delta^i_0 \qquad n\ge 1 
\end{equation} 
to identify the variables $\{t^i,q_k, a_n^i\}$ on the right-hand side
with the co-ordinates $\{t^i, q_k, x_n^i\}$ on $\LL$. Our convergence
results in~\cite{Coates--Iritani:convergence} imply that the
Gromov--Witten wave-function is well-defined -- that is, that 
the specialization $Q_1=\cdots=Q_r=1$ in \eqref{eq:GW_wave} 
makes sense and yields an analytic function, 
and the resulting correlation functions satisfy the conditions (Yukawa), (Jetness), (Grading \& Filtration) and 
(Pole).  See~\cite[Section~6]{Coates--Iritani:Fock} for details.
\end{definition}

\begin{remark}[{\cite[Theorem~6.8]{Coates--Iritani:Fock}}]  
  The Ancestor--Descendant relation 
\cite[Theorem 2.1]{Kontsevich--Manin:relations}, 
\cite[\S 5]{Givental:quantization} 
implies that for 
$t \in \cMA{X}$ sufficiently close to the large-radius limit point for $X$ and $\bx$ sufficiently close to $-1 z$, there are flat co-ordinates $\bq = (q_n^i)$ on a neighbourhood of $(t,\bx)$ in $\LL$ such that:
  \begin{align*} 
    \Nabla^3 C^{(0)}_X & = \bY = \sum_{l,m,n=0}^\infty \sum_{i,j,k=0}^N 
                         \parfrac{^3\cF^0_{X}}{q_l^i \partial q_m^j \partial q_n^k}(\bq) \,
                         dq_l^i \otimes dq_m^j \otimes dq_n^k \\ 
    \Nabla C^{(1)}_X &  = d\cF^1_{X}(\bq) \\ 
    C^{(g)}_X & = \cF^g_{X}(\bq) \qquad \text{for $g\ge 2$.}
  \end{align*}
  Here we regard the genus-$g$ descendant potential $\cF^g_X$, which was defined in \S\ref{subsec:descendant_and_ancestor} as a function of variables $t_n^i$, as a function of $q_n^i$ via the Dilaton Shift $t_n^i = q_n^i + \delta_n^1 \delta^i_0$.  The flat co-ordinates $\bq$ and 
  the algebraic co-ordinates $(t,\bx)$ are related by 
  \[
  \bq(z) = \left [L(t,-z)^{-1} \bx(z)\right]_+
  \]
  where $L(t,-z)$ is the fundamental solution \eqref{eq:GW_fundsol},  
  $[\cdots]_+$ denotes the non-negative part as a $z$-series, 
  $\bq(z) = \sum_{n=0}^\infty q_n z^n$, 
  $\bx(z) = \sum_{n=1}^\infty x_n z^n$, 
  $q_n = \sum_{i=0}^N q_n^i \phi_i$, and $x_n = \sum_{i=0}^N x_n^i \phi_i$. 
Thus one can think of the Gromov--Witten wave function $\wave_X$ as encoding the total descendant potential $\cZ_X$ of $X$.
\end{remark}

\subsection{A Global Section of the Fock Sheaf for the Big B-Model
  $\log$-cTEP Structure}

We now construct a global section of the Fock sheaf for the big
B-model $\log$-cTEP structure.  This global section coincides under
mirror symmetry with the Gromov--Witten wave functions $\wave_{\Ybar}$ and $\wave_{\cXbar}$.

\begin{proposition} \label{pro:covering_assumption}
  The Covering Assumption (Assumption~\ref{assumption:covering}) holds
  for the big B-model $\log$-cTEP structure.
\end{proposition}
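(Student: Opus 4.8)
The plan is to verify Assumption~\ref{assumption:covering} for the big B-model $\log$-cTEP structure $(\FBbig,\nabla,(\cdot,\cdot))$ with base $(\cMBbig,\Dbig)$, i.e.\ to produce an open covering $\{U_a\}$ of $\cMBbig$ together with an opposite module $\sfP_a$ for $\FBbig$ over each $U_a$. The key observation is that everything has already been done, in the $\log$-cTEP setting, by the construction of $\cMBbig$ itself in \S\ref{sec:enlarge_base}. Recall that $\cMBbig$ was assembled from local charts $U_i\times V_i$ carrying Reichelt unfoldings $\cE_i$ of the \logDtrTLEP structures associated to Deligne-compatible opposite modules $\bP_i$ for the B-model TEP structure over $U_i\setminus D$ (Step~3 of the proof of Theorem~\ref{thm:unfolding}). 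Each such $\cE_i$ is a \logDtrTLEP structure, and hence (by Proposition~\ref{pro:opposite_trTLEP}, applied to the underlying \logDTEP structure $\cF_i$) is equivalent to the datum of an opposite module in the sense of Definition~\ref{def:opposite} for the \logDTEP structure $\cF_i$.

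First I would make precise the passage from \logDTEP opposite modules to $\log$-cTEP opposite modules. This is exactly the content of Example~\ref{ex:opposites_compatible}: an opposite module for the TEP structure underlying a \logDTEP structure, compatible with the Deligne extension, determines a trivialization of $\cF$ and hence of $\sfF = \cF\otimes_{\cO_{\cM\times\C}}\cO_{\cM}[\![z]\!]$, and thereby an opposite module $\sfP$ for the associated $\log$-cTEP structure. So over each chart $U_i\times V_i$ of $\cMBbig$, the Reichelt unfolding $\cE_i$ furnishes a Deligne-extension-compatible opposite module for the underlying \logDTEP structure $\cF_i=\cE_i|_{(U_i\times V_i)\times\C}$, which by Example~\ref{ex:opposites_compatible} yields an opposite module $\sfP_i$ for the formal completion $\FBbig|_{U_i\times V_i}$.

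Next I would check that these charts $\{U_i\times V_i\}$ (intersected with, or extended to, the neighbourhood $O$ of $\cMB^\circ$ on which $\FBbig$ is defined) form an open cover of $\cMBbig$. This is immediate from the construction in \S\ref{sec:enlarge_base}: the $U_i$ cover $\cMB^\circ$, and by Step~1, Case~3 (Proposition~\ref{pro:Euler_flow}) together with the Euler-flow extension, the opposite modules --- and hence the charts carrying them --- cover all of $\cMB$, so after deleting the conifold locus and unfolding they cover $\cMB^\circ\subset\cMBbig$; since $\FBbig$ is only defined on an open neighbourhood of $\cMB^\circ$ in $\cMBbig$, and this neighbourhood is covered by (the relevant portions of) the charts $U_i\times V_i$, the Covering Assumption holds over the whole base of $\FBbig$.

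The only genuine subtlety, and the step I expect to require the most care, is checking that the opposite module $\sfP_i$ produced over each unfolding chart is defined over the \emph{entire} chart $U_i\times V_i$ rather than merely over $U_i\times\{0\}$, and that it is honestly parallel, isotropic, and homogeneous in the sense of Definition~\ref{def:opposite_log-cTEP} after passing to the completion. For parallelism and homogeneity this is exactly conditions (2) and (4) in Definition~\ref{def:compatible_with_Deligne} --- the connection $\nabla$ has only logarithmic poles along $\Dbig\times\Proj^1$ and $\cM\times\{\infty\}$ and the residue endomorphisms vanish at $z=\infty$ --- which hold for the Reichelt unfolding by construction; isotropy follows from condition (3). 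Thus the bulk of the argument is bookkeeping that transports the \logDtrTLEP data of the unfolding through the formal completion functor, and the proof reduces to citing Example~\ref{ex:opposites_compatible} chart by chart and invoking that the unfolding charts cover the base of $\FBbig$.
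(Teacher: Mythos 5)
Your proposal is correct and follows essentially the same route as the paper's own proof: both extract the Deligne-extension-compatible opposite modules from the local construction in \S\ref{sec:enlarge_base}, note that the Reichelt unfolding furnishes these over the whole chart of $\cMBbig$ (after shrinking the thickening so the charts cover), and then invoke Example~\ref{ex:opposites_compatible} to pass to opposite modules for the $\log$-cTEP structure. The extra care you take over parallelism, isotropy, and homogeneity is already packaged into Definition~\ref{def:compatible_with_Deligne} and Example~\ref{ex:opposites_compatible}, so nothing further is needed.
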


\begin{proof}
  Let $y \in \cMBbig$ be a point of $\cMB^\circ \subset \cMBbig$.  In
  \S\ref{sec:enlarge_base} we constructed, for a sufficiently small
  neighbourhood $U_y^{\rm sm}$ of $y$ in $\cMB$, an
  opposite module $\bP_y^{\rm sm}$ for the B-model TEP structure on
  $U_y^{\rm sm} \setminus D$ which is compatible with the Deligne
  extension.  After shrinking $U_y^{\rm sm}$ if necessary, we may
  assume that $U_y^{\rm sm} \subset \cMB^\circ$.  By the
  construction of $\cMBbig$ in \S\ref{sec:enlarge_base}, the opposite
  module $\bP_y^{\rm sm}$ extends to an opposite module $\bP_y$ for
  the big B-model TEP structure on $U_y \setminus \Dbig$ which is
  compatible with the Deligne extension, for some neighbourhood $U_y$
  of $y$ in $\cMBbig$.  
Recall that $\cMBbig$ was constructed as the germ of
  a thickening of $\cMB^\circ$; after shrinking $\cMBbig$ if necessary
  we may assume that the open sets $\{U_y : y \in \cMB^\circ\}$ just
  constructed form an open covering of $\cMBbig$.  
By Example~\ref{ex:opposites_compatible}, the opposite module $\bP_y$
  over $U_y$ determines an opposite module $\sfP_y$ for the big
  B-model $\log$-cTEP structure over $U_y$.  Thus
  Assumption~\ref{assumption:covering} holds for the big B-model
  $\log$-cTEP structure.
\end{proof}

\begin{definition}
\label{def:Fock_B}
In view of Proposition~\ref{pro:covering_assumption}, 
there is a Fock sheaf on $\cMBbig$ determined by the big
B-model $\log$-cTEP structure.  We denote this by $\Fock_{\rm B}$.
\end{definition}

\begin{definition}
  Recall the definition of $\cC(t,z)$ from equation
  \eqref{eq:definition_of_C}.  We say that a $\log$-cTEP structure
  $(\sfF,\nabla, (\cdot,\cdot)_\sfF)$ with Euler field $E$ and base
  $(\cM,D)$ is \emph{tame semisimple} at $t \in \cM$ if the
  endomorphism $\iota_E \cC(t,0) \in \End_{\C}(\sfF_{0,t})$ 
  is semisimple with pairwise distinct eigenvalues. 
  This endomorphism is ``multiplication by the
  Euler field'' and coincides with the action 
of $\nabla_{z^2 \partial_z}$ on $\sfF_{0,t}$. 
Tame semisimplicity implies that all 
operators $\iota_v \cC(t,0)\in \End(\sfF_{0,t})$ 
with $v \in T\cM(\log D)$ are semisimple. 
\end{definition}

Consider now the big B-model $\log$-cTEP structure 
$(\FBbig,\nablaB,\pairingB)$ with 
base $(\cMBbig,\Dbig)$. 
Let $U_{\tss}$ denote the set of points at which this 
$\log$-cTEP structure is tame semisimple. 
The complement of $U_{\tss}$ in $\cMBbig$ is a union of 
divisors $\{B_i : i \in I\}$ and, 
after shrinking the thickening $\cMBbig$ of $\cMB^\circ$ 
if necessary, we may insist that each irreducible component 
$B_i$ meets $\cMB^\circ \subset \cMBbig$. 
The critical values of the superpotential $W_y$ are distinct for 
$y \in \cMB^\times = \cMB^\circ \setminus D$ 
(see equation~\eqref{eq:critical_values}), 
and so the tame semisimple locus $U_{\tss}$ contains $\cMB^\times$.  
This implies that, for each $i \in I$, the intersection of the divisor $B_i$ 
with $\cMB^\circ$ either contains the component $(y_1=0)$ 
or the component $(y_2=0)$ of $D \cap \cMB^\circ$.  
In particular, each divisor $B_i$ contains the large-radius limit point 
$y_1=y_2=0$. 
Moreover, we can also see that $U_{\tss}$ does not intersect 
with $\Dbig$. In fact, by our local construction of $\cFBbig$ 
in \S\ref{sec:unfolding_locally}, 
the residues of $z\nabla$ along $\Dbig$ define 
nilpotent operators 
(see Proposition \ref{pro:Birkhoff_logarithmic_connection}) 
in $\End(\sfF_{{\rm B},0}^{\rm big})$, 
which are non-zero by miniversality. 
Therefore a point on $\Dbig$ 
cannot be tame semisimple\footnote
{We can also check that this holds for the big quantum cohomology of $\Ybar$: 
the quantum product $h_i\star$ coincides with the nilpotent 
operator $h_i \cup$ along $q_i=0$ because of the Divisor Equation.}. 

In previous work we have shown -- see \cite[Definition~7.9]{Coates--Iritani:Fock} -- that
Givental's formula~\cite{Givental:semisimple} for higher-genus potentials 
defines a section $\wave_{\tss}$ of the B-model Fock sheaf $\Fock_{\rm B}$ 
over the tame semisimple locus $U_{\tss} \subset \cMBbig$.
The mirror isomorphism of \logDTEP structures from 
Theorem~\ref{thm:big_mirror_symmetry_Ybar}: 
\[
\big(\cFBbig,\nablaB,\pairingB\big)\Big|_{U^{\rm big}\times \C} 
\cong 
    \Mir^\star
    \big(\cFA{\Ybar},\nablaA{\Ybar}, \pairingA{\Ybar} \big)
\]
induces an isomorphism of Fock sheaves 
\[
\Fock_{\rm B}\big|_{U^{\rm big}} \cong \Mir^\star 
\Fock_{{\rm A}, \Ybar} 
\]
and Teleman's theorem~\cite{Teleman} implies that, 
under this isomorphism, 
$\wave_{\tss}$ corresponds to the Gromov--Witten wave function 
$\Mir^\star \wave_{\Ybar}$ (see Definition \ref{def:GW-wave})  
over $U_{\tss} \cap U^{\rm big}$.  (This is explained in detail in~\cite[Theorem~7.15]{Coates--Iritani:Fock}.) 
The same is true when we replace $\Ybar$ with $\cXbar$ 
and work near the orbifold point, which is the large-radius limit point for $\cXbar$. 
We obtain the following: 
\begin{theorem}
\label{thm:wave_B} 
After shrinking the thickening $\cMBbig$ of $\cMB^\circ$ if necessary, 
there exists a global section $\wave_{\rm B}$ of $\Fock_{\rm B}$ 
over $\cMBbig$ 
extending $\wave_{\tss}$ such that the following holds: 
\begin{itemize} 
\item[(a)] near the large radius limit point for $\Ybar$, 
$\wave_{\rm B}$ corresponds to the Gromov--Witten wave 
function of $\Ybar$ under the mirror isomorphism in 
Theorem~\ref{thm:big_mirror_symmetry_Ybar}; 
\item[(b)] near the large radius limit point for $\cXbar$, 
$\wave_{\rm B}$ corresponds to the Gromov--Witten wave 
function of $\cXbar$ under the mirror isomorphism in 
Remark~\ref{rem:big_mirror_symmetry_cXbar}. 
\end{itemize} 
\end{theorem}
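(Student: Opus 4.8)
The plan is to construct $\wave_{\rm B}$ by first producing it on the tame semisimple locus $U_{\tss}$ and then extending it across the complementary divisors $\{B_i\}$. The section $\wave_{\tss}$ over $U_{\tss}$ is already furnished by Givental's higher-genus formula, as in \cite[Definition~7.9]{Coates--Iritani:Fock}; this is the starting point. The substance of the theorem is the \emph{extension} of $\wave_{\tss}$ to all of $\cMBbig$ together with the identification of its germs at the two large-radius limit points. I would treat the extension and the identification together, using the identification near the cusps to control the extension, as follows.

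First I would establish the two identifications \textbf{locally}, near the large-radius limit points for $\Ybar$ and for $\cXbar$. Near the large-radius point for $\Ybar$, Theorem~\ref{thm:big_mirror_symmetry_Ybar} gives an isomorphism of \logDTEP structures $\big(\cFBbig,\nablaB,\pairingB\big)|_{U^{\rm big}\times\C}\cong\Mir^\star\big(\cFA{\Ybar},\nablaA{\Ybar},\pairingA{\Ybar}\big)$, which by functoriality of the Fock-sheaf construction induces an isomorphism $\Fock_{\rm B}|_{U^{\rm big}}\cong\Mir^\star\Fock_{{\rm A},\Ybar}$ intertwining the opposite modules $\bP_\LR$ and $\bP_{\rm A}$ (the last clause of that theorem). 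Over the overlap $U_{\tss}\cap U^{\rm big}$, Teleman's theorem \cite{Teleman} — packaged for our setting in \cite[Theorem~7.15]{Coates--Iritani:Fock} — says precisely that Givental's semisimple section matches the ancestor/descendant potential produced by the Givental--Teleman classification, so $\wave_{\tss}$ corresponds to $\Mir^\star\wave_{\Ybar}$ there. Since $U_{\tss}\supset\cMB^\times$ and each $B_i$ meets the large-radius component of $D$, the overlap $U_{\tss}\cap U^{\rm big}$ is a punctured neighbourhood of the large-radius point that accumulates at it, so this gives the germ of $\wave_{\tss}$ at the large-radius point as an element of the local Fock space $\Fock(U^{\rm big};\sfP_\LR)$, namely the Gromov--Witten wave function $\Mir^\star\wave_{\Ybar}$. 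The same argument with $\cXbar$ in place of $\Ybar$ (Remark~\ref{rem:big_mirror_symmetry_cXbar}) handles the orbifold point.

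Second I would \textbf{extend} $\wave_{\tss}$ across the divisors $B_i=U_{\tss}^c$. The key structural fact, noted just before the theorem, is that $U_{\tss}$ does not meet $\Dbig$, and that each $B_i\cap\cMB^\circ$ contains one of the components $(y_1=0)$ or $(y_2=0)$ and hence passes through the large-radius limit point. Over a neighbourhood $U_a$ of a generic point of $B_i$ there is an opposite module $\sfP_a$ (Proposition~\ref{pro:covering_assumption}), so the correlation functions $\{C^{(g)}\}$ of $\wave_{\tss}$, expressed in the chart $\Fock(U_a\setminus B_i;\sfP_a)$, are \emph{a priori} holomorphic only on $U_a\setminus B_i$. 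What must be shown is that, after transforming by the appropriate propagator into the chart adapted to $\sfP_a$, these correlation functions extend holomorphically across $B_i$. Here I would use that the wave function obtained near the large-radius point is the Gromov--Witten wave function, whose correlation functions are genuinely holomorphic at the large-radius point (our convergence results \cite{Coates--Iritani:convergence}), and propagate this holomorphicity along $B_i$ by analytic continuation: the correlation functions are determined on the connected locus $U_{\tss}$, the Feynman transformation rule $T(\sfP_a,\sfP_b)$ is holomorphic in the opposite modules, and the only possible singularities lie along $\Dbig$, which $B_i$ avoids. Concretely, the jet potential $\exp(\cW)$ of $\wave_{\tss}$ and its transform by $\exp\!\big(\tfrac{\hbar}{2}\Delta^{\mu\nu}\partial_{\sy^\mu}\partial_{\sy^\nu}\big)$ must be checked to satisfy the (Pole) condition of Definition~\ref{def:local_Fock} on $U_a$, so that it lies in $\Fock(U_a;\sfP_a)$ rather than merely $\Fock(U_a\setminus B_i;\sfP_a)$. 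Shrinking the thickening $\cMBbig$ of $\cMB^\circ$ if necessary — as the statement permits — lets me assume each $B_i$ is small enough that this continuation is unobstructed.

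The main obstacle I expect is precisely this extension step: verifying that $\wave_{\tss}$, which is defined by Givental's formula only where the quantum product is semisimple, really does extend holomorphically across the non-semisimple divisors $B_i$ as a section of the Fock sheaf. The difficulty is that Givental's formula itself degenerates at $B_i$ (the canonical coordinates collide), so the extension cannot be read off from that formula directly; one must argue that the \emph{correlation functions} — the coordinate-free data $\{\Nabla^n C^{(g)}\}$ — remain regular even though the semisimple frame breaks down. I would reduce this to the holomorphicity of the Gromov--Witten correlation functions at the large-radius points (which each $B_i$ contains) plus the fact that the transformation rule and the propagator $\Delta(\sfP_a,\sfP_b)$ are holomorphic away from $\Dbig$; since $B_i\cap U_{\tss}=\varnothing$ but $B_i$ meets the large-radius locus where regularity is known, analytic continuation of the jet potentials across the thin divisor $B_i$ then yields the global section $\wave_{\rm B}$, completing the proof.
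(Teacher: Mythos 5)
Your overall architecture matches the paper's: the identifications (a) and (b) come from the mirror isomorphisms of \logDTEP structures plus Teleman's theorem on the overlap $U_{\tss}\cap U^{\rm big}$, and the real content is extending $\wave_{\tss}$ across the non-semisimple divisors $B_i$, using the fact that every $B_i$ passes through the large-radius region where the correlation functions are identified with the (manifestly holomorphic) Gromov--Witten correlation functions of $\Ybar$. Those are exactly the paper's two inputs.

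The one step that does not work as you state it is the mechanism for propagating regularity from $B_i\cap U^{\rm big}$ to all of $B_i$. ``Analytic continuation of the jet potentials across the thin divisor'' is not an argument: analytic continuation does not extend a holomorphic function across a divisor, and your auxiliary claim that ``the only possible singularities lie along $\Dbig$, which $B_i$ avoids'' is unjustified --- the whole problem is that Givental's formula could a priori produce singularities along $B_i$ itself, as you acknowledge two sentences earlier. The correct tool, and the one the paper uses, is Hartogs' principle: fix \emph{one} opposite module, namely $\sfP_\LR$ on $U^{\rm big}$ (by the cocycle property of the transformation rule, regularity with respect to one Deligne-compatible opposite module on an open set implies it for all), observe that with respect to $\sfP_\LR$ the correlation functions of $\wave_{\tss}$ agree with those of $\Mir^\star\wave_{\Ybar}$ on $U_{\tss}\cap U^{\rm big}$ and hence extend holomorphically to all of $U^{\rm big}$, so in particular across $B_i\cap U^{\rm big}$; the locus of $B_i$ where a correlation function fails to extend is then a proper analytic subset of the divisor $B_i$, hence of codimension at least two in $\cMBbig$, and Hartogs' extension theorem removes it. Replacing your continuation step by this codimension-two removable-singularity argument closes the gap; everything else in your proposal is in line with the paper.
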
  

\begin{proof}
In view of our discussion, it suffices to show that the section $\wave_{\tss}$ 
extends holomorphically across the divisors $B_i$, $i\in I$.  
The divisors $B_i$,~$i \in I$, all meet the open set~$U^{\rm big}$.
  By Hartog's Principle, it suffices to check that the correlation
  functions for $\wave_{\tss}$ with respect to one opposite module
  extend to holomorphic functions on all of~$U^{\rm big}$. 
We check this using the opposite module~$\sfP_\LR$ from
  Example~\ref{ex:opposite_B_log-cTEP}; under mirror symmetry, this
  corresponds to the canonical opposite module ~$\sfP_{\rm A}$ from
  Example~\ref{ex:opposite_A_log-cTEP} (see
  Theorem~\ref{thm:big_mirror_symmetry_Ybar}) and $\wave_{\tss}$
  corresponds to $\wave_{\Ybar}$. But the correlation functions
  \eqref{eq:GW_wave} for $\wave_{\Ybar}$ are evidently holomorphic on
  all of $\Mir(U^{\rm big})$. 
\end{proof}

\begin{remark} \label{rem:CRC}
The existence of a global section $\wave_{\rm B}$ 
with these properties establishes a higher-genus version of 
the Crepant Resolution Conjecture for $\cXbar= \Proj(1,1,1,3)$.  See
Theorem~8.1 and Corollary~8.2 in~\cite{Coates--Iritani:Fock} 
for a more general result for weak-Fano toric orbifolds.  
\end{remark} 

\section{The Finite-Dimensional Fock Sheaf}
\label{sec:Fock_sheaf_fd}

In this section we construct a finite-dimensional version of the Fock sheaf, which one can think of as arising from the big B-model Fock sheaf by taking the conformal limit.  Recall from \S\ref{sec:conformal_limit} that we have a three-dimensional vector bundle $\widebar{H} \to \cMCY$ equipped with a logarithmic flat connection $\nabla$, a two-dimensional flat subbundle $\Hvec$ of $\widebar{H}$, a two-dimensional flat affine subbundle $\Haff$ of $\widebar{H}$, and a distinguished section $\zeta$ of $\Haff$.  
There is a canonical identification, for each $y \in \cMCY$, between
any tangent space to $\Haff|_y$ and the fiber $\Hvec|_y$, so $\Haff$
is parallel to $\Hvec$; $\Haff$ is a symplectic affine bundle, 
$\Hvec$ is a symplectic vector bundle, and 
this identification between $\Haff$ and $\Hvec$ intertwines 
the symplectic structures.  
The base $\cMCY$ of $\widebar{H}$, $\Haff$, and $\Hvec$ 
is isomorphic to $\Proj(3,1)$, and the flat connection $\nabla$ 
has logarithmic poles at the divisor $\DCY = \{0,{-\frac{1}{27}}\}$. 
The finite-dimensional Fock sheaf that we will construct has base $\cMCY$.  

\begin{figure}[htb]
\centering
\includegraphics[scale=0.7, bb=32 37 443 273]{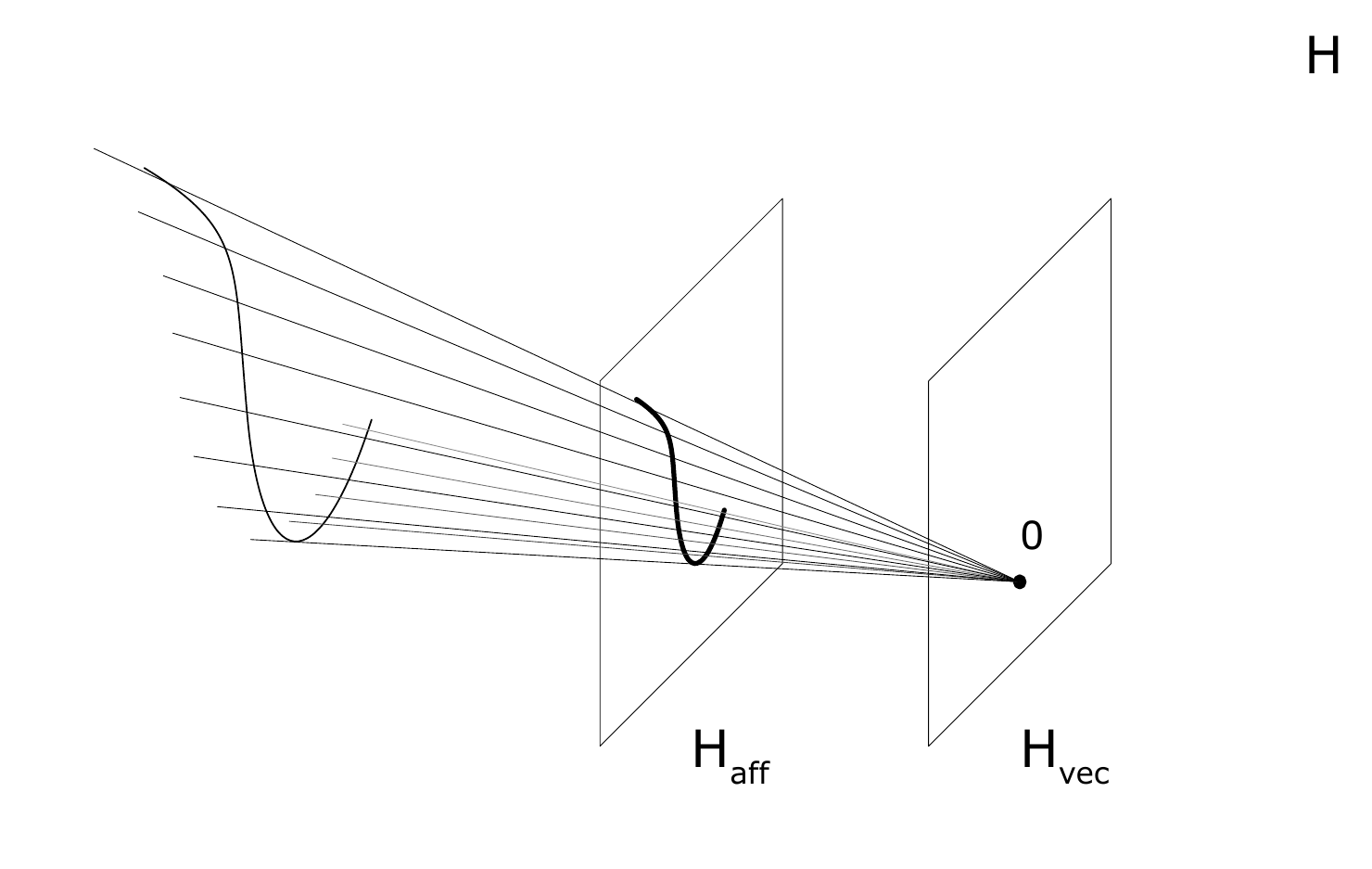} 
\caption{The finite-dimensional cone $\hcL$: the primitive 
section $\zeta$ sweeps out a Lagrangian curve $\cL 
\subset \Haff|_{t_0}$ via parallel translation to the fibre at $t_0$.}
\label{fig:finite-dimensional_cone}
\end{figure}

\subsection{The Yukawa Coupling and the Kodaira--Spencer Map}
\label{sec:Yukawa_fd}

\begin{notation} 
We denote by $\Theta(\log \DCY)$ the sheaf of tangent vector 
fields on $\cMCY$ 
logarithmic along $\DCY$ and by $\Omega^1(\log \DCY)$ 
the sheaf of 1-forms on $\cMCY$ logarithmic along $\DCY$. 
Similarly, we denote by $\Theta(\log \{0\})$ 
(respectively $\Omega^1(\log \{0\})$)  
the sheaf of tangent vector fields (respectively 1-forms) logarithmic 
only at $0\in \DCY$. 
\end{notation}

\begin{definition}[cf.~Definition \ref{def:Yukawa}]
  \label{def:Yukawa_fd} 
  The \emph{Yukawa coupling} $Y_{\CY} \in \big(\Omega^1(\log
  \DCY)\big)^{\otimes 3}$ is defined by:
  \begin{align*}
    Y_{\CY}(X_1,X_2,X_3) = \Omega\big(\nabla_{X_1} \nabla_{X_2}
    \zeta,\nabla_{X_3} \zeta\big)
    &&
    \text{$X_1$,~$X_2$,~$X_3 \in \Theta(\log \DCY)$}
  \end{align*}
Here we regard $\nabla_{X_3} \zeta$ and 
$\nabla_{X_1} \nabla_{X_2} \zeta$ as sections of $\Hvec$, 
via the identification of tangent spaces to $\Haff$ 
with fibers of $\Hvec$ discussed above. 
\end{definition}

\begin{definition}[Definition~\ref{def:KS}] 
  \label{def:Kodaira_Spencer_fd}
  The \emph{Kodaira--Spencer map} is:
  \begin{align*}
    \KS \colon \Theta(\log \{0\}) & \to \cO(\Hvec) \\
    X & \mapsto \nabla_X \zeta
  \end{align*}
\end{definition}

\begin{remark} 
  The Kodaira--Spencer map gives an isomorphism between the
  logarithmic tangent bundle $\Theta(\log\{0\})$ 
of $(\cMCY,\{0\})$ and the subbundle $F^2_\vec
  \subset \Hvec$ defined in \S\ref{sec:opposite_filtrations}.
\end{remark}

\begin{remark} 
\label{rem:pole_Yukawa} 
From the previous remark, it follows that the Yukawa 
coupling has a pole of order 3 at the large-radius limit point 
$y_1 = 0$ and a pole of order 1 at the conifold point 
$y_1 = -\frac{1}{27}$.  We give an explicit formula 
for $Y_{\CY}$ in Example~\ref{ex:Yukawa_conformal_limit}.
\end{remark} 

Let $t_0\in \cMCY$ be a point away from $\DCY$. 
Locally near $t_0$, we can encode the information 
of the filtered flat bundle $(\widebar{H},  
\widebar{F}^1 \subset 
\widebar{F}^2 \subset \widebar{H},\nabla)$ 
discussed in \S\ref{sec:opposite_filtrations}
as a finite-dimensional cone $\hcL$ in $\widebar{H}|_{t_0}$. 
Parallel translation defines an isomorphism 
$\widebar{H}|_t \cong \widebar{H}|_{t_0}$ for $t$ 
in a small neighbourhood of $t_0$. 
Via this isomorphism, the flag 
$(0\subset \widebar{F}^1_t\subset \widebar{F}^2_t
\subset \widebar{H}|_t)$ can be identified 
with a flag\footnote{The map $t\mapsto 
(0\subset \widebar{F}^1_t \subset \widebar{F}^2_t \subset 
\widebar{H}|_t) \in \operatorname{Fl}_{1,2,3}(\widebar{H}|_{t_0})$ 
can be viewed as a period map. } in $\widebar{H}|_{t_0}$. 
With this identification in mind, we define the finite dimensional cone 
$\hcL \subset \widebar{H}|_{t_0}$ to be: 
\[
\hcL = \bigcup_{t} \widebar{F}^1_t 
\]
where $t$ varies in a neighbourhood of $t_0$. 
See Figure \ref{fig:finite-dimensional_cone}. 
Recall from \S \ref{sec:opposite_filtrations} 
that $\widebar{F}^1_t$ is a line generated by the primitive 
section $\zeta = -z$, that $\widebar{F}^2_t$ is generated by 
$\zeta$ and $\theta \zeta$, 
and that $\theta \zeta = \nabla_{y_1\parfrac{}{y_1}}\zeta$. 
The tangent space of $\hcL$ along the 
line $\widebar{F}^1_t$ is therefore $\widebar{F}^2_t$. 
Recall also that $\zeta$ lies in the affine subbundle 
$\Haff$. Under the above identification, $t\mapsto \zeta(t)$ sweeps out 
a one-dimensional Lagrangian submanifold $\cL$ 
in $\Haff|_{t_0}$: 
\[
\cL = \hcL \cap \Haff|_{t_0} 
= \{ \zeta(t) : \text{$t$ in a neighbourhood of $t_0$}\}. 
\]
In other words, $\cMCY\setminus \DCY$ can be locally embedded 
into a fiber $\Haff|_{t_0}$ as a Lagrangian submanifold. 
The tangent space of $\cL$ at 
$\zeta(t)$ is identified with $F^2_\vec|_t 
\subset \Hvec|_t \cong T_{\zeta(t)} \Haff|_t$. 
By the same construction, we can realize the universal cover 
of $\cMCY\setminus \DCY$ as an immersed Lagrangian 
submanifold in $\Haff|_{t_0}$. 

\subsection{Opposite Line Bundles and Propagators}

\begin{definition}[cf.~Definitions \ref{def:opposite}, \ref
{def:opposite_log-cTEP}] 
Let $U$ be an open subset of $\cMCY$.  
An \emph{opposite line bundle} over $U$ 
is a one-dimensional subbundle $P$ of $\Hvec|_U$ such that
\begin{itemize} 
\item[(1)] $P$ is flat, i.e.~$\nabla \cO(P) 
\subset \Omega^1(\log \DCY) \otimes \cO(P)$;
\item[(2)]  
for each $t \in U$, 
we have $P_t \oplus F^2_\vec|_t = \Hvec|_t$.
\end{itemize}
\end{definition}

\begin{definition}[Flat connection $\Nabla^P$, 
cf.~Definition \ref{def:Nabla}] 
\label{def:Nabla_fd}
Let $P$ be an opposite line bundle over $U\subset \cMCY$. 
Let $\Pi \colon \Hvec \to \Hvec/P \cong F^2_\vec$ denote 
the projection along $P$. The flat connection $\nabla$ on $\Hvec$ 
induces a flat connection on $F^2_\vec$ over $U$: 
\begin{equation} 
\label{eq:conn_P_fd} 
\cO(F^2_\vec) \xrightarrow{\nabla} \Omega^1(\log \DCY) \otimes 
\cO(\Hvec) \xrightarrow{\id\otimes \Pi} 
\Omega^1(\log \DCY) \otimes \cO(F^2_\vec) 
\end{equation} 
Composing this with the Kodaira-Spencer isomorphism 
$\Theta(\log \{0\}) \cong \cO(F^2_\vec)$, we obtain 
a logarithmic flat connection on $U\subset \cMCY$: 
\[
\Nabla^P \colon \Theta(\log \{0\}) \to 
\Omega^1(\log \DCY) \otimes \Theta(\log \{0\}) 
\]
We denote the dual connection 
\[
\Nabla^P \colon \Omega^1(\log \{0\}) \to 
\Omega^1(\log \DCY) \otimes \Omega^1(\log \{0\}) 
\]
by the same symbol. 
\end{definition}
\begin{remark} 
\label{rem:pole_NablaP} 
The description of $\Hvec$ in \S\ref{sec:two} shows that
the residue endomorphism $N$ of $\nabla$ at a point $t_0\in \DCY$ 
is nilpotent. 
Monodromy invariance then forces that an opposite 
line bundle $P$ around $t_0$ is unique and has 
$\Image N$ as the fiber at $t_0$; such a line bundle 
will be denoted by $P_\LR$ for $t_0=0$ and 
by $P_\con$ for $t_0=-\frac{1}{27}$ -- see Notation \ref{nota:opposite_fd} below. 
Therefore the connection \eqref{eq:conn_P_fd} has no logarithmic 
singularities along $\DCY$ for such $P$, i.e.~gives a map
\[
\cO(F^2_\vec) \to \Omega^1 \otimes \cO(F^2_\vec) 
\] 
Consequently, the connection $\Nabla^P$ gives a map:  
\[
\Nabla^P \colon \Omega^1(\log \{0\}) \to 
\Omega^1 \otimes \Omega^1(\log\{0\}) 
\subset 
\Omega^1(\log\{0\})^{\otimes 2}. 
\]
In particular, a flat co-ordinate associated with $\Nabla^{P_\con}$ 
is \emph{holomorphic} at the conifold point, whereas 
a flat co-ordinate for $\Nabla^{P_\LR}$ is \emph{logarithmic} 
at the large-radius limit point. 
Note however that the connection \eqref{eq:conn_P_fd} 
can have poles along $\DCY$ if we do not require the opposite line bundle $P$ to be flat 
(see \S\ref{subsec:curved} for curved opposite line bundles). 
\end{remark} 

Recall from the previous section 
that a neighbourhood of $t_0 \in \cMCY \setminus \DCY$ 
can be embedded into $\Haff|_{t_0}$ as a Lagrangian submanifold 
$\cL$. 
Choose affine Darboux co-ordinates $(p,x)$ on $\Haff|_{t_0}$ 
such that $\partial/\partial p$ is parallel to $P_{t_0}$ and 
that $\Omega = \frac{1}{3} dp \wedge dx$. 
The fact that $P$ is an opposite line bundle  implies that 
$P_{t_0}=\langle \partial/\partial p\rangle $ 
is transversal to the tangent space 
$T_{\zeta(t)} \cL = F^2_\vec|_t$ 
(note that $P_t$ is independent of $t$ when transported to 
the fiber $\Hvec|_{t_0}$). 
Therefore $\cL$ can be written as the graph of 
a function 
(see Figure \ref{fig:flat_coordinate_on_fd_Lagrangian}), $p = p(x)$.  
We may regard a function $\cF_{\rm B}^0(x)$ satisfying 
\[
p(x) = 3\parfrac{\cF_{\rm B}^0}{x} 
\]
as a ``genus-zero potential'' for the B-model; this depends on the choice of $P$. 
The co-ordinate $x$ restricted to $\cL
\subset \Haff|_{t_0}$ 
defines an affine flat co-ordinate with respect to $\Nabla^P$.
More invariantly, the affine flat structure 
is given by the projection along the linear foliation $P_{t_0}$: 
\[
\cL \subset \Haff|_{t_0} \longrightarrow 
\Haff|_{t_0}/P_{t_0}  
\]
\begin{figure}[htb] 
\includegraphics[bb=200 610 400 710]{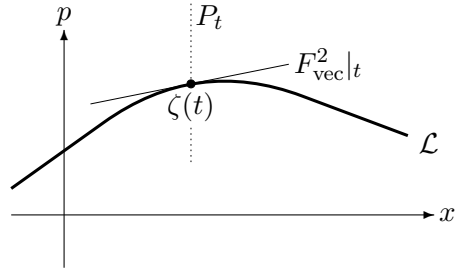}
\caption{Writing $\cL \subset \Haff|_{t_0}$ 
as a graph of $p=p(x)$: Darboux co-ordinates $(p,x)$ are chosen 
so that $P_{t_0} = \langle \partial/\partial p \rangle$;
the co-ordinate $x$ on $\cL$ 
then defines the affine flat structure associated to $P$.}
\label{fig:flat_coordinate_on_fd_Lagrangian} 
\end{figure} 

\begin{example}[The Yukawa coupling in flat co-ordinates]
\label{ex:Yukawa}
Let $P$ be an opposite line bundle in a neighbourhood of 
$t_0 \in \cMCY \setminus \DCY$. 
As above, 
choose affine Darboux co-ordinates $(p,x)$ of $\Haff|_{t_0}$ 
such that $P_{t_0} = \langle \partial/\partial p\rangle$ 
and that $\Omega = \frac{1}{3} dp \wedge dx$, 
and write $\cL$ as the graph 
of a function $p=p(x)$.  
We set:  
\begin{equation} 
\label{eq:tau} 
\tau = \parfrac{p}{x} 
\end{equation} 
Then the Yukawa coupling is given by:
  \begin{align*}
Y_{\CY} & =    Y_{\CY}(\partial_x,\partial_x,\partial_x) dx^{\otimes 3}  = 
    \Omega(\nabla_{\partial_x} \nabla_{\partial_x} \zeta,
    \nabla_{\partial_x} \zeta) dx^{\otimes 3} \\
    &= \Omega\left(
      \begin{pmatrix}
        \frac{\partial \tau}{\partial x} \\ 0
      \end{pmatrix},
      \begin{pmatrix}
        \tau \\ 1 
      \end{pmatrix}
    \right) dx^{\otimes 3} 
    = \frac{1}{3} \frac{\partial \tau}{\partial x} dx^{\otimes 3} 
  \end{align*}
\end{example}

\begin{remark}
This calculation shows that the Yukawa coupling is 
given by the 3rd derivative of a generating function 
$\cF_{\rm B}^0$ for $\cL$. 
Note that the Yukawa coupling is independent of the 
choice of $P$ whereas $\cF_{\rm B}^0$ depends on $P$. 
For Givental's  infinite-dimensional Lagrangian cone, 
this is explained in 
\citelist{\cite{Coates--Ruan}*{\S6.1}\cite{Givental:homological}*{\S6}}. 
\end{remark}

\begin{proposition} 
\label{pro:existence_opposite_fd}
For every $t\in \cMCY$, there exists an opposite line bundle $P$ 
in a neighbourhood of $t$. 
\end{proposition}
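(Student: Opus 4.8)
The plan is to treat separately the points where the connection $\nabla$ on $\Hvec$ is regular and the three special points $\{0,-\frac{1}{27},\infty\}$. Throughout I use that $\Hvec$ has rank $2$ and $F^2_\vec$ is a line subbundle, so that an opposite line bundle near $t$ is exactly a $\nabla$-flat (logarithmic) line subbundle of $\Hvec$ whose fibre at each nearby point is complementary to $F^2_\vec$. Since transversality of two lines in a rank-$2$ bundle is an open condition on the base, it suffices to produce a flat line subbundle whose fibre at $t$ itself is transverse to $F^2_\vec|_t$; transversality on a possibly smaller neighbourhood then follows.

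First I would dispose of the generic case $t \notin \{0,-\frac{1}{27},\infty\}$. Here $\nabla$ is holomorphic and flat on a contractible neighbourhood $U$ of $t$, so parallel transport trivializes $\Hvec|_U$ by its flat sections. Choosing any line $L \subset \Hvec|_t$ with $L \oplus F^2_\vec|_t = \Hvec|_t$ (automatically possible, since $F^2_\vec|_t$ is a single line in a two-dimensional space) and spreading $L$ out by flat sections gives a flat line subbundle $P$ with $P|_t = L$; shrinking $U$ preserves transversality. This settles existence away from the special points.

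The substance of the argument is at the three special points, where I would exploit monodromy invariance. Around the cusps $t_0 \in \DCY = \{0,-\frac{1}{27}\}$ the residue $R$ of $\nabla$ on $\Hvec$ is a nonzero nilpotent endomorphism of the rank-$2$ fibre (Remark~\ref{rem:pole_NablaP}), so $\Ker R = \Image R$ is a line and the local monodromy $\exp(\mp 2\pi\iu R)$ is unipotent with this line as its unique invariant line. Because $R(\Image R)=0 \subset \Image R$, this line is preserved by the residue and hence extends across $t_0$ to a $\nabla$-flat subbundle $P$ carrying only a logarithmic pole; by Proposition~\ref{pro:opposite_cusps} the fibre $\Image R$ is $P_\LR|_0$, respectively $P_\con|_{-1/27}$, which is complementary to $F^2_\vec$ there, so $P$ is an opposite line bundle near the cusp. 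At the orbifold point $t_0=\infty$ I would work on the uniformizing chart, where $\nabla$ is regular and the isotropy group $\mu_3$ acts semisimply on the fibre $\Hvec|_\infty$ with two distinct eigenlines; one of these is $F^2_\vec|_\infty$ and the other is the $\mu_3$-invariant line $P_\orb|_\infty$ of Proposition~\ref{pro:opposite_cusps}. Spreading $P_\orb|_\infty$ out by the (regular) flat sections produces a $\mu_3$-invariant flat line subbundle, which descends to the orbifold and is transverse to $F^2_\vec$ near $\infty$.

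I expect the only real obstacle to be the bookkeeping at the special points: checking that the distinguished invariant line is genuinely transverse to $F^2_\vec$ and that it extends with no worse than a logarithmic pole. Both facts are, however, already contained in the explicit description of $\Hvec$ in \S\ref{sec:two}--\S\ref{sec:opposite_filtrations} together with the cusp analysis of Proposition~\ref{pro:opposite_cusps}, where $P_\LR$, $P_\con$, $P_\orb$ are exhibited as explicit lines spanned by frame vectors distinct from the generator $\theta\zeta$ of $F^2_\vec$ (giving transversality), while the nilpotency of the residue (Remark~\ref{rem:pole_NablaP}) controls the pole order. Thus the proof reduces to assembling these established facts with the elementary open-transversality argument used in the generic case.
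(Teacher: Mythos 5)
Your proposal is correct. The generic case is handled exactly as in the paper: pick any line in $\Hvec|_t$ complementary to $F^2_\vec|_t$ and spread it out by parallel transport, using openness of transversality. At the three special points, however, you take a different route from the paper's proof, which simply cites the infinite-dimensional machinery: Proposition~\ref{pro:LR_conifold_orbifold} produces the unique Deligne-extension-compatible opposite modules $\bP_\LR$, $\bP_\con$, $\bP_\orb$ for the rank-6 B-model structure, and Proposition~\ref{pro:correspondence_of_opposites} converts these into opposite line bundles for $\Hvec$. You instead argue directly inside the rank-2 bundle: at the cusps you use that the residue of $\nabla$ is a nonzero nilpotent, so that $\Ker R=\Image R$ is the unique monodromy-invariant line, which is spanned by a single-valued flat section extending holomorphically across the cusp and lying in $\Ker R$ there; at the orbifold point you use the semisimple $\mu_3$-action with its two distinct eigenlines, taking the one that is not $F^2_\vec|_\infty$. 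This is a legitimate and more elementary argument; the paper's own Remark~\ref{rem:pole_NablaP} already records the key observation that monodromy invariance forces the fibre at a cusp to be $\Image N$. What your route does not give for free is the uniqueness of the opposite line bundle at the cusps and its identification with the canonical (Deligne-compatible) opposite modules upstairs, which the paper needs later (Notation~\ref{nota:opposite_fd}, \S\ref{sec:opposites_conformal_limit}); but for the bare existence statement being proved here, your argument suffices, granted the explicit transversality checks from \S\ref{sec:two}--\S\ref{sec:opposite_filtrations} and Proposition~\ref{pro:opposite_cusps} that you correctly invoke.
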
 

\begin{proof} 
When $t \in \cMCY$ is not equal to the large-radius, conifold, or
orbifold points, one can construct an opposite line bundle $P$ on a
neighbourhood $U$ of $t$ by choosing a one-dimensional subspace
$P_t$ of $\Hvec|_t$ that is opposite to $F^2_\vec$, and then
extending $P_t$ to a line bundle over $U$ by parallel translation. 

When $t$ is equal to the large radius, conifold, or orbifold points, 
we have a canonical choice for an opposite line bundle near $t$. 
As shown in Proposition \ref{pro:LR_conifold_orbifold}, 
there are unique opposite modules $\bP_\LR$, $\bP_\con$, $\bP_\orb$ 
for $\cFB^\times$ near the large radius, 
conifold and orbifold points, which are 
compatible with the Deligne extension $\cFB$. 
These opposite modules induce, via the correspondence in 
Proposition \ref{pro:correspondence_of_opposites}, 
opposite line bundles over neighbourhoods of 
the large radius, conifold and orbifold points respectively. 
\end{proof} 

\begin{notation} 
\label{nota:opposite_fd} 
We write $P_\LR$, $P_\con$, $P_\orb$ for the opposite line bundles 
in a neighbourhood of the large-radius, conifold and orbifold points 
(respectively) discussed in the proof above. 
The fibres of 
$P_\LR$, $P_\con$, $P_\orb$ at the respective limit points are 
(see Proposition \ref{pro:opposite_cusps}): 
\begin{align*} 
P_\LR & \big |_{y_1=0}  = \langle \theta^2 \zeta\rangle, 
\\
P_\con & \big|_{y_1=-\frac{1}{27}} 
= \langle (1+27 y_1) \theta^2 \zeta \rangle, 
\\ 
P_\orb &\big|_{y_1=\infty}  = \langle z^{-1} \frd_1^2 \rangle 
= \langle \fry_1^{-2} \theta(\theta+\tfrac{1}{3}) \zeta \rangle 
\end{align*}  
These are unique opposite line bundles, respectively, around 
the large-radius, conifold and orbifold points: see Remark~\ref{rem:pole_NablaP} 
and Proposition~\ref{prop:opposite_cusps_flat}. 
\end{notation} 

\begin{definition}[cf.~Definition \ref{def:propagator}] 
\label{def:propagator_fd}
  Let $P_1$,~$P_2$ be opposite line bundles over $U$, and let $\Pi_i \colon
  \Hvec \to F^2_\vec$ be the projection along $P_i$, $i \in \{1,2\}$.
  The \emph{propagator} $\Delta = \Delta(P_1,P_2)$ is the logarithmic
  bivector field $\Delta \in \big(\Theta(\log \{0\})\big)^{\otimes 2}$
  defined by:
  \[
  \Delta := (\KS \otimes \KS)^{-1} (\Pi_1 \otimes \Pi_2) \Omega^\vee
  \]
  where $\Omega^\vee \in \Hvec\otimes \Hvec$ 
is the dual symplectic form on $\Hvec^\vee$.
\end{definition}

\begin{example}[The propagator in flat co-ordinates, 
cf.~{\cite[Lemma 5.22]{Coates--Iritani:Fock}}] 
\label{ex:propagator}
Let $P_1$, $P_2$ be opposite line bundles over a 
neighbourhood $U$ of $t_0 \in \cMCY \setminus \DCY$. 
We embed $U$ into $\Haff|_{t_0}$ as a 
Lagrangian curve $\cL$ as above. 
Let $(p,x)$ and $(p',x')$ denote affine Darboux co-ordinates
on $\Haff|_{t_0}$ associated to $P_1$ and $P_2$ respectively 
as in Example \ref{ex:Yukawa}, 
so that 
\[
P_1|_{t_0} = \left\langle \parfrac{}{p} \right\rangle, \quad 
P_2|_{t_0} = \left\langle \parfrac{}{p'} \right\rangle, \quad 
\Omega = \frac{1}{3} dp \wedge d x = \frac{1}{3} 
dp' \wedge dx'. 
\] 
Then $x$ and $x'$ restricted to $\cL$ 
give flat co-ordinates for $P_1$ and $P_2$ respectively.  
If 
\begin{align*} 
\begin{aligned} 
p' &= a p + b x + e \\ 
x' &= c p + d x + f 
\end{aligned} 
&& 
\text{with 
$\begin{pmatrix} a & b \\ c & d \end{pmatrix} 
\in \SL(2,\C)$} 
\end{align*} 
is the affine symplectic co-ordinate change between $(p,x)$ and $(p',x')$, 
the slope parameters \eqref{eq:tau} of $\cL$ are related by 
\[
\tau' = \frac{a\tau+b}{c\tau+d} 
\]
and the flat vector fields $\partial_x$, $\partial_{x'}$ 
on $\cL\cong U$ are related by 
\begin{equation} 
\label{eq:relation_flat_vector_fields}
\partial_x = \parfrac{x'}{x} \partial_{x'} = (c \tau + d) \partial_{x'}. 
\end{equation} 
Let $\Pi_i \colon \Hvec \to F^2_\vec$ denote the projection along $P_i$ 
for $i=1,2$. Then we have 
\begin{align*}
& \KS^{-1} \Pi_1(\partial_p) = 0 & &  
\KS^{-1} \Pi_2(\partial_p) = \KS^{-1} \Pi_2(a \partial_{p'} 
+ c \partial_{x'} ) = c \partial_{x'} = 
\frac{c}{c\tau+d} \partial_x \\ 
& \KS^{-1} \Pi_1(\partial_x) = \partial_x && 
\KS^{-1} \Pi_2(\partial_x) = 
\KS^{-1} \Pi_2(b\partial_{p'} + d \partial{x'}) = 
d \partial_{x'} = 
\frac{d}{c\tau+d}\partial_x 
\end{align*} 
and the propagator is: 
\begin{align*}
\Delta(P_1,P_2) & = (\KS \otimes \KS)^{-1} (\Pi_1 \otimes \Pi_2)
(3 \partial_p \otimes \partial_x - 3 \partial_x \otimes \partial_p) \\
&= -\frac{3c}{c\tau+d} 
\partial_x\otimes \partial_x. 
\end{align*}
\end{example} 

\begin{lemma}[Propagator calculus, cf.~{\cite[Proposition 4.45]{Coates--Iritani:Fock}}]
  \label{lem:formulas_for_propagator}
Let $P_1$,~$P_2$ be opposite line bundles over $U$, let $t_0 \in U$,
and let $x$ be a flat co-ordinate on $U$ corresponding to $P_1$.  
Write the propagator $\Delta(P_1,P_2)$ as $\Delta(x) \partial_x
  \otimes \partial_x$, and write the Yukawa coupling as $Y_{\CY}(x) \, dx
  \otimes dx \otimes dx$.  Then:
  \begin{align*}
\Nabla^{P_2} - \Nabla^{P_1} & = \Delta(P_1,P_2)\cdot Y_{\CY} 
= \Delta(x) Y_{\CY}(x) dx \\  
\frac{\partial \Delta}{\partial x} & = \Delta(x)^2 Y_{\CY}(x)
  \end{align*} 
where we regard $\Nabla^{P_1}$, $\Nabla^{P_2}$ as connections
on $\Omega^1(\log \{0\})$. 
\end{lemma}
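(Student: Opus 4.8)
The plan is to compute everything in the flat co-ordinate $x$ associated to $P_1$, so that $\Nabla^{P_1}$ becomes the trivial connection $d$ on $\Omega^1(\log\{0\})$. In this frame the section $\partial_x$ is $\Nabla^{P_1}$-flat, hence it suffices to understand how $\Nabla^{P_2}$ differs from $\Nabla^{P_1}$ acting on $dx$. First I would record the two ingredients from Examples~\ref{ex:Yukawa} and~\ref{ex:propagator}: writing $\tau = \partial p/\partial x$, the Yukawa coupling is $Y_{\CY} = \tfrac{1}{3}\,(\partial\tau/\partial x)\,dx^{\otimes 3}$, and for the affine symplectic change of co-ordinates $\begin{pmatrix} a & b \\ c & d\end{pmatrix}\in\SL(2,\C)$ relating the Darboux co-ordinates of $P_1$ and $P_2$, the propagator is $\Delta(P_1,P_2) = -\tfrac{3c}{c\tau+d}\,\partial_x\otimes\partial_x$, so $\Delta(x) = -\tfrac{3c}{c\tau+d}$.

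The first identity is then essentially a restatement of the definition of $\Nabla^{P_2}$ via the projection $\Pi_2$. The key step is to express $\Nabla^{P_2}_{\partial_x}(dx)$ in terms of the change-of-frame data. Since $x'$ is the flat co-ordinate for $P_2$, one has $\Nabla^{P_2}_{\partial_x}(dx') = 0$, and from \eqref{eq:relation_flat_vector_fields} the one-forms are related by $dx = (c\tau+d)^{-1}\,dx'$ on $\cL$; differentiating this relation and using $\Nabla^{P_2} dx' = 0$ converts the difference $\Nabla^{P_2}-\Nabla^{P_1}$ into a multiple of $dx$ involving $\partial_x(c\tau+d)^{-1}$. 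A short computation, using $\partial_x\tau = 3 Y_{\CY}(x)$ from Example~\ref{ex:Yukawa}, gives $\partial_x\big((c\tau+d)^{-1}\big) = -c(c\tau+d)^{-2}\,\partial_x\tau = -3c(c\tau+d)^{-2} Y_{\CY}(x)$, and recognising the prefactor $-3c(c\tau+d)^{-1} = \Delta(x)$ yields $(\Nabla^{P_2}-\Nabla^{P_1})(dx) = \Delta(x)\,Y_{\CY}(x)\,dx\otimes dx$, which is the first claimed formula $\Nabla^{P_2}-\Nabla^{P_1} = \Delta(P_1,P_2)\cdot Y_{\CY}$.

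For the second identity I would differentiate $\Delta(x) = -3c(c\tau+d)^{-1}$ directly: again using $\partial_x\tau = 3 Y_{\CY}(x)$ we get $\partial_x\Delta = 3c\cdot c(c\tau+d)^{-2}\,\partial_x\tau = 9c^2(c\tau+d)^{-2} Y_{\CY}(x)$, and since $\Delta(x)^2 = 9c^2(c\tau+d)^{-2}$ this is exactly $\Delta(x)^2 Y_{\CY}(x)$, as required. The main obstacle, and the only place demanding care, is keeping the factors of $3$ (from the normalisation $\Omega = \tfrac{1}{3}dp\wedge dx$) and the transformation of one-forms versus vector fields consistent between Examples~\ref{ex:Yukawa} and~\ref{ex:propagator}; once the relation $\partial_x\tau = 3 Y_{\CY}(x)$ is fixed, both identities reduce to the single elementary computation of $\partial_x(c\tau+d)^{-1}$. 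Everything is local near $t_0\in\cMCY\setminus\DCY$, so no subtlety arises at the cusps.
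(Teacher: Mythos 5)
Your proposal is correct and follows essentially the same route as the paper's proof: both work in the Darboux co-ordinates of Examples~\ref{ex:Yukawa} and~\ref{ex:propagator}, use the relation $dx = (c\tau+d)^{-1}dx'$ together with $\Nabla^{P_2}dx'=0$ to compute $\Nabla^{P_2}(dx)=-\frac{c\tau_x}{c\tau+d}\,dx\otimes dx=\Delta(x)Y_{\CY}(x)\,dx\otimes dx$, and obtain the second identity by directly differentiating $\Delta(x)=-\frac{3c}{c\tau+d}$. Your bookkeeping of the factor of $3$ via $\partial_x\tau = 3Y_{\CY}(x)$ matches the paper's computation exactly.
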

\begin{proof}
Choose co-ordinates as in Examples~\ref{ex:Yukawa}, 
\ref{ex:propagator} so that, with notation as there, 
\[
Y_{\CY}(x) = \frac{1}{3} \parfrac{\tau}{x}, 
\qquad 
\Delta(x) = -\frac{3c}{c\tau+d} .
\]
Let us denote derivatives with respect to $x$ by subscripts. 
Recalling that $x$ and $x'$ are flat co-ordinates for $P_1$ and $P_2$ 
respectively, and using \eqref{eq:relation_flat_vector_fields}, 
we have: 
\begin{align*}
\Nabla^{P_1} (dx) & = 0 \\  
\Nabla^{P_2}(dx) & = \Nabla^{P_2} \left( \frac{dx'}{c\tau+d} \right) 
= - \frac{c \tau_x}{(c\tau+d)^2} dx \otimes dx' 
= - \frac{c\tau_x}{c\tau +d} dx\otimes dx. 
\end{align*} 
Hence 
\[
\Nabla^{P_2}(dx) - \Nabla^{P_1} (dx) = \Delta(x) Y_{\CY}(x) dx \otimes dx 
\]
and:
  \[
  \frac{\partial \Delta}{\partial x} = 
  \frac{3 c^2 \tau_x}{(c\tau+d)^2}
  =
  \Delta(x)^2 Y_{\CY}(x)
  \]
as claimed.
\end{proof}

\subsection{The Fock Sheaf}
\label{sec:Fock_fd}
We describe the Fock sheaf in the finite-dimensional setting. 
The construction is almost parallel to \S\ref{sec:Fock_log}. 

\begin{definition}
\label{def:local_Fock_space_fd}
Let $U$ be an open subset of $\cMCY$ and let $P$ be an opposite line
bundle over $U$. 
First suppose that $U$ does not contain the conifold point
$y_1= -\frac{1}{27}$. 
The \emph{local Fock space} $\Fock_{\CY}(U;P)$ consists
of collections:
\[
  \big \{ 
  \Nabla^n C^{(g)} \in 
\big(\Omega_U^1(\log \{0\}) \big)^{\otimes n}: 
  \text{$g \geq 0$,~$n \geq 0$,~$2g-2+n>0$} 
  \big\}
\]
of completely symmetric logarithmic $n$-tensors on $U$ such that:
  \begin{itemize}
  \item (Yukawa) $\Nabla^3 C^{(0)}$ is the Yukawa coupling $Y_{\CY}$;
  \item (Jetness) $\Nabla(\Nabla^n C^{(g)}) = \Nabla^{n+1} C^{(g)}$.
  \end{itemize}
Here $\Nabla = \Nabla^P$ is the flat connection on 
$\Omega^1_U(\log\{0\})$ defined by $P$, 
extended to logarithmic $n$-tensors in the 
obvious way; cf.~ the discussion in \S\ref{sec:Fock_log}. 
When $U$ contains the conifold point, we define $\Fock_{\CY}(U;P)$ similarly 
except that we allow $\Nabla^n C^{(g)}$ 
to have poles of order at most $2g-2+n$ at the conifold point, and 
impose the same conditions (Yukawa) and (Jetness). 
\end{definition}
\begin{remark}
The Yukawa coupling $Y_{\CY} = \Nabla^3 C^{(0)}$ has a pole of order 1 
at the conifold point -- see Remark \ref{rem:pole_Yukawa} -- and 
thus satisfies the last condition in the definition.  
\end{remark} 

Let $t$ be a co-ordinate on $U$.  If the point $t=0$ is the large-radius 
limit point then write $u = \log t$, $du = \frac{dt}{t}$, and $\partial_u =
t \frac{\partial}{\partial t}$; otherwise write $u = t$, $du = dt$,
$\partial_u = \frac{\partial}{\partial t}$.  Then, as in the
infinite-dimensional case, if:
\[
\Nabla^n C^{(g)} = C^{(g)}_n du^{\otimes n}
\]
then we refer to the tensors $\Nabla^n C^{(g)}$ or the functions
$C^{(g)}_n$ as \emph{$n$-point correlation functions}.  We again
encode elements of the local Fock space $\Fock_{\CY}(U;P)$ as formal
functions on the logarithmic tangent bundle.  Let $v$ be the fiber
co-ordinate on the logarithmic tangent bundle $\Theta_U(\log\{0\})$ that
is dual to $\partial_u$, so that $(u,v)$ denotes a point in the total
space of $\Theta_U(\log \{0\})$.  

\begin{definition}[jet potential]
\label{def:jetpotential_fd}
Given an element $\wave = \{\Nabla^n C^{(g)}\}_{g,n}$ of 
$\Fock_{\CY}(U;P)$,
set:
\begin{align*}
  \cW^g(u,v) =
  \sum_{n=\max(0,3-2g)}^\infty 
  \frac{C^{(g)}_n(u)}{n!} v^n && \text{and} &&
  \cW(u,v) = \sum_{g=0}^\infty \hbar^{g-1} \cW^g(u,v)
\end{align*}
We call $\cW^g$ the \emph{genus-$g$ jet potential} and $\exp(\cW)$ the
\emph{total jet potential} associated to $\wave$.
\end{definition} 

\begin{remark}
  We regard $\cW^{(g)}(u,v)$ as a formal function on the total space
  of the logarithmic tangent bundle.  As in the infinite-dimensional
  case, $\exp(\cW)$ is well-defined as a power series in $\hbar$ and
  $\hbar^{-1}$.
\end{remark}

\begin{definition}[transformation rule]
  \label{def:transformation_rule_fd}
  Let $P_1$ and $P_2$ be opposite line bundles over $U$, and consider
  the propagator $\Delta(P_1,P_2) = \Delta(u) \partial_u
  \otimes \partial_u$.  The \emph{transformation rule} 
\[
T(P_1,
  P_2)\colon \Fock_{\CY}(U;P_1) \to \Fock_{\CY}(U;P_2)
  \]
assigns to a jet potential
  $\cW$ for an element $\wave \in \Fock_{\CY}(U,P_1)$, the jet potential
  $\hcW$ for an element $\widehat{\wave} \in 
\Fock_{\CY}(U,P_2)$ given by: 
  \[
  \exp\big(\hcW(u,v)\big) =  
  \exp\left(\frac{\hbar}{2}\Delta(u) \parfrac{^2}{v^2}
    \right) 
  \exp\big(\cW(u,v)\big)
  \]
\end{definition}

Suppose that $\wave = \{C^{(g)}_n  du^{\otimes n} 
: \text{$g \geq 0$,~$n \geq 0$,~$2g-2+n>0$}\}$ 
are the correlation functions for $\cW$ and that
$\widehat{\wave} = \{\hC^{(g)}_n du^{\otimes n} : 
\text{$g \geq 0$,~$n \geq  0$,~$2g-2+n>0$}\}$ 
are the correlation functions for $\hcW$.  
The transformation rule in Definition~\ref{def:transformation_rule_fd} is
equivalent to the Feynman rule:
\begin{equation}
  \label{eq:Feynman_rule_fd}
  \hC^{(g)}_n du^{\otimes n} 
  = \sum_\Gamma \frac{1}{|\Aut(\Gamma)|} 
  \Cont_{\Gamma}(\Delta, \{C^{(h)}\}_{h\le g})
\end{equation}
where the summation is over all connected decorated graphs $\Gamma$
such that:
\begin{itemize}
\item 
To each vertex $v\in V(\Gamma)$ is 
assigned a non-negative integer $g_v\ge 0$, called genus; 
  
\item 
$\Gamma$ has $n$~labelled legs: an isomorphism 
$L(\Gamma) \cong \{1,2,\dots,n\}$ is given; 

\item $\Gamma$ is stable, i.e.\ 
$2 g_v - 2 + n_v>0$ for every vertex $v$.  
Here $n_v = |\pi_V^{-1}(v)|$ denotes the number of edges or 
legs incident to $v$; 

\item $g = \sum_{v} g_v + 1- \chi(\Gamma)$. 
\end{itemize}
(See Appendix~\ref{sec:graph_notation} for our notation for
graphs.)\phantom{.} 
We put the correlation function $C^{(g_v)}_{n_v} du^{\otimes n_v}$ 
on the vertex $v$ 
and put the propagator $\Delta(P_1,P_2)$ on every edge.  
Then $\Cont_\Gamma(\Delta,\{C^{(h)}\}_{h\le g})$ is defined to be the 
contraction of all these tensors; the result is an $n$-tensor with the
$n$~tensor indices corresponding to the $n$~labelled legs\footnote
{Since the base $\cMCY$ is one-dimensional, there is only one 
kind of tensor indices; the labelling of legs still plays a role 
in reducing automorphisms of $\Gamma$. }. 
As before,
$\Aut(\Gamma)$ denotes the automorphism group of the decorated graph
$\Gamma$.

\begin{proposition}
\label{prop:transf_rule_well_defined}
  The transformation rule is well-defined.  In other words, if:
  \begin{align*}
    & \wave = \{ C^{(g)}_n du^{\otimes n}: \text{$g \geq 0$,~$n \geq
      0$,~$2g-2+n>0$}\} 
    \intertext{is an element of $\Fock_{\CY}(U,P_1)$ and:}
    & \widehat{\wave} = \{\hC^{(g)}_n du^{\otimes n}: 
\text{$g \geq 0$,~$n \geq
      0$,~$2g-2+n>0$}\}
  \end{align*}
  is defined by the Feynman rule \eqref{eq:Feynman_rule_fd} then
  $\widehat{\wave} \in \Fock_{\CY}(U,P_2)$.
\end{proposition}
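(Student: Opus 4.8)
The plan is to verify that the transformation rule $T(P_1,P_2)$ preserves the two defining conditions of the local Fock space, namely (Yukawa) and (Jetness), together with the pole-order bound at the conifold point. This is a finite-dimensional specialization of \cite[Proposition~4.115]{Coates--Iritani:Fock}, so the strategy is to follow that argument but exploit the simplifications afforded by the one-dimensional base $\cMCY$.

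First I would check the (Yukawa) condition. Since $\wave \in \Fock_{\CY}(U,P_1)$, its genus-zero three-point function is $\Nabla^{P_1,3} C^{(0)} = Y_{\CY}$, and I need $\Nabla^{P_2,3} \hC^{(0)} = Y_{\CY}$ as well. Examining the Feynman rule \eqref{eq:Feynman_rule_fd} at genus zero with three legs, the only stable connected graphs contributing are the single trivalent vertex (giving $C^{(0)}_3$) and graphs with propagator edges connecting lower-valence vertices; the latter require vertices of valence $\le 2$ at genus $0$, which are unstable, so only the single vertex survives. Thus $\hC^{(0)}_3 = C^{(0)}_3$ as \emph{functions}, but the two sides are expressed in different flat co-ordinates. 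The key computation is then to show that the change of connection from $\Nabla^{P_1}$ to $\Nabla^{P_2}$ is exactly compensated by the propagator terms appearing when one re-derives the three-point function. This is precisely the content of Lemma~\ref{lem:formulas_for_propagator}: the identity $\Nabla^{P_2} - \Nabla^{P_1} = \Delta(P_1,P_2) \cdot Y_{\CY}$ and the Riccati-type equation $\partial_x \Delta = \Delta^2 Y_{\CY}$ encode exactly how covariant derivatives and propagator insertions interchange. I would feed these two relations into the Feynman sum to confirm the genus-zero three-point function is transformation-invariant.

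For (Jetness), I would show that the Feynman-rule output satisfies $\Nabla^{P_2}(\Nabla^{P_2,n}\hC^{(g)}) = \Nabla^{P_2,n+1}\hC^{(g)}$; equivalently, the collection $\{\hC^{(g)}_n\}$ is the jet (sequence of covariant derivatives) of the functions $\{\hC^{(g)}_0\}$, or more precisely that adding a leg to every contributing graph is compatible with applying $\Nabla^{P_2}$. The clean way to organize this is through the jet potentials: the transformation rule in Definition~\ref{def:transformation_rule_fd} is the Gaussian/propagator integral operator $\exp(\tfrac{\hbar}{2}\Delta \partial_v^2)$, and I would verify that this operator intertwines the two flat structures. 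Concretely, using Lemma~\ref{lem:formulas_for_propagator} to compute how $\Delta(u)$ and the co-ordinate change between $P_1$- and $P_2$-flat frames vary along $U$, one checks that differentiating $\exp(\hcW)$ by $\Nabla^{P_2}$ reproduces the leg-insertion on the graph side. This is the step where the propagator calculus does the real work, and it is essentially a bookkeeping of Christoffel symbols against the $\partial_v^2$-operator.

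The main obstacle, and the only genuinely new point beyond the infinite-dimensional argument, is the pole-order condition when $U$ contains the conifold point $y_1 = -\tfrac{1}{27}$. I would argue by induction on $2g-2+n$ along the graph sum: each vertex $C^{(g_v)}_{n_v}$ contributes a pole of order at most $2g_v-2+n_v$ by the inductive hypothesis, the Yukawa coupling $Y_{\CY}$ appearing implicitly (and the propagator $\Delta$) each contribute a pole of order one at the conifold point—the propagator because $\partial_x\Delta = \Delta^2 Y_{\CY}$ and $Y_{\CY}$ has a simple pole there (Remark~\ref{rem:pole_Yukawa})—and then a careful count using $g = \sum_v g_v + 1 - \chi(\Gamma)$ together with the number of edges $|E(\Gamma)|$ shows the total pole order of $\Cont_\Gamma$ is bounded by $2g-2+n$. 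The Euler-characteristic relation is what makes the edge and vertex contributions combine correctly; I expect the bookkeeping here to be the most delicate part, but it is elementary once the per-edge and per-vertex pole orders are pinned down.
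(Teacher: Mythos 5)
Your treatment of (Yukawa) and (Jetness) is essentially the paper's argument: (Yukawa) follows because the single trivalent genus-zero vertex is the only contributing graph, and (Jetness) is verified by applying $\Nabla^{P_2}$ to the Feynman sum and using the two identities of Lemma~\ref{lem:formulas_for_propagator} to match the resulting terms (differentiated vertices, differentiated edges, and the shift $\Nabla^{P_2}-\Nabla^{P_1}=\Delta\cdot Y_{\CY}$ acting on legs) against the graphs contributing to $\hC^{(g)}_{n+1}$. One small simplification you miss: the Yukawa condition needs no compensation argument at all, since $\hC^{(0)}_3\,du^{\otimes 3}$ and $C^{(0)}_3\,du^{\otimes 3}$ are literally the same tensor and the condition only asks that this tensor equal $Y_{\CY}$.

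The genuine problem is your handling of the conifold pole condition. First, the bookkeeping you sketch does not close: with $g=\sum_v g_v+1-\chi(\Gamma)$ and $\sum_v n_v=n+2|E(\Gamma)|$, the vertex contributions alone already sum to $\sum_v(2g_v-2+n_v)=2g-2+n$, so if each propagator edge contributed even a simple pole the total would be $2g-2+n+|E(\Gamma)|$, exceeding the required bound for every graph with an edge. Second, your claim that $\Delta$ has a simple pole at the conifold point does not follow from the Riccati equation $\partial_x\Delta=\Delta^2 Y_{\CY}$. The paper avoids all of this by the observation (Remark~\ref{rem:pole_NablaP}) that near the conifold point (and the large-radius point) the flat opposite line bundle is \emph{unique}, being forced by local monodromy invariance; hence if $U$ contains the conifold point then $P_1=P_2$, the propagator vanishes, the transformation rule is the identity, and the pole-order condition never has to be propagated through a nontrivial Feynman sum. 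Without this uniqueness observation your argument for the pole condition fails; with it, the entire conifold discussion becomes vacuous. (The pole condition does require real work for \emph{curved} opposite line bundles, which is handled separately in \S\ref{subsec:curved}.)
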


\begin{proof}
First note that if $U$ contains either large-radius or conifold points, 
there is a unique opposite line bundle over $U$ 
-- see Remark \ref{rem:pole_NablaP}.
Therefore the transformation rule is trivial and there is 
nothing to prove. In particular, we do not need to discuss 
the `pole order $2g-2+n$' condition at the conifold point. 
(When we consider curved opposite line bundles, however, 
this condition matters: see \S \ref{subsec:curved}.) 

We need to show that $\widehat{\wave}$ satisfies the properties
  (Yukawa) and (Jetness) in Definition~\ref{def:local_Fock_space_fd}.
  (Yukawa) is obvious, as there is exactly one stable 3-valent graph
  with $g=0$ and so $\hC^{(0)}_3 = C^{(0)}_3$.  
To establish (Jetness), we shall differentiate the right-hand side of
\eqref{eq:Feynman_rule_fd} with respect to $\Nabla^{P_2}$ 
and check if it coincides with the Feynman rule for 
$\hC^{(g)}_{n+1} du^{\otimes (n+1)}$. 

As discussed, it suffices to check (Jetness) away from $\DCY$. 
Therefore we may choose the co-ordinate $u$ to be a flat co-ordinate $x$ 
associated with $P_1$ 
and use notation as in Lemma~\ref{lem:formulas_for_propagator}. 
Since $\Nabla^{P_2} = \Nabla^{P_1} + \Delta(x) Y_{\CY}(x) dx$ by 
Lemma \ref{lem:formulas_for_propagator}, 
$\Nabla^{P_2}$ applied to \eqref{eq:Feynman_rule_fd} 
yields a sum over stable Feynman graphs
  as above, but with either:
  \begin{itemize}
  \item[($a$)] a distinguished vertex that carries $\Nabla^{P_1} \big(
    C^{(g_v)}_n dx^{\otimes n} \big) 
= C^{(g_v)}_{n_v+1} dx^{\otimes (n_v+1)}$; or
  \item[($b$)] a distinguished edge which carries $d \Delta(x) 
= Y_{\CY}(x) \Delta(x)^2 dx$; or 
   \item[($c$)] a distinguished leg that carries $Y_{\CY}(x) \Delta(x) dx$ in place 
   of $dx$. 
  \end{itemize}
The first possibility here arises from differentiating a vertex term in
\eqref{eq:Feynman_rule_fd}; the second possibility arises from
differentiating an edge term; and the third possibility arises from 
the difference of $\Nabla^{P_1}$ and $\Nabla^{P_2}$ 
-- recall how $\Nabla$ acts on $n$-tensors from Equation~\eqref{eq:covariant_derivative_tensor}. 
Note that we have used (Jetness) for $\wave$ in (a) and 
Lemma~\ref{lem:formulas_for_propagator} in (b). 
Observe that these are precisely the contributions appearing 
in the Feynman sum for $\hC^{(g)}_{n+1} dx^{\otimes (n+1)}$; 
in fact ($a$)--($c$) correspond respectively to Feynman 
graphs such that 
\begin{itemize} 
\item[($a'$)] the leg labelled by $n+1$ is on a vertex $v$ such that 
$2g_v - 2 + n_v >1$; 
\item[($b'$)] the leg labelled by $n+1$ is on a genus-zero vertex 
with 1 leg and 2 adjacent edges;   
\item[($c'$)] the leg labelled by $n+1$ is on a genus-zero vertex 
with 2 legs and 1 adjacent edge. 
\end{itemize}  
The proposition follows. 
\end{proof}

We now show that the transformation rule satisfies the cocycle
condition.

\begin{proposition}
\label{prop:propagator_cocycle} 
  Let $P_1$,~$P_2$, and~$P_3$ be opposite line bundles over $U$, and
  let $\Delta_{ij} = \Delta(P_i,P_j)$ be the corresponding
  propagators.  We have:
  \[
  \Delta_{13} = \Delta_{12} + \Delta_{23}
  \]
  In particular, $\Delta_{12} = {-\Delta_{21}}$.
\end{proposition}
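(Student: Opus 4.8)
The plan is to establish the additivity relation $\Delta_{13} = \Delta_{12} + \Delta_{23}$ directly from the definition of the propagator in Definition~\ref{def:propagator_fd}, exploiting the fact that the three projections $\Pi_1$, $\Pi_2$, $\Pi_3$ onto $F^2_\vec$ all share the same image and differ only in their kernels $P_1$, $P_2$, $P_3$. Recall that $\Delta(P_i,P_j) = (\KS\otimes \KS)^{-1}(\Pi_i\otimes \Pi_j)\Omega^\vee$. Since the Kodaira--Spencer isomorphism is common to all three and $\Omega^\vee$ is fixed, the entire identity reduces to the tensor identity
\[
(\Pi_1\otimes \Pi_3)\Omega^\vee = (\Pi_1\otimes \Pi_2)\Omega^\vee + (\Pi_2\otimes \Pi_3)\Omega^\vee
\]
in $F^2_\vec\otimes F^2_\vec$.

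The key algebraic input is that any two of these projections differ by a map into the flat subbundle $F^2_\vec$ that factors through the difference of kernels; more precisely, since each $\Pi_i$ is a projection of $\Hvec$ onto the same subbundle $F^2_\vec$ along $P_i$, we have $\Pi_i - \Pi_j$ vanishes on $F^2_\vec$ and its image lies in $F^2_\vec$. The cleanest route is to write everything in flat Darboux co-ordinates adapted to $P_1$ exactly as in Example~\ref{ex:propagator}, where the computation there already gives $\Delta(P_i,P_j) = -\tfrac{3c_{ij}}{c_{ij}\tau + d_{ij}}\,\partial_x\otimes \partial_x$ with $\begin{pmatrix} a_{ij} & b_{ij} \\ c_{ij} & d_{ij}\end{pmatrix}\in \SL(2,\C)$ the affine symplectic transition matrix between the $P_i$- and $P_j$-Darboux co-ordinates. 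First I would observe that these transition matrices compose: the matrix for $(P_1,P_3)$ is the product of the matrices for $(P_1,P_2)$ and $(P_2,P_3)$. Writing $M_{12} = \begin{pmatrix} a & b \\ c & d\end{pmatrix}$ and $M_{23} = \begin{pmatrix} a' & b' \\ c' & d'\end{pmatrix}$, one computes that the bottom row of $M_{13} = M_{23}M_{12}$ has entries $c_{13} = c'a + d'c$ and $d_{13} = c'b + d'd$, and that the slope parameter transforms by the corresponding M\"obius action, so $\tau_2 = \tfrac{a\tau + b}{c\tau + d}$ in the notation of Example~\ref{ex:propagator}.

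The proof then reduces to a one-line rational-function identity. Setting $\tau_1 = \tau$ (the slope for $P_1$) and $\tau_2 = (a\tau+b)/(c\tau+d)$, I would verify
\[
-\frac{3 c_{13}}{c_{13}\tau_1 + d_{13}} =
-\frac{3 c}{c\tau_1 + d} - \frac{3 c'}{c'\tau_2 + d'},
\]
where the second propagator $\Delta(P_2,P_3)$ is expressed in its own flat co-ordinate $x'$ and then converted to the $x$-frame using the relation $\partial_{x'} = (c\tau+d)^{-1}\partial_x$ from equation~\eqref{eq:relation_flat_vector_fields}, which contributes the factor $(c\tau+d)^2$ needed to rewrite $\partial_{x'}\otimes \partial_{x'}$ in terms of $\partial_x\otimes \partial_x$. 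Substituting $c_{13} = c'a + d'c$, $d_{13} = c'b + d'd$, and $\tau_2 = (a\tau+b)/(c\tau+d)$, and clearing denominators using $ad - bc = 1$, both sides collapse to the same expression. The final claim $\Delta_{12} = -\Delta_{21}$ follows by taking $P_3 = P_1$, so that $\Delta_{11} = 0$ (the identity transition matrix has $c=0$) and hence $\Delta_{12} + \Delta_{21} = \Delta_{11} = 0$.

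The only mild obstacle is bookkeeping: one must be careful that the co-ordinate change from $x'$ to $x$ enters the propagator quadratically (since $\Delta$ is a bivector) while the slope $\tau$ enters through a M\"obius transformation, so the factors of $(c\tau+d)$ must be tracked precisely. An alternative, perhaps more conceptual, route would be to prove the tensor identity $(\Pi_1\otimes\Pi_3 - \Pi_1\otimes\Pi_2 - \Pi_2\otimes\Pi_3)\Omega^\vee = 0$ abstractly by noting that $\Omega^\vee$ is symmetric in $F^2_\vec\otimes F^2_\vec$ under the projections and that the difference $\Pi_i - \Pi_j$ has rank one with image determined by the line $P_j$; this mirrors the proof of the corresponding infinite-dimensional statement in \cite[Proposition~4.111]{Coates--Iritani:Fock}, which we may simply invoke or adapt, since the finite-dimensional propagator is constructed in exactly the same way from an opposite decomposition and a symplectic form. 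I expect the co-ordinate computation to be the quickest to write down cleanly.
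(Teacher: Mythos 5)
Your proposal is correct, but it proves the identity by a genuinely different route from the paper. You work in the flat Darboux co-ordinates of Example~\ref{ex:propagator}, observe that the affine symplectic transition matrices compose ($M_{13}=M_{23}M_{12}$, so $c_{13}=c'a+d'c$, $d_{13}=c'b+d'd$), and reduce everything to a rational-function identity; the key cancellation is $c_{13}d-cd_{13}=c'(ad-bc)=c'$ together with $c'\tau_2+d'=(c_{13}\tau+d_{13})/(c\tau+d)$, after which the factor $(c\tau+d)^{-2}$ from rewriting $\partial_{x'}\otimes\partial_{x'}$ in the $\partial_x$-frame makes the two sides match. I checked this and it closes. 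The paper instead argues co-ordinate-free: writing $\Delta_{13}(\KS^\star\omega,\KS^\star\omega')=\Omega^\vee\bigl((\Pi_1^\star-\Pi_3^\star)\omega,\Pi_3^\star\omega'\bigr)$ using isotropy of $\Image\Pi_3^\star=P_3^\perp$, splitting $\Pi_1^\star-\Pi_3^\star=(\Pi_1^\star-\Pi_2^\star)+(\Pi_2^\star-\Pi_3^\star)$, and then replacing $\Pi_3^\star$ by $\Pi_2^\star$ in the first summand because $\Image(\Pi_i^\star-\Pi_j^\star)$ lies in the isotropic subspace $(F^2_\vec)^\perp$. This is exactly the ``alternative, more conceptual route'' you sketch in your last paragraph, and it is the same argument as \cite[Proposition~4.111]{Coates--Iritani:Fock}. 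What each buys: the paper's argument is two lines, works uniformly at every point of $U$ (including points of $\DCY$, where the Lagrangian-curve picture is not directly available), and generalizes verbatim to the infinite-dimensional setting; your computation is more concrete and simultaneously re-derives the cocycle property of the M\"obius action on the slope parameter $\tau$, which is used again in \S\ref{sec:calc}.

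Two small points of bookkeeping. First, the Darboux-co-ordinate description is only set up at a base point $t_0\in\cMCY\setminus\DCY$, so strictly your computation proves the identity on $U\setminus\DCY$; since the propagators are holomorphic sections of $\Theta(\log\{0\})^{\otimes 2}$ over $U$, the identity extends by continuity (and near $\DCY$ the opposite line bundle is unique anyway, by Remark~\ref{rem:pole_NablaP}, so there is nothing to prove there). Second, the conversion factor you mention is $(c\tau+d)^{-2}$ on $\partial_{x'}\otimes\partial_{x'}$, not $(c\tau+d)^{2}$; the identity you then write has the second term already in the $x'$-frame, so make sure the final displayed equation carries the factor consistently — as verified above, it does work out.
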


\begin{proof}
  Let $\Pi_i \colon \Hvec \to F^2_\vec$ be the projection along $P_i$.
  Then, for any sections $\omega$,~$\omega'$ of $(F^2_\vec)^\vee$, we
  have:
  \begin{align*}
    \Delta_{13}(\KS^\star \omega, \KS^\star \omega') 
    &= \Omega^\vee\big( (\Pi_1^\star - \Pi_3^\star) \omega,
    \Pi_3^\star \omega' \big) \\
    &= \Omega^\vee\big( (\Pi_1^\star - \Pi_2^\star) \omega,
    \Pi_3^\star \omega' \big) +
    \Omega^\vee\big( (\Pi_2^\star - \Pi_3^\star) \omega,
    \Pi_3^\star \omega' \big) \\
    &= \Omega^\vee\big( (\Pi_1^\star - \Pi_2^\star) \omega,
    \Pi_2^\star \omega' \big) +
    \Delta_{23}(\KS^\star \omega, \KS^\star \omega') \\
    &= \Delta_{13}(\KS^\star \omega, \KS^\star \omega')  +
    \Delta_{23}(\KS^\star \omega, \KS^\star \omega') 
  \end{align*}
  For the first equality here we used the fact that $\Image
  \Pi_3^\star = P_3^\perp$ is isotropic; for the third equality we
  used the fact that $\Image (\Pi_2^\star - \Pi_3^\star)$ and 
  $\Image(\Pi_1^\star - \Pi_2^\star)$ are contained in the 
  isotropic subspace $(F^2_\vec)^\perp$.
\end{proof}

\begin{corollary}
  The transformation rule
  (Definition~\ref{def:transformation_rule_fd}) satisfies the cocycle
  condition: if $P_1$,~$P_2$, and~$P_3$ are opposite line bundles over
  $U$ then:
  \[
  T(P_1,P_3) = T(P_2,P_3) \circ T(P_1,P_2)
  \]
\end{corollary}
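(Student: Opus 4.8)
The plan is to deduce the cocycle condition for the transformation rule directly from the additivity of propagators established in Proposition~\ref{prop:propagator_cocycle}. The key observation is that each transformation rule $T(P_i,P_j)$ is, at the level of total jet potentials, the exponential of a second-order differential operator built from the propagator $\Delta_{ij} = \Delta(P_i,P_j)$. Explicitly, writing $\Delta_{ij} = \Delta_{ij}(u)\,\partial_u \otimes \partial_u$ in a flat co-ordinate, the rule sends $\exp(\cW)$ to $\exp\left(\tfrac{\hbar}{2}\Delta_{ij}(u)\,\partial_v^2\right)\exp(\cW)$. So I would first rewrite the desired identity $T(P_1,P_3) = T(P_2,P_3)\circ T(P_1,P_2)$ as an equality of operators acting on total jet potentials.

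First I would set $D_{ij} := \tfrac{\hbar}{2}\Delta_{ij}(u)\,\partial_v^2$, the second-order operator occurring in $T(P_i,P_j)$. Because these operators all involve only $\partial_v^2$ (and the coefficients $\Delta_{ij}(u)$ are functions of $u$ alone, hence commute with $\partial_v$), the operators $D_{12}$, $D_{23}$, $D_{13}$ all commute with one another. Consequently $\exp(D_{23})\exp(D_{12}) = \exp(D_{23}+D_{12})$. The content of the corollary is then precisely that $D_{13} = D_{12} + D_{23}$, which is immediate from Proposition~\ref{prop:propagator_cocycle}: since $\Delta_{13} = \Delta_{12} + \Delta_{23}$ as logarithmic bivector fields, their coefficient functions satisfy $\Delta_{13}(u) = \Delta_{12}(u) + \Delta_{23}(u)$, and hence $D_{13} = D_{12} + D_{23}$. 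Combining these gives
\[
\exp(D_{13}) = \exp(D_{12}+D_{23}) = \exp(D_{23})\exp(D_{12}),
\]
which is exactly the operator identity underlying $T(P_1,P_3) = T(P_2,P_3)\circ T(P_1,P_2)$.

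The only genuine subtlety, and the step I expect to require the most care, is checking that these formal manipulations of infinite-order differential operators in $\hbar$ and $v$ are legitimate on the space of total jet potentials. Since $\exp(\cW)$ is a well-defined power series in $\hbar$ and $\hbar^{-1}$ (as recorded after Definition~\ref{def:jetpotential_fd}), one must confirm that applying $\exp(D_{ij})$ produces another such series and that the Baker--Campbell--Hausdorff-type collapse $\exp(D_{23})\exp(D_{12})=\exp(D_{23}+D_{12})$ holds order by order. This is straightforward here precisely because all the operators are proportional to $\partial_v^2$ with $u$-dependent coefficients and therefore genuinely commute, so no higher commutator terms arise; the grading and filtration conditions ensure the relevant sums are finite in each bidegree. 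Granting this well-definedness, which parallels the infinite-dimensional discussion cited from~\cite{Coates--Iritani:Fock}, the corollary follows at once.
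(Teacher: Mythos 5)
Your proposal is correct and is exactly the argument the paper intends: the corollary is stated without proof immediately after Proposition~\ref{prop:propagator_cocycle} precisely because the transformation rules are exponentials of the mutually commuting operators $\tfrac{\hbar}{2}\Delta_{ij}(u)\,\partial_v^2$, so additivity of the propagators gives the cocycle condition at once. Your additional care about well-definedness of the formal manipulations is appropriate but, as you note, already covered by the remarks following Definition~\ref{def:jetpotential_fd} and the grading/filtration structure.
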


Thus the following Definition makes sense.

\begin{definition}[Fock sheaf]
\label{def:Fock_CY}
From Proposition~\ref{pro:existence_opposite_fd} 
we know that there is an open covering
$\{U_a : a \in A\}$ of $\cMCY$ such that for each $a \in A$ there
exists an opposite line bundle $P_a$ over $U_a$.  
The \emph{Fock sheaf} $\Fock_{\CY}$ is defined 
to be the sheaf of sets over $\cMCY$ obtained by 
gluing the local Fock spaces $\Fock_{\CY}(U_a;P_a)$, $a \in A$, 
using the transformation rule
\begin{align*}
 T(P_a,P_b) : \Fock_{\CY}(U_a \cap U_b;P_a) \to 
\Fock_{\CY}(U_a \cap U_b;P_b)
    && a, b \in A
\end{align*}
over $U_a \cap U_b$.
\end{definition}

\begin{remark}
  The Feynman rule \eqref{eq:Feynman_rule_fd} coincides with that used
  by Aganagic--Bouchard--Klemm \cite{ABK}*{\S2}.  It arises there
  through stationary phase approximation of certain integral operators
  acting on wave functions, which suggests a possible non-perturbative
  extension of the quantization formalism.  Our approach here
  emphasizes rigorous mathematical constructions, but in doing so
  hides this possible link to a non-perturbative theory.
\end{remark}

\subsection{Curved Opposite Line Bundles} 
\label{subsec:curved} 
We discuss a generalization of the previous framework 
to possibly curved (i.e.~not necessarily parallel) opposite line bundles. 
Of particular interest to us are the complex conjugate line bundle 
and the algebraic opposite line bundles which will be introduced 
later in \S\ref{subsec:alg_cc}. 
A general theory for curved opposite modules in the infinite-dimensional setting 
was developed in \cite[\S 4.13 and \S 9]{Coates--Iritani:Fock}, 
and the discussion here is parallel to that. 

\begin{definition} 
Let $U\subset \cMCY$ be an open set. 
A \emph{possibly curved opposite line bundle} over $U$ 
is a topological (or $C^\infty$) line subbundle $P$ of $\Hvec|_U$ 
such that $\Hvec|_U = F^2_\vec|_U \oplus P$. 
\end{definition} 

For possibly curved opposite line bundles $P_1,P_2$, 
the propagator 
\[
\Delta(P_1,P_2) := (\KS^{-1}\otimes \KS^{-1})  
(\Pi_1 \times \Pi_2)_* \Omega^\vee 
\]
is still well-defined as a \emph{continuous} (or $C^\infty$) section of 
$(\Theta(\log \{0\}))^{\otimes 2}$ 
(see Definition \ref{def:propagator_fd}). 
Let $P_0$ be an opposite line bundle and suppose 
that an element of the local Fock space for $P_0$ 
\[
\wave=\{C^{(g)}_n du^{\otimes n} 
: 2g-2 + n >0\} \in \Fock_{\CY}(U;P_0) 
\] 
is given. For a possibly curved opposite line bundle $P$ over $U$, 
we define \emph{genus-$g$, $n$-point correlation functions} 
$\hC^{(g)}_n du^{\otimes n}$ \emph{with respect to $P$} 
by the same Feynman rule as before 
\begin{equation} 
\label{eq:Feynman_P0_P}
\hC^{(g)}_n du^{\otimes n} = \sum_{\Gamma} 
\frac{1}{|\Aut(\Gamma)|} 
\Cont_\Gamma( \Delta(P_0,P), \{C^{(h)}\}_{h\le g}) 
\end{equation} 
where $\Gamma$ ranges over all connected, decorated, 
genus-$g$ stable graphs (see the list of conditions below 
equation~\eqref{eq:Feynman_rule_fd}). 
Note that $\hC^{(0)}_3 du^{\otimes 3} = C^{(0)}_3 du^{\otimes 3}$ 
is the Yukawa coupling. 

\begin{lemma} 
\label{lem:regularity_curved} 
If $U$ does not contain the conifold point, 
then the correlation functions \eqref{eq:Feynman_P0_P} 
are continuous sections of $\Omega^1(\log\{0\})^{\otimes n}$. 
If $U$ contains the conifold point, $(1+27 y)^{2g-2+n}  \hC^{(g)}_n 
du^{\otimes n}$ extends continuously across the conifold point.  
\end{lemma}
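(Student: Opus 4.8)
The plan is to analyze the pole structure of the right-hand side of the Feynman rule \eqref{eq:Feynman_P0_P} graph by graph, tracking where poles at the conifold point can arise. First I would observe that the only ingredients appearing in $\Cont_\Gamma(\Delta(P_0,P),\{C^{(h)}\})$ are the vertex correlation functions $C^{(h)}_{n_v}\,du^{\otimes n_v}$ coming from the given element $\wave \in \Fock_{\CY}(U;P_0)$ and the propagator $\Delta(P_0,P)$ placed on every edge. Away from the conifold point, continuity is immediate: $P$ is a continuous (or $C^\infty$) opposite line bundle by assumption, the projections $\Pi_0,\Pi_1$ and the Kodaira--Spencer isomorphism are continuous, so $\Delta(P_0,P)$ is a continuous section of $(\Theta(\log\{0\}))^{\otimes 2}$, and the vertex tensors are logarithmic $n_v$-tensors; contraction of continuous tensors is continuous, and the tensor indices on the legs are logarithmic along $\{0\}$. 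This handles the first assertion.

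For the statement near the conifold point, the key is to count pole orders. I would take $P_0 = P_\con$, the unique flat opposite line bundle near the conifold point from Notation~\ref{nota:opposite_fd}, so that (by the definition of $\Fock_{\CY}(U;P_\con)$ for $U$ containing $y_1=-\tfrac{1}{27}$) each vertex correlation function $C^{(h)}_{n_v}\,du^{\otimes n_v}$ has a pole of order at most $2h-2+n_v$ at the conifold point. Next I would analyze the pole order of the propagator $\Delta(P_\con,P)$. Since $P$ is merely continuous while $P_\con$ is flat with $\Image N$ as fibre at the conifold point, and $\KS^{-1}$ contributes factors governed by the Yukawa coupling's behaviour, I expect $\Delta(P_\con,P)\cdot(1+27y)$ to extend continuously across the conifold point -- that is, $\Delta$ has a pole of order at most $1$ there. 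This matches the order-$1$ pole of the Yukawa coupling $Y_{\CY}$ noted in Remark~\ref{rem:pole_Yukawa} and is consistent with the propagator calculus $\partial_x\Delta = \Delta^2 Y_{\CY}$ in Lemma~\ref{lem:formulas_for_propagator}.

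With these two pole-order inputs, the counting is a standard Euler-characteristic bookkeeping on a connected decorated stable graph $\Gamma$ of total genus $g$ with $n$ legs. Writing $V$ for the number of vertices and $E$ for the number of edges, each vertex $v$ contributes a factor with pole order at most $2g_v-2+n_v$, each edge contributes pole order at most $1$, and the relation $g=\sum_v g_v + 1 - \chi(\Gamma)$ with $\chi(\Gamma)=V-E$ gives the total pole order as at most
\begin{align*}
\sum_{v}(2g_v - 2 + n_v) + E
&= 2\sum_v g_v - 2V + \Big(n + 2E\Big) + E \\
&= 2\Big(g - 1 + \chi(\Gamma)\Big) - 2V + n + 3E \\
&= 2g - 2 + n,
\end{align*}
where I used $\sum_v n_v = n + 2E$ (each edge has two endpoints, each leg one) and $\chi(\Gamma)=V-E$. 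Thus $(1+27y)^{2g-2+n}\hC^{(g)}_n\,du^{\otimes n}$ extends continuously, as claimed. I expect the main obstacle to be the second step: rigorously establishing that $\Delta(P_\con,P)$ has a pole of order at most one at the conifold point, since $P$ is only continuous and one must control the behaviour of the projection along the curved bundle $P$ against the flat reference $P_\con$ whose fibre degenerates into $\Image N$ -- here an explicit local trivialization of $\Hvec$ near the conifold point, using the frame in \S\ref{sec:two} and the symplectic pairing \eqref{eq:symplectic_pairing}, together with the propagator formula of Example~\ref{ex:propagator}, should make the estimate precise.
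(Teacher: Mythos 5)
Your overall strategy is the same as the paper's: track pole orders at the conifold point graph by graph and use the Euler-characteristic identity. The first assertion is handled the same way (continuity of all ingredients away from the conifold point), and your choice $P_0 = P_\con$ is harmless since the parallel opposite line bundle over a neighbourhood of the conifold point is unique. However, your treatment of the propagator contains a genuine error that your final arithmetic happens to conceal.

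You assert that $\Delta(P_\con,P)$ has a pole of order at most $1$ at the conifold point and accordingly add $E$ to the pole count. This is wrong on two levels. First, the arithmetic: with $\chi(\Gamma)=V-E$ and $\sum_v n_v = n+2E$ one gets $\sum_v(2g_v-2+n_v) = 2g-2+n$ \emph{exactly}, so $\sum_v(2g_v-2+n_v)+E = 2g-2+n+E$, not $2g-2+n$ as your last display claims; if the propagator really contributed a pole on each edge, your bound would be $2g-2+n+E$ and the lemma would fail. Second, the premise: the propagator contributes no pole at the conifold point. By construction (\S\ref{subsec:curved}) $\Delta(P_\con,P) = (\KS^{-1}\otimes\KS^{-1})(\Pi_1\times\Pi_2)_*\Omega^\vee$ is a \emph{continuous} section of $\Theta(\log\{0\})^{\otimes 2}$ on all of $U$: $\Omega^\vee$ is regular, the projections are continuous because $\Hvec = F^2_\vec\oplus P_i$ holds globally, and $\KS\colon\Theta(\log\{0\})\to F^2_\vec$ is an isomorphism everywhere, including at $y=-\frac{1}{27}$. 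Your analogy with the order-$1$ pole of the Yukawa coupling is a non sequitur — $Y_{\CY}$ is a cotangent-valued tensor while $\Delta$ is a bivector field, and the identity $\partial_x\Delta = \Delta^2 Y_{\CY}$ says nothing about a pole of $\Delta$ at the conifold. The correct argument, which is the one sentence in the paper, is that the only poles come from the vertex terms, and $\sum_v(2g_v-2+n_v)=2g-2+n$ (additivity of the Euler number under graph contractions). With that correction your proof goes through.
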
 
\begin{proof}
The former statement is obvious from the definition. 
The latter statement follows from the condition 
that $C^{(g)}_n du^{\otimes n}$ has a pole of order $2g-2+n$ 
at the conifold point (see Definition~\ref{def:local_Fock_space_fd}), 
and the fact that the ``Euler number'' $2g-2+n$ is additive 
under graph contractions. 
\end{proof} 

\begin{remark} 
The Feynman rule involving curved opposite modules 
still satisfies the cocycle condition. This is because 
Proposition~\ref{prop:propagator_cocycle} and its proof 
are valid also for curved opposite line bundles. 
In particular, we can invert the 
Feynman rule \eqref{eq:Feynman_P0_P} to get 
\[
C^{(g)}_n du^{\otimes n} = 
\sum_{\Gamma} \frac{1}{|\Aut(\Gamma)|} 
\Cont_\Gamma( \Delta(P,P_0), \{\hC^{(h)}\}_{h\le g}).  
\]
\end{remark} 

The main difference from the parallel case is 
that correlation functions with respect to a possibly 
curved opposite line bundle 
\emph{do not satisfy (Jetness)} in general. 
In place of (Jetness), they satisfy certain anomaly equations. 
We assume henceforth that a possibly curved opposite line bundle $P$ 
is a $C^\infty$ subbundle of $\Hvec$. 
Also, for simplicity, we work over the locus 
$\cMCY \setminus \DCY$ where the connection $\nabla$ 
has no singularities.  
We can define a (not necessarily flat) connection $\Nabla^P$ associated 
with a curved $P$ by the same formula as in Definition~\ref{def:Nabla_fd}. 
Namely, the projection $\Pi \colon \Hvec \to F^2_\vec$ along $P$ 
induces a (not necessarily flat) connection 
\begin{align*} 
C^\infty(F^2_\vec) \xrightarrow{\nabla} 
C^\infty(T_\C^\vee \otimes \Hvec) 
\xrightarrow{\id \otimes \Pi} 
C^\infty(T_\C^\vee \otimes F^2_\vec) 
\end{align*} 
which in turn defines the connection: 
\begin{align*} 
\Nabla^P & \colon 
C^\infty(T^{1,0}) \to C^\infty(T_\C^\vee
\otimes T^{1,0}) \\ 
\intertext{and its dual:} 
\Nabla^P & \colon 
C^\infty((T^{1,0})^\vee) \to C^\infty(T_\C^\vee \otimes 
(T^{1,0})^\vee) 
\end{align*} 
via the Kodaira--Spencer isomorphism 
$F^2_\vec \cong T^{1,0}$.  Here $T_\C$ denotes the complexified tangent bundle 
of $\cMCY \setminus \DCY$, $T_\C^\vee$ its dual,
and $T_\C = T^{1,0}\oplus T^{0,1}$ the type decomposition. 
The connection $\Nabla^P$ can be naturally 
extended to $n$-tensors: 
\[
\Nabla^P \colon C^\infty((T^{1,0})^\vee\otimes 
\cdots \otimes (T^{1,0})^\vee) 
\to C^\infty(T_\C^\vee \otimes (T^{1,0})^\vee\otimes 
\cdots \otimes (T^{1,0})^\vee) 
\]
Note that the $(0,1)$-part of $\Nabla^P$ is the standard 
Dolbeault operator. 

\begin{definition}[torsion] 
\label{def:torsion} 
Let $P$ be a possibly curved opposite line bundle. The \emph{torsion} 
of $P$ is the $C^\infty$ tensor $\Lambda\colon 
(T^{1,0})^\vee \otimes (T^{1,0})^\vee \to T_\C^\vee$  
defined by 
\[
\Lambda(\omega_1,\omega_2) = \Omega^\vee(
\nabla^\vee \Pi^*(\KS^*)^{-1} \omega_1, 
\Pi^*(\KS^*)^{-1} \omega_2)  \qquad 
\omega_1,\omega_2 \in C^\infty((T^{1,0})^\vee)
\]
where $\nabla^\vee$ is the connection on $\Hvec^\vee$ 
dual to $\nabla$, $\Pi^* \colon (F^2_\vec)^\vee \to \Hvec^\vee$ 
is the dual of the projection $\Pi\colon \Hvec\to F^2_\vec$, and
$\KS^* \colon (F^2_\vec)^\vee 
\cong (T^{1,0})^\vee$ is the dual of the 
Kodaira--Spencer isomorphism.  
\end{definition} 

Note that $\Lambda$ vanishes if and only if $P$ is parallel for 
$\nabla$. We can generalize the propagator calculus 
in Lemma \ref{lem:formulas_for_propagator} as follows: 
\begin{lemma} 
\label{lem:propagator_calc_curved}
Let $P_0$ be a (parallel) opposite line bundle over $U$ and 
let $P$ be a possibly curved opposite line bundle 
over $U$. 
Let $x$ be a flat co-ordinate associated with $P_0$. Write the propagator $\Delta(P_0,P)$ as 
$\Delta(x) \partial_x \otimes \partial_x$,
the Yukawa coupling as 
$Y_{\CY}(x) dx\otimes dx\otimes dx$, and
the torsion of $P$ as $\Lambda= \Lambda(dx,dx) = 
\Lambda_x dx + \Lambda_{\ov{x}} d\ov{x} \in C^\infty(T_\C^\vee)$. 
Then: 
\begin{align*} 
\Nabla^P - \Nabla^{P_0} &= \Delta(P_0,P) \cdot Y_{\CY}  = \Delta(x) Y_{\CY}(x) dx \\ 
d \Delta(x) &= \Delta(x)^2 Y_{\CY}(x) dx - \Lambda_x dx - \Lambda_{\ov{x}} 
d\ov{x}. 
\end{align*} 
Moreover, the curvature of $\Nabla^P$ on the cotangent 
bundle $(T^{1,0})^\vee$ is  
$Y_{\CY}(x) \Lambda_{\ov{x}} \, dx \wedge d\ov{x}$. 
\end{lemma}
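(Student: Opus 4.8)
The plan is to compute everything in the flat co-ordinate $x$ associated with the parallel opposite line bundle $P_0$, so that $\Nabla^{P_0}(dx)=0$ and $\Delta(P_0,P)=\Delta(x)\,\partial_x\otimes\partial_x$ is a single scalar function. The three assertions are: the variation formula $\Nabla^P-\Nabla^{P_0}=\Delta(x)Y_{\CY}(x)\,dx$, the derivative formula for $\Delta(x)$, and the curvature formula. I would treat them in that order, since each feeds the next.

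First I would establish the variation formula. The connection $\Nabla^P$ is built from the projection $\Pi$ along $P$ followed by the Kodaira--Spencer identification, exactly as in the parallel case (Definition~\ref{def:Nabla_fd}, now read $C^\infty$-ly). The difference $\Pi_P-\Pi_{P_0}$ maps $\Hvec$ into $F^2_\vec$ and has image lying in $(F^2_\vec)^\perp$, and contracting this difference against $\Omega^\vee$ reproduces precisely the propagator $\Delta(P_0,P)$ paired with the Yukawa data. Since $\nabla_{\partial_x}\nabla_{\partial_x}\zeta$ computes $Y_{\CY}$, the discrepancy between the two induced connections on $(T^{1,0})^\vee$ is $\Delta(x)Y_{\CY}(x)\,dx$. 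This is the same algebra as the proof of Lemma~\ref{lem:formulas_for_propagator}; the only novelty is that $\Pi$ is merely $C^\infty$, but that plays no role in this purely fibrewise computation.

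Next I would differentiate the propagator. Here the curvedness of $P$ enters: $d\Delta(x)$ picks up the $\nabla$-derivative of the projected sections $\Pi^*(\KS^*)^{-1}$, and by Definition~\ref{def:torsion} the piece coming from the failure of $P$ to be parallel is exactly the torsion $\Lambda=\Lambda_x\,dx+\Lambda_{\ov x}\,d\ov x$. The holomorphic part reproduces the parallel-case answer $\Delta(x)^2Y_{\CY}(x)\,dx$ (whence the $\Lambda\to 0$ limit recovers Lemma~\ref{lem:formulas_for_propagator}), and the remaining two terms are $-\Lambda_x\,dx-\Lambda_{\ov x}\,d\ov x$. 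The computation is the same Leibniz expansion of $\Omega^\vee(\Pi^*(\KS^*)^{-1}\partial_x^\flat,\Pi^*(\KS^*)^{-1}\partial_x^\flat)$ as before, now retaining the antiholomorphic contribution that was previously zero.

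Finally I would read off the curvature of $\Nabla^P$ on $(T^{1,0})^\vee$. Writing $\Nabla^P=\Nabla^{P_0}+\Delta(x)Y_{\CY}(x)\,dx\otimes(\,\cdot\,)$ as a connection on the line bundle $(T^{1,0})^\vee$, its curvature is $d\big(\Delta(x)Y_{\CY}(x)\big)\wedge dx$; since $Y_{\CY}(x)\,dx$ and $dx$ are holomorphic, only the antiholomorphic part of $d\Delta(x)$ survives the wedge with $dx$, and by the second formula that part is $-\Lambda_{\ov x}\,d\ov x$. Thus the curvature is $Y_{\CY}(x)\Lambda_{\ov x}\,dx\wedge d\ov x$, as claimed. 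The main obstacle I anticipate is bookkeeping the $C^\infty$ (rather than holomorphic) dependence cleanly enough that the torsion tensor $\Lambda$ emerges in the stated invariant form, i.e.\ identifying the antiholomorphic derivative of the projection $\Pi$ with $\Lambda_{\ov x}$; once the variation formula and the definition of $\Lambda$ are set up in the $P_0$-flat frame, the rest is a short differentiation.
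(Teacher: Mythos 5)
Your proposal is correct and follows essentially the same route as the paper: both arguments reduce to the Darboux frame $(p,x)$ adapted to $P_0$ in $\Haff|_{t_0}$, re-run the propagator calculus of Lemma~\ref{lem:formulas_for_propagator}, and identify the new antiholomorphic contribution with the torsion via Definition~\ref{def:torsion} (the paper makes this explicit by writing $P_x=\C\,(c,1)^{\mathrm T}$ and computing $\Delta=-3/(\tau-c)$, $\Lambda=3\,dc/(\tau-c)^2$). The only mild divergence is your last step: you obtain the curvature as $d\bigl(\Delta(x)Y_{\CY}(x)\,dx\bigr)$ using the first two identities, whereas the paper computes $(\Nabla^P)^2(dx)$ directly; both give $Y_{\CY}(x)\Lambda_{\ov{x}}\,dx\wedge d\ov{x}$.
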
 
\begin{proof} 
We use notation as in Examples~\ref{ex:Yukawa} and~\ref{ex:propagator}. 
Take a point $t_0\in U$ and embed a neighbourhood 
of $t_0$ as a Lagrangian curve $\cL 
\subset \Haff|_{t_0}$.  Fix  
Darboux co-ordinates $(p,x)$ on $\Haff|_{t_0}$ 
such that $P_0 = \langle \partial/\partial p\rangle$, 
$\Omega = \frac{1}{3} dp \wedge dx$, and that 
$x$ coincides with the given flat co-ordinate when restricted to 
$\cL$. 
In terms of the co-ordinates $(p,x)$, we have 
\[
\KS(\partial_x) 
= \tau \parfrac{}{p} + \parfrac{}{x} 
= \begin{pmatrix} \tau \\ 1 \end{pmatrix} 
\]
where $\tau$ is the slope parameter of $\cL$ in \eqref{eq:tau}. 
Suppose that the fiber $P_x$ at $x$ is written 
in the form 
(via parallel translation to $\Hvec|_{t_0}$):  
\[
P_x = \C \begin{pmatrix} c \\ 1 \end{pmatrix} 
\]
for a smooth function $c=c(x)$. 
Then a computation similar to Example~\ref{ex:propagator} gives: 
\[
\Delta(P_0,P) = \Delta(x) \partial_x \otimes \partial_x 
= - \frac{3}{\tau-c} \partial_x \otimes \partial_x.  
\]
In terms of the dual frame $\{dp,dx\}$ of $\Hvec^\vee|_{t_0}$, we have:
\[
\Pi^*(\KS^*)^{-1}(dx) = \frac{1}{\tau-c} (dp - c dx) = 
\frac{1}{\tau-c} (1,-c) 
\]
Hence:
\begin{equation} 
\label{eq:torsion} 
\Lambda = \Lambda(dx,dx) = 
\Omega^\vee\left( d \left[ \frac{1}{\tau-c} 
(1, -c) \right], 
\frac{1}{\tau-c} 
(1, -c) \right) 
= \frac{3}{ (\tau-c)^2} dc 
\end{equation} 
and: 
\[
\Nabla^P dx = \KS^*\left( \nabla^\vee  
\Pi^*(\KS^*)^{-1}(dx) |_{F^2_\vec} \right) 
= \KS^*\left[ \left. d 
\left( \frac{1}{\tau-c} (1, -c) \right)\right|_{F^2_\vec}\right] 
= - \frac{d\tau\otimes dx}{\tau-c}. 
\]
Therefore, using $Y_{\CY}(x) = \frac{1}{3} \tau_x$, we find: 
\[
\Nabla^P dx - \Nabla^{P_0} dx = -\frac{3}{\tau-c} Y_{\CY}(x) dx\otimes dx 
= \Delta(x) Y_{\CY}(x) dx\otimes dx  
\]
and: 
\[
d \Delta =\frac{3 d\tau}{(\tau -c)^2} 
- \frac{3 dc}{(\tau-c)^2}
= \Delta(x)^2 Y_{\CY}(x) dx - \Lambda. 
\]
Finally, the curvature of $\Nabla^P$ is given by: 
\[
\left(\Nabla^P\right)^2 (dx) 
= \Nabla^P\left(-\frac{\tau_x}{\tau-c} dx \otimes dx 
\right) 
= -\frac{\tau_x c_{\ov{x}}}{(\tau -c)^2} (d\ov{x}\wedge dx) \otimes dx
= Y_{\CY}(x)\Lambda_{\ov{x}}  (dx\wedge d\ov{x}) \otimes dx
\]
as claimed.  
\end{proof} 

Using Lemma \ref{lem:propagator_calc_curved}, we deduce the 
following anomaly equation. We omit the proof since the 
argument is very similar to that proving (Jetness)
in Proposition~\ref{prop:transf_rule_well_defined}. 

\begin{proposition}[anomaly equation, 
cf.~{\cite[Theorem 4.86]{Coates--Iritani:Fock}}]  
\label{prop:anomaly_equation} 
Let $P_0$ be a (parallel) opposite line bundle and 
$P$ be a possibly curved opposite line bundle. 
Let $x$ denote a flat co-ordinate associated with $P_0$ 
and let $\wave = 
\{ C^{(g)}_n dx^{\otimes n}\}_{2g-2+n>0}$ 
be an element of the local Fock space $\Fock_{\rm CY}(U;P_0)$. 
Let $\hC^{(g)}_n dx^{\otimes n}$ denote the genus-$g$, 
$n$-point correlation functions with respect to $P$  
produced from $\wave$ by the 
Feynman rule \eqref{eq:Feynman_P0_P}. 
Let $\Lambda= 
\Lambda(dx,dx) = \Lambda_x dx+ \Lambda_{\ov{x}} d \ov{x}$ 
denote the torsion of $P$. Then we have: 
\[
\hC^{(g)}_{n+1} dx^{\otimes (n+1)} 
=\Nabla^P (\hC^{(g)}_n dx^{\otimes n}) 
+ \frac{1}{2} 
\sum_{\substack{h+k = g \\ i+j=n}} 
\binom{n}{i} \Lambda \otimes 
\hC^{(h)}_{i+1} \hC^{(k)}_{j+1} dx^{\otimes n} 
+ \frac{1}{2} \Lambda \otimes \hC^{(g-1)}_{n+2} dx^{\otimes n}
\]
Equivalently:
\begin{align*} 
\hC^{(g)}_{n+1} 
& = \parfrac{\hC^{(g)}_n}{x} + n \Delta(x) Y_{\CY}(x) 
\hC^{(g)}_n 
+ \frac{1}{2} 
\sum_{\substack{h+k = g \\ i+j=n}} 
\binom{n}{i} \Lambda_x  
\hC^{(h)}_{i+1} \hC^{(k)}_{j+1} 
+ \frac{1}{2} \Lambda_x \hC^{(g-1)}_{n+2},  \\ 
0 & = \parfrac{\hC^{(g)}_n}{\ov{x}} + \frac{1}{2} 
\sum_{\substack{h+k = g \\ i+j=n}} 
\binom{n}{i} \Lambda_{\ov{x}} 
\hC^{(h)}_{i+1} \hC^{(k)}_{j+1}  
+ \frac{1}{2} \Lambda_{\ov{x}} \hC^{(g-1)}_{n+2} 
\end{align*} 
where we use notation from Lemma 
$\ref{lem:propagator_calc_curved}$. 
\end{proposition}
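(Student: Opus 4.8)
The plan is to prove the anomaly equation by generalizing the (Jetness) argument from Proposition~\ref{prop:transf_rule_well_defined}, accounting for the extra terms arising from the torsion of the curved opposite line bundle $P$. The key input is the modified propagator calculus in Lemma~\ref{lem:propagator_calc_curved}, which differs from the parallel case (Lemma~\ref{lem:formulas_for_propagator}) precisely by the torsion terms $-\Lambda_x \, dx - \Lambda_{\ov{x}}\, d\ov{x}$ in the derivative of the propagator. Working over $\cMCY \setminus \DCY$ with a flat co-ordinate $x$ associated to $P_0$, I would apply the full differential $\Nabla^P$ (including its $(0,1)$-part, the Dolbeault operator) to both sides of the Feynman rule~\eqref{eq:Feynman_P0_P} and identify the resulting contributions with the claimed right-hand side.

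First I would differentiate the sum over graphs $\sum_\Gamma \frac{1}{|\Aut(\Gamma)|} \Cont_\Gamma(\Delta(P_0,P),\{C^{(h)}\})$ term by term, exactly as in the proof of Proposition~\ref{prop:transf_rule_well_defined}. As before, the derivative hits either a vertex, an edge, or a leg. Differentiating a vertex carrying $C^{(g_v)}_{n_v}\, dx^{\otimes n_v}$ produces $\Nabla^{P_0}$ applied to the vertex term, which by (Jetness) for $\wave$ gives $C^{(g_v)}_{n_v+1}\, dx^{\otimes(n_v+1)}$ (contribution of type $(a)$). Differentiating a leg produces the difference $\Nabla^P - \Nabla^{P_0} = \Delta(x)Y_{\CY}(x)\,dx$ acting there (type $(c)$). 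The crucial difference is at the edges: differentiating a propagator now yields $d\Delta(x) = \Delta(x)^2 Y_{\CY}(x)\,dx - \Lambda_x\,dx - \Lambda_{\ov{x}}\,d\ov{x}$ rather than just the first term. The $\Delta^2 Y_{\CY}$ piece reproduces the type-$(b)$ contribution that, together with $(a)$ and $(c)$, reassembles into the Feynman sum for $\hC^{(g)}_{n+1}\,dx^{\otimes(n+1)}$, giving the term $\Nabla^P(\hC^{(g)}_n\,dx^{\otimes n})$ after accounting for how $\Nabla^P$ acts on tensors.

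The new torsion terms $-\Lambda\,\Delta^{-\text{edge}}$ require the most care. Cutting an edge carrying a factor of $\Lambda$ either disconnects the graph into two stable pieces (yielding the bilinear sum $\frac{1}{2}\sum_{h+k=g,\,i+j=n}\binom{n}{i}\Lambda\otimes \hC^{(h)}_{i+1}\hC^{(k)}_{j+1}\,dx^{\otimes n}$) or decreases the genus by one while keeping the graph connected (yielding $\frac{1}{2}\Lambda\otimes \hC^{(g-1)}_{n+2}\,dx^{\otimes n}$); these are precisely the standard Feynman-graph degeneration contributions familiar from the holomorphic anomaly equation. The symmetry factor $\frac12$ and the binomial coefficient arise from the combinatorics of choosing which legs and genus distribute to each side, and from the two ways of orienting the cut edge. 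Decomposing the $(1,0)$-part and the $(0,1)$-part of the resulting identity then yields, respectively, the two displayed scalar equations, the $\ov{x}$-equation containing only the torsion terms because $\Nabla^P$ and $Y_{\CY}$ are holomorphic.

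The main obstacle will be the careful bookkeeping of automorphism factors when an edge is cut: one must verify that the $\frac{1}{|\Aut(\Gamma)|}$ weights on the differentiated graphs match the $\frac12\binom{n}{i}$ coefficients after summing over all graphs that degenerate to a given configuration. This is the same combinatorial lemma that underlies the well-definedness proof in Proposition~\ref{prop:transf_rule_well_defined}, and indeed the paper explicitly defers to that argument (``We omit the proof since the argument is very similar''). I would invoke the corresponding infinite-dimensional statement \cite[Theorem~4.86]{Coates--Iritani:Fock}, whose proof handles exactly this bookkeeping, and check that the one-dimensional base here only simplifies matters since there is a single type of tensor index.
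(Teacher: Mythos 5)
Your proposal is correct and is precisely the argument the paper intends: the paper omits the proof, saying only that it is ``very similar to that proving (Jetness) in Proposition~\ref{prop:transf_rule_well_defined}'', and your differentiation of the Feynman rule via Lemma~\ref{lem:propagator_calc_curved}, with the extra torsion term in $d\Delta$ reorganized into separating-edge (bilinear) and non-separating-edge (genus-lowering) contributions, is exactly that omitted argument. One small point to phrase more carefully: the $(0,1)$-equation follows because the target tensor $\hC^{(g)}_{n+1}\,dx^{\otimes(n+1)}$ has no $d\ov{x}$-component (so the $d\ov{x}$-parts of the right-hand side must cancel), not because the $\hC^{(g)}_{n}$ themselves are holomorphic --- in general they are not when $P$ is curved.
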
 

\begin{remark} 
\label{rem:anomaly_equation} 
When we apply this in \S\ref{sec:calc}, 
we shall restrict to the following special cases: 
\begin{itemize} 
\item[(a)] $P$ is an anti-holomorphic line subbundle, i.e.~preserved by 
the $(1,0)$-part of $\nabla$ 
(e.g.~the complex conjugate opposite line bundle 
in Definition \ref{def:Pcc}); 

\item[(b)] $P$ is a holomorphic line subbundle which is not flat 
(e.g.~the algebraic opposite line bundle 
in Definition \ref{def:Palg}). 
\end{itemize} 
In the case (a), we have $\Lambda_x =0$. 
Therefore the correlation functions satisfy `partial' jetness 
$(\Nabla^P)^{1,0} (\hC^{(g)}_n dx^{\otimes n}) = 
\hC^{(g)}_{n+1} dx^{\otimes (n+1)}$ and 
the holomorphic anomaly equations: 
\begin{align*} 
0 & = \parfrac{\hC^{(1)}_1}{\ov{x}} + 
\frac{1}{2} \Lambda_{\ov{x}} Y_{\CY}(x)  && \text{for $g=1$} \\ 
0 & = \parfrac{\hC^{(g)}_0}{\ov{x}} + \frac{1}{2} 
\sum_{h=1}^{g-1} \Lambda_{\ov{x}} \hC^{(h)}_1 \hC^{(g-h)}_1 
+ \frac{1}{2} \Lambda_{\ov{x}} \hC^{(g-1)}_2  
&& \text{for $g \ge 2$} 
\end{align*} 
where $\hC^{(g)}_1 = \partial_x \hC^{(g)}_0$ for $g\ge 2$ 
and $\hC^{(g)}_2 = \partial_x \hC^{(g)}_1 + \Delta(x) Y_{\CY}(x) 
\hC^{(g)}_1$ for $g\ge 1$. 
Note that the holomorphic anomaly equation in genus 1 says that 
$\hC^{(1)}_1 dx$ behaves as a `connection 1-form' on 
the square root of the canonical bundle: 
\[
d(\hC^{(1)}_1 dx) = \frac{1}{2} \Lambda_{\ov{x}} Y_{\CY}(x) 
dx \wedge d\ov{x} = \frac{1}{2} (\Nabla^P)^2. 
\]
In the case (b), we have $\Lambda_{\ov{x}} = 0$. 
Thus the correlation functions $\hC^{(g)}_n$ are holomorphic, 
but do not satisfy (Jetness). 
\end{remark}

\section{The Conformal Limit of the Fock Sheaf}
\label{sec:conformal_limit_Fock}

Let $\cMCY^\circ$ denote the complement of the conifold locus:
\[
\cMCY^\circ := \cMCY \setminus \{{\textstyle -\frac{1}{27}}\}
\]
and let $\Fock_{\CY}^\circ$ denote the restriction to $\cMCY^\circ$ 
of the finite-dimensional Fock sheaf $\Fock_{\CY}$ 
from \S\ref{sec:Fock_sheaf_fd}. 
Recall from \S\ref{sec:Fock_sheaf} 
that the B-model Fock sheaf is defined on $\cMBbig$ and 
let $i\colon \cMCY^\circ \to \cMBbig$ denote the inclusion. 
In this section, we prove: 
\begin{theorem} \label{thm:Fock_restriction}
There exists a restriction map of Fock sheaves, $i^{-1} \Fock_{\rm B} \to 
\Fock_{\CY}^\circ$. 
\end{theorem}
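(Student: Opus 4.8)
The plan is to construct the restriction map chart by chart, and then to glue. The geometric input is that $\cMCY^\circ$ sits inside $\cMBbig$ (via $i$) as the conifold-deleted part of the component $\{y_2=0\}$ of $\Dbig$, and that restricting $\FBbig$ to this locus and forming the residue $N=\nablaB_{y_2\partial_{y_2}}|_{y_2=0}$ recovers, through \S\ref{sec:six}--\ref{sec:two}, the rank-six bundle $H$, its quotient $\overline{H}=\Cok N$, and the rank-two symplectic bundle $\Hvec=\Ker N/(\Image N\cap\Ker N)$ on which $\Fock_{\CY}$ is built. First I would fix, for $y\in\cMCY^\circ$ and a small $U\subset\cMBbig$, an opposite module $\sfP$ for $\FBbig$ over $U$ compatible with the Deligne extension; such $\sfP$ cover $\cMBbig$ by Proposition~\ref{pro:covering_assumption}, and by Example~\ref{ex:opposites_compatible} they arise from Deligne-compatible opposite modules $\bP$ for the underlying \logDTEP structure. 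Restricting the associated flat trivialization along $y_2=0$ yields an extension of $\cFCY|_{\{y\}\times\C}$ to $\{y\}\times\Proj^1$ satisfying condition~(D) of Proposition~\ref{pro:correspondence_of_opposites}, hence a subspace $P\subset\Hvec|_y$ with $F^2_\vec\oplus P=\Hvec|_y$; parallelness of $\sfP$ forces $P$ to be flat, so $P$ is an opposite line bundle over $U\cap\cMCY^\circ$. This is exactly the correspondence of Propositions~\ref{pro:extending_trivialization} and~\ref{pro:existence_of_compatible_opposites}, specializing to $P_\LR$, $P_\con$, $P_\orb$ at the cusps by Proposition~\ref{pro:LR_conifold_orbifold}, and it defines the map on the indexing data of the two Fock sheaves.

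Next I would define the map $\Fock_{\rm B}(U;\sfP)\to\Fock_{\CY}(U\cap\cMCY^\circ;P)$ on correlation functions. The big correlators $\Nabla^n C^{(g)}\in(\bOmega^1(\log\Dbig))^{\otimes n}$ are tensors on $\LLo$ over $\cMBbig$; I would restrict the base to $y_2=0$, kill the unfolding and descendant (fiber) directions by evaluating along the distinguished section of $\LLo$ given by the Dilaton-shifted primitive class $z\zeta$, and then reduce the tensor indices to the single physical direction via the finite Kodaira--Spencer identification $\Theta_{\cMCY^\circ}(\log\{0\})\cong F^2_\vec\subset\Hvec$ of Definition~\ref{def:Kodaira_Spencer_fd}, matched against the big Kodaira--Spencer isomorphism $\bThetao(\log\Dbig)\cong\pr^\star\FBbig$ of Definition~\ref{def:KS}. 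The resulting tensors $C^{(g)}_n\,du^{\otimes n}$ on $\cMCY^\circ$ satisfy (Yukawa) because the rank-two $Y_{\CY}$ of Definition~\ref{def:Yukawa_fd} is the reduction of the big $\bY$ of Definition~\ref{def:Yukawa} along $H\to\Hvec$, and they satisfy (Jetness) because the finite flat connection $\Nabla^P$ of Definition~\ref{def:Nabla_fd} is the reduction of the big $\Nabla$ of Definition~\ref{def:Nabla} under the two Kodaira--Spencer maps and the projections along $\sfP$ and $P$.

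The decisive compatibility is that restriction intertwines the transformation rules $T(\sfP_1,\sfP_2)$ and $T(P_1,P_2)$. Since both are the Feynman sums \eqref{eq:transformationrule-jet} and \eqref{eq:Feynman_rule_fd} built from propagators, it suffices to show that the big propagator $\Delta(\sfP_1,\sfP_2)$ of Definition~\ref{def:propagator} restricts to the finite propagator $\Delta(P_1,P_2)$ of Definition~\ref{def:propagator_fd}. This reduces to the compatibility of the symplectic forms: the pairing $\Omega$ on $\Hvec$ is induced from that on $H$ by the quotient construction $\Ker N/(\Image N\cap\Ker N)$ of \S\ref{sec:two} (using $(\Ker N)^\perp=\Image N$), and the projections along $P_i$ are the reductions of the projections along $\sfP_i$. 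Once the propagators match, the vertex tensors match by the previous paragraph, so the Feynman contributions agree term by term; the cocycle relation (Proposition~\ref{prop:propagator_cocycle}) then shows the local restrictions are independent of the auxiliary choices and glue to a single morphism $i^{-1}\Fock_{\rm B}\to\Fock_{\CY}^\circ$.

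The hard part will be the coherence of the restriction with the logarithmic singularity along $y_2=0$: we are restricting the variation \emph{onto} a component of its own log divisor, and the rank drop from six to two is effected by the nilpotent residue $N$, so the connection surviving the conformal limit depends on the splitting of $0\to\Omega^1_{\DCY}\to\Omega^1_{\cMCY}(\log\DCY)|_{\DCY}\to\cO_{\DCY}\to 0$ fixed in \S\ref{sec:six}. I would need to verify that evaluating the big tensors along the $z\zeta$-section and reducing through $N$ is consistent with this splitting, that the descendant directions $x^i_n$ genuinely decouple in the limit, and that the big (Grading \& Filtration) and (Pole) constraints restrict to the statement that the $C^{(g)}_n\,du^{\otimes n}$ are honest logarithmic $n$-tensors on $\cMCY^\circ$ in the sense of Definition~\ref{def:local_Fock_space_fd}. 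This is where the weight shift and the miniversality of $\FBbig$ must be used carefully; away from the divisor the construction is a routine transport of the Feynman formalism through the Kodaira--Spencer dictionary.
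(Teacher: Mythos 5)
Your architecture matches the paper's: restrict along the primitive section $\zeta$, convert Deligne-compatible opposite modules into opposite line bundles via Propositions~\ref{pro:extending_trivialization} and~\ref{pro:correspondence_of_opposites}, check (Yukawa), (Jetness), and the propagators, then glue. You have also correctly isolated the delicate point that one is restricting \emph{onto} a component of the log divisor, so that everything hinges on the splitting of $0 \to \Omega^1_{\cMCY^\circ}(\log\{0\}) \to \Omega^1_{\cMB^\circ}(\log D)|_{\cMCY^\circ} \to \cO_{\cMCY^\circ} \to 0$; the paper resolves this exactly as you anticipate, by showing the splitting is the unique one compatible with the canonical lift $F^2 \cap \Ker N \xrightarrow{\sim} F^2_\vec$ furnished by the residue endomorphism $N$ (Lemma~\ref{lem:splitting_is_canonical}).

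The genuine gap is in your propagator comparison. You assert that $\Delta(\sfP_1,\sfP_2)$ restricts to $\Delta(P_1,P_2)$ because ``the pairing $\Omega$ on $\Hvec$ is induced from that on $H$ by the quotient construction'' and ``the projections along $P_i$ are the reductions of the projections along $\sfP_i$.'' The second clause is precisely what has to be proved, and it does not follow from formal compatibility of the symplectic structures. The big propagator is built from the tensor $V = (\Pi_1 \otimes \Pi_2)\Omega^\vee \in \FBbig \hotimes \FBbig$, where $\Omega^\vee$ is a semi-infinite bivector with components in every graded piece; a priori its restriction $V_{\CY}$ to $\cMCY^\circ$ has no reason to be concentrated in $(F^2 \cap \Ker N)^{\otimes 2}$, which is what is needed for it to descend through $H \surj \widebar{H} \supset \Hvec$ to the rank-two propagator. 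The paper's proof (Proposition~\ref{pro:propagator_restricts}) requires two separate inputs: the grading identity $(\Gr \otimes 1 + 1 \otimes \Gr)V_{\CY} = 2V_{\CY}$ (Lemma~\ref{lem:propagator_grading}), which confines $V_{\CY}$ to total degree two, and the commutation relation $(D_2 \otimes 1 - 1 \otimes D_2)V_{\CY} = 0$, which comes from the fact that $D_2 = -zN$ preserves both $\sFCY$ and the $\sfP_i$ and is self-adjoint for $\Omega$. Only the combination of these kills the components in $\sFCY^{(0)} \otimes \sFCY^{(2)}$, $\sFCY^{(2)} \otimes \sFCY^{(0)}$, and the unwanted part of $\sFCY^{(1)} \otimes \sFCY^{(1)}$, leaving $V_{\CY} \in \langle (D_2 - 3D_1)^{\otimes 2}\rangle = (F^2 \cap \Ker N)^{\otimes 2}$. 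A similar explicit verification (not just the Kodaira--Spencer dictionary) is needed to see that the connection induced by $\sfP$ preserves $F^2 \cap \Ker N$ and hence reduces to $\Nabla^P$, which is carried out in \S\ref{sec:connections_conformal_limit} by computing $\nabla'(D_1 - \tfrac{1}{3}D_2)$ against the explicit bases. Without these computations the term-by-term matching of Feynman contributions, and hence the intertwining of the transformation rules, is unsupported.
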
 

\noindent There are several things to understand:
\begin{itemize}
\item how correlation functions for $\Fock_{\rm B}$ give rise to correlation functions for $\Fock_{\CY}^\circ$ 
(\S\ref{sec:correlation_functions_conformal_limit});
\item how 
opposite modules for the big B-model log-cTEP structure that are compatible with the Deligne extension
give rise to opposite line bundles for $\Hvec$ 
(\S\ref{sec:opposites_conformal_limit}); 
\item how the transformation rule used to assemble $\Fock_{\rm B}$ 
out of local Fock spaces gives rise to the transformation rule used to 
assemble $\Fock_{\CY}^\circ$ out of local Fock spaces 
(\S\S\ref{sec:connections_conformal_limit}--\ref
{sec:propagators_conformal_limit}).
\end{itemize}
With this material in place, we define the restriction map $i^{-1} \Fock_{\rm B} \to \Fock_{\CY}^\circ$ in 
\S\ref{sec:restriction_conformal_limit}.  In \S\ref{sec:global_section_conformal_limit} we show that there is a global section $\wave_{\rm CY}$ of the finite-dimensional Fock sheaf $\Fock_{\CY}^\circ$, which arises via restriction from the global section $\wave_{\rm B}$ of $\Fock_{\rm B}$.  Near the large-radius limit point, correlation functions of $\wave_{\rm CY}$ encode Gromov--Witten invariants of the non-compact Calabi--Yau $3$-fold $Y = K_{\Proj^2}$ and near the orbifold point,  the correlation functions of $\wave_{\rm CY}$ encode Gromov--Witten invariants of the non-compact orbifold $\cX = \big[\C^3/\mu_3\big]$.  We will see in \S\ref{sec:conifold_estimate} that the genus-$g$, $n$-point correlation functions of $\wave_{\CY}$, which \emph{a priori} are holomorphic functions on $\cMCY^\circ$, are in fact meromorphic functions on $\cMCY$ with poles at the conifold point ${-\frac{1}{27}} \in \cMCY$ of order at most $2g-2+n$.  
Thus we can think of $\wave_{\CY}$ as a 
global section of the finite-dimensional Fock sheaf $\Fock_{\CY}$. 

\subsection{Correlation Functions in the Conformal Limit}
\label{sec:correlation_functions_conformal_limit}

Recall from Theorem \ref{thm:unfolding} 
that the big B-model \logDTEP structure $\cFBbig$ 
(and hence the big B-model $\log$-cTEP structure $\FBbig$) 
has logarithmic singularities along $\Dbig\subset \cMBbig$.  
Let $\LL$ denote the total space of the big B-model $\log$-cTEP 
structure and $\LLo$ denote its open subset as defined in 
Definition \ref{def:miniversal_cTEP}. 
Correlation functions for the B-model Fock sheaf $\Fock_{\rm B}$ 
are local sections $\Nabla^n C^{(g)}$ of 
$\bOmegao^1(\log \Dbig)^{\otimes n}$, 
where $\bOmegao^1(\log \Dbig)$ is the sheaf of one-forms 
on $\LLo$ logarithmic along $\pr^{-1} \Dbig$, 
satisfying the conditions (Yukawa), (Jetness), (Grading and Filtration), 
and (Pole). 
Correlation functions for the finite-dimensional Fock sheaf $\Fock_{\CY}$ 
are local sections $\Nabla^n C^{(g)}$ of 
$\Omega^1(\log \{0\})^{\otimes n}$, where $\Omega^1(\log \{0\})$ 
is the sheaf of one-forms on $\cMCY$ logarithmic at $y_1=0$, 
satisfying the conditions (Yukawa) and (Jetness).  
Roughly speaking, to relate $\Fock_{\rm B}$ to $\Fock_{\CY}$, 
we want to pull back correlation functions for $\Fock_{\rm B}$ 
along\footnote{The primitive section $\zeta$ lands in 
$\LLo \subset \LL$ because Reichelt's 
conditions (IC), (GC) hold along $\cMCY^\circ$: 
see \S\ref{sec:unfolding_locally}.  
}
the primitive section $\zeta \colon \cMCY^\circ \to \LLo$.  This requires care, because in general there is no canonical way to restrict logarithmic forms to the logarithmic locus.

\begin{example}
\label{ex:no_canonical_map}
Let $i \colon D \to \cM$ be the inclusion of a normal crossing divisor 
into a complex manifold.  
Then there is no canonical map $i^\star \Omega^1_{\cM} ( \log D ) 
\to \Omega^1_D$; 
indeed the canonical map goes in the other direction, 
and fits into an exact sequence
\[
\xymatrix{
  0 \ar[r] & \Omega^1_D \ar[r] & i^\star \Omega^1_{\cM} ( \log D ) \ar[r]^-{\text{res}} & \cO_D \ar[r] & 0 
}
\]
where the map $\text{res}$ takes the residue along $D$.
\end{example}

To pull back correlation functions for $\Fock_{\rm B}$, we first restrict to the image of $\zeta \colon \cMB^\circ \to \LLo$.  
Here there is a well-defined pullback, as $\zeta\big|_{\cMB^\circ}$ 
is transverse to the logarithmic locus in $\LLo$; 
over $\cMCY^\circ$, it defines a map: 
\[
\zeta^\star \bOmegao^1(\log \Dbig) \to \Omega^1_{\cMB^\circ}(\log D)\big|_{\cMCY^\circ}, 
\]
where here and hereafter $\zeta^\star$  
means the pull-back by $\zeta \colon \cMCY^\circ \to \LL^\circ$ 
and $D \subset \cMB$ is the divisor \eqref{eq:log_divisor}. 
Then we choose a splitting of
\begin{equation}
  \label{eq:splitting}
  \xymatrix{
    0 \ar[r] & \Omega^1_{\cMCY^\circ}(\log \{0\}) \ar[r] & \Omega^1_{\cMB^\circ}(\log D)\big|_{\cMCY^\circ} \ar[r] & \cO_{\cMCY^\circ} \ar[r] \ar@/_/@{-->}(75,3)& 0. 
  }
\end{equation}
Here the dashed arrow is multiplication by 
\[
du_2 = \frac{1}{3} \frac{dy_1}{y_1} + \frac{dy_2}{y_2} 
= \frac{d\fry_2}{\fry_2}. 
\] 
As we will see in \S\ref{sec:connections_conformal_limit}, 
in our situation this choice of splitting \emph{is} canonical.  
Combining, we get a restriction map 
\begin{equation}
\label{eq:restriction}
\zeta^\star \bOmegao^1(\log \Dbig) \longrightarrow 
\Omega^1_{\cMCY^\circ}(\log \{0\}) 
\end{equation}
as the composition
\[
\xymatrix{
  \zeta^\star \bOmegao^1(\log \Dbig) \ar[r] 
& \Omega^1_{\cMB^\circ}(\log D)\big|_{\cMCY^\circ} \ar[r] 
& \Omega^1_{\cMCY^\circ}(\log \{0\}) \ar[r]  
\oplus \cO_{\cMCY^\circ} \ar[r] 
& \Omega^1_{\cMCY^\circ}(\log \{0\}) 
}
\]
where the middle arrow is the splitting and the right-hand arrow is projection to the first factor.  

\begin{example} \label{ex:Yukawa_conformal_limit}
We can compute the B-model Yukawa coupling $\bY$ 
(Definition~\ref{def:Yukawa}) using Proposition~\ref{pro:GKZ}.  
Restricting the result to $\zeta(\cMB^\circ)$, which is possible because 
$\zeta\big|_{\cMB^\circ}$ is transverse to the logarithmic locus, yields
  \[
 \bY\big|_{\zeta(\cMB^\circ)} = { - \frac{1}{3(1+27y_1)}} \Big( \frac{dy_1}{y_1} \Big)^{\otimes 3} + 9 (du_2)^{\otimes 3}.
  \]
But the Yukawa coupling in the finite-dimensional setting 
(Definition~\ref{def:Yukawa_fd}) is 
\[
 Y_{\CY} = \Omega(\theta^2 \zeta, \theta \zeta) \, \Big( \frac{dy_1}{y_1} \Big)^{\otimes 3} = { - \frac{1}{3(1+27y_1)}} \Big( \frac{dy_1}{y_1} \Big)^{\otimes 3} 
  \]
-- see \eqref{eq:symplectic_pairing}.   Thus the restriction map \eqref{eq:restriction} takes the Yukawa coupling $\bY$ to $Y_{\CY}$.
\end{example}

\subsection{Opposite Modules in the Conformal Limit}
\label{sec:opposites_conformal_limit}
We now discuss how opposite modules 
for the big B-model log-cTEP structure that are compatible with the Deligne extension give rise 
to opposite line bundles in the conformal limit.  
This is largely a summary of material from 
\S\S\ref{sec:conformal_limit}--\ref{sec:enlarge_base}.  
In \S\ref{sec:conformal_limit}, we considered the restriction 
$\cFCY$ of the B-model log-TEP structure to $\cMCY \times \C$.  
This is a log-TEP structure with base $(\cMCY,\DCY)$, 
which carries an endomorphism $N\colon \cFCY \to z^{-1} \cFCY$ 
given by the residue of the B-model connection 
along the divisor $\cMCY \times \C \subset \cMB \times \C$.  
Taking the formalization\footnote
{Since $\cFCY$ is graded -- see \eqref{eq:restricted_GKZz} -- 
no information is lost by the formalization 
$(\cFCY,\nabla) \rightsquigarrow (\sFCY,\nabla)$.} 
 of $\cFCY$ at $z=0$ defines 
a log-cTEP structure $\sFCY$ with base $(\cMCY,\DCY)$, 
equipped with a residue endomorphism $N \colon \sFCY \to z^{-1} \sFCY$ 
induced by that on $\cFCY$.  
In \S\ref{sec:conformal_limit} we considered a six-dimensional vector bundle $H$, a three-dimensional vector bundle $\widebar{H}$, and a two-dimensional vector bundle $\Hvec$; these are related to the log-cTEP structure $\sFCY$ as follows:
\begin{equation}
  \label{eq:diagram_ambients}
  \begin{aligned}
    \xymatrix{
      \Hvec\  \ \ar@{^{(}->}[dr] && H \ar@{->>}[dl]\ar@{^{(}->}[dr] \\
      & \widebar{H} && \sFCY[z^{-1}]
    }
  \end{aligned}
\end{equation}
The vector bundle $H$ is included in $\sFCY[z^{-1}]$ as the degree-$1$ part; it is preserved by the action of the residue endomorphism $N$, so we can regard $N$ as an endomorphism of $H$.  There is a canonical surjection from $H$ onto $\widebar{H} = H/\Image N$, and $\Hvec = (\Ker N)/(\Image N \cap \Ker N)$ sits canonically as a subbundle of $\widebar{H}$.  The diagram \eqref{eq:diagram_ambients} induces the following diagram of Hodge subbundles:
\begin{equation}
  \label{eq:diagram_Hodge}
  \begin{aligned}
    \xymatrix{
      F^2_{\vec}\  \ \ar@{^{(}->}[dr] && F^2 \ar@{->>}[dl]\ar@{^{(}->}[dr] \\
      & \widebar{F}^2 && \sFCY  
    }
  \end{aligned}
\end{equation}
Here $F^2 \subset H$ is three-dimensional, $\widebar{F}^2 \subset \widebar{H}$ is two-dimensional, and $F^2_{\vec} \subset \Hvec$ is one-dimensional.  These were introduced in \S\ref{sec:six} and \S\ref{sec:opposite_filtrations}; we give explicit bases for them below.  
Let $U$ be an open neighbourhood of $y\in \cMCY^\circ$ in 
$\cMBbig$. 
Let $\bP$ be an opposite module for 
the big B-model \logDTEP structure $(\cFBbig,\nablaB,\pairingB)$ 
over $U\setminus \Dbig$ 
such that $\bP$ is compatible with the Deligne extension 
$(\cFBbig,\nablaB,\pairingB)|_U$ 
in the sense of Definition \ref{def:compatible_with_Deligne}. 
The opposite module $\bP$ naturally yields an opposite module $\sfP$ 
for the big B-model $\log$-cTEP structure $(\FBbig,\nablaB,\pairingB)$ 
over $U$. 
By a slight abuse of language, we call such a $\sfP$ 
a \emph{Deligne-extension-compatible opposite module} for 
the big B-model $\log$-cTEP structure $\FBbig$. 
Let $\sfP_{\CY}$ denote the restriction $\sfP$ to $\cMCY^\circ \cap U$. 
Combining Propositions~\ref{pro:extending_trivialization},~\ref{pro:correspondence_of_opposites}
and~\ref{pro:opposite_cusps}, we find that 
$\bP$ (or $\sfP_{\CY}$) induces an opposite line bundle 
$P$ over $U \cap \cMCY^{\circ}$ 
and that \eqref{eq:diagram_ambients} induces the following 
diagram of opposite modules, filters, and line bundles:
\begin{equation}
  \label{eq:diagram_opposites}
  \begin{aligned}
    \xymatrix{
      P \ar@{=}[dr] && U_1 \ar@{->>}[dl]\ar@{^{(}->}[dr] \\
      & \widebar{U}_1 && \sfP_{\CY}
    }
  \end{aligned}
\end{equation}
Here $U_1$ is the degree-one part of $\sfP_{\CY}$, which is three-dimensional; $\widebar{U}_1$ is the image of $U_1$ under the projection to $\widebar{H}$, which is one-dimensional; and the opposite line bundle $P$ is equal to $\widebar{U}_1$.  
We have that $U_1 = \big \langle z^{-1} D_2^2 \big \rangle 
+ \{s \in \Ker N : [s] \in P\}$. 

Let us now give explicit bases for the bundles in \eqref{eq:diagram_Hodge} and \eqref{eq:diagram_opposites}, summarizing the discussion in \S\ref{sec:conformal_limit}.  
We have, in the manifold chart $\cMCY\setminus \{y_1=\infty\}$:
\begin{align*}
  \Ker N & = \big \langle D_1 - \textstyle \frac{1}{3}D_2, z^{-2} D_2^3, z^{-1} (1 + 27y_1) (D_1 - \textstyle \frac{1}{3}D_2)^2 \big \rangle \\
  \Image N & = \big \langle D_2, z^{-1} D_2^2, z^{-2} D_2^3 \big \rangle \\
  \intertext{Furthermore:}
  F^2 & = \big\langle z, D_1 - \textstyle \frac{1}{3}D_2, D_2 \big\rangle \\
  \widebar{F}^2 & = \big\langle [z], [D_1 - \textstyle \frac{1}{3}D_2] \big\rangle = \big\langle [z], [D_1] \big \rangle = \big \langle \zeta, \theta \zeta \big \rangle \\
  F^2_{\vec} & = \big\langle [D_1 - \textstyle \frac{1}{3}D_2] \big\rangle = \big\langle [D_1] \big \rangle = \big \langle \theta \zeta \big \rangle \\
  \intertext{and:}
  U_1 & =  \big \langle z^{-1} D_2^2, z^{-2} D_2^3, z^{-1} (1 + 27y_1) (D_1 - \textstyle \frac{1}{3}D_2)^2 + a (D_1 - \textstyle \frac{1}{3}D_2) \big \rangle \\
  P & =  \big \langle (1 + 27y_1) \theta^2 \zeta + a \theta \zeta \big \rangle
\end{align*}
where $a$ is a scalar-valued function of $y_1$ that parameterizes the opposite filter $U_1$ or line bundle $P$.

A key observation is that the surjection $\Ker N \surj \Hvec$ 
induces an isomorphism 
$F^2 \cap \Ker N \xrightarrow{\sim}  F^2_{\vec}$.   
That is, the residue endomorphism $N$ singles out a canonical lift of 
$F^2_\vec$ to $H$. This is also true near the orbifold point. 
As we now explain, it is this that makes our choice of splitting in 
\eqref{eq:splitting}  canonical.   
Note that the Kodaira--Spencer map (see Definition~\ref{def:KS}) 
gives an isomorphism $\zeta^\star \bThetao(\log \Dbig) \xrightarrow{\sim}
\zeta^\star \pr^\star \FBbig = \sFCY|_{\cMCY^\circ}$, 
and consider the diagram
\begin{equation} 
\label{eq:canonical_splitting} 
\begin{aligned}
\xymatrix{
  \zeta^\star \bThetao(\log \Dbig) \ar[rr]_-{\sim}^-{\KS} && \sFCY\big|_{\cMCY^\circ} \\
  \Theta_{\cMB^\circ}(\log D)\big|_{\cMCY^\circ} \ar@{^{(}->}^-{\KS}[rr] \ar@{^{(}->}[u] && F^2\big|_{\cMCY^\circ} \ar@{^{(}->}[u] \ar@{=}[r] &  \big \langle z, D_1 - {\textstyle \frac{1}{3} D_2}, D_2 \big \rangle\\
  && \big(F^2 \cap \Ker N\big)\big|_{\cMCY^\circ} \ar@{^{(}->}[u] \ar@{=}[r] &  \big \langle D_1 - {\textstyle \frac{1}{3} D_2} \big \rangle\\
  \Theta_{\cMCY^\circ}(\log \{0\}) \ar[rr]_-{\sim}^-{\KS} \ar@{-->}[uu] && F^2_{\vec}\big|_{\cMCY^\circ} \ar^-{\rotatebox{270}{$\sim$}}[u] \ar@{=}[r] &  \big \langle [ D_1 - {\textstyle \frac{1}{3} D_2} ] \big \rangle \\
}
\end{aligned} 
\end{equation} 
where the lower-right vertical isomorphism is the canonical lift of $F^2_\vec$ to $H$ and $\KS$ denotes the Kodaira--Spencer map.  There is a unique choice for the dashed arrow that makes the diagram commute: the bottom horizontal map takes $y_1 \frac{\partial}{\partial y_1}$ to $[D_1] = [D_1 - {\textstyle \frac{1}{3} D_2} ]$, and so the dashed map must take $y_1 \frac{\partial}{\partial y_1}$ to $y_1 \frac{\partial}{\partial y_1} - {\textstyle \frac{1}{3}} y_2 \frac{\partial}{\partial y_2}$.  Thus our choice of splitting in \eqref{eq:splitting} is the unique choice such that this diagram commutes.  Dualizing gives:

\begin{lemma}
  \label{lem:splitting_is_canonical}
The restriction map \eqref{eq:restriction} is the unique map that makes the following diagram commute:
  \[
  \xymatrix{
    \zeta^\star \bOmegao^1(\log \Dbig) \ar@{-->}[dd] && 
\sFCY^\vee \big|_{\cMCY^\circ}
\ar_-{\KS^\star}^-{\sim}[ll] \ar[d] \\
    && \big(F^2 \cap \Ker N \big)^\vee \big|_{\cMCY^\circ} \ar^{\rotatebox{90}{$\sim$}}[d]\\
    \Omega^1_{\cMCY^\circ}(\log \{0\})  
&& \big(F^2_{\vec}\big)^\vee\big|_{\cMCY^\circ} \ar_-{\KS^\star}^-{\sim}[ll]
  }
  \]
\end{lemma}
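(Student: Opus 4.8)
The plan is to transpose the commutative diagram \eqref{eq:canonical_splitting} of bundle maps into its dual and then identify the dashed arrow there with the restriction map \eqref{eq:restriction}. First I would observe that the diagram \eqref{eq:canonical_splitting} lives entirely in the category of locally free sheaves on $\cMCY^\circ$ and $\pr^{-1}(\cMCY^\circ)$, restricted along $\zeta$, and that every solid arrow in it is either an inclusion of subbundles or the Kodaira--Spencer isomorphism $\KS$. Because $\KS$ is an isomorphism over $\LLo$ (and $\zeta$ lands in $\LLo$, as noted in the footnote to \S\ref{sec:correlation_functions_conformal_limit}), applying the functor $\sHom_{\cO}(-,\cO)$ turns each $\KS$ into its transpose $\KS^\star$, and turns each inclusion of subbundles into a surjection of the dual bundles. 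In particular the two vertical isomorphisms $F^2_\vec \xrightarrow{\sim} (F^2 \cap \Ker N)$ and $\Theta_{\cMCY^\circ}(\log\{0\}) \xrightarrow{\sim} F^2_\vec$ dualize to the isomorphisms appearing on the right-hand vertical of the square in the statement.

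The key step is to check that the \emph{unique} dashed arrow making \eqref{eq:canonical_splitting} commute dualizes to the \emph{unique} dashed arrow making the square in Lemma~\ref{lem:splitting_is_canonical} commute. Since dualization is a contravariant exact functor on the relevant (locally free, finite-rank) sheaves, it sends the commuting diagram \eqref{eq:canonical_splitting} to a commuting diagram of duals, and it sends the unique map filling one corner to the unique map filling the transposed corner; existence and uniqueness of the dashed arrow are thus preserved. Concretely, the bottom horizontal map in \eqref{eq:canonical_splitting} sends $y_1 \tfrac{\partial}{\partial y_1}$ to $[D_1 - \tfrac13 D_2]$, and the canonical lift $F^2 \cap \Ker N \xrightarrow{\sim} F^2_\vec$ identifies this with $D_1 - \tfrac13 D_2 \in F^2$. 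The dashed arrow, as already computed in the text surrounding \eqref{eq:canonical_splitting}, must therefore send $y_1\tfrac{\partial}{\partial y_1}$ to $y_1\tfrac{\partial}{\partial y_1} - \tfrac13 y_2 \tfrac{\partial}{\partial y_2}$; dualizing, the corresponding map on one-forms must send $\tfrac{dy_1}{y_1}$ back to the class of $\tfrac{dy_1}{y_1}$ while killing the residue in the $du_2$-direction, which is precisely the composite defining \eqref{eq:restriction}.

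Finally I would verify that the dual of the vertical inclusion $\Theta_{\cMB^\circ}(\log D)|_{\cMCY^\circ} \hookrightarrow \zeta^\star\bThetao(\log\Dbig)$ in \eqref{eq:canonical_splitting}, composed with $\KS^\star$, reproduces the left-hand and top edges of the square in the statement: this is where one pins down that the source of the restriction map is $\zeta^\star\bOmegao^1(\log\Dbig)$ with its $\KS^\star$-identification to $\sFCY^\vee|_{\cMCY^\circ}$, matching Definition~\ref{def:KS} of $\KS^\star$. The main obstacle I anticipate is purely bookkeeping rather than conceptual: one must be careful that the dualization is taken over the correct structure sheaf at each corner (the mixed $\zeta^\star\bOmega$, $\Omega^1_{\cMB^\circ}(\log D)$, and $\Omega^1_{\cMCY^\circ}(\log\{0\})$ all carry slightly different logarithmic pole structures), and that the residue exact sequence \eqref{eq:splitting}, which is the dual of the inclusion $\Theta_{\cMCY^\circ}(\log\{0\}) \hookrightarrow \Theta_{\cMB^\circ}(\log D)|_{\cMCY^\circ}$, is matched up with the correct splitting direction. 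Once these identifications are made compatibly, commutativity of the square follows formally from commutativity of \eqref{eq:canonical_splitting} together with the contravariant functoriality of $(-)^\vee$, and uniqueness of the dashed arrow is inherited from the uniqueness already established for \eqref{eq:canonical_splitting}.
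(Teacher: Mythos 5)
Your proposal is correct and follows essentially the same route as the paper: the paper establishes the commuting diagram \eqref{eq:canonical_splitting} with its unique dashed arrow $y_1\partial_{y_1}\mapsto y_1\partial_{y_1}-\tfrac{1}{3}y_2\partial_{y_2}$ and then simply states ``Dualizing gives'' the lemma, which is exactly your argument, carried out with a little more care about the contravariant functoriality. No gap.
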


\subsection{Connections in the Conformal Limit}
\label{sec:connections_conformal_limit}

In this section we will show that the restriction map \eqref{eq:restriction} 
sends the connection (Definition \ref{def:Nabla})
\begin{align}
  \Nabla^{\sfP} \colon \bOmegao^1(\log \Dbig) & \longrightarrow \bOmegao^1(\log \Dbig) \otimes \bOmegao^1(\log \Dbig) \label{eq:first_connection} \\
  \intertext{to the connection (Definition \ref{def:Nabla_fd})}
  \Nabla^{P} \colon \Omega^1_{\cMCY^\circ}(\log \{0\}) 
& \longrightarrow \Omega^1_{\cMCY^\circ}(\log \{0\}) \otimes 
\Omega^1_{\cMCY^\circ}(\log \{0\}) \label{eq:second_connection}
\end{align}
where the opposite line bundle $P$ is induced by the 
Deligne-extension-compatible opposite module $\sfP$ 
as in \S\ref{sec:opposites_conformal_limit}. 
More precisely, these connections are defined on open sets 
where $\sfP$ or $P$ are defined, but we shall omit 
the restriction signs to ease the notation. 
Note that it suffices to check the correspondence between the connections 
\eqref{eq:first_connection}, \eqref{eq:second_connection} 
on the manifold chart $\{y_1 \neq \infty\}$; we will work only with 
this chart.

\begin{remark}
  Since $\Nabla^{\sfP}$ is not $\cO_{\LLo}$-linear, it \emph{does not} induce a map from $\zeta^\star \bOmegao^1(\log \Dbig)$ to  $\zeta^\star \bOmegao^1(\log \Dbig)^{\otimes 2}$
\end{remark}

Since $\zeta|_{\cMB^\circ}$ is transverse to the logarithmic locus, 
we can pull back the connection \eqref{eq:first_connection} 
to get a connection
\[
\bOmegao^1(\log \Dbig)\big|_{\zeta(\cMB^\circ)}  
\longrightarrow \Omega^1_{\cMB^\circ}(\log D) \otimes 
\left( \bOmegao^1(\log \Dbig)\big|_{\zeta(\cMB^\circ)}  \right) 
\]
Restricting to $\cMCY^\circ$ gives
\[
\zeta^\star \bOmegao^1(\log \Dbig) \longrightarrow 
\Big(\Omega^1_{\cMB^\circ}(\log D) \Big|_{\cMCY^\circ}\Big)
\otimes 
\zeta^\star \bOmegao^1(\log \Dbig) 
\]
and using the splitting \eqref{eq:splitting} gives a connection 
\[
\nabla' \colon \zeta^\star \bOmegao^1(\log \Dbig) \longrightarrow  
\Omega^1_{\cMCY^\circ}(\log \{0\}) \otimes 
\zeta^\star \bOmegao^1(\log \Dbig). 
\]
Explicitly:
\[
\nabla' \alpha = \frac{dy_1}{y_1} \otimes 
\Big( \Nabla^{\sfP}_{\big(y_1 \frac{\partial}{\partial y_1} 
- \frac{1}{3} y_2 \frac{\partial}{\partial y_2}\big)} \alpha \Big). 
\]

Let us identify $\zeta^\star \bOmegao^1(\log \Dbig)$ with $\sFCY^\vee$ 
using the Kodaira--Spencer map, so that
\[
\nabla' \colon \sFCY^\vee \longrightarrow 
\Omega^1_{\cMCY^\circ}(\log \{0\}) \otimes \sFCY^\vee 
\]
We need to show that $\nabla'$ induces a connection on $(F^2_\vec)^\vee$ via the map $\sFCY^\vee \to (F^2 \cap \Ker N)^\vee \cong (F^2_\vec)^\vee$ -- see Lemma~\ref{lem:splitting_is_canonical} -- and that this induced connection coincides, via the Kodaira--Spencer map, with $\Nabla^P$.  To see this, consider the dual connection
\[
\nabla' \colon \sFCY \longrightarrow 
\Omega^1_{\cMCY^\circ}(\log \{0\}) \otimes \sFCY
\]
and compute:
\begin{align*}
  \nabla' (D_1 - {\textstyle \frac{1}{3}} D_2) 
&= \frac{dy_1}{y_1}\otimes 
\Nabla^{\sfP}_{\big(y_1 \frac{\partial}{\partial y_1} - \frac{1}{3} y_2 \frac{\partial}{\partial y_2}\big)} (D_1 - {\textstyle \frac{1}{3}} D_2) 
 \\
  &= \frac{dy_1}{y_1}\otimes 
\Pi_{\sfP} \Big(z^{-1} (D_1 - {\textstyle \frac{1}{3}} D_2)^2 \Big)   \\
  &= - \frac{dy_1}{y_1}\otimes 
{\frac{a}{1+27y_1}} (D_1 - {\textstyle \frac{1}{3}} D_2) 
\end{align*}
where $\Pi_\sfP \colon \sFCY[z^{-1}] \to \sFCY$ is the projection 
along $\sfP$. 
Here we used the fact that $\Pi_{\sfP}$ on $H$ is the same as projection  $H \to F^2$ along $U_1$, together with the explicit bases from \S\ref{sec:opposites_conformal_limit}.  Thus $\nabla'$ preserves 
$F^2 \cap \Ker N$, and so induces a connection on $F^2_\vec$.  
It remains to show that this induced connection is $\Nabla^P$.  
But this is obvious:
\begin{align*}
  \Nabla^P \big( \theta \zeta \big) &= 
\frac{dy_1}{y_1}\otimes \Pi_P (\theta^2 \zeta) \\
  &= -\frac{dy_1}{y_1}\otimes  \frac{a}{1+27y_1} \big( \theta \zeta \big)  
\end{align*}
where we again used the explicit bases in \S\ref{sec:opposites_conformal_limit}.  So under the identification $F^2_\vec \xrightarrow{\sim}  F^2 \cap \Ker N$, which sends $\theta \zeta$ to $D_1 - {\textstyle \frac{1}{3}} D_2$, $\nabla'$ coincides with $\Nabla^P$.  Thus we have shown that the restriction map \eqref{eq:restriction} sends the connection \eqref{eq:first_connection} to the connection \eqref{eq:second_connection}.

\subsection{The Propagators Agree in the Conformal Limit}
\label{sec:propagators_conformal_limit}
In this section, we prove: 
\begin{proposition} 
\label{pro:propagator_reduction} 
Let $\sfP_1$,~$\sfP_2$ be Deligne-extension-compatible 
opposite modules for the big B-model $\log$-cTEP structure 
$(\FBbig,\nablaB,\pairingB)$. 
Let $P_1$,~$P_2$ be the corresponding opposite line bundles.  
The pull-back by $\zeta \colon \cMCY^\circ \to \LLo$ 
of the propagator in the infinite-dimensional 
setting $($Definition $\ref{def:propagator})$ 
\[
\zeta^\star \Delta(\sfP_1,\sfP_2) 
\in \sHom(\zeta^\star \bOmegao(\log \Dbig)^{\otimes 2}, 
\cO_{\cMCY^\circ}) 
\]
is induced from the propagator in the finite-dimensional setting 
$(\text{Definition~}\ref{def:propagator_fd})$ 
\[
\Delta(P_1,P_2) \in \Theta_{\cMCY^\circ} (\log \{0\})^{\otimes 2} 
= 
\sHom((\Omega^1_{\cMCY^\circ}(\log \{0\}))^{\otimes 2}, 
\cO_{\cMCY^\circ})
\]
via the restriction map \eqref{eq:restriction}.
\end{proposition}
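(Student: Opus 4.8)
The plan is to reduce the statement to an identity between finite-dimensional symplectic data, using the explicit bases for the subbundles of $H$ assembled in \S\ref{sec:opposites_conformal_limit}. Both propagators are ultimately built from the same ingredients: the dual symplectic form $\Omega^\vee$, the projections $\Pi_i$ along the opposite modules (resp.\ line bundles), and the Kodaira--Spencer maps. The key point is that the diagrams \eqref{eq:diagram_ambients}, \eqref{eq:diagram_Hodge}, and \eqref{eq:diagram_opposites} are all compatible, so the finite-dimensional propagator is literally the image of the infinite-dimensional one under the maps in these diagrams. I would therefore unwind both definitions and match them term by term.

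First I would recall that, by Definition~\ref{def:propagator}, the infinite-dimensional propagator $\Delta(\sfP_1,\sfP_2)$ is, via $\KS^\star$, the push-forward along $\Pi_1 \times \Pi_2$ of the Poisson bivector on $\FBbig[z^{-1}]$ defined by $\Omega^\vee$; and by Definition~\ref{def:propagator_fd} the finite-dimensional $\Delta(P_1,P_2)$ is $(\KS\otimes\KS)^{-1}(\Pi_1\otimes\Pi_2)\Omega^\vee$ on $\Hvec$. The essential observation is that the symplectic form $\Omega^\vee$ on $\FBbig[z^{-1}]$, restricted to $H$ and then descended to $\Hvec = \Ker N/(\Image N \cap \Ker N)$, agrees with the symplectic form on $\Hvec$ used in the finite-dimensional setting (this is the construction in \S\ref{sec:two} via the infinitesimally symplectic endomorphism $N$). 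I would then use the explicit description $U_1 = \langle z^{-1}D_2^2\rangle + \{s\in\Ker N : [s]\in P\}$ from \S\ref{sec:opposites_conformal_limit} to see that the projection $\Pi_{\sfP_i}$ on $H$ descends to the projection $\Pi_{P_i}$ on $\Hvec$: concretely, $\Image N$ and the extra generator $z^{-1}D_2^2$ lie in $U_1$, so modding out by $N$ collapses $\Pi_{\sfP_i}$ to $\Pi_{P_i}$ exactly.

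The main obstacle, and the step that requires genuine care, is the bookkeeping across the splitting \eqref{eq:splitting}. The infinite-dimensional propagator lives in $\zeta^\star\bOmegao(\log\Dbig)^{\otimes 2}$, whereas the finite-dimensional one lives in $\Theta_{\cMCY^\circ}(\log\{0\})^{\otimes 2}$; the passage between them is precisely the restriction map \eqref{eq:restriction}, which on the dual side is the canonical map $\sFCY^\vee \to (F^2\cap\Ker N)^\vee \cong (F^2_\vec)^\vee$ of Lemma~\ref{lem:splitting_is_canonical}. I would show that under the Kodaira--Spencer identifications $\zeta^\star\bThetao(\log\Dbig)\cong \sFCY$ (Definition~\ref{def:KS}) and $\Theta_{\cMCY^\circ}(\log\{0\})\cong F^2_\vec$ (Definition~\ref{def:Kodaira_Spencer_fd}), the canonical lift $F^2_\vec \xrightarrow{\sim} F^2\cap\Ker N$ singled out by $N$ is exactly what makes the two projections correspond. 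This is the same mechanism that made the connections agree in \S\ref{sec:connections_conformal_limit}, so in practice I would mirror that argument verbatim, computing $\zeta^\star\Delta(\sfP_1,\sfP_2)$ evaluated on $D_1-\tfrac13 D_2$ and checking it equals $\Delta(P_1,P_2)$ evaluated on $\theta\zeta$, using the explicit bases.

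Concretely, I would argue as follows. Fix the generator $D_1-\tfrac13 D_2$ of $F^2\cap\Ker N$, whose class is $\theta\zeta\in F^2_\vec$. Writing $\sfP_i$-projections in the basis of \S\ref{sec:opposites_conformal_limit}, one has $\Pi_{\sfP_i}^\star(\KS^\star)^{-1}$ applied to the relevant covector landing in $\sFCY$, and I would verify that its component along $F^2\cap\Ker N$ is the only one surviving the pairing against the second factor, because $\Image\Pi_j^\star$ is isotropic in the directions killed by $N$ (this is the same isotropy input used in Proposition~\ref{prop:propagator_cocycle}). Pairing the two surviving components via $\Omega^\vee$ then reproduces, verbatim, the computation of $\Delta(P_1,P_2)$ in Example~\ref{ex:propagator}. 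Since both propagators are symmetric bilinear, checking agreement on this single pair of generators (and its obvious scalar multiples over $\cMCY^\circ$) suffices to conclude that $\zeta^\star\Delta(\sfP_1,\sfP_2)$ is induced from $\Delta(P_1,P_2)$ via \eqref{eq:restriction}, which is the assertion of Proposition~\ref{pro:propagator_reduction}.
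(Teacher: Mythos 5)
Your overall framework is right, and your final step (matching the two propagators on the generator $D_1-\tfrac13 D_2$ versus $\theta\zeta$) is close to what the paper does, but there is a genuine gap earlier in the argument. The restriction map \eqref{eq:restriction}, read dually as $\sFCY^\vee \to (F^2\cap\Ker N)^\vee \cong (F^2_\vec)^\vee$, is a quotient of an infinite-rank module onto a line bundle. For the assertion ``$\zeta^\star\Delta(\sfP_1,\sfP_2)$ is \emph{induced from} $\Delta(P_1,P_2)$'' to hold, you must first prove that the bilinear form $\zeta^\star\Delta(\sfP_1,\sfP_2)$ factors through this quotient --- equivalently, that the tensor $V_{\CY}=(\Pi_1\otimes\Pi_2)\Omega^\vee\big|_{\cMCY^\circ}$ actually lies in $(F^2\cap\Ker N)^{\otimes 2}=\langle (D_2-3D_1)^{\otimes 2}\rangle$. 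Checking the value on one generator cannot establish this: you also need to show the form vanishes on every covector killed by the restriction map. Your proposed mechanism for this (``$\Image\Pi_j^\star$ is isotropic in the directions killed by $N$'') is not sufficient and is not the argument that works. The paper's Proposition~\ref{pro:propagator_restricts} pins $V_{\CY}$ down by combining (i) homogeneity, $(\Gr\otimes 1+1\otimes\Gr)V_{\CY}=2V_{\CY}$ from Lemma~\ref{lem:propagator_grading}, which confines $V_{\CY}$ to total degree two, and (ii) the identity $(D_2\otimes 1-1\otimes D_2)V_{\CY}=0$, which follows from $D_2\Pi_i=\Pi_i D_2$ and self-adjointness of $D_2$; even then a component-by-component analysis is needed to kill the terms $1\otimes a_2$, $a_2\otimes 1$ and $\gamma_{22}\,D_2\otimes D_2$, none of which is excluded by isotropy alone.

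Two smaller points. Your claim that ``$\Image N$ and the extra generator $z^{-1}D_2^2$ lie in $U_1$'' is false: $D_2\in\Image N$ but $D_2\in F^2\setminus U_1$. The descent of $\Pi_{\sfP_i}$ to $\Pi_{P_i}$ works for a different reason, namely that $\Image N=(\Image N\cap F^2)\oplus(\Image N\cap U_1)$, so the projection along $U_1$ preserves $\Image N$ and passes to the quotient $\widebar H$. Finally, once Proposition~\ref{pro:propagator_restricts} is in hand, the paper's identification of $V_{\CY}$ with $V_{\rm fd}$ avoids explicit bases entirely: for $\varphi=\Omega(v_i,\cdot)$ with $v_i\in\sfP_i$ one has $\iota_\varphi V_{\CY}=v_1-v_2$, which is degree one and hence equals the difference of the degree-one parts $w_1-w_2$ whose classes lie in $P_1$, $P_2$; this is cleaner than the coordinate computation you propose, though your version would also work at that stage.
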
  

The $\log$-cTEP structure $(\sFCY, \nabla, (\cdot,\cdot))$ 
with base $(\cMCY,\DCY)$ 
carries an $\cO_{\cMCY}$-linear grading operator:
\[
\Gr(P) = \big[ \textstyle z\frac{\partial}{\partial z}, P \big]
\]
This is the grading inherited from the GKZ system 
(Definition~\ref{def:GKZ_grading}).
It is a shift of the grading operator $\gr$ 
inherited from the big B-model $\log$-cTEP structure 
so that $\Gr = \gr + \frac{3}{2}$: see Example \ref{ex:Gr_gr}.  
The $\cO_{\cMCY}[\![z]\!]$-module $\sFCY$ decomposes as:
\[
\sFCY = \prod_{i=0}^\infty \sFCY^{(i)}
\]
where $\sFCY^{(i)}$ is the sub-bundle of degree~$i$ with respect to $\Gr$.  We have:
\[
\sFCY^{(i)} = 
\begin{cases}
  \big\langle 1 \big\rangle & i=0 \\
  \big\langle z, D_2, D_2-3D_1 \big\rangle & i=1 \\
  \big\langle z^2, z D_2, z(D_2-3D_1), D_2^2, D_1 (D_2-3D_1) \big\rangle & i=2 \\
  \big\langle z^3, z^2 D_2, z^2(D_2-3D_1), z D_2^2, z D_1 (D_2-3D_1), D_2^3 \big\rangle & i=3 \\
  z^{i-3} \sFCY^{(3)} & i \geq 4
\end{cases}
\]
Note that $\sFCY^{(1)} = F^2 \subset H$. 
Recall from Definition \ref{def:propagator} that the propagator 
$\Delta(\sfP_1,\sfP_2)$ 
is induced from the tensor 
$V\in \FBbig \hotimes \FBbig = 
\sHom(\FB^{{\rm big}\vee} \otimes \FB^{{\rm big} \vee}, 
\cO_{\cMBbig})$: 
\[
V(\varphi_1,\varphi_2) 
:= \Omega^\vee(\Pi_1^\star\varphi_1, 
\Pi_2^\star\varphi_2)
\]
as $\Delta(\sfP_1,\sfP_2) = (\KS \otimes \KS) \pr^\star(V)$, 
where $\Pi_i \colon \FBbig[z^{-1}] \to \FBbig$ is the projection 
along $\sfP_i$. 
\begin{proposition}
\label{pro:propagator_restricts}
Let $V_{\CY}$ denote the restriction of 
$V$ to $\cMCY^\circ$. 
Then we have: 
\[
V_{\CY} \in 
\left\langle (D_2 - 3D_1)^{\otimes 2} \right\rangle = 
 (F^2\cap \Ker N)^{\otimes 2} \subset \sFCY^{(1)} 
\otimes \sFCY^{(1)}. 
\]  
\end{proposition}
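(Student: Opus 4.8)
The plan is to deduce the statement from three inputs: a grading constraint forcing $V_{\CY}$ into total degree two, the isotropy of the opposite modules, and the explicit Rees description of $\sfP_1,\sfP_2$ from \S\ref{sec:opposites_conformal_limit}. Throughout I would write $\Pi_i\colon \sFCY[z^{-1}]\to\sFCY$ for the projection along $\sfP_i$, so that $V_{\CY}=(\Pi_1\otimes\Pi_2)\Omega^\vee$ with $\Omega^\vee\in\sFCY[z^{-1}]\otimes\sFCY[z^{-1}]$, and then compute the relevant graded pieces of $V_{\CY}$ by hand.

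First I would establish the grading constraint. Each $\Pi_i$ commutes with the grading operator by Lemma~\ref{lem:Pi_commutes_with_gr}, so $V_{\CY}$ is homogeneous for $\Gr\otimes 1+1\otimes\Gr$; combining Lemma~\ref{lem:propagator_grading} with the shift $\Gr=\gr+\tfrac{3}{2}$ of Example~\ref{ex:Gr_gr} yields $(\Gr\otimes 1+1\otimes\Gr)V_{\CY}=2V_{\CY}$. Using the grading decomposition $\sFCY=\prod_i\sFCY^{(i)}$ recorded just before the statement, this places $V_{\CY}$ in $\bigoplus_{i+j=2}\sFCY^{(i)}\otimes\sFCY^{(j)}$, so only the $(0,2)$, $(1,1)$ and $(2,0)$ graded components can be nonzero.

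Next I would use isotropy to rewrite $V_{\CY}$ through the \emph{difference} of the two projections. Since $\Delta(\sfP_i,\sfP_i)=0$ by the cocycle relation $\Delta_{13}=\Delta_{12}+\Delta_{23}$ stated after Definition~\ref{def:propagator}, we have $(\Pi_i\otimes\Pi_i)\Omega^\vee=0$. Writing $\Pi_2=\Pi_1+(\Pi_2-\Pi_1)$ in the second slot and $\Pi_1=\Pi_2+(\Pi_1-\Pi_2)$ in the first, these vanishings give $V_{\CY}\in W\otimes W$, where $W:=\Image(\Pi_1-\Pi_2)=\Pi_2(\sfP_1)$ is the graded image of the difference of the projections. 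It then suffices to compute the low-degree graded pieces of $W$. In degree one, $\Pi_i$ is the finite projection $H\to F^2$ along $U_1^{(i)}$, so $W^{(1)}\subseteq F^2\cap(U_1^{(1)}+U_1^{(2)})$; since $U_1^{(1)}$ and $U_1^{(2)}$ share the generators $z^{-1}D_2^2,\,z^{-2}D_2^3$ and differ only by the multiple $a(D_1-\tfrac{1}{3}D_2)$ of $D_1-\tfrac{1}{3}D_2\in F^2\cap\Ker N$ (see \S\ref{sec:opposites_conformal_limit} and \S\ref{subsubsec:ker_im_N}), this intersection is the line $\langle D_1-\tfrac{1}{3}D_2\rangle$.

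The hard part will be the vanishing $W^{(0)}=0$, equivalently $\sfP_1^{(0)}=\sfP_2^{(0)}$, which is exactly what rules out the $(0,2)$ and $(2,0)$ components. Here I would invoke compatibility with the Deligne extension through the Rees module $z^{-2}U_3+z^{-1}U_2+U_1+z U_0$ from the proof of Proposition~\ref{pro:correspondence_of_opposites}: the degree-zero part of $\sfP_i$ equals $z^{-1}U_2$ with $U_2=\Ker N+\Image N$, which does not depend on the opposite-line-bundle parameter $a$, so the only $a$-dependent generator $z^{-1}(1+27y_1)(D_1-\tfrac{1}{3}D_2)^2+a(D_1-\tfrac{1}{3}D_2)$ contributes below degree one solely through elements already lying in $z^{-1}U_2$; hence $\Pi_1,\Pi_2$ agree in degree zero. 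Feeding $W^{(0)}=0$ and $W^{(1)}\subseteq\langle D_1-\tfrac{1}{3}D_2\rangle$ into $V_{\CY}\in\bigoplus_{i+j=2}W^{(i)}\otimes W^{(j)}$ then confines $V_{\CY}$ to $\langle(D_1-\tfrac{1}{3}D_2)^{\otimes 2}\rangle=\langle(D_2-3D_1)^{\otimes 2}\rangle=(F^2\cap\Ker N)^{\otimes 2}$, as claimed.
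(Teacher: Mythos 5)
Your argument is correct, but it is not the paper's argument, so let me compare the two routes. Both proofs start from the same grading constraint $(\Gr\otimes 1+1\otimes\Gr)V_{\CY}=2V_{\CY}$. From there the paper exploits the residue endomorphism: since $D_2=-zN$ preserves both $\sFCY$ and $\sfP_i|_{\cMCY}$ and is self-adjoint for $\Omega$, one gets $(D_2\otimes 1-1\otimes D_2)V_{\CY}=0$, and writing $V_{\CY}=1\otimes a_2+\sum\gamma_{ij}\phi_i\otimes\phi_j+a_2\otimes 1$ in the basis $(z,D_2,D_2-3D_1)$ of $\sFCY^{(1)}$ and reading off the graded components of this identity kills $a_2$ and all $\gamma_{ij}$ except $\gamma_{33}$. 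You instead use isotropy to reduce to $W=\Image(\Pi_1-\Pi_2)$ and then compute $W^{(0)}$ and $W^{(1)}$ from the Rees description of Deligne-compatible opposites: this is a genuinely different mechanism, and a nice one, since it makes explicit that the propagator only sees the \emph{difference} of the two polarizations, and that the $(0,2)$ and $(2,0)$ components vanish for the transparent reason that $U_0$ and $U_2$ are canonical (independent of the parameter $a$) so the two opposite modules literally agree in degrees $\le 0$. The trade-off is that your route leans on more input: you need that each $\sfP_i$ really is the Rees module of its filtration $U_\bullet$ (this follows from the homogeneity of $\sfP_i$ established in the $(\text{D})\Rightarrow(\text{C})$ step of Proposition~\ref{pro:correspondence_of_opposites}, but it deserves a sentence), and you should justify $(W\otimes\sfF)\cap(\sfF\otimes W)=W\otimes W$ — harmless here because the claimed containment is fiberwise and in each fixed degree everything is finite-dimensional, but $W^{(1)}$ need not be a subbundle where $a_1-a_2$ vanishes. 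The paper's route avoids the explicit classification of opposites entirely, using only the structural fact $N(U_p)\subset U_{p-1}$, at the cost of a more computational finish. Both are valid.
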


\begin{proof}
  Lemma~\ref{lem:propagator_grading} implies that 
$(\Gr \otimes 1 + 1 \otimes \Gr) V_{\CY} = 2 V_{\CY}$,
  and therefore that:
  \[
  V_{\CY} \in \left(\sFCY^{(0)} \otimes \sFCY^{(2)}\right) \oplus
  \left( \sFCY^{(1)} \otimes \sFCY^{(1)} \right) \oplus
  \left( \sFCY^{(2)} \otimes \sFCY^{(0)} \right)
  \]
  Let us write:
  \[
  V_{\CY} = 1 \otimes a_2 + 
  \sum \gamma_{ij} \phi_i \otimes \phi_j
  + a_2 \otimes 1
  \]
where $a_2 \in \sFCY^{(2)}$, $\gamma_{ij}$ is symmetric in $i$ and $j$,
and $(\phi_1,\phi_2,\phi_3) = (z,D_2,D_2-3D_1)$ 
is a basis for $\sFCY^{(1)}$. 
We claim that the following equation holds: 
\begin{equation}
    \label{eq:D2_and_propagator}
    \big( D_2 \otimes 1 - 1 \otimes D_2 \big) V_{\CY} = 0
\end{equation}
where $D_2 = -z N$ is the endomorphism of $\sfF_{\CY}$ 
(see \S \ref{sec:six}). 
To see this, note that, since $D_2=-zN$ preserves both $\sfF_{\CY}$ 
and $\sfP_i|_{\cMCY}$, we have that $D_2 \Pi_i = \Pi_i D_2$.  
Thus:
\begin{align*} 
\big( D_2 \otimes 1 - 1 \otimes D_2 \big) V_{\CY} 
& = \big( D_2 \otimes 1 - 1 \otimes D_2 \big) 
\big(\Pi_1 \otimes \Pi_2\big) \Omega^\vee \\
& = \big(\Pi_1 \otimes \Pi_2\big) 
\big( D_2 \otimes 1 - 1 \otimes D_2 \big) \Omega^\vee
\end{align*} 
which is zero as $D_2$ is self-adjoint with respect to $\Omega$.  
Writing out the graded pieces of \eqref{eq:D2_and_propagator} yields:
  \begin{align*}
    0 &= 1 \otimes D_2 a_2 && \text{$(0,3)$ component}\\
    0 &= D_2 \otimes a_2 - \sum 
\gamma_{ij} \phi_i \otimes D_2 \phi_j && \text{$(1,2)$ component} 
\end{align*}
The first equation shows that $a_2 \in \Ker D_2$.  
The second equation gives $\phi_i \ne D_2 \implies D_2 \phi_j = 0$, 
i.e.~$\phi_j=D_2-3D_1$,  
and thus $\gamma_{11}=\gamma_{12}=\gamma_{31}=
\gamma_{32} = 0$.  
Symmetry of $\gamma$ gives 
\[
\gamma = \begin{pmatrix} 0 & 0 & 0 \\ 0 & \gamma_{22} & 0 \\ 
0 & 0 & \gamma_{33} \end{pmatrix} 
\]
The second equation now becomes
$D_2\otimes a_2 = \gamma_{22} D_2\otimes D_2^2$,  
and thus $a_2 = \gamma_{22} D_2^2$. 
Since $D_2 a_2 = 0$, we conclude that $\gamma_{22}=0$ 
and $a_2=0$. 
Thus $V_{\CY} = \gamma_{33}(D_2-3D_1)
\otimes (D_2-3D_1)$ and the Proposition follows.
\end{proof}

\begin{proof}[Proof of Proposition~\ref{pro:propagator_reduction}] 
In view of the diagram \eqref{eq:canonical_splitting} and 
Proposition~\ref{pro:propagator_restricts}, 
it suffices to show that the element $V_{\rm fd} \in 
F^2_\vec\otimes F^2_\vec$ defined by 
\[
V_{\rm fd}(\varphi_1,\varphi_2) := 
\Omega^\vee(\pi_1^\star \varphi_1,\pi_2^\star \varphi_2) 
\]
coincides with $V_{\CY}$ under the identification 
$F^2_\vec \cong F^2 \cap \Ker N$,  
where $\pi_i \colon \Hvec \to F^2_\vec$ is the projection 
along $P_i$ and $\varphi_i \in (F^2_\vec)^\vee$. 
Take $\varphi \in \sFCY^\vee$ and 
choose $v_i \in \sfP_i|_{\cMCY^\circ}$ 
such that $\varphi = \Omega(v_i,\cdot)$ 
for $i=1,2$; 
then we have 
\[
\iota_\varphi V_{\CY} = v_1 - v_2 \in \sFCY. 
\]
We know that $v_1-v_2$ lies in $F^2\cap \Ker N$ 
by Proposition \ref{pro:propagator_restricts}. 
On the other hand, let $\ov{\varphi}\in 
(F_2\cap \Ker N)^\vee \cong (F^2_\vec)^\vee$ 
be the image of $\varphi$ 
and let $w_i\in H$ be the degree-1 part of $v_i$. 
Then we have $\ov{\varphi}= \Omega(w_i,\cdot)$ on 
$F^2_\vec\cap \Ker N$. 
By the correspondence between $\sfP_i$ and $P_i$ 
in \S\ref{sec:opposites_conformal_limit}, 
the image $[w_i]$ of $w_i$ in $\widebar{H}=\Cok N$ lies in $P_i$ 
and thus: 
\[
\iota_{\ov{\varphi}} V_{\rm fd} = [w_1] - [w_2] 
\in F^2_\vec \subset \widebar{H}. 
\]
Since $v_1-v_2$ is of degree 1, 
we have $v_1-v_2 = w_1- w_2$. Thus $v_1-v_2$ corresponds 
to $[w_1]-[w_2]$ under the isomorphism 
$F^2 \cap \Ker N \cong F^2_\vec$. 
The conclusion follows.  
\end{proof} 

\subsection{The Restriction Map on Fock Sheaves}
\label{sec:restriction_conformal_limit}

As discussed, correlation functions for the B-model Fock sheaf $\Fock_{\rm B}$ are local sections $\Nabla^n C^{(g)}$ of $\bOmegao^1(\log \Dbig)^{\otimes n}$ satisfying the conditions (Yukawa), (Jetness), (Grading and Filtration), and (Pole). Applying the restriction map \eqref{eq:restriction} to such correlation functions $\{\Nabla^n C^{(g)}\}_{g,n}$ yields local sections of $\Omega^1_{\cMCY^\circ}(\log \{0\})^{\otimes n}$ which satisfy (Yukawa), by Example~\ref{ex:Yukawa_conformal_limit}, and (Jetness), by \S\ref{sec:connections_conformal_limit}.  To show that we get a restriction map on Fock sheaves
\begin{equation} 
\label{eq:restriction_Focksheaf}
i^{-1} \Fock_{\rm B} \to \Fock_{\CY}^\circ
\end{equation} 
it remains only to check that the restriction map takes the propagator for the big B-model Fock sheaf $\Fock_{\rm B}$ to the propagator for the finite-dimensional Fock sheaf $\Fock_{\CY}^\circ$.  This is the content of \S\ref{sec:propagators_conformal_limit}.  Theorem~\ref{thm:Fock_restriction} is proved.

\subsection{A Global Section of the Finite-Dimensional Fock Sheaf}
\label{sec:global_section_conformal_limit}

Applying the restriction map \eqref{eq:restriction_Focksheaf} to the global section $\wave_{\rm B}$ of $\Fock_{\rm B}$ (see Theorem \ref{thm:wave_B}) gives a global section 
$\wave_{\CY}$ of $\Fock_{\CY}^\circ$.  
We can compute the correlation functions of $\wave_{\CY}$ with respect to the opposite line bundle $P_\LR$ by applying the restriction map to the Gromov--Witten wave function $\wave_{\Ybar}$, which is a global section of 
$\Fock_{{\rm A}, \Ybar}$.  
With notation as in Definition~\ref{def:GW-wave}, the mirror map (see Theorem~\ref{thm:mirrorsymmetryforYbar}) gives:
\begin{align*}
  \begin{aligned}
      t^1= \log q_1 &= \log y_1 - 3 g(y_1) \\
      t^2 = \log q_2 &= \log y_2 + g(y_1)
  \end{aligned}
  && \text{where} && g(y_1) = \sum_{d=1}^\infty \frac{(3d-1)!}{(d!)^3} (-1)^{d+1} y_1^d.
\end{align*}
Thus $\frac{1}{3} d t^1 + dt^2 = \frac{1}{3} d \log y_1 + d\log y_2$, and the splitting \eqref{eq:splitting} in these co-ordinates 
is given by $du_2 = \frac{1}{3} dt^1 + dt^2$.  
Restricting to the image of $\zeta\big|_{\cMCY^\circ}$ sets 
\begin{align*}
t^0 = t^3 = t^4 = t^5 = 0 && q_2=0 && x^i_n =
 \begin{cases}
  {-1} & \text{if $n=1$ and $i=0$} \\
   0 & \text{otherwise.}
 \end{cases}
\end{align*} 
Let 
$\wave_{\Ybar} = \{\Nabla^n C^{(g)}_{\Ybar}\}_{g,n}$ 
denote the Gromov--Witten wave function of $\Ybar$. 
Then the restriction 
of $\Nabla^n C^{(g)}_{\Ybar}$ to $\zeta(\cMCY^{\circ})$ 
under the map \eqref{eq:restriction}
is given by: 
\[
\left. \left( \partial_{t^1} - {\textstyle \frac{1}{3}}\partial_{t^2}\right)^n 
F^{(g)}_{\Ybar}(t)
\right|_{Q_1=Q_2 =1, t^0=t^3=t^4=t^5=0, q_2 =0} (dt^1)^{\otimes n}  
\]
where $F^{(g)}_{\Ybar}$ is the Gromov--Witten potential 
\eqref{eq:GW_potential} of $\Ybar$. 
Writing 
\begin{align*}
  n_{g,d} = \corr{\vphantom{\big\vert}}^{\Ybar}_{g,n,(d,0)} = \corr{\vphantom{\big\vert}}^{Y}_{g,n,d}
  && d>0, 
\end{align*}
we have 
\begin{align}
\label{eq:GW-wave_Y}
\begin{aligned}  
\text{the restriction of $\Nabla^3 C^{(0)}_{\Ybar}$} 
& = 
\left( {- \frac{1}{3}} + \sum_{d=1}^\infty d^3 n_{0,d} 
\, q_1^d \right)\Big( \frac{dq_1}{q_1} \Big)^{\otimes 3} && \\ 
\text{the restriction of $\Nabla C^{(1)}_{\Ybar}$} 
& = \left( {- \frac{1}{12}} + \sum_{d=1}^\infty d n_{1,d} 
\, q_1^d \right) \frac{dq_1}{q_1} && \\ 
\text{the restriction of $C^{(g)}_{\Ybar}$} & = 
\sum_{d=0}^\infty n_{g,d} \, q_1^d && \text{for $g\ge 2$}
\end{aligned} 
\end{align} 
where $q_1 = e^{t^1}$ and we used the fact that 
$\corr{(h_1-\frac{1}{3} h_2)^3}_{1,3,0}^{\Ybar} = 
\int_{\Ybar} (h_1-\frac{1}{3} h_2)^3 = -\frac{1}{3}$ 
and 
$\corr{h_1-\frac{1}{3} h_2}^{\Ybar}_{1,1,0} 
= {-\frac{1}{24}} \int_{\Ybar} (h_1-\frac{1}{3} h_2) 
\cup c_2(\Ybar) = -\frac{1}{12}$.  
These are the correlation functions of $\wave_{\CY}$ with 
respect to the opposite line bundle $P_\LR$ and coincide with 
(the derivatives of) the Gromov--Witten potentials of $Y$. 

In a similar way, we can compute the correlation functions of $\wave_{\CY}$ with respect to the opposite line bundle $P_\orb$ by applying the restriction map \eqref{eq:restriction_Focksheaf} to the Gromov--Witten wave function $\wave_{\cXbar}$.
Let $\{\frt=t^4, \log q=t^1\}$ denote the co-ordinates on 
$H^2_\orb(\cXbar)$ dual to $\{\unit_{\frac{1}{3}}, h\}$ 
defined in \S\ref{sec:bases}. 
Recall that the mirror map in Theorem~\ref{thm:mirrorsymmetryforXbar} gives
\begin{align*}
\frt & = \sum_{n=0}^\infty (-1)^n 
\frac{\prod_{j=0}^{n-1}(\frac{1}{3}+j)^3}{(3n+1)!}
\fry_1^{3n+1}, && 
\log q  = 3 \log \fry_2.
\end{align*} 
Thus the splitting \eqref{eq:splitting} is given 
in these co-ordinates by $du_2 = \frac{1}{3}d\log q$. 
Writing 
\begin{align*} 
n^\orb_{g,k} = \corr{\unit_{\frac{1}{3}},\dots,
\unit_{\frac{1}{3}}}
^{\cXbar}_{g,k,0} = 
\corr{\unit_{\frac{1}{3}},\dots,\unit_{\frac{1}{3}}}^{\cX}_{g,k,0} 
&& \text{when $2g-2+k>0$} 
\end{align*} 
we have: 
\begin{align}
\label{eq:GW-wave_X}  
\begin{aligned} 
\text{the restriction of $\Nabla^3C^{(0)}_{\cXbar}$} &= 
\left(\sum_{k=0}^\infty n^\orb_{0,k+3} \frac{\frt^k}{k!}
\right)  (d\frt)^{\otimes 3} && \\
\text{the restriction of $\Nabla C^{(1)}_{\cXbar}$} & =  
\left(\sum_{k=0}^\infty n^\orb_{1,k+1} \frac{\frt^k}{k!} \right) 
d\frt  && \\ 
\text{the restriction of $C^{(g)}_{\cXbar}$} & = 
\sum_{k=0}^\infty n^\orb_{g,k} \frac{\frt^k}{k!} 
&& \text{for $g\ge 2$.}  
\end{aligned} 
\end{align} 
These are the correlation functions for $\wave_{\CY}$ 
with respect to the opposite line bundle $P_\orb$ 
and coincide with (the derivatives of) the Gromov--Witten potentials of $\cX$.  This proves:

\begin{theorem}[cf.~Theorem~\ref{thm:wave_B}] 
\label{thm:wave_CY} 
Let $\wave_{\CY}$ be the section of the Fock sheaf $\Fock_{\CY}^\circ$ 
over $\cMCY^{\circ}$ given as the restriction of  the global section
$\wave_{\rm B}\in \Fock_{\rm B}$ 
under the map \eqref{eq:restriction_Focksheaf}. 
Then: 
\begin{itemize} 
\item[(a)] around $y_1=0$ and 
with respect to the opposite line bundle $P_\LR$, the 
correlation functions of $\wave_{\CY}$ are given by 
the Gromov--Witten potential of $Y$ as in \eqref{eq:GW-wave_Y}; 

\item[(b)] around $y_1=\infty$ and 
with respect to the opposite line bundle $P_\orb$, the 
correlation functions of $\wave_{\CY}$ are given by 
the Gromov--Witten potential of $\cX$ as in \eqref{eq:GW-wave_X}. 
\end{itemize} 
\end{theorem}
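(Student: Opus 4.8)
The plan is to assemble Theorem~\ref{thm:wave_CY} from the pieces already constructed, since the hard analytic and algebraic work has been done in Theorems~\ref{thm:wave_B} and~\ref{thm:Fock_restriction}. First I would recall that $\wave_{\CY}$ is \emph{defined} as the image of the global section $\wave_{\rm B} \in \Fock_{\rm B}$ under the restriction map $i^{-1}\Fock_{\rm B} \to \Fock_{\CY}^\circ$ of Theorem~\ref{thm:Fock_restriction}; that this image is a genuine global section of $\Fock_{\CY}^\circ$ is immediate because the restriction map is a map of sheaves and $\wave_{\rm B}$ is global. So the content of the theorem is purely the identification of the correlation functions near the two cusps $y_1=0$ and $y_1=\infty$ with the Gromov--Witten data of $Y$ and $\cX$.

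The key step is to carry out the explicit computation in a neighbourhood of the large-radius limit point using the opposite line bundle $P_\LR$. By Theorem~\ref{thm:wave_B}(a), near the large-radius limit point $\wave_{\rm B}$ corresponds under the mirror isomorphism of Theorem~\ref{thm:big_mirror_symmetry_Ybar} to the Gromov--Witten wave function $\wave_{\Ybar}$, and that same isomorphism intertwines $\bP_\LR$ (hence $\sfP_\LR$) with the canonical A-model opposite module $\sfP_{\rm A}$. Thus the $P_\LR$-correlation functions of $\wave_{\CY}$ are the restrictions, via the map~\eqref{eq:restriction}, of the correlation functions $\Nabla^n C^{(g)}_{\Ybar}$ in~\eqref{eq:GW_wave}. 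I would then substitute the mirror map from Theorem~\ref{thm:mirrorsymmetryforYbar}, giving $t^1 = \log y_1 - 3g(y_1)$ and $t^2 = \log y_2 + g(y_1)$, so that $\tfrac{1}{3}dt^1 + dt^2 = \tfrac{1}{3}d\log y_1 + d\log y_2 = du_2$; this confirms that the canonical splitting~\eqref{eq:splitting} is exactly $du_2$ in flat coordinates and that the directional derivative extracted by~\eqref{eq:restriction} is $\partial_{t^1} - \tfrac{1}{3}\partial_{t^2}$, as asserted. Evaluating along $\zeta(\cMCY^\circ)$ sets $t^0=t^3=t^4=t^5=0$, $q_2=0$, and imposes the dilaton-shifted jet values $x^i_n = -\delta_n^1\delta^i_0$, reducing the ancestor potentials $C^{(g)}_{\Ybar}$ to the primary genus-$g$ potentials of $\Ybar$. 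The degree-$(d,0)$ invariants of $\Ybar$ coincide with the degree-$d$ invariants $n_{g,d}$ of $Y$, and I would pin down the two anomalous constant terms $-\tfrac13$ and $-\tfrac{1}{12}$ via the stated intersection numbers $\int_{\Ybar}(h_1-\tfrac13 h_2)^3 = -\tfrac13$ and $-\tfrac{1}{24}\int_{\Ybar}(h_1-\tfrac13 h_2)\cup c_2(\Ybar) = -\tfrac{1}{12}$, yielding precisely~\eqref{eq:GW-wave_Y}.

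The orbifold-point computation is formally identical: by Theorem~\ref{thm:wave_B}(b) and Remark~\ref{rem:big_mirror_symmetry_cXbar}, near $y_1=\infty$ the section $\wave_{\rm B}$ corresponds to $\wave_{\cXbar}$ with $\sfP_\orb$ matching $\sfP_{\rm A}$, and the mirror map of Theorem~\ref{thm:mirrorsymmetryforXbar} gives $\frt = \frt(\fry_1)$ and $\log q = 3\log\fry_2$, so the splitting becomes $du_2 = \tfrac13 d\log q = d\log\fry_2$ and the extracted derivative is $\partial_\frt$. Restricting along $\zeta$ and using that the age-one orbifold invariants of $\cXbar$ of degree zero equal those of $\cX$ produces~\eqref{eq:GW-wave_X}. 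I expect the only delicate point to be bookkeeping: verifying that the restriction map~\eqref{eq:restriction}, which a priori only extracts a directional derivative in the direction dual to the canonical splitting, really does pick out the combination $\partial_{t^1}-\tfrac13\partial_{t^2}$ (respectively $\partial_\frt$) rather than some other logarithmic derivative. This is controlled by Lemma~\ref{lem:splitting_is_canonical} together with the identity $\tfrac13 dt^1 + dt^2 = du_2$ that follows from the divisor-equation form of the mirror map; once this is confirmed the remaining steps are routine substitution and the identification of $\Ybar$- and $\cXbar$-invariants with those of $Y$ and $\cX$.
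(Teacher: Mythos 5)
Your proposal is correct and follows essentially the same route as the paper: Theorem~\ref{thm:wave_CY} is proved there precisely by the computation in \S\ref{sec:global_section_conformal_limit}, namely applying the restriction map \eqref{eq:restriction_Focksheaf} to $\wave_{\rm B}$, invoking Theorem~\ref{thm:wave_B} and the mirror isomorphisms to identify the result near each cusp with $\wave_{\Ybar}$ (resp.\ $\wave_{\cXbar}$), checking that the splitting \eqref{eq:splitting} is $du_2 = \tfrac13 dt^1 + dt^2$ (resp.\ $\tfrac13 d\log q$) in flat coordinates, restricting along $\zeta$ with the dilaton shift, and evaluating the two classical constants $\int_{\Ybar}(h_1-\tfrac13 h_2)^3=-\tfrac13$ and $-\tfrac{1}{24}\int_{\Ybar}(h_1-\tfrac13 h_2)\cup c_2(\Ybar)=-\tfrac1{12}$. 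The one point you flag as delicate — that the extracted directional derivative is $\partial_{t^1}-\tfrac13\partial_{t^2}$ — is handled exactly as you suggest, via the canonicity of the splitting (Lemma~\ref{lem:splitting_is_canonical}) together with the divisor-equation form of the mirror map.
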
 

\begin{remark}[cf.~Remark~\ref{rem:CRC}] 
The existence of a global section $\wave_{\CY}$ 
with these properties establishes a 
higher-genus version of the Crepant Resolution Conjecture~\cite{Ruan:crepant,Bryan--Graber,CIT:wall-crossings,Coates--Ruan,Iritani:Ruan}
for the crepant resolution $Y \to \cX$. 
\end{remark}

\section{Estimates at the Conifold Point}
\label{sec:conifold_estimate}

Given an open set $U \subset \cMCY^\circ$ and an opposite line bundle $P$ over $U$,  correlation functions with respect to $P$ for the global section $\wave_{\CY}$ of $\Fock_{\CY}^\circ$ are holomorphic functions on $U$.  Recall that there is a unique opposite line bundle $P_\con$ near the conifold point ${-\frac{1}{27}} \in \cMCY$: see Notation~\ref{nota:opposite_fd}. 
  In this section we show that genus-$g$, $m$-point correlation functions for $\wave_{\CY}$ with respect to $P_\con$ extend meromorphically across the conifold point and have a pole of order at most $2g-2+m$ there. 
This shows that $\wave_{\CY}$ satisfies the conifold pole condition in 
Definition~\ref{def:local_Fock_space_fd}, and thus that $\wave_{\CY}$ extends to a global section of 
$\Fock_{\CY}$ over $\cMCY$.  This follows immediately from the corresponding statement about  $\wave_{\rm B}$:

\begin{theorem}
  \label{thm:conifold_estimate}
Let $\bP_\con$ denote the unique opposite module for $\cFBbig$ 
at the conifold point that is compatible with the Deligne extension 
$($see Proposition~$\ref{pro:LR_conifold_orbifold})$  
and let $\sfP_\con$ denote the corresponding opposite module for 
$\FBbig$ $($see Example~$\ref{ex:opposite_B_log-cTEP})$.   
Consider the pull-back of the genus-$g$, $m$-point correlation 
function for $\wave_{\rm B}$ with respect to $\sfP_\con$ along the primitive section $\zeta \colon 
\cMB^\circ \to \LLo$; 
this gives a local section of $\Omega^1_{\cMB^\circ}(\log D)^{\otimes m}$
which is defined in a neighbourhood of 
the conifold divisor $(y_1=-\frac{1}{27})$
but is not defined on the divisor itself. 
This extends meromorphically across the conifold divisor, and has 
a pole of order at most $2g-2+m$ there. 
\end{theorem}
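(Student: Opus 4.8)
The plan is to bound the pole order of the genus-$g$, $m$-point correlation functions for $\wave_{\rm B}$ with respect to $\sfP_\con$ by examining the ingredients of Givental's formula for higher-genus potentials at a point where the associated $\log$-cTEP structure is tame semisimple. The key point is that $\wave_{\rm B}$ was constructed (Theorem~\ref{thm:wave_B}) as Givental's quantization of the tame-semisimple $\log$-cTEP structure $\FBbig$, and that the conifold divisor $(y_1 = -\frac{1}{27})$ lies in the tame-semisimple locus $U_{\tss}$ after restricting to $\cMB^\circ$, since $(y_1 = -\frac{1}{27})$ is not a component of $D$. So $\wave_{\rm B}$ near the conifold is governed by the canonical local structure: canonical co-ordinates $u_i(y)$ given by the critical values of the superpotential $W_y$, a diagonalizing transformation $\Psi(y)$ of the pairing into the canonical frame, and the $R$-matrix $R(y,z)$ solving the homogeneity equation. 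Givental's formula expresses each genus-$g$ descendant potential as a sum of graph contributions built from these data, so the pole order is controlled by the pole orders of $u_i$, $\Psi$, $R$, and the propagator $\Delta(\sfP_\con,\cdot)$ at the conifold.

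\emph{First} I would make the reduction to $\wave_{\rm B}$ explicit: since the restriction map \eqref{eq:restriction} and the opposite line bundle $P_\con$ are induced from $\sfP_\con$ in the conformal limit, the statement for $\wave_{\CY}$ (that genus-$g$, $m$-point functions have poles of order $\le 2g-2+m$ at the conifold) follows at once from the same bound for $\wave_{\rm B}$ pulled back along $\zeta$. This is precisely the deduction in the theorem's preamble, so the content lies entirely in the $\wave_{\rm B}$ estimate. \emph{Second}, I would analyze the behaviour of the canonical data near the conifold. As $y_1 \to -\frac{1}{27}$ two critical values of $W_y$ collide, so one of the canonical co-ordinate differences $u_i - u_j$ vanishes; I would compute its vanishing order in a local co-ordinate $s = 1+27 y_1$ (expecting $u_i - u_j \sim s^{1/2}$ after choosing an appropriate local uniformizer, the standard conifold/$A_1$ behaviour). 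This controls the pole orders of $\Psi$ and of the $R$-matrix entries $R_k$ through the recursive differential equations that determine them, and it is what feeds the worst poles into the graph sum.

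\emph{Third}, I would do the bookkeeping on Givental's graph sum. Each stable genus-$g$ graph with $m$ legs contributes a product of vertex factors (determined by $\Psi$ and the canonical $R$-deformed potentials), edge factors (the propagator), and leg factors. The crucial combinatorial input is that the exponent $2g-2+m$ is the total ``Euler characteristic'' weight which is \emph{additive under edge-cutting and vertex-splitting}: $2g-2+m = \sum_v (2g_v - 2 + n_v)$ over vertices, after accounting for edges and legs. I would assign to each edge and each vertex a local pole order in $s$, show each elementary piece contributes at most its share of the $s$-power, and then sum over the graph to obtain the total bound $2g-2+m$. This is parallel to the additivity argument already used in Lemma~\ref{lem:regularity_curved}, where $2g-2+n$ was shown to be additive under graph contractions, and I would import that additivity statement.

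\textbf{The hard part} will be the precise pole-order estimate of the $R$-matrix near a generic conifold/$A_1$ collision of canonical co-ordinates, together with verifying that the propagator $\Delta(\sfP_\con,\cdot)$ contributes exactly the correct power of $s$ so that the per-edge and per-vertex contributions assemble to the sharp bound $2g-2+m$ rather than something weaker. One must track how the half-integer behaviour $u_i - u_j \sim s^{1/2}$ interacts with the $z$-expansion of $R$ and with the explicit gauge given by $\sfP_\con$ from Proposition~\ref{pro:LR_conifold_orbifold}; the frame $1, D_2, D_2^2, D_2^3, D_1, (1+27y_1)D_1^2$ is precisely adapted to the conifold, and I expect the factor $(1+27y_1)$ there to absorb exactly the singular behaviour, giving the sharp constant. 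Establishing this optimal (rather than merely finite) pole order is where the real work lies; the graph combinatorics, once the local estimates are in hand, is then a routine additivity argument.
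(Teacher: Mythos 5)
Your strategy coincides with the paper's: Givental's graph-sum formula near the conifold, pole estimates for the $R$-matrix extracted from the behaviour of the superpotential as $T=y_1+\tfrac{1}{27}\to 0$, and the Euler-characteristic additivity $2g-2+m=\sum_v(2g_v-2)+2|E(\Gamma)|+m$ to assemble the bound. Two of your supporting claims are wrong, however, and one omission matters. First, the conifold divisor does \emph{not} lie in the tame-semisimple locus $U_{\tss}$ -- it is not even contained in the base $\cMBbig$, which is a thickening of $\cMB^\circ=\cMB\setminus\{y_1=-\tfrac{1}{27}\}$; tame semisimplicity holds only on a punctured neighbourhood, and its failure in the limit (two critical values of $W_y$ colliding) is precisely the source of the poles. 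If your claim were true the correlation functions would be holomorphic at the conifold and there would be nothing to prove; this sentence contradicts the rest of your plan and should be deleted. Second, the degeneration is not the ``standard $A_1$'' collision you invoke: the two relevant critical points \emph{escape to infinity} ($x_3^c=O(T^{-1/2})$) while their critical values collide like $T^{1/2}$, so the estimates for $R_t$ cannot be imported from the finite $A_1$ local model. The paper derives them by a stationary-phase computation in rescaled logarithmic coordinates, obtaining quarter-integer $T$-orders (entries of $R_t^{-1}$ of order $O(T^{\frac14-\frac n2})$ at order $z^n$; see \S\ref{sec:critical_points} and \eqref{eq:estimate_R_inverse}). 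Third, the sharp exponent $2g-2+m$ is special to the image of the primitive section $\zeta$, which touches the discriminant at the conifold: there $q_1^c=[R_t^{-1}x_1]^c=O(T^{1/4})$ for divergent $c$, and the vertex terms acquire extra poles from the dilaton-shift factors $(q_1^c)^{-(2g_v-2+k+p)}$ (for generic $\bx$ the $0$-point function has a pole of order only $g-1$). The assembly is then not a naive per-edge/per-vertex assignment: the edge term $V_t^{(l,c),(l',c')}=O(T^{-(l+l'+1)/2})$ must be split into two half-edge factors $O(T^{\frac34-\frac l2})$, absorbed into the incident vertices and legs, plus a uniform $O(T^{-2})$ per edge; only after this redistribution does each vertex contribute exactly $-(2g_v-2)$ and the count close to $-(2g-2+m)$.
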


In outline: this will follow from the Givental's higher-genus formula -- which we used to define the B-model global section $\wave_{\rm B}$, and which expresses each genus-$g$ correlation function as a finite sum over Feynman graphs -- together with an analysis of the stationary phase asymptotics of various oscillating integrals.  The stationary phase analysis will allow us to estimate the pole order of each ingredient of Givental's formula.

\subsection{Critical Points}
\label{sec:critical_points}

Consider the Landau--Ginzburg mirror $(\pi, W, \omega)$ from \S\ref{sec:LG_Ybar}, and identify the fiber of $\pi$ with $(\C^\times)^3$ by setting
\begin{align*}
  w_1 = x_1 x_3, && w_2 = x_2 x_3, && w_3 = \frac{y_1 x_3}{x_1 x_2}, && w_4 = x_3, && w_5 = \frac{y_2}{x_3}
\end{align*}
where $(x_1,x_2,x_3) \in (\C^\times)^3$.  Then the superpotential becomes
\[
W(x_1,x_2,x_3) 
= x_1 x_3 + x_2 x_3 + \frac{y_1 x_3}{x_1 x_2} + x_3 + \frac{y_2}{x_3}
\]
and there are six critical points:
\[
(x_1^c, x_2^c, x_3^c) = \left( \sqrt[3]{y_1}, \sqrt[3]{y_1}, \sqrt{\frac{y_2}{1 + 3 \sqrt[3]{y_1}}} \right)
\]
Writing $T = y_1 + \frac{1}{27}$ for the co-ordinate near the conifold point, we see that four of the critical points extend holomorphically across $T=0$ and the other two escape to infinity there.  The divergent critical points are those for which the critical value
\begin{equation} 
\label{eq:critical_values} 
W(x_1^c, x_2^c, x_3^c) = 2 \sqrt{y_2 (1+3 \sqrt[3]{y_1})}
\end{equation} 
approaches zero as $T \to 0$. 
We also note that $x_3^c = O(T^{-1/2})$ for a divergent $c$. 

Introduce logarithmic co-ordinates near a critical point $c$, 
setting 
\begin{align*}
x_1 = x_1^c \exp((x_3^c)^{-1/2}\theta_1), && 
x_2 = x_2^c \exp((x_3^c)^{-1/2} \theta_2), && 
x_3 = x_3^c \exp((x_3^c)^{1/2} \theta_3),
\end{align*} 
and writing
\[
W_{i j \cdots k}(c) = \left( 
\frac{\partial}{\partial \theta_i} \frac{\partial}{\partial \theta_j} \cdots \frac{\partial}{\partial \theta_k} W \right) (x_1^c, x_2^c, x_3^c)
\]
for the multiple logarithmic derivative of W at $c$.  
Then the logarithmic Hessian at $c$ satisfies:
\begin{align*}
  H_c & =
\begin{pmatrix}
     2 \sqrt[3]{y_1} & \sqrt[3]{y_1} & 0 \\
     \sqrt[3]{y_1} & 2\sqrt[3]{y_1} & 0 \\
     0 & 0 & 2 y_2
\end{pmatrix} 
&&& H_c^{-1} & = 
\begin{pmatrix}
 \frac{2}{3\sqrt[3]{y_1}} & -\frac{1}{3\sqrt[3]{y_1}} & 0 \\
 -\frac{1}{3\sqrt[3]{y_1}} & \frac{2}{3\sqrt[3]{y_1}} & 0 \\
0 & 0 & \frac{1}{2 y_2}
\end{pmatrix} 
\end{align*} 
and $\det(H_c) = 6 y_1^{2/3} y_2$. 
These quantities are holomorphic at $T=0$. 
For a divergent critical point $c$, and 
for $m$,~$n\ge 1$ and $l\ge 0$, we have: 
\begin{align*}
   & W_{\underbrace{\scriptstyle 1\cdots 1}_{m}\underbrace{\scriptstyle 3\cdots 3}_{l}}(c)
      = 
      W_{\underbrace{\scriptstyle 2 \cdots 2}_{m} 
      \underbrace{\scriptstyle 3\cdots 3}_{l}}(c) = 
  \begin{cases}
    0 & \text{$m$ odd} \\
    O(T^{\frac{m-l-2}{4}}) & \text{$m$ even}\\
  \end{cases}
  \\ 
    &  W_{\underbrace{\scriptstyle 3\cdots 3}_{m}}(c)
       =
        \begin{cases}
          0 & \text{$m$ odd} \\
          O(T^{\frac{-m+2}{4}}) & \text{$m$ even}\\
        \end{cases}
     \qquad \quad 
         W_{\underbrace{\scriptstyle 1\cdots 1}_{n}
\underbrace{\scriptstyle 2\cdots 2}_{m}
\underbrace{\scriptstyle 3\cdots 3}_{l}}(c)
     =
       O(T^{\frac{n+m-l-2}{4}})
\end{align*}
as $T\to 0$. 
At non-divergent critical points, the multiple logarithmic 
derivatives of $W$ are holomorphic at $T=0$. 
Altogether, we get 
\begin{equation} 
\label{eq:order_derivative_W} 
W_{i_1\dots i_k}(c) =
\begin{cases}  
O(T^{-\frac{k}{4}+\frac{1}{2}})  & \text{if $c$ is divergent;} \\ 
O(1) & \text{if $c$ is non-divergent.} 
\end{cases} 
\end{equation}
\subsection{Givental's Higher-Genus Formula}
\label{subsec:Givental_formula} 

Choose a point $t\in \cMB \setminus D\subset \cMBbig$.  The B-model 
$\log$-cTEP structure is tame semisimple at $t$ 
because $W$ has pairwise distinct eigenvalues. 
Correlation functions for the B-model wave function $\wave_{\rm B}$ 
with respect to $\sfP_\con$ are obtained 
by applying a certain quantized operator $\widehat{R}_t$ 
to the product of Kontsevich--Witten tau-functions
\begin{align*}
  \cT = \prod_{c} \tau(\bq^c) && \text{where $\bq^c = q^c_0 + 
q^c_1 z + q^c_2 z^2 + \cdots \in \C[\![z]\!]$.}
\end{align*}
Here $c$ ranges over critical points of $W$ and 
$R_t$ is an invertible $\C[\![z]\!]$-linear operator: 
\[
R_t \colon \prod_{c} \C [\![z]\!] \longrightarrow \FBbig|_t
\]
This is Givental's formula for the ancestor potentials of a semisimple Frobenius manifold. It is discussed, in a notation and framework convenient for our setting, in~\cite[\S\S3--4]{Coates--Iritani:convergence}; the original reference is~\cite{Givental:semisimple}.  
The operator $R_t$ here is a certain ``asymptotic fundamental solution'' 
for the connection $\nablaB$, whose existence near $t$ 
is guaranteed in general by~\cite[Proposition~1.1]{Givental:elliptic} 
and which in our setting we can obtain from a genuine fundamental 
solution matrix by taking stationary phase asymptotics. 

Recall from Proposition~\ref{pro:LR_conifold_orbifold} that 
the flat trivialization of $\cFB$ corresponding to $\bP_\con$ 
is given by the frame $1$, $D_2$, $D_2^2$, $D_2^3$,  $D_1$, 
$(1+27y_1) D_1^2$ at the conifold point $(y_1,y_2) = (-1/27, 0)$.  
Let $\cD_1,\ldots,\cD_6$ denote differential operators whose classes in the 
GKZ system give the flat trivialization associated with $\bP_\con$ 
and coincide with the above frame at the conifold point. 
Let~$c$ be a critical point, and let $\Gamma_+(c)$ 
denote the Lefschetz thimble given by upward 
gradient flow from $c$ of the function 
$x \mapsto \Re\big( {W(x) \over z}\big)$.  
Let $\{s_c\}$ denote the flat sections of $\cFB$ dual to 
the cycles $\{\Gamma_+(c)\}$ so that 
\begin{align*}
  \frac{1}{(2\pi z)^{3/2}}\int_{\Gamma_+(c)} s_{c'} = \delta_{c,c'} 
  && \text{(cf.~\S\ref{sec:B_model_TEP})}.
\end{align*}
Define a matrix $S_t = (s_{jc})$ with rows indexed by $j \in \{1,2,\ldots,6\}$ and columns indexed by critical points $c$ of $W$, 
by expressing the sections $s_c$ with respect to the frame $\cD_1,\ldots,\cD_6$:
\[
s_c = \sum_{j=1}^6 s_{jc} \cD_j
\]
The matrix $S_t$ is a fundamental solution matrix for $\nablaB$; 
its entries are multi-valued holomorphic functions on 
$\cMB^\circ\times \C^\times$.   
The duality between the sections $\{s_c\}$ and the cycles 
$\{\Gamma_c\}$ implies that the $(c,j)$ entry 
of the inverse matrix $S_t^{-1}$ is the oscillating integral
\begin{equation}
  \label{eq:matrix_entry}
[S_t^{-1}]_{(c,j)} = {\frac{1}{(2\pi z)^{3/2}} 
\cD_j \int_{\Gamma_+(c)} e^{-W/z} \omega}.
\end{equation}
In the basis $\{\cD_i\}$ of $\FBbig|_t$, the linear operator 
$R_t^{-1}$ is represented by a formal power series in $z$ with coefficients 
in 6 by 6 matrices. 
The $(c,j)$-entry of $R_t^{-1}$ is obtained from 
the $(c,j)$-entry \eqref{eq:matrix_entry} 
of $S_t^{-1}$ by stationary phase expansion: 
\begin{align*}
  e^{W(c)/z} [S_t^{-1}]_{(c,j)} \ 
  \sim \ 
  [R_t^{-1}]_{(c,j)} && \text{as $z\to +0$.}   
\end{align*}
The basis $\cD_i$ can be calculated explicitly up to 
order $O(T)$. 
\begin{lemma} 
\label{lem:hcD_along_conifold_divisor} 
Define $(\cD'_1,\dots,\cD'_6) := 
(1, D_2, D_2^2-y_2, D_2^3 + z y_2 -2 y_2D_2, D_1,(1+27y_1) D_1^2)$. 
Then $\cD_i = \cD'_i + O(T)$ in the GKZ system. 
\end{lemma}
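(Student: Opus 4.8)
The plan is to determine the flat frame $\cD_1,\dots,\cD_6$ by restricting to the conifold divisor $\{T=0\}$, where $T=y_1+\tfrac1{27}$, and propagating the known value at the conifold point in the $y_2$-direction. The initial condition is supplied by Proposition~\ref{pro:LR_conifold_orbifold}(2): at the corner $(y_1,y_2)=(-\tfrac1{27},0)$ the flat trivialization associated with $\bP_\con$ is the frame $1,D_2,D_2^2,D_2^3,D_1,(1+27y_1)D_1^2$, which gives $\cD_i|_{T=0,\,y_2=0}$. Since this frame is homogeneous for the grading operator $\Gr$ (with $\deg 1=0$, $\deg D_1=\deg D_2=1$, $\deg D_2^2=2$, etc., while $\deg y_2=2$ and $\deg z=1$) and the flat trivialization is graded, each $\cD_i$ must be homogeneous of the same degree as the corresponding frame element. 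This forces $\cD_1=1$ and $\cD_2=D_2$ outright, and constrains the remaining operators to a finite-parameter ansatz vanishing at $y_2=0$, e.g.\ $\cD_3=D_2^2+\epsilon\,y_2$ and $\cD_4=D_2^3+\alpha\,z y_2+\beta\,y_2 D_2$ for constants $\epsilon,\alpha,\beta$.

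Next I would impose the conditions characterizing the flat trivialization, using the explicit connection and Gram matrices of Proposition~\ref{pro:GKZ}(b) restricted to $\{T=0\}$. Concretely, with $\nablaB_{y_2\partial_{y_2}}=-z^{-1}D_2$ acting on GKZ classes, I would require that $\nablaB_{y_2\partial_{y_2}}\cD_j$, once re-expressed \emph{in the frame} $\{\cD_i\}$, be of the normal form $\tfrac1z A_{y_2}\cD_j$ demanded by compatibility with the Deligne extension (holomorphic, $z$-independent $A_{y_2}$ with $A_{y_2}|_{y_2=0}$ nilpotent), together with constancy of the Gram matrix of $\pairingB$; cf.\ the discussion following Definition~\ref{def:compatible_with_Deligne}. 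Homogeneity collapses these requirements to finitely many linear equations in $\epsilon,\alpha,\beta,\dots$, whose unique solution is the list of operators $\cD'_i$ in the statement. Uniqueness of the Deligne-extension-compatible opposite module at the conifold point (Proposition~\ref{pro:LR_conifold_orbifold}(2), via Proposition~\ref{pro:extending_trivialization}) then identifies $\cD'_i$ with $\cD_i$ modulo $T$.

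The step I expect to be the main obstacle is the correct enforcement of the normal-form condition, which is delicate precisely because one must translate $\nablaB_{y_2\partial_{y_2}}\cD_j$ back into the frame $\{\cD_i\}$ before reading off coefficients. Computing naively in the GKZ basis of Proposition~\ref{pro:GKZ}(b) produces spurious $z^{0}$ contributions (for instance a term $(1+\epsilon)y_2\cdot 1$ when differentiating $\cD_3$); these cancel only after re-expressing the image in terms of $\cD_i$, and it is exactly this cancellation that pins down the constants---forcing the $-y_2$ in $\cD'_3$ and the combination $zy_2-2y_2D_2$ in $\cD'_4$. A secondary subtlety lives at the corner stratum $\{T=0\}\cap\{y_2=0\}$, where two branches of the divisor $D$ meet and the conifold-chart basis vectors of Proposition~\ref{pro:GKZ}(b) carry $\tfrac1{27T}$ poles: one must check that the $O(T)$ ambiguity in lifting the frame elements $D_1$ and $(1+27y_1)D_1^2$ does not contaminate the restriction to $\{T=0\}$ and that the $y_2$- and $T$-direction flatness equations are mutually compatible. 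Once the normal-form bookkeeping is set up, the remaining work is routine linear algebra with the matrices of Proposition~\ref{pro:GKZ}(b).
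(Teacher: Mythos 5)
Your proposal is correct and follows essentially the same route as the paper: the paper simply verifies that the explicit frame $\{\cD'_i\}$ agrees with the known frame at the corner $(y_1,y_2)=(-\tfrac{1}{27},0)$ and that the action of $D_2$ in this basis is represented by a $z$-independent matrix along $y_1=-\tfrac{1}{27}$, then invokes uniqueness of the Deligne-extension-compatible trivialization. Your version runs the same computation in reverse (solving for the undetermined constants in a homogeneous ansatz rather than checking a given answer), and your identification of the key cancellation that forces the $-y_2$ and $zy_2-2y_2D_2$ corrections is exactly the content of the paper's matrix computation.
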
 
\begin{proof} 
We need to show that $\cD'_i$ gives a flat trivialization 
associated with $\bP_\con$ along 
the divisor $(y_1=-1/27)$.  
Since $\{\cD'_i\}$ coincides with the frame 
$1, D_2, D_2^2, D_2^3, D_1, (1+27y_1) D_1^2$ 
at $(y_1,y_2) = (-1/27,0)$, it suffices to check that 
the action of $D_2$ in the basis $\{\cD'_i\}$ is 
represented by a matrix independent of $z$. 
Indeed, the action of $D_2$ in the basis $\{\cD'_i\}$ is 
\[
\begin{pmatrix} 
0 & y_2 & 0 & -2y_2^2 & 0 & 0 \\ 
1 & 0 & y_2 & 0 & 0 & 0 \\ 
0 & 1 & 0 & y_2 & \frac{1}{3} & 0 \\
0 & 0& 1 & 0 & 0 & 0 \\ 
0 & 0 & 0 & 0 & 0 & 0 \\
0 & 0 & 0 & 9y_2 & 0 & 0 
\end{pmatrix} 
\]
along the divisor $y_1=-1/27$. The lemma follows. 
\end{proof}

\subsection{Stationary Phase Asymptotics}

We say that a function $f$ of $T$ \emph{has $T$-order $\alpha$} 
if $f(T) = O(T^\alpha)$ as $T\to 0$. 
We evaluate the $T$-order of $R_t^{-1}$ by 
examining the stationary phase asymptotics of \eqref{eq:matrix_entry}, 
where $T = y_1 + \frac{1}{27}$ is the co-ordinate of $t$ 
and we shall keep $y_2\neq 0$ fixed. 
Let $c$ be a divergent critical point, and start 
with the oscillatory integral associated with $c$. We have 
\[
\frac{1}{(2\pi z)^{3/2}}
\int_{\Gamma_+(c)} e^{-W/z} \omega \sim 
\frac{e^{-W(c)/z}}{ \sqrt{x_3^c} \sqrt{\det H_c}} 
\left[
  e^{\frac{z}{2} \Delta}
  \exp \left( -\frac{1}{z} \sum_{k=3}^\infty \frac{1}{k!} 
\sum_{i_1,\ldots,i_k} W_{i_1\cdots i_k}(c) \theta_{i_1} \cdots \theta_{i_k} \right) \right]_{\theta_1 = \theta_2 = \theta_3 = 0}
\]
where $\Delta = \sum_{i,j=1}^3 H_c^{ij} \frac{\partial}{\partial \theta_i} \frac{\partial}{\partial \theta_j}$ and 
$H_c^{ij}$ are the matrix entries of the inverse Hessian. 
We can obtain this asymptotic expansion by expanding the 
integrand in Taylor series in $\theta_1,\theta_2,\theta_3$ 
and performing termwise Gaussian integrals: see 
\citelist{\cite{CCIT:MS}*{\S 6.2} \cite{Lho-Pandharipande:CRC}*{Appendix A}}. 
The factor $(x_3^c)^{-1/2}$ comes from 
$\omega = (x_3^c)^{-1/2} d\theta_1 d\theta_2 d\theta_3$. 
By Wick's theorem, 
the term in square brackets is the sum over graphs\footnote
{As an illustration, we listed all graphs that contribute whose number of edges is 
less than or equal to 3. 
Such a graphical technique is standard in quantum 
field theory: see e.g.~\citelist{\cite{BIZ}*{\S 2} \cite{Zee}*{Chapter I.7}}.} 
\begin{center}
\includegraphics[bb=113 628 493 687]{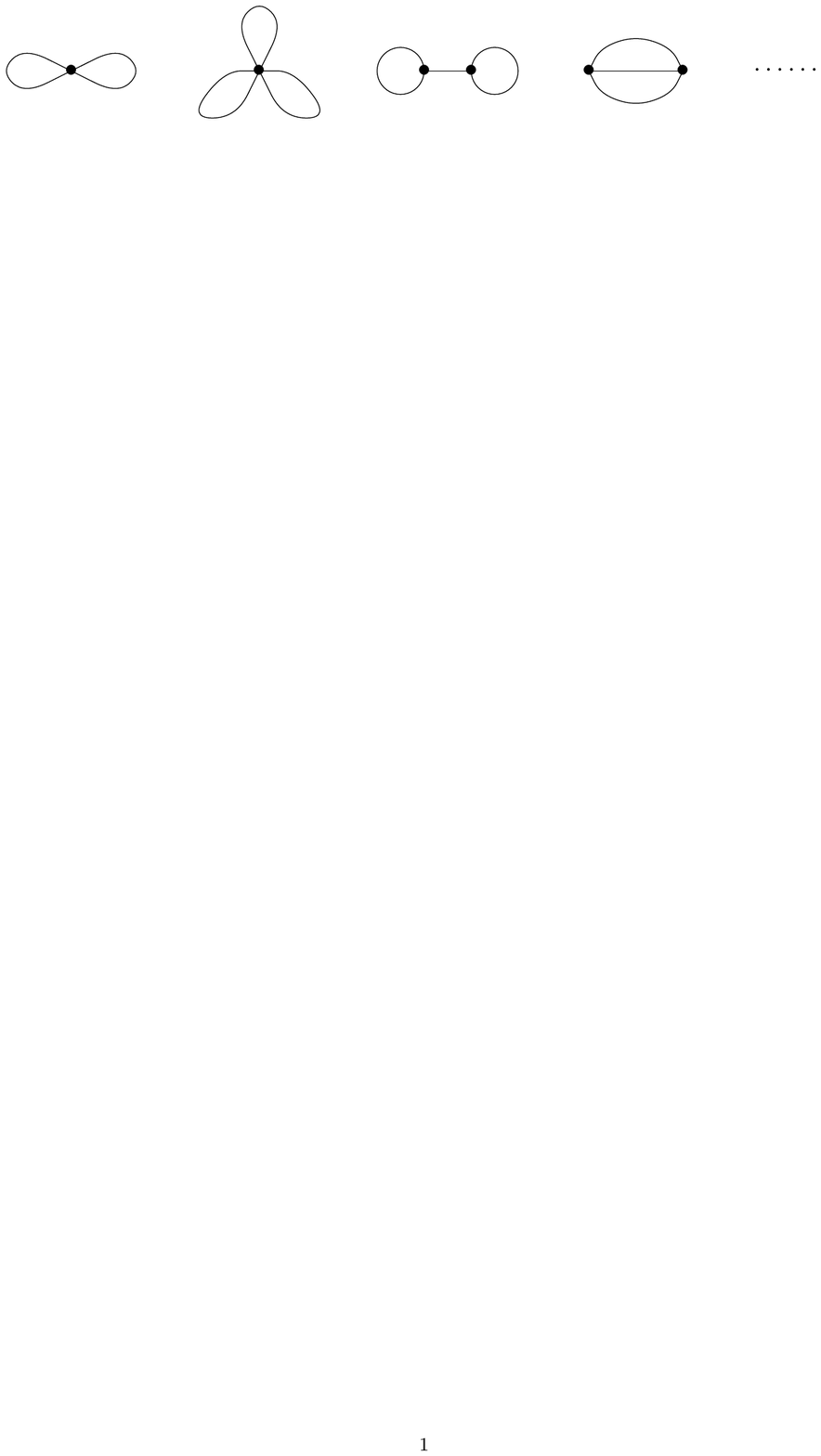}
\end{center}
where each vertex has valency at least $3$, 
we place the tensor $-\frac{1}{z} \sum_{i_1,\ldots,i_k} W_{i_1\cdots i_k}
d\theta_{i_1} \cdots d\theta_{i_k}$ at a vertex of valency $k$, 
and contract using the bivector field $z\Delta$ on each edge.  
A graph with $E$~edges and $V$~vertices contributes to the coefficient 
of $z^{E-V}$ in the asymptotic expansion, and 
if the graph has vertices of valencies $k_1,\dots,k_V$, 
then its contribution has $T$-order 
$\sum_{i=1}^V (-\frac{k_i}{4} + \frac{1}{2}) = 
-\frac{E-V}{2}$; here we used \eqref{eq:order_derivative_W}. 
Thus
\[
\frac{1}{(2\pi z)^{3/2}}
\int_{\Gamma_+(c)} e^{-W/z} \omega \sim 
e^{-W(c)/z}\sum_{n =0}^\infty a_n z^n  
\qquad 
\text{with $a_n = O(T^{\frac{1}{4} - \frac{n}{2}})$ as $T\to 0$.}
\]
The stationary phase asymptotics of 
$\cD \int_{\Gamma_+(c)} e^{-W/z} \omega$, 
with $\cD$ a differential operator in $y_1$ and $y_2$, 
can be computed similarly. 
For example, if $\cD = D_2= -z y_2 \parfrac{}{y_2}$, we have 
\begin{align*} 
& \frac{1}{(2\pi z)^{3/2}}D_2 \int_{\Gamma_+(c)} 
e^{-W/z} \omega 
= \frac{1}{(2\pi z)^{3/2}} \frac{y_2}{x_3^c} \int_{\Gamma_+(c)} 
e^{-W/z - \sqrt{x_3^c} \theta_3} 
\omega \\ 
& \sim 
\frac{e^{-W(c)/z}}{\sqrt{x_3^c}\sqrt{\det(H_c)}} 
\frac{y_2}{x_3^c} 
\left[e^{\frac{z}{2}\Delta} 
\exp\left(-\sqrt{x_3^c} \theta_3 - 
\frac{1}{z} \sum_{k=3}^\infty \frac{1}{k!} 
\sum_{i_1,\dots,i_k} W_{i_1 \dots i_k}(c) 
\theta_{i_1} \cdots \theta_{i_k}\right) 
\right]_{\theta_1=\theta_2=\theta_3=0} 
\end{align*} 
We apply Wick's theorem again to express this as a sum over graphs. 
In this case, we allow graphs to have additional vertices of valency 
$1$ where we place the tensor $(-\sqrt{x_3^c} d\theta_3)$. 
If a graph has $V$ vertices of valencies $k_1,\dots,k_V \ge 3$, 
$L$ vertices of valency $1$, and $E$ edges, its contribution 
has $T$-order $\sum_{i=1}^V (-\frac{k_i}{4} + \frac{1}{2}) 
- \frac{L}{4} = -\frac{E-V}{2}$. 
Hence
\[
 \frac{1}{(2\pi z)^{3/2}}D_2 \int_{\Gamma_+(c)} 
e^{-W/z} \omega 
\sim e^{-W(c)/z} \sum_{n=0}^\infty 
b_n z^n 
\qquad 
\text{with $b_n = O(T^{\frac{3}{4} - \frac{n}{2}})$ 
as $T\to 0$.} 
\]
Combining this method with Lemma \ref{lem:hcD_along_conifold_divisor}, 
we can compute the $T$-orders of the asymptotic expansion 
of $\cD_i \int_{\Gamma_+(c)} e^{-W/z} \omega$ for all $i$. 
The analysis for a non-divergent critical 
point is identical except for the fact that everything is 
holomorphic at $T=0$.  
Ordering the critical points so that the first two are divergent 
and the last four are non-divergent, we see that 
\begin{equation}
  \label{eq:estimate_R_inverse}
  \begin{aligned}
    R_t^{-1} = \sum_{n=0}^\infty 
    \begin{pmatrix}
      [ {\textstyle \frac{1}{4} - \frac{n}{2} } ] &
      [ {\textstyle \frac{3}{4} - \frac{n}{2} } ] &
      [ {\textstyle \frac{1}{4} - \frac{n}{2} } ] &
      [ {\textstyle \frac{3}{4} - \frac{n}{2} } ] &
      [ {\textstyle - \frac{1}{4} - \frac{n}{2} } ] &
      [ {\textstyle \frac{1}{4} - \frac{n}{2} } ] \\
      [ {\textstyle \frac{1}{4} - \frac{n}{2} } ] &
      [ {\textstyle \frac{3}{4} - \frac{n}{2} } ] &
      [ {\textstyle \frac{1}{4} - \frac{n}{2} } ] &
      [ {\textstyle \frac{3}{4} - \frac{n}{2} } ] &
      [ {\textstyle - \frac{1}{4} - \frac{n}{2} } ] &
      [ {\textstyle \frac{1}{4} - \frac{n}{2} } ] \\
      [0] & [0] & [0] & [0] & [0] & [1] \\ 
      [0] & [0] & [0] & [0] & [0] & [1] \\ 
      [0] & [0] & [0] & [0] & [0] & [1] \\ 
      [0] & [0] & [0] & [0] & [0] & [1] 
    \end{pmatrix}
    z^n 
  \end{aligned}
\end{equation}
where $[\alpha]$ denotes a term that has $T$-order $\alpha$. 

We will need similar estimates for the matrix entries of $R_t$.  
For this we use the unitarity condition
\[
R_t(-z)^{-\mathrm{T}} R_t(z)^{-1}= G 
\quad 
\text{or equivalently,} \quad 
\sum_c [R_t(-z)^{-1}]_{(c,i)} [R_t(z)^{-1}]_{(c,j)} = g_{ij} 
\]
from~\cite[\S1.3]{Givental:semisimple}; 
here $G=(g_{ij}) 
=( ((-)^\star \cD_i, \cD_j)_{\rm B})$ 
is the Gram matrix in Proposition~\ref{pro:GKZ}(b) 
evaluated at the conifold point $(y_1,y_2) = (-1/27,0)$. 
The unitarity follows directly from the description 
\eqref{eq:stationary_phase} of the B-model pairing.  
Thus
\[
R_t(z) = G^{-1} R_t (-z)^{-\mathrm{T}}
\]
and since 
\[
G^{-1} = 
\begin{pmatrix}
  0 & 0 & 0 & \frac{1}{9} & 0 & 0 \\
  0 & 0 & \frac{1}{9} & 0 & 0 & 1 \\
  0 & \frac{1}{9} & 0 & 0 & 0 & 0 \\
  \frac{1}{9} & 0 & 0 & 0 & 0 & 0 \\
  0 & 0 & 0 & 0 & 0 & -3 \\
  0 & 1 & 0 & 0 & -3 & 0 
\end{pmatrix}
\]
we conclude that
\begin{equation}
  \label{eq:estimate_R}
  \begin{aligned}
R_t = \sum_{k=0}^\infty 
    \begin{pmatrix}
      [ {\textstyle \frac{3}{4} - \frac{n}{2} } ] & [ {\textstyle \frac{3}{4} - \frac{n}{2} } ] &  [0] & [0] & [0] & [0] \\ 
      [ {\textstyle \frac{1}{4} - \frac{n}{2} } ] & [ {\textstyle \frac{1}{4} - \frac{n}{2} } ] &  [0] & [0] & [0] & [0] \\ 
      [ {\textstyle \frac{3}{4} - \frac{n}{2} } ] & [ {\textstyle \frac{3}{4} - \frac{n}{2} } ] &  [0] & [0] & [0] & [0] \\ 
      [ {\textstyle \frac{1}{4} - \frac{n}{2} } ] & [ {\textstyle \frac{1}{4} - \frac{n}{2} } ] &  [0] & [0] & [0] & [0] \\ 
      [ {\textstyle \frac{1}{4} - \frac{n}{2} } ] & [ {\textstyle \frac{1}{4} - \frac{n}{2} } ] &  [1] & [1] & [1] & [1] \\ 
      [ {\textstyle -\frac{1}{4} - \frac{n}{2} } ] & [ {\textstyle -\frac{1}{4} - \frac{n}{2} } ] &  [0] & [0] & [0] & [0] 
    \end{pmatrix}
    z^n.
  \end{aligned}
\end{equation}

\subsection{The Proof of Theorem~\ref{thm:conifold_estimate}}

It remains to translate these estimates for the pole order in $T$ of stationary phase asymptotics to estimates for the pole order in $T$ of correlation functions.  
Choose a point $t\in \cMB\setminus D\subset \cMBbig$. 
As before, we fix the co-ordinate $y_2\neq 0$ of the point $t$  
and study the asymptotics of correlation functions 
as $T = y_1+ \frac{1}{27}$ goes to zero. 
Introduce algebraic co-ordinates $(\tit,\bx)$ on the total space 
$\LL$ of the big B-model $\log$-cTEP structure,  
where 
\begin{itemize} 
\item $\tit\in \cMBbig$ represents a point in a neighbourhood of $t$;
and
\item $\bx = \sum_{n=1}^\infty \sum_{i=1}^6 x_n^i e_i z^n
\in z\C^6[\![z]\!]$ are co-ordinates along the fiber of $\LL\to \cMBbig$ 
associated with the frame $\cD_1,\dots,\cD_6$ of $\FBbig$.  
\end{itemize}  
See Definition \ref{def:algebraic_local_co-ordinates}. 
There is a distinguished flat co-ordinate system\footnote
{Note that the flat co-ordinate system $\hbq$ depends on 
the choice of $t$.}
$\hbq\in \C^6[\![z]\!]$ 
in the formal neighbourhood of the fiber $\LL_t^\circ$ in $\LL^\circ$,  
associated with the frame $\{\cD_i\}$:
see \cite[Definition 4.28]{Coates--Iritani:Fock}. 
This is given by 
\begin{equation} 
\label{eq:from_algebraic_to_flat} 
\hbq = [M(\tit, z) \bx]_+ 
\end{equation} 
where $[\cdots]_+$ means the non-negative part as a $z$-series.  The
inverse fundamental solution matrix $M(\tit,z)$ here is characterized by the conditions $M(t,z) = \Id$ 
and $d M(\tit,z) =\frac{1}{z} M(\tit,z) A(\tit)$, 
where $d$ is the differential in the $\tit$-direction and 
$\frac{1}{z}A(\tit)$ is the matrix-valued connection $1$-form 
for $\nablaB$ written in the frame $\{\cD_i\}$. 

Let $\{\Nabla^m C^{(g)}\}_{g,m}$ denote the correlation functions for 
$\wave_{\rm B}$ with respect to $\sfP_\con$. 
Givental's higher genus formula discussed in \S \ref{subsec:Givental_formula} 
gives correlation functions along $\LL^\circ_t$, expressed in terms of 
the flat co-ordinate system $\hbq = \sum_{n=0}^\infty 
\sum_{i=1}^6 \hq_n^i e_i z^n$. 
Writing 
\[
\Nabla^m C^{(g)}\big|_{\LL_t^\circ}= 
\sum C^{(g)}_{(n_1,i_1),\dots, (n_m,i_m)}(t,\bx) \,
dq_{n_1}^{i_1} \otimes \cdots \otimes d\hq_{n_m}^{i_m}
\]
and setting\footnote{$\cA^\con_{(t,\bx)} $ is a formal power series in the shifted flat co-ordinate 
$\hba = \sum_{n=0}^\infty \sum_{i=1}^6 \ha_n^i e_i z^n 
:= \hbq - \bx$ on a neighbourhood of $(t,\bx)$.}
\[
\cA^\con_{(t,\bx)} =  \exp\left( \sum_{g=0}^\infty 
\sum_{m:2g-2+m>0}  \frac{\hbar^{g-1}}{m!}
\sum_{n_1,\dots,n_m} \sum_{1\le i_1,\dots,i_m\le 6} 
C^{(g)}_{(n_1,i_1),\dots,(n_m,i_m)}(t,\bx) \ha_{n_1}^{i_1} 
\cdots \ha_{n_m}^{i_m} 
\right),  
\] 
we have 
\begin{equation} 
\label{eq:conifold_ancestor} 
\cA^\con_{(t,\bx)} 
= \left[ \exp\left( \frac{\hbar}{2} \sum V_t^{(n,c),(n',c')} \parfrac{}{q^c_n} 
\parfrac{}{q^{c'}_{n'}} \right) \cT'  
\right]_{\bq^c =  [R_t^{-1}(\bx + \hba)]^c} 
\end{equation} 
where $V_t^{(n,c),(n',c')}$ are coefficients of 
Givental's propagator defined below, 
and $\cT'$ is the product of the Kontsevich--Witten tau function 
modified at genus $1$: 
\[
\cT'(\bq) = \prod_c
\exp\left(\sum_{g=0}^\infty \hbar^{g-1} 
\left(\cF^g_{\pt}(\bq^c) - \delta_{g,1}\cF^1_\pt([R_t^{-1}\bx]^c)\right)
\right).  
\]
Recall that $\cF^g_\pt$ is the genus-$g$ descendant potential 
\eqref{eq:genus_g_descendant_potential} of a point; 
we regard it 
as a function of the dilaton-shifted co-ordinate 
$\bq^c = -z +\bt$. 
Formula \eqref{eq:conifold_ancestor} 
follows from the definition of $\wave_{\tss}$ 
\cite[Definition 7.9]{Coates--Iritani:Fock}, the fact that 
$\{\Nabla^m C^{(g)}\}$ can be obtained from $\wave_{\tss}$ 
by the transformation rule $T(\sfP_{\tss}, \sfP_\con)$,
and the following facts: 
\begin{itemize} 
\item the `conifold ancestor potential' $\cA^\con_{(t,\bx)}$ 
is the image under the formalization map of $\{\Nabla^m C^{(g)}\}$ 
at $(t,\bx)$, see \cite[Definition 5.11]{Coates--Iritani:Fock}, 
where the formalization map is the one associated with the frame 
$\{\cD_i\}$.

\item $\cT'(\bq)$ is the image under the formalization map 
of $\wave_{\tss}$ at $(t,R_t^{-1} \bx)$, where the formalization 
map is the one associated with the semisimple trivialization.  To see this 
combine Lemma~5.13 and Lemma~7.13 of \cite{Coates--Iritani:Fock}.

\item the transformation rule $T(\sfP_{\tss}, \sfP_\con)$ 
is expressed in terms of the action of Givental's quantized 
operator $\widehat{R}_t$ through the formalization map; 
see \cite[Theorem 5.14]{Coates--Iritani:Fock}. 
\end{itemize} 
\begin{definition} 
\emph{Givental's propagator} $\{V_t^{(n,c),(n',c')}\}$ associated with 
$R_t$ is defined by 
\[
\sum_{n=0}^\infty \sum_{n'=0}^\infty (-1)^{n+n'} V_t^{(n,c),(n',c')} w^n z^{n'} 
= \left[\frac{R_t(-w)^{-1} R_t(z)- \Id}{z+w}  \right]_{(c,c')}   
\]
where $c$,~$c'$ range over critical points of $W$. 
\end{definition} 
The formula \eqref{eq:conifold_ancestor} 
together with the discussion in \cite[\S3]{Coates--Iritani:convergence}  
implies that $C^{(g)}_{(n_1,i_1),\dots,(n_m,i_m)}(t,\bx)$ 
is given by the sum over decorated connected Feynman graphs 
\begin{center}
\includegraphics[bb=283 653 344 710]{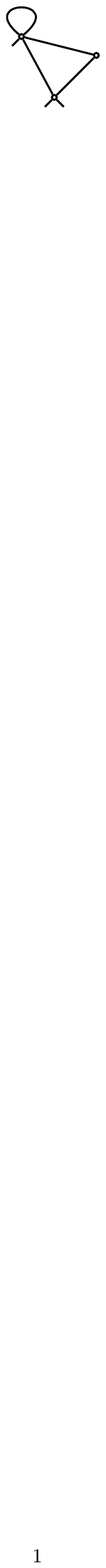} 
\end{center}
where 
\begin{itemize} 
\item each vertex $v$ is labelled by an integer $g_v\ge 0$;
\item the graph has $m$ external half-edges, called \emph{legs}, 
labelled by $\{1,\dots,m\}$; 
\item a label $(l,c) \in \Z_{\geq 0} \times \{\text{critical points of $W$}\}$ 
is assigned to each pair of a vertex and a half-edge incident to it; 
note that we assign a label $(l,c)$ to legs (external half-edges) too 
so that legs have two different kinds of labels; 
\item the Euler number $\chi$ of the graph satisfies 
$g = 1- \chi + \sum_{v:\text{vertex}} g_v $;
\end{itemize} 
and we require that, for each 
vertex $v$, if $(l_1,c_1),\dots,(l_k,c_k)$ are all the
labels attached to half-edges incident to $v$, then 
\begin{equation*}
  l_1 + \dots + l_k   \le 3 g_v -3 + k \qquad 
  \text{and} \qquad 
  2g_v-2 + k  > 0
\end{equation*} 
There are finitely many such decorated Feynman graphs~\cite{Givental:semisimple}.
The contribution of such a graph
$\Gamma$ to $C^{(g)}_{(n_1,i_1),\dots,(n_m,i_m)}(t,\bx)$ is:
\begin{equation}
  \label{eq:Gammacontribution}
  \frac{1}{|\Aut(\Gamma)|} 
  \prod_{e \in E(\Gamma)} (\text{edge term for $e$}) 
  \prod_{v \in V(\Gamma)} (\text{vertex term for $v$})
  \prod_{\ell \in L(\Gamma)} (\text{leg term for $\ell$})
\end{equation}
where the edge term for an edge with labels $(l,c)$, $(l',c')$ 
is the coefficient $V_t^{(l,c),(l',c')}$ of Givental's propagator;
the vertex term for a vertex $v$ incident to half-edges with labels 
$(l_1,c_1),\dots, (l_k,c_k)$ is\footnote{We used the Dilaton equation 
in the second line.} 
\begin{align}
\nonumber 
& \left. \parfrac{^k\cF^{g_v}_\pt}{q_{l_1}^c \partial q_{l_2}^c 
\cdots \partial q_{l_k}^c}\right|_{\bq^c =[R_t^{-1}\bx]^c} 
= \sum_{p=0}^\infty \frac{1}{p!} 
\corr{\psi_1^{l_1},\dots,\psi_k^{l_k},
\bt^c(\psi_{k+1}), \dots 
\bt^c(\psi_{k+p})}_{g_v,k+p,0}^\pt 
\\ 
\label{eq:vertex_term} 
& \qquad =  \sum_{p=0}^\infty (-q^c_1)^{-(2g_v-2+k+p)}
\frac{1}{p!} 
\corr{\psi_1^{l_1},\dots,\psi_k^{l_k}, 
\bq^c_{\ge 2}(\psi_{k+1}),\dots,\bq^c_{\ge 2}(\psi_{k+p})
}_{g_v,k+p,0}^\pt 
\end{align}
if $c_1 = \cdots = c_k = c$ and zero otherwise, where 
we set 
\begin{equation} 
\label{eq:Dilaton_shift_in_sstriv} 
\bq^c(z) = -z + \bt^c(z) = [R_t(z)^{-1} \bx(z)]^c, 
\end{equation} 
and $\bq^c_{\ge 2}(z)$ denotes the truncation of the $z$-series 
$\bq^c(z)$ at degree two; and finally the leg term of a leg $\ell$ 
with labels $s \in \{1,\dots,m\}$ and $(l,c)$ is 
$[R_t^{-1} e_{i_s} z^{n_s}]_l^c$, where 
$[\cdots]_l^c$ denotes the coefficient of $z^l$ in 
the $c$th component. 

Now we restrict $(t,\bx)$ to lie on the image of the primitive section 
$\zeta\colon \cMB^\circ \to \LLo$, and evaluate the $T$-order 
of $C^{(g)}_{(n_1,i_1),\dots,(n_m,i_m)}(t,\bx)$. 
The primitive section $\zeta$ is given by $-z = -z \cD'_1$ in the GKZ 
system. Since the frame $\{\cD_i\}$ is homogeneous and 
$\cD'_1=1$ has the lowest possible degree, it follows 
that $\cD_1 = f(T) \cD'_1$ for some holomorphic function $f(T)$ 
such that $f(0) = 1$ and $f$ is independent of $y_2$ or $z$. 
Therefore the section $\zeta$ is given in terms of $\bx$ by 
\begin{equation} 
\label{eq:zeta_by_x}
x_1^1 = -1 +O(T), \quad x_1^2 = \cdots = x_1^6 =0, 
\quad x_n^1 = \cdots = x_n^6 = 0 \quad \text{for all $n\ge 2$.} 
\end{equation} 
On this locus, using \eqref{eq:Dilaton_shift_in_sstriv} 
and \eqref{eq:estimate_R_inverse}, we have for $n\ge 1$, 
\[
q_n^c =
\begin{cases}  
O(T^{\frac{3}{4} - \frac{n}{2}}) & \text{if $c$ is divergent;} \\ 
O(1) & \text{if $c$ is non-divergent.} 
\end{cases} 
\] 
A more careful analysis of the first column of $R_t^{-1}$ 
shows that $q_1^c$ is exactly of $T$-order $\frac{1}{4}$ if $c$ is 
divergent, and is exactly of $T$-order $0$ if $c$ is non-divergent. 
Thus we find for every critical point $c$, 
\[
(q_1^c)^{-1} = O(T^{-1/4}) \qquad \text{and} \qquad 
q_n^c = O(T^{\frac{3}{4}-\frac{n}{2}}) \quad 
\text{for $n\ge 2$.}
\]
We can estimate the $T$-order of the 
vertex term \eqref{eq:vertex_term} from this.  
Using the fact that the coefficient of $q_{j_1}^c \cdots q_{j_p}^c$ 
(with $j_1,\dots,j_p \ge 2$) 
in \eqref{eq:vertex_term} is non-zero only when 
$l_1+ \cdots +l_k + j_1 + \cdots + j_p = 3g_v-3+k+p$, 
we find that the $T$-order of the vertex term \eqref{eq:vertex_term} 
is at least 
\begin{equation} 
\label{eq:vertex_contribution}  
\frac{-(2g_v-2+k+p)}{4} + \sum_{r=1}^p \left( \frac{3}{4} - 
\frac{j_r}{2}\right) = 
-(2g_v-2) - \sum_{r=1}^k \left(\frac{3}{4} - \frac{l_r}{2}\right). 
\end{equation} 
The $T$-order of the leg term 
$[R_t^{-1} e_{i_s} z^{n_s}]_l^c$ 
is $-\frac{1}{4} - \frac{l-n_s}{2}$ by 
\eqref{eq:estimate_R_inverse}. 
To compute the $T$-order of the edge term, writing 
\begin{align*} 
\sum_{l,l'\ge 0} (-1)^{l+l'} V_t^{(l,c),(l',c')} w^l z^{l'} 
& = \left[R_t(-w)^{-1}\frac{R_t(z)- R_t(-w)}{z+w}  \right]_{(c,c')}   \\ 
& = \sum_{l=0}^\infty \sum_{l'= 0}^\infty 
(-1)^l w^l z^{l'} \sum_{e+e'=l+l'+1, e\le n} \left[ (R_t^{-1})_e 
(R_t)_{e'}\right]_{(c,c')}
\end{align*} 
with $R_t(z)^{-1} = \sum_{e\ge 0} (R_t^{-1})_e z^e$ 
and $R_t(z) = \sum_{e\ge 0} (R_t)_e z^e$, 
and using \eqref{eq:estimate_R_inverse} and \eqref{eq:estimate_R}, 
we find that $V^{(l,c),(l',c')} = O\big(T^{-\frac{l+l'+1}{2}}\big)$ 
as $T \to 0$, for all pairs $(c,c')$ of critical points. 
Since
\[
-\frac{l+l'+1}{2} 
= \left(\frac{3}{4} - \frac{l}{2}\right) 
+ \left(\frac{3}{4} - \frac{l'}{2}\right) - 2
\]
let us split the contribution $O\big(T^{-\frac{l+l'+1}{2}}\big)$ 
from an edge $e$ with labels $(l,c)$ and $(l',c')$ 
up into contributions $O(T^{\frac{3}{4} - \frac{l}{2}})$ 
and $O(T^{\frac{3}{4} - \frac{l'}{2}})$ 
carried by the two half-edges given by $e$ 
and a contribution $O(T^{-2})$ carried by $e$ itself. 
We include this new contribution $O(T^{\frac{3}{4} - \frac{l}{2}})$ 
from a half-edge with label $(l,c)$ into the $T$-order of 
vertices or legs incident to it. 
Then, the new $T$-order of the vertex term of a vertex $v$ becomes 
$-(2g_v-2)$ -- see \eqref{eq:vertex_contribution} --
and the new $T$-order of the leg term of a leg labelled by 
$s\in\{1,\dots,m\}$ is $-1+\frac{n_s}{2}$. 
Therefore the total $T$-order of the contribution from a graph $\Gamma$ 
is at least 
\[
\overbrace{\overset{}{- 2 |E(\Gamma)|}
}^{\text{edge terms}}  
\overbrace{- \sum_{v\in V(\Gamma)} (2g_v-2)}
^{\text{vertex terms}} 
\overbrace{ - \sum_{1\le s\le m}
\left(1 - \frac{n_s}{2}\right)}^{\text{leg terms}}  
\ge  - (2g-2 +m)
\]
where we used $g=\sum_{v\in \Gamma(V)} g_v + 1 -\chi$.  
Summing over all graphs, we find that 
\[
C^{(g)}_{(n_1,i_1),\dots,(n_m,i_m)}(t,\bx) = O(T^{-(2g-2+m)})
\] 
as $T\to 0$ on the image of $\zeta$. 

We need to check that the change of co-ordinates  
\eqref{eq:from_algebraic_to_flat} does not affect the 
pole order in $T$. 
Recall that $C^{(g)}_{(n_1,i_1),\dots,(n_m,i_m)}(t,\bx)$ 
is an $m$-tensor written in the basis $\{d\hq_n^i\}$ of 1-forms. 
Write $\tit=(\tit^1,\dots,\tit^6)$ for a co-ordinate system 
centered at $t$, and write 
$\frac{1}{z}A = 
\frac{1}{z}\sum_{\alpha=1}^6 A_\alpha(\tit) d\tit^\alpha$ 
for the connection $1$-form of $\nablaB$. 
Equation \eqref{eq:from_algebraic_to_flat} gives 
\begin{align*} 
\hq_0^i & = \sum_{\alpha} \tit^\alpha [A_\alpha(t) x_1]^i 
+ O(|\tit|^2)  && \\ 
\hq_n^i & = x_n^i + \sum_{\alpha} \tit^\alpha [A_\alpha(t) x_{n+1}]^i 
+ O(|\tit|^2) && \text{for $n\ge 1$}. 
\end{align*} 
Since the section $\zeta=-z$ has co-ordinates $x_n^i =0$ 
for $n\ge 2$ -- see \eqref{eq:zeta_by_x} -- we have:  
\begin{align*} 
\zeta^\star(d\hq_0^i)\big|_t & = 
[\nablaB \zeta]^i_0
= \delta_{1,i} \frac{dy_1}{y_1} 
+ \delta_{2,i} \frac{dy_2}{y_2} +O(T) && \\ 
\zeta^\star(d \hq_n^i) \big|_t & = dx_n^i 
&& \text{for $n\ge 1$}
\end{align*} 
where $[\cdots]_0^i$ means the coefficient in front of $z^0 \cD_i$ 
when expanded in the basis $\{z^n \cD_i\}$. 
These $1$-forms are regular along $T=0$. 
This means that $\zeta^\star(\Nabla^m C^{(g)})$ 
has poles of order $2g-2+m$ along $y_1 = -1/27$, for 
any fixed $y_2 \neq 0$. We already know from 
Theorem \ref{thm:wave_B} that $\Nabla^m C^{(g)}$ 
extends regularly across $y_2 =0$ as a logarithmic tensor; 
Hartog's Principle applied to a section of 
$\Omega^1_{\cMB}(\log D)^{\otimes m}$ thus proves 
Theorem~\ref{thm:conifold_estimate}. 

\begin{remark} 
In this section, we studied correlation functions 
on the image of $\zeta$, but the pole order along $T=y_1+\frac{1}{27}=0$ 
depends on the choice of slice. 
A similar analysis shows that $C^{(g)}(t,\bx)$ (the $0$-point correlation 
function, with $g\ge 2$) 
has pole of order $g-1$ along $T=0$ for a fixed \emph{generic} $\bx$. 
The restriction to the image of $\zeta$ is special because 
$\zeta$ touches the discriminant divisor $P(t,x_1) = 0$ 
(see equation~\eqref{eq:discriminant}) at the conifold point; 
this follows from $q_1^c =[R_t^{-1}x_1]^c= O(T^{1/4})$ 
on the image of $\zeta$ for a divergent $c$.  
We have the $5g-5$ pole order condition 
along the discriminant (Definition~\ref{def:local_Fock}), and correlation functions on the image of~$\zeta$ acquire part of their 
poles from this. 
\end{remark}

\section{Modularity}
\label{sec:calc} 

We now apply the theory developed in the preceding sections to 
show that the Gromov--Witten potential of 
local $\Proj^2$ is a quasi-modular function with respect to 
the congruence subgroup $\Gamma_1(3)$ of $SL(2,\Z)$: 
\[
\Gamma_1(3) = \left\{\begin{pmatrix} a & b \\ c & d \end{pmatrix} 
\in SL(2,\Z) 
: a \equiv d \equiv 1, \ c \equiv 0 \bmod 3\right\}.   
\]

\subsection{The Mirror Family for Local $\Proj^2$} 

As discussed in the Introduction, the mirror to the non-compact Calabi--Yau manifold $Y$ is a certain family of elliptic curves $\{E_y : y \in \cMCY\}$.  This family has been studied by many authors: see for example~\citelist{\cite{CKYZ:local_mirror} \cite{Hosono:central_charges} \cite{Stienstra:resonant} \cite{Takahashi:log_mirror} \cite{ABK} \cite{Konishi--Minabe:local_B} \cite{Doran--Kerr}}.  We summarize the aspects of this work that we need.

\subsubsection{A Family of Elliptic Curves with $\Gamma_1(3)$-Level Structure}
\label{subsubsec:family_level}
Recall that $\cMCY = \Proj(3,1)$ and $\DCY = \{-\frac{1}{27},0\}$. 
We will see the mirror family of $Y$ emerging in the conformal limit $y_2 \to 0$ of the 
Landau--Ginzburg potential mirror to $\Ybar = \Proj(\cO_{\Proj^2}\oplus 
\cO_{\Proj^2}(-2))$ from \S \ref{sec:LG_Ybar}: 
\[
\text{$W_y = w_1 + w_2 + w_3 + w_4 + w_5$ with $w_1w_2w_3=y_1 w_4^3$, $w_4w_5 = y_2$.}  
\]
Setting the last co-ordinate $w_5=y_2/w_4$ to zero 
and considering the zero locus of $W_y$ in the projective space 
with co-ordinates $[w_1,w_2,w_3,w_4]$, we obtain a family of elliptic curves
\begin{align*} 
E_y & = \left\{[w_1,w_2,w_3,w_4] \in \Proj^3: 
      \text{$w_1w_2w_3 = y w_4^3$, $w_1 + w_2 + w_3 + w_4 =0$} \right \} \\
    & = \text{compactification of }
      \left \{(w_1,w_2) \in (\C^\times)^2 : 
      w_1 + w_2 + \frac{y}{w_1 w_2} + 1 =0 \right\} 
\end{align*} 
parametrized by $y=y_1\in \C\subset \cMCY$. 
The second line is a presentation in the affine chart $w_4=1$. 
The curve $E_y$ has singularities when $y\in \DCY$. 
By introducing a co-ordinate $v = (w_1w_2w_3)^{1/3}$, 
we can extend the family across the orbifold point $y=\infty$ as  
\[
E_y = \left\{[w_1,w_2,w_3,v] \in \Proj^3 : 
  \text{$w_1w_2w_3 = v^3$, $w_1+w_2+w_3+ \fry v =0$} \right\} 
\]
with $\fry = \fry_1 = y^{-1/3}$. The isotropy group $\mu_3$ 
at $y=\infty \in \Proj(3,1)$ acts on the family 
as $v \mapsto \xi^{-1} v$, $\fry \mapsto \xi \fry$. 
A holomorphic volume form on $E_y$ is given by the one-form 
\[
\lambda_y = \frac{1}{3} \frac{d\log w_1 \wedge d \log w_2}
{d (w_1+w_2+ \frac{y}{w_1w_2}+1)} 
= \frac{dw_1}{3w_1(w_2-\frac{y}{w_1w_2}))}   
\]
where $(w_1,w_2) \in (\C^\times)^2$ are co-ordinates on the 
affine chart.  

\begin{remark} \label{rem:3-fold_covering}
  Aganagic--Bouchard--Klemm \cite{ABK} worked with 
  a 3-fold covering $\pi\colon \tE_y \to E_y$ given by  
  \begin{align*} 
    \tE_y = \left\{[X,Y,Z]\in \Proj^2 : 
    X^3 + Y^3+ Z^3 + \fry XYZ = 0 \right\}
  \end{align*} 
  where $\pi$ maps $[X,Y,Z]$ to 
  $[w_1,w_2,w_3,v] = [X^3, Y^3, Z^3, XYZ]$.    
\end{remark}

A \emph{$\Gamma_1(3)$-level structure} on an elliptic 
curve $E$ (equipped with a group structure) is by definition 
choice of a 3-torsion point $\tor$ on $E$. 
This is equivalent to the choice of an order-3 automorphism 
$\sigma$ of $E$ without fixed points, or to 
a non-zero element $\ell$ in $H_1(E,\Z/3\Z)$. 
We introduce a group structure on $E_y$ such that 
$[w_1,w_2,w_3,w_4] = [1,-1,0,0] \in E_y$ is the identity element,
and define a $\Gamma_1(3)$-structure on $E_y$ by 
the order 3 automorphism $\sigma$: 
\[
\sigma \colon [w_1,w_2,w_3,w_4] \mapsto [w_3,w_1,w_2, w_4]
\]
The corresponding 3-torsion point is $\tor=\sigma(0)= [0,1,-1,0]\in E_y$. 
For a path $\gamma$ connecting $0$ and $\tor$, 
$3 \gamma$ defines a non-zero element $\ell \in H_1(E_y,\Z/3\Z)$, 
which is independent of the choice of the path $\gamma$. 
The set of ordered bases $\{\alpha,\beta\}$ for $H_1(E_y,\Z)$ 
satisfying $\alpha\cdot \beta =1$ and $[\alpha] = \ell$ 
is a torsor over $\Gamma_1(3)$, via change of basis. 

A \emph{marked elliptic curve} is a pair $(E,\{\alpha,\beta\})$ 
of an elliptic curve $E$ (with group structure) and a symplectic 
basis, also called a marking, $\{\alpha,\beta\} \subset H_1(E,\Z)$ 
with $\alpha \cdot \beta=1$. 
The moduli space of marked elliptic curves can be identified 
with the upper-half plane $\HH=\{\tau\in \C : 
\Im(\tau) >0\}$ via the period map $(E,\{\alpha,\beta\}) 
\mapsto \tau = \int_\beta \lambda/\int_\alpha \lambda 
\in \HH$, where $\lambda$ is a non-zero holomorphic one-form on $E$.  
We call $\tau$ a \emph{modular parameter}. 
We let $\SL(2,\Z)$, and hence $\PSL(2,\Z)$, act on the upper-half plane 
by fractional linear transformations 
\[
\begin{pmatrix} 
a & b \\ 
c & d 
\end{pmatrix} \cdot \tau 
= \frac{a\tau + b}{c\tau+d} 
\]
which corresponds to the change of markings 
\begin{equation} 
\label{eq:action_on_homology_basis} 
(\alpha,\beta) \mapsto (\alpha',\beta') 
= (\alpha,\beta) \begin{pmatrix} d & b \\ c & a \end{pmatrix}. 
\end{equation}
The moduli stack of elliptic curves with $\Gamma_1(3)$-level structure 
is identified with the quotient: 
\begin{equation} 
\label{eq:moduli_elliptic_level} 
[\HH/\Gamma_1(3)]  
\end{equation} 
The $\Gamma_1(3)$-orbit of a marked elliptic curve 
$(E,\{\alpha,\beta\})$ corresponds to the elliptic curve $E$ 
with the $\Gamma_1(3)$-level structure $\ell = [\alpha] \in H_1(E,\Z/3\Z)$. 

\begin{remark} 
The $\Gamma_1(3)$-structure on $E_y$ lifts to 
a level-$3$ structure on $\tE_y$, i.e.~to a basis of 3-torsion points. 
The corresponding order-3 automorphisms are given by $[X,Y,Z] 
\mapsto [Z,X,Y]$ and $[X,Y,Z]\mapsto 
[X,\xi Y, \xi^2 Z]$ with $\xi\in \mu_3$.  
\end{remark}

\begin{proposition} 
\label{prop:moduli_elliptic_level} 
The base space $\cMCY \setminus \DCY$ of the mirror family 
can be identified 
with the moduli stack \eqref{eq:moduli_elliptic_level} 
of elliptic curves with $\Gamma_1(3)$-level structure. 
\end{proposition}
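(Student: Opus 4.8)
The plan is to exhibit $E_y$ together with its $\Gamma_1(3)$-level structure as a classifying family, and to show that the resulting morphism to $[\HH/\Gamma_1(3)]$ is an isomorphism of stacks. First I would use the identification (from the Introduction and \S\ref{sec:two}) of the rank-two flat bundle $\Hvec$ with the weight-one variation of Hodge structure on $H^1(E_y)$, under which $F^2_\vec$ is the Hodge line $H^{1,0}(E_y) = \langle \lambda_y \rangle$. Choosing locally a flat symplectic frame $\{\alpha,\beta\}$ of $H_1(E_y,\Z)$ with $\alpha\cdot\beta = 1$ and $[\alpha] = \ell$ produces the multivalued modular parameter $\tau(y) = \int_\beta \lambda_y / \int_\alpha \lambda_y \in \HH$. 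The order-three automorphism $\sigma$, equivalently the class $\ell \in H_1(E_y,\Z/3\Z)$, is defined algebraically on the whole family and is therefore invariant under the monodromy of $\pi_1(\cMCY\setminus\DCY)$; since the stabilizer of $\ell$ in $\SL(2,\Z)$ is exactly $\Gamma_1(3)$ (the torsor statement of \S\ref{subsubsec:family_level}), the monodromy representation lands in $\Gamma_1(3)$ and $\tau$ descends to a morphism $\Phi\colon \cMCY\setminus\DCY \to [\HH/\Gamma_1(3)]$. As $[\HH/\Gamma_1(3)]$ is a fine moduli stack for elliptic curves with $\Gamma_1(3)$-level structure, $\Phi$ is precisely the classifying morphism of $(E_y,\sigma)$.

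Next I would show $\Phi$ is \'etale. The Kodaira--Spencer map $\KS\colon \Theta(\log\{0\}) \to \cO(F^2_\vec)$ is an isomorphism (the Remark following Definition~\ref{def:Kodaira_Spencer_fd}); equivalently the Yukawa coupling $Y_{\CY} = -\tfrac{1}{3(1+27y_1)}(dy_1/y_1)^{\otimes 3}$ of Example~\ref{ex:Yukawa_conformal_limit} is nowhere vanishing on $\cMCY\setminus\DCY$. This says exactly that $d\tau/dy$ is nonzero, so the period map is a local biholomorphism and $\Phi$ is a representable, unramified morphism of one-dimensional Deligne--Mumford stacks.

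Finally I would pin down the degree. Since $\cMCY\setminus\DCY$ is connected, $\Phi$ presents it as a connected \'etale cover of $[\HH/\Gamma_1(3)]$, hence as $[\HH/H]$ for a subgroup $H$ of $\pi_1^{\mathrm{orb}}([\HH/\Gamma_1(3)]) = \Gamma_1(3)$ whose index equals the degree. To see that the degree is one I would match the local structure at the three distinguished points: at $y=0$ and $y=-\tfrac{1}{27}$ the residue $N$ is a nonzero nilpotent (Remark~\ref{rem:pole_NablaP}), so the local monodromy is unipotent of infinite order and these points map to the two cusps of $X_1(3)$, while at $y=\infty$ the $\mu_3$ isotropy gives order-three monodromy mapping to the unique elliptic point of order three of $\Gamma_1(3)$. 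The orbifold Euler characteristic of $[\HH/\Gamma_1(3)]$ is $-\tfrac{1}{6}$ times the index $4$ of $\Gamma_1(3)$ in $\PSL(2,\Z)$, namely $-\tfrac{2}{3}$, and this agrees with the orbifold Euler characteristic of the genus-zero orbifold $\cMCY\setminus\DCY$ (two punctures and one $\mu_3$-point, giving $2-2-(1-\tfrac{1}{3}) = -\tfrac{2}{3}$); since $\chi^{\mathrm{orb}}$ multiplies by the degree under \'etale covers, the degree is one, $H = \Gamma_1(3)$, and $\Phi$ is an isomorphism. The main obstacle is precisely this last step: an \'etale morphism can a priori be a nontrivial cover, and ruling this out amounts to identifying the global monodromy image with all of $\Gamma_1(3)$. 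I expect the cleanest route is the Euler-characteristic and cusp/elliptic-point bookkeeping above, but one must verify carefully that the counts of cusps and elliptic points, together with the stacky $\mu_3$-structure at $y=\infty$, are transported correctly by $\Phi$.
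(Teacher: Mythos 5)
Your proof is correct in substance but takes a genuinely different route from the paper's. The paper works on coarse moduli spaces via the explicit $j$-invariant: it records $j(E_y) = -(1+24y)^3/(y^3(1+27y))$, observes that the composite $\cMCY\setminus\DCY \to [\HH/\Gamma_1(3)] \to \HH/\PSL(2,\Z)$ is a degree-$4$ map with the same ramification data over $j=0,1728$ as the degree-$4$ covering $[\HH/\Gamma_1(3)] \to \HH/\PSL(2,\Z)$, and concludes that the coarse spaces agree before matching the $\mu_3$-structures at $y=\infty$ and $\tau = e^{2\pi\iu/3}$. You instead avoid the $j$-invariant entirely: \'etaleness of the classifying map from the non-vanishing of the Yukawa coupling (equivalently, $\Omega(\theta\zeta,\theta^2\zeta) = \tfrac{1}{3(1+27y)} \neq 0$, which is exactly infinitesimal Torelli here), followed by an orbifold Euler-characteristic count to force the degree to be one. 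Your approach buys independence from the explicit formula for $j$ and makes the role of the VHS transparent; the paper's buys a completely elementary verification at the cost of a computation. One point in your argument deserves to be made explicit rather than just flagged: multiplicativity of $\chi^{\mathrm{orb}}$ under $\Phi$ requires $\Phi$ to be a \emph{finite covering} (proper \'etale), not merely a local isomorphism --- an open immersion of a disc into $\C$ is \'etale with no degree. This is supplied by your cusp bookkeeping: since $\Im\tau \to \infty$ as $y\to 0$ and the conifold monodromy is parabolic, $\Phi$ extends to a holomorphic map of compactified coarse curves $\Proj^1 \to \overline{X_1(3)} \cong \Proj^1$ taking boundary to boundary, whence the restriction to the open parts is proper and the Euler-characteristic argument closes. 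With that spelled out, and with the check (common to both proofs) that $\mu_3$ acts faithfully on the family at $y=\infty$ so that the map on stabilizer groups is an isomorphism, your argument is complete.
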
 
\begin{proof} 
As we saw, $\cMCY\setminus \DCY$ is equipped with 
a family of elliptic curves with $\Gamma_1(3)$-level structure. 
Hence we have a canonical map 
$\cMCY\setminus \DCY \to [\HH/\Gamma_1(3)]$. 
The $j$-invariant of $E_y$ is given by 
\[
j(E_y) = -\frac{(1+24y)^3}{y^3(1+27y)}
\]
and this gives the composition $\cMCY \setminus \DCY 
\to [\HH/\Gamma_1(3)] \to \HH/\PSL(2,\Z) \cong \C$. 
We can easily see that this has the same degree ($=4$) and 
ramification data (at $j =0$, $1728$) 
as the covering $[\HH/\Gamma_1(3)] 
\to \HH/\PSL(2,\Z)$. Thus the coarse moduli spaces of 
$\cMCY \setminus \DCY$ and $[\HH/\Gamma_1(3)]$ 
are the same.  The $\mu_3$-orbifold structures at 
$y=\infty$, $\tau=e^{2\pi\iu/3}$ also match.  
\end{proof} 

\begin{figure}[htbp]
\includegraphics[bb=200 583 400 710]{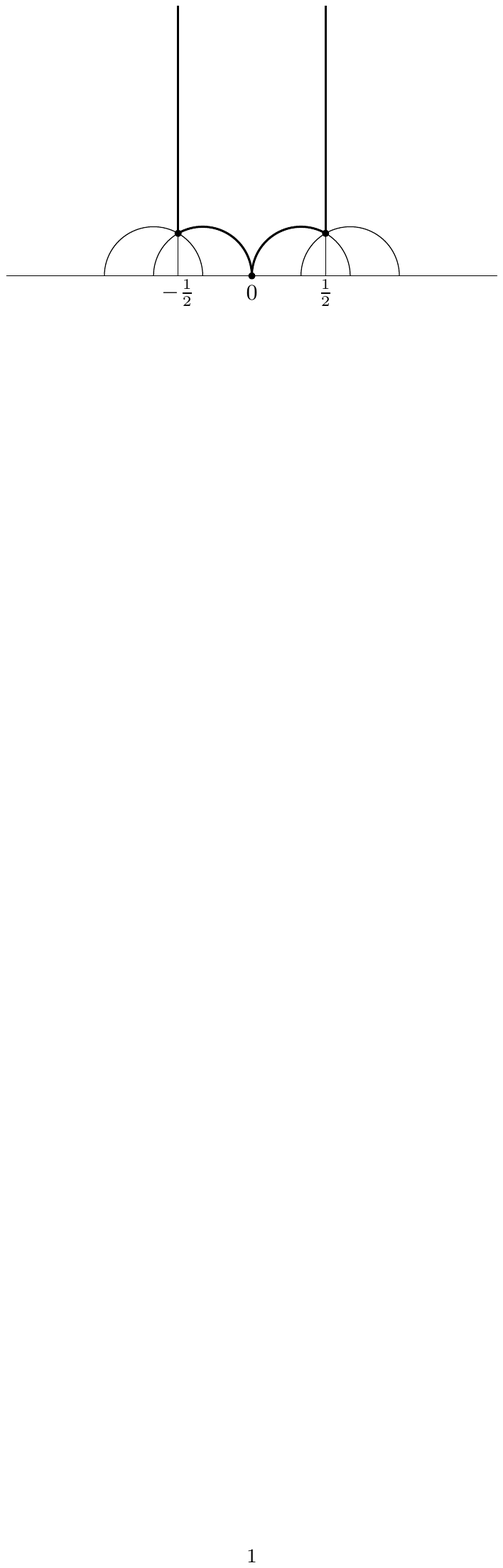}
\caption{A fundamental domain for $\HH/\Gamma_1(3)$. 
Note that $\Gamma_1(3)$ is generated by 
$\tau \mapsto \tau+1$ and $\tau \mapsto \tau/(3\tau +1)$. 
The large-radius limit point is $\tau=+\infty\iu$, 
the conifold point is $\tau= 0$, and the orbifold point is 
$\tau = -\frac{1}{2} + \frac{\iu}{2\sqrt{3}} 
= \frac{\xi-1}{3}$, where the parameter $\tau$ is as in 
Corollary \ref{cor:modular_parameter}.}
\end{figure} 

\subsubsection{A Relative Cohomology Mirror and the Picard--Fuchs Equation}
Let $F_y=F_y(w_1,w_2)$ denote the defining equation of $E_y$ 
on the affine chart $(w_1,w_2)\in (\C^\times)^2$: 
\[
F_y = w_1+ w_2 + \frac{y}{w_1w_2} + 1.  
\]
The corresponding affine elliptic curve 
\[
\Eaff_y = \big\{(w_1,w_2) \in (\C^\times)^2 : F_y(w_1,w_2) = 0\big\}
\] 
is $E_y\setminus \{0,\tor,2\tor\}$, where $\tor$ is the 
3-torsion point as before. 
Near $y=\infty$, by introducing variables 
$v_1 = \fry w_1$, $v_2 = \fry w_2$,  
we define 
\begin{equation} 
\label{eq:Eaff_near_infinity}
\Eaff_y = \big\{(v_1,v_2) \in (\C^\times)^2 : 
v_1+v_2+ 1/(v_1v_2) + \fry =0\big\} = E_y \setminus 
\{0,\tor,2\tor\}, 
\end{equation} 
where $\fry = y^{-1/3}$. 
A mirror for $Y$ is given by 
the relative cohomology of the pair $\big((\C^\times)^2, \Eaff_y\big)$; 
such a mirror has been analysed by 
Stienstra~\cite{Stienstra:resonant}, N.~Takahashi~\cite{Takahashi:log_mirror}  
and Konishi--Minabe~\cite{Konishi--Minabe:local_B}.  
We shall see that the variation of Hodge structure on $H^1(E_y)$ 
corresponds to the rank~2 vector bundle $\Hvec$ 
from \S\ref{sec:two}, and 
that the variation of mixed Hodge structure 
on $H^2\big((\C^\times)^2, \Eaff_y\big)$ 
corresponds to the rank~3 vector bundle $\widebar{H}$ there. 
Let $\zeta_y\in H^2\big((\C^\times)^2,\Eaff_y\big)$ denote 
the relative cohomology class given by 
\[
\zeta_y = 
\frac{dw_1}{w_1} \wedge \frac{dw_2}{w_2} 
= \frac{dv_1}{v_1} \wedge \frac{dv_2}{v_2}. 
\]
\begin{proposition}[\cites{Batyrev:MHS_affine, Stienstra:resonant, 
Takahashi:log_mirror,Konishi--Minabe:local_B}] 
\label{prop:PF} 
The classes $\zeta_y \in H^2\big((\C^\times)^2,\Eaff_y\big)$, 
$\lambda_y \in H^1(E_y)$ satisfy 
\[
\theta \zeta_y = \delta \left( \lambda_y\big|_{\Eaff_y}\right)  
\]
where $\theta = \nabla_{y\parfrac{}{y}}$ is the Gauss--Manin 
connection and 
$\delta \colon 
H^1(\Eaff_y) \to H^2\big((\C^\times)^2, \Eaff_y\big)$ is the connecting 
homomorphism.  
They satisfy the Picard--Fuchs equations:
\begin{align}
\label{eq:PF}
\begin{split}  
\left( \theta^3 + 3y\theta(3\theta+1)(3\theta+2) \right) \zeta_y &= 0 \\
\left( \theta^2 + 3 y (3\theta+1)(3\theta +2) \right) \lambda_y & = 0 
\end{split} 
\end{align} 
\end{proposition}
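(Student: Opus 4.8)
The plan is to establish the two Picard--Fuchs equations \eqref{eq:PF} by relating them to the quantum differential equation \eqref{eq:QDE_for_Y} for $\widebar{H}$, which we have already computed, and then to obtain the relation $\theta\zeta_y = \delta(\lambda_y|_{\Eaff_y})$ directly from the residue/connecting-homomorphism description of the mixed Hodge structure. Concretely, I would first set up the long exact sequence of the pair $\big((\C^\times)^2,\Eaff_y\big)$ in de Rham cohomology, which reads
\[
H^1\big((\C^\times)^2\big) \longrightarrow H^1(\Eaff_y)
\xrightarrow{\ \delta\ } H^2\big((\C^\times)^2,\Eaff_y\big)
\longrightarrow H^2\big((\C^\times)^2\big).
\]
Here $H^2\big((\C^\times)^2,\Eaff_y\big)$ is the rank-$3$ space that should match $\widebar{H}$ from \S\ref{sec:two}, with $\zeta_y=\tfrac{dw_1}{w_1}\wedge\tfrac{dw_2}{w_2}$ playing the role of the primitive section. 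The class $\zeta_y$ is the natural generator of the image of $H^2\big((\C^\times)^2\big)$, and the affine elliptic curve $\Eaff_y=E_y\setminus\{0,\tor,2\tor\}$ contributes its $H^1$.

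The key computational step is to verify that $\theta\zeta_y=\delta(\lambda_y|_{\Eaff_y})$. For this I would apply the Gauss--Manin connection $\theta=\nabla_{y\partial_y}$ to the relative class $\zeta_y$ using the Griffiths--Dwork method: differentiating $\zeta_y=\tfrac{dw_1}{w_1}\wedge\tfrac{dw_2}{w_2}$ under the integral against a relative cycle and integrating by parts against the defining equation $F_y=w_1+w_2+\tfrac{y}{w_1w_2}+1$ produces, modulo exact forms, a one-form supported on the boundary $\Eaff_y$. Since $\tfrac{\partial F_y}{\partial y}=\tfrac{1}{w_1w_2}$, the derivative $\theta\zeta_y$ reduces to a residue along $F_y=0$, and a direct residue computation identifies this residue with $\lambda_y=\tfrac{dw_1}{3w_1(w_2-y/(w_1w_2))}$; the connecting homomorphism $\delta$ is precisely the map taking a class on $\Eaff_y$ to the relative class whose coboundary it is. This is the geometric content of the statement and where the factor $\tfrac13$ in $\lambda_y$ gets fixed.

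Once this relation is in place, the two Picard--Fuchs equations follow in tandem. The operator annihilating $\zeta_y$ is read off from the GKZ/quantum differential operator: comparing with \eqref{eq:QDE_for_Y}, which we rewrote in the form $\theta^3-y_1(-3\theta)(-3\theta-1)(-3\theta-2)$, and noting the sign/normalization convention $y\leftrightarrow y_1$ in the elliptic-curve presentation, yields $\big(\theta^3+3y\theta(3\theta+1)(3\theta+2)\big)\zeta_y=0$ after expanding the product. The second equation is then obtained by applying $\theta$ to the first and using $\theta\zeta_y=\delta(\lambda_y|_{\Eaff_y})$ together with the injectivity of $\delta$ on the relevant graded piece: since $\delta$ intertwines the Gauss--Manin connections, an operator annihilating $\theta\zeta_y$ pulls back to one annihilating $\lambda_y$, and one checks that $\theta^{-1}$ of the cubic operator (dividing out the overall $\theta$) gives exactly $\theta^2+3y(3\theta+1)(3\theta+2)$ acting on $\lambda_y$.

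The main obstacle I anticipate is the careful bookkeeping in the Griffiths--Dwork reduction establishing $\theta\zeta_y=\delta(\lambda_y|_{\Eaff_y})$ with the correct constant: one must track the exact forms discarded at each integration by parts, confirm that the toric logarithmic poles along $w_1w_2=0$ do not contribute spurious boundary terms, and verify that the residue along $\Eaff_y$ lands in the correct $F^1$-graded piece of the mixed Hodge structure rather than merely up to a lower filtration step. A secondary subtlety is matching conventions between the B-model presentation in $(w_1,w_2)$-coordinates near $y=0$ and the orbifold presentation in $(v_1,v_2)$-coordinates near $y=\infty$ from \eqref{eq:Eaff_near_infinity}, ensuring that both $\zeta_y$ and $\lambda_y$ extend as claimed across the orbifold point and that the Picard--Fuchs operators are $\mu_3$-equivariant there. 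These are, however, precisely the verifications that are routine given the explicit connection matrices in Proposition~\ref{pro:GKZ} and the identification of $\widebar{H}$ with the quantum D-module of $Y$, so the proof reduces to a residue calculation plus comparison with known operators.
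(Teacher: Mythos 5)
Your treatment of the relation $\theta\zeta_y=\delta(\lambda_y|_{\Eaff_y})$ is essentially the paper's: the paper differentiates the period $\int_C\zeta_y$ over a relative cycle $C$ (working in the chart near $y=\infty$) and identifies $3y\partial_y\int_C\zeta_y$ with $3\int_{\partial C}\lambda_y$, citing Takahashi and Konishi--Minabe for the details, and your Griffiths--Dwork phrasing of that same residue computation is fine. Your derivation of the second Picard--Fuchs equation from the first, via the factorization $\theta^3+3y\theta(3\theta+1)(3\theta+2)=\bigl(\theta^2+3y(3\theta+1)(3\theta+2)\bigr)\circ\theta$ together with flatness of $\delta$ and injectivity of $\delta$ on $W_1H^1(\Eaff_y)=\Image\bigl(H^1(E_y)\to H^1(\Eaff_y)\bigr)$, is correct and is a genuine small alternative to the paper, which simply cites the literature for both equations.

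The gap is in your derivation of the \emph{first} Picard--Fuchs equation. You propose to read the operator annihilating $\zeta_y$ off from \eqref{eq:QDE_for_Y}, i.e.\ from the operator annihilating the primitive section $\zeta$ of $\widebar{H}$. But $\widebar{H}$ was constructed in \S\ref{sec:two} from the GKZ system and the B-model \logDTEP structure of the Landau--Ginzburg mirror of $\Ybar$, not from the relative cohomology $H^2\bigl((\C^\times)^2,\Eaff_y\bigr)$; the identification of these two D-modules is precisely Corollary~\ref{cor:VHS_isom}, which the paper deduces \emph{from} Proposition~\ref{prop:PF} (``the vector bundles are described by the same Picard--Fuchs equations''). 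Assuming that $H^2\bigl((\C^\times)^2,\Eaff_y\bigr)$ ``should match $\widebar{H}$'' in order to transfer \eqref{eq:QDE_for_Y} to $\zeta_y$ is therefore circular. To close the gap you must establish $\bigl(\theta^3+3y\theta(3\theta+1)(3\theta+2)\bigr)\zeta_y=0$ independently of $\widebar{H}$: either by carrying out the Griffiths--Dwork reduction on the relative de Rham complex of the pair (the computation you only sketch for the connecting-homomorphism relation), by expanding the periods of $\zeta_y$ as hypergeometric series and verifying the recursion, or by citing the references the paper itself invokes (Batyrev, Stienstra). Once the first equation is obtained by one of these means, the rest of your argument goes through.
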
 
\begin{proof}
Let $C \in H_2\big((\C^\times)^2,\Eaff_y\big)$ be a relative cycle. 
Working in the chart near $y=\infty$, we find 
\[
3 y\parfrac{}{y} \int_C \zeta_y = - 
\fry \parfrac{}{\fry} \int_C \zeta_y =  
\int_{\partial C} \fry \frac{d \log v_1 \wedge d\log v_2}
{d(v_1+v_2+\frac{1}{v_1v_2})} 
= 3 \int_{\partial C} \lambda_y
\] 
(see \cite[Lemma 1.8]{Takahashi:log_mirror}, 
\cite[Lemma 4.3]{Konishi--Minabe:local_B}).
This gives the first equation. The Picard--Fuchs equations 
are well-known:
see \cite[Theorem 14.2]{Batyrev:MHS_affine} and 
\cite[\S 6]{Stienstra:resonant}.  
\end{proof} 

\begin{corollary} 
\label{cor:VHS_isom} 
We have the following isomorphisms. 
\begin{itemize} 
\item[(1)]  
The rank $3$ vector bundle $\bigcup_y H^2((\C^\times)^2,\Eaff_y)$ 
over $\cMCY\setminus \DCY$ 
equipped with the Gauss--Manin connection is isomorphic to 
the vector bundle $(\widebar{H},\nabla)$ from \S\ref{sec:two} . 
\item[(2)] The rank $2$ vector bundle $\bigcup_y H^1(E_y)$ 
over $\cMCY\setminus \DCY$ 
equipped with the Gauss--Manin connection is isomorphic to 
the vector bundle $(\Hvec, \nabla)$ from \S\ref{sec:two}.
\end{itemize}  
These isomorphisms map $\zeta_y\in H^2((\C^\times)^2,\Eaff_y)$ 
to $\zeta\in\widebar{H}$ and $\lambda_y\in H^1(E_y)$  
to $\theta \zeta\in \Hvec$.
\end{corollary}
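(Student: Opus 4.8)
The plan is to prove both isomorphisms by realizing each bundle as the $\cD$-module generated by a cyclic vector annihilated by the relevant Picard--Fuchs operator, and then matching these operators with the quantum differential operators of \S\ref{sec:two}. Write
$$L = \theta^3 + 3y\,\theta(3\theta+1)(3\theta+2), \qquad M = \theta^2 + 3y(3\theta+1)(3\theta+2),$$
so that by Proposition~\ref{prop:PF} we have $L\zeta_y = 0$ and $M\lambda_y = 0$. Expanding $(-3\theta)(-3\theta-1)(-3\theta-2) = -3\theta(3\theta+1)(3\theta+2)$ shows that the operator~\eqref{eq:QDE_for_Y} is exactly $L$, so that $L\zeta = 0$ for the primitive section $\zeta$ of $\widebar{H}$.

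For part~(1), I would first note that $\zeta$ is a cyclic vector for $(\widebar{H},\nabla)$: by~\eqref{eq:frame_Hbar} the classes $\zeta,\theta\zeta,(1+27y_1)\theta^2\zeta$ form a frame, and since $1+27y_1$ is invertible on $\cMCY\setminus\DCY$ so do $\zeta,\theta\zeta,\theta^2\zeta$. Hence the $\cD$-module map $\cD/\cD L \to (\widebar{H},\nabla)$ sending $1\mapsto\zeta$ is an isomorphism of flat bundles over $\cMCY\setminus\DCY$ (both are locally free of rank $3$; the leading coefficient of $L$ is $1+27y$, invertible away from the conifold point). The crux is then the analogous statement on the Hodge-theoretic side: that $\zeta_y$ is cyclic for $\bigcup_y H^2\big((\C^\times)^2,\Eaff_y\big)$, i.e.\ that $\zeta_y,\theta\zeta_y,\theta^2\zeta_y$ are linearly independent over $\cMCY\setminus\DCY$. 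Granting this, $1\mapsto\zeta_y$ defines an isomorphism $\cD/\cD L \xrightarrow{\sim} \bigcup_y H^2\big((\C^\times)^2,\Eaff_y\big)$, and composing with the previous one gives the desired isomorphism carrying $\zeta\mapsto\zeta_y$.

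For part~(2), I would deduce the rank-$2$ statement from part~(1) together with the connecting homomorphism. Applying $M$ to the relation $\theta\zeta_y = \delta\big(\lambda_y|_{\Eaff_y}\big)$ of Proposition~\ref{prop:PF} and using that $\delta$ commutes with the Gauss--Manin connection gives $M(\theta\zeta_y) = \delta\big(M\lambda_y|_{\Eaff_y}\big) = 0$; transporting through the isomorphism of part~(1) yields $M(\theta\zeta)=0$. Since $\Hvec\subset\widebar{H}$ is the $\nabla$-invariant rank-$2$ subbundle spanned over $\cMCY\setminus\DCY$ by $\theta\zeta$ and $\theta^2\zeta$ (see \S\ref{subsubsec:Hvec} and~\eqref{eq:symplectic_pairing}), the class $\theta\zeta$ is a cyclic vector for $(\Hvec,\nabla)$ annihilated by $M$. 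The same argument as in part~(1)—now with $\cD/\cD M$, whose leading coefficient is again $1+27y$—identifies $\Hvec$ with $\bigcup_y H^1(E_y)$ via $\theta\zeta\mapsto\lambda_y$. The two isomorphisms are compatible: under part~(1) the subbundle $\Hvec$ corresponds to the rank-$2$ sub-variation $\delta\big(\bigcup_y H^1(E_y)\big)$, so the inclusion $\Hvec\hookrightarrow\widebar{H}$ matches $\delta\colon H^1(E_y)\to H^2\big((\C^\times)^2,\Eaff_y\big)$.

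The hard part will be the cyclicity of $\zeta_y$ and $\lambda_y$—that the stated Picard--Fuchs operators really generate the full annihilator ideals, so that the rank-$3$ and rank-$2$ local systems are genuinely cyclic $\cD$-modules. For $\lambda_y$ on an elliptic curve this is standard, but for the relative class $\zeta_y$ one must invoke the toric/GKZ structure of the mirror family and the maximal unipotency of the monodromy at the large-radius point $y=0$, as established in the work of Batyrev, Stienstra, Takahashi and Konishi--Minabe cited in Proposition~\ref{prop:PF}. A secondary, more routine, point is to carry the identifications across the orbifold point $y=\infty\in\cMCY\setminus\DCY$ using the $\fry_1$-chart presentations of $\widebar{H}$ and of the family $E_y$, and to check the relevant $\mu_3$-equivariance there.
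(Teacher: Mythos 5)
Your proposal is correct and takes essentially the same route as the paper, whose entire proof is the one-line observation that $(\widebar{H},\nabla)$ and $(\Hvec,\nabla)$ are described by the same Picard--Fuchs equations \eqref{eq:PF} (via \eqref{eq:QDE_for_Y}); you have simply unpacked this into the explicit cyclic $\cD$-module argument, correctly identifying the only substantive external input as the cyclicity of $\zeta_y$, which is supplied by the references cited in Proposition~\ref{prop:PF}. (One small simplification available to you: the factorization $L=M\theta$ gives $M(\theta\zeta)=0$ directly, without routing through the isomorphism of part~(1).)
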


\begin{proof}
The vector bundles $(\widebar{H},\nabla)$, $(\Hvec,\nabla)$ 
are described by the same Picard--Fuchs equations \eqref{eq:PF};
see \eqref{eq:QDE_for_Y}. 
\end{proof}

Consider now the diagram: 
\[
\xymatrix{
0 \ar[r] & 
H_2\big((\C^\times)^2\big) \ar[r] & 
H_2\big((\C^\times)^2, \Eaff_y\big) 
\ar[r]^(0.6){\partial} 
& H_1(\Eaff_y) \ar[d]^{i_*} \ar[r]& H_1\big((\C^\times)^2\big) \ar[r] & 
0 \\ 
& & & H_1(E_y) & & 
}
\]
where we use $\Z$ coefficients and the top row is exact. 
Since $\Re(F_y) \colon (\C^\times)^2 \to \R$ is a Morse 
function with 3 critical points of Morse index 2, it follows from Morse theory  that 
\[
H_1((\C^\times)^2,\Eaff_y) = 0, \qquad 
H_2((\C^\times)^2,\Eaff_y)\cong \Z^3; 
\] 
see e.g.~\cite[\S 3.3.1]{Iritani:integral}.  Generators of $H_2\big((\C^\times)^2,\Eaff_y\big)$ are given by 
3 Lefschetz thimbles emanating from critical points of $F_y$.  
We define the \emph{lattice of vanishing cycles} to be 
\[
\VC_y := 
\Image\left(i_* \circ \partial \colon 
H_2\big((\C^\times)^2,\Eaff_y;\Z\big) 
\to H_1(E_y;\Z) \right).  
\]
\begin{proposition} 
\label{prop:VC}
The sublattice $\VC_y \subset H_1(E_y;\Z)$ 
is of index $3$ and is given by 
\[
\VC_y= 3 H_1(E_y;\Z) + \big\{ 
\alpha \in H_1(E_y;\Z) : [\alpha ] = \ell \big\} 
= \pi_* H_1(\tE_y;\Z) 
\]
where $\ell\in H_1(E_y,\Z/3\Z)$ is the $\Gamma_1(3)$-level structure 
of $E_y$ and $\pi\colon \tE_y \to E_y$ is the $3$-fold 
covering described in Remark~\ref{rem:3-fold_covering}.  
\end{proposition}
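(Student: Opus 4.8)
The plan is to reduce everything to a homology computation via the exact sequence of the pair $\big((\C^\times)^2,\Eaff_y\big)$ recorded before the statement, and then to match the resulting lattice against the other two descriptions. Exactness gives $\Image(\partial)=\Ker\big(j_*\colon H_1(\Eaff_y)\to H_1((\C^\times)^2)\big)$, where $j\colon \Eaff_y\hookrightarrow(\C^\times)^2$ is the inclusion, so that $\VC_y=\Image(i_*\circ\partial)=i_*(\Ker j_*)$. Writing $K=\Ker\big(i_*\colon H_1(\Eaff_y)\to H_1(E_y)\big)$, this $K$ is free of rank $2$, generated by the small loops $c_0,c_{\tor},c_{2\tor}$ around the three punctures subject to $c_0+c_{\tor}+c_{2\tor}=0$, and $i_*$ is onto. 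A short diagram chase then identifies $\VC_y$ with the kernel of
\[
\psi\colon H_1(E_y)\longrightarrow H_1((\C^\times)^2)\big/ j_*(K),\qquad \psi(\alpha)=[\,j_*\tilde\alpha\,],
\]
where $\tilde\alpha$ is any lift of $\alpha$ to $H_1(\Eaff_y)$; this is well defined because two lifts differ by an element of $K$.

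Next I would carry out the Newton-polygon computation that produces the factor $3$. The polygon of $F_y=w_1+w_2+y/(w_1w_2)+1$ is the triangle with vertices $(1,0),(0,1),(-1,-1)$ and interior point $(0,0)$; its three edges are primitive and correspond to the three punctures. Parametrising the dominant balance of $F_y$ near each end, I would check that $j_*c_0,j_*c_{\tor},j_*c_{2\tor}$ are the primitive outer normals $(1,1),(-2,1),(1,-2)$ of the three edges. These sum to zero and span an index-$3$ sublattice $j_*(K)\subset H_1((\C^\times)^2)\cong\Z^2$; hence $H_1((\C^\times)^2)/j_*(K)\cong\Z/3$, the map $\psi$ is surjective, and $\psi(\alpha)$ is the winding number of $w_1/w_2$ along $\alpha$ modulo $3$. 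Therefore $\VC_y=\Ker\psi$ has index $3$, which is the first assertion. To obtain $\VC_y=\pi_*H_1(\tE_y;\Z)$ I would then compute the monodromy of the covering: $\pi$ is an unramified $\Z/3$-cover with deck group generated by $g\colon[X,Y,Z]\mapsto[X,\xi Y,\xi^2 Z]$, so it is a degree-$3$ isogeny, $\pi_*$ is injective with index-$3$ image, and $\pi_*H_1(\tE_y)=\Ker(\rho)$ for a monodromy character $\rho\colon H_1(E_y)\to\Z/3$. Since $X,Y,Z$ are cube roots of $w_1,w_2,w_3$ and $w_1w_2w_3=yw_4^3$ forces the winding numbers $n_1,n_2,n_3$ along a loop to satisfy $n_1+n_2+n_3\equiv 0$, continuing a cube root around $\alpha$ gives $\rho(\alpha)\equiv n_2-n_1\pmod 3$, i.e.\ $\rho=-\psi$; hence $\pi_*H_1(\tE_y)=\Ker\rho=\Ker\psi=\VC_y$.

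Finally I would identify this index-$3$ lattice with $3H_1(E_y;\Z)+\{\alpha:[\alpha]=\ell\}$. As $\psi$ takes values in $\Z/3$ we have $3H_1(E_y;\Z)\subset\VC_y$, so $\VC_y$ is the preimage under reduction mod $3$ of the line $\Ker\bar\psi\subset H_1(E_y,\Z/3)$; the claim is that $\Ker\bar\psi=\langle\ell\rangle$. Since $\Ker\bar\psi=\Ker\bar\rho$ is one-dimensional and $\ell\neq 0$, it suffices to prove $\rho(\ell)=0$. Here I would use a lifting argument: the fixed-point-free order-$3$ automorphism $\sigma$ lifts to $\tilde\sigma\colon[X,Y,Z]\mapsto[Z,X,Y]$ with $\pi\circ\tilde\sigma=\sigma\circ\pi$ and $\tilde\sigma^3=\id$. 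Choosing the path defining $\ell$ to be $\gamma=\pi\circ\tilde\gamma$ for a path $\tilde\gamma$ on $\tE_y$ from the origin to $\tilde\sigma(0)$, the cycle $3\gamma=\gamma*\sigma\gamma*\sigma^2\gamma$ lifts to $\tilde\gamma*\tilde\sigma\tilde\gamma*\tilde\sigma^2\tilde\gamma$, which ends at $\tilde\sigma^3(0)=0$; thus the lift closes up, $\rho(\ell)=0$, and $\Ker\bar\psi=\langle\ell\rangle$, giving $\VC_y=3H_1(E_y;\Z)+\{\alpha:[\alpha]=\ell\}$.

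The step I expect to be the main obstacle is the pair of explicit computations in the second paragraph: pinning down the three classes $j_*c_\bullet$ as the primitive \emph{outer} normals (rather than the edge directions) and matching the orientation conventions so that the cover monodromy $\rho$ agrees up to sign with $\psi$. Everything — the index $3$, the equality $\VC_y=\pi_*H_1(\tE_y)$, and the identification with $\ell$ — rests on these winding numbers coming out consistently, and it is easy to introduce a sign or dualisation error there. By contrast, the homological reduction of the first paragraph is formal, and the lifting argument of the third paragraph is robust once the compatibility $\pi\circ\tilde\sigma=\sigma\circ\pi$ and $\tilde\sigma^3=\id$ are in hand.
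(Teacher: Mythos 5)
Your argument is correct, but it takes a genuinely different route from the paper's. The paper proves the index-$3$ claim by degenerating at the orbifold point: it projects $E_y \to \Proj^1$ via $v_1$, tracks the four branch points as $\fry \to -3\xi^j$, reads off the three vanishing cycles $2\alpha+\beta$, $\alpha+2\beta$, $-\alpha+\beta$ from the trajectories of coalescing branch points (Figure~\ref{fig:vanishing_cycles}), and observes that these span an index-$3$ sublattice; it then pins down \emph{which} index-$3$ sublattice $\VC_y$ is by noting that $\VC_y$ is monodromy-invariant and that, the monodromy group being $\Gamma_1(3)$ acting through \eqref{eq:action_on_homology_basis} (via Proposition~\ref{prop:moduli_elliptic_level}), there is a unique $\Gamma_1(3)$-invariant sublattice of index $3$ — so all three lattices in the statement must coincide. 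You instead reduce, via the long exact sequence of the pair, to the winding-number character $\psi\colon H_1(E_y)\to\Z/3$, compute the puncture classes from the Newton polygon of $F_y$, match $\psi$ against the monodromy character $\rho$ of the covering $\pi$, and verify $\rho(\ell)=0$ by lifting the cycle $3\gamma$ through the equivariant lift $\tilde\sigma\colon[X,Y,Z]\mapsto[Z,X,Y]$ of $\sigma$ (using that $\sigma$, being fixed-point-free, is translation by $\tor$, so your concatenated loop really does represent $\ell$). Your route costs more bookkeeping but is more self-contained: it does not rely on the branch-point figure, nor on the identification of the monodromy group with $\Gamma_1(3)$ and the uniqueness of the invariant sublattice, and it relates $\VC_y$ to $\pi_* H_1(\tE_y;\Z)$ and to $\ell$ directly rather than through that uniqueness. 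One reassurance on the obstacle you flag: the sign and orientation ambiguities are harmless, since each of the three descriptions is either a lattice generated by a set of vectors (unchanged under negating generators) or the kernel of a surjection onto $\Z/3$ (unchanged under replacing $\psi$ or $\rho$ by its negative), so the comparison of subgroups is insensitive to exactly the conventions you were worried about.
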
 
\begin{proof} 
We work in the chart near $y=\infty$ and use the presentation 
\eqref{eq:Eaff_near_infinity} of $\Eaff_y$. 
Consider the projection $\Eaff_y \to \C$, $(v_1,v_2) \mapsto v_1$ 
to the $v_1$-plane, which extends to a ramified covering 
$E_y \to \Proj^1$. 
This has 4 branch points given by $v_1 = 0$ and $v_1(v_1+\fry)^2=4$; note that $v_1=\infty$ is not a branch point.  
The branch points move as $\fry$ varies, and two of them 
coalesce when $\fry= -3 \xi^j$, $j\in\{0,1,2\}$, with $\xi=e^{2\pi\iu/3}$, 
where $E_y$ is singular. 
The three vanishing cycles on $E_{y=\infty}$ 
associated with three paths $[0,-3 \xi^i]$, $i\in\{0,1,2\}$,
on the $\fry$-plane are given by the trajectories of 
coalescing branch points: see Figure \ref{fig:vanishing_cycles}. 
It is then easy to see that these vanishing cycles generate 
a sublattice of index 3. Thus $\VC_y$ is of 
index 3. 

On the other hand, the sublattice $\VC_y$ 
is clearly invariant under monodromy. 
Since we have $\cMCY \setminus \DCY \cong [\HH/\Gamma_1(3)]$, 
the monodromy group is $\Gamma_1(3)$ and acts on 
symplectic bases of $H_1(E_y,\Z)$ by 
\eqref{eq:action_on_homology_basis}. 
It is easy to see that there is a unique sublattice of index~3 
which is invariant under $\Gamma_1(3)$. The conclusion follows. 
\end{proof} 

\begin{figure}[htbp] 
\centering
\includegraphics[bb=200 610 400 710]{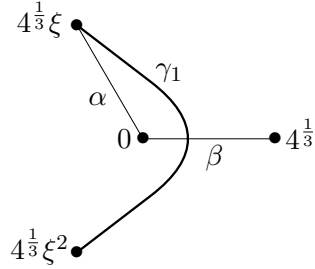} 
\caption{A vanishing cycle $\gamma_1$ on $E_\infty$, pictured on the $v_1$-plane. The black dots are branch points. 
Two other vanishing cycles 
$\gamma_2$, $\gamma_3$ are 
obtained from $\gamma_1$ by $2\pi/3$, $4\pi/3$ rotations 
respectively. 
The cycles $\{\alpha,\beta\}$ give a symplectic basis. 
With some choice of orientations, we find 
$\gamma_1 = 2 \alpha +\beta$, 
$\gamma_2= \alpha + 2\beta$, $\gamma_3 = -\alpha +\beta$.} 
\label{fig:vanishing_cycles} 
\end{figure} 

\begin{remark} 
\label{rem:other_mirror_models} 
As a mirror to $Y$, 
Chiang--Klemm--Yau--Zaslow~\cite{CKYZ:local_mirror} 
considered periods of a multi-valued one-form
\[
\int_{\gamma\subset E_y} \log (w_1) \frac{dw_2}{w_2} 
\] 
and periods of the 3-fold 
$\check{Y}= \big\{(w_1,w_2,u,v) \in (\C^\times)^2 \times \C^2 : 
F_y(w_1,w_2) + uv =0\big\}$: 
\[
\int_{S \subset \check{Y}} \frac{d w_1}{w_1}\wedge \frac{dw_2}{w_2}
\wedge \frac{du}{u}.  
\]
These are equivalent, up to a Tate twist, to the relative cohomology mirror~\cite{Konishi--Minabe:local_B}. 
\end{remark} 

\subsection{Periods and Compactly Supported $K$-theory}  
We next compute periods of the mirror family 
as explicit hypergeometric series.  To do this, we identify 
periods over integral cycles 
with elements of the compactly supported $K$-group of $Y$ (or $\cX$) 
via the $\hGamma$-integral structure 
\cite{Iritani:integral,Iritani:Ruan,Iritani:qcohperiod}. 
We then identify the modular parameter $\tau$ 
with the second derivative of the genus-zero Gromov--Witten 
potential of $Y$. 
Most of the computations in this section are already in the literature, 
in particular in work of Hosono~\cite{Hosono:central_charges}. 

\subsubsection{$I$-function, $\hGamma$-Integral Structure and Monodromy} 
\label{subsubsec:monodromy}
The $I$-functions \citelist{\cite{Givental:toric} \cite{CCIT:computing}}
of $Y$ and $\cX$ are the power series 
\begin{align*} 
I_Y(y,z) & = \sum_{d=0}^\infty 
y^{h/z+d} \frac{\prod_{m=0}^{3d-1}(-3h-mz)}
{\prod_{m=1}^d(h+mz)^3} \\ 
I_\cX(\fry,z) & = \sum_{d=0}^\infty 
\fry^d \frac{\prod_{
0\le m<d/3, \fracp{d/3}=\fracp{m}} (-mz)^3 }{d!z^d}
\unit_{\fracp{d/3}}
\end{align*} 
which take values, respectively, in $\HY=H^\bullet(Y)$ and 
$\HX =H^\bullet_{\orb}(\cX)$. In the second line, $\fracp{r}$ 
denotes the fractional part of a real number $r$. 
The components of $I_Y$ written in the basis $\{1,h,h^2\}$, or the components of $I_\cX$ written in the basis 
$\{\unit,\unit_{\frac{1}{3}},\unit_{\frac{2}{3}}\}$,
form a basis of solutions to the Picard--Fuchs equation 
\eqref{eq:PF} satisfied by $\zeta_y$. Therefore 
periods of $\zeta_y$ can be written as certain linear combinations  
of these hypergeometric series. 
In what follows, we set $z=1$ and write $I_Y(y) = I_Y(y,1)$ 
and $I_\cX(\fry) = I_\cX(\fry,1)$. 
The Mirror Theorem~\cite{Givental:elliptic}*{Theorem 4.2} implies that
the $I$-function of $Y$ can be 
expanded as 
\[
I_Y(y) = 1 + t h + \parfrac{F_Y^0}{t} (-3h^2) 
\]
where $t=t(y)$ is the mirror map for $Y$,
given by $t(y) = \log y + g(y)$ with $g(y)$ as 
in Theorem~\ref{thm:mirrorsymmetryforYbar}, 
and 
\begin{equation} 
\label{eq:genus-zero_pot_Y}
F_Y^0(t) = -\frac{1}{18} t^3 + \sum_{d=1}^\infty 
\corr{}_{0,0,d}^Y e^{t d}
\end{equation} 
is the genus-zero Gromov--Witten 
potential\footnote{We added a cubic term to $F_Y^0$ 
which is responsible for the cup product.} 
restricted to $H^2(Y)$.  
The Mirror Theorem~\cite{CCIT:computing}*{Theorem 4.6} implies that
the $I$-function of $\cX$ can be 
expanded as 
\[
I_\cX(\fry) = 1 + \frt \unit_{\frac{1}{3}} + 
\parfrac{F_\cX^0}{\frt} (3 \unit_{\frac{2}{3}})
\]
where $\frt=\frt(\fry)$ is the mirror map for $\cX$, 
which is the same map as appeared in 
Theorem \ref{thm:mirrorsymmetryforXbar},
and 
\begin{equation} 
\label{eq:genus-zero_pot_X}
F_\cX^0(\frt)  = 
\sum_{n=3}^\infty \corr{\unit_{\frac{1}{3}}, 
\dots, \unit_{\frac{1}{3}}}_{0,n,0}^\cX \frac{\frt^n}{n!}.    
\end{equation} 
is the genus-zero Gromov--Witten 
potential restricted to $H^2_{\orb}(\cX)$.

Consider now the $\hGamma$-integral structure \citelist{\cite{Iritani:integral}*{\S 2.4} \cite{Iritani:Ruan}*{\S 2}}. 
The classes $\hGamma_Y \in \HY$, $\hGamma_\cX \in \HX$ 
are defined by:
\begin{align*}
  \hGamma_Y := \Gamma(1+h)^3 \, \Gamma(1-3h) = 1+\pi^2 h^2, && 
  \hGamma_\cX :=  \bigoplus_{i=0}^2 \Gamma(1-\tfrac{i}{3})^3 \unit_{\frac{i}{3}}. 
\end{align*}
Let $X$ denote either $Y$ or $\cX$ and consider  
the $K$-group $K_c(X)$ of coherent sheaves on $X$ with compact support. 
The groups $K_c(Y)$, $K_c(\cX)$ are freely generated by 
3 coherent sheaves: 
\begin{align*}
  K_c(Y) = \left\langle 
  \cO_{\pt}, \cO_{\Proj^1}(-1), \cO_{\Proj^2}(-1) \right\rangle, 
  &&
  K_c(\cX) = \left\langle 
  \cO_0, \cO_0\otimes \varrho, \cO_0\otimes \varrho^2 
  \right \rangle 
\end{align*}
where $\Proj^1\subset \Proj^2$ denotes a line and $\varrho$ 
is the standard one-dimensional representation of~$\mu_3$. 
For $V\in K_c(X)$, we define a vector $\Psi(V)$ lying in 
the compactly supported (orbifold) cohomology $H_{X,c}$ of $X$ by
\[
\Psi(V) = \hGamma_X \cup (2\pi\iu)^{\frac{\deg}{2}} 
\inv^*\tch(V).  
\]
This is an analogue of the Mukai vector. 
For a precise definition of the right-hand side, we refer the reader to~\cite{Iritani:integral}*{\S 2.4} and~\cite{Iritani:Ruan}*{\S 2.5}.
In the case at hand, we have:
\begin{align*} 
\Psi(\cO_{\pt}) & = (2\pi\iu)^3 [\pt] \\
\Psi(\cO_{\Proj^1}(-1)) & 
= (2\pi\iu)^2 [\Proj^1]\\
\Psi(\cO_{\Proj^2}(-1)) & = (2\pi\iu) \left(1+\pi \iu h -\pi^2 h^2\right)\cap [\Proj^2] 
\end{align*} 
for $Y$ and 
\begin{align*} 
\Psi(\cO_0 \otimes \varrho^i) 
= (2\pi\iu)^3 \left( 
\frac{1}{3} [\pt] + \frac{\xi^{-i}}
{\Gamma(\frac{1}{3})^3}\unit_{\frac{1}{3}}
 - \frac{\xi^{-2i}}{\Gamma(\frac{2}{3})^3}\unit_{\frac{2}{3}} 
\right) 
&&  i \in \{0,1,2\}
\end{align*} 
for $\cX$, 
where $[\pt]\in H_{c}^6(\cX) \subset H_{\orb,c}^6(\cX)$ 
is the class of a non-stacky point, so that $(1,[\pt]) = 1$,
and $\xi = e^{2\pi\iu/3}$.  Cf.~\cite{Iritani:Ruan}*{Example 2.16}.

\begin{definition}[\cite{Iritani:integral,Iritani:Ruan}] 
Let $X$ be $Y$ or $\cX$. 
We define the \emph{quantum cohomology central charge} 
of $V \in K_c(X)$ to be 
\[
\Pi_X(V) = \left((-1)^{\deg/2} I_X, \Psi(V) \right) 
\]
where $I_X$ is the $I$-function of $X$ and 
$(\cdot,\cdot)$ is the natural pairing between 
(orbifold) cohomology and compactly supported (orbifold) cohomology. 
\end{definition} 

\begin{remark} 
\label{rem:central_charge_in_A-model_coord} 
The quantum cohomology central charge in \cite{Iritani:integral, 
Iritani:Ruan} is a function of the A-model co-ordinates (K\"ahler 
parameters) and is related to the present one by a change 
of co-ordinate given by the mirror map, together with a multiplicative factor of $(2\pi\iu)^{-3}$ . Under the mirror map 
$t=t(y)$ for $Y$, we have 
\begin{align} 
\label{eq:central_charges_Y} 
\begin{split} 
\Pi_Y(\cO_{\pt}) & = (2\pi\iu)^3 \\ 
\Pi_Y(\cO_{\Proj^1}(-1)) & = - (2\pi\iu)^2 t \\ 
\Pi_Y(\cO_{\Proj^2}(-1)) & = -(2\pi\iu) 
\left(\pi^2+ \pi \iu t + 3 \parfrac{F^0_Y}{t}\right).  
\end{split} 
\end{align} 
Similarly, under the mirror map $\frt=\frt(\fry)$ for $\cX$, 
we have 
\begin{equation} 
\label{eq:central_charges_X} 
\Pi_\cX(\cO_0\otimes \varrho^i) = 
(2\pi\iu)^3 \left( 
\frac{1}{3} + \frac{\xi^{-2i}}{3\Gamma(\frac{2}{3})^3} \frt 
+ \frac{\xi^{-i}}{\Gamma(\frac{1}{3})^3} 
\parfrac{F_\cX^0}{\frt} \right), 
\qquad 
i\in \{0,1,2\}. 
\end{equation} 
\end{remark} 

We introduce period vectors $\vPi_Y$ and $\vPi_\cX$ as follows: 
\begin{align*} 
\vPi_Y & :=\big( \Pi_Y(\cO_{\pt}), \Pi_Y(\cO_{\Proj^1}(-1)), 
\Pi_Y(\cO_{\Proj^2}(-1)) \big), \\  
\vPi_\cX & := \left ( \Pi_\cX(\cO_0), \Pi_\cX(\cO_0\otimes \varrho), 
\Pi_\cX(\cO_0\otimes \varrho^2) \right).  
\end{align*} 
They are power series solutions defined near $y=0$, $\fry=y^{-1/3} = 0$ 
respectively; 
since they satisfy the Picard--Fuchs equation, they analytically 
continue to the universal cover of $\cMCY\setminus \DCY$. 
Take a base point $y_0 \in \cMCY\setminus \DCY$  
such that $0<y_0 \ll 1$. 
We choose a branch of $\vPi_Y$ around $y_0$ 
by requiring that $\log y_0 \in \R$.

\begin{proposition}[\cites{DFR:noncompact_CY,
Hosono:typeIIA, Hosono:central_charges}]  
\label{prop:monodromy} 
Under analytic continuation along the positive real line 
in the $y$-plane, we have 
\[
\vPi_Y = \vPi_\cX \begin{pmatrix} 1 & 0 & 0 \\ 
1 & -1 & 0 \\ 1 & 1 & 1
\end{pmatrix}. 
\]
Moreover, the analytic continuation of $\vPi_Y$ along the 
loops $\gamma_\LR$, $\gamma_{\con}$,   $\gamma_\orb$ 
in Figure $\ref{fig:paths}$  
are given by $\vPi_Y M_\LR$, $\vPi_Y M_{\con}$, 
$\vPi_Y M_\orb$ respectively, where 
\begin{align*}
  M_\LR = \begin{pmatrix} 1 & -1 & 0 \\ 
    0 & 1 & -1 \\ 
    0 & 0 & 1 
  \end{pmatrix},  
  &&
     M_{\con} =  
     \begin{pmatrix} 
       1 & 0 & 0 \\ 0 & 1 & 0 \\ 0 & 3 & 1 
     \end{pmatrix}, 
      &&
         M_\orb = \begin{pmatrix} 1 & 1 & 1 \\ 0 & 1 & 1 \\ 0 & -3 & -2 
         \end{pmatrix}. 
\end{align*}
\end{proposition}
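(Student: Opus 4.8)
The plan is to exploit the $\hGamma$-integral structure throughout: both $\vPi_Y$ and $\vPi_\cX$ are vectors of solutions to the single Picard--Fuchs equation \eqref{eq:PF}, so they differ by a constant change of basis, and every transition and monodromy matrix in the statement is the period-level shadow of an integral operation on the compactly supported $K$-groups $K_c(Y)$ and $K_c(\cX)$. Concretely, I would first establish the connection matrix $T$ of part~1, then compute the large-radius monodromy $M_\LR$ and the orbifold monodromy $M_\orb$ by hand from the explicit central-charge formulas \eqref{eq:central_charges_Y} and \eqref{eq:central_charges_X}, and finally recover the conifold monodromy $M_\con$ from the relation in the fundamental group of the thrice-punctured sphere.

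For part~1, the path of analytic continuation is the positive real $y$-axis, which avoids the conifold locus $y=-\tfrac1{27}$; thus $\vPi_Y$, defined by convergent hypergeometric series near $y=0$, continues unambiguously to a basis of solutions near $\fry=y^{-1/3}=0$, where it must equal $\vPi_\cX\,T$ for a unique constant matrix $T$. To identify $T$ with the stated matrix I would match $\hGamma$-classes across the derived McKay equivalence $D^b(Y)\cong D^b(\cX)$ for the crepant resolution $Y\to\cX$: under analytic continuation $\Psi(\cO_{\pt})$ goes to the sum $\Psi(\cO_0)+\Psi(\cO_0\otimes\varrho)+\Psi(\cO_0\otimes\varrho^2)$ (the point continues to the pushforward of the regular representation), $\Psi(\cO_{\Proj^2}(-1))$ goes to $\Psi(\cO_0\otimes\varrho^2)$, and so on, reproducing the three columns of $T$. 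Equivalently, and more concretely, one computes the connection coefficients of the hypergeometric solutions at $0$ and $\infty$ by the classical Barnes/Gamma-function method; this is essentially the computation of Hosono \cite{Hosono:central_charges}, which I would cite while checking that the normalizations match our central charges.

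For $M_\LR$, the large-radius monodromy is $y\mapsto e^{2\pi\iu}y$, i.e.\ $t\mapsto t+2\pi\iu$ in the flat co-ordinate. Substituting into \eqref{eq:central_charges_Y} and using that the instanton part $\sum_d\corr{}^Y_{0,0,d}e^{td}$ of $F_Y^0$ in \eqref{eq:genus-zero_pot_Y} is invariant while the cubic term $-\tfrac{1}{18}t^3$ shifts polynomially, I read off $\Pi_Y(\cO_{\pt})\mapsto\Pi_Y(\cO_{\pt})$, $\Pi_Y(\cO_{\Proj^1}(-1))\mapsto\Pi_Y(\cO_{\Proj^1}(-1))-\Pi_Y(\cO_{\pt})$, and $\Pi_Y(\cO_{\Proj^2}(-1))\mapsto\Pi_Y(\cO_{\Proj^2}(-1))-\Pi_Y(\cO_{\Proj^1}(-1))$, which is exactly $\vPi_Y\mapsto\vPi_Y M_\LR$. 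For the orbifold monodromy I work in the $\cX$-frame: the orbifold loop is $\fry\mapsto\xi\fry$, and since the non-vanishing genus-zero orbifold invariants in \eqref{eq:genus-zero_pot_X} force $F_\cX^0$ to be a series in $\frt^3$, substitution into \eqref{eq:central_charges_X} cyclically permutes the three central charges, $\Pi_\cX(\cO_0\otimes\varrho^i)\mapsto\Pi_\cX(\cO_0\otimes\varrho^{i+1})$ (tensoring by $\varrho$). This is a cyclic permutation matrix $C$ acting as $\vPi_\cX\mapsto\vPi_\cX C$; conjugating through part~1 gives $M_\orb=T^{-1}CT$ on $\vPi_Y$, which one checks equals the stated matrix.

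Finally, since $\cMCY\setminus\DCY$ has the homotopy type of a thrice-punctured sphere, with the loops $\gamma_\LR,\gamma_\con,\gamma_\orb$ of Figure~\ref{fig:paths} satisfying $\gamma_\LR\gamma_\con\gamma_\orb=1$ in $\pi_1$, the right action of monodromy on the row vector $\vPi_Y$ yields $M_\LR M_\con M_\orb=I$, whence $M_\con=M_\LR^{-1}M_\orb^{-1}$; direct multiplication confirms this is the transvection claimed. (Alternatively $M_\con$ can be obtained intrinsically as the Picard--Lefschetz, or spherical-twist, transformation attached to the cycle vanishing at the conifold, with the factor $3$ coming from its self-intersection data.) I expect the genuine obstacle to be part~1: pinning down $T$ requires either the delicate analytic continuation of the hypergeometric periods across $(0,\infty)$ or the input that the $\hGamma$-classes are compatible with the derived equivalence; once $T$ is in hand, everything else follows from the elementary substitutions above together with the fundamental-group relation.
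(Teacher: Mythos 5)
Your proposal is correct and follows essentially the same route as the paper's proof: the connection matrix is taken from the Barnes-integral/hypergeometric continuation of Hosono et al., $M_\LR$ and $M_\orb$ are identified with the period-level action of $(-)\otimes\cO(-1)$ on $K_c(Y)$ and $(-)\otimes\varrho$ on $K_c(\cX)$ (your explicit substitutions $t\mapsto t+2\pi\iu$ and $\fry\mapsto\xi\fry$ are exactly these operations read off from \eqref{eq:central_charges_Y} and \eqref{eq:central_charges_X}), and $M_\con=M_\LR^{-1}M_\orb^{-1}$ comes from the relation in $\pi_1$. The only caution is that your alternative derivation of $T$ via compatibility of the $\hGamma$-integral structure with the Fourier--Mukai transform is a nontrivial input (noted only as a remark in the paper, with references), so the Barnes-integral citation should remain the primary justification for part~1.
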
 
\begin{proof} 
The analytic continuation has been computed 
in \cites{DFR:noncompact_CY, Hosono:typeIIA,Hosono:central_charges} 
in a slightly different basis. The Barnes integral representation 
for the $I$-function yields the connection formula 
between $\vPi_Y$ and $\vPi_\cX$: see e.g.~\citelist{\cite{Hosono:typeIIA}*{Appendix A}
\cite{CIT:wall-crossings}*{Appendix}}. 
It is easy to see that 
the monodromy around the orbifold point $y=\infty$ 
corresponds to 
$(-)\otimes \varrho$ on $K_c(\cX)$ and that the monodromy around 
the large radius limit point $y=0$ corresponds to $(-)\otimes \cO(-1)$ 
on $K_c(Y)$. 
This together with the connection formula yields $M_\LR$ and $M_\orb$. 
The conifold monodromy $M_{\con}$ is then given by $M_\LR^{-1} 
M_\orb^{-1}$.  
\end{proof} 

\begin{figure}[htbp]
\centering
\includegraphics[bb=200 610 400 710]{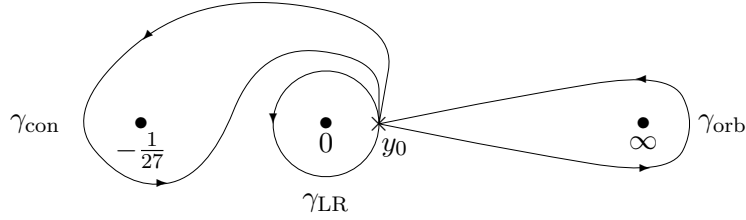}
\caption{Paths in $\cMCY \setminus \DCY$. 
The base point $y_0$ 
of the analytic continuation is chosen so that $0<y_0\ll 1$.}
\label{fig:paths} 
\end{figure} 

\begin{remark}[\cites{Horja, Hosono:typeIIA, 
Hosono:central_charges, Borisov-Horja:FM, Coates--Iritani--Jiang}]
The connection matrix relating $\vPi_Y$ and $\vPi_{\cX}$ coincides 
with the Fourier--Mukai transformation 
between compactly supported $K$-groups. 
Consider the diagram 
\[
\xymatrix{
& \left[\cO_{\Proj^2}(-1)/\mu_3\right]  
\ar[ld]_(0.6){f} \ar[rd]^(0.6){g} & \\ 
Y & & \cX 
} 
\]
and the Fourier--Mukai transformation $\Phi(-) = 
\bR g_*( f^*(-) \otimes \cO(-1) \otimes \varrho)$.  
Then we have: 
\[
\left(\Phi(\cO_{\pt}), 
\Phi(\cO_{\Proj^1}(-1)), \Phi(\cO_{\Proj^2}(-1)) \right) 
= \left( 
\cO_0, \cO_0\otimes \varrho, \cO_0\otimes \varrho^2 \right) 
\begin{pmatrix} 1 & 0 & 0 \\ 
1 & -1 & 0 \\ 1 & 1 & 1
\end{pmatrix}.  
\]
As we remarked in the proof, $M_\LR$ and $M_\orb$ correspond 
to the autoequivalences $(-)\otimes \cO(-1)$, $(-)\otimes \varrho$ 
respectively. 
The inverse conifold monodromy $M_{\con}^{-1}$ 
corresponds to the Seidel--Thomas spherical twist 
by the object $\cO_{\Proj^2}(-1)$. 
Observe also that the $2 \times 2$ 
right-lower submatrices of $M_\LR, M_{\con}, M_\orb$ 
generate $\Gamma_1(3)$. 
\end{remark} 

\begin{remark} 
The above matrices $M_\LR, M_{\con}, M_\orb$ represent 
the monodromy acting on \emph{homology} $H_2\big((\C^\times)^2, \Eaff_y\big)$; 
the monodromy acting on cohomology $H^2\big((\C^\times)^2, \Eaff_y\big)$, or equivalently the monodromy of $\big(\widebar{H},\nabla\big)$, is 
given by the adjoint-inverse of these matrices. 
\end{remark} 

\subsubsection{Identification of periods with hypergeometric series} 
We show that quantum cohomology central charges are  
periods of $\zeta_y$ over integral cycles, and vice versa. 

\begin{lemma}[\cite{Hosono:central_charges}*{Appendix A}] 
\label{lem:real_Lefschetz}  
For $0<y < \frac{1}{27}$, let 
$\Gamma_\R \in H_2\big((\C^\times)^2,\Eaff_{-y}\big)$ denote 
the class of a Lefschetz thimble associated to the critical value 
$1 -3 y^{1/3}$ of $F_{-y}$ and the straight path $[0,1-3 y^{1/3}]$, 
i.e.
\[
\Gamma_\R = \left\{(w_1,w_2)\in (\C^\times)^2: w_1<0, w_2<0, 
w_1+w_2+\frac{-y}{w_1w_2}+ 1 \ge 0 \right\}. 
\]
Then we have
\[
\Pi_Y(\cO_{\Proj^2}(-1))(e^{\pi \iu} y) = 
2\pi\iu \int_{\Gamma_\R} \zeta_{-y}.  
\]
\end{lemma} 
\begin{remark}
  The $(2\pi\iu)$ factor on the right-hand side here reflects the fact that we are working with a 2-dimensional relative cohomology mirror model, instead of a 3-dimensional mirror. 
\end{remark}
\begin{proof}[Proof of Lemma~\ref{lem:real_Lefschetz}]
Hosono \cite{Hosono:central_charges}*{equation (A.4)} 
evaluated the period integral over a vanishing sphere in the 
3-dimensional mirror model (see Remark \ref{rem:other_mirror_models}), 
and his computation implies the lemma. 
We give another proof using the Mellin transform, 
which was used by Katzarkov--Kontsevich--Pantev  
\cite{Katzarkov--Kontsevich--Pantev} to compute oscillatory 
integrals mirror to $\Proj^n$. 
Via the co-ordinate change $u_1 = -w_1$, $u_2 = -w_2$, 
$u_3 = -y/(w_1w_2)$, we write, for $0<y<\frac{1}{27}$, 
\[
\varphi(y) := \int_{\Gamma_\R} \zeta_{-y} 
= \int_{\substack{u_1>0, u_2>0, u_3>0, 
u_1+u_2+u_3\le 1 \\ u_1u_2u_3=y}}
\frac{d \log u_1 \wedge d \log u_2 \wedge d \log u_3}
{d\log y}. 
\]
We set $\varphi(y) = 0$ for $y\ge \frac{1}{27}$. 
The Mellin transform of $\varphi(y)$ can be computed as 
the Euler integral: 
\[
\int_0^\infty y^s \frac{dy}{y} \varphi(y) 
= \int_{\substack{u_1+u_2+u_3 \le 1\\ u_1>0,u_2>0,u_3>0}}
 (u_1u_2u_3)^s \frac{du_1}{u_1}
 \frac{du_2}{u_2} \frac{du_3}{u_3} 
 = \frac{\Gamma(s)^3}{\Gamma(1+3s)} 
\]
for $\Re(s)>0$. The Mellin inversion formula gives 
\[
\varphi(y) = \frac{1}{2\pi\iu} \int_{c-\iu \infty}^{c+\iu \infty} 
\frac{\Gamma(s)^3}{\Gamma(1+3s)} y^{-s} ds
\]
for $c>0$. Closing the contour to the left, we can write 
$\varphi(y)$ as the sum of residues at $s = -n$, $n=0,1,2,\dots$. 
Thus 
\begin{align*} 
\varphi(y) & = \sum_{n=0}^\infty \Res_{h=0} 
\frac{\Gamma(h-n)^3}{\Gamma(1+3h-3n)} y^{n-h} dh \\
& = \int_{\Proj^2} \sum_{n=0}^\infty 
h^3 \frac{\Gamma(h-n)^3}{\Gamma(1+3h-3n)} 
y^{n-h}  \\ 
& = \frac{1}{2\pi\iu} 
\left((-1)^{\deg/2} I_Y(e^{\pi\iu} y), \Psi(\cO_{\Proj^2}(-1))
\right) = \frac{1}{2\pi\iu} \Pi_Y(\cO_{\Proj^2}(-1))(e^{\pi\iu} y). 
\end{align*} 
In the second line here, $h$ is regarded as a cohomology class on $\Proj^2$. 
The lemma follows. 
\end{proof} 

\begin{proposition} 
\label{prop:central_charge_is_period}
Let $X$ denote either $Y$ or $\cX$. We have an isomorphism 
$\Mir \colon K_c(X) \cong H_2\big((\C^\times)^2, \Eaff_y;\Z\big)$ 
of integral lattices such that for $V \in K_c(X)$,
\begin{equation} 
\label{eq:central_charge_is_period}
\Pi_X(V) = 2\pi\iu \int_{\Mir(V)} \zeta_y.  
\end{equation} 
\end{proposition}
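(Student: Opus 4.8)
The plan is to establish the isomorphism $\Mir$ and the period identity \eqref{eq:central_charge_is_period} by combining the explicit hypergeometric evaluation of Lemma~\ref{lem:real_Lefschetz} with the monodromy computation of Proposition~\ref{prop:monodromy}. First I would treat the case $X = Y$. Lemma~\ref{lem:real_Lefschetz} already gives \eqref{eq:central_charge_is_period} for the single generator $\cO_{\Proj^2}(-1)$, identifying $\Pi_Y(\cO_{\Proj^2}(-1))(e^{\pi\iu}y)$ with $2\pi\iu \int_{\Gamma_\R} \zeta_{-y}$ for the real Lefschetz thimble $\Gamma_\R$. The key point is that both sides of the desired identity transform the same way under analytic continuation: the left-hand side $\vPi_Y$ transforms by the monodromy matrices $M_\LR, M_\con, M_\orb$ of Proposition~\ref{prop:monodromy} acting on $K_c(Y)$, while the right-hand side transforms by the corresponding monodromy on $H_2\big((\C^\times)^2,\Eaff_y;\Z\big)$, which (per the final remark before this proposition) is the adjoint-inverse action, but the \emph{cycles} themselves transform by the homology monodromy, i.e.~by exactly the same matrices. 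So I would define $\Mir(\cO_{\Proj^2}(-1))$ to be the class $\Gamma_\R$ (suitably continued back to the base point $y_0$), and then define $\Mir$ on the remaining generators $\cO_{\pt}$ and $\cO_{\Proj^1}(-1)$ by applying the monodromy transformations: since $M_\LR, M_\con, M_\orb$ generate the monodromy group and act transitively enough to express all of $K_c(Y)$ in terms of continuations of $\cO_{\Proj^2}(-1)$, the period identity for the single class propagates to all of $K_c(Y)$ by naturality of analytic continuation.

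More precisely, I would argue as follows. Both the vector $\vPi_Y$ of central charges and the vector of period integrals $\big(2\pi\iu\int_{C_i}\zeta_y\big)$ over an integral basis $\{C_i\}$ of $H_2\big((\C^\times)^2,\Eaff_y;\Z\big)$ are solutions of the same Picard--Fuchs system \eqref{eq:PF} satisfied by $\zeta_y$, hence each is a fixed $\C$-linear combination of the other. Lemma~\ref{lem:real_Lefschetz} pins down one entry of the change-of-basis matrix. To pin down the whole matrix, I would use that the monodromy representation on $\vPi_Y$ (given by the integral matrices $M_\bullet$ acting on the $K$-theory basis) is identified with the monodromy representation on homology (given by the same matrices acting on vanishing cycles, cf.~Proposition~\ref{prop:VC} and the surrounding discussion). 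Since this monodromy representation is \emph{irreducible} over $\Q$ as a $\Gamma_1(3)$-representation on the rank-$3$ lattice (the mixed Hodge structure is non-split, with the rank-$2$ piece $\Hvec$ carrying the irreducible $\Gamma_1(3)$-action on $H_1(E_y)$), the intertwiner between these two isomorphic integral representations is unique up to scalar; Lemma~\ref{lem:real_Lefschetz} fixes the scalar to be $1$ (after the $2\pi\iu$ normalization). This forces the matching of the full integral lattices, giving the isomorphism $\Mir \colon K_c(Y) \xrightarrow{\sim} H_2\big((\C^\times)^2,\Eaff_y;\Z\big)$ together with \eqref{eq:central_charge_is_period} for all $V$.

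For the orbifold case $X = \cX$, I would use the connection formula $\vPi_Y = \vPi_\cX \left(\begin{smallmatrix} 1 & 0 & 0 \\ 1 & -1 & 0 \\ 1 & 1 & 1 \end{smallmatrix}\right)$ from Proposition~\ref{prop:monodromy}, which is exactly the Fourier--Mukai / connection matrix relating the two $K$-theory bases. Transporting the $Y$-result along the positive real axis and analytically continuing $\zeta_y$ correspondingly, the period identity for $\cX$ follows formally: define $\Mir_\cX$ as the composite of $\Mir_Y$ with this continuation and the integral change of basis, and note that the matrix above is in $\GL(3,\Z)$, so integrality of the lattice isomorphism is preserved. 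The central charges $\Pi_\cX(\cO_0\otimes\varrho^i)$ are power series near $\fry = 0$ and continue to the universal cover, so the identity extends by analytic continuation from the overlap region.

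The main obstacle I anticipate is the integrality and uniqueness of the intertwiner: it is not automatic that the $\C$-linear relation between central charges and periods is given by an \emph{integral, unimodular} matrix, and establishing this cleanly requires either the irreducibility argument sketched above or a direct verification that the $\hGamma$-integral structure \cite{Iritani:integral,Iritani:Ruan} matches the natural integral structure on relative homology. The hypergeometric/Mellin computation in Lemma~\ref{lem:real_Lefschetz} handles one column, but propagating integrality to the whole matrix is where the real work lies. I would lean on the fact, already recorded in the excerpt, that the homology monodromy matrices $M_\LR, M_\con, M_\orb$ are integral and their lower-right $2\times 2$ blocks generate $\Gamma_1(3)$, so the monodromy orbit of the single integral cycle $\Gamma_\R$ generates a finite-index sublattice; comparing with Proposition~\ref{prop:VC} (the lattice of vanishing cycles has index $3$) should let me verify that the full generated lattice is exactly $H_2\big((\C^\times)^2,\Eaff_y;\Z\big)$, completing the proof that $\Mir$ is an isomorphism of integral lattices rather than merely of $\C$-vector spaces.
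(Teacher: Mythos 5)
Your first paragraph is essentially the paper's proof: the paper establishes \eqref{eq:central_charge_is_period} for the single class $\cO_{\Proj^2}(-1)$ via Lemma~\ref{lem:real_Lefschetz}, observes that the large-radius monodromy $M_\LR$ acts on $K_c(Y)$ as $(-)\otimes\cO(-1)$ and that $K_c(Y)$ is generated over $\Z$ by the orbit of $\cO_{\Proj^2}(-1)$ under this operation, while $H_2\big((\C^\times)^2,\Eaff_y;\Z\big)$ is generated by the orbit of the thimble $\Gamma_\R$ under the corresponding homological monodromy; analytic continuation of the single identity around $\gamma_\LR$ then produces the identity on a full set of generators, and the case $X=\cX$ reduces to $X=Y$ via the integral connection matrix. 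That argument is complete on its own and is exactly what the paper does.

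Your second paragraph, where you say "the real work lies," introduces two problems. First, the claim that the rank-$3$ monodromy representation is irreducible over $\Q$ is false: the class $\cO_{\pt}$, whose central charge is the constant $(2\pi\iu)^3$, spans a monodromy-invariant line (all three matrices $M_\LR$, $M_\con$, $M_\orb$ fix $e_1$), reflecting the weight filtration on $H_2\big((\C^\times)^2,\Eaff_y\big)$. Schur's lemma therefore does not apply directly. The conclusion you want --- that the intertwiner is unique up to scalar --- does still hold, but for a finer reason: the representation is indecomposable, its invariants are exactly the line $\langle e_1\rangle$, its coinvariants vanish over $\Q$ (e.g.\ $\operatorname{Im}(M_\LR-1)=\langle e_1,e_2\rangle$ and $(M_\con-1)e_2=3e_3$), and hence any $G$-endomorphism inducing zero on the irreducible quotient and landing in the trivial subrepresentation must vanish. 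If you want to keep this route you must make that argument, not invoke irreducibility. Second, there is a circularity risk: asserting that the homology monodromy "is given by exactly the same matrices" in an integral basis of cycles is very close to what the proposition asserts; a priori the homology monodromy is some integral representation that you would still need to identify, over $\Z$ and not merely over $\C$, with the one computed from the hypergeometric continuation. The orbit-generation argument of your first paragraph (and of the paper) avoids both issues entirely, so I would drop the intertwiner argument and rely on the first paragraph, adding only the observation that the periods over an integral basis of cycles are linearly independent solutions of the Picard--Fuchs system, so that the map $\Mir$ defined on the monodromy orbit of $\Gamma_\R$ is well defined and injective.
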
 
\begin{proof} 
It suffices to prove this for $X=Y$. 
We saw in Lemma~\ref{lem:real_Lefschetz} 
that the identity \eqref{eq:central_charge_is_period} 
holds for $V = \cO_{\Proj^2}(-1)$ and 
$\Mir(V) = \Gamma_\R$.  
Recall from Proposition \ref{prop:monodromy} 
that monodromy $M_\LR$ around the large radius limit 
$y=0$ corresponds to $(-)\otimes \cO(-1)$ on $K_c(Y)$. 
Since $K_c(Y)$ is generated by $\cO_{\Proj^2}(-1)$ 
under $(-)\otimes \cO(-1)$, and Lefschetz thimbles are generated by $\Gamma_\R$ 
under monodromy around $y=0$, the conclusion follows. 
\end{proof} 

Next we describe cycles $\partial \Mir(V)$ on the elliptic curve $E_y$ 
in terms of the level structure. 

\begin{proposition} 
Let $\Mir \colon K_c(Y) \cong H_2\big((\C^\times)^2,\Eaff_y;\Z\big)$ 
as in Proposition~\ref{prop:central_charge_is_period} 
and set $\Gamma_1= \Mir(\cO_{\Proj^1}(-1))$, $\Gamma_2 
= \Mir(\cO_{\Proj^2}(-1))$. There exist a symplectic 
basis $\{\alpha, \beta\}$ of $H_1(E_y;\Z)$ 
and a sign $\varepsilon \in \{\pm 1\}$ 
such that 
$ [\alpha]$ is the level structure $\ell\in H_1(E_y;\Z/3\Z)$ 
and that 
\[
\partial \Gamma_1 = \varepsilon3 \beta, \qquad 
\partial \Gamma_2 = \varepsilon \alpha.  
\]
\end{proposition}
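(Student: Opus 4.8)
The plan is to transport the statement across the lattice isomorphism $\Mir\colon K_c(Y)\xrightarrow{\sim}H_2\big((\C^\times)^2,\Eaff_y;\Z\big)$ of Proposition~\ref{prop:central_charge_is_period} and read everything off the boundary map composed with $i_*$, which I abbreviate $\partial:=i_*\circ\partial\colon H_2\big((\C^\times)^2,\Eaff_y\big)\to H_1(E_y)$, using the explicit monodromy matrices of Proposition~\ref{prop:monodromy}. First I would pin down $\Gamma_0:=\Mir(\cO_{\pt})$: since $\Pi_Y(\cO_{\pt})=(2\pi\iu)^3$ is constant and $\int_{[(\C^\times)^2]}\zeta_y=(2\pi\iu)^2$, the identity \eqref{eq:central_charge_is_period} forces $\Gamma_0$ to be the fundamental torus class, so $\partial\Gamma_0=0$. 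Hence $\partial$ kills $\Z\Gamma_0$; as its image is $\VC_y$ and $\VC_y\cong\Z^2$ has the same rank as $K_c(Y)/\Z\Gamma_0$, and the quotient map to the torsion-free $H_1(E_y)$ has saturated kernel, $\partial$ descends to an isomorphism $\langle\bar\Gamma_1,\bar\Gamma_2\rangle\xrightarrow{\sim}\VC_y$. Writing $P:=\partial\Gamma_1$ and $Q:=\partial\Gamma_2$, these form a $\Z$-basis of $\VC_y$. Applying $\partial$ to the homology monodromies read off from $M_\LR$ and $M_\con$ (whose first columns are $(1,0,0)^{\mathrm T}$, so $\Gamma_0$ is invariant) gives the large-radius and conifold actions on $\VC_y$:
\[
m'_\LR\colon P\mapsto P,\ Q\mapsto Q-P,\qquad m'_\con\colon P\mapsto P+3Q,\ Q\mapsto Q.
\]

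Next come two lattice inputs. Because $\VC_y\subset H_1(E_y)$ has index $3$ (Proposition~\ref{prop:VC}) inside the unimodular symplectic lattice $H_1(E_y)$, the induced pairing on the rank-$2$ lattice $\VC_y$ satisfies $|\langle P,Q\rangle|=3$. The weights of the two local monodromies I extract from the $j$-invariant $j(E_y)=-\tfrac{(1+24y)^3}{y^3(1+27y)}$ of Proposition~\ref{prop:moduli_elliptic_level}: it has a pole of order $3$ at the large-radius point $y=0$ and a pole of order $1$ at the conifold point $y=-\tfrac1{27}$, and since $y$ and $1+27y$ are genuine uniformizers there, the (unipotent) local monodromies are a weight-$3$ and a weight-$1$ symplectic transvection respectively. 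From the conifold being weight one, $m'_\con(v)=v+\langle v,\delta_\con\rangle\delta_\con$ with $\delta_\con$ primitive; then $m'_\con(Q)=Q$ gives $Q=k\delta_\con$, and $m'_\con(P)=P+3Q$ combined with $|\langle P,Q\rangle|=3$ yields $3k^2=3$, so $k=\pm1$ and $Q=\partial\Gamma_2$ is \emph{primitive}; thus $[Q]\neq0$ and, lying in $\VC_y$, it generates $[\VC_y]=\langle\ell\rangle$. From the large-radius point being weight three, $m'_\LR\equiv\mathrm{id}\pmod 3$, whence $P=-\big(m'_\LR(Q)-Q\big)\in 3H_1(E_y)$, i.e.\ $[\partial\Gamma_1]=0$.

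Finally I assemble the marking. Choose $\varepsilon\in\{\pm1\}$ with $[\varepsilon Q]=\ell$ and set $\alpha:=\varepsilon Q$ (primitive, with $[\alpha]=\ell$) and $\beta:=\varepsilon P/3$, which is integral because $P\in 3H_1$. Then $\partial\Gamma_2=\varepsilon\alpha$ and $\partial\Gamma_1=\varepsilon 3\beta$ by construction, while $3\langle\alpha,\beta\rangle=\langle P,Q\rangle=\pm3$ gives $\langle\alpha,\beta\rangle=\pm1$; with the orientation convention for which $\langle P,Q\rangle=-3$ this is $+1$, so $\{\alpha,\beta\}$ is a symplectic basis. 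One then checks $\VC_y=\langle\alpha,3\beta\rangle$, in agreement with the description $\VC_y=3H_1(E_y)+\{v:[v]=\ell\}$ of Proposition~\ref{prop:VC}, which also confirms consistency of $[\alpha]=\ell$.

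I expect the main obstacle to be the vanishing $[\partial\Gamma_1]=0$, i.e.\ the divisibility by $3$ of the large-radius monodromy. This is precisely where the arithmetic special to local $\Proj^2$ enters (the cube in $j\sim y^{-3}$, ultimately traceable to $\cO(-3)$), and it must be read off from the $j$-invariant rather than from the $\VC_y$-level matrices alone, which only see a weight-$3$ transvection disguised as the symbol $\begin{pmatrix}1&0\\3&1\end{pmatrix}$ through the index-$3$ pairing. A secondary nuisance is fixing the sign of the intersection form so that $\langle\alpha,\beta\rangle=+1$ rather than $-1$; this is an orientation convention and is absorbed into the choice of $\varepsilon$ and of the ordering of the basis.
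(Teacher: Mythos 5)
Your argument is correct in its essentials but reaches the two key facts --- primitivity of $\partial\Gamma_2$ and divisibility of $\partial\Gamma_1$ by $3$ --- by a genuinely different route. The paper works entirely inside $\VC_y$: it observes that $\partial\Gamma_1,\partial\Gamma_2$ is a basis of $\VC_y$ (because $\theta\Pi_Y(\cO_{\pt})=0$), that $[\partial\Gamma_1]$ spans a monodromy-invariant line in $\VC_y/3\VC_y$, and that the $\Gamma_1(3)$-action on the basis $\{-3\beta,\alpha\}$ of $\VC_y$ has a \emph{unique} such invariant line, namely the one spanned by $-3\beta$; the mod-$3$ identifications then cascade. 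You instead pass to the ambient lattice $H_1(E_y;\Z)$ and use Picard--Lefschetz at the two cusps: the index-$3$ computation gives $|\langle\partial\Gamma_1,\partial\Gamma_2\rangle|=3$, the weight-$1$ transvection at the conifold forces $\partial\Gamma_2$ primitive, and the weight-$3$ transvection at large radius forces $\partial\Gamma_1\in 3H_1$. Both are sound; your version makes the geometric source of the factor $3$ (the $I_3$ versus $I_1$ degenerations) more visible, at the cost of importing the Kodaira-type input relating the pole order of $j$ to the transvection weight. Two small remarks: the identification of $\Mir(\cO_{\pt})$ with the torus class is more than you need --- $\partial\Mir(\cO_{\pt})=0$ already follows from $\theta\Pi_Y(\cO_{\pt})=0$ and equation \eqref{eq:derivative_central_charge}, which is how the paper phrases it; and the $j$-invariant is not actually indispensable for the divisibility step, since unipotency of the extension of $Q\mapsto Q-P$, $P\mapsto P$ to the overlattice $H_1$ together with $|\langle P,Q\rangle|=3$ already forces $P=\pm 3\delta$.

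The one genuine gap is the final sign. With $\alpha=\varepsilon Q$ and $\beta=\varepsilon P/3$ one has $\alpha\cdot\beta=(Q\cdot P)/3$, which is independent of $\varepsilon$, and the roles of $\alpha$ and $3\beta$ are forced (one class is primitive with nonzero reduction mod $3$, the other is divisible by $3$), so nothing is left to reorder. Hence $\alpha\cdot\beta=+1$ is a concrete assertion about the specific cycles $\partial\Gamma_1,\partial\Gamma_2$ and the complex orientation of $E_y$, not a convention to be absorbed. The paper also defers exactly this point, settling it only in the proof of Corollary \ref{cor:modular_parameter} via the Riemann bilinear inequality and the asymptotics $\tau\sim-\tfrac12+\tfrac{\log y}{2\pi\iu}\in\HH$ as $y\to 0$; some such analytic input (or an explicit intersection computation as in Figure \ref{fig:vanishing_cycles}) is needed to close your argument as well.
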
 
\begin{proof} 
By differentiating \eqref{eq:central_charge_is_period} and 
using Proposition \ref{prop:PF}, we obtain  
\begin{equation} 
\label{eq:derivative_central_charge} 
y \parfrac{}{y} \Pi_X(V) = 2\pi\iu \int_{\partial \Mir(V)} \lambda_y, 
\end{equation} 
that is, the derivatives of the quantum cohomology 
central charges are precisely periods over 
cycles from $\VC_y$. 
Since $\{\cO_\pt, \cO_{\Proj^1}(-1), \cO_{\Proj^2}(-1)\}$ 
is a basis of $K_c(Y)$ and $y\parfrac{}{y} \Pi_Y(\cO_\pt) = 0$, 
$y\parfrac{}{y} \Pi_Y(\cO_{\Proj^1}(-1))$ and 
$y \parfrac{}{y} \Pi_Y(\cO_{\Proj^2}(-1))$ form a basis of periods 
over vanishing cycles, i.e.~
\[
\VC_y = \left\langle 
\partial \Gamma_1, \partial \Gamma_2  
\right\rangle.  
\] 
The monodromy of $y\parfrac{}{y} \Pi_Y(\cO_{\Proj^1}(-1))$, 
$y\parfrac{}{y} \Pi_Y(\cO_{\Proj^2}(-1))$ is given by the 
$2\times 2$ right-lower submatrices of $M_\LR, M_{\con}, M_\orb$ 
in Proposition \ref{prop:monodromy}. 
By reducing the monodromy modulo~3, we find 
that the class of $\partial \Gamma_1$ 
in $\VC_y/3 \VC_y$ generates a monodromy-invariant line 
over $\F_3 =\Z/3\Z$. 

Let us choose a symplectic basis $\{\alpha,\beta\}$ of $H_1(E_y;\Z)$ 
such that $[\alpha]$ is the given $\Gamma_1(3)$-level structure. 
Then $\{-3\beta,\alpha\}$ forms a basis of $\VC_y$ by 
Proposition \ref{prop:VC}. The monodromy in this basis is given by 
(see \eqref{eq:action_on_homology_basis}): 
\[
(-3\beta, \alpha) \mapsto (-3\beta',\alpha') = (-3\beta,\alpha) 
\begin{pmatrix} 
a & -c/3 \\ -3b & d
\end{pmatrix}.  
\]
Thus the basis $\{-3\beta,\alpha\}$ also transforms 
under $\Gamma_1(3)$, and we see that 
$-3 \beta$ generates a monodromy-invariant 
line of $\VC_y/3 \VC_y$.  
The discussion in the previous paragraph implies 
\begin{equation} 
\label{eq:O_P1(-1)_mod_3} 
\partial \Gamma_1 \equiv  \pm 3\beta \mod 3 \VC_y.  
\end{equation} 
Since $\partial \Gamma_1, \partial \Gamma_2$ are a basis 
of $\VC_y$, this implies that 
$[\partial \Gamma_2 ]  
= n [\alpha] + m [3 \beta]$ in $\VC_y/3\VC_y$ for some $n\in \F_3^\times$
and $m\in \F_3$; in particular 
\[
\partial \Gamma_2 \equiv \pm \alpha \mod 3 H_1(E_y;\Z).  
\]
Thus the class of $\partial \Gamma_2$ in $H_1(E_y;\Z/3\Z)$ 
equals $\varepsilon \ell$ for some $\varepsilon \in \{\pm 1\}$. 
Equation \eqref{eq:O_P1(-1)_mod_3} implies that $\partial \Gamma_1$ 
is divisible by $3$ in $H_1(E_y;\Z)$ and thus 
$\{\partial \Gamma_2, \partial \Gamma_1/3\}$ gives a basis of 
$H_1(E_y;\Z)$. It now suffices to show that this is a \emph{symplectic} basis:  
$\partial \Gamma_2 \cdot (\partial \Gamma_1/3) =1$. 
We will discuss this in the proof of the following 
Corollary \ref{cor:modular_parameter}. 
\end{proof} 

\begin{corollary}[cf.~Proposition \ref{prop:moduli_elliptic_level}] 
\label{cor:modular_parameter}
The multi-valued function 
\[
\tau = - \frac{y\parfrac{}{y} \Pi_Y(\cO_{\Proj^2}(-1))}{
y\parfrac{}{y} \Pi_Y(\cO_{\Proj^1}(-1))} 
=  -\frac{1}{2} - \frac{3}{2\pi\iu} \parfrac{^2 F^0_Y}{t^2} 
\]
takes values in the upper-half plane $\HH$ 
and induces an isomorphism 
$\cMCY\setminus \DCY \cong [\HH/\Gamma_1(3)]$, where 
$t=t(y)$ is the mirror map for $Y$.  
\end{corollary}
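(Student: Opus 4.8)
The plan is to prove the two assertions of Corollary~\ref{cor:modular_parameter} in turn: first the explicit formula relating $\tau$ to the second derivative of $F^0_Y$, and then the claim that $\tau$ maps $\cMCY\setminus \DCY$ isomorphically onto $[\HH/\Gamma_1(3)]$.

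For the formula, I would start directly from the expansions \eqref{eq:central_charges_Y} of the central charges under the mirror map. Applying $y\parfrac{}{y}$ (equivalently $\parfrac{}{t}$, since $t=t(y)$ is a local biholomorphism near $y=0$) to the two relevant entries gives
\begin{align*}
y\parfrac{}{y}\Pi_Y(\cO_{\Proj^1}(-1)) &= -(2\pi\iu)^2, \\
y\parfrac{}{y}\Pi_Y(\cO_{\Proj^2}(-1)) &= -(2\pi\iu)\left(\pi\iu + 3\parfrac{^2 F^0_Y}{t^2}\right),
\end{align*}
using the divisor-equation shape of \eqref{eq:genus-zero_pot_Y} and the fact that $y\partial_y = \partial_t$ after the change of variables. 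Taking the ratio and simplifying $\pi\iu/(2\pi\iu) = \tfrac12$ then yields the stated closed form $\tau = -\tfrac12 - \tfrac{3}{2\pi\iu}\parfrac{^2 F^0_Y}{t^2}$. This part is an essentially mechanical computation once one is careful that $y\partial_y$ acts trivially on $\Pi_Y(\cO_{\pt})$ (it is constant) and equals $\partial_t$ on the other entries.

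For the geometric identification, the natural route is to show that this analytic $\tau$ \emph{is} the period ratio of the mirror family, and hence coincides with the modular parameter of Proposition~\ref{prop:moduli_elliptic_level}. First I would invoke \eqref{eq:derivative_central_charge}, which says that $y\partial_y \Pi_Y(V) = 2\pi\iu\int_{\partial\Mir(V)}\lambda_y$ expresses these derivatives as honest periods of the holomorphic one-form $\lambda_y$ over the cycles $\partial\Gamma_1, \partial\Gamma_2\in H_1(E_y;\Z)$. By the preceding Proposition, $\partial\Gamma_1 = \varepsilon 3\beta$ and $\partial\Gamma_2 = \varepsilon\alpha$ for a symplectic basis $\{\alpha,\beta\}$ with $[\alpha]=\ell$. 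Therefore
\[
\tau = -\frac{\int_{\partial\Gamma_2}\lambda_y}{\int_{\partial\Gamma_1}\lambda_y} = -\frac{\varepsilon\int_\alpha \lambda_y}{\varepsilon\cdot 3\int_\beta\lambda_y} = -\frac{1}{3}\frac{\int_\alpha\lambda_y}{\int_\beta\lambda_y},
\]
which, up to the normalization inherent in the $\Gamma_1(3)$-marking, is the period ratio $\int_\beta/\int_\alpha$ appearing in Proposition~\ref{prop:moduli_elliptic_level}. I would then check that $\tau$ lands in $\HH$ by a positivity argument (the Riemann bilinear relation forces $\Im\tau>0$ for a genuine symplectic basis and a nonzero holomorphic form), and that the change of basis \eqref{eq:action_on_homology_basis} under the monodromy group---computed in Proposition~\ref{prop:monodromy} to be exactly $\Gamma_1(3)$ via the lower-right $2\times 2$ submatrices of $M_\LR, M_\con, M_\orb$---acts on $\tau$ by the standard fractional-linear action. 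Combining the monodromy computation with Proposition~\ref{prop:moduli_elliptic_level}, which already identifies the coarse moduli and orbifold data, gives the isomorphism $\cMCY\setminus\DCY\cong[\HH/\Gamma_1(3)]$.

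The main obstacle I anticipate is bookkeeping with normalizations: matching the factor of $\tfrac13$ (and the sign $\varepsilon$) coming from the index-$3$ sublattice $\VC_y$ with the convention under which $\{\alpha,\beta\}$ is declared symplectic and $[\alpha]=\ell$, so that the fractional-linear action is literally by $\Gamma_1(3)$ rather than by a conjugate or a finite-index variant. In particular one must confirm that the period ratio defined via the \emph{marked} basis $\{\partial\Gamma_2,\partial\Gamma_1/3\}$ agrees with the modular parameter of \S\ref{subsubsec:family_level}, which requires verifying the missing symplecticity claim $\partial\Gamma_2\cdot(\partial\Gamma_1/3)=1$ promised at the end of the previous proof. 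This is where I would spend the most care: the intersection number can be pinned down by evaluating the monodromy-invariant symplectic pairing on the explicitly computed vanishing cycles of Figure~\ref{fig:vanishing_cycles}, or equivalently by noting that $M_\LR, M_\con, M_\orb$ preserve the standard symplectic form on $H_1$ and that only the $+1$ normalization is consistent with $\tau\in\HH$.
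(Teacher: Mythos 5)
Your proposal follows essentially the same route as the paper: derive the closed form for $\tau$ from \eqref{eq:central_charges_Y}, reinterpret numerator and denominator as periods over $\partial\Gamma_2$ and $\partial\Gamma_1$ via \eqref{eq:derivative_central_charge}, identify $\tau$ with $\pm 1/(-3\tau')$ where $\tau'$ is the modular parameter of \S\ref{subsubsec:family_level}, and transfer the isomorphism of Proposition~\ref{prop:moduli_elliptic_level}. Two local points need repair. First, $y\parfrac{}{y}$ is \emph{not} equal to $\parfrac{}{t}$: the mirror map is $t=\log y+g(y)$, so $y\partial_y t=1+yg'(y)\neq 1$, and your displayed value $y\parfrac{}{y}\Pi_Y(\cO_{\Proj^1}(-1))=-(2\pi\iu)^2$ is off by this Jacobian factor. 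The computation survives only because the same factor appears in the numerator and cancels in the ratio; that cancellation, not an identity of vector fields, is what justifies computing with $\partial_t$.

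Second, on the sign and the symplecticity claim: the Riemann bilinear inequality gives $\Im\tau'>0$ but cannot by itself resolve the ambiguity $\tau=\pm 1/(-3\tau')$, and your proposed shortcut that ``only the $+1$ normalization is consistent with $\tau\in\HH$'' is circular, since $\tau\in\HH$ is part of what is being proved. The paper breaks the ambiguity with the explicit asymptotic $\tau\sim -\tfrac{1}{2}+\log y/(2\pi\iu)$ as $y\to 0$ --- immediate from the formula you derive in your first step together with $F^0_Y=-t^3/18+O(e^t)$ --- which places $\tau$ in $\HH$ for small $|y|$ and thereby also settles the deferred claim $\partial\Gamma_2\cdot(\partial\Gamma_1/3)=1$. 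Your alternative of computing intersection numbers of the vanishing cycles in Figure~\ref{fig:vanishing_cycles} would also work. Finally, be explicit that $\tau=1/(-3\tau')$ is the Fricke involution of $\tau'$, not $\tau'$ itself: Proposition~\ref{prop:moduli_elliptic_level} gives the isomorphism via $\tau'$, so one must still check that post-composition with $\tau'\mapsto 1/(-3\tau')$ preserves $[\HH/\Gamma_1(3)]$. Your direct verification that the monodromy acts on $\tau$ through (inverses of) the lower-right $2\times 2$ blocks of $M_\LR$, $M_\con$, $M_\orb$, which lie in $\Gamma_1(3)$, is a legitimate substitute for the paper's explicit conjugation argument, but it has to be carried out rather than left at ``up to the normalization inherent in the marking.''
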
 
\begin{proof} 
We have shown that there exist a symplectic basis 
$\{\alpha,\beta\}$ of $H_1(E_y;\Z)$ and $\varepsilon\in \{\pm 1\}$ 
such that 
$[\alpha] = \ell$ and 
$\partial \Gamma_1 = \pm \varepsilon 3\beta$ 
and $\partial \Gamma_2 = \varepsilon \alpha$. 
(The sign $\pm$ was not determined in the above discussion.) 
Recall from \S\ref{subsubsec:family_level} that the modular parameter 
for $E_y$, with respect to this marking,
is given by $\tau' = \int_\beta \lambda_y/\int_\alpha \lambda_y$. 
On the other hand, by \eqref{eq:derivative_central_charge}, we have 
\[
\tau = 
- \frac{y\parfrac{}{y} \Pi_Y(\cO_{\Proj^2}(-1))}{
y\parfrac{}{y} \Pi_Y(\cO_{\Proj^1}(-1))} 
= \frac{-\int_{\partial \Gamma_2} \lambda_y}
{\int_{\partial \Gamma_1} \lambda_y}   
= \pm \frac{1}{-3\tau'}. 
\]
This quantity satisfies
\begin{align*}
  \tau \sim -\frac{1}{2} + \frac{\log y}{2\pi\iu} + O(y) 
  && \text{as $y \to 0$}
\end{align*}
which lies in the upper-half plane $\HH$ when $|y|$ is sufficiently small.  
The Riemann bilinear inequality then implies that 
$(\partial \Gamma_1, -\partial \Gamma_2)$ is positively 
oriented, i.e.~that $\partial \Gamma_1 = \varepsilon 3 \beta$ 
and $\tau=1/(-3\tau')$.  
The isomorphism $\cMCY\setminus \DCY \cong [\HH/\Gamma_1(3)]$ 
in Proposition \ref{prop:moduli_elliptic_level} was given by 
the parameter $\tau'$; it now suffices to observe that 
the map $\tau' \mapsto \tau = 1/(-3\tau')$ 
induces an isomorphism $[\HH/\Gamma_1(3)] \cong [\HH/\Gamma_1(3)]$ 
via the involution on $\Gamma_1(3)$: 
\[
\begin{pmatrix} 
a & b \\ 
c & d 
\end{pmatrix} 
\longmapsto 
\begin{pmatrix} 
d & -c/3 \\ 
-3b & a
\end{pmatrix} 
= \begin{pmatrix} 
0 & 1 \\ 
-3 & 0
\end{pmatrix} 
\begin{pmatrix} 
a & b \\ c & d \end{pmatrix} 
\begin{pmatrix} 
0 & 1 \\ 
-3 & 0
\end{pmatrix}^{-1}.  
\]
The second expression for $\tau$ follows from 
\eqref{eq:central_charges_Y}. 
\end{proof} 

\begin{remark} 
\label{rem:Fricke} 
The parameter $\tau$ in Corollary \ref{cor:modular_parameter}  
is a modular parameter for $\tE_y$ rather than for $E_y$.
The map $\tau' \mapsto 1/(-3\tau')$ exchanging the modular parameters 
of $E_y$ and $\tE_y$ is known as the Fricke involution. 
The Fricke involution exchanges the large-radius ($\tau=+\infty \iu$) 
and conifold ($\tau=0$) points, and preserves the orbifold point 
($\tau = \frac{1-\xi}{3}$, $\frac{\xi^2-1}{3}$). 
The role of Fricke involution in this context has been studied extensively 
by Alim--Scheidegger--Yau--Zhou \cite{ASYZ}. 
\end{remark} 

Let $\chi(V_1, V_2) = \sum_{i=0}^3 (-1)^i \dim \Ext^i(V_1,V_2)$ 
denote the Euler pairing of  
coherent sheaves $V_1,V_2$ with compact support. 
Since we have $\partial \Gamma_1 \cdot \partial \Gamma_2 = -3$ 
and $\chi(\cO_{\Proj^1}(-1),\cO_{\Proj^2}(-1)) = 3$, we conclude:  
\begin{corollary} 
Let $X$ denote either $Y$ or $\cX$. 
For $V_1,V_2 \in K_c(X)$, we have 
\[
\chi(V_1,V_2) = 
- (\partial \Mir(V_1)) \cdot (\partial \Mir(V_2)). 
\]
\end{corollary}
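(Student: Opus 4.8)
The plan is to regard each side of the identity as a $\Z$-bilinear form on the rank-three lattice $K_c(X)$ and to check that the two forms agree. The first observation is that both forms are \emph{antisymmetric}. The intersection pairing $\cdot$ on $H_1(E_y;\Z)$ is the standard symplectic form, so $(\partial\Mir(V_1))\cdot(\partial\Mir(V_2))$ is antisymmetric in $(V_1,V_2)$ because $\partial\Mir$ is a homomorphism. For $\chi$, I would invoke Serre duality on the Calabi--Yau threefold $Y$ (and on the CY $3$-orbifold $\cX$): for compactly supported $V_1,V_2$ one has $\Ext^i(V_1,V_2)\cong\Ext^{3-i}(V_2,V_1)^\vee$, whence $\chi(V_1,V_2)=-\chi(V_2,V_1)$. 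An antisymmetric bilinear form on a free rank-three lattice is determined by its three values on the off-diagonal pairs of a basis (the diagonal values vanish by antisymmetry). Thus it suffices to match the two forms on the basis elements.

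First I would treat $X=Y$, using the basis $\{\cO_{\pt},\cO_{\Proj^1}(-1),\cO_{\Proj^2}(-1)\}$. For the pairs involving $\cO_\pt$: on the homology side, $\Pi_Y(\cO_\pt)=(2\pi\iu)^3$ is constant, so $y\partial_y\Pi_Y(\cO_\pt)=0$, and then \eqref{eq:derivative_central_charge} forces $\partial\Mir(\cO_\pt)=0$ since $\lambda_y$ is a nonzero one-form; hence $-(\partial\Mir(\cO_\pt))\cdot(\partial\Mir(V))=0$ for all $V$. On the $K$-theory side, I would argue $\chi(\cO_\pt,\cO_{\Proj^1}(-1))=\chi(\cO_\pt,\cO_{\Proj^2}(-1))=0$ by disjoint supports: the class of $\cO_\pt$ in $K_c(Y)$ is independent of the chosen point, so one may represent it by a skyscraper at a point lying off the line $\Proj^1$, respectively off the zero section $\Proj^2\subset Y$, which makes all $\Ext$ groups vanish. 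The remaining pair is exactly the input given just before the statement: $\chi(\cO_{\Proj^1}(-1),\cO_{\Proj^2}(-1))=3$ and $\partial\Gamma_1\cdot\partial\Gamma_2=-3$, so $-(\partial\Gamma_1)\cdot(\partial\Gamma_2)=3$. All three off-diagonal values agree, proving the identity for $Y$.

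For $X=\cX$ I would transport the result through the Fourier--Mukai equivalence $\Phi$ appearing in the preceding Remark. As an equivalence of (derived) categories it preserves the Euler pairing, so $\chi_{\cX}(V_1,V_2)=\chi_Y(\Phi V_1,\Phi V_2)$. Moreover the connection relation $\vPi_Y=\vPi_\cX P$ of Proposition~\ref{prop:monodromy}, whose matrix $P$ is precisely the Fourier--Mukai transform on $K$-theory, combined with $\Pi_X(V)=2\pi\iu\int_{\Mir(V)}\zeta_y$ from Proposition~\ref{prop:central_charge_is_period}, shows that under analytic continuation along the positive real axis the lattice isomorphism $\Mir_{\cX}$ equals $\Mir_Y\circ\Phi$. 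Since the intersection pairing is flat (monodromy invariant), parallel transport identifies $(\partial\Mir_{\cX}(V_1))\cdot(\partial\Mir_{\cX}(V_2))$ with $(\partial\Mir_Y(\Phi V_1))\cdot(\partial\Mir_Y(\Phi V_2))$, and the $Y$-case then yields the $\cX$-case.

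The routine parts are the disjoint-support vanishing and the antisymmetry of $\chi$. The step I expect to require the most care is the identification $\Mir_{\cX}=\Mir_Y\circ\Phi$ in the last paragraph: one must check that matching the full central-charge vectors (not merely a single period) genuinely pins down the two lattice isomorphisms, and that the direction of $\Phi$ and the choice of analytic-continuation path are consistent with the conventions of Propositions~\ref{prop:monodromy} and~\ref{prop:central_charge_is_period}.
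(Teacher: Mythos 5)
Your proof is correct and is exactly the argument the paper intends: the paper's own justification is the single computation $\chi(\cO_{\Proj^1}(-1),\cO_{\Proj^2}(-1))=3=-\,\partial\Gamma_1\cdot\partial\Gamma_2$, and you have supplied the implicit scaffolding (antisymmetry of both pairings via Serre duality and the symplectic intersection form, vanishing of every pairing involving $\cO_{\pt}$, and transport to $\cX$ through the Fourier--Mukai matrix of Proposition~\ref{prop:monodromy}). The only phrase worth tightening is ``since $\lambda_y$ is a nonzero one-form'': the vanishing of $\int_{\partial\Mir(\cO_{\pt})}\lambda_y$ kills the \emph{integral} cycle because the period map is injective on $H_1(E_y;\Z)$ (the ratio $\tau$ of periods is non-real), not merely because $\lambda_y\neq 0$.
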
 

\subsection{Opposite Line Bundles at Cusps and 
the Crepant Resolution Conjecture}
\label{subsec:opposite_cusps_CRC} 
Recall the opposite line bundles $P_\LR$, $P_\con$, $P_\orb$ 
associated with large-radius, conifold and orbifold points that were defined in Notation \ref{nota:opposite_fd}. 
We next describe these opposite line bundles in terms of flat 
co-ordinates given by central charge functions, 
and obtain an explicit Feynman rule relating the 
Gromov--Witten potentials of $\cX$ and $Y$. 

As discussed in \S\ref{subsubsec:Haff}, any (local) function 
$\psi$ satisfying the Picard--Fuchs equation:
\begin{equation} 
\label{eq:PF_psi} 
\left[ \theta^3 + 3y \theta(3\theta+1)(3\theta+2) \right] \psi = 0 
\qquad \text{with $\theta = y\parfrac{}{y}$} 
\end{equation} 
defines (locally) a D-module homomorphism 
$\psi^\sharp \colon \cO(\widebar{H}) \to \cO$ sending $\zeta$ to $\psi$. 
In particular, the central charge functions 
$\Pi_Y(V)$, $\Pi_\cX(V)$ define ``flat co-ordinates'' on 
$\widebar{H}$ -- that is, flat sections 
of the dual bundle $\widebar{H}^\vee$. 
Recall from \S\S\ref{subsubsec:Haff}--\ref{subsubsec:Hvec}
that the subbundles $\Haff, \Hvec \subset \widebar{H}$ are 
cut out, respectively, by the equations
\begin{align*} 
\Pi_Y(\cO_\pt)^\sharp & = (2\pi \iu)^3, & & 
\Pi_Y(\cO_\pt)^\sharp = 0;
\end{align*} 
see also \eqref{eq:central_charges_Y}.
Introduce the following flat co-ordinates on $\widebar{H}$:   
\begin{align}
\label{eq:Darboux_central_charge} 
\begin{split} 
x & = \iu (2\pi\iu)^{-3/2} \Pi_Y(\cO_{\Proj^1}(-1))^\sharp \\
p & = -\iu (2\pi\iu)^{-3/2} \Pi_Y(\cO_{\Proj^2}(-1))^\sharp
\end{split} 
\end{align} 
where we set $\iu^{1/2} = e^{\pi\iu/4} = (1+\iu)/\sqrt{2}$.   
These co-ordinates are multi-valued:
they are originally defined near a point $y_0$ 
with $0<y_0\ll 1$, 
and then analytically continued over the 
universal cover of $\cMCY\setminus \DCY$. 
For example, if we analytically continue 
them to the orbifold point along the positive real line in the $y$-plane, 
we have, from the connection formula in Proposition~\ref{prop:monodromy}: 
\begin{align} 
\label{eq:Darboux_central_charge_X} 
\begin{split} 
x & =  \iu (2\pi\iu)^{-3/2} 
\left (-\Pi_\cX(\cO_0\otimes \varrho)^\sharp 
+ \Pi_\cX(\cO_0 \otimes \varrho^2)^\sharp \right) \\
p & = -\iu (2\pi\iu)^{-3/2} 
\Pi_\cX(\cO_0\otimes \varrho^2)^\sharp
\end{split} 
\end{align} 
$x$~and~$p$ give Darboux co-ordinates corresponding 
to an \emph{integral} basis of $K_c(Y)$ or $K_c(\cX)$. 

\begin{lemma} 
\label{lem:symplectic_pairing_in_flat} 
When we restrict $(p,x)$ to $\Haff$, we have 
$\Omega = \frac{1}{3} dp \wedge dx$.  
\end{lemma}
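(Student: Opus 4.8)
The plan is to reduce the statement to a single scalar identity and then verify that identity by a Wronskian argument. Since $x$ and $p$ are, by \eqref{eq:Darboux_central_charge}, the restrictions to $\Haff$ of the flat linear functionals $\Pi_Y(\cO_{\Proj^1}(-1))^\sharp$ and $\Pi_Y(\cO_{\Proj^2}(-1))^\sharp$ (times the stated constants), their differentials $dx,dp$ are constant $1$-forms on the fibres of $\Haff$, i.e.\ elements of $\Hvec^\vee$. Writing $\partial_x,\partial_p$ for the dual frame of $\Hvec$, the asserted identity $\Omega=\frac13\,dp\wedge dx$ on $\Haff$ is equivalent to $\Omega(\partial_p,\partial_x)=\frac13$, and dually to $\Omega^\vee(dx,dp)=-3$. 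So the whole lemma amounts to computing $\Omega^\vee(dx,dp)$.

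First I would express $dx,dp$ in the frame $e_1=\theta\zeta$, $e_2=-(1+27y)\theta^2\zeta$ of $\Hvec$ on the chart $\cMCY\setminus\{y=\infty\}$ (see \eqref{eq:frame_Hbar}). Because $\psi^\sharp$ sends $\zeta\mapsto\psi$ and is a D-module homomorphism, it intertwines $\theta=\nabla_{y\partial_y}$, so $\psi^\sharp(\theta\zeta)=\theta\psi$ and $\psi^\sharp(\theta^2\zeta)=\theta^2\psi$. Hence, in the dual frame $e^1,e^2$, one has $dx=(\theta x)e^1-(1+27y)(\theta^2 x)e^2$ and likewise for $p$. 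From \eqref{eq:symplectic_pairing} we have $\Omega(e_1,e_2)=-\frac13$, so the dual form satisfies $\Omega^\vee(e^1,e^2)=1/\Omega(e_1,e_2)=-3$, and expanding the two-by-two determinant gives $\Omega^\vee(dx,dp)=3(1+27y)\,W$, where $W:=(\theta x)(\theta^2 p)-(\theta^2 x)(\theta p)$. It thus suffices to prove $(1+27y)\,W\equiv-1$.

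Next I would show $(1+27y)W$ is constant and then pin down the constant at the large-radius limit. Both $x$ and $p$ solve the Picard--Fuchs equation \eqref{eq:PF_psi}, which I rewrite as $\theta^3\psi=-(1+27y)^{-1}(27y\,\theta^2+6y\,\theta)\psi$; differentiating $W$ and substituting yields $\theta W=-\tfrac{27y}{1+27y}W$, so $(1+27y)W$ is annihilated by $\theta$ and is therefore constant. To evaluate it I would let $y\to0$, where $1+27y\to1$. Using \eqref{eq:central_charges_Y} together with $x=-\iu(2\pi\iu)^{1/2}t$ and $p=\iu(2\pi\iu)^{-1/2}(\pi^2+\pi\iu t+3\,\partial_t F^0_Y)$, the normalization constants multiply to $1$, the two $\pi\iu$-terms in $W$ cancel, and the chain rule $\theta=(\theta t)\partial_t$ collapses the rest to $W=3(\theta t)^3\,\partial_t^3 F^0_Y$. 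Since $\theta t\to1$ and $\partial_t^3F^0_Y\to-\frac13$ as $y\to0$ (the only surviving contribution being the cubic $-\frac1{18}t^3$ in \eqref{eq:genus-zero_pot_Y}, the instanton terms being exponentially small), we get $W\to-1$, hence $(1+27y)W\equiv-1$ and $\Omega^\vee(dx,dp)=-3$, as required.

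The main obstacle will be the bookkeeping of normalization constants and signs --- the factors $\iu(2\pi\iu)^{\pm1/2}$ in \eqref{eq:Darboux_central_charge}, the orientation convention implicit in writing $dp\wedge dx$ rather than $dx\wedge dp$, and the sign in $\Omega^\vee(e^1,e^2)=1/\Omega(e_1,e_2)$ --- since the structural computation is routine but the precise value $\frac13$ depends on all of these cancelling exactly. As a consistency check I would note that the homological reading of the same quantity recovers the intersection number $\partial\Gamma_1\cdot\partial\Gamma_2=-3$ supplied by the preceding Euler-pairing corollary, via $\theta\,\Pi_Y(V)=2\pi\iu\int_{\partial\Mir(V)}\lambda_y$ and the identification $\theta\zeta\leftrightarrow\lambda_y$ of Corollary~\ref{cor:VHS_isom}; this matches the direct computation and fixes the overall sign.
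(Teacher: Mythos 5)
Your proof is correct. The essential content is the same as the paper's: reduce the claim to a single scalar identity and pin it down at the large-radius limit using the explicit asymptotics of the central charges (equivalently, the cubic term $-t^3/18$ of $F^0_Y$) together with $\Omega(\theta\zeta,\theta^2\zeta)=\tfrac{1}{3(1+27y)}$ from \eqref{eq:symplectic_pairing}. The one genuine difference is how you establish that the comparison at $y\to 0$ suffices. The paper observes that $x$ and $p$ are flat, so $\tfrac13\,dp\wedge dx$ and $\Omega$ are both parallel sections of the line bundle $\Lambda^2\Hvec^\vee$ and it is enough to match asymptotics; you instead prove directly from the Picard--Fuchs equation \eqref{eq:PF_psi} that $(1+27y)\bigl((\theta x)(\theta^2 p)-(\theta^2 x)(\theta p)\bigr)$ is annihilated by $\theta$. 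Your Wronskian computation is exactly the content of the paper's Lemma~\ref{lem:Wronskian}, which the paper later \emph{deduces from} the present lemma; you have inverted that logical order by proving the Wronskian identity independently and deriving the lemma from it. Both directions are valid, and your route has the mild advantage of not needing to invoke flatness of $\Omega$ under the Gauss--Manin connection, at the cost of an extra ODE computation. Your bookkeeping of the normalization constants ($-\iu(2\pi\iu)^{1/2}\cdot \iu(2\pi\iu)^{-1/2}=1$, cancellation of the $\pi\iu t$ terms, $\theta t\to 1$, $\partial_t^3F^0_Y\to-\tfrac13$) checks out, as does the sign $\Omega^\vee(e^1,e^2)=1/\Omega(e_1,e_2)$ in two dimensions.
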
 
\begin{proof} 
Set $\Pi_1 = \Pi_Y(\cO_{\Proj^1}(-1))$ 
and $\Pi_2=\Pi_Y(\cO_{\Proj^2}(-1))$. 
As $y \to 0$, we have (see equation~\eqref{eq:central_charges_Y}) 
\begin{align*} 
\Pi_1  & = - (2\pi\iu)^2 \log y 
+O(y)\\ 
\Pi_2  & = 
-(2\pi\iu) \left(\pi^2 + \pi\iu \log y -\frac{1}{2} (\log y)^2\right) 
+O(y\log y)  
\end{align*} 
The sections $\theta \zeta$, $\theta^2 \zeta \in \cO(\Hvec)$ 
form a fiberwise tangent frame of $\Haff$ near $y=0$. 
Since $x$ and $p$ are flat, it suffices to check that 
the asymptotics of $\Omega(\theta\zeta,\theta^2\zeta)$ 
and $\frac{1}{3}(dp\wedge dx)(\theta\zeta, \theta^2\zeta)$ 
agree. 
We have (see equation~\eqref{eq:symplectic_pairing}): 
\begin{align*} 
\Omega(\theta\zeta, \theta^2\zeta) & = \frac{1}{3(1+27y)} 
\sim \frac{1}{3}\\ 
(dp\wedge dx) (\theta\zeta,\theta^2 \zeta) 
 &= (2\pi\iu)^{-3} \left( \theta \Pi_2 \cdot 
 \theta^2 \Pi_1 
 - \theta^2 \Pi_2 \cdot \theta \Pi_1 \right) 
 \sim 1 
\end{align*} 
as $y\to 0$. The conclusion follows. 
\end{proof}

\begin{proposition} 
\label{prop:opposite_cusps_flat} 
Let $(p,x)$ be the coordinates of $\widebar{H}$ 
given by \eqref{eq:Darboux_central_charge}.  
If we analytically continue the co-ordinates $(p,x)$ 
along the paths shown in Figure $\ref{fig:paths}$, 
we have that:
\begin{itemize} 
\item[(a)] the opposite line bundle $P_\LR\subset \Hvec$ 
is cut out by $x = 0$; 
\item[(b)] the opposite line bundle $P_\con\subset \Hvec$ is 
cut out by $p=0$; 
\item[(c)] the opposite line bundle $P_\orb\subset \Hvec$ 
is cut out by $ x+(1-\xi) p=0$, where $\xi = e^{2\pi\iu/3}$. 
\end{itemize} 
\end{proposition}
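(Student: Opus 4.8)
The plan is to use that the Darboux co-ordinates $(x,p)$ of \eqref{eq:Darboux_central_charge} are flat functionals on $\Hvec$. Each of $x$ and $p$ is a constant multiple of one of the D-module homomorphisms $\Pi_Y(\cO_{\Proj^1}(-1))^\sharp$, $\Pi_Y(\cO_{\Proj^2}(-1))^\sharp$ from \S\ref{subsubsec:Haff}, which are flat sections of $\widebar{H}^\vee$ because the central charges solve the Picard--Fuchs equation \eqref{eq:PF_psi} (Proposition~\ref{prop:PF}); together with $\Pi_Y(\cO_\pt)^\sharp$ they form a flat frame of $\widebar{H}^\vee$. Since $\Hvec = \{\Pi_Y(\cO_\pt)^\sharp = 0\}$, the restrictions of $x$ and $p$ are flat co-ordinates on the fibres of $\Hvec$ (Lemma~\ref{lem:symplectic_pairing_in_flat}), so each of $\{x=0\}$, $\{p=0\}$, $\{x+(1-\xi)p=0\}$ is the kernel of a nonzero flat functional and hence a flat line subbundle of $\Hvec$; the opposite line bundles $P_\LR$, $P_\con$, $P_\orb$ are flat as well. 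Near a cusp, a single-valued flat line subbundle is invariant under the local monodromy, so the strategy is to identify each locus and each $P_\bullet$ as a monodromy-invariant line and then match them.

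For (a) and (b) this is immediate. By Proposition~\ref{prop:monodromy}, going around $\gamma_\LR$ sends $\Pi_Y(\cO_{\Proj^1}(-1))$ to $\Pi_Y(\cO_{\Proj^1}(-1)) - \Pi_Y(\cO_\pt)$, while going around $\gamma_\con$ fixes $\Pi_Y(\cO_{\Proj^2}(-1))$; since $\Pi_Y(\cO_\pt)^\sharp$ vanishes on $\Hvec$, the functional $x$ is invariant under $M_\LR$ and $p$ is invariant under $M_\con$ when restricted to $\Hvec$. As $0,-\tfrac{1}{27}\in\DCY$, the local monodromies $M_\LR$ and $M_\con$ act unipotently and nontrivially on $\Hvec$, so each has a \emph{unique} invariant line, which by Remark~\ref{rem:pole_NablaP} is $\Image N = P_\LR$ (respectively $P_\con$). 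Both $\ker x$ and $P_\LR$ are invariant lines of $M_\LR$, so they coincide; likewise $\ker p = P_\con$. This proves (a) and (b).

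The orbifold case (c) is the main obstacle. Here $M_\orb$ acts on $\Hvec$ with eigenvalues $\xi,\xi^2$ (order three, reflecting the $\mu_3$-point), so there are \emph{two} invariant lines and invariance alone does not single out $P_\orb$; I would instead match fibres at $y=\infty$ directly. Continuing $(x,p)$ to the orbifold chart via Proposition~\ref{prop:monodromy} gives the expressions \eqref{eq:Darboux_central_charge_X} in terms of $\Phi_i := \Pi_\cX(\cO_0\otimes\varrho^i)$, so that $x+(1-\xi)p = \iu(2\pi\iu)^{-3/2}(-\Phi_1+\xi\Phi_2)^\sharp$. The crucial computation is that, substituting the central charges \eqref{eq:central_charges_X}, the $\parfrac{F_\cX^0}{\frt}$ terms cancel and the remaining terms combine to $-\Phi_1+\xi\Phi_2 = (2\pi\iu)^3\tfrac{\xi-1}{3}\bigl(1-\frt/\Gamma(\tfrac{2}{3})^3\bigr)$. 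Evaluating the associated flat functional on the fibre $\langle\fry_1^{-2}\theta(\theta+\tfrac{1}{3})\zeta\rangle$ of $P_\orb$ produces $\fry_1^{-2}\theta(\theta+\tfrac{1}{3})\bigl(1-\frt/\Gamma(\tfrac{2}{3})^3\bigr)$; since the mirror map $\frt(\fry_1)$ has only exponents $\equiv1\bmod3$ and $\theta(\theta+\tfrac{1}{3})$ kills $\fry_1^0$ and $\fry_1^1$, this is $O(\fry_1^2)$ and vanishes at $y=\infty$. As $x+(1-\xi)p$ is a $\xi$-eigenvector of $M_\orb$, its kernel is a single-valued flat line subbundle near the orbifold point, and matching fibres gives $P_\orb=\{x+(1-\xi)p=0\}$. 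The delicate steps are organising the analytic continuation, noticing the cancellation of the genus-zero derivative that collapses $-\Phi_1+\xi\Phi_2$ to a simple linear expression, and checking that the vanishing order under $\theta(\theta+\tfrac{1}{3})$ outruns the $\fry_1^{-2}$ pole.
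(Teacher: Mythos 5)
Your proof is correct and follows essentially the same route as the paper's. Parts (a) and (b) coincide with the paper's argument: the local monodromies from Proposition~\ref{prop:monodromy} act unipotently and nontrivially on $\Hvec$, so each has a unique invariant line, which must be the (flat, hence monodromy-invariant) opposite line bundle. For part (c) both arguments begin the same way, reducing to a choice between the two eigenlines $\{x+(1-\xi^i)p=0\}$, $i\in\{1,2\}$, of the semisimple orbifold monodromy; the only divergence is the final selection step. The paper evaluates $x$ and $p$ on the generator $\nabla_{\partial/\partial\fry}\zeta$ of $F^2_\vec$, finds that $F^2_\vec|_{\fry=0}$ lies in $\{x+(1-\xi^2)p=0\}$, and concludes that $P_\orb$ must be the \emph{other} eigenline because an opposite line bundle is transversal to $F^2_\vec$. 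You instead evaluate the candidate functional $x+(1-\xi)p$ directly on the explicit generator $\fry_1^{-2}\theta(\theta+\tfrac{1}{3})\zeta$ of $P_\orb|_{y=\infty}$ from Notation~\ref{nota:opposite_fd}, using the same cancellation of the $\partial F^0_\cX/\partial\frt$ terms in \eqref{eq:central_charges_X} and the fact that the exponents of $\frt(\fry_1)$ are $\equiv 1 \bmod 3$. Both computations are of the same kind and equally valid; yours trades the transversality/exclusion step for prior knowledge of the fibre of $P_\orb$ at the orbifold point (Proposition~\ref{pro:opposite_cusps}), while the paper's uses only the definition of an opposite line bundle.
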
 
\begin{proof} 
The opposite line bundles $P_\LR$, $P_\con$, $P_\orb$ 
are flat subbundles of $\Hvec$ around the large-radius, conifold, and 
orbifold points respectively, and as such, 
they are necessarily invariant 
under the corresponding local monodromy. 
From the computation in Proposition \ref{prop:monodromy}, 
we find that $\{x=0\}$ is a unique invariant line 
in $\Hvec = \{\Pi_Y(\cO_\pt)^\sharp =0\}$ 
around the large-radius limit point; similarly 
$\{p=0\}$ is a unique invariant line 
around the conifold point. Parts (a) and (b) follow. 
The monodromy around the orbifold point is semisimple 
with eigenvalues $\{\xi,\xi^2\}$ 
and we have precisely two invariant lines 
given by 
$x+(1-\xi^i) p =0$, $i\in \{1,2\}$. 
On the other hand, the generator 
$v:=\nabla_{\partial/\partial \fry} \zeta$ 
of $F^2_\vec$ near the orbifold point  
has co-ordinates 
\begin{align*} 
x(v) & 
=  \iu (2\pi\iu)^{-3/2} \partial_{\fry}
\left(- \Pi_\cX(\cO_0\otimes \varrho) + 
\Pi_\cX(\cO_0\otimes \varrho^2) \right) = 
\iu \frac{(2\pi\iu)^{3/2}}{3\Gamma(\frac{2}{3})^3} (-\xi + \xi^2) 
+O(\fry) 
\\
p(v) &= -\iu (2\pi\iu)^{-3/2} \partial_{\fry} 
\Pi_\cX(\cO_0\otimes \varrho^2) 
= - \iu \frac{(2\pi\iu)^{3/2}}{3\Gamma(\frac{2}{3})^3} 
\xi^2 + O(\fry)  
\end{align*} 
where we used \eqref{eq:Darboux_central_charge_X} 
and the formula \eqref{eq:central_charges_X} 
for $\vPi_{\cX}$. 
Therefore $F^2_\vec|_{\fry=0}$ lies in the 
subspace 
$x + (1-\xi^2) p=0$. 
Part (c) follows since 
$P_\orb$ is transversal to $F^2_\vec$ near $\fry=0$. 
\end{proof} 

\begin{recapitulation} 
\label{rec:Caff}
Recall from \S\ref{sec:Yukawa_fd} that we can immerse 
the universal cover of $\cMCY \setminus \DCY$ into 
the fiber $\Haff|_{y_0}$ as an (immersed) Lagrangian submanifold 
$\cL$, by parallel translation of the primitive section $\zeta$. 
In terms of the ``integral'' 
co-ordinates $(p,x)$ 
on $\Haff|_{y_0}$ -- see equation~\eqref{eq:Darboux_central_charge} -- $\cL$ is given by:  
\[
p= p(\zeta) = \iu (2\pi\iu)^{-1/2} \left(\pi^2 + \pi \iu t + 3 
\parfrac{F_Y^0}{t} \right),  
\qquad 
x= x(\zeta) = -\iu(2\pi\iu)^{1/2} t 
\]
where $t=t(y)$ is the mirror map for $Y$ 
(see equation~\eqref{eq:central_charges_Y}), 
and the tangent space is: 
\begin{equation} 
\label{eq:slope_in_integral_coordinates}
T_{(x(\zeta),p(\zeta))} \cL = 
\C \begin{pmatrix} \tau \\ 1 \end{pmatrix} \quad 
\text{with $\tau = \parfrac{p(\zeta)}{x(\zeta)}
= -\frac{1}{2} - \frac{3}{2\pi\iu} \parfrac{^2F^0_Y}{t^2}$
}.   
\end{equation} 
We saw in Corollary~\ref{cor:modular_parameter} 
that the slope $\tau$ 
lies in $\HH$ and identifies 
the universal cover of $\cMCY \setminus \DCY$ with $\HH$. 
\end{recapitulation}

\begin{notation} 
\label{nota:GW_X_Y}
In this section, we denote by $F^g_X$ 
the genus-$g$ Gromov--Witten potential \eqref{eq:GW_potential} 
of $X = \cX$ or $Y$, restricted to the second cohomology  
and with Novikov parameters specialized to $Q=1$. 
As before, we write $t \mapsto t h \in H^2(Y)$, 
$\frt \mapsto \frt \unit_{\frac{1}{3}}\in H^2_\orb(\cX)$ 
for parameters on the second cohomology. 
Explicitly, we have 
(see equations~\eqref{eq:GW-wave_Y}--\eqref{eq:GW-wave_X} 
and \eqref{eq:genus-zero_pot_Y}--\eqref{eq:genus-zero_pot_X}):  
\begin{align*} 
F^0_Y(t) &= 
-\frac{1}{18} t^3 + \sum_{d=1}^\infty \corr{}_{0,0,d}^Y e^{dt}  \\ 
F^1_Y(t) & = -\frac{1}{12} t + 
\sum_{d=1}^\infty \corr{}_{1,0,d}^Y e^{dt} \\ 
F^g_Y(t) & = \sum_{d=0}^\infty \corr{}_{g,0,d}^Y e^{dt} && 
\text{for $g\ge 2$}\\ 
\intertext{and} 
F^g_\cX(\frt) &= \sum_{n:2g-2+n>0} 
\corr{\unit_{\frac{1}{3}},\dots,\unit_{\frac{1}{3}}}_{g,n,0}^\cX 
\frac{\frt^n}{n!} && \text{for $g\ge 0$}. 
\end{align*} 
\end{notation}

\begin{corollary}
\label{cor:analytic_HH} 
The following objects can be analytically continued to the 
universal cover $(\cMCY\setminus \DCY)\sptilde \cong \HH$ 
of $\cMCY\setminus \DCY$: 
\begin{itemize} 
\item[(a)] 
the opposite line bundles $P_\LR$, $P_\con$, $P_\orb$;  
\item[(b)] 
the Gromov--Witten potential $F^g_Y(t)$ of $Y$, 
when regarded 
as a function near the large-radius limit point $y=0$ 
via the mirror map $t=t(y)$; 
\item[(c)] 
the Gromov--Witten potential $F^g_\cX(\frt)$ of $\cX$, 
when regarded as a function near the orbifold point $\fry=0$ 
via the mirror map $\frt = \frt(y)$.  
\end{itemize} 
\end{corollary}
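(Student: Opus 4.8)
The plan is to derive all three continuation statements from two facts already established: the global section $\wave_{\CY}$ of the finite-dimensional Fock sheaf exists over all of $\cMCY$ (Theorem~\ref{thm:wave_CY} together with the conifold pole bound of Theorem~\ref{thm:conifold_estimate}), and the slope parameter $\tau$ identifies the universal cover of $\cMCY \setminus \DCY$ with $\HH$ (Corollary~\ref{cor:modular_parameter}).

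For part (a), recall that an opposite line bundle is by definition a \emph{flat} one-dimensional subbundle of $\Hvec$. Because $\HH$ is simply connected, parallel transport in the pullback of $(\Hvec,\nabla)$ is path-independent, so each of $P_\LR$, $P_\con$, $P_\orb$ -- defined a priori only near its cusp -- extends to a single flat line subbundle over all of $\HH$. Equivalently, Proposition~\ref{prop:opposite_cusps_flat} exhibits these subbundles as the zero loci of the flat coordinates $x$, $p$, and $x+(1-\xi)p$; these coordinates are assembled from the central-charge functions $\Pi_Y(\cdot)$, which solve the Picard--Fuchs equation \eqref{eq:PF} and therefore continue over the universal cover.

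For parts (b) and (c), I would invoke Theorem~\ref{thm:wave_CY}: the correlation functions of $\wave_{\CY}$ with respect to $P_\LR$ are the derivatives of $F^g_Y$ as in \eqref{eq:GW-wave_Y}, and with respect to $P_\orb$ are the derivatives of $F^g_{\cX}$ as in \eqref{eq:GW-wave_X}. Since $\wave_{\CY}$ is a genuine global section, its correlation functions with respect to a fixed opposite line bundle are single-valued and holomorphic precisely on the locus where that bundle is flat and transverse to $F^2_\vec$; part (a) secures flatness over $\HH$, so the point is transversality. In the $(p,x)$-coordinates of Proposition~\ref{prop:opposite_cusps_flat} one has $F^2_\vec = \C\binom{\tau}{1}$ with $\tau \in \HH$ (Recapitulation~\ref{rec:Caff}), while $P_\LR = \langle \partial_p \rangle$ and $P_\orb = \langle \partial_p - (1-\xi)\partial_x\rangle$. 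The line $F^2_\vec$ never equals $P_\LR$ (that would force $1=0$), and equals $P_\orb$ only at $\tau = -(1-\xi)^{-1}$, whose imaginary part is negative and so lies outside $\HH$; the large-radius and conifold values $\tau=\infty$ and $\tau=0$ sit on the boundary of $\HH$. Thus $P_\LR$ and $P_\orb$ remain genuine opposite line bundles at every interior point of $\HH$, and the associated correlation functions are holomorphic there. For $g \ge 2$ the potential \emph{is} the $0$-point correlation function, so it continues at once; for $g=0,1$ only its derivatives are correlation data, but these extend holomorphically (for $g=0$ one may instead invoke the explicit relation between $\partial_t^2 F^0_Y$ and $\tau$ of Corollary~\ref{cor:modular_parameter}, which is manifestly holomorphic on $\HH$), and on the simply connected domain $\HH$ the potential is recovered by integration, the germ at the relevant cusp fixing the constants.

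The hard part will be the bookkeeping linking the correlation \emph{tensors} $\Nabla^n C^{(g)}$ produced by $\wave_{\CY}$ to the scalar potentials $F^g_Y$, $F^g_{\cX}$ in their natural variables $t$, $\frt$ -- in particular tracking the flat coordinate attached to each opposite line bundle through the mirror map (Recapitulation~\ref{rec:Caff}). The transversality computation is the conceptual crux: it guarantees that the flat continuations of $P_\LR$ and $P_\orb$ stay opposite to $F^2_\vec$ across all of $\HH$, so that no new poles appear and the continued potentials are honestly holomorphic on the whole upper half-plane.
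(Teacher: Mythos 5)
Your proposal is correct and follows essentially the same route as the paper: part (a) is exactly the paper's argument (flat extension over the simply connected cover plus the transversality check via Proposition~\ref{prop:opposite_cusps_flat}, using that $\tau=0$, $\tau=\infty$, and $\tau=-1/(1-\xi)$ all lie outside $\HH$), and parts (b)--(c) are deduced, as in the paper, from part (a) together with the globality of $\wave_{\CY}$ from Theorem~\ref{thm:wave_CY}. The extra remarks on recovering $F^0$, $F^1$ from their derivatives by integration on the simply connected domain are a harmless refinement the paper leaves implicit.
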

\begin{proof}  
We use notation as in Recapitulation \ref{rec:Caff}. 
Since $\tau$ is a non-zero holomorphic function on 
$(\cMCY\setminus \DCY)\sptilde$, 
it follows from Proposition \ref{prop:opposite_cusps_flat} 
that $\cL$ is transversal to both $P_\LR$ 
and $P_\con$ \emph{everywhere} on 
$(\cMCY\setminus \DCY)\sptilde$, 
i.e.~$P_\LR$ and $P_\con$ 
extend to opposite line bundles over 
$(\cMCY\setminus \DCY)\sptilde$. 
Similarly, since $1+(1-\xi) \tau$ never vanishes 
for $\tau \in \HH$, $P_\orb$ also extends to 
the universal cover. 
Part (a) follows. Parts (b) and (c) follow from Part (a) and 
the fact that the Gromov--Witten potentials of $Y$ and $\cX$ extend 
to a global section $\wave_{\CY}$ of $\Fock_{\CY}^\circ$:
see Theorem~\ref{thm:wave_CY}. 
\end{proof} 

\begin{notation}[Darboux co-ordinates at cusps]  
\label{nota:Darboux_cusps}  
Let $(p,x)$ be the ``integral'' Darboux co-ordinates on $\Haff$ 
given in \eqref{eq:Darboux_central_charge} 
and let $\tau$ be the slope \eqref{eq:slope_in_integral_coordinates} 
of $\cL$ in these co-ordinates.  
In view of Proposition \ref{prop:opposite_cusps_flat}, 
we introduce the following Darboux co-ordinates 
on $\Haff|_{y_0}$ 
associated to the large-radius, conifold and orbifold points 
(cf.~Examples \ref{ex:Yukawa}, \ref{ex:propagator}). 
Here all Darboux co-ordinates $(p',x')$ are normalized so that 
$\Omega = \frac{1}{3} dp' \wedge dx'$. 

\begin{itemize} 
\item[(1)] To the large-radius limit point, we associate the 
Darboux co-ordinates  
\begin{align*} 
\begin{pmatrix} p_\LR \\ x_\LR \end{pmatrix} 
= 
\iu (2\pi\iu)^{-1/2} 
\begin{pmatrix} 
- 2\pi\iu & - \pi \iu \\ 
0 & 1 
\end{pmatrix} 
\begin{pmatrix} p \\ x \end{pmatrix} 
+ 
\begin{pmatrix} -\pi^2 \\ 0 \end{pmatrix} 
\end{align*} 
such that $P_\LR = \langle \partial/\partial p_\LR \rangle$. 
In these co-ordinates, $\cL$ and its slope 
are given by: 
\[
\begin{cases} 
p_\LR = 3 \parfrac{F_Y^0}{t}\\
x_\LR = t
\end{cases} 
\ \text{and} \quad 
\tau_\LR = \parfrac{p_\LR}{x_\LR} 
= - 2\pi\iu\left(\tau+\frac{1}{2}\right) 
= 3 \parfrac{^2 F_Y^0}{t^2} 
\]
where $t=t(y)$ is the mirror map for $Y$ 
(see \S\ref{subsubsec:monodromy}). 

\item[(2)] To the conifold point, we associate the Darboux 
co-ordinates: 
\[
\begin{pmatrix} 
p_\con \\ x_\con 
\end{pmatrix} 
= \frac{1}{\sqrt{3}} (2\pi\iu)^{-1/2}
\begin{pmatrix} 
0 & 2\pi\iu
\\ 
- 3 & 0
\end{pmatrix} 
\begin{pmatrix} p \\ x \end{pmatrix}
\]
such that $P_\con = \langle \partial/\partial p_\con \rangle$. 
In these co-ordinates, the slope of $\cL$ is given by: 
\[
\tau_\con= \parfrac{p_\con}{x_\con}
= - \frac{2\pi\iu}{3\tau} 
\]
\item[(3)] 
To the orbifold point, we associate the Darboux co-ordinates 
\[
\begin{pmatrix} 
p_\orb \\ x_\orb 
\end{pmatrix} 
= \frac{1}{\iu (2\pi\iu)^{3/2}} 
\begin{pmatrix} 
3\Gamma(\frac{1}{3})^3 & (1-\xi) 
\Gamma(\frac{1}{3})^3 \\ 
3 \Gamma(\frac{2}{3})^3 & 
(1-\xi^2) \Gamma(\frac{2}{3})^3 
\end{pmatrix} 
\begin{pmatrix} 
p \\ x 
\end{pmatrix} 
+ \begin{pmatrix} 
\Gamma(\frac{1}{3})^3 \\ 
\Gamma(\frac{2}{3})^3 
\end{pmatrix} 
\]
such that $P_\orb = \langle \partial/\partial p_\orb \rangle$. 
In these co-ordinates, $\cL$ and its slope are 
given by: 
\begin{align*} 
\begin{cases} 
p_\orb = 3 \parfrac{F^0_\cX}{\frt}  \\ 
x_\orb = \frt 
\end{cases} 
\ \text{and} \quad 
\tau_\orb =  
\parfrac{p_\orb}{x_\orb} 
= \frac{\Gamma(\frac{1}{3})^3}{\Gamma(\frac{2}{3})^3} \cdot 
\frac{3 \tau + 1-\xi}{3 \tau + 1-\xi^2} 
= 3 \parfrac{^2 F^0_\cX}{\frt^2}
\end{align*} 
where $\frt = \frt(\fry)$ is the mirror map for $\cX$ 
(see \S\ref{subsubsec:monodromy}). 
\end{itemize} 
\end{notation} 
\begin{remark} 
The slope parameters $\tau_\LR, \tau_\con, \tau_\orb$ 
take values, respectively, in the right-half plane 
$\{\tau_\LR \in \C : \Re(\tau_\LR) >0\}$, 
the left-half plane $\{\tau_\con\in \C : \Re(\tau_\con)<0\}$ and 
the disc $\{\tau_\orb \in \C : |\tau_\orb|< \Gamma(\frac{1}{3})^3/
\Gamma(\frac{2}{3})^3\}$. 
In particular, the inequalities 
\[
\Re\left( \parfrac{^2F_Y^0}{t^2} \right) >0, \qquad 
\left| \parfrac{^2F_\cX^0}{\frt^2} \right| < 
\frac{\Gamma(\frac{1}{3})^3}{3 \Gamma(\frac{2}{3})^3} 
\]
hold. 
\end{remark} 
\begin{remark} 
\label{rem:x_con_asymptotics}
The conifold co-ordinate $x_\con = \sqrt{3} (2\pi\iu)^{-1/2} p$ 
restricted to $\cL$ can be written as the integral: 
\[
x_\con = \frac{\sqrt{3}}{2\pi} \int_{\Gamma_\R} \zeta 
\]
for $-1/27<y<0$ by Lemma \ref{lem:real_Lefschetz}. 
In particular, $\theta x_\con$ is a period over a vanishing cycle 
at the conifold point (see \eqref{eq:derivative_central_charge}). 
We also obtain the asymptotics $x_\con \sim 1+27 y$ near the 
conifold point by approximating the above integral by the area of an ellipse. 
Since $x_\con$ is invariant under the conifold monodromy, 
it is holomorphic near $y=-1/27$. 
\end{remark} 

The slope parameters in Notation \ref{nota:Darboux_cusps} 
are related to each other by: 
\begin{align*} 
\tau_\LR & = \frac{-3\pi\iu \tau_\con - 4\pi^2}{3\tau_\con}, 
&& 
\tau_\con = \frac{-4\pi^2}{3(\tau_\LR + \pi \iu)} 
\\
\tau_\LR & = -\frac{\pi}{\sqrt{3}} 
\frac{\Gamma(\frac{2}{3})^3 \tau_\orb + \Gamma(\frac{1}{3})^3}
{\Gamma(\frac{2}{3})^3 \tau_\orb - \Gamma(\frac{1}{3})^3} 
&& 
\tau_\orb= \frac{\Gamma(\frac{1}{3})^3}{\Gamma(\frac{2}{3})^3} 
\frac{\sqrt{3} \tau_\LR-\pi}{\sqrt{3} \tau_\LR +\pi} 
\\
\tau_\con & = 2\pi\iu 
\frac{\Gamma(\frac{2}{3})^3 \tau_\orb - \Gamma(\frac{1}{3})^3}
{(1-\xi^2)\Gamma(\frac{2}{3})^3 \tau_\orb - 
(1-\xi) \Gamma(\frac{1}{3})^3}
&& 
\tau_\orb = \frac{\Gamma(\frac{1}{3})^3}{\Gamma(\frac{2}{3})^3} 
\frac{(1-\xi)\tau_\con - 2\pi\iu}{(1-\xi^2)\tau_\con - 2\pi\iu} 
\end{align*} 
Therefore, by Example \ref{ex:propagator}, the propagators 
among the opposite line bundles $P_\LR$, $P_\con$, $P_\orb$ 
are given as follows: 
\begin{align}
\label{eq:propagator_explicit} 
\begin{aligned} 
\Delta(P_\LR,P_\con) & = \frac{-3}{\tau_\LR + \pi \iu} 
(\partial_{x_\LR})^{\otimes 2}  & 
\Delta(P_\orb,P_\con) & = \frac{-3\Gamma(\frac{2}{3})^3}
{\Gamma(\frac{2}{3})^3 \tau_\orb + \xi \Gamma(\frac{1}{3})^3}
(\partial_{x_\orb})^{\otimes 2}
\\ 
\Delta(P_\con,P_\LR)  & = -\frac{3}{\tau_\con} 
(\partial_{x_\con})^{\otimes 2} & 
\Delta(P_\con, P_\orb) & = \frac{-3(1-\xi^2)}{(1-\xi^2)\tau_\con - 2\pi\iu} 
(\partial_{x_\con})^{\otimes 2} \\ 
\Delta(P_\LR,P_\orb) & = \frac{-3\sqrt{3}}{\sqrt{3}\tau_\LR + \pi} 
(\partial_{x_\LR})^{\otimes 2} &
\Delta(P_\orb,P_\LR) & = \frac{-3\Gamma(\frac{2}{3})^3}
{\Gamma(\frac{2}{3})^3 \tau_\orb - \Gamma(\frac{1}{3})^3}
(\partial_{x_\orb})^{\otimes 2} 
\end{aligned} 
\end{align}
where $x_\LR= t(y)$, $x_\orb = \frt(\fry)$ 
are the mirror maps for $Y$ and $\cX$ respectively. 
The correlation functions of $\wave_{\CY}$ 
with respect to $P_\LR$, 
$P_\con$, $P_\orb$ are related by Feynman rules
given by these propagators. In particular, we get: 

\begin{theorem}[Crepant Resolution Conjecture for $\cX$: explicit form]  
\label{thm:CRC_explicit} 
As in Corollary $\ref{cor:analytic_HH}$, we regard the Gromov--Witten 
potentials of $Y$ and $\cX$ as holomorphic functions on the universal 
cover of $\cMCY\setminus \DCY$, 
and we use Notation $\ref{nota:GW_X_Y}$. 
After analytic continuation along the positive real line in the $y$-plane, 
the Gromov--Witten potentials of $Y$ and $\cX$ are related by 
a Feynman rule as in \S $\ref{sec:Fock_fd}$:  
\begin{align*} 
(\partial_{\frt}^3 F_\cX^0) \cdot  (d\frt)^{\otimes 3} 
&= (\partial_{t}^3 F_Y^0)  \cdot (dt)^{\otimes 3} 
= -\frac{1}{3(1+27y)} \left(\frac{dy}{y}\right)^{\otimes 3}
\\ 
(\partial_{\frt} F_\cX^1) d\frt  
& = \left(\partial_{t} F_Y^1 
+ \frac{1}{2} (\partial_{t}^3 F_Y^0) \Delta \right) 
dt  
\\ 
F_\cX^g & = \sum_{\Gamma} 
\frac{1}{|\Aut(\Gamma)|} 
\Cont_\Gamma\left( \Delta, 
\{\partial_t^\bullet \cF_Y^h : h\le g\} \right) && 
\text{for $g\ge 2$} 
\end{align*} 
where $\Gamma$ in the third line ranges over all connected 
stable decorated genus-$g$ graphs without legs, 
$\Delta$ is the propagator from $P_\LR$ to $P_\orb$: 
\[
\Delta = 
-\frac{3\sqrt{3}}{\sqrt{3}\tau_\LR+\pi} \qquad 
\text{with $\tau_\LR = 3\parfrac{^2F^0_Y}{t^2}$,} 
\] 
$\Cont_\Gamma(\Delta,\{\partial_t^\bullet \cF_Y^h : h\le g\})$ 
denotes the contraction along the graph $\Gamma$ 
with edge terms $\Delta$ and vertex terms  
$\partial_t^\bullet \cF_Y^h$ -- see the explanation after \eqref{eq:Feynman_rule_fd} -- and
\[
d\frt = -\frac{\sqrt{3}}{4\pi^2} \Gamma\left(\frac{2}{3}\right)^3 
\left(\sqrt{3} \tau_\LR +\pi \right) dt. 
\]
\end{theorem}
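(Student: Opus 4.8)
The plan is to obtain the stated Feynman rule as a direct corollary of the fact that $\wave_{\CY}$ is a \emph{single} global section of $\Fock_{\CY}^\circ$ whose local representatives with respect to $P_\LR$ and $P_\orb$ are, respectively, the Gromov--Witten potentials of $Y$ and of $\cX$. First I would record, using Theorem~\ref{thm:wave_CY} together with (Jetness), that in the flat coordinate $x_\LR = t$ the correlation functions of $\wave_{\CY}$ with respect to $P_\LR$ are exactly $C^{(g)}_n = \partial_t^n F^g_Y$ for all admissible $(g,n)$: here $C^{(0)}_3 = \partial_t^3 F^0_Y$ is the Yukawa coupling, $C^{(1)}_1 = \partial_t F^1_Y$, and $C^{(g)}_0 = F^g_Y$ for $g \geq 2$, with the higher $C^{(g)}_n$ obtained by repeatedly applying $\Nabla^{P_\LR}$. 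Symmetrically, in the flat coordinate $x_\orb = \frt$ the correlation functions with respect to $P_\orb$ are $\widehat{C}^{(g)}_n = \partial_\frt^n F^g_\cX$.

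Next I would fix the common domain on which to compare the two frames. By Corollary~\ref{cor:analytic_HH} the opposite line bundles $P_\LR$ and $P_\orb$, and the potentials $F^g_Y$ and $F^g_\cX$, all continue along the positive real axis in the $y$-plane to the universal cover of $\cMCY \setminus \DCY$; on this domain both local Fock representatives of $\wave_{\CY}$ are defined, and by Definition~\ref{def:Fock_CY} they are related by the transformation rule $T(P_\LR,P_\orb)$. By Definition~\ref{def:transformation_rule_fd} this is precisely the Feynman rule~\eqref{eq:Feynman_rule_fd} with propagator $\Delta = \Delta(P_\LR,P_\orb)$. Applying it to the $P_\LR$-correlation functions of the first step and substituting the identifications there turns the rule into the three displayed identities: the $g=0$, $3$-point equality is the frame-independence of the Yukawa coupling, whose closed form $-\frac{1}{3(1+27y)}(\frac{dy}{y})^{\otimes 3}$ is Example~\ref{ex:Yukawa_conformal_limit}; the $g=1$, $1$-point identity comes from the two stable graphs contributing to $\widehat{C}^{(1)}_1$ (a single genus-one vertex, and a genus-zero vertex carrying one leg and one self-loop, the latter supplying the factor $\tfrac{1}{2}$ from $|\Aut|$), giving $(\partial_\frt F^1_\cX)\,d\frt = (\partial_t F^1_Y + \tfrac{1}{2}(\partial_t^3 F^0_Y)\Delta)\,dt$; and the $g \geq 2$, $0$-point identity is the full Feynman sum, whose vertex terms are the $\partial_t^\bullet F^h_Y$ by the first step.

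It then remains to make the propagator and the change of flat coordinate explicit. The value $\Delta(P_\LR,P_\orb) = \frac{-3\sqrt{3}}{\sqrt{3}\,\tau_\LR + \pi}(\partial_{x_\LR})^{\otimes 2}$ is exactly the entry already recorded in~\eqref{eq:propagator_explicit}, and the identifications $\tau_\LR = 3\,\partial_t^2 F^0_Y$ and $x_\LR = t$ are Notation~\ref{nota:Darboux_cusps}(1). The relation $d\frt = -\frac{\sqrt{3}}{4\pi^2}\Gamma(\tfrac{2}{3})^3(\sqrt{3}\,\tau_\LR + \pi)\,dt$ I would deduce from~\eqref{eq:relation_flat_vector_fields}: writing $\begin{pmatrix} a & b \\ c & d\end{pmatrix}$ for the affine-symplectic change of Darboux frame from $P_\LR$ to $P_\orb$, read off from the Möbius relation between $\tau_\LR$ and $\tau_\orb$ in Notation~\ref{nota:Darboux_cusps}, one has $\partial_{x_\LR} = (c\,\tau_\LR + d)\,\partial_{x_\orb}$, so $d\frt/dt = c\,\tau_\LR + d$, which equals the stated coefficient; and one checks directly that this is consistent with $\Delta = -3c/(c\,\tau_\LR + d)$ from Example~\ref{ex:propagator}.

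The whole argument is thus a bookkeeping assembly of results proved earlier in the paper. I expect the main obstacle to be organizational rather than conceptual: one must verify that analytic continuation along the chosen path keeps $P_\LR$ and $P_\orb$ transverse to $F^2_\vec$ at every point, which is exactly the content of Corollary~\ref{cor:analytic_HH}(a), and one must track the tensor-versus-function and flat-coordinate conventions with care so that the combinatorial factors $\tfrac{1}{|\Aut(\Gamma)|}$ and the substituted vertex terms $\partial_t^\bullet F^h_Y$ match the finite-dimensional Feynman rule~\eqref{eq:Feynman_rule_fd} exactly, including the genus-one modification built into the definition of the vertex terms.
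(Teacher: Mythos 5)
Your proposal is correct and follows essentially the same route as the paper: the paper's proof simply cites Theorem~\ref{thm:wave_CY} together with the definition of $\Fock_{\CY}^\circ$, referring to Example~\ref{ex:Yukawa_conformal_limit} for the Yukawa coupling and to \eqref{eq:propagator_explicit} for the propagator, exactly the ingredients you assemble. Your additional derivation of the coefficient in $d\frt$ from \eqref{eq:relation_flat_vector_fields} and Notation~\ref{nota:Darboux_cusps} is the intended bookkeeping, carried out correctly.
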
 
\begin{proof} 
This follows from Theorem \ref{thm:wave_CY} and the definition 
of the finite-dimensional Fock sheaf $\Fock_{\CY}^\circ$. 
See Example \ref{ex:Yukawa_conformal_limit} for the Yukawa 
coupling (genus-zero term) and 
\eqref{eq:propagator_explicit} 
for the propagator. 
\end{proof}

\begin{remark} 
By the general theory developed in \S\ref{sec:Fock_fd}, we can invert 
the Feynman rule in the above theorem, 
by exchanging $F^g_\cX$ and $F^g_Y$, 
$x_\orb = \frt$ and $x_\LR = t$, 
and replacing $\Delta$ with 
$-3 \Gamma(\frac{2}{3})^3/(\Gamma(\frac{2}{3})^3 \tau_\orb - 
\Gamma(\frac{1}{3})^3)$. 
\end{remark} 

\begin{example} 
In Theorem \ref{thm:CRC_explicit}, 
the Feynman rule at genus two takes the form: 
\begin{align*} 
F^2_\cX & = F^2_Y + \frac{1}{2} \Delta 
(\partial_t^2 F^1_Y) + 
\frac{1}{2} \Delta( \partial_t F^1_Y)^2 
+ \frac{1}{2} \Delta^2 
(\partial_t F^1_Y)(\partial_t^3 F^0_Y) 
+ \frac{1}{8} \Delta^2 (\partial_t^4 F^0_Y)
+ \frac{5}{24} \Delta^3 (\partial_t^3 F^0_Y)^2.   
\end{align*} 
\end{example} 

\subsection{Algebraic and Complex Conjugate Opposite Line Bundles}
\label{subsec:alg_cc}
Recall the notion of curved opposite line bundle from \S\ref{subsec:curved}.
In this section we introduce two curved opposite line bundles 
$P_\alg$ and $P_\cc$. 
The algebraic opposite line bundle $P_\alg$ is a holomorphic subbundle of 
$\Hvec$ which is opposite to $F^2_\vec$ but is not flat; 
the complex conjugate opposite line bundle $P_\cc$ 
is a $C^\infty$-subbundle 
of $\Hvec$ which is opposite to $F^2_\vec$ but is not flat 
in the antiholomorphic direction.  
The key property of these line bundles is that they 
are \emph{single-valued} over $\cMCY\setminus \DCY$, 
and therefore they yield single-valued correlation functions 
of the global section $\wave_{\CY}$ of $\Fock_{\CY}^\circ$. 
This property plays a crucial role in the next section. 

As explained in \S\ref{subsec:opposite_cusps_CRC}, 
the central charges $\Pi_Y(V)$ of $V\in K_c(Y)$ 
give flat co-ordinates $\Pi_Y(V)^\sharp$ 
on $\widebar{H}$, and thus on $\Hvec$.   
Since the $\Z$-lattice formed by these co-ordinates $\Pi_Y(V)^\sharp$ 
is preserved under monodromy, they determine a real flat subbundle 
of $\Hvec|_{\cMCY\setminus \DCY}$  
\[
H_{\vec,\R} := \left\{ v \in \Hvec\big|_{\cMCY\setminus \DCY} 
: \text{$\iu(2\pi\iu)^{-3/2}  \Pi_Y(V)^\sharp (v) \in \R$ for all  $V\in K_c(Y)$} \right\}
\]
with the property that $\Hvec = H_{\vec,\R} \oplus \iu H_{\vec,\R}$. 
Recall from Corollary \ref{cor:VHS_isom} that 
$\Hvec|_y$ is isomorphic to $H^1(E_y,\C)$ 
for $y\in \cMCY\setminus \DCY$, 
and that via this isomorphism, $\Pi_Y(V)^\sharp$ corresponds\footnote{Up to a constant -- see equations~\eqref{eq:central_charge_is_period} and~\eqref{eq:derivative_central_charge}.} to 
the integration over the integral cycle $\partial\Mir(V)$.
 By scaling the isomorphism $\Hvec \cong 
\bigcup_y H^1(E_y,\C)$ by a constant,  therefore, 
we have that $H_{\vec,\R} \cong H^1(E_y,\R)$.

\begin{definition}[complex conjugate opposite]  
\label{def:Pcc} 
The \emph{complex conjugate opposite line bundle} 
$P_\cc$ is defined to be the $C^\infty$ complex 
subbundle of $\Hvec|_{\cMCY\setminus \DCY}$ 
given as the complex conjugate 
of $F^2_\vec$ with respect to the real form $H_{\vec,\R}$. 
\end{definition} 

Since $P_\cc$ is the complex conjugate of a holomorphic subbundle, 
$P_\cc$ is flat in the holomorphic direction -- that is, 
$\nabla_v C^\infty(P_\cc) \subset C^\infty(P_\cc)$ for 
any $(1,0)$-vector field $v$.  It is not 
flat in the antiholomorphic direction. 

\begin{lemma} 
\label{lem:Pcc} 
The line bundle $P_\cc\subset \Hvec|_{\cMCY\setminus \DCY}$ 
extends to a topological line subbundle of $\Hvec$ over 
$\cMCY$ such that $F^2_\vec \oplus P_\cc = \Hvec$ 
holds globally. 
Moreover, we have 
\[
P_\cc|_{y=0} = P_\LR|_{y=0}, \qquad  
P_\cc|_{y=-\frac{1}{27}} = P_\con|_{y=-\frac{1}{27}}, \qquad
P_\cc|_{y=\infty} = P_\orb|_{y=\infty}.
\] 
\end{lemma}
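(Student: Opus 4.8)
The plan is to prove Lemma~\ref{lem:Pcc} by establishing two things: first, that the complex conjugate line bundle $P_\cc$ extends continuously across the three cusps $y \in \{0, -\frac{1}{27}, \infty\}$ and remains transversal to $F^2_\vec$ there; second, that at each cusp its fibre agrees with the corresponding flat opposite line bundle. The key tool is the description of $P_\cc$ via the real structure $H_{\vec,\R}$ and the identification $\Hvec|_y \cong H^1(E_y,\C)$ from Corollary~\ref{cor:VHS_isom}, under which $H_{\vec,\R} \cong H^1(E_y,\R)$ and $F^2_\vec$ corresponds to the Hodge line $H^{1,0}(E_y)$.

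First I would reinterpret the whole statement in classical Hodge-theoretic terms. Under the isomorphism $\Hvec|_y \cong H^1(E_y,\C)$, the subbundle $F^2_\vec$ is $H^{1,0}(E_y) = \C\,\lambda_y$, and by Definition~\ref{def:Pcc} the fibre $P_\cc|_y$ is its complex conjugate $\overline{H^{1,0}(E_y)} = H^{0,1}(E_y)$ with respect to $H^1(E_y,\R)$. The transversality $F^2_\vec \oplus P_\cc = \Hvec$ is then just the classical Hodge decomposition $H^1(E_y,\C) = H^{1,0} \oplus H^{0,1}$, which holds for \emph{every} smooth elliptic curve. This immediately gives global transversality over $\cMCY \setminus \DCY$. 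The content of the extension statement is therefore about the limiting behaviour of the Hodge line $H^{0,1}(E_y)$ as $y$ approaches a cusp, where $E_y$ degenerates.

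The main work is the cusp analysis, and the cleanest route is through the slope parameter $\tau$ and the Darboux co-ordinates of Notation~\ref{nota:Darboux_cusps}. Recall from Recapitulation~\ref{rec:Caff} that $F^2_\vec$ has slope $\tau \in \HH$ in the integral co-ordinates $(p,x)$, so $F^2_\vec|_y = \C\binom{\tau}{1}$; the real structure then forces $P_\cc|_y = \C\binom{\bar\tau}{1}$. As $y \to 0$ (large radius) we have $\tau \to +\iu\infty$ by Corollary~\ref{cor:modular_parameter}, so $\binom{\bar\tau}{1}$ becomes proportional to $\binom{1}{0}$, i.e.\ $P_\cc|_{y=0} = \langle \partial/\partial p \rangle = P_\LR|_{y=0}$ by Proposition~\ref{prop:opposite_cusps_flat}(a). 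For the conifold point, the Fricke-transformed parameter $\tau_\con = -2\pi\iu/(3\tau)$ from Notation~\ref{nota:Darboux_cusps}(2) tends to $0$ as $\tau \to 0$, and working in the $(p_\con, x_\con)$ co-ordinates shows $P_\cc|_{y=-1/27} = \langle \partial/\partial p_\con\rangle = P_\con|_{y=-1/27}$, using Proposition~\ref{prop:opposite_cusps_flat}(b). For the orbifold point, $\tau$ tends to the fixed point $\frac{\xi-1}{3}$ of the $\mu_3$-action (Remark~\ref{rem:Fricke}); I would check that $\bar\tau$ lands on the line $x + (1-\xi)p = 0$ cut out by $P_\orb$ in Proposition~\ref{prop:opposite_cusps_flat}(c), which amounts to verifying $\overline{\left(\tfrac{\xi-1}{3}\right)} + (1-\xi)\cdot\tfrac{1}{3}\cdot(\text{appropriate factor}) = 0$ after accounting for the co-ordinate normalisations. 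The continuity of the extension at each cusp follows because $\tau$ (respectively $\tau_\con$, $\tau_\orb$) is a holomorphic local co-ordinate there, so $\bar\tau$ is continuous and the line $\C\binom{\bar\tau}{1}$ varies continuously, remaining transversal to $F^2_\vec$ right up to and including the limiting fibre.

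The hard part will be handling the orbifold cusp carefully, since there the local uniformizer is $\fry = y^{-1/3}$ and the period ratio $\tau$ has a ramified relationship to $\fry$; moreover the real structure $H_{\vec,\R}$ and the $\mu_3$-action interact, so I must confirm that the conjugate line respects the orbifold structure and genuinely matches $P_\orb$ rather than the other $\mu_3$-invariant line $x + (1-\xi^2)p = 0$ (which is $F^2_\vec$ itself at the limit, per the proof of Proposition~\ref{prop:opposite_cusps_flat}(c)). Transversality of $P_\cc$ to $F^2_\vec$ at the orbifold point should follow from $\tau \notin \R$ together with the fact that $P_\cc|_{y=\infty}$ picks out the conjugate eigenline; I expect this to reduce to the observation that $\bar\tau \neq \tau$ for $\tau \in \HH$, so the two $\mu_3$-eigenlines are genuinely distinct and $P_\cc$ cannot collapse onto $F^2_\vec$. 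Once the three cusp identifications are in hand, patching them with the global transversality over $\cMCY \setminus \DCY$ gives the continuous global subbundle asserted in the lemma.
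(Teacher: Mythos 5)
Your opening move (the Hodge decomposition $H^1(E_y,\C)=H^{1,0}\oplus H^{0,1}$ gives transversality over $\cMCY\setminus\DCY$) is exactly the paper's, and your treatment of the orbifold point is essentially sound: $y=\infty$ is \emph{not} in $\DCY$, the local monodromy there is finite, so the flat coordinates $(p,x)$ stay bounded on the uniformizing chart and the identity $(\xi^2-1)(\xi-1)=3$ really does verify that $\bar\tau=\tfrac{1}{\xi-1}$ places $P_\cc|_{y=\infty}$ on the line $x+(1-\xi)p=0$, i.e.\ on $P_\orb|_{y=\infty}$. (The paper instead characterises both lines by $\mu_3$-invariance plus oppositeness, which avoids the computation.)

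The gap is at the two points of $\DCY$, where the connection on $\Hvec$ has a logarithmic pole with unipotent monodromy. There the flat frame $(p,x)$ degenerates: parallel transport to the reference fibre blows up like $\log y$, so the limit of the transported line $\C\binom{\bar\tau}{1}$ in the \emph{fixed} fibre does not compute the limit of the subbundle $P_\cc|_y$ inside the fibre $\Hvec|_{y=0}$ of the extended bundle. The clearest symptom: your argument applies verbatim to $F^2_\vec=\C\binom{\tau}{1}$, whose transported image \emph{also} converges to $\C\binom{1}{0}$ as $\tau\to+\iu\infty$, so it would equally ``prove'' $F^2_\vec|_{y=0}=P_\LR|_{y=0}$ --- false, and contradicting the very transversality you need. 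What must actually be shown is that in a holomorphic frame $\{s_0,s_1\}$ of $\Hvec$ extending across the cusp, with $F^2_\vec=\langle s_1\rangle$, one has $\bar s_1=\bigl(c+2\log|t|\bigr)s_0-s_1+O(|t|\log|t|)$, whence $P_\cc=\langle s_0+O(1/\log|t|)\,s_1\rangle\to\langle s_0\rangle$; this is the paper's nilpotent-orbit computation, which uses the explicit $\log t$ in the flat sections and the reality of the monodromy ($b\in\iu\R$) --- information not captured by ``$\Im\tau\to\infty$'' alone. (Equivalently: the propagator $\Delta(P_\LR,P_\cc)=-3(\tau_\LR+\ov{\tau_\LR})^{-1}(\partial_{x_\LR})^{\otimes 2}$, written in the logarithmic frame $\partial_{x_\LR}$ that \emph{does} extend across $y=0$, tends to zero.) Two further slips: $\tau_\con=-2\pi\iu/(3\tau)$ tends to $\infty$, not $0$, as $\tau\to 0$; and the cleanest conifold computation stays in the real coordinates $(p,x)$, where $P_\con$ is cut out by $p=0$ and $\bar\tau\to 0$ --- but it suffers from the same degeneration problem, since both $F^2_\vec$ and $P_\cc$ collapse onto the invariant line $p=0$ in the flat frame.
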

\begin{proof} 
Under the isomorphism $\Hvec|_y \cong H^1(E_y,\C)$, 
$F^2_\vec|_y$ corresponds to $H^{1,0}(E_y,\C) \cong \C \lambda_y$. 
As discussed above, we have $H_{\vec,\R}|_y \cong H^1(E_y,\R)$. 
The Hodge decomposition implies that 
$P_\cc|_y$ corresponds to $H^{0,1}(E_y,\C)$ 
and is opposite to $F^2_\vec|_y$ over $\cMCY\setminus \DCY$. 

The extension of $P_\cc$ across $\DCY$ and the oppositeness there 
follow from a property of the nilpotent orbit  
associated to a degeneration of Hodge structure (see \cite{Schmid}).  
We will give an elementary account below. 
Choose one of the limit points from $\DCY = \{0,-\frac{1}{27}\}$ and 
let $t$ denote a local co-ordinate centred at that point. 
From the description of $\widebar{H}$ in \S\ref{sec:two}, 
we can find a local basis $\{s_0,s_1\}$ of $\Hvec$ near $t=0$ 
such that $F^2_\vec = \langle s_1 \rangle$ 
and that the connection $\nabla$ is of the form: 
\[
(\nabla s_0, \nabla s_1) 
= (s_0, s_1) A(t) \frac{dt}{t} \qquad 
\text{with $A(0) = 
\begin{pmatrix} 0 & 1 \\ 0 &0  \end{pmatrix}$} 
\]
Then we can find a basis $\{f_0,f_1\}$ of flat sections
of the form (see e.g.~Proposition 
\ref{pro:fundamental_solution_logarithmic_connection}): 
\[
(f_0,f_1) = (s_0,s_1) G(t) 
\begin{pmatrix} 1 & -\log t \\ 0 & 1 \end{pmatrix} 
\]
where $G(t)$ is a holomorphic matrix-valued function 
near $t=0$ with $G(0) = I_2$. 
The flat section $f_0$ spans a monodromy-invariant line; hence 
after scaling $\{s_0,s_1\}$ by a constant, 
we may assume that $f_0\in H_{\vec,\R}$. 
Let $f_2 =a f_0 + b f_1$, $b\neq 0$ be another flat section 
taking values in $H_{\vec,\R}$ and linearly independent of $f_0$. 
The monodromy acts on $f_2$ as $f_2 \mapsto f_2 - 2\pi\iu b f_0$; 
 reality of the monodromy then implies $b\in \iu \R$. 
The complex conjugate $\ov{f_1}$ of $f_1$ 
with respect to $H_{\vec,\R}$ is then computed as:  
\[
\ov{f_1} = \frac{\ov{a}-a}{b}  f_0 - f_1. 
\]
Thus the complex conjugate of $\{s_0,s_1\}$ with respect to 
$H_{\vec,\R}$ is: 
\begin{align*} 
(\ov{s_0},\ov{s_1}) 
&= (\ov{f_0}, \ov{f_1}) 
\begin{pmatrix} 
1 & \ov{\log t} \\ 0 & 1 
\end{pmatrix} 
\ov{G(t)}^{-1}
= (f_0,f_1) 
\begin{pmatrix} 1 & \frac{\ov{a}-a}{b} \\ 0 & -1 \end{pmatrix} 
\begin{pmatrix} 
1 & \ov{\log t} \\ 0 & 1 
\end{pmatrix} 
\ov{G(t)}^{-1} \\ 
& = (s_0,s_1) G(t) 
\begin{pmatrix}
1 & -\log t \\ 0 & 1 
\end{pmatrix} 
\begin{pmatrix} 1 & \frac{\ov{a}-a}{b} \\ 0 & -1 \end{pmatrix} 
\begin{pmatrix} 
1 & \ov{\log t} \\ 0 & 1 
\end{pmatrix} 
\ov{G(t)}^{-1} \\
& = (s_0,s_1) \left[ 
\begin{pmatrix} 
1 & \frac{\ov{a} -a}{b} + 2 \log |t| \\ 
0 & -1
\end{pmatrix} + O(|t| \log|t|)\right] 
\end{align*} 
Since $P_\cc = \langle \ov{s_1} \rangle$, 
this implies that $P_\cc$ extends across $t=0$ 
as a topological line bundle, and that the fiber at $t=0$ is 
spanned by the invariant section $f_0|_{t=0}=s_0|_{t=0}$. 
This also shows the oppositeness of $P_\cc$ along $\DCY$ 
and that $P_\cc|_{y=0} = P_\LR|_{y=0}$ 
and $P_\cc|_{y=-\frac{1}{27}} = P_\con|_{y=-\frac{1}{27}}$ 
(we showed in the proof of Proposition \ref{prop:opposite_cusps_flat} 
that $P_\LR$ and $P_\con$ are spanned by invariant sections  
near cusps). 
To show that $P_\orb|_{y=\infty} = P_\cc|_{y=\infty}$, 
it suffices to note that these subspaces are uniquely characterized 
by invariance under $\mu_3$-monodromy and oppositeness 
to $F^2_\vec|_{y=\infty}$.  
\end{proof}  

Recall from \S\ref{sec:two} that $\cO(\Hvec)$ is 
isomorphic to $\cO(1)\oplus \cO(-1)$ as a 
vector bundle over $\cMCY \cong \Proj(3,1)$, 
and that the subsheaf $\cO(F^2_\vec)\cong \Theta(\log\{0\})$ 
is isomorphic to $\cO(1)$. 
There is a (precisely) one-dimensional family of holomorphic line subbundles 
of $\Hvec$ which correspond to the factor $\cO(-1)$. 
\begin{definition}[algebraic opposite]  
\label{def:Palg} 
The \emph{algebraic opposite line bundle} 
$P_\alg = P_\alg(a)$, where $a\in \C$,
is the holomorphic line subbundle of $\Hvec$ 
with basis given by: 
\begin{align*}
\begin{aligned}  
s_0 &=(9y +a) \theta\zeta + (1+27y) \theta^2\zeta 
& & \text{over $\cMCY\setminus \{y=\infty\}$} \\ 
s_\infty & = (1-3a) \fry^2 \partial_{\fry} \zeta+ (27+\fry^3)
\partial_{\fry}^2 \zeta = 9 \fry s_0
& & \text{over $\cMCY\setminus \{y=0\}$} 
\end{aligned} 
\end{align*} 
where 
$\theta = y\parfrac{}{y}$ and 
$\partial_{\fry} = \parfrac{}{\fry}$ act via the connection $\nabla$. 
Any holomorphic line subbundle of $\Hvec$ which is globally 
complementary to $F^2_\vec$ is of the form $P_\alg(a)$ for some $a \in \C$. 
\end{definition} 

\begin{lemma} 
Let $P_\alg(a)$ be the algebraic opposite line in Definition $\ref{def:Palg}$. 
We have 
\[
P_\alg(0)|_{y=0}=P_\LR|_{y=0}, \quad 
P_\alg(\tfrac{1}{3})|_{y=-\frac{1}{27}} = P_\con|_{y=-\frac{1}{27}}, 
\quad 
P_\alg(a)|_{y=\infty} = P_\orb|_{y=\infty}, 
\]
for all $a \in \C$.
\end{lemma}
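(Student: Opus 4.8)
The plan is to prove the three fiber identities by direct evaluation of the defining frames $s_0$ and $s_\infty$ of $P_\alg(a)$ at the respective cusps, comparing with the explicit descriptions recorded in Notation~\ref{nota:opposite_fd}:
\[
P_\LR|_{y=0}=\langle\theta^2\zeta\rangle,\qquad
P_\con|_{y=-\frac{1}{27}}=\langle(1+27y)\theta^2\zeta\rangle,\qquad
P_\orb|_{y=\infty}=\langle z^{-1}\frd_1^2\rangle .
\]
Both $s_0$ and $s_\infty$ are sections of $\Hvec$, since $\theta\zeta$ and $(1+27y)\theta^2\zeta$ span $\Hvec$ on the manifold chart by \eqref{eq:frame_Hbar}, and everything needed is the frame and connection data of \S\ref{sec:two}.

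First I would treat the large-radius point: setting $a=0$ and evaluating $s_0=(9y+a)\theta\zeta+(1+27y)\theta^2\zeta$ at $y=0$, the $\theta\zeta$-coefficient vanishes and the $\theta^2\zeta$-coefficient equals $1$, so $s_0|_{y=0}=\theta^2\zeta|_{y=0}$ spans $P_\LR|_{y=0}$.

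For the conifold point the one subtle point — which I expect to be the main obstacle — is that $\theta^2\zeta$ by itself is singular as a section of $\Hvec$ at $y=-\frac{1}{27}$, whereas the combination $(1+27y)\theta^2\zeta$ is a genuine nowhere-vanishing frame element of $\Hvec$ there (it is the relevant frame vector in \eqref{eq:frame_Hbar}). Thus one must read $s_0$ through this frame rather than through $\theta^2\zeta$. Setting $a=\tfrac13$ and using $9y|_{y=-1/27}=-\tfrac13$ and $(1+27y)|_{y=-1/27}=0$, the coefficient $9y+a$ of $\theta\zeta$ vanishes, leaving $s_0|_{y=-1/27}=\big((1+27y)\theta^2\zeta\big)\big|_{y=-1/27}$, which spans $P_\con|_{y=-1/27}$.

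The orbifold point is handled in the second chart using $s_\infty=(1-3a)\fry^2\partial_\fry\zeta+(27+\fry^3)\partial_\fry^2\zeta$, where $\partial_\fry$ acts via $\nabla$. The key step is to translate these covariant derivatives into frame data: from $y=\fry^{-3}$ one gets $\nabla_{\partial_\fry}=-\tfrac{3}{\fry}\theta$ on $\widebar{H}$, equivalently $\nabla_{\partial_\fry}=-z^{-1}\frd_1$ in the GKZ description, so that $\partial_\fry\zeta=[\frd_1]$ and $\partial_\fry^2\zeta=[-z^{-1}\frd_1^2]=\tfrac{27}{27+\fry^3}\,g_2$, where $g_2=[-z^{-1}(1+\tfrac{1}{27}\fry^3)\frd_1^2]$ is the orbifold frame element of \eqref{eq:frame_Hbar}. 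Evaluating at $\fry=0$, the first term of $s_\infty$ vanishes because of the $\fry^2$ factor and the second gives $27\,\partial_\fry^2\zeta|_{\fry=0}$, a nonzero multiple of $[z^{-1}\frd_1^2]$; hence $s_\infty|_{\fry=0}$ spans $P_\orb|_{y=\infty}$ for \emph{every} $a$. This independence of $a$ is exactly what the statement asserts, and it reflects the fact that the $a$-dependent term of $s_\infty$ carries the extra vanishing factor $\fry^2$. The only real care required throughout is the correct bookkeeping of the transition between the manifold and orbifold frames and the compensating factor $(1+\tfrac{1}{27}\fry^3)$ in $g_2$; once that is in place the three identities follow by inspection.
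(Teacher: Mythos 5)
Your proof is correct and follows essentially the same route as the paper, whose proof is a one-line citation of Notation~\ref{nota:opposite_fd} and Proposition~\ref{pro:opposite_cusps}: you simply carry out explicitly the evaluation of the frames $s_0$, $s_\infty$ at the three cusps against the recorded fibres of $P_\LR$, $P_\con$, $P_\orb$. Your frame bookkeeping at the conifold and orbifold points (reading $s_0$ through $(1+27y)\theta^2\zeta$ rather than $\theta^2\zeta$, and the identity $\partial_\fry^2\zeta=[-z^{-1}\frd_1^2]$) checks out.
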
 
\begin{proof} 
This follows from Notation \ref{nota:opposite_fd} 
and Proposition \ref{pro:opposite_cusps}. 
\end{proof} 

\begin{proposition} 
\label{prop:propagator_with_cc}
We use Notation $\ref{nota:Darboux_cusps}$. 
The propagators between $P_\LR$, $P_\con$, $P_\orb$ 
and $P_\cc$ are given as follows: 
\begin{align*} 
\Delta(P_\LR, P_\cc) & = 
\frac{-3}{\tau-\ov{\tau}} (\partial_x)^{\otimes 2}
= \frac{3}{2\pi\iu(\tau-\ov{\tau})} 
(\partial_{x_\LR})^{\otimes 2} 
= -\frac{3}{\tau_\LR + \ov{\tau_\LR}} 
(\partial_{x_\LR})^{\otimes 2} 
\\ 
\Delta(P_\con, P_\cc) & = -3 \frac{\ov\tau}{\tau} 
\frac{1}{\tau-\ov\tau} (\partial_x)^{\otimes 2}
= -\frac{9}{2\pi\iu} \frac{|\tau|^2}{\tau-\ov\tau} 
(\partial_{x_\con})^{\otimes 2} 
= - \frac{3}{\tau_\con + \ov{\tau_\con}} 
(\partial_{x_\con})^{\otimes 2}
\\
\Delta(P_\orb,P_\cc) & = - 3 \frac{1+\ov{\tau}(1-\xi)}
{1+\tau(1-\xi)} \frac{1}{\tau-\ov\tau} (\partial_x)^{\otimes 2} 
= \frac{3\Gamma(\frac{2}{3})^6 }{(2\pi\iu)^3} 
\frac{(3\tau +1-\xi^2)(3\ov{\tau}+1-\xi^2)}{\tau-\ov{\tau}}
(\partial_{x_\orb})^{\otimes 2} \\ 
& = \frac{3\ov{\tau_\orb}}{ \Gamma(\frac{1}{3})^6 
\Gamma(\frac{2}{3})^{-6} - |\tau_\orb|^2} 
(\partial_{x_\orb})^{\otimes 2}
\end{align*} 
where we regard $x$,~$x_\LR$,~$x_\con$,~$x_\orb$ as co-ordinates 
on the immersed submanifold $\cL\looparrowright \Haff$, 
or on the universal cover of $\cMCY\setminus \DCY$ 
(see Recapitulation $\ref{rec:Caff}$). 
\end{proposition}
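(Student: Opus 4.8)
The plan is to compute each propagator $\Delta(P_\bullet, P_\cc)$ by applying the master formula from Example~\ref{ex:propagator}, which expresses the propagator between two opposite line bundles in terms of the slope $\tau$ of $\cL$ and the slopes of the two line bundles in integral Darboux co-ordinates $(p,x)$. The essential new ingredient here is locating the fiber of $P_\cc$ in these co-ordinates. From Definition~\ref{def:Pcc} and the identification $\Hvec|_y \cong H^1(E_y,\C)$ from Corollary~\ref{cor:VHS_isom}, the fiber $P_\cc|_y$ corresponds to $H^{0,1}(E_y) = \overline{H^{1,0}(E_y)}$, i.e.~the complex conjugate of $F^2_\vec|_y$ with respect to the real structure $H_{\vec,\R}$. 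Since the integral co-ordinates $x, p$ in \eqref{eq:Darboux_central_charge} are (up to the constant $\iu(2\pi\iu)^{-3/2}$) the period functions $\int_{\partial\Gamma_i}\lambda_y$, and these are \emph{real} precisely on $H_{\vec,\R}$, complex conjugation with respect to $H_{\vec,\R}$ acts on the co-ordinates $(p,x)$ by ordinary complex conjugation of their values.

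Concretely, first I would recall from Recapitulation~\ref{rec:Caff} that $\cL$ has tangent direction $\binom{\tau}{1}$ in the $(p,x)$-frame, so $F^2_\vec|_{t}$ is spanned by this vector; its complex conjugate with respect to $H_{\vec,\R}$ is $\binom{\ov\tau}{1}$, and this spans $P_\cc|_t$. I would then feed the pair $\big(\langle\partial/\partial p\rangle\text{-type data for }P_\bullet,\ P_\cc=\langle\binom{\ov\tau}{1}\rangle\big)$ into the computation of Example~\ref{ex:propagator}. In the integral co-ordinates, $P_\LR$ is cut out by $x=0$ (Proposition~\ref{prop:opposite_cusps_flat}(a)), so $P_\LR=\langle\partial/\partial p\rangle$ and the propagator formula gives directly $\Delta(P_\LR,P_\cc) = -\frac{3}{\tau-\ov\tau}(\partial_x)^{\otimes 2}$. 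For $P_\con$ (cut out by $p=0$) and $P_\orb$ (cut out by $x+(1-\xi)p=0$), I would first express the relevant projection $\Pi$ along each line bundle acting on $\binom{\ov\tau}{1}$, reading off the coefficient $c$ exactly as in the worked computation of $\KS^{-1}\Pi_2$ in Example~\ref{ex:propagator}, which yields the first (unnormalized, $x$-based) expression for each propagator.

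The remaining work is purely bookkeeping: converting the answer from the integral co-ordinate $x$ to the cusp-adapted co-ordinates $x_\LR, x_\con, x_\orb$, and from the slope $\tau$ to the cusp-adapted slopes $\tau_\LR, \tau_\con, \tau_\orb$. The relations $\partial_x = (\partial x_\bullet/\partial x)\,\partial_{x_\bullet}$ are linear and are encoded in the explicit affine changes of Darboux co-ordinates in Notation~\ref{nota:Darboux_cusps}; substituting the formulas $\tau_\LR = -2\pi\iu(\tau+\tfrac12)$, $\tau_\con = -2\pi\iu/(3\tau)$, and $\tau_\orb = \frac{\Gamma(1/3)^3}{\Gamma(2/3)^3}\cdot\frac{3\tau+1-\xi}{3\tau+1-\xi^2}$ (together with their conjugates) into the $x$-based expressions produces the three normalized forms claimed in each line. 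The transformation $\partial_x \mapsto \partial_{x_\con}$ carries the factor $\tfrac{\partial x_\con}{\partial x} = -\sqrt{3}\,(2\pi\iu)^{1/2}$ and similarly for $x_\orb$, so each conversion introduces a fixed multiplicative constant that must be tracked carefully; this is the only place where sign and branch-of-square-root conventions (recall $\iu^{1/2}=e^{\pi\iu/4}$) could introduce errors.

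The main obstacle I anticipate is \emph{not} conceptual but is the careful verification that complex conjugation in $H_{\vec,\R}$ corresponds to $\tau\mapsto\ov\tau$ in the chosen co-ordinates together with the consistent handling of the constant prefactors under the three co-ordinate changes. In particular, confirming that the intermediate ``$x$-based'' expressions (the second column of each displayed line, e.g.~$\frac{3}{2\pi\iu(\tau-\ov\tau)}(\partial_{x_\LR})^{\otimes 2}$) and the final ``slope-based'' expressions (e.g.~$-\frac{3}{\tau_\LR+\ov{\tau_\LR}}(\partial_{x_\LR})^{\otimes 2}$) agree requires substituting $\tau_\LR+\ov{\tau_\LR} = -2\pi\iu(\tau-\ov\tau)$ and analogous identities, each of which must be checked to ensure the numerators and the Jacobian factors combine correctly. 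The orbifold case is the most delicate, since the slope $\tau_\orb$ is a Möbius transform of $\tau$ with $\xi$-dependent coefficients, and simplifying $\frac{(3\tau+1-\xi^2)(3\ov\tau+1-\xi^2)}{\tau-\ov\tau}$ into the $|\tau_\orb|^2$ form uses the relation $\xi\ov\xi=1$ and the explicit value of the constant $\Gamma(1/3)^6\Gamma(2/3)^{-6}$; I would verify this identity directly rather than trust pattern-matching.
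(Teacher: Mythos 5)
Your proposal is correct and follows essentially the same route as the paper: identify $P_\cc=\C\binom{\ov\tau}{1}$ in the integral Darboux frame (using that $(p,x)$ are real on $H_{\vec,\R}$, so conjugation is the ordinary one), compute $\KS^{-1}\Pi_\bullet$ and $\KS^{-1}\Pi_\cc$ on $\partial_p,\partial_x$ as in Example~\ref{ex:propagator}, contract with $\Omega^\vee=3\partial_p\otimes\partial_x-3\partial_x\otimes\partial_p$, and then convert via Notation~\ref{nota:Darboux_cusps}. One small correction to your anticipated bookkeeping: since $x_\con$ and $x_\orb$ are proportional to combinations involving $p$, the Jacobians such as $\partial x_\con/\partial x=-\sqrt{3}\,(2\pi\iu)^{-1/2}\tau$ are $\tau$-dependent functions on $\cL$ rather than fixed constants, which is exactly what produces the $|\tau|^2$ numerator in the second expression for $\Delta(P_\con,P_\cc)$.
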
 
\begin{proof} 
Since $(p,x)$ are real co-ordinates with respect to $H_{\vec,\R}$, 
the complex conjugation in these co-ordinates is 
the ordinary one. Written in this frame, we have:
\[
F^2_\vec = \C \begin{pmatrix} \tau \\ 1 \end{pmatrix}, 
\qquad 
P_\cc = \C \begin{pmatrix} \ov{\tau} \\ 1\end{pmatrix}, 
\quad 
\text{and}
\quad  
\KS(\partial_x) = 
\begin{pmatrix} \tau \\ 1 \end{pmatrix}. 
\] 
Hence by writing $\Pi_\cc \colon \Hvec \to F^2_\vec$ for the projection 
along $P_\cc$, we have 
\[
\KS^{-1} \Pi_\cc (\partial_p) = \frac{1}{\tau-\ov\tau}\partial_x, 
\qquad 
\KS^{-1} \Pi_\cc(\partial_x) = -\frac{\ov\tau}{\tau-\ov\tau} \partial_x.  
\]
Let $\Pi_\LR$, $\Pi_\orb$, $\Pi_\con \colon 
\Hvec \to F^2_\vec$ denote the projections 
along $P_\LR$, $P_\orb$, $P_\con$ respectively. 
Since $\KS^{-1} \Pi_\LR(\partial_p) =0$, $\KS^{-1}\Pi_\LR(\partial_x) 
= \partial_x$, we have (see Definition \ref{def:propagator_fd}) 
\[ 
\Delta(P_\LR, P_\cc) = 
(\KS^{-1}\otimes \KS^{-1}) (\Pi_\LR \otimes \Pi_\cc) 
(3 \partial_p \otimes  \partial_x - 3 \partial_x \otimes \partial_p) 
= - \frac{3}{\tau-\ov\tau} (\partial_x)^{\otimes 2}.   
\]
The other formulae can be obtained similarly 
using Notation~\ref{nota:Darboux_cusps} and 
\begin{align*} 
\KS^{-1} \Pi_\con(\partial_p) &= \frac{1}{\tau}\partial_x, 
&& \KS^{-1}\Pi_\con(\partial_x) = 0, \\
\KS^{-1}\Pi_\orb(\partial_p) &= \frac{1-\xi}{1 + \tau (1-\xi)} \partial_x, 
&& 
\KS^{-1} \Pi_\orb(\partial_x) = \frac{1}{1+\tau(1-\xi)} \partial_x 
\end{align*} 
which we deduce easily from Proposition \ref{prop:opposite_cusps_flat}. 
\end{proof} 
\begin{remark} 
The propagators $\Delta(P,P_\cc)$ with $P=P_\LR$, $P_\con$, $P_\orb$
approach zero at the corresponding limit points, confirming again 
the conclusion of Lemma~\ref{lem:Pcc}.
\end{remark} 

\begin{lemma} 
\label{lem:Wronskian} 
For any flat affine Darboux co-ordinates $(\tp,\tx)$ on $\Haff$ 
with $\Omega = \frac{1}{3} d\tp \wedge d\tx$,  
we have $\theta \tp(\zeta) \cdot \theta^2 \tx(\zeta) - 
\theta^2 \tp(\zeta) \cdot \theta \tx(\zeta) = (1+27y)^{-1}$,
where $\theta = y\parfrac{}{y}$.  
\end{lemma}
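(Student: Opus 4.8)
The plan is to expand $\theta\zeta$ and $\theta^2\zeta$ in the constant frame attached to the flat Darboux co-ordinates $(\tp,\tx)$, recognise the left-hand side as three times the off-diagonal term of $\Omega$ in that frame, and then identify it with the already-known value of $\Omega(\theta\zeta,\theta^2\zeta)$.

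First I would record that, because $(\tp,\tx)$ are \emph{flat} affine Darboux co-ordinates on $\Haff$ normalised by $\Omega=\frac13\,d\tp\wedge d\tx$, the associated frame $\partial_{\tp}=\partial/\partial\tp$, $\partial_{\tx}=\partial/\partial\tx$ of the (parallel) vector bundle $\Hvec$ is flat and satisfies $\Omega(\partial_{\tp},\partial_{\tx})=\frac13$. Parallel-transporting all fibres to a fixed fibre $\Haff|_{t_0}$, the section $\zeta$ becomes the Lagrangian curve $\cL$ of \S\ref{sec:Yukawa_fd}, and $\tp(\zeta)$, $\tx(\zeta)$ become scalar functions of $y$ on the universal cover of $\cMCY\setminus\DCY$.

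The substantive step is then to check that covariant differentiation of $\zeta$ along $\theta=y\partial_y$ reduces, in this flat frame, to ordinary differentiation of the components. Since $\Hvec$ is a parallel subbundle of $\widebar{H}$ and $\zeta$ is a section of the parallel affine bundle $\Haff$, both $\theta\zeta=\nabla_\theta\zeta$ and $\theta^2\zeta=\nabla_\theta\nabla_\theta\zeta$ are sections of $\Hvec$, and flatness of $\partial_{\tp},\partial_{\tx}$ gives
\begin{align*}
\theta\zeta &= \theta\big(\tp(\zeta)\big)\,\partial_{\tp}+\theta\big(\tx(\zeta)\big)\,\partial_{\tx},\\
\theta^2\zeta &= \theta^2\big(\tp(\zeta)\big)\,\partial_{\tp}+\theta^2\big(\tx(\zeta)\big)\,\partial_{\tx}.
\end{align*}
Granting this, bilinearity and antisymmetry of $\Omega$ immediately yield $\Omega(\theta\zeta,\theta^2\zeta)=\frac13\big(\theta\tp(\zeta)\cdot\theta^2\tx(\zeta)-\theta^2\tp(\zeta)\cdot\theta\tx(\zeta)\big)$, while \eqref{eq:symplectic_pairing}, which gives $\Omega(\theta\zeta,\theta^2\zeta)=\frac{1}{3(1+27y)}$ (as $\theta\zeta$ and $(1+27y)\theta^2\zeta$ are the very frame of $\Hvec$ appearing there), supplies the value of the left-hand side. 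Cancelling the common factor $\frac13$ then yields the assertion, manifestly independent of the particular choice of $(\tp,\tx)$.

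I expect the middle step to be the only real obstacle: one must be certain that $\theta^2\zeta$ still lies in $\Hvec$ (so that pairing it symplectically against $\theta\zeta$ is legitimate) and that its components in the flat frame are honestly the iterated derivatives $\theta^2\tp(\zeta),\theta^2\tx(\zeta)$, with no Christoffel corrections. Both points follow from flatness of $\nabla$ restricted to the parallel subbundle $\Hvec$ and from the identification of tangent spaces to $\Haff$ with fibres of $\Hvec$ used throughout \S\ref{sec:two} and \S\ref{sec:Yukawa_fd}; but it is exactly this identification that makes the computation of the symplectic pairing in the affine co-ordinate frame valid, so I would state it carefully before invoking it.
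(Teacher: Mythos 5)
Your proposal is correct and is essentially the paper's own argument: the paper's proof is the one line ``this follows from $3\Omega(\theta\zeta,\theta^2\zeta)=(1+27y)^{-1}$, see \eqref{eq:symplectic_pairing}'', and your careful expansion in the flat Darboux frame is exactly the computation hiding behind that line. The points you flag as needing care (that $\theta^2\zeta$ lies in $\Hvec$ and that no Christoffel terms appear in a flat frame) are indeed the right ones, and they hold for the reasons you give.
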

\begin{proof} 
This follows from $3\Omega(\theta\zeta,\theta^2\zeta) = (1+27y)^{-1}$:
see \eqref{eq:symplectic_pairing}. 
\end{proof} 

Let $E_2(\tau)$ and $\hE_2(\tau)$ 
denote the second Eisenstein series 
and its modular counterpart: 
\begin{align*} 
E_2(\tau)  = 1 - 24 \sum_{n=1}^\infty \frac{n Q^n}{1-Q^n} &&
\hE_2(\tau) = E_2(\tau) + \frac{6}{\pi\iu} \frac{1}{\tau-\ov\tau} 
\end{align*} 
with $Q= e^{2\pi\iu \tau}$. 
Then we have \cite{Kaneko--Zagier}: 
\begin{align}
\label{eq:E2_modular} 
\begin{split}  
E_2\left(\frac{a \tau + b}{c\tau +d}\right) & = (c\tau+d)^2 E_2(\tau) + 
\frac{6 c(c\tau + d)}{\pi\iu} \\ 
\hE_2 \left(\frac{a\tau +b}{c\tau + d}\right) & = (c\tau+d)^2 
\hE_2(\tau) 
\end{split} 
\end{align} 
for every $\begin{pmatrix} a & b \\ c & d \end{pmatrix}\in 
\SL(2,\Z)$. 

\begin{proposition} 
\label{prop:propagator_cc_alg} 
Let $P_\alg=P_\alg(a)$ be the algebraic opposite line bundle in 
Definition~$\ref{def:Palg}$.  Use Notation~$\ref{nota:Darboux_cusps}$. 
The propagator between $P_\cc$ and $P_\alg$ is given by 
\begin{align}
\label{eq:propagator_cc_alg}  
\Delta(P_\cc,P_\alg) 
&=3 \left( \frac{1}{\tau-\ov\tau} - 
\theta x \cdot 
\left((9y +a)  \theta x
+ (1+27 y)  \theta^2 x \right)   
\right) \partial_x \otimes \partial_x \\
\label{eq:propagator_cc_alg_E2} 
& = \frac{\pi\iu}{2} \hE_2(\tau) \partial_x \otimes \partial_x 
+ 3 \left(\frac{1}{12} - a\right) \theta \otimes \theta. 
\end{align} 
where we regard $x=x(\zeta)$ as a co-ordinate 
on the immersed submanifold $\cL\looparrowright \Haff$, 
or on the universal cover $\HH$ of $\cMCY\setminus \DCY$ 
(see Recapitulation $\ref{rec:Caff}$). 
\end{proposition}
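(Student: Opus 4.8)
The plan is to compute the propagator $\Delta(P_\cc, P_\alg)$ by combining the cocycle property of propagators with the explicit formulas already established. First I would use the decomposition $\Delta(P_\cc, P_\alg) = \Delta(P_\cc, P_\LR) + \Delta(P_\LR, P_\alg)$, which follows from Proposition~\ref{prop:propagator_cocycle}. The first term is $-\Delta(P_\LR, P_\cc)$, which Proposition~\ref{prop:propagator_with_cc} gives as $\tfrac{3}{\tau-\ov\tau}(\partial_x)^{\otimes 2}$. For the second term, I would apply the definition of the propagator (Definition~\ref{def:propagator_fd}) directly: choosing the flat Darboux co-ordinates $(p,x)$ associated to $H_{\vec,\R}$ so that $\KS(\partial_x) = \begin{pmatrix}\tau\\1\end{pmatrix}$ and $F^2_\vec$ has slope $\tau$, I would express the fiber of $P_\alg$ in these co-ordinates and read off the projection $\KS^{-1}\Pi_\alg$ just as in Example~\ref{ex:propagator} and the proofs of Propositions~\ref{prop:propagator_with_cc}.

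The key computational input is the slope of $P_\alg$ in the $(p,x)$ co-ordinates. Since $P_\alg$ is spanned by $s_0 = (9y+a)\theta\zeta + (1+27y)\theta^2\zeta$, and since $x$ and $p$ are flat co-ordinates on $\Haff$ pulled back from the central charges, I would compute $x(s_0)$ and $p(s_0)$ by applying $\theta = y\partial_y$ to the restrictions $x(\zeta)$, $p(\zeta)$ along $\cL$. This expresses $s_0$ in the frame $\{\partial_p, \partial_x\}$ via the vector $\big((9y+a)\theta p(\zeta) + (1+27y)\theta^2 p(\zeta),\ (9y+a)\theta x(\zeta) + (1+27y)\theta^2 x(\zeta)\big)$. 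Feeding this into the propagator formula and simplifying yields the first form \eqref{eq:propagator_cc_alg}; the Wronskian relation from Lemma~\ref{lem:Wronskian}, namely $\theta p(\zeta)\cdot\theta^2 x(\zeta) - \theta^2 p(\zeta)\cdot\theta x(\zeta) = (1+27y)^{-1}$, is exactly what is needed to reduce the cross-terms to the stated expression involving only $\theta x$ and its derivatives.

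To pass from \eqref{eq:propagator_cc_alg} to the closed form \eqref{eq:propagator_cc_alg_E2}, I would split the bracket into its antiholomorphic piece $\tfrac{1}{\tau-\ov\tau}$ and its holomorphic piece $-\theta x\big((9y+a)\theta x + (1+27y)\theta^2 x\big)$. The antiholomorphic piece, multiplied by $3$, matches the $\tfrac{6}{\pi\iu}\tfrac{1}{\tau-\ov\tau}$ term in the definition of $\hE_2(\tau)$ after accounting for the factor $\tfrac{\pi\iu}{2}$; more precisely I would verify that $\tfrac{\pi\iu}{2}\hE_2(\tau)(\partial_x)^{\otimes 2}$ absorbs $\tfrac{3}{\tau-\ov\tau}(\partial_x)^{\otimes 2}$ together with the holomorphic $\tfrac{\pi\iu}{2}E_2(\tau)$ part coming from the remaining term, and that the discrepancy is the rational holomorphic contribution $3(\tfrac{1}{12}-a)\theta\otimes\theta$. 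Recognizing $E_2(\tau)$ here requires identifying the holomorphic quantity $-\theta x\big((9y+a)\theta x+(1+27y)\theta^2 x\big)$, after conversion from the $x$-frame to the $\theta = y\partial_y$ frame via $\partial_x = (\partial_x y/\partial y)\,\theta$, with the weight-two quasimodular Eisenstein series up to the shift by $a$.

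The hard part will be the final identification with $E_2(\tau)$: establishing that the holomorphic combination of periods and their $\theta$-derivatives equals $\tfrac{\pi\iu}{2}E_2(\tau)$ (up to the explicit rational correction) is not a formal manipulation but uses the specific arithmetic of the mirror family and the normalization of $\tau$ from Corollary~\ref{cor:modular_parameter}. I expect this to rely on matching the $q$-expansion near the large-radius limit $\tau\to+\infty\iu$, where both sides can be compared term by term, together with the modular transformation law \eqref{eq:E2_modular} to pin down that the single-valued object produced by the Feynman rule transforms with the correct quasimodular anomaly. The change of frame from $\partial_x$ to $\theta$ and the bookkeeping of the constant $a$-dependence will require care, since $a$ enters linearly and must land exactly in the $\theta\otimes\theta$ coefficient without contaminating the $\hE_2$ term.
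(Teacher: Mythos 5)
Your derivation of the first formula \eqref{eq:propagator_cc_alg} is correct and, apart from routing the computation through $P_\LR$ via the cocycle identity $\Delta(P_\cc,P_\alg)=\Delta(P_\cc,P_\LR)+\Delta(P_\LR,P_\alg)$, it is the same calculation as the paper's: express the generator $s_0$ of $P_\alg$ in the integral Darboux frame $\{\partial_p,\partial_x\}$, use Lemma~\ref{lem:Wronskian} to reduce its slope to the quantity $A=\theta x\cdot\big((9y+a)\theta x+(1+27y)\theta^2 x\big)$ (so that $P_\alg=\C\,{}^t(\tau A-1,\ A)$), and read off the projections as in Example~\ref{ex:propagator}. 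That all checks out. (Minor slip: the change of frame is $\partial_x=(\theta x)^{-1}\theta$, not $(\partial_x y/\partial y)\,\theta$.)

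The gap is in the passage from \eqref{eq:propagator_cc_alg} to \eqref{eq:propagator_cc_alg_E2}. You correctly flag this as the hard part, but your proposed method --- ``matching the $q$-expansion near the large-radius limit $\ldots$ term by term'' --- is not a proof as stated: an identity of two $q$-series cannot be verified by comparing infinitely many coefficients unless you first establish a rigidity statement that makes finitely many comparisons (or a single evaluation) suffice. The paper supplies exactly this: using the weight-$1$ transformation of $\theta x$ and \eqref{eq:E2_modular}, both sides of the claimed identity are single-valued on $\cMCY\setminus\DCY$; both extend to \emph{continuous global} sections of $\Theta(\log\{0\})^{\otimes 2}$ over all of $\cMCY=\Proj(3,1)$ --- and this requires explicit limit computations at the cusps, e.g.\ rewriting $\hE_2(\tau)\,\partial_x^{\otimes2}$ as $-\tfrac{3}{2\pi\iu}\hE_2(-1/\tau)\,\partial_{x_\con}^{\otimes2}$ to see regularity at the conifold point, a step absent from your outline. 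The difference of the two sides is then holomorphic and global, hence a section of $\Theta(\log\{0\})^{\otimes 2}\cong\cO(2)$, which is one-dimensional (a constant multiple of $\theta\otimes\theta$); evaluating at $y=0$, where both sides equal $-3a\,\theta\otimes\theta$, finishes the proof. To repair your argument you must either reproduce this global-sections/rigidity step (including the boundary analysis at the conifold and orbifold points), or replace it by an explicit finite-dimensionality statement for weight-$2$, depth-$\le 1$ quasi-modular forms for $\Gamma_1(3)$ so that your coefficient comparison terminates. Without one of these, the identification with $\hE_2$ is unproved.
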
 
\begin{proof} 
In terms of the integral Darboux co-ordinates $(p,x)$ 
in \eqref{eq:Darboux_central_charge}, $P_\alg$ 
is given by 
\[
P_\alg = \C \begin{pmatrix} 
(9 y +a) \theta p(\zeta) + (1+27y) \theta^2p(\zeta) \\ 
(9 y + a) \theta x(\zeta) + (1+27y) \theta^2 x(\zeta)
\end{pmatrix} 
\]
Set $A := \theta x(\zeta) \cdot 
((9 y + a) \theta x(\zeta) + (1+27y) \theta^2 x(\zeta))$. 
Using Lemma \ref{lem:Wronskian}, we find 
\[
P_\alg = \C 
\begin{pmatrix} 
\frac{\theta p(\zeta)}{\theta x(\zeta)} A -1 \\ A 
\end{pmatrix} 
= \C \begin{pmatrix} \tau A -1 \\ A \end{pmatrix}. 
\]
Arguing as in Proposition \ref{prop:propagator_with_cc}, 
we find: 
\[
\Delta(P_\cc, P_\alg) = 3 \left(\frac{1}{\tau-\ov\tau} -A\right) 
 \partial_x \otimes \partial_x. 
\] 
This shows \eqref{eq:propagator_cc_alg}. 
Next we show that the expressions \eqref{eq:propagator_cc_alg} 
and \eqref{eq:propagator_cc_alg_E2} coincide. 
Recall that $(p,x)$ and $\tau$ transform 
under monodromy as (see Proposition \ref{prop:monodromy}) 
\[
\begin{pmatrix} p \\ x \end{pmatrix} 
\mapsto 
\begin{pmatrix} a & b \\ c & d \end{pmatrix} 
\begin{pmatrix} p \\ x \end{pmatrix} + 
\begin{pmatrix} * \\ * \end{pmatrix}, 
\qquad 
\tau \mapsto \frac{a \tau+b}{c\tau+d} 
\]
with $\begin{pmatrix} a & b \\ c & d \end{pmatrix} 
\in \Gamma_1(3)$,   whence $\theta x = \theta x(\zeta)$ transforms as: 
\begin{equation} 
\label{eq:theta_x_weight_1}
\theta x \mapsto c (\theta p) + d  (\theta x) 
= (c\tau +d) \theta x.  
\end{equation} 
Therefore the modular transformation property 
\eqref{eq:E2_modular} implies that 
\[
\hE_2(\tau)  \partial_x \otimes \partial_x 
= \hE_2(\tau) (\theta x)^{-2} \theta \otimes \theta 
\]
is invariant under monodromy,
and that the expression \eqref{eq:propagator_cc_alg_E2} 
descends to a single-valued bivector field on $\cMCY \setminus \DCY$. 
Moreover, \eqref{eq:propagator_cc_alg_E2} 
extends to a continuous global section of 
$\Theta(\log \{0\})^{\otimes 2}$ since 
$\hE_2(\tau) \to 1$ and 
$\theta x(\zeta) \to -\iu (2\pi\iu)^{1/2}$ 
in the large-radius limit $\tau \to +\infty \iu$,
and 
\[
\hE_2(\tau) \partial_x \otimes \partial_x 
= \hE_2(\tau) \frac{-3\tau^2}{2\pi\iu} \partial_{x_\con}\otimes 
\partial_{x_\con} 
= - \frac{3}{2\pi\iu} \hE_2(-1/\tau) 
\partial_{x_\con}\otimes \partial_{x_\con} 
\]
tends to $-3/(2\pi\iu) (\partial_{x_\con})^{\otimes 2}$ 
in the conifold limit $\tau \to 0$. 
Note that $\Delta(P_\cc,P_\alg)$ is a global continuous 
section of $\Theta(\log \{0\})^{\otimes 2}$ as 
$P_\cc$, $P_\alg$ are globally defined; moreover  
the difference between \eqref{eq:propagator_cc_alg} and 
\eqref{eq:propagator_cc_alg_E2} is holomorphic. 
Thus the difference between \eqref{eq:propagator_cc_alg} 
and \eqref{eq:propagator_cc_alg_E2} is a global 
holomorphic section of $\Theta(\log\{0\})^{\otimes 2} \cong 
\cO(2)$.  Such a section is unique up to a constant, so it suffices now to check that \eqref{eq:propagator_cc_alg} 
and \eqref{eq:propagator_cc_alg_E2} have the same value 
$-3a\theta\otimes \theta$ 
at $y=0$. 
\end{proof} 

Comparing \eqref{eq:propagator_cc_alg} and 
\eqref{eq:propagator_cc_alg_E2}, we obtain: 
\begin{corollary} 
\label{cor:E2}
$- 2\pi \iu E_2(\tau) = 
(\theta x) \cdot \left( \left(1 + 108 y\right) 
\theta x  
+ 12 (1+27y) \theta^2 x \right)$, where $x= x(\zeta)$. 
\end{corollary}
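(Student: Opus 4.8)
The plan is to extract the desired identity directly from the two expressions for the propagator $\Delta(P_\cc, P_\alg)$ given in Proposition~\ref{prop:propagator_cc_alg}, specializing to a convenient value of the algebraic parameter $a$. Setting $a = \frac{1}{12}$ in equation~\eqref{eq:propagator_cc_alg_E2} kills the holomorphic correction term $3(\frac{1}{12} - a)\theta \otimes \theta$, leaving
\[
\Delta(P_\cc, P_\alg(\tfrac{1}{12})) = \frac{\pi\iu}{2} \hE_2(\tau)\, \partial_x \otimes \partial_x.
\]
On the other hand, equation~\eqref{eq:propagator_cc_alg} with $a = \frac{1}{12}$ reads
\[
\Delta(P_\cc, P_\alg(\tfrac{1}{12})) = 3\left( \frac{1}{\tau - \ov\tau} - \theta x \cdot \big((9y + \tfrac{1}{12})\theta x + (1 + 27y)\theta^2 x\big)\right)\partial_x \otimes \partial_x.
\]

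Equating the two scalar coefficients of $\partial_x \otimes \partial_x$ gives
\[
\frac{\pi\iu}{2}\hE_2(\tau) = 3\left(\frac{1}{\tau - \ov\tau} - (\theta x)\big((9y + \tfrac{1}{12})\theta x + (1+27y)\theta^2 x\big)\right).
\]
Next I would substitute the definition $\hE_2(\tau) = E_2(\tau) + \frac{6}{\pi\iu}\frac{1}{\tau - \ov\tau}$ into the left-hand side. The term $\frac{\pi\iu}{2} \cdot \frac{6}{\pi\iu}\frac{1}{\tau-\ov\tau} = \frac{3}{\tau - \ov\tau}$ on the left then cancels exactly against the $\frac{3}{\tau-\ov\tau}$ appearing on the right. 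This cancellation is the key structural point: it removes the non-holomorphic piece $\frac{1}{\tau - \ov\tau}$ from both sides and leaves a purely holomorphic identity, namely
\[
\frac{\pi\iu}{2} E_2(\tau) = -3\,(\theta x)\big((9y + \tfrac{1}{12})\theta x + (1 + 27y)\theta^2 x\big).
\]

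Finally I would multiply both sides by $-4$, obtaining
\[
-2\pi\iu\, E_2(\tau) = 12\,(\theta x)\big((9y + \tfrac{1}{12})\theta x + (1+27y)\theta^2 x\big) = (\theta x)\big((12 \cdot 9y + 1)\theta x + 12(1+27y)\theta^2 x\big),
\]
and $12 \cdot 9 = 108$ gives the claimed coefficient $1 + 108 y$, yielding exactly the statement of the Corollary. I do not expect any genuine obstacle here, since the result is a formal consequence of Proposition~\ref{prop:propagator_cc_alg}; the only thing to be careful about is the bookkeeping in the cancellation of the $\frac{1}{\tau - \ov\tau}$ terms and the correct tracking of constant factors of $\pi\iu$ and $3$. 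The mildly delicate point worth stating explicitly is that, although each individual expression for $\Delta(P_\cc, P_\alg)$ is only a $C^\infty$ (non-holomorphic) bivector field because $P_\cc$ is not flat in the antiholomorphic direction, their coefficient identity forces the non-holomorphic contributions to match, and after the cancellation the surviving relation is holomorphic; this is precisely the mechanism by which the anti-holomorphic completion $\hE_2$ of $E_2$ enters, consistent with the modular transformation laws~\eqref{eq:E2_modular}.
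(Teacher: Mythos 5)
Your proposal is correct and is exactly the paper's argument: the Corollary is stated in the paper with the one-line justification ``Comparing \eqref{eq:propagator_cc_alg} and \eqref{eq:propagator_cc_alg_E2}, we obtain,'' and you have simply carried out that comparison, with the specialization $a=\tfrac{1}{12}$ being a harmless convenience since the $a$-dependent terms cancel between the two sides for any $a$. The bookkeeping (cancellation of the $\frac{3}{\tau-\ov\tau}$ terms via the definition of $\hE_2$, and the factor $-4$ giving $1+108y$) checks out.
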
 

\begin{corollary} 
\label{cor:eta} 
Let $\eta(\tau) = e^{\pi\iu\tau/12} \prod_{n=1}^\infty 
(1-e^{2\pi\iu \tau n})$ denote the Dedekind eta function. 
We have 
$\eta(\tau) =e^{-\frac{\pi\iu}{24}} 
y^{\frac{1}{24}} (1+27y)^{\frac{1}{8}} 
\sqrt{\iu (2\pi\iu)^{-1/2} \theta x}$.  
\end{corollary}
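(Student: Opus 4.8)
The plan is to verify the identity by showing that both sides have the same logarithmic derivative along the base $\cMCY\setminus\DCY$ and then matching a single constant at the large-radius limit. The essential classical input is the relation
\[
\frac{d}{d\tau}\log\eta(\tau) = \frac{\pi\iu}{12} E_2(\tau),
\]
which follows at once from the product expansion $\eta(\tau)=e^{\pi\iu\tau/12}\prod_{n\ge 1}(1-e^{2\pi\iu\tau n})$ and the definition of $E_2$. Writing $R$ for the proposed right-hand side and applying the operator $\theta = y\partial_y$, I would compute $\theta\log R$ and $\theta\log\eta(\tau)$ separately and check that they agree.

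First I would compute $\theta\log\eta(\tau) = (\theta\tau)\cdot\tfrac{\pi\iu}{12}E_2(\tau)$ by the chain rule. The factor $\theta\tau$ comes from $\tau = \theta p(\zeta)/\theta x(\zeta)$ (Recapitulation~\ref{rec:Caff}) together with the Wronskian identity of Lemma~\ref{lem:Wronskian}, which gives $\theta\tau = -1/\big((1+27y)(\theta x)^2\big)$. Substituting the expression for $E_2$ from Corollary~\ref{cor:E2} and simplifying, the two factors $(\theta x)^2$ cancel and the two minus signs cancel, leaving
\[
\theta\log\eta(\tau) = \frac{1+108y}{24(1+27y)} + \frac{\theta^2 x}{2\,\theta x}.
\]
On the other hand, differentiating $\log R$ directly from the definition of $R$ yields $\theta\log R = \tfrac{1}{24} + \tfrac{27y}{8(1+27y)} + \tfrac12\,(\theta^2 x)/(\theta x)$, and the elementary manipulation $\tfrac{1}{24} + \tfrac{27y}{8(1+27y)} = (1+108y)/(24(1+27y))$ shows that the two logarithmic derivatives coincide. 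Hence $\theta(\log R - \log\eta) = 0$, so $R/\eta$ is locally constant on $\cMCY\setminus\DCY$.

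Finally I would pin down this constant by passing to the large-radius limit $\tau\to +\iu\infty$, i.e.\ $y\to 0$. There the product in $\eta$ tends to $1$, and using $\tau\sim -\tfrac12 + \tfrac{\log y}{2\pi\iu}$ from Corollary~\ref{cor:modular_parameter} one obtains $\eta(\tau)\sim e^{\pi\iu\tau/12}\sim e^{-\pi\iu/24}y^{1/24}$. For $R$ one has $(1+27y)^{1/8}\to 1$ and $\theta x\to -\iu(2\pi\iu)^{1/2}$ (as recorded in the proof of Proposition~\ref{prop:propagator_cc_alg}), so that $\iu(2\pi\iu)^{-1/2}\theta x\to 1$ and the square root tends to $1$; thus $R\sim e^{-\pi\iu/24}y^{1/24}$ as well. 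The two leading terms agree, so $R/\eta = 1$ and the identity follows.

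The main obstacle I anticipate is the bookkeeping of constant phases and the choice of branch for the square root: one must check that the principal branch of $\sqrt{\iu(2\pi\iu)^{-1/2}\theta x}$ is the one whose limit is $+1$ rather than $-1$, consistently with the branch of $\eta$ chosen near the cusp. This requires tracking the normalization $\iu^{1/2}=e^{\pi\iu/4}$ and the argument of $\theta x$ along the real locus used for analytic continuation; once these are fixed, the remaining computation is routine.
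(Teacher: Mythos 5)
Your proof is correct and follows essentially the same route as the paper's: both rest on $\partial_\tau\log\eta=\tfrac{\pi\iu}{12}E_2(\tau)$ combined with Corollary~\ref{cor:E2} and the Wronskian identity to compute $\theta\log\eta=\tfrac{1}{24}+\tfrac{1}{8}\tfrac{27y}{1+27y}+\tfrac12\tfrac{\theta^2x}{\theta x}$, then integrate. Your explicit matching of the constant via the asymptotics $\eta(\tau)\sim e^{-\pi\iu/24}y^{1/24}$ and $\theta x\to-\iu(2\pi\iu)^{1/2}$ at the large-radius limit merely spells out a step the paper leaves implicit.
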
 
\begin{proof} 
Using the identity 
$\parfrac{}{\tau} \log \eta(\tau) = \frac{\pi\iu}{12} E_2(\tau)$ 
and the above corollary, we have 
\[
\theta \log \eta(\tau) = -\frac{1}{(1+27 y)(\theta x)^2} 
\frac{\pi \iu}{12} E_2(\tau) 
= \frac{1}{24} +\frac{1}{8} \frac{27y}{1+27y} +  \frac{1}{2} 
\frac{\theta^2 x}{\theta x}. 
\] 
We arrive at the formula by integrating this. 
\end{proof}

Combining Propositions \ref{prop:propagator_with_cc},
\ref{prop:propagator_cc_alg}, we obtain: 
\begin{corollary} 
\label{cor:propagator_with_alg} 
With notation as in Propositions $\ref{prop:propagator_with_cc}$ 
and $\ref{prop:propagator_cc_alg}$, 
we have 
\begin{align*} 
\Delta(P_\LR, P_\alg)  - \Delta_a& = 
\frac{\pi\iu}{2} E_2(\tau) (\partial_x)^{\otimes 2}  
= -\frac{1}{4} E_2(\tau) (\partial_{x_\LR})^{\otimes 2} \\ 
\Delta(P_\con, P_\alg) - \Delta_a & = 
\left( 
\frac{3}{\tau} + \frac{\pi\iu}{2} E_2(\tau) \right) 
(\partial_x)^{\otimes 2} 
= \frac{3}{4} E_2\left(\frac{3\tau_\con}{2\pi\iu} \right) 
(\partial_{x_\con})^{\otimes 2} \\ 
\Delta(P_\orb, P_\alg) - \Delta_a & = 
\left( \frac{3(1-\xi)}{1+\tau(1-\xi)} 
+ \frac{\pi\iu}{2} E_2(\tau) \right) 
(\partial_x)^{\otimes 2} \\
& = \partial_\fry x_\orb \cdot \left( \frac{1}{12} \fry^2 
\partial_\fry x_\orb + 3 \left(1+\frac{\fry^3}{27}\right) 
\partial_{\fry}^2 x_\orb \right)  
(\partial_{x_\orb})^{\otimes 2}
\end{align*} 
where $\Delta_a = 3 \left(\frac{1}{12} -a\right) \theta\otimes \theta$.  
\end{corollary}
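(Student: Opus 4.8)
The plan is to deduce Corollary~\ref{cor:propagator_with_alg} directly from the cocycle identity for propagators (Proposition~\ref{prop:propagator_cocycle}), which remains valid for curved opposite line bundles (see the remark following Lemma~\ref{lem:regularity_curved}). The key observation is that for any of the three parallel opposite line bundles $P \in \{P_\LR, P_\con, P_\orb\}$ we may write
\[
\Delta(P, P_\alg) = \Delta(P, P_\cc) + \Delta(P_\cc, P_\alg),
\]
and both summands are already computed: $\Delta(P, P_\cc)$ in Proposition~\ref{prop:propagator_with_cc} and $\Delta(P_\cc, P_\alg)$ in Proposition~\ref{prop:propagator_cc_alg}. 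The nonholomorphic pieces—those involving $\tau - \ov{\tau}$ and $\hE_2(\tau)$—must cancel, since $P$, $P_\alg$ are both holomorphic line bundles and so $\Delta(P, P_\alg)$ is a genuine \emph{holomorphic} bivector field (by the case (b) discussion in Remark~\ref{rem:anomaly_equation}, $\Lambda_{\ov x} = 0$ for $P_\alg$).

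First I would treat the large-radius case. Adding the formula $\Delta(P_\LR, P_\cc) = \frac{3}{2\pi\iu(\tau-\ov\tau)}(\partial_x)^{\otimes 2}$ from Proposition~\ref{prop:propagator_with_cc} to the expression~\eqref{eq:propagator_cc_alg_E2} for $\Delta(P_\cc, P_\alg)$, the two occurrences of $\frac{1}{\tau - \ov\tau}$ combine: using $\hE_2(\tau) = E_2(\tau) + \frac{6}{\pi\iu}\frac{1}{\tau-\ov\tau}$, the antiholomorphic contribution $\frac{\pi\iu}{2}\cdot\frac{6}{\pi\iu}\frac{1}{\tau-\ov\tau} = \frac{3}{\tau-\ov\tau}$ in $\frac{\pi\iu}{2}\hE_2(\tau)$ exactly cancels $\Delta(P_\LR, P_\cc) = -\frac{3}{\tau-\ov\tau}(\partial_x)^{\otimes 2}$ (rewriting the latter via the first equality in its display). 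What survives is $\frac{\pi\iu}{2}E_2(\tau)(\partial_x)^{\otimes 2} + \Delta_a$, giving the first line after subtracting $\Delta_a$. The conversion to $\partial_{x_\LR}$ uses $\partial_x = -\iu(2\pi\iu)^{1/2}\partial_{x_\LR}$ (from $x_\LR = t$ and $x = -\iu(2\pi\iu)^{1/2}t$ in Notation~\ref{nota:Darboux_cusps}(1)), so $(\partial_x)^{\otimes 2} = -(2\pi\iu)(\partial_{x_\LR})^{\otimes 2}$, turning $\frac{\pi\iu}{2}$ into $-\frac{\pi\iu}{2}\cdot 2\pi\iu = \pi^2$... which after tracking the normalization yields the claimed $-\frac14 E_2(\tau)$.

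For the conifold and orbifold cases I would proceed identically, using $\Delta(P_\con, P_\cc)$ and $\Delta(P_\orb, P_\cc)$ from Proposition~\ref{prop:propagator_with_cc}, again letting the $\frac{1}{\tau-\ov\tau}$ terms cancel against the nonholomorphic part of $\frac{\pi\iu}{2}\hE_2(\tau)$. The conifold normalization follows from $\tau_\con = -\frac{2\pi\iu}{3\tau}$ and the transformation $E_2(-1/\tau) = \tau^2 E_2(\tau) + \frac{6\tau}{\pi\iu}$ from~\eqref{eq:E2_modular}, identifying $\frac{3}{\tau} + \frac{\pi\iu}{2}E_2(\tau)$ with $\frac{3}{4}E_2\!\left(\frac{3\tau_\con}{2\pi\iu}\right)$ after the change of frame $\partial_x \mapsto \partial_{x_\con}$. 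For the orbifold case the final line is obtained by re-expressing $\frac{3(1-\xi)}{1+\tau(1-\xi)} + \frac{\pi\iu}{2}E_2(\tau)$ in the $\fry$-coordinate via Corollary~\ref{cor:E2} (which gives $-2\pi\iu E_2(\tau)$ as a differential expression in $\theta x$), together with the relation $\theta = y\partial_y = -\frac13\fry\partial_\fry$ and the chain rule relating $\partial_\fry x_\orb$ to $\theta x$.

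\emph{The main obstacle} I anticipate is purely bookkeeping: tracking the three incompatible Darboux normalizations (the $(2\pi\iu)$-powers, the factors of $\iu$, and the $\Gamma(\tfrac13)^3/\Gamma(\tfrac23)^3$ constants in Notation~\ref{nota:Darboux_cusps}) through each frame change $\partial_x \mapsto \partial_{x_\bullet}$, and verifying that the modular transformation~\eqref{eq:E2_modular} is applied with the correct $\SL(2,\Z)$ matrix at each cusp. The conceptual content—that $\Delta(P,P_\alg) - \Delta_a$ is holomorphic and hence equals the holomorphic projection of $\frac{\pi\iu}{2}\hE_2(\tau)$, namely $\frac{\pi\iu}{2}E_2(\tau)$ plus the appropriate meromorphic correction at each cusp—is forced by the cocycle condition and the holomorphicity of $P_\alg$; the only real work is making the constants match, which I would pin down by evaluating both sides at the relevant limit point (as is done at the end of Proposition~\ref{prop:propagator_cc_alg}).
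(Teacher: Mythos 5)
Your proposal follows exactly the paper's proof: the cocycle identity $\Delta(P,P_\alg)=\Delta(P,P_\cc)+\Delta(P_\cc,P_\alg)$ from Proposition~\ref{prop:propagator_cocycle}, combined with Propositions~\ref{prop:propagator_with_cc} and~\ref{prop:propagator_cc_alg} and the frame conversions via Notation~\ref{nota:Darboux_cusps}, equation~\eqref{eq:E2_modular}, Corollary~\ref{cor:E2}, and Lemma~\ref{lem:Wronskian}. The only slip is in your large-radius frame change, where the chain rule is inverted: since $x=-\iu(2\pi\iu)^{1/2}x_\LR$ one has $(\partial_x)^{\otimes 2}=-\tfrac{1}{2\pi\iu}(\partial_{x_\LR})^{\otimes 2}$, not $-(2\pi\iu)(\partial_{x_\LR})^{\otimes 2}$, and with this correction $\tfrac{\pi\iu}{2}E_2(\tau)$ becomes $-\tfrac{1}{4}E_2(\tau)$ directly.
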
 
\begin{proof}
We use $\Delta(P_\LR,P_\alg) = \Delta(P_\LR,P_\cc) + 
\Delta(P_\cc, P_\alg)$ etc.~from 
Proposition \ref{prop:propagator_cocycle}. 
We also use Notation \ref{nota:Darboux_cusps}, equation 
\eqref{eq:E2_modular}, 
Corollary \ref{cor:E2}, and Lemma \ref{lem:Wronskian} 
for $(p_\orb,x_\orb)$. 
Another way to compute these quantities will be explained in 
Lemma \ref{lem:propagator_with_alg}.
\end{proof} 

\subsection{Quasi-modularity of Gromov--Witten Potentials} \label{sec:quasi-modularity}
In this section we prove that the Gromov--Witten potential $F^g_Y$ is a quasi-modular function.  Let us begin by reviewing the theory of quasi-modular forms introduced\footnote
{To be more precise, Kaneko--Zagier considered quasi-modular 
forms which satisfy a standard growth condition at cusps. 
We do not impose the growth condition, since we  
deal with (quasi-)modular forms with non-positive weight.} by 
Kaneko--Zagier \cite{Kaneko--Zagier}. 
We say that a holomorphic function $f\colon \HH \to \C$ 
is a \emph{quasi-modular form of weight $k$} for $\Gamma_1(3)$ 
if there exist finitely many holomorphic functions 
$f_i\colon \HH \to \C$, $i=1,\dots,n$ such that 
\[
\hat{f}(\tau) = f(\tau) + \frac{f_1(\tau) }{\tau-\ov\tau} 
+ \cdots + \frac{f_n(\tau) }{(\tau-\ov\tau)^n} 
\]
is modular of weight $k$, i.e.~
\begin{align*} 
\hat{f}\left( \frac{a\tau+b}{c\tau+d}\right)  = 
(c\tau +d)^k \hat{f}(\tau)
&& \text{for all $\begin{pmatrix} a & b \\ c & d \end{pmatrix} \in \Gamma_1(3)$ and all $\tau \in \HH$.}   
\end{align*}
When $n=0$, $f$ is a (holomorphic) modular 
form of weight $k$. 
It is known that $f_1,\dots,f_n$ (and hence $\hat{f}$) 
are uniquely determined by $f$ \cite[Proposition 1]{Kaneko--Zagier}; see \cite[Proposition 3.4]{Bloch--Okounkov} for a proof. 
The function $\hat{f}$ is called 
the \emph{almost holomorphic modular form} associated with $f$, 
and $f$ is called the \emph{holomorphic limit} of $\hat{f}$.  
Equation \eqref{eq:E2_modular} shows that $E_2$ is 
a quasi-modular form of weight $2$ and $\hE_2$ is the 
associated almost holomorphic modular form. 
Every almost holomorphic modular form of weight $k$ 
can be uniquely expanded in the form: 
\[
\hat{f}(\tau)= \sum_{j=0}^n g_j(\tau) \hE_2(\tau)^j  
\]
where $g_j$ is holomorphic modular 
of weight $k-2j$. 
Taking the holomorphic limit, we find that 
the corresponding quasi-modular form $f$ admits a unique expansion: 
\begin{equation} 
\label{eq:E2_expansion} 
f(\tau) = \sum_{j=0}^n g_j(\tau) E_2(\tau)^j 
\end{equation} 
with $g_j$ holomorphic modular of weight $k-2j$. 
The ring of quasi-modular forms is therefore generated by 
modular forms and $E_2$ (see \cite{Kaneko--Zagier, 
Bloch--Okounkov}). 

\begin{remark}[modular quantities] 
\label{rem:modular_quantities} 
Let $(p,x)$ be the Darboux co-ordinates 
from \eqref{eq:Darboux_central_charge}, 
regarded as functions on $\cL \cong 
(\cMCY\setminus \DCY)\sptilde \cong \HH$ 
as in Recapitulation \ref{rec:Caff}.  
The following quantities are holomorphic modular for $\Gamma_1(3)$: 
\begin{itemize} 
\item the rational co-ordinates $y$, $\fry$, which are of weight 0; 
\item $\theta x$, which is of weight 1 
(see equation~\eqref{eq:theta_x_weight_1});  
\item $\theta \tau= \theta(\theta p/\theta x) 
= - (1+27y)^{-1} (\theta x)^{-2}$, which is of 
weight $-2$;  
\item the Yukawa coupling $Y_{\CY}(x) = \frac{1}{3}\partial_x \tau 
= \frac{1}{3}\theta \tau/\theta x$, which is of weight $-3$.  
\end{itemize} 
We also note the following: 
\begin{itemize}  
\item $f(\tau) (d\tau)^{\otimes k}$ is $\Gamma_1(3)$-invariant 
$\Longleftrightarrow$ 
$f(\tau)$ is of weight $2k$; 
\item $f(\tau) (\partial_x)^{\otimes k}$ is $\Gamma_1(3)$-invariant 
$\Longleftrightarrow$ $f(\tau)$ is of weight $k$; 
\item $f(\tau) (dx)^{\otimes k}$ is $\Gamma_1(3)$-invariant 
$\Longleftrightarrow$ $f(\tau)$ is of weight $-k$. 
\end{itemize} 
These follow from $d \tau = (\theta \tau) \frac{dy}{y}$, 
$\partial_x = (\theta x)^{-1} \theta$ and 
the above computation. 
\end{remark} 

\begin{notation}[correlation functions for $\wave_{\CY}$]
\label{nota:corr_wave_CY} 
Let $x = x(\zeta)$ denote the co-ordinate on 
$(\cMCY\setminus \DCY)\sptilde$ induced by 
the integral Darboux co-ordinates \eqref{eq:Darboux_central_charge}. 
We represent the global section $\wave_{\CY}$ of the finite-dimensional 
Fock sheaf 
by correlation functions as follows (see \S\S 
\ref{sec:conformal_limit_Fock}--\ref{sec:conifold_estimate}): 
\begin{itemize} 
\item[(1)] the correlation functions $C^{(g)}_{Y,n} dx^{\otimes n}$ with respect to $P_\LR$;

\item[(2)] the correlation functions $C^{(g)}_{\cX,n} dx_\orb^{\otimes n}$ with respect to $P_\orb$;

\item[(3)] the correlation functions $C^{(g)}_{\con,n} dx_\con^{\otimes n}$ with respect to $P_\con$;

\item[(4)] the correlation functions $C^{(g)}_{\cc,n} dx^{\otimes n}$ with respect to $P_\cc$;
\item[(5)] the correlation functions $C^{(g)}_{\alg,n} dx^{\otimes n}$ with respect to $P_\alg(a)$.  
\end{itemize} 
The co-ordinates $\frt = x_\orb$ and $x_\con$ here were defined 
in Notation~\ref{nota:Darboux_cusps}.  
We will set $a=\frac{1}{12}$ unless otherwise specified, 
and will write $F^g_\con = C^{(g)}_{\con,0}$. 
Theorem~\ref{thm:wave_CY} gives
\begin{align} 
\label{eq:corr_Y}
  C^{(g)}_{Y,n} = \parfrac{^n F^{g}_Y}{x^n} 
  = \iu^n (2\pi\iu)^{-n/2}\parfrac{^n F^g_Y}{t^n} 
  && \text{and} && 
  C^{(g)}_{\cX,n} = \parfrac{^n F^{g}_\cX}{\frt^n} 
\end{align}
where $F^{g}_Y$ and $F^g_\cX$ were defined in Notation~\ref{nota:GW_X_Y}.
\end{notation}

\begin{theorem} 
\label{thm:quasi-modularity}
Let $g$ and $n$ be non-negative integers satisfying $2g-2+n>0$.  
We have the following (quasi-)modularity with respect to the 
group $\Gamma_1(3)$ and the modular parameter 
$\tau$ from Corollary $\ref{cor:modular_parameter}$. 
\begin{itemize} 
\item[(a)] $C^{(g)}_{Y,n}$ is a quasi-modular form 
of weight $-n$; 

\item[(b)] $C^{(g)}_{\cc,n}$ 
is the almost-holomorphic modular form of weight $-n$ 
associated with $C^{(g)}_{Y,n}$; 

\item[(c)] $C^{(g)}_{\alg,n}$ 
is the holomorphic modular form of weight $-n$ which appears as
the constant term of the $E_2$-expansion \eqref{eq:E2_expansion} of $C^{(g)}_{Y,n}$. 
\end{itemize} 
\end{theorem}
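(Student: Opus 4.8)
The plan is to set up three mutually-reinforcing inductions on $(g,n)$ — ordered lexicographically in $(2g-2+n)$ — that simultaneously establish statements (a), (b), and (c). The conceptual engine is the observation, already encoded in Proposition~\ref{prop:propagator_cc_alg} and Corollary~\ref{cor:propagator_with_alg}, that changing the opposite line bundle is governed by the Feynman transformation rule whose propagators are controlled by $E_2$, $\hE_2$, or holomorphic modular forms. The three families $\{C^{(g)}_{Y,n}\}$ (relative to $P_\LR$), $\{C^{(g)}_{\cc,n}\}$ (relative to $P_\cc$), and $\{C^{(g)}_{\alg,n}\}$ (relative to $P_\alg$) are related pairwise by these transformation rules, so once I control the modular weights and holomorphy of one family I can transport that control to the others.

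First I would establish the base cases. The genus-zero three-point function $C^{(0)}_{Y,3} = \partial_x^3 F^0_Y$ is the Yukawa coupling $Y_{\CY}$, which Remark~\ref{rem:modular_quantities} identifies as a holomorphic modular form of weight $-3$; this is simultaneously the $P_\cc$ and $P_\alg$ correlation function since all three agree at the three-point genus-zero level (the propagators only enter for higher Euler number). For the genus-one one-point function, I would use the relation $C^{(g)}_{\cc,n}$ versus $C^{(g)}_{\alg,n}$ together with the explicit propagators $\Delta(P_\LR,P_\alg)$ and $\Delta(P_\LR,P_\cc)$ from Corollary~\ref{cor:propagator_with_alg} and Proposition~\ref{prop:propagator_with_cc}: the difference is exactly $\frac{\pi\iu}{2}E_2(\tau)(\partial_x)^{\otimes 2}$ versus $-\frac{3}{\tau-\ov\tau}(\partial_x)^{\otimes 2}$, which makes the weight-$2$ quasi-modularity of the genus-one propagator manifest and pins down $\hE_2$ as the almost-holomorphic completion.

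For the inductive step I would feed the Feynman rule \eqref{eq:Feynman_P0_P} for passing from $P_\LR$ to $P_\cc$ (respectively $P_\alg$). Each graph contributes a product of lower-complexity correlation functions (available by induction) contracted against propagators, and each propagator carries a definite modular weight: by Remark~\ref{rem:modular_quantities}, $\partial_x$ raises weight by $1$, the propagator $\Delta(P_\LR,P_\cc)$ transforms with weight $2$, and the Euler-number bookkeeping $2g-2+n = \sum_v(2g_v-2+n_v)$ ensures the total weight of every graph's contribution equals $-n$. The key algebraic input is that $C^{(g)}_{\alg,n}$ is holomorphic (torsion $\Lambda_{\ov x}=0$ for the holomorphic bundle $P_\alg$, case (b) of Remark~\ref{rem:anomaly_equation}), while $C^{(g)}_{\cc,n}$ satisfies the holomorphic anomaly equation (torsion $\Lambda_x=0$ for the anti-holomorphic $P_\cc$, case (a)); this is precisely the statement that $C^{(g)}_{\cc,n}$ is almost-holomorphic modular and $C^{(g)}_{Y,n}$ is its holomorphic limit. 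Single-valuedness of $P_\cc$ and $P_\alg$ over $\cMCY\setminus\DCY$ (Lemma~\ref{lem:Pcc} and Definition~\ref{def:Palg}) guarantees the correlation functions descend to functions on $[\HH/\Gamma_1(3)]$, giving the $\Gamma_1(3)$-invariance of the corresponding tensors and hence the claimed modular transformation law.

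The main obstacle I anticipate is the precise matching of the $E_2$-expansion in part (c): I must show that $C^{(g)}_{\alg,n}$ is \emph{exactly} the constant term (the $E_2^0$-coefficient) of the $E_2$-expansion of $C^{(g)}_{Y,n}$, not merely some holomorphic modular form of the right weight. This requires tracking how the difference $\Delta(P_\LR,P_\alg)-\Delta(P_\LR,P_\cc)$ — which by the cocycle condition equals $\Delta(P_\cc,P_\alg)=\frac{\pi\iu}{2}\hE_2(\tau)(\partial_x)^{\otimes 2}+\text{(holomorphic)}$ minus $\frac{\pi\iu}{2}E_2(\tau)(\partial_x)^{\otimes 2}$ — injects exactly the $E_2$-versus-$\hE_2$ discrepancy at each edge. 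Concretely, the passage $P_\cc \to P_\LR$ replaces every occurrence of $\hE_2$ (entering through $\Delta(\cdot,P_\cc)$) by $E_2$, so the holomorphic limit that sends $\hE_2 \mapsto E_2$ intertwines the $P_\cc$ and $P_\LR$ Feynman sums; meanwhile the $P_\alg$ correlation function uses $E_2$-free propagators and thus records only the $E_2$-independent part. Making this bookkeeping airtight — ensuring no spurious $E_2$-powers survive in $C^{(g)}_{\alg,n}$ and that the uniqueness of the $E_2$-expansion (Kaneko--Zagier) forces the identification — is where the real work lies, and I would organize it by proving a single lemma that the holomorphic-limit operation $\tau-\ov\tau \to \infty$ commutes with the Feynman contraction and sends the $P_\cc$-sum to the $P_\LR$-sum term by term.
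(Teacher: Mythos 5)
Your proposal is correct and rests on the same three pillars as the paper's proof: single-valuedness of $P_\cc$ and $P_\alg$ forces $\Gamma_1(3)$-invariance of the corresponding correlation tensors (hence modularity of weight $-n$ via Remark~\ref{rem:modular_quantities}), the explicit propagators $\Delta(P_\LR,P_\cc)=\frac{-3}{\tau-\ov{\tau}}(\partial_x)^{\otimes 2}$ and $\Delta(P_\LR,P_\alg)=\frac{\pi\iu}{2}E_2(\tau)(\partial_x)^{\otimes 2}$ control the Feynman sums relating the three families, and Kaneko--Zagier uniqueness pins down the identifications in (b) and (c). The difference is organizational: the paper needs neither the induction on $(g,n)$ nor your proposed lemma that the holomorphic limit commutes with the Feynman contraction. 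Since modularity of \emph{every} $C^{(h)}_{\alg,m}$ is available simultaneously from single-valuedness (no recursion required), the Feynman expansion
\[
C^{(g)}_{Y,n} \;=\; C^{(g)}_{\alg,n} + \sum_{i=1}^{3g-3+n}\tilde f_i(\tau)\Bigl(-\tfrac{\pi\iu}{2}E_2(\tau)\Bigr)^i
\]
already has modular coefficients $\tilde f_i$ of weight $-n-2i$ (they are contractions of the $C^{(h)}_{\alg,m}$), so it \emph{is} the $E_2$-expansion \eqref{eq:E2_expansion} and uniqueness immediately yields (a) and (c); likewise the expansion of $C^{(g)}_{\cc,n}$ as $C^{(g)}_{Y,n}$ plus a polynomial in $(\tau-\ov{\tau})^{-1}$ with holomorphic coefficients, combined with its modularity, is by definition the almost-holomorphic completion, yielding (b). The obstacle you single out in part (c) therefore dissolves without any term-by-term limit argument, and the appeal to the anomaly equations of Proposition~\ref{prop:HAE} and Remark~\ref{rem:anomaly_equation}, while consistent, is not needed for the theorem.
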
 
\begin{proof} 
The correlation functions $\{C^{(g)}_{Y,n} dx^{\otimes n}\}$, 
$\{C^{(g)}_{\cc,n}dx^{\otimes n}\}$, 
$\{C^{(g)}_{\alg,n}dx^{\otimes n}\}$ are different realizations 
of the same section $\wave_{\CY}$ of the Fock sheaf, and 
therefore they are related by the Feynman rule. 
The relationship between these correlation functions is shown 
in Figure~\ref{fig:triangle}: the propagators recorded there were computed in Proposition~\ref{prop:propagator_with_cc}, Proposition~\ref{prop:propagator_cc_alg}, and Corollary~\ref{cor:propagator_with_alg}. 
Since $P_\cc$ and $P_\alg$ are single-valued 
subbundles on $\cMCY$, we know that 
$C^{(g)}_{\cc,n}dx^{\otimes n}$ 
and $C^{(g)}_{\alg,n} dx^{\otimes n}$ 
are $\Gamma_1(3)$-invariant. Remark~\ref{rem:modular_quantities} then
implies that $C^{(g)}_{\cc,n}$ and $C^{(g)}_{\alg,n}$ 
are modular of weight $-n$. The Feynman rules between 
$P_\LR$ and $P_\cc$/$P_\alg$ imply  
that $C^{(g)}_{\cc,n}$ and $C^{(g)}_{\alg,n}$ 
can be written in the form: 
\begin{align*} 
C^{(g)}_{\cc,n} & = C^{(g)}_{Y,n} + \sum_{i=1}^{3g-3+n} f_i(\tau) 
\left( \frac{-3}{\tau-\ov\tau}\right)^i  \\
C^{(g)}_{Y,n} & = 
C^{(g)}_{\alg,n} + \sum_{i=1}^{3g-3+n} \tilde{f}_i(\tau) 
\left( -\frac{\pi\iu}{2} E_2(\tau)\right)^i 
\end{align*} 
for some holomorphic functions $f_i$, $\tilde{f}_i$ on $\HH$. 
Moreover $\tilde{f}_i$ is modular because it consists of 
products of several $C^{(h)}_{\alg,m}$'s (with total 
weight $-n-2i$). 
This implies that $C^{(g)}_{Y,n}$ is a quasi-modular form, 
that $C^{(g)}_{\cc,n}$ is the corresponding almost holomorphic 
modular form, and that $C^{(g)}_{\alg,n}$ is the constant 
term of the $E_2$-expansion of $C^{(g)}_{Y,n}$, as claimed. 
\end{proof}

\begin{figure}[htbp]
\includegraphics[bb=200 590 400 710]{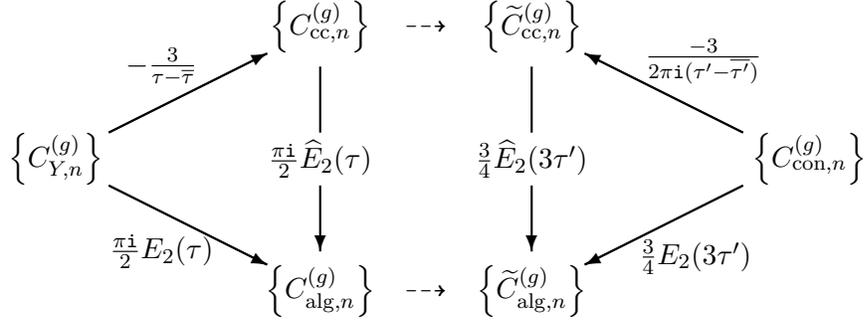}
\caption{Triples of correlation functions related by Feynman rules, with the arrows labelled by propagators.
The dashed arrows $\dashrightarrow$ mean a change of frames (multiplication by $(6\pi\iu)^{n/2}(\tau')^n$) 
and change of variables $\tau' = -1/(3\tau)$. } 
\label{fig:triangle}
\end{figure} 

Essentially the same argument shows the parallel results 
for conifold correlation functions:
\begin{proposition} 
Let $g$ and $n$ be non-negative integers satisfying $2g-2+n>0$.  
Let $\heartsuit$ denote one of $\alg$, $\con$ and $\cc$, 
and write $\tC^{(g)}_{\heartsuit,n} := (\parfrac{x}{x_\con})^n 
C^{(g)}_{\heartsuit,n}$ 
for the correlation functions in the frame $(dx_\con)^{\otimes n}$. 
We have the following (quasi-)modularity with respect to the 
group $\Gamma_1(3)$ and the modular parameter 
$\tau' := -1/(3\tau) 
= \tau_\con/(2\pi\iu)$ discussed in Remark~\ref{rem:Fricke}.
\begin{itemize} 
\item[(a)] $C^{(g)}_{\con,n}$ 
is a quasi-modular form of weight $-n$; 
\item[(b)] 
$\tC^{(g)}_{\cc,n}= (6\pi\iu)^{n/2}  (\tau')^n C^{(g)}_{\cc,n}$ 
is the associated almost-holomorphic modular form; 
\item[(c)] 
$\tC^{(g)}_{\alg,n}= 
(6\pi\iu)^{n/2} (\tau')^n C^{(g)}_{\alg,n}$ 
is the holomorphic modular form which appears as 
the constant term of the $E_2(3\tau')$-expansion 
of $C^{(g)}_{\con,n}$. 
\end{itemize} 
\end{proposition}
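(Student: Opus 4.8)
The plan is to run the proof of Theorem~\ref{thm:quasi-modularity} essentially verbatim, replacing the large-radius data $(P_\LR, \tau, x)$ by the conifold data $(P_\con, \tau', x_\con)$ throughout. The mechanism that legitimizes this substitution is the Fricke involution of Remark~\ref{rem:Fricke} and Corollary~\ref{cor:modular_parameter}: writing $W = \begin{pmatrix} 0 & 1 \\ -3 & 0 \end{pmatrix}$, one has $\tau' = W\cdot\tau = -1/(3\tau)$ and $W\Gamma_1(3)W^{-1} = \Gamma_1(3)$, so $\tau'$ is again a modular parameter for $\Gamma_1(3)$, now with the monodromy $M \in \Gamma_1(3)$ acting through $M' := WMW^{-1}$. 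Since $C^{(g)}_{\con,n}$, $\tC^{(g)}_{\cc,n}$ and $\tC^{(g)}_{\alg,n}$ are by definition the coefficients of the correlation tensors of the single global section $\wave_{\CY}$ (Theorem~\ref{thm:wave_CY}, Notation~\ref{nota:corr_wave_CY}) expressed in the common frame $(dx_\con)^{\otimes n}$, they are related to one another by the Feynman rule, exactly as in the large-radius case.

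First I would establish the conifold analogue of Remark~\ref{rem:modular_quantities}: that $\theta x_\con$ transforms with weight $1$ with respect to $\tau'$ and the group $\Gamma_1(3)$ acting via $M \mapsto M' = WMW^{-1}$. From Notation~\ref{nota:Darboux_cusps} we have, on $\cL$, that $x_\con = -\sqrt{3}\,(2\pi\iu)^{-1/2}\, p$ and hence $\parfrac{x}{x_\con} = \sqrt{3}\,(2\pi\iu)^{1/2}\,\tau'$, which immediately identifies the change-of-frame factor $(\parfrac{x}{x_\con})^n = (6\pi\iu)^{n/2}(\tau')^n$ appearing in the definition of $\tC^{(g)}_{\heartsuit,n}$. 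Feeding the monodromy action $(p,x) \mapsto M(p,x)$ from Proposition~\ref{prop:monodromy} through this relation and using $\theta p = \tau\,\theta x$, a short computation gives $\theta x_\con \mapsto (c'\tau' + d')\,\theta x_\con$ for $M' = \begin{pmatrix} a' & b' \\ c' & d' \end{pmatrix}$, the desired weight-$1$ transformation. The dictionary of Remark~\ref{rem:modular_quantities} then applies verbatim in the $x_\con$-frame: a $\Gamma_1(3)$-invariant section $f(\tau')\,(dx_\con)^{\otimes n}$ corresponds precisely to $f$ being modular of weight $-n$ in $\tau'$.

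With this in hand, the argument closes as in Theorem~\ref{thm:quasi-modularity}. Because $P_\cc$ and $P_\alg$ are single-valued subbundles of $\Hvec$ over $\cMCY$ (Lemma~\ref{lem:Pcc} and Definition~\ref{def:Palg}), the tensors $C^{(g)}_{\cc,n}(dx)^{\otimes n} = \tC^{(g)}_{\cc,n}(dx_\con)^{\otimes n}$ and $C^{(g)}_{\alg,n}(dx)^{\otimes n} = \tC^{(g)}_{\alg,n}(dx_\con)^{\otimes n}$ are $\Gamma_1(3)$-invariant, so the coefficients $\tC^{(g)}_{\cc,n}$ and $\tC^{(g)}_{\alg,n}$ are modular of weight $-n$ in $\tau'$. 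The cocycle property (Proposition~\ref{prop:propagator_cocycle}) and the Feynman rule connecting $P_\con$ with $P_\cc$ then express $C^{(g)}_{\con,n}$ as $\tC^{(g)}_{\cc,n}$ plus a finite sum of terms carrying positive powers of the propagator $\Delta(P_\con,P_\cc) = -\tfrac{3}{\tau_\con + \ov{\tau_\con}}(\partial_{x_\con})^{\otimes 2}$ from Proposition~\ref{prop:propagator_with_cc}, whose coefficient is proportional to $1/(\tau'-\ov{\tau'})$ up to modular factors since $\tau_\con + \ov{\tau_\con} = 2\pi\iu(\tau'-\ov{\tau'})$; this exhibits $C^{(g)}_{\con,n}$ as quasi-modular of weight $-n$ with associated almost-holomorphic modular form $\tC^{(g)}_{\cc,n}$, proving (a) and (b). Likewise, taking $a = \tfrac{1}{12}$ so that $\Delta_a = 0$, the propagator $\Delta(P_\con,P_\alg) = \tfrac34 E_2(3\tau')(\partial_{x_\con})^{\otimes 2}$ from Corollary~\ref{cor:propagator_with_alg} (using $\tfrac{3\tau_\con}{2\pi\iu} = 3\tau'$) supplies exactly the $E_2(3\tau')$-powers, so the Feynman expansion of $C^{(g)}_{\con,n}$ about $P_\alg$ is its $E_2(3\tau')$-expansion with constant term $\tC^{(g)}_{\alg,n}$, proving (c).

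The main obstacle is the first step: one must carefully verify the Fricke-twisted weight transformation of $\theta x_\con$, i.e.\ that conjugation by $W$ carries the weight-$1$ behaviour of $\theta x$ in $\tau$ to the weight-$1$ behaviour of $\theta x_\con$ in $\tau'$, and that the involution genuinely preserves $\Gamma_1(3)$. Everything else is bookkeeping: the propagator identities were already computed in Proposition~\ref{prop:propagator_with_cc} and Corollary~\ref{cor:propagator_with_alg}; the single-valuedness of $P_\cc$, $P_\alg$ is Lemma~\ref{lem:Pcc} and Definition~\ref{def:Palg}; and the pole-order bound at the conifold point (order at most $2g-2+n$ for $C^{(g)}_{\con,n}$), guaranteed by Theorem~\ref{thm:conifold_estimate}, ensures the expansions take place in the correct spaces of functions on the universal cover.
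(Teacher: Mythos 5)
Your proposal is correct and follows exactly the route the paper intends: the paper proves this proposition by remarking that "essentially the same argument" as in Theorem~\ref{thm:quasi-modularity} applies, with the Fricke involution of Remark~\ref{rem:Fricke} and the conifold entries of the propagator computations (Proposition~\ref{prop:propagator_with_cc}, Corollary~\ref{cor:propagator_with_alg}) supplying the conifold analogues of the large-radius data. Your verification of the weight-$1$ transformation of $\theta x_\con$ under $M \mapsto WMW^{-1}$ and of the change-of-frame factor $(\partial x/\partial x_\con)^n = (6\pi\iu)^{n/2}(\tau')^n$ correctly fills in the details the paper leaves implicit.
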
 

\subsection{The Holomorphic Anomaly Equation and the Anomaly Equation} 

\begin{proposition} 
\label{prop:torsion_alg_cc} 
Let $(p,x)$ denote the integral Darboux co-ordinates \eqref{eq:Darboux_central_charge}, 
and regard $x=x(\zeta)$ as a co-ordinate on the universal 
cover of $\cMCY \setminus \DCY$. 
The torsion forms (see Definition~\ref{def:torsion}) 
of $P_\cc$ and $P_\alg(a)$ are given respectively as follows: 
\begin{align*} 
\Lambda_\cc(dx,dx) & = \frac{3}{(\tau-\ov\tau)^2} d\ov{\tau}  
= - \frac{3}{(\tau-\ov\tau)^2} \frac{d\ov{x}}
{(1+27 \ov{y}) (\ov{\theta x})^3} \\
\Lambda_\alg\left(\frac{dy}{y}, \frac{dy}{y}\right) 
& = \frac{9(1-6a) y - 3 a^2}{1+27y} \frac{dy}{y} 
\end{align*} 
\end{proposition}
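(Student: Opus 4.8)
The plan is to reduce both computations to the general torsion formula obtained in the proof of Lemma~\ref{lem:propagator_calc_curved}. Recall that in Darboux co-ordinates $(p,x)$ on $\Haff|_{t_0}$ with $\Omega = \tfrac13 dp\wedge dx$, if a (possibly curved) opposite line bundle is written via parallel transport as $P = \C\binom{c}{1}$ while $F^2_\vec = \C\binom{\tau}{1}$, then equation~\eqref{eq:torsion} gives $\Lambda(dx,dx) = \frac{3}{(\tau-c)^2}\,dc$. Thus for each of $P_\cc$ and $P_\alg(a)$ the whole problem splits into (i) reading off the function $c$ in the integral Darboux co-ordinates~\eqref{eq:Darboux_central_charge}, and (ii) rewriting the resulting one-form $dc$ in the requested frame.

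For $P_\cc$ this is immediate. As noted in the proof of Proposition~\ref{prop:propagator_with_cc}, the co-ordinates $(p,x)$ are real with respect to $H_{\vec,\R}$, so complex conjugation is the standard one and $P_\cc = \C\binom{\ov\tau}{1}$, i.e. $c = \ov\tau$. Hence $\Lambda_\cc(dx,dx) = \frac{3}{(\tau-\ov\tau)^2}\,d\ov\tau$. To obtain the second expression I would use $\partial_x\tau = 3Y_{\CY}(x) = -(1+27y)^{-1}(\theta x)^{-3}$, which follows from Example~\ref{ex:Yukawa} together with the formulae $Y_{\CY}(x) = \tfrac13\theta\tau/\theta x$ and $\theta\tau = -(1+27y)^{-1}(\theta x)^{-2}$ of Remark~\ref{rem:modular_quantities}. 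Since $\ov\tau$ is anti-holomorphic in $x$, we have $d\ov\tau = \ov{\partial_x\tau}\,d\ov x$, giving $\Lambda_\cc(dx,dx) = -\frac{3}{(\tau-\ov\tau)^2}\frac{d\ov x}{(1+27\ov y)(\ov{\theta x})^3}$.

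The substantive case is $P_\alg(a)$. Writing $s_0 = (9y+a)\theta\zeta + (1+27y)\theta^2\zeta$ from Definition~\ref{def:Palg} and using flatness of the co-ordinates $p,x$ (so that $p(\theta^k\zeta) = \theta^k p(\zeta)$, and likewise for $x$), the computation already carried out in the proof of Proposition~\ref{prop:propagator_cc_alg} — which invokes the Wronskian identity of Lemma~\ref{lem:Wronskian} — gives $P_\alg(a) = \C\binom{\tau - 1/A}{1}$ with $A = \theta x\bigl((9y+a)\theta x + (1+27y)\theta^2 x\bigr)$, where $x = x(\zeta)$. Thus $c = \tau - 1/A$, $\tau - c = 1/A$, and $\Lambda_\alg(dx,dx) = 3A^2\,dc = 3A^2\,d\tau + 3\,dA$. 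Converting to the requested frame through $dx = (\theta x)\frac{dy}{y}$, i.e. $\Lambda_\alg(\frac{dy}{y},\frac{dy}{y}) = (\theta x)^{-2}\Lambda_\alg(dx,dx) = (\theta x)^{-2}(3A^2\theta\tau + 3\theta A)\frac{dy}{y}$, leaves a purely algebraic simplification.

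That simplification is the main obstacle, and it is where the Picard--Fuchs equation enters. Setting $B = (9y+a)\theta x + (1+27y)\theta^2 x$ so that $A = (\theta x)B$, I would compute $\theta B$ and use the order-three Picard--Fuchs relation~\eqref{eq:PF_psi} for $x = x(\zeta)$, namely $(1+27y)\theta^3 x = -27y\,\theta^2 x - 6y\,\theta x$, to eliminate $\theta^3 x$; this yields the clean identity $\theta B = 3y\,\theta x + (9y+a)\theta^2 x$. Substituting $\theta A = (\theta^2 x)B + (\theta x)\theta B$ and $\theta\tau = -(1+27y)^{-1}(\theta x)^{-2}$ into $(\theta x)^{-2}(3A^2\theta\tau + 3\theta A)$, all terms involving $\theta^2 x/\theta x$ and $(\theta^2 x/\theta x)^2$ cancel, leaving $-\frac{3(9y+a)^2}{1+27y} + 9y = \frac{9(1-6a)y - 3a^2}{1+27y}$. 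The delicate point is precisely this cancellation: it relies on the coefficients $9y+a$ and $1+27y$ in Definition~\ref{def:Palg} matching the Picard--Fuchs coefficients, so I would carefully track the $\theta$-derivatives rather than trust the pattern. Checking that the answer is frame-independent (and agrees on the $y=\infty$ chart) is then routine given the transition data in~\S\ref{sec:two}.
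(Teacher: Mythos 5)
Your proposal is correct and follows essentially the same route as the paper's proof: apply the general torsion formula \eqref{eq:torsion} with $c=\ov\tau$ for $P_\cc$ and $c=\tau-1/A$ for $P_\alg$ (the latter read off from the proof of Proposition~\ref{prop:propagator_cc_alg}), then simplify using $d\tau = -\,dx/\bigl((1+27y)(\theta x)^3\bigr)$ and the Picard--Fuchs relation $(1+27y)\theta^3 x = -6y\,\theta x - 27y\,\theta^2 x$. The algebraic cancellation you flag as the delicate point does go through exactly as you describe, yielding $-3(9y+a)^2/(1+27y)+9y = (9(1-6a)y-3a^2)/(1+27y)$.
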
 
\begin{proof} 
Use \eqref{eq:torsion}. In the notation there, we have 
$c = \ov\tau$ for $P_\cc$ and $c = \tau - 1/A$ 
for $P_\alg$, where $A$ is as in the proof of Proposition \ref{prop:propagator_cc_alg}. 
We also use:
\begin{align*} 
& d \tau  = \tau_x dx = 3 Y_{\CY}(x) dx 
= - \frac{1}{(1+27y) (\theta x)^3}dx 
&&
\text{(see Example \ref{ex:Yukawa_conformal_limit})}, \\ 
& (1+27 y) \theta^3 x = - 6 y \theta  x - 27 y \theta^2 x  
&& \text{(since $x$ is a solution to \eqref{eq:PF_psi})}
\end{align*} 
where $Y_{\CY} = Y_{\CY}(x) (dx)^{\otimes 3}$. 
\end{proof} 

\begin{remark} 
The connection $\Nabla^{P_\cc}$ associated with the 
complex conjugate opposite line bundle 
(see \S\ref{subsec:curved}) respects the  
positive definite Hermitian metric $h$ 
on $\Theta \cong F^2_\vec \cong 
\bigcup_y H^{1,0}(E_y)$ given by 
$h(\lambda_1,\lambda_2) = \iu \int_{E_y} \lambda_1 
\cup \ov{\lambda_2}$ for $\lambda_i \in H^{1,0}(E_y)$. 
Therefore $\Nabla^{P_\cc}$ is the Chern connection 
associated with this Hermitian metric. Recall also from 
Lemma \ref{lem:propagator_calc_curved} that 
\[
Y_{\CY}(x) \Lambda_{\ov{x}} \, dx\wedge d\ov{x} 
= \frac{dx\wedge d\ov{x}}{(\tau-\ov\tau)^2 |1+27 y|^2 |\theta x|^6} 
= \frac{d\tau \wedge d\ov{\tau}}{(\tau-\ov\tau)^2}
\]
is the curvature of $\Nabla^{P_\cc}$ and gives the Poincar\'e 
metric on $\HH$. 
\end{remark} 

From Proposition \ref{prop:anomaly_equation} 
and Remark \ref{rem:anomaly_equation}, we obtain the following: 

\begin{proposition} 
\label{prop:HAE} 
The correlation functions $C^{(g)}_{\cc,n}dx^{\otimes n}$ 
associated with $P_\cc$ satisfy the following holomorphic 
anomaly equation:  
\begin{align*} 
C^{(g)}_{\cc,n+1} 
& = \Nabla^{P_\cc}_x C^{(g)}_{\cc,n} = 
\parfrac{C^{(g)}_{\cc,n}}{x} - 
\frac{n\tau_x}{(\tau-\ov{\tau})} C^{(g)}_{\cc,n}  \\ 
\parfrac{C^{(1)}_{\cc,1}}{\ov{x}}  
& = -\frac{1}{2} \frac{|\tau_x|^2}{(\tau-\ov{\tau})^2} \\ 
\parfrac{C^{(g)}_{\cc,0}}{\ov{x}} 
& = -\frac{1}{2} \sum_{h=1}^{g-1} 
\frac{3 \ov{\tau_x} }
{(\tau-\ov{\tau})^2}C^{(h)}_{\cc,1} C^{(g-h)}_{\cc,1}
- \frac{1}{2} \frac{3 \ov{\tau_x} }{(\tau-\ov{\tau})^2} 
C^{(g-1)}_{\cc,2}.  
\end{align*} 
\end{proposition}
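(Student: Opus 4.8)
The plan is to apply the general anomaly equation (Proposition~\ref{prop:anomaly_equation}) to the specific case $P_0 = P_\LR$ and $P = P_\cc$, specializing the abstract torsion-form anomaly equations to the complex-conjugate opposite line bundle and rewriting everything in terms of the modular quantities from Notation~\ref{nota:Darboux_cusps} and Proposition~\ref{prop:torsion_alg_cc}. The statement we must prove is essentially Proposition~\ref{prop:anomaly_equation} together with Remark~\ref{rem:anomaly_equation}(a), instantiated with the explicit torsion form $\Lambda_\cc$ just computed. So the proof is mainly a matter of plugging in $P = P_\cc$ and verifying that the anti-holomorphic-line-bundle hypothesis of Remark~\ref{rem:anomaly_equation}(a) holds.

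First I would verify that $P_\cc$ is an anti-holomorphic subbundle, i.e.~that $\Lambda_{\cc,x} = 0$. This is exactly case~(a) of Remark~\ref{rem:anomaly_equation}: since $P_\cc$ is the complex conjugate of the holomorphic bundle $F^2_\vec$, it is preserved by the $(1,0)$-part of $\nabla$, and therefore its torsion has no $dx$-component. Concretely, Proposition~\ref{prop:torsion_alg_cc} gives $\Lambda_\cc(dx,dx) = \frac{3}{(\tau-\ov\tau)^2} d\ov\tau$, which is visibly of type $(0,1)$, so $\Lambda_{\cc,x}=0$ and $\Lambda_{\cc,\ov x} = \frac{3}{(\tau-\ov\tau)^2}\ov{\tau_x}$ after writing $d\ov\tau = \ov{\tau_x}\,d\ov x$. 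With $\Lambda_{\cc,x}=0$ in hand, the first (holomorphic-jetness) equation of Remark~\ref{rem:anomaly_equation} reads $C^{(g)}_{\cc,n+1} = (\Nabla^{P_\cc})^{1,0}(C^{(g)}_{\cc,n}\,dx^{\otimes n})$, and the explicit action of $\Nabla^{P_\cc}$ on $dx$ computed in the proof of Proposition~\ref{prop:propagator_cc_alg} (namely $\Nabla^{P_\cc}dx = -\frac{\tau_x}{\tau-\ov\tau}\,dx\otimes dx$, using $c=\ov\tau$) gives the stated $\partial_x C^{(g)}_{\cc,n} - \frac{n\tau_x}{\tau-\ov\tau}C^{(g)}_{\cc,n}$ upon expanding the covariant derivative of an $n$-tensor via \eqref{eq:covariant_derivative_tensor}.

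Next I would read off the anti-holomorphic equations. Substituting $\Lambda_{\cc,\ov x} = \frac{3\,\ov{\tau_x}}{(\tau-\ov\tau)^2}$ into the second displayed equation of Remark~\ref{rem:anomaly_equation}(a) yields the genus-one equation: the Yukawa coupling $Y_{\CY}(x) = \frac13 \tau_x$ combines with $\frac12\Lambda_{\cc,\ov x}$ to give $\partial_{\ov x}C^{(1)}_{\cc,1} = -\frac12\frac{|\tau_x|^2}{(\tau-\ov\tau)^2}$, matching the claim. For $g\ge 2$ the same substitution into the $g\ge 2$ holomorphic anomaly equation of Remark~\ref{rem:anomaly_equation}(a) produces $\partial_{\ov x}C^{(g)}_{\cc,0} = -\frac12\sum_{h=1}^{g-1}\frac{3\ov{\tau_x}}{(\tau-\ov\tau)^2}C^{(h)}_{\cc,1}C^{(g-h)}_{\cc,1} - \frac12\frac{3\ov{\tau_x}}{(\tau-\ov\tau)^2}C^{(g-1)}_{\cc,2}$, which is precisely the third equation. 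Here I would note that the relations $C^{(g)}_{\cc,1}=\partial_x C^{(g)}_{\cc,0}$ and $C^{(g)}_{\cc,2}=\partial_x C^{(g)}_{\cc,1} + \Delta(x)Y_{\CY}(x)C^{(g)}_{\cc,1}$ from Remark~\ref{rem:anomaly_equation} are exactly the specialization of the first (holomorphic-jetness) equation to $n=0,1$, so no separate verification is needed.

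The only genuinely substantive step — and the one I expect to be the main obstacle, though it is already essentially done in the excerpt — is ensuring consistency of the identification $\Lambda_{\cc,\ov x}$ in the $(p,x)$-frame with the abstract torsion appearing in Proposition~\ref{prop:anomaly_equation}, and confirming that $(p,x)$ being real with respect to $H_{\vec,\R}$ makes the complex conjugation in $\Lambda_\cc$ the naive one. This is guaranteed by the construction of the integral Darboux co-ordinates in \eqref{eq:Darboux_central_charge} and by Lemma~\ref{lem:Pcc}, and the computation $c=\ov\tau$ was carried out in the proof of Proposition~\ref{prop:torsion_alg_cc}. Given all of this, the proof reduces to a direct substitution, and so — as the excerpt itself indicates by omitting the proof of Proposition~\ref{prop:anomaly_equation} on the grounds that the argument mirrors the proof of (Jetness) in Proposition~\ref{prop:transf_rule_well_defined} — I would simply cite Proposition~\ref{prop:anomaly_equation}, Remark~\ref{rem:anomaly_equation}(a), and Proposition~\ref{prop:torsion_alg_cc}, and record the resulting specialized formulas.
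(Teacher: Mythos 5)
Your proposal is correct and follows essentially the same route as the paper: Proposition~\ref{prop:HAE} is stated there as a direct consequence of Proposition~\ref{prop:anomaly_equation} and Remark~\ref{rem:anomaly_equation}(a), specialized via the torsion form $\Lambda_\cc$ computed in Proposition~\ref{prop:torsion_alg_cc}, which is exactly the substitution you carry out. Your additional checks (that $x$ is a flat co-ordinate for $P_\LR$, that $\Lambda_{\cc,x}=0$, and the sign bookkeeping through \eqref{eq:covariant_derivative_tensor}) are all consistent with the paper's conventions.
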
 

\begin{proposition} 
\label{prop:corr_alg} 
Let $\hC^{(g)}_{\alg,n} := C^{(g)}_{\alg,n} (\theta x)^n$ 
denote the correlation function associated with $P_\alg(a)$ 
written in the frame $(\frac{dy}{y})^{\otimes n}$. 
Then $\hC^{(g)}_{\alg,n}$ is a rational function of $y$ 
of the form: 
\begin{equation} 
\label{eq:corr_alg_form} 
\hC^{(g)}_{\alg,n} = \sum_{i=\ceil{n/3}}^{2g-2+n} 
\frac{c_i}{(1+27y)^i}, \qquad 
c_i\in \C, 
\end{equation} 
and satisfies the following anomaly equation: 
\begin{equation*} 
\hC^{(g)}_{\alg,n+1} 
= \theta \hC^{(g)}_{\alg,n} 
+ \frac{n(a+9y)}{1+27y}\hC^{(g)}_{\alg,n} 
+ \frac{9(1-6a)y-3a^2}{2(1+27y)} 
\left(\sum_{\substack{h+k=g \\ i+j=n}}
\binom{n}{i} \hC^{(h)}_{\alg,i+1} \hC^{(k)}_{\alg,j+1} 
+ \hC^{(g-1)}_{\alg,n+2} \right)   
\end{equation*} 
with $\theta = y\parfrac{}{y}$. 
$($In this proposition, we do not specialize $a$ to $\frac{1}{12}.)$
\end{proposition}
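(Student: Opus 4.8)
<br>

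The statement to prove, Proposition~\ref{prop:corr_alg}, makes two claims about the algebraic correlation functions $\hC^{(g)}_{\alg,n}$: a rationality/pole-structure claim of the form \eqref{eq:corr_alg_form}, and an anomaly equation. Let me sketch how I would prove each.

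The plan is to first establish the anomaly equation, as this is essentially a direct specialization of the general machinery already developed. Proposition~\ref{prop:anomaly_equation} gives the anomaly equation for correlation functions with respect to any curved opposite line bundle $P$, in terms of the torsion $\Lambda$ and the connection $\Nabla^P$. Here $P=P_\alg(a)$ is a \emph{holomorphic} (but non-flat) opposite line bundle, so by Remark~\ref{rem:anomaly_equation}(b) we have $\Lambda_{\ov x}=0$ and the correlation functions are holomorphic. The torsion $\Lambda_\alg$ was computed in Proposition~\ref{prop:torsion_alg_cc}, namely $\Lambda_\alg(\frac{dy}{y},\frac{dy}{y}) = \frac{9(1-6a)y-3a^2}{1+27y}\frac{dy}{y}$. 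First I would rewrite the anomaly equation of Proposition~\ref{prop:anomaly_equation} in the logarithmic frame $(\frac{dy}{y})^{\otimes n}$ rather than the flat frame $(dx)^{\otimes n}$; this is just a change of frame by the factor $(\theta x)^n$, using $\partial_x = (\theta x)^{-1}\theta$ and $dx = (\theta x)\frac{dy}{y}$. The ``$n\Delta(x)Y_{\CY}(x)$'' term in the flat-frame anomaly equation becomes, after this change of frame, the Christoffel-type term $\frac{n(a+9y)}{1+27y}$; this should be read off directly from the connection form $\Nabla^{P_\alg}$, which the proof of the connections section and Corollary~\ref{cor:propagator_with_alg} already determine (indeed $\Nabla^{P_\alg}-\Nabla^{P_\LR}=\Delta(P_\LR,P_\alg)\cdot Y_{\CY}$ by Lemma~\ref{lem:formulas_for_propagator}). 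The quadratic and genus-lowering terms carry the torsion coefficient $\frac{9(1-6a)y-3a^2}{2(1+27y)}$, matching $\frac{1}{2}\Lambda_\alg$ in the logarithmic frame. This part is bookkeeping once the frames are aligned.

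The more substantial claim is the pole structure \eqref{eq:corr_alg_form}: that $\hC^{(g)}_{\alg,n}$ is a rational function with poles only at the conifold $1+27y=0$, of order between $\ceil{n/3}$ and $2g-2+n$. I would prove this by induction on $2g-2+n$ using the anomaly equation just established. The base case is the Yukawa coupling $C^{(0)}_{\alg,3}=Y_{\CY}$, which by Example~\ref{ex:Yukawa_conformal_limit} equals $-\frac{1}{3(1+27y)}(\frac{dy}{y})^{\otimes 3}$, so $\hC^{(0)}_{\alg,3}=-\frac{1}{3(1+27y)}$, consistent with $\ceil{3/3}=1$ and $2\cdot 0-2+3=1$. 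For the inductive step, observe that each of the three terms on the right-hand side of the anomaly equation either applies $\theta=y\frac{\partial}{\partial y}$ (which raises pole order by at most one and preserves rationality), multiplies by $\frac{a+9y}{1+27y}$ (raising the top pole order by one), or multiplies by $\frac{9(1-6a)y-3a^2}{1+27y}$ times a product/genus-lowering term. Tracking the Euler number $2g-2+n$ — which is additive under the graph contractions implicit in the quadratic term — gives the upper bound $2g-2+(n+1)$ on the pole order of $\hC^{(g)}_{\alg,n+1}$. I expect the main obstacle to be the lower bound $\ceil{n/3}$ on the pole order: this records that there is no holomorphic constant term, and reflects the fact that $P_\alg$ corresponds to the $\cO(-1)$-summand of $\cO(\Hvec)$ over $\Proj(3,1)=\cMCY$ (see \S\ref{sec:two}), so that $\hC^{(g)}_{\alg,n}(\frac{dy}{y})^{\otimes n}$ must be a global section of $\cO(n)$ vanishing to appropriate order; equivalently this is a weighted-degree/homogeneity constraint coming from the $\Z/3\Z$-orbifold structure at $y=\infty$. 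One can also see the lower bound from the boundedness of $C^{(g)}_{\alg,n}$ at the orbifold point together with Theorem~\ref{thm:quasi-modularity}(c), which identifies $C^{(g)}_{\alg,n}$ as a holomorphic modular form of weight $-n$ (and modular forms of negative weight on $\Gamma_1(3)$ have forced vanishing). I would verify that the inductive step does not destroy the lower bound by checking that neither $\theta$ nor multiplication by the torsion factor can introduce a spurious regular constant — again most cleanly phrased via the $\cO(-1)$ homogeneity or equivalently the weight/growth at $y=\infty$.

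To summarize the order of operations: first align frames and read off the anomaly equation from Proposition~\ref{prop:anomaly_equation}, Proposition~\ref{prop:torsion_alg_cc}, and Corollary~\ref{cor:propagator_with_alg}; then prove \eqref{eq:corr_alg_form} by induction on $2g-2+n$, with the base case from Example~\ref{ex:Yukawa_conformal_limit}, the upper bound from tracking pole orders through $\theta$ and the torsion factors in the anomaly equation, and the lower bound from the $\cO(-1)$-summand structure of $\Hvec$ over $\Proj(3,1)$ (equivalently from negative-weight modularity, Theorem~\ref{thm:quasi-modularity}(c)). The anomaly equation is routine given the earlier results; the delicate point, and the one I would write out most carefully, is the sharp lower bound $\ceil{n/3}$ on the conifold pole order.
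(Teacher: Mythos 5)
Your treatment of the anomaly equation is essentially the paper's: specialize Proposition~\ref{prop:anomaly_equation} using the torsion from Proposition~\ref{prop:torsion_alg_cc} and the identity $\Nabla^{P_\alg(a)}(\tfrac{dy}{y}) = \tfrac{a+9y}{1+27y}\tfrac{dy}{y}\otimes\tfrac{dy}{y}$, which follows from Lemma~\ref{lem:propagator_calc_curved} and Corollaries~\ref{cor:propagator_with_alg} and~\ref{cor:E2}. That part is fine.

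The gap is in your proof of the pole structure \eqref{eq:corr_alg_form}. You propose to establish it by induction on $2g-2+n$ via the anomaly equation, with the Yukawa coupling as base case. The inductive step $n\mapsto n+1$ does preserve the pole-order bound (the paper's Remark following the proposition notes exactly this), but the anomaly equation only produces $(n+1)$-point functions from $n$-point and lower-genus data: it never determines a $0$-point function from anything. Consequently the base cases $\hC^{(g)}_{\alg,0}$ for $g\ge 2$ and $\hC^{(1)}_{\alg,1}$ --- which are precisely the holomorphic ambiguities, i.e.\ the data \emph{not} fixed by the anomaly equation --- are unreachable by your induction, and the claim that they have a pole of order at most $2g-2$ (resp.\ $1$) at the conifold point is exactly the hard content of the proposition. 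The paper obtains this bound for \emph{all} $(g,n)$ at once from Theorem~\ref{thm:conifold_estimate}, the stationary-phase estimate on the ingredients of Givental's formula for the correlation functions of $\wave_{\rm B}$ with respect to $\sfP_\con$, transferred to the curved opposite $P_\alg$ by Lemma~\ref{lem:regularity_curved} (additivity of $2g-2+n$ under graph contraction), combined with regularity of $\wave_{\CY}$ away from the conifold point. Without invoking that theorem your argument cannot get off the ground for $g\ge 1$. Your account of the lower bound $\ceil{n/3}$ --- vanishing at the orbifold point forced by $\Omega^1(\log\{0\})\cong\cO(-1)$ on $\Proj(3,1)$, so that a rational function of $y$ decaying like $y^{-n/3}$ must be $O(y^{-\ceil{n/3}})$ --- is the right mechanism, though citing Theorem~\ref{thm:quasi-modularity}(c) is heavier than needed; plain regularity of the tensor in the orbifold chart $\fry$ suffices.
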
 
\begin{proof} 
By Theorem \ref{thm:conifold_estimate} and 
Lemma \ref{lem:regularity_curved}, the $n$-tensor 
$\hC^{(g)}_{\alg,n} (\frac{dy}{y})^{\otimes n} 
= \hC^{(g)}_{\alg,n} (-3\frac{d\fry}{\fry})^{\otimes n}$ 
has poles of order at most $2g-2+n$ at the conifold point 
and is regular elsewhere. 
Thus $\hC^{(g)}_{\alg,n}$ is of the form \eqref{eq:corr_alg_form}. 
The anomaly equation follows from Propositions \ref{prop:anomaly_equation} 
and \ref{prop:torsion_alg_cc}: it suffices to note that 
\begin{equation} 
\label{eq:Nabla_alg} 
\Nabla^{P_\alg(a)} \left( \frac{dy}{y} \right) = \frac{a+9y}{1+27y} 
\frac{dy}{y} \otimes \frac{dy}{y},
\end{equation} 
which follows by combining Lemma~\ref{lem:propagator_calc_curved}, 
Corollaries~\ref{cor:propagator_with_alg} and~\ref{cor:E2}, and 
$\frac{dy}{y} = \frac{dx}{\theta x}$.  
\end{proof} 
\begin{remark} 
The above anomaly equation reconstructs 
$n$-point correlation functions 
$\hC^{(g)}_{\alg,n}$ from the base cases
$\hC^{(0)}_{\alg,3}$, $\hC^{(1)}_{\alg,1}$ and $\hC^{(h)}_{\alg,0}$, $h\ge 2$. The anomaly equation preserves 
the pole order condition at $y=-1/27$ and the vanishing condition at $y=\infty$. 
\end{remark} 

\begin{example}[genus-one potential] \label{ex:Y_genus_one}
In this example we set $a=-1/12$.  
The genus-one, one-point function $\hC^{(1)}_{\alg,1}$  
is of the form: 
\[
\hC^{(1)}_{\alg,1} =  \frac{c}{1+27y} 
\]
for some $c\in \C$. 
The transformation rule between $P_\alg$ and $P_\LR$ implies: 
\begin{equation} 
\label{eq:transf_LR_alg_genus_1}
\frac{\iu}{\sqrt{2\pi\iu}} \partial_t F_Y^1 = 
C^{(1)}_{Y,1} = C^{(1)}_{\alg,1} 
-\frac{1}{2} \frac{\pi\iu}{2} E_2(\tau) Y_{\CY}(x). 
\end{equation} 
Since $\partial_t F^1_Y|_{y=0} = -1/12$, it follows 
that $c = -1/24$. Using Corollary \ref{cor:E2}, we get: 
\[
(\theta x) \cdot C^{(1)}_{Y,1} = -\frac{1}{2} \frac{\theta^2x}{\theta x}
- \frac{1}{12} \left(1 +  \frac{27y}{1+27y} \right). 
\]
Integrating, we have 
\begin{align*} 
F^1_Y = 
\int C_{Y,1}^{(1)} dx & = \log \left( 
(\theta x)^{-\frac{1}{2}} y^{-\frac{1}{12}} (1+27y)^{-\frac{1}{12}} 
 \right) + \text{const} \\ 
 & = - \log\left(e^{\frac{\pi\iu}{24}} 
y^{\frac{1}{24}} (1+27y)^{-\frac{1}{24}} 
\eta(\tau)\right) \qquad 
\text{(by Corollary \ref{cor:eta})}
\end{align*} 
or equivalently, 
\[
e^{F^1_Y} \sqrt{dx} = \text{const}.  
\frac{\sqrt{\frac{dy}{y}}}{y^{\frac{1}{12}} (1+27y)^{\frac{1}{12}}}  
\]
which shows that $e^{F^1_Y}$ is modular of weight $-1/2$ 
(with automorphic factors). 
Equation~\eqref{eq:transf_LR_alg_genus_1} also implies that: 
\[
dF^1_Y = \left( 
- \frac{\pi\iu}{12} E_2(\tau) + \frac{1}{24} (\theta x)^2 \right) d\tau. 
\]
\end{example} 

\begin{remark}[solving the holomorphic anomaly equation] 
\label{rem:holomorphic_ambiguity} 
Bershadsky--Cecotti--Ooguri--Vafa 
introduced a Feynman diagram technique to solve the 
holomorphic anomaly equation~\cite{BCOV:HA,BCOV}. For example, in our case, 
a solution at genus one (see Proposition \ref{prop:HAE}) is
\begin{equation} 
\label{eq:genus_1_cc} 
C^{(1)}_{\cc,1} = - \frac{\tau_x}{2(\tau-\ov\tau)} + f(x) 
\end{equation} 
for some holomorphic function $f(x)$. On the other hand, the transformation 
rule from $P_\cc$ to $P_\alg(\frac{1}{12})$ gives: 
\[
C^{(1)}_{\alg,1} = C^{(1)}_{\cc,1} + \frac{\pi\iu}{4} 
\hE_2(\tau) Y_{\CY}(x) 
= C^{(1)}_{\cc,1} + \frac{\tau_x}{2(\tau-\ov{\tau})} 
+ \frac{d}{dx} \log(\eta(\tau));
\]
see Proposition \ref{prop:propagator_cc_alg}.
Therefore the holomorphic ambiguity $f(x) = 
C^{(1)}_{\alg,1} - \partial_x \log \eta(\tau)$ essentially 
corresponds to the algebraic potential $C^{(1)}_{\alg,1}$. 
More generally, for $g\ge 2$, the transformation rule gives: 
\[
C^{(g)}_{\alg,0} = C^{(g)}_{\cc,0} + \sum_{\Gamma} 
\frac{1}{|\Aut(\Gamma)|}
\Cont_\Gamma(\Delta(P_\cc,P_\alg), \{C^{(h)}_{\cc,\bullet}\}_{h<g})
\]
where we separate the Feynman rule \eqref{eq:Feynman_rule_fd} 
into the leading term $C^{(g)}_{\cc,0}$ and the lower-genus contribution. 
When viewed as an expression for $C^{(g)}_{\cc,0}$, 
this formula solves the holomorphic anomaly equation 
recursively in genus, with holomorphic ambiguity $C^{(g)}_{\alg,0}$. 
\end{remark} 
\begin{remark}
Integrating \eqref{eq:genus_1_cc}, 
we find that  
$F_\cc^1= \log(|\tau-\ov{\tau}|^{-\frac{1}{2}} 
|y|^{-\frac{1}{12}} |1+27y|^{\frac{1}{12}} 
|\eta(\tau)|^{-2})$ satisfies $\partial_x F_\cc^1 = C^{(1)}_{\cc,1}$. 
Then $F_\cc^1$ is $\Gamma_1(3)$-invariant and 
$e^{F_\cc^1}= |e^{F_Y^1}|^2 |\tau-\ov{\tau}|^{-\frac{1}{2}}$.  
We may view $e^{F_\cc^1}$ as a `norm' of the exponentiated 
genus-one potential, 
see \cite{BCOV:HA}, \cite[\S 9.4]{Coates--Iritani:Fock}. 
\end{remark} 

\subsection{Algebraic Opposite and Finite Generation} 

As we saw in Proposition \ref{prop:corr_alg}, correlation functions with 
respect to the algebraic opposite line bundle $P_\alg(a)$ belong to 
the polynomial ring $\C[(1+27y)^{-1}]$. 
Using the transformation rule from the algebraic opposite 
to other opposites, we conclude that correlation functions 
with respect to $P_\LR,P_\orb,P_\con$ 
belong to the polynomial ring generated by $(1+27y)^{-1}$ and 
the respective propagators, and that they 
are related to each other by `interchanging the propagators'. 
This recovers the results of Lho and Pandharipande 
\cite{Lho-Pandharipande:SQ_HAE}, and their version 
of the Crepant Resolution Conjecture \cite{Lho-Pandharipande:CRC}. 

Let us write the propagators between $P_\alg(a)$ 
and $P_\LR$, $P_\con$, $P_\orb$ 
in the frame $\theta \otimes \theta$, setting
\[
\Delta(P_\alg(a),P_\heartsuit) = \Delta_{\alg,\heartsuit} \, 
\theta \otimes \theta
\]
where $\heartsuit$ is one of `$\LR$', `$\con$', `$\orb$'. 

\begin{lemma}[cf.~Corollary \ref{cor:propagator_with_alg}]  
\label{lem:propagator_with_alg} 
Using Notation $\ref{nota:Darboux_cusps}$, we have 
\[
\Delta_{\alg,\heartsuit} = 3(1+27y) 
\frac{\theta^2 x_\heartsuit}{\theta x_\heartsuit} 
+ 27y + 3a, \qquad 
\heartsuit \in \{ \LR, \con, \orb\}. 
\]
\end{lemma}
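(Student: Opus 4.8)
The statement to prove is Lemma~\ref{lem:propagator_with_alg}: for $\heartsuit \in \{\LR, \con, \orb\}$,
\[
\Delta_{\alg,\heartsuit} = 3(1+27y) \frac{\theta^2 x_\heartsuit}{\theta x_\heartsuit} + 27y + 3a,
\]
where $\Delta(P_\alg(a), P_\heartsuit) = \Delta_{\alg,\heartsuit}\, \theta \otimes \theta$ and $\theta = y\partial_y$.

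The plan is to give a uniform computation directly from the definition of the propagator (Definition~\ref{def:propagator_fd}), rather than going through the explicit $\tau$-formulae in Corollary~\ref{cor:propagator_with_alg}. First I would recall that for any (parallel) opposite line bundle $P_\heartsuit$, the co-ordinate $x_\heartsuit$ is a flat co-ordinate for $\Nabla^{P_\heartsuit}$, so $\theta x_\heartsuit$ and $\theta^2 x_\heartsuit$ make sense as functions on the universal cover; in Notation~\ref{nota:Darboux_cusps} these $x_\heartsuit$ arise from the integral Darboux co-ordinates. The key is to express the torsion-free computation of $\Delta(P_\alg(a), P_\heartsuit)$ in the frame $\theta \otimes \theta$ rather than $\partial_x \otimes \partial_x$. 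Using $\partial_x = (\theta x)^{-1}\theta$ together with the cocycle relation $\Delta(P_\alg,P_\heartsuit) = \Delta(P_\alg,P_\cc) + \Delta(P_\cc,P_\heartsuit)$ from Proposition~\ref{prop:propagator_cocycle} would be one route, but the cleaner approach is to compute $\Delta(P_\alg(a),P_\heartsuit)$ directly using the explicit basis of $P_\alg(a)$ from Definition~\ref{def:Palg} and the fact that $x_\heartsuit$ cuts out $P_\heartsuit$ as a flat co-ordinate.

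Concretely, I would work in the integral Darboux co-ordinates $(p,x)$ of \eqref{eq:Darboux_central_charge}, where $F^2_\vec$ along $\cL$ is spanned by $\binom{\tau}{1}$ with $\tau = \theta p(\zeta)/\theta x(\zeta)$. As established in the proof of Proposition~\ref{prop:propagator_cc_alg}, the algebraic opposite $P_\alg(a)$ is spanned by $\binom{\tau A - 1}{A}$, where $A = \theta x \cdot \big((9y+a)\theta x + (1+27y)\theta^2 x\big)$. For the cusp opposite $P_\heartsuit = \langle \partial/\partial p_\heartsuit\rangle$, the projection $\Pi_\heartsuit$ along $P_\heartsuit$ composed with $\KS^{-1}$ sends $\partial_p, \partial_x$ to multiples of $\partial_{x_\heartsuit}$, and hence of $\theta$ via $\partial_{x_\heartsuit} = (\theta x_\heartsuit)^{-1}\theta$. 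The propagator $\Delta(P_\alg(a),P_\heartsuit)$ then becomes a Wronskian-type expression; I would evaluate it using Lemma~\ref{lem:Wronskian}, which gives $\theta \tilde p(\zeta)\cdot \theta^2\tilde x(\zeta) - \theta^2\tilde p(\zeta)\cdot \theta\tilde x(\zeta) = (1+27y)^{-1}$ for any flat Darboux pair $(\tilde p, \tilde x)$. Applying this with $(\tilde p,\tilde x) = (p_\heartsuit, x_\heartsuit)$ lets me replace the $p_\heartsuit$-derivatives by $x_\heartsuit$-derivatives and the normalization factor $(1+27y)^{-1}$. Collecting terms, the coefficient of $\theta\otimes\theta$ should reduce to $3(1+27y)\,\theta^2 x_\heartsuit/\theta x_\heartsuit + 27y + 3a$, where the $27y+3a$ piece comes from the $(9y+a)\theta x$ contribution in $A$ after multiplying through by $3$.

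The main obstacle will be bookkeeping the change of frame carefully: the propagator is naturally computed as a multiple of $\partial_x\otimes\partial_x$ (the flat frame for $P_\LR$-type co-ordinates), but the statement is in the $\theta\otimes\theta$ frame, and $\theta = (\theta x)\partial_x$ carries a factor that must cancel correctly against the Wronskian normalization. I expect the cleanest verification is to note that both sides transform identically and agree by checking that $\Nabla^{P_\alg(a)}$ is the connection whose Christoffel symbol in the $\frac{dy}{y}$ frame is $\frac{a+9y}{1+27y}$ (equation~\eqref{eq:Nabla_alg}), so that the difference $\Nabla^{P_\heartsuit} - \Nabla^{P_\alg}$ equals $\Delta(P_\alg,P_\heartsuit)\cdot Y_\CY$ by Lemma~\ref{lem:formulas_for_propagator}. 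Writing $\Nabla^{P_\heartsuit}(\frac{dy}{y})$ in terms of the flat co-ordinate $x_\heartsuit$ gives Christoffel symbol $-\theta^2 x_\heartsuit/\theta x_\heartsuit$ in the $\frac{dy}{y}$-frame; subtracting and using $Y_\CY = -\frac{1}{3(1+27y)}(\frac{dy}{y})^{\otimes 3}$ from Example~\ref{ex:Yukawa_conformal_limit} then solves for $\Delta_{\alg,\heartsuit}$ and yields the claimed formula after clearing the factor of $\theta x_\heartsuit$. This connection-difference approach avoids the explicit $\tau$-parametrization entirely and handles all three cusps simultaneously, which is why I would present it as the primary argument.
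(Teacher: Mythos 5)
Your final argument --- the connection-difference route --- is correct, but it is a genuinely different derivation from the paper's. The paper's proof is a two-line symplectic computation: it writes down the spanning vector $v_\alg = -(27y+3a)\,\theta\zeta - 3(1+27y)\,\theta^2\zeta$ of $P_\alg(a)$ and the spanning vector $v_\heartsuit = 3(1+27y)\frac{\theta^2 x_\heartsuit}{\theta x_\heartsuit}\,\theta\zeta - 3(1+27y)\,\theta^2\zeta$ of $P_\heartsuit$ (the latter because $x_\heartsuit$ cuts out $P_\heartsuit$), notes that both are normalized so that $\Omega(v,\theta\zeta)=1$, and concludes that $v_\heartsuit - v_\alg$ lies in $(F^2_\vec)^\perp = F^2_\vec$ and equals $\Delta_{\alg,\heartsuit}\,\theta\zeta$; the formula is then read off. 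Your route instead computes $\Nabla^{P_\heartsuit}\bigl(\tfrac{dy}{y}\bigr) = -\tfrac{\theta^2 x_\heartsuit}{\theta x_\heartsuit}\tfrac{dy}{y}\otimes\tfrac{dy}{y}$ (flatness of $x_\heartsuit$) and $\Nabla^{P_\alg(a)}\bigl(\tfrac{dy}{y}\bigr) = \tfrac{a+9y}{1+27y}\tfrac{dy}{y}\otimes\tfrac{dy}{y}$, takes the difference, and divides by $Y_{\CY} = -\tfrac{1}{3(1+27y)}\bigl(\tfrac{dy}{y}\bigr)^{\otimes 3}$; I checked the signs and this does yield $\Delta_{\alg,\heartsuit} = 3(1+27y)\tfrac{\theta^2 x_\heartsuit}{\theta x_\heartsuit} + 27y + 3a$. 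Two small adjustments would tighten your write-up. First, since $P_\alg(a)$ is holomorphic but not parallel, the identity $\Nabla^{P_\heartsuit}-\Nabla^{P_\alg} = \Delta(P_\alg,P_\heartsuit)\cdot Y_{\CY}$ should be cited from Lemma~\ref{lem:propagator_calc_curved} (together with antisymmetry of the propagator), not from Lemma~\ref{lem:formulas_for_propagator}, which is stated only for a pair of parallel opposites. Second, the paper justifies equation~\eqref{eq:Nabla_alg} via Corollary~\ref{cor:propagator_with_alg}, so if you want your derivation to be independent of the $\tau$-based computations you should derive \eqref{eq:Nabla_alg} directly from Definition~\ref{def:Palg}: decomposing $\theta^2\zeta$ along $\langle\theta\zeta\rangle\oplus\langle v_\alg\rangle$ gives $\Pi_{P_\alg}(\theta^2\zeta) = -\tfrac{a+9y}{1+27y}\,\theta\zeta$, which is exactly \eqref{eq:Nabla_alg}. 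With those two fixes your argument is complete and, as you say, handles all three cusps uniformly; the trade-off is that it invokes the propagator calculus for curved opposites, whereas the paper's version needs only the symplectic pairing on $\Hvec$.
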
 
\begin{proof} 
We can deduce this from the previous computation 
(Corollary \ref{cor:propagator_with_alg}), but here 
we outline a simpler derivation. 
By definition -- see Definition \ref{def:Palg} -- $P_\alg$ is spanned by: 
\begin{align*} 
v_\alg & = (-27y-3a) \theta\zeta -3 (1+27y) \theta^2 \zeta. \\ 
\intertext{On the other hand $P_\heartsuit$ is cut out by $x_\heartsuit=0$ 
(see Notation~\ref{nota:Darboux_cusps}) and is therefore spanned by} 
v_\heartsuit & = 
3 (1+27y) \frac{\theta^2x_\heartsuit}{\theta x_\heartsuit} \theta \zeta 
- 3 (1+27y) \theta^2 \zeta. 
\end{align*} 
Since $\Omega(v_\alg,\theta\zeta) = \Omega(v_\heartsuit, 
\theta \zeta) =1$, it follows that 
$v_\heartsuit - v_\alg= \Delta_{\alg,\heartsuit}\, (\theta \zeta)$. 
The conclusion follows. 
\end{proof} 

\begin{lemma}[cf.~Lemma \ref{lem:propagator_calc_curved}]
\begin{align*}
\theta ((1+27y)^{-1}) & =  (1+27y)^{-2} -(1+27y)^{-1}, \\ 
\theta (\Delta_{\alg,\heartsuit}) & = 
- \frac{1}{3(1+27y)} (\Delta_{\alg,\heartsuit})^2
+ \frac{2(a+9y)}{1+27y} \Delta_{\alg,\heartsuit} 
+ \frac{9(1-6a)y - 3a^2}{1+27y}.   
\end{align*} 
\end{lemma}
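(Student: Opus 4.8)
The final statement consists of two differential identities, and both are mechanical computations once the right ingredients are assembled. The plan is to prove each line directly by differentiating.

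For the first identity, $\theta\big((1+27y)^{-1}\big) = (1+27y)^{-2} - (1+27y)^{-1}$, I would simply apply $\theta = y\,\partial_y$: we have $\theta\big((1+27y)^{-1}\big) = -27y\,(1+27y)^{-2}$, and writing $-27y = (1+27y)^{-1} - 1$ times $(1+27y)$ — more precisely, $-27y = 1 - (1+27y)$ — gives $-27y\,(1+27y)^{-2} = (1+27y)^{-2} - (1+27y)^{-1}$. This is a one-line algebraic rearrangement and presents no difficulty.

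For the second identity, the clean approach is to invoke the propagator calculus of Lemma~\ref{lem:propagator_calc_curved} with $P_0$ taken to be the flat opposite $P_\heartsuit$ and $P$ taken to be the algebraic opposite $P_\alg(a)$. That lemma gives $d\Delta = \Delta^2 Y_{\CY}\,dx - \Lambda_x\,dx - \Lambda_{\overline{x}}\,d\overline{x}$ for the propagator $\Delta(P_\heartsuit, P_\alg)$ written in the flat co-ordinate $x = x_\heartsuit$ of $P_\heartsuit$; since $P_\alg$ is holomorphic, $\Lambda_{\overline{x}} = 0$, so the anti-holomorphic term drops out. I would convert this into the frame $\theta\otimes\theta$: using $\Delta(P_\alg,P_\heartsuit) = -\Delta(P_\heartsuit,P_\alg)$ from Proposition~\ref{prop:propagator_cocycle}, together with the relation $dx_\heartsuit = (\theta x_\heartsuit)\,\frac{dy}{y}$ and the expression $\Delta_{\alg,\heartsuit} = 3(1+27y)\frac{\theta^2 x_\heartsuit}{\theta x_\heartsuit} + 27y + 3a$ from Lemma~\ref{lem:propagator_with_alg}, I would recompute $\theta(\Delta_{\alg,\heartsuit})$. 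The Yukawa coupling enters through $Y_{\CY}(x) = -\frac{1}{3(1+27y)(\theta x)^3}$ (Example~\ref{ex:Yukawa_conformal_limit}), which produces the $-\frac{1}{3(1+27y)}(\Delta_{\alg,\heartsuit})^2$ term, and the torsion $\Lambda_\alg(\frac{dy}{y},\frac{dy}{y}) = \frac{9(1-6a)y - 3a^2}{1+27y}\frac{dy}{y}$ from Proposition~\ref{prop:torsion_alg_cc} supplies the inhomogeneous constant term. The remaining linear term $\frac{2(a+9y)}{1+27y}\Delta_{\alg,\heartsuit}$ arises from the change of frame between $\partial_x\otimes\partial_x$ and $\theta\otimes\theta$, i.e.~from $\Nabla^{P_\alg(a)}(\frac{dy}{y}) = \frac{a+9y}{1+27y}\frac{dy}{y}\otimes\frac{dy}{y}$ in equation~\eqref{eq:Nabla_alg}, applied twice because $\Delta$ is a $2$-tensor.

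Alternatively — and this may be the cleaner route to present — one can verify the second identity purely algebraically by substituting the explicit formula $\Delta_{\alg,\heartsuit} = 3(1+27y)\frac{\theta^2 x_\heartsuit}{\theta x_\heartsuit} + 27y + 3a$ into both sides and using that $x_\heartsuit$ satisfies the Picard--Fuchs equation~\eqref{eq:PF_psi}, namely $(1+27y)\theta^3 x_\heartsuit = -6y\,\theta x_\heartsuit - 27y\,\theta^2 x_\heartsuit$. Writing $w := \frac{\theta^2 x_\heartsuit}{\theta x_\heartsuit}$, one computes $\theta w = \frac{\theta^3 x_\heartsuit}{\theta x_\heartsuit} - w^2$ and substitutes the Picard--Fuchs relation to eliminate the third derivative; then $\theta(\Delta_{\alg,\heartsuit})$ becomes a rational expression in $y$ and $w$, which one matches term-by-term against the right-hand side (itself quadratic in $\Delta_{\alg,\heartsuit}$ and hence in $w$). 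The main obstacle in either approach is purely bookkeeping: keeping track of the factors of $(1+27y)$ and confirming that the coefficient of each power of $w$ agrees, with the $a^2$ and $a$-linear terms landing correctly. No conceptual difficulty arises, since all the structural inputs (propagator calculus, torsion formula, the frame-change connection, and the Picard--Fuchs equation) have already been established.
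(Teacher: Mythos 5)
Your proposal is correct and matches the paper's (very terse) proof, which likewise notes the first identity is immediate and derives the second as an analogue of Lemma~\ref{lem:propagator_calc_curved} using Proposition~\ref{prop:torsion_alg_cc}, equation~\eqref{eq:Nabla_alg}, and the Picard--Fuchs equation for $x_\heartsuit$. Your alternative purely algebraic check via Lemma~\ref{lem:propagator_with_alg} and the relation $(1+27y)\theta^3 x_\heartsuit = -6y\,\theta x_\heartsuit - 27y\,\theta^2 x_\heartsuit$ does close correctly (the $w$-linear terms cancel and both sides reduce to $9y - 3(1+27y)(\theta^2 x_\heartsuit/\theta x_\heartsuit)^2$), so either route is fine.
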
 
\begin{proof} 
The first equation is obvious. 
The second equation 
is an analogue of Lemma \ref{lem:propagator_calc_curved} 
(see also Proposition~\ref{prop:torsion_alg_cc} 
and equation~\eqref{eq:Nabla_alg})
and follows immediately from the Picard--Fuchs equation 
\eqref{eq:PF_psi} for $x_\heartsuit$. 
\end{proof} 

The lemma shows that the ring 
$\C[\Delta_{\alg,\heartsuit}, (1+27y)^{-1}]$ 
is closed under the differential $\theta = y\parfrac{}{y}$,
so that it is a differential ring. 
The following theorem shows that the Gromov--Witten potentials 
of $\cX$ and $Y$ belong respectively to the differential rings $\C[\Delta_{\alg,\orb},(1+27y)^{-1}]$ and
$\C[\Delta_{\alg,\LR},(1+27y)^{-1}]$.

\begin{theorem} 
\label{thm:finite_generation}
Let $C^{(g)}_{Y, n}, C^{(g)}_{\cX,n}, C^{(g)}_{\con,n}$ 
be as in Notation $\ref{nota:corr_wave_CY}$. 
For a pair $(g,n)$ of non-negative integers with $2g-2+n>0$, 
there exists a polynomial $f_{g,n}(\Delta,R) \in \C[\Delta,R]$ 
such that:
\begin{align*} 
C^{(g)}_{Y,n} & = 
f_{g,n}(\Delta_{\alg,\LR},(1+27y)^{-1}) (\theta x)^{-n}, \\ 
C^{(g)}_{\cX,n} & = f_{g,n}(\Delta_{\alg,\orb}, (1+27y)^{-1}) 
(\theta x_\orb)^{-n}, \\ 
C^{(g)}_{\con,n} & = f_{g,n}(\Delta_{\alg,\con},(1+27y)^{-1}) 
(\theta x_\con)^{-n}.     
\end{align*} 
Moreover, we have 
$\deg_{\Delta} f_{g,n} \le 3g-3+n$, $\deg_{R} f_{g,n} 
\le 2g-2+n$ and 
\begin{equation} 
\label{eq:diff_by_propagator} 
\parfrac{f_{g,n}}{\Delta} =\frac{1}{2} 
\sum_{\substack{n_1+n_2=n \\ g_1+g_2=g}} 
\binom{n}{n_1} 
f_{g_1,n_1+1} f_{g_2,n_2+1} + \frac{1}{2} f_{g-1,n+2}.  
\end{equation} 
\end{theorem}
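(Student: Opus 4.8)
The plan is to use the algebraic opposite line bundle $P_\alg = P_\alg(\tfrac{1}{12})$ as a single global ``source'' shared by all three cusps. By Proposition~\ref{prop:corr_alg}, the correlation functions of $\wave_{\CY}$ with respect to $P_\alg$, written in the logarithmic frame $(\tfrac{dy}{y})^{\otimes n}$, are polynomials $\hC^{(g)}_{\alg,n} = \hat{c}^{(g)}_n\big((1+27y)^{-1}\big)$ with $\hat{c}^{(g)}_n \in \C[R]$ and $\deg_R \hat{c}^{(g)}_n \le 2g-2+n$. Since $P_\alg$ is a single-valued holomorphic opposite line bundle over all of $\cMCY$, these polynomials do not depend on which cusp we work near; the only cusp-dependent datum is the propagator $\Delta(P_\alg, P_\heartsuit) = \Delta_{\alg,\heartsuit}\,\theta\otimes\theta$ of Lemma~\ref{lem:propagator_with_alg}. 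First I would pass to the universal frame: writing $dx = (\theta x)\tfrac{dy}{y}$ and similarly for $x_\orb=\frt$ and $x_\con$, each claimed identity (e.g.~$C^{(g)}_{Y,n} = f_{g,n}(\Delta_{\alg,\LR}, R)(\theta x)^{-n}$) is equivalent to identifying the corresponding $P_\heartsuit$-correlation function, \emph{written in the frame} $(\tfrac{dy}{y})^{\otimes n}$, with one universal polynomial evaluated at $\Delta=\Delta_{\alg,\heartsuit}$, $R=(1+27y)^{-1}$.

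Next I would \emph{define}
\[
f_{g,n}(\Delta, R) = \sum_{\Gamma} \frac{1}{|\Aut(\Gamma)|}\, \Delta^{|E(\Gamma)|} \prod_{v\in V(\Gamma)} \hat{c}^{(g_v)}_{n_v}(R),
\]
the sum running over the (finitely many) connected decorated stable graphs of genus $g$ with $n$ labelled legs. The transformation rule relating the $P_\alg$- and $P_\heartsuit$-realizations of $\wave_{\CY}$ (\S\ref{sec:Fock_fd}, extended to curved opposites via the cocycle condition) expresses the $P_\heartsuit$-correlation functions by contracting the vertex tensors $\hC^{(g_v)}_{\alg,n_v}(\tfrac{dy}{y})^{\otimes n_v}$ against the edge bivectors $\Delta_{\alg,\heartsuit}\,\theta\otimes\theta$. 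Because $\langle \tfrac{dy}{y}, \theta\rangle = 1$, each contraction of a half-edge with a vertex leg contributes $1$, each edge contributes the scalar $\Delta_{\alg,\heartsuit}$, and the $n$ external legs survive in the frame $(\tfrac{dy}{y})^{\otimes n}$. This is precisely the substitution $\Delta \mapsto \Delta_{\alg,\heartsuit}$, $R\mapsto (1+27y)^{-1}$ into $f_{g,n}$, and it yields all three formulae at once because the vertex data $\{\hat{c}^{(g)}_n\}$ is common to the three cusps (Theorem~\ref{thm:wave_CY}, Notation~\ref{nota:corr_wave_CY}).

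The degree bounds then follow from graph combinatorics. As $\Delta$ records only the number of edges, $\deg_\Delta f_{g,n}$ is the maximal edge count of a connected stable graph of genus $g$ with $n$ legs, attained by trivalent genus-zero-vertex graphs and equal to $3g-3+n$. For the $R$-degree, a graph $\Gamma$ contributes an $R$-degree of at most $\sum_v \deg_R \hat{c}^{(g_v)}_{n_v} \le \sum_v (2g_v-2+n_v)$, and the identities $\sum_v n_v = n + 2|E(\Gamma)|$ and $\sum_v g_v = g - \big(|E(\Gamma)|-|V(\Gamma)|+1\big)$ give $\sum_v (2g_v-2+n_v) = 2g-2+n$, independent of $\Gamma$; hence $\deg_R f_{g,n}\le 2g-2+n$.

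Finally, equation~\eqref{eq:diff_by_propagator} is the edge-cutting identity for this graph sum: differentiating $\Delta^{|E(\Gamma)|}$ distinguishes one edge $e$, and summing over (graph, distinguished edge) pairs and cutting $e$ produces, up to the factor $\tfrac12$ from interchanging the two resulting half-edges, either a connected genus-$(g{-}1)$ graph with $n+2$ legs (the term $\tfrac12 f_{g-1,n+2}$) or an ordered pair of connected graphs of genera $g_1+g_2=g$ with $n_1+1$ and $n_2+1$ legs, the $\binom{n}{n_1}$ distributing the original legs (the term $\tfrac12 \sum \binom{n}{n_1} f_{g_1,n_1+1} f_{g_2,n_2+1}$). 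The main obstacle I anticipate is bookkeeping: verifying that the automorphism factors $|\Aut(\Gamma)|$ recombine correctly under edge-cutting so that the constants $\tfrac12$ and $\binom{n}{n_1}$ emerge exactly, and checking that the frame conversions between $x$, $x_\orb=\frt$, $x_\con$ and the universal frame are compatible with the contraction $\langle \tfrac{dy}{y}, \theta\rangle = 1$ in the orbifold and conifold charts. Everything else is a formal consequence of the transformation rule together with Proposition~\ref{prop:corr_alg}.
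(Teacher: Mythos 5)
Your proposal is correct and follows essentially the same route as the paper's proof: express each cusp's correlation functions via the Feynman rule with source $P_{\alg}$ and propagator $\Delta_{\alg,\heartsuit}\,\theta\otimes\theta$, use Proposition~\ref{prop:corr_alg} for the vertex polynomials in $(1+27y)^{-1}$, derive the degree bounds from the graph identities $\sum_v n_v = n+2|E(\Gamma)|$ and $\sum_v g_v = g-(|E(\Gamma)|-|V(\Gamma)|+1)$, and obtain \eqref{eq:diff_by_propagator} by edge-cutting (separating versus non-separating edges). The paper's proof is terser but identical in substance, and the ``bookkeeping'' you flag is exactly the part the paper also asserts without elaboration.
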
 
\begin{proof} 
This follows from the Feynman rule relating $\hC^{(g)}_{\alg,n}$ 
to each of $C^{(g)}_{Y,n}$, $C^{(g)}_{\cX,n}$, $C^{(g)}_{\con,n}$  
together with Proposition \ref{prop:corr_alg}. 
Note that the Feynman rule for $C^{(g)}_{Y,n}$, written in the frame 
$(\frac{dy}{y})^{\otimes n}$, is of the form: 
\[
C^{(g)}_{Y,n} (\theta x)^n = \sum_{\Gamma} 
\frac{1}{|\Aut(\Gamma)|} \Cont_{\Gamma}
\left (\Delta_{\alg,\LR}, 
\{\hC^{(h)}_{\alg,\bullet} \}_{h\le g} \right)
\]
and the Feynman rules for $C^{(g)}_{\cX,n}$ and $C^{(g)}_{\con,n}$ 
have the same shape. 
By Proposition \ref{prop:corr_alg}, $\hC^{(h)}_{\alg,m}$ is a polynomial 
in $(1+27y)^{-1}$ of degree $\le 2h-2+m$.  
Thus the right-hand side can be written as 
a polynomial $f_{g,n}(\Delta_{\alg,\LR}, (1+27y)^{-1})$ 
such that $f_{g,n}(\Delta,R)$ has degree $\le 2g-2+n$ 
in $R$. The degree of $f_{g,n}$ as a polynomial in $\Delta$ 
is bounded by $3g-3+n$, which is the maximum possible 
number of edges appearing in Feynman graphs. 
The differential equation for $f_{g,n}$ follows from the 
Feynman rule: the first term corresponds to separating edges 
and the second term corresponds to non-separating 
edges. 
\end{proof} 
\begin{remark} 
Lho--Pandharipande 
\cite[Theorems 1,2]{Lho-Pandharipande:SQ_HAE}, 
\cite[Theorems 1,2]{Lho-Pandharipande:CRC} 
showed essentially the same result for $F^g_Y, F^g_\cX$ 
using stable quotient invariants. 
The differential equation \eqref{eq:diff_by_propagator} 
together with $\partial_x C^{(g)}_{Y,n} = C^{(g)}_{Y,n+1}$ 
implies a ``holomorphic version'' of holomorphic anomaly equation 
proved by Lho--Pandharipande 
\cite[Theorem 2]{Lho-Pandharipande:SQ_HAE}, 
\cite[Theorem 2]{Lho-Pandharipande:CRC}; 
such equations are sometimes referred to as 
``modular anomaly equations''.
The above theorem implies that 
the Gromov--Witten potentials $F^{(g)}_Y = C^{(g)}_{Y,0}$, 
$F^{(g)}_\cX =C^{(g)}_{\cX,0}$  
are related by a change of generators: 
\[
\cF^{(g)}_Y = \left. \cF^{(g)}_\cX 
\right|_{\Delta_{\alg,\orb} \to \Delta_{\alg,\LR}}
\]
for $g\ge 2$. 
This is a formulation of Crepant Resolution Conjecture due to 
Lho and Pandharipande \cite{Lho-Pandharipande:CRC}. 
\end{remark}

\subsection{Calculation of Gromov--Witten Invariants 
and the Conifold Gap} \label{sec:calculation}

One can combine knowledge of the first $2g-1$ genus-$g$ Gromov--Witten invariants of $Y$ -- for instance from the topological vertex~\cite{AKMV,Li--Liu--Liu--Zhou} -- with the modularity results from \S\ref{sec:quasi-modularity} to determine all genus-$g$ Gromov--Witten invariants of $Y$ (and $\cX$), as we now explain.  This is essentially the calculation of Aganagic--Bouchard--Klemm~\cite[Section~6.4]{ABK}, placed in our rigorous mathematical setting.  

From Example~\ref{ex:Yukawa_conformal_limit} and Example~\ref{ex:Y_genus_one} we know the Yukawa coupling $Y_{\CY}$ and the genus-one data $d F^1_Y$ exactly.  
Let $g \geq 2$, suppose by induction that we know $F^h_Y$ 
exactly for $h<g$, and suppose that we know the first $2g-1$ 
genus-$g$ Gromov--Witten invariants: $n_{g,d}$, $0 \leq d \leq 2g-2$. 
Consider the transformation rule
\begin{equation}
  \label{eq:Feynman_sum_for_computation}
  C^{(g)}_{\alg,0} = C^{(g)}_{Y,0} + \sum_\Gamma \frac{1}{|\Aut(\Gamma)|} \Cont_{\Gamma}(\Delta, \{C^{(h)}_{Y,\bullet}\}_{h < g})
\end{equation}
between the correlation functions $C^{(g)}_{\alg,0}$ with respect to the algebraic opposite line bundle $P_{\alg}(a)$ and the correlation functions $C^{(g)}_{Y,\bullet}$ with respect to $P_{\LR}$ 
(see Notation \ref{nota:corr_wave_CY}).  
Here the precise value of $a$ is not important, but it is convenient to take $a = \frac{1}{12}$.  
In \eqref{eq:Feynman_sum_for_computation} we have divided the terms in the transformation rule \eqref{eq:Feynman_rule_fd} into the main term $C^{(g)}_{Y,0}$ and the sum over all Feyman graphs with a non-zero number of edges; also $\Delta = \Delta(P_{\LR},P_{\alg})$ from Corollary~\ref{cor:propagator_with_alg}.  Equation~\eqref{eq:Feynman_sum_for_computation} determines the first $2g-1$ terms of the Taylor series expansion of $C^{(g)}_{\alg,0}$ in $y$.  
On the other hand, we know that $C^{(g)}_{\alg,0}$ has a pole of order at most $2g-2$ at the conifold point, and is regular elsewhere, so (see Proposition \ref{prop:corr_alg}) 
\[
C^{(g)}_{\alg,0} = \sum_{i=0}^{2g-2} \frac{a_i}{(27 y + 1)^i}
\]
and the first $2g-1$ Taylor coefficients of $C^{(g)}_{\alg,0}$ determine $C^{(g)}_{\alg,0}$ exactly.  
Reading equation~\eqref{eq:Feynman_sum_for_computation} 
as an expression for $C^{(g)}_{Y,0}$ now determines $C^{(g)}_{Y,0}$, 
and hence all genus-$g$ Gromov--Witten invariants of $Y$, exactly.  

\begin{remark}
The correlation function $C^{(g)}_{Y,0}$ here is 
a `holomorphic ambiguity' of Bershadsky--Cecotti--Ooguri--Vafa 
\cite{BCOV}; Aganagic--Bouchard-Klemm \cite{ABK} 
denote it by $h_g^{(0)}$, see~Remark~\ref{rem:holomorphic_ambiguity}. 
\end{remark}

With the holomorphic ambiguities in hand, we can compute higher-genus Gromov--Witten invariants of $\cX$ too.  From Example~\ref{ex:Yukawa_conformal_limit} we have that the Yukawa coupling is
\[
Y_{\CY} = \frac{9}{\fry^3+27} \, d\fry^{\otimes 3},
\]
where $\fry = y^{-1/3}$, and by arguing as in Example~\ref{ex:Y_genus_one} we have that
\[
dF^1_{\cX} = \frac{1}{8} \frac{\fry^2}{\fry^3+27} \, d \fry + \frac{1}{2} \Delta_{\alg, \orb} Y_{\CY}
\]
where $\Delta_{\alg, \orb}$ is the propagator from Lemma~\ref{lem:propagator_with_alg}.  By inverting the mirror map $\frt = \frt(\fry)$ from Theorem~\ref{thm:mirrorsymmetryforXbar}, we can write $Y_{\CY}$ and $dF^1_{\cX}$ in terms of the orbifold flat co-ordinate $\frt$; thus we know both the Yukawa coupling and $dF^1_{\cX}$ near the orbifold point exactly.  Consider now the transformation rule
\begin{equation}
  \label{eq:Feynman_sum_for_orbifold}
  C^{(g)}_{\alg,0} = C^{(g)}_{\cX,0} + \sum_\Gamma \frac{1}{|\Aut(\Gamma)|} \Cont_{\Gamma}(\Delta, \{C^{(h)}_{\cX,\bullet}\}_{h < g})
\end{equation}
between the correlation functions $C^{(g)}_{\alg,0}$ and the correlation functions $C^{(g)}_{\cX,\bullet}$ with respect to $P_{\orb}$.  Here $\Delta = \Delta(P_{\orb},P_{\alg})$ from Corollary~\ref{cor:propagator_with_alg}.  Assuming by induction that we know the genus-$h$ orbifold Gromov--Witten invariants of $\cX$ for all $h<g$, or equivalently that we know $C^{(h)}_{\cX,n}$ for all $h< g$ and all $n$, equation \eqref{eq:Feynman_sum_for_orbifold} determines $C^{(g)}_{\cX,0}$, and hence all genus-$g$ orbifold Gromov--Witten invariants of $\cX$, exactly.

We can perform the same analysis at the conifold point, obtaining `conifold Gromov--Witten invariants'.  The conifold flat co-ordinate $x_\con$ from Notation~\ref{nota:Darboux_cusps} is the holomorphic solution to the Picard--Fuchs equations that satisfies $x_{\con} \sim 27y + 1$ near the conifold point (see Remark \ref{rem:x_con_asymptotics}).  
Thus we can find $x_{\con}$ by solving the Picard--Fuchs equations in power series:
\begin{align*}
  & x_{\con} = \textstyle 
             (27y+1) + \frac{11}{18}(27y+1)^{2} + \frac{109}{243}(27y+1)^{3} + \frac{9389}{26244}(27y+1)^{4} + \frac{88351}{295245}(27y+1)^{5} + \cdots \\
  \intertext{and}
  & 27y+1 = \textstyle
          x_{\con} - \frac{11}{18}x_{\con}^{2} + \frac{145}{486}x_{\con}^{3} - \frac{6733}{52488}x_{\con}^{4} + \frac{120127}{2361960}x_{\con}^{5} - \cdots 
\end{align*}
Example~\ref{ex:Yukawa_conformal_limit} determines the Yukawa coupling near the conifold point:
\begin{multline*}
  Y_{\CY}  = { - \frac{1}{3(1+27y)}} \Big( \frac{dy}{y} \Big)^{\otimes 3} = \\
  \Big( \textstyle \frac{1}{3}\frac{1}{x_{\con}}  -\frac{1}{54} + \frac{1}{2916}x_{\con} + \frac{7}{19683} \frac{x_{\con}^{2}}{2!} - \frac{529}{2361960}\frac{x_{\con}^{3}}{3!} + \frac{53}{531441}\frac{x_{\con}^{4}}{4!} - \frac{26093}{1205308188}\frac{x_{\con}^{5}}{5!} - \cdots \Big) dx_{\con}^{\otimes 3}
\end{multline*}
Arguing as in Example~\ref{ex:Y_genus_one} we have that
\[
dF^1_{\con} = {-\frac{1}{24}} \frac{1}{1+27y} \frac{dy}{y} + \frac{1}{2} \Delta_{\alg, \con} Y_{\CY}
\]
where $\Delta_{\alg, \con}$ is as in Lemma~\ref{lem:propagator_with_alg}, so that $dF^1_{\con}$ is
\[
\Big( \textstyle -\frac{1}{12}\frac{1}{x_{\con}} + \frac{5}{216} - \frac{1}{11664}x_{\mathit{con}} - \frac{5}{26244}\frac{x_{\mathit{con}}^{2}}{2!} + \frac{283}{3149280}\frac{x_{\mathit{con}}^{3}}{3!} - \frac{215}{6377292}\frac{x_{\mathit{con}}^{4}}{4!} + \frac{4517}{535692528}\frac{x_{\mathit{con}}^{5}}{5!} + \cdots \Big) d x_{\con}
\]
Note that both the Yukawa coupling and $dF^1_{\con}$ have a simple pole at the conifold point.  The transformation rule
\begin{equation}
  \label{eq:Feynman_sum_for_conifold}
  C^{(g)}_{\alg,0} = C^{(g)}_{\con,0} + \sum_\Gamma \frac{1}{|\Aut(\Gamma)|} \Cont_{\Gamma}(\Delta, \{C^{(h)}_{\con,\bullet}\}_{h < g})
\end{equation}
between the correlation functions $C^{(g)}_{\alg,0}$ and the correlation functions $C^{(g)}_{\con,\bullet}$ with respect to $P_{\con}$ inductively determines all the correlation functions $C^{(g)}_{\con,\bullet}$ from the holomorphic ambiguities $C^{(g)}_{\alg,0}$, exactly as above.  Here $\Delta = \Delta(P_{\con},P_{\alg})$ from Corollary~\ref{cor:propagator_with_alg}.  

The results of these calculations can be found in Appendix~\ref{sec:tables}.  We determine Gromov--Witten and Gopakumar--Vafa invariants of $Y$ up to genus~$7$ and degree~$15$, as well as orbifold Gromov--Witten invariants of $\cX$ up to genus~$7$ with up to~$27$ insertions of the orbifold class~$\unit_{\frac{1}{3}}$, and conifold Gromov--Witten invariants up to genus~$7$ and degree~$4$.  In particular we find that the genus-$g$ correlation function with respect to the conifold opposite line bundle $P_{\con}$, for $2 \leq g \leq 7$, has a pole of order of order $2g-2$ at the conifold point:
\begin{equation} 
\label{eq:gap}
C^{(g)}_{\con,0} = \frac{B_{2g}}{2g(2g-2)} 3^{g-1} x_{\con}^{2-2g} + \cdots
\end{equation} 
and that no other negative powers of $x_{\con}$ occur in the Laurent expansion of $C^{(g)}_{\con,0}$.  Thus we verify the ``conifold gap'' conjecture of Huang--Klemm~\cite{Huang--Klemm, Huang--Klemm--Quackenbush} up to genus~$7$.  

\subsection*{Source Code}

This paper is accompanied by fully-commented source code\footnote{\href{https://arxiv.org/src/1804.03292/anc}{\tt https://arxiv.org/src/1804.03292/anc}}, written in the computer algebra system Sage~\cite{Sage}.  This should allow the reader to verify the calculations presented here, and to perform similar calculations.  The source code, but not the text of this paper, is released under a Creative Commons~CC0 license~\cite{CC0}: see the included file {\tt LICENSE} for details.    If you make use of the source code in an academic or commercial context, please acknowledge this by including a reference or citation to this paper.  Part of the code, a Sage package for performing sums over Feynman graphs, makes use of data files produced by the program `boundary' by Stefano Maggiolo and Nicola Pagani~\cite{Maggiolo--Pagani}.

\raggedbottom

\pagebreak

\appendix

\section{Basic Facts About Connections With Logarithmic Singularities}

\begin{proposition}
\label{pro:fundamental_solution_logarithmic_connection}
Consider $\C^r \times\C^s$ with standard co-ordinates
$(x_1,\ldots,x_r,y_1,\ldots,y_s)$, 
a contractible open neighbourhood $U$ of $(0,0)$ in $\C^r \times \C^s$, 
and a trivial holomorphic vector bundle 
$E = \C^{N+1}\times (U\times \C)\to U\times \C$.  
Let $z$ denote the standard co-ordinate on the second factor
$\C$ of $U \times \C$. 
Suppose that $E$ has a ``partial" meromorphic flat connection 
$\nabla$  in the directions of $x$ and $y$: 
\begin{equation*}
    \nabla = d + \frac{1}{z} \Bigg(
    \sum_{i=1}^{r} A_i(x,y,z) \frac{dx_i}{x_i} +
    \sum_{j=1}^{s} B_j(x,y,z) dy_j 
    \Bigg)
\end{equation*}
where $A_1,\ldots,A_r$, $B_1,\ldots,B_s$ are matrix-valued 
holomorphic functions on $U \times \C$ such that, for $1 \leq i \leq r$,
$A_i(0,0,z)$ is nilpotent.  Then there
exists a unique matrix-valued function $L(x,y,z)$ of the form:
  \[
  L(x,y,z) = \tL(x,y,z) e^{-\sum_{i=1}^{r} A_i(0,0,z) \log x_i/z}
  \]
with $\tL$ regular along $U \times \C^\times$ and that
$\tL(0,0,z) = \id$, the identity matrix, such that:
\begin{equation}
    \label{eq:equations_defining_L}
    \begin{aligned}
      & \nabla_{x_k \partial_{x_k}} L(x,y,z) v = 0 & \quad & 1 \leq k \leq r \\
      & \nabla_{\partial_{y_k}} L(x,y,z) v= 0 && 1 \leq k \leq s
    \end{aligned}
  \end{equation}
for every $v\in \C^{N+1}$. 
\end{proposition}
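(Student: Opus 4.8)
The plan is to factor out an explicit, multivalued model solution that carries all of the ramification, thereby reducing the problem to the construction of a \emph{single-valued} gauge transformation $\tL$ across the divisor, and then to build $\tL$ as a convergent power series whose solvability is forced by the nilpotency of the residues. First I would record the one consequence of flatness that I need. Writing $\nabla = d + \frac1z\Omega$ with $\Omega = \sum_i A_i\frac{dx_i}{x_i} + \sum_j B_j\,dy_j$, flatness means $\frac1z d\Omega + \frac1{z^2}\Omega\wedge\Omega = 0$. Pairing the $\frac{dx_i}{x_i}\wedge\frac{dx_j}{x_j}$-component with the commuting logarithmic vector fields $x_i\partial_{x_i}$, $x_j\partial_{x_j}$ and restricting to $x=0$ gives $[A_i(0,0,z),A_j(0,0,z)]=0$, since the derivative terms $x_i\partial_{x_i}A_j$ vanish at $x=0$ by holomorphy of the $A_j$. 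Hence the exponent is a sum of commuting matrices and the model factor $P(x,z) := e^{-\sum_i A_i(0,0,z)\log x_i/z}$ is a genuine (multivalued) fundamental solution of the frozen-residue connection $d + \frac1z\sum_i A_i(0,0,z)\frac{dx_i}{x_i}$, i.e. $x_k\partial_{x_k}P = -\frac1z A_k(0,0,z)P$; differentiating the matrix exponential is legitimate precisely because the residues commute.

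Next I would substitute $L = \tL P$ into the defining equations $x_k\partial_{x_k}L = -\frac1z A_k L$ and $\partial_{y_j}L = -\frac1z B_j L$. As $P$ is independent of $y$, this is equivalent to the system
\[
x_k\partial_{x_k}\tL = \tfrac1z\bigl(\tL A_k(0,0,z) - A_k(x,y,z)\tL\bigr), \qquad \partial_{y_j}\tL = -\tfrac1z B_j(x,y,z)\tL,
\]
for a single-valued $\tL$ with normalization $\tL(0,0,z)=\id$. The apparent pole of the first equation at $x_k=0$ cancels because $A_k(x,y,z)$ agrees with $A_k(0,0,z)$ to leading order there, so its right-hand side vanishes at $x=0$ for $\tL=\id$.

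I would then construct $\tL$ as a power series $\tL = \sum_{m\in\Z_{\ge0}^r}\tL_m(y,z)\,x^m$, where $x^m = x_1^{m_1}\cdots x_r^{m_r}$. The coefficient $\tL_0(y,z)$ is the unique holomorphic solution with $\tL_0(0,z)=\id$ of the regular linear system $\partial_{y_j}\tL_0 = -\frac1z B_j(0,y,z)\tL_0$, which is integrable by flatness along $x=0$ and solvable on the contractible set $U\cap\{x=0\}$. For $|m|\ge 1$, choosing any $k$ with $m_k\ge 1$, the $x_k$-equation yields the purely algebraic recursion
\[
\Bigl(m_k + \tfrac1z\operatorname{ad}_{A_k(0,0,z)}\Bigr)\tL_m = \bigl(\text{expression in the } \tL_{m'} \text{ with } |m'|<|m|\bigr).
\]
Since $A_k(0,0,z)$ is nilpotent, $\operatorname{ad}_{A_k(0,0,z)}$ is nilpotent, so $m_k + \frac1z\operatorname{ad}_{A_k(0,0,z)}$ has the single eigenvalue $m_k\neq 0$ and is invertible, determining $\tL_m$ from the lower-order terms. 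Compatibility of the several $x_k$-equations with one another and with the $y$-equations at each order is exactly the flatness of $\nabla$.

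The main obstacle will be convergence of this formal series, i.e. establishing holomorphy of $\tL$ across the divisor rather than merely a formal solution. For this I would bound $\bigl(m_k+\frac1z\operatorname{ad}_{A_k(0,0,z)}\bigr)^{-1}$, which is the finite Neumann sum $\frac1{m_k}\sum_{l\ge 0}\bigl(-\frac1{m_k z}\operatorname{ad}_{A_k(0,0,z)}\bigr)^l$ of norm $O(1/m_k)$ for fixed $z\neq 0$, and feed this into a standard majorant estimate for regular-singular systems to obtain holomorphy on $U\times\C^\times$ (shrinking $U$ if necessary; flatness and simple-connectivity of $U\setminus D$ then propagate the solution back over all of $U$). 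Alternatively one can induct on $r$, reducing the analytic core to the classical one-variable regular-singular ODE with nilpotent residue. Finally, for uniqueness, if $L_1=\tL_1 P$ and $L_2=\tL_2 P$ both solve the system then $L_2^{-1}L_1$ is $\nabla$-flat, hence a $z$-dependent constant matrix $C$, so $\tL_1 = \tL_2\,(PCP^{-1})$; requiring $\tL_1$ to be single-valued and holomorphic across $x=0$ forces $[A_i(0,0,z),C]=0$ for all $i$, whence $PCP^{-1}=C$, and evaluating at $(x,y)=(0,0)$ with the normalization gives $C=\id$ and $L_1=L_2$.
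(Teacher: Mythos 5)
Your proof is correct in substance, but it takes a genuinely different route from the paper's: the paper disposes of this proposition in three lines by observing that nilpotent residues are non-resonant and citing Deligne's ``prolongement canonique'' results (\cite{Deligne}, Propositions 5.4 and 5.5) for the gauge equivalence with the frozen-residue connection; you instead reconstruct that theorem from scratch by the standard method (commutativity of the residues from flatness, the ansatz $L=\tL P$, a power-series recursion in $x$ solved by invertibility of $m_k$ plus a nilpotent operator, a majorant estimate, and a monodromy argument for uniqueness). What the citation buys is brevity and the global statement over all of $U$; what your argument buys is self-containedness and makes visible exactly where nilpotency (rather than mere non-resonance) enters.

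Two small imprecisions are worth repairing. First, the order-$m$ recursion operator is not $m_k+\tfrac1z\operatorname{ad}_{A_k(0,0,z)}$ but $m_k+\tfrac1z\bigl(L_{A_k(0,y,z)}-R_{A_k(0,0,z)}\bigr)$, where $L$ and $R$ denote left and right multiplication: the $x^0$-coefficient of $A_k(x,y,z)$ is $A_k(0,y,z)$, which still depends on $y$. Invertibility survives because left and right multiplications commute and because $A_k(0,y,z)$ is nilpotent --- being conjugate to $A_k(0,0,z)$ via $\tL_0$, which follows from the flatness identity $z\,\partial_{y_j}A_k(0,y,z)=[A_k(0,y,z),B_j(0,y,z)]$; note that this conjugacy is also precisely the order-zero compatibility of the $x_k$-equations with the $y$-equations, which you assert but do not check. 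Second, $U\setminus D$ is \emph{not} simply connected (its fundamental group is generated by loops around the branches of $D$); the single-valuedness of $\tL=LP^{-1}$ on all of $U\setminus D$ comes instead from the fact that the monodromy of the analytically continued $L$ around $x_i=0$ is exactly $e^{-2\pi\iu A_i(0,0,z)/z}$, which is cancelled by that of $P$. With those repairs the argument is complete.
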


\begin{proof} 
By the assumption, the residue endomorphism 
$N_i = A_i(0,0,z)/z$ along $x_i=0$ 
is non-resonant, i.e.~the eigenvalues of $N_i$ 
do not differ by positive integers. 
In this case, for each $z\in \C^\times$, 
the connection $\nabla|_{U\times \{z\}}$ 
is gauge equivalent to the connection 
$d + \sum_{i=1}^r N_i \frac{dx_i}{x_i}$ 
\cite{Deligne}*{5.4, 5.5}. 
The required gauge transformation is $\tL$ in the proposition. 
\end{proof} 

\begin{remark} 
The flat connection in the above proposition is only a ``partial" connection 
defined in the directions of $x$ and $y$. In what follows, 
we consider a ``full" flat connection extended in the direction of $z$: 
even in such a situation we still consider 
a matrix-valued function $L$ which solves the equations 
\eqref{eq:equations_defining_L} only in the directions of $x$ and $y$. 
Informally, we call such an $L$ a fundamental solution 
in the directions of $x$ and $y$.
\end{remark}

Let us recall Birkhoff factorization in the theory of loop groups
(see~\cite{Pressley--Segal}). 
A smooth loop $z\mapsto L(z)$ in $GL_{N+1}$ which is 
sufficiently close to the identity (in the ``big cell" of $LGL_{N+1}$) 
admits a unique factorization 
\[
L = L_+ L _-
\]
where $L_+$ is a holomorphic map from $\{z\in \C: |z|<1\}$ 
to $GL_{N+1}$ with smooth boundary values 
and $L_-$ is a holomorphic map from $\{z\in \Proj^1: |z|>1\}$ 
to $GL_{N+1}$ with smooth boundary values 
which equals the identity at $z=\infty$. 
In the following proposition, we regard the fundamental solution $L$ 
as an element of the loop group $LGL_{N+1}$ 
by restricting $z$ to lie in $S^1$ 
and consider its Birkhoff factorization. 
This method has been used in quantum cohomology in~\cite{Coates-Givental, Guest}.

\begin{proposition} 
\label{pro:Birkhoff_logarithmic_connection}
Suppose that the partial meromorphic flat connection $\nabla$ on $E$ in 
Proposition \ref{pro:fundamental_solution_logarithmic_connection} 
is extended in the $z$-direction to a meromorphic flat connection 
of the form: 
\begin{equation}
\label{eq:before_gauge}
\nabla = d + \frac{1}{z} 
\Bigg ( \sum_{i=1}^r A_i(x,y,z) \frac{dx_i}{x_i} 
+ \sum_{j=1}^s B_j(x,y,z) dy_j + 
C(x,y,z) \frac{dz}{z} \Bigg )
\end{equation} 
where $C(x,y,z)$ is a matrix-valued holomorphic function 
on $U\times \C$ such that $C(0,0,z)$ depends linearly 
on $z$, i.e.~$C(0,0,z) = C_0 + C_1 z$ for some 
constant matrices $C_0$ and $C_1$.  
Assume moreover that $A_i^{\circ} = A_i(0,0,z)$ is both nilpotent and 
independent of $z$. 
Let $L$ be the fundamental solution in the directions of 
$x$ and $y$ in Proposition \ref{pro:fundamental_solution_logarithmic_connection}.  
After shrinking $U$ if necessary, $L$ admits a Birkhoff factorization 
$L= L_+ L_-$ such that $L_+$ is holomorphic on $U\times \C$, 
and after gauge transformation by $L_+$, the connection 
$\nabla$ takes the form:
\begin{equation}
    \label{eq:after_gauge}
    L_+^{-1}\circ \nabla \circ L_+ = d + \frac{1}{z} 
\Bigg( \sum_{i=1}^{r} \tA_i(x,y) \frac{dx_i}{x_i} +
    \sum_{j=1}^{s} \tB_j(x,y) dy_j + 
    \tC(x,y,z) \frac{dz}{z}
    \Bigg)
\end{equation}
where $\tA_1,\ldots,\tA_r$, $\tB_1,\ldots,\tB_s, \tC$ are matrix-valued 
holomorphic functions on $U$ such that, for $1 \leq i \leq r$, 
$\tA_i|_{x_i=0}=A_i^{\circ}$ is
independent of $x_1,\dots,x_{i-1},x_{i+1},\dots,x_r,y_1,\dots,y_s$ 
and nilpotent and that $\tC(x,y,z)$ depends linearly on $z$: 
$\tC(x,y,z) = \tC_0(x,y) +C_1 z$
for some matrix-valued regular function $\tC_0$ on $U$ and some 
constant matrix $C_1$. 
\end{proposition}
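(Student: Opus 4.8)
The plan is to obtain $L_+$ as the positive part of a Birkhoff factorisation of the fundamental solution $L$ supplied by Proposition~\ref{pro:fundamental_solution_logarithmic_connection}, and then to read the asserted normal form off the flatness of the gauged connection. First I would write $L = \tL\,\Xi$ with $\Xi := e^{-\sum_{i=1}^r A_i^\circ \log x_i/z}$, where $A_i^\circ = A_i(0,0,z)$; since the $A_i^\circ$ are nilpotent, $z$-independent, and commute by flatness, $\Xi$ is a finite polynomial in the $\log x_i$ and in $z^{-1}$, holomorphic and invertible on $U\times\C^\times$. The factor $\tL$ is single-valued, holomorphic and invertible on $U\times\C^\times$, with $\tL(0,0,z)=\id$. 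Restricting $z$ to the unit circle and regarding $\tL(x,y,\cdot)$ as a loop in $GL_{N+1}$, its value at $(x,y)=0$ is the constant loop $\id$, which lies in the big cell; as $\tL$ varies holomorphically in $(x,y)$, after shrinking $U$ the loop stays in the big cell and admits a factorisation $\tL = L_+\tL_-$ depending holomorphically on $(x,y)$, with $L_+$ holomorphic on $\{|z|<1\}$ and $\tL_-$ holomorphic on $\{|z|>1\}$, $\tL_-(x,y,\infty)=\id$. I then set $L_-:=\tL_-\Xi$, so $L = L_+L_-$. To see that $L_+$ is holomorphic and invertible on all of $U\times\C$, note that on $\{|z|>1\}$ one has $L_+ = \tL\,\Xi^{-1}\tL_-^{-1}$, holomorphic there; this agrees on $S^1$ with the factorisation value, so by a removable-singularity argument across $S^1$, $L_+$ extends holomorphically to $\C$, and $\det L_+ = \det\tL/(\det\Xi\,\det\tL_-)$ is nowhere zero. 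Thus $\widetilde\nabla := L_+^{-1}\circ\nabla\circ L_+$ is the sought gauged connection, and $L_- = L_+^{-1}L$ is its fundamental solution in the $x,y$-directions.

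For the $dx_i$- and $dy_j$-components I would exploit two representations of $\tA_k$. Gauging gives $\tA_k = L_+^{-1}A_kL_+ + z\,x_kL_+^{-1}\partial_{x_k}L_+$, manifestly regular at $z=0$; on the other hand $\widetilde\nabla_{x_k\partial_{x_k}}L_-=0$ together with $x_k\partial_{x_k}\Xi = -z^{-1}A_k^\circ\Xi$ yields
\[
\tA_k = -z\,(x_k\partial_{x_k}\tL_-)\tL_-^{-1} + \tL_- A_k^\circ\tL_-^{-1},
\]
which is regular at $z=\infty$ because $\tL_-=\id+O(z^{-1})$. These agree on the annulus, so $\tA_k$ is holomorphic on $\Proj^1_z$ and hence independent of $z$; the same argument gives the $z$-independence of $\tB_j$. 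Evaluating the second expression at $z=\infty$ shows $\tA_k|_{x_k=0}=A_k^\circ$, which is nilpotent and, being a constant matrix, independent of the remaining variables.

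For the $\tfrac{dz}{z}$-component, gauging gives $\tC = zL_+^{-1}(z\partial_zL_+)+L_+^{-1}CL_+$, holomorphic at $z=0$; and $\tL(0,0,z)=\id$ forces $L_+(0,0,z)=\id$, whence $\tC(0,0,z)=C_0+C_1z$. I would then use the flatness identity $[\widetilde\nabla_{z\partial_z},\widetilde\nabla_{x_k\partial_{x_k}}]=0$, which, since $\tA_k$ is $z$-independent, reads
\[
x_k\partial_{x_k}\tC = -\tA_k + \tfrac1z[\tC,\tA_k],
\]
with its $y$-analogue. These identities express $\tC$ as the adjoint transport by $L_-$ of its origin value $C_0+C_1z$, plus a particular solution sourced by the $z$-independent form $\sum_k\tA_k\frac{dx_k}{x_k}+\sum_j\tB_j\,dy_j$; because $L_-=\id+O(z^{-1})$ is holomorphic at $z=\infty$, this representation shows that $\tC - C_1z$ is regular at $z=\infty$. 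As $\tC$ is also holomorphic on $U\times\C$, it is entire in $z$ of at most linear growth, hence $\tC = \tC_0(x,y)+C_1z$ with the \emph{same} constant $C_1$.

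The main obstacle I expect is making this last step fully rigorous: controlling $\tC$ at $z=\infty$ requires solving the adjoint flatness equations carefully and ruling out an essential singularity there (equivalently, bounding the $z$-degree of $\tC$), and it is precisely here that the hypotheses ``$A_i^\circ$ independent of $z$'' and ``$C(0,0,z)$ linear in $z$'' are used, in the spirit of the Birkhoff-factorisation arguments of Coates--Givental and Guest cited above. A subsidiary technical point, routine but worth stating explicitly, is the holomorphic dependence of the factorisation on the parameters $(x,y)$ and the attendant shrinking of $U$.
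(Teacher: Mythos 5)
Your construction of $L_+$ and your treatment of the $dx_i$- and $dy_j$-components follow the paper's proof very closely: the paper also Birkhoff-factorises $\tL$ rather than $L$ (using that $A_i^{\circ}$ is $z$-independent so that the factor $e^{-\sum_i A_i^{\circ}\log x_i/z}$ can be absorbed into $L_-$), and also pins down $\tA_i$ and $\tB_j$ by exhibiting them once in a form regular at $z=0$ (gauging by $L_+$) and once in a form regular at $z=\infty$ (gauging by $L_-$), then invoking Liouville. The paper extends $L_+$ across $|z|\ge 1$ by integrating the $z$-direction ODE rather than by your reflection argument, and gets $\tA_i|_{x_i=0}=A_i^{\circ}$ from the flatness relations rather than by evaluation at $z=\infty$; these are cosmetic differences, and your versions of these steps are fine (modulo the slip $L_+=\tL\,\Xi^{-1}\tL_-^{-1}$, which should read $L_+=L\,\Xi^{-1}\tL_-^{-1}=\tL\,\tL_-^{-1}$).

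The gap is exactly where you locate it: the regularity of $\tC-C_1z$ at $z=\infty$. Your ``adjoint transport by $L_-$ of the origin value'' cannot be run as stated, because $L_-=\tL_-\,e^{-\sum_i A_i^{\circ}\log x_i/z}$ blows up polynomially in $\log x_i$ as $x\to 0$, so the initial condition you would transport is not directly accessible at the origin; and without identifying the transported quantity you cannot bound the $z$-degree of $\tC$. The paper closes this with a computation your proposal omits: gauging by the \emph{full} fundamental solution $L$ kills the $x$- and $y$-components, so $L^{-1}\circ\nabla\circ L=d+D\,\frac{dz}{z}$, and flatness forces $D$ to be independent of $x$ and $y$; since $A_i^{\circ}$ is nilpotent, the explicit expression for $D$ is polynomial in the $\log x_i$, and extracting the constant term in $\log x_i$, $x$, $y$ gives $D=z^{-1}C(0,0,z)=z^{-1}C_0+C_1$. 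Substituting $L=L_+L_-$ into this identity yields
\[
L_+^{-1}(z\partial_zL_+)+z^{-1}L_+^{-1}CL_+\;=\;L_-(z\partial_zL_-^{-1})+L_-\bigl(z^{-1}C_0+C_1\bigr)L_-^{-1},
\]
whose left-hand side is $z^{-1}\tC$, holomorphic on $\C^\times$ with at most a simple pole at $z=0$, and whose right-hand side is visibly regular at $z=\infty$ with value $C_1$ (both $\tL_-$ and $e^{-\sum_i A_i^{\circ}\log x_i/z}$ tend to $\id$ there). Hence $\tC=\tC_0(x,y)+C_1z$ with the same constant $C_1$. Once you insert this identification of $D$ — which is where the hypotheses ``$A_i^{\circ}$ nilpotent and $z$-independent'' and ``$C(0,0,z)$ linear in $z$'' actually enter — your argument is complete.
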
 

\begin{proof} 
Recall that the fundamental solution is of the form 
$L = \tL e^{-\sum_{i=1}^r A_i^\circ \log x_i /z}$ 
with $\tL$ holomorphic on $U\times \C^\times$. 
Because $\tL(0,0,z) = \id$, $\tL$ admits the Birkhofff 
factorization $\tL = \tL_+ \tL_-$, shrinking $U$ if necessary. 
Because $A_i^\circ = A_i(0,0,z)$ is independent of $z$, 
this gives the Birkhoff factorization of $L$: 
$L_+ := \tL_+$ and 
$L_- := \tL_- e^{-\sum_{i=1}^r A_i^\circ \log x_i/z}$. 
Note that $L_+$ is holomorphic on $U\times \{z\in \C: |z|<1\}$ 
with smooth boundary values. 
The fundamental solution $L$ transforms the connection $\nabla$ 
to $L^{-1}\circ \nabla \circ L = d + D dz/z$ 
with $D$ given by 
\begin{align*} 
D & = L^{-1} (z \partial_z L + z^{-1} C L) \\
& = e^{\sum_{i=1}^r A_i^{\circ} \log x_i/z} 
\tL^{-1} \left( z \partial_z \tL + \tL z^{-1} \sum_{i=1}^r A_i^\circ 
\log x_i + z^{-1} C \tL \right) e^{-\sum_{i=1}^r A_i^\circ \log x_i/z}.  
\end{align*} 
The flatness of $d + D\frac{dz}{z}$ implies that 
$D$ is independent of $x$ and $y$. 
The above expression for $D$ is polynomial in $\log x_1,\dots,\log x_r$ as 
$A_i^\circ$ is nilpotent. Therefore taking the constant term 
in $\log x$, $x$ and $y$, we obtain 
\[
D = z^{-1}C(0,0,z).  
\]
Substituting $L_+L_-$ for $L$ in the equation 
$L^{-1}\circ \nabla \circ L = d + D dz/z$, 
we obtain 
\begin{align*} 
z L_+^{-1} (x_i\partial_{x_i}L_+) + L_+^{-1} A_i L_+ & = 
z L_- (x_i\partial_{x_i} L_-^{-1} )  \\
z L_+^{-1} (\partial_{y_j} L_+) + L_+^{-1} B_j L_+ & 
=z L_- (\partial_{y_j} L_-^{-1}) \\
L_+^{-1} (z \partial_z L_+) + L_+^{-1}z^{-1}C L_+
& = L_-  (z\partial_z L_-^{-1}) +  L_- z^{-1}C(0,0,z) L_-^{-1}  
\end{align*} 
The first two equations show that the left-hand sides 
are analytically continued to $U\times \Proj^1$ and 
thus are independent of $z$. 
Therefore, the gauge transformation $L_+$ transforms 
the connection matrices $A_i$, $B_j$ into $z$-independent connection matrices.  
The third equation implies that 
\[
L_+^{-1} (z\partial_z L_+) + L_+^{-1} z^{-1} C L_+ 
= z^{-1} \left[L_+^{-1} C L_+\right]_{z=0} + C_1 
\]
where $C(0,0,z) = C_0 + C_1 z$. 
Therefore $L_+$ transforms the connection matrix $z^{-1} C$ 
into a connection matrix of the form 
$z^{-1}\tC =z^{-1} \tC_0(x,y) + C_1$. 
On the other hand, this equation can be viewed as a  
differential equation for $L_+$ in the $z$-direction. 
Since the differential equation has no singularities 
on $\C^\times$, $L_+$ is analytically continued 
to a holomorphic function on $U\times \C$. 

Now we know that $L_+$ is holomorphic on $U\times \C$, 
and after gauge transformation by $L_+$ the connection 
$\nabla$ remains flat and takes the form:
\[
\tnabla = L_+^{-1} \circ \nabla \circ L_+ = 
d + \frac{1}{z} \Bigg(
  \sum_{i=1}^{r} \tA_i(x,y) \frac{dx_i}{x_i} +
  \sum_{j=1}^{s} \tB_j(x,y) dy_j + 
  \tC(x,y,z) \frac{dz}{z}
  \Bigg)
\]
where $\tA_1,\ldots,\tA_r$, $\tB_1,\ldots,\tB_s$ are 
independent of $z$, $\tA_i(0,0)= A_i^\circ$ is nilpotent 
for $1\le i\le r$, and $\tC$ is linear in $z$: 
$\tC=\tC_0(x,y) + z C_1$. 
It remains to show that $\tA_i|_{x_i=0}$ is independent of 
$x_1,\dots,x_{i-1},x_{i+1},\dots,x_r, y_1,\dots,y_s$; 
then it coincides with the nilpotent matrix $A_i^\circ$.  
Flatness of $\tnabla$ yields: 
\begin{align*} 
\partial_{y_j} \tA_i  &= x_i \partial_{x_i} \tB_j + z^{-1} [\tA_i, \tB_j] \\ 
x_j\partial_{x_j} \tA_i & = x_i\partial_{x_i} \tA_j + z^{-1}[\tA_i,\tA_j]  
\end{align*} 
This implies that $\tA_i|_{x_i=0}$ is independent of $x_1,\dots,x_{i-1},
x_{i+1},\dots,x_r,y_1,\dots,y_s$. 
\end{proof} 

\begin{proposition}
\label{pro:constant_pairing} 
Let $(E, \nabla)$ be the meromorphic flat connection 
in Proposition~\ref{pro:Birkhoff_logarithmic_connection}. 
Suppose that $E$ is equipped 
with a holomorphic non-degenerate pairing 
\[
(\cdot,\cdot)_E \colon (-1)^*\cO(E) \otimes_{U\times \C} 
\cO(E) \to \cO_{U\times \C}  
\]
such that $\big(\cO(E), \nabla, (\cdot,\cdot)_E\big)$ is a \logDTEP structure with base 
$(U,D)$, where $(-) \colon U \times \C \to U\times \C$ is the map 
sending $(x,y,z)$ to $(x,y,-z)$ (see Definition \ref{def:logDTEP}) 
and $D = \{x_1\cdots x_r =0\}$ is the normal crossing divisor. 
Suppose also that the Gram matrix of the pairing $(\cdot,\cdot)_E$ 
is independent of $z$ along $\{(0,0)\} \times \C$. 
After the gauge transformation by $L_+$ in Proposition 
\ref{pro:Birkhoff_logarithmic_connection}, the Gram matrix of the 
pairing $(\cdot,\cdot)_E$ with respect to the new trivialization is constant on $U\times \C$. 
\end{proposition}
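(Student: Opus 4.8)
The plan is to show that the gauge transformation $L_+$ from Proposition~\ref{pro:Birkhoff_logarithmic_connection}, which we have already arranged to normalize the connection, simultaneously makes the Gram matrix of $(\cdot,\cdot)_E$ constant. The key structural fact I would exploit is that a flat connection together with a flat pairing strongly constrains how the Gram matrix can vary: once the connection is brought to the normal form \eqref{eq:after_gauge} with $z$-independent coefficient matrices $\tA_i$, $\tB_j$ in the base directions, the compatibility condition \eqref{eq:TEP-properties} between $\nabla$ and $(\cdot,\cdot)_E$ forces the Gram matrix to be covariantly constant in a way that, combined with the boundary normalization along $\{(0,0)\}\times\C$, pins it down completely.

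First I would write $G(x,y,z)$ for the Gram matrix of $(\cdot,\cdot)_E$ in the trivialization obtained after the gauge transformation by $L_+$. The flatness/compatibility axiom $d\big((-)^\star s_1,s_2\big)_E = \big((-)^\star \nabla s_1, s_2\big)_E + \big((-)^\star s_1, \nabla s_2\big)_E$ translates into a first-order linear differential equation for $G$ in each of the directions $x_i\partial_{x_i}$, $\partial_{y_j}$, and $z\partial_z$. Concretely, writing the connection $1$-form after gauge transformation as $\frac{1}{z}\big(\sum_i \tA_i \frac{dx_i}{x_i} + \sum_j \tB_j\, dy_j + \tC\,\frac{dz}{z}\big)$, the compatibility condition in the base directions reads
\begin{align*}
x_i\partial_{x_i} G &= \tfrac{1}{z}\big(\tA_i(-z)^{\mathrm{T}} G - G\,\tA_i(z)\big) \\
\partial_{y_j} G &= \tfrac{1}{z}\big(\tB_j(-z)^{\mathrm{T}} G - G\,\tB_j(z)\big)
\end{align*}
and since $\tA_i$, $\tB_j$ are now independent of $z$, the right-hand sides are, after clearing $z$, polynomial expressions whose $z$-dependence I can analyze directly. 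I would next argue that $G$ is in fact independent of $z$: the $z\partial_z$-equation, using that $\tC(x,y,z) = \tC_0(x,y) + z C_1$ depends only linearly on $z$, shows that $z\partial_z G$ is controlled by these same matrices, and combining this with the base-direction equations and the assumption that $G$ is $z$-independent along $\{(0,0)\}\times\C$ forces $z$-independence everywhere on $U\times\C$ (one can expand $G$ in a Laurent series in $z$ and show each nonconstant coefficient must vanish). Once $G$ is known to be $z$-independent, the base-direction equations above reduce to $\tA_i^{\mathrm{T}} G = G\,\tA_i$ and $\tB_j^{\mathrm{T}} G = G\,\tB_j$ together with $x_i\partial_{x_i} G = 0$ and $\partial_{y_j} G = 0$, whence $G$ is constant on $U\times\C$.

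The main obstacle I anticipate is the bookkeeping in the $z\partial_z$ direction: extracting $z$-independence of $G$ requires carefully matching the linear-in-$z$ structure of $\tC$ against the symmetry condition $\big((-)^\star s_1, s_2\big)_E = (-)^\star\big((-)^\star s_2, s_1\big)_E$, which relates $G(x,y,z)$ to $G(x,y,-z)^{\mathrm{T}}$ and thereby couples the even and odd parts of the Laurent expansion. I would handle this by writing $G(x,y,z) = \sum_{k} G_k(x,y) z^k$, deriving from the base-direction equations that each $G_k$ with $k\neq 0$ satisfies a homogeneous flat transport equation with vanishing initial data along $\{(0,0)\}\times\C$ (by the hypothesis that $G$ is $z$-independent there, so $G_k|_{(0,0)}=0$ for $k\neq 0$), and invoking uniqueness of solutions of linear ODEs to conclude $G_k\equiv 0$ for $k\neq 0$. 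With $z$-independence established, the remaining steps are the routine covariant-constancy argument sketched above, which I would not grind through in detail.
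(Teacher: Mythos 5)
Your overall skeleton is the right one --- expand the Gram matrix $G$ in powers of $z$, feed it into the compatibility equation between $\nabla$ and $(\cdot,\cdot)_E$, and use the normalization along $\{(0,0)\}\times\C$ --- and your final reduction is correct: once $G$ is known to be independent of $z$, the identity $x_i\partial_{x_i}G=\frac{1}{z}\bigl(-\tA_i^{\mathrm T}G+G\tA_i\bigr)$ forces both sides to vanish separately, so $G$ is constant. The gap is in the step that is supposed to deliver $z$-independence.

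Writing $G=\sum_{k\ge 0}G_k(x,y)z^k$, the base-direction compatibility equations do \emph{not} give a closed, homogeneous transport equation for each $G_k$. What they give is the upward cascade
\[
x_i\parfrac{G_k}{x_i} \;=\; -\tA_i^{\mathrm T}G_{k+1}+G_{k+1}\tA_i,
\qquad
\parfrac{G_k}{y_j} \;=\; -\tB_j^{\mathrm T}G_{k+1}+G_{k+1}\tB_j,
\]
i.e.\ the derivative of $G_k$ is expressed in terms of $G_{k+1}$, not of $G_k$, so the system never closes at any finite $k$ and there is no linear ODE for $G_k$ alone to which you can apply uniqueness with the initial condition $G_k|_{(0,0)}=0$. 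For fixed $z\ne 0$, uniqueness of flat transport only yields $G(x,y,z)=\Phi(x,y,-z)^{-\mathrm T}G_0\,\Phi(x,y,z)^{-1}$ for the fundamental solution $\Phi$, which is manifestly $z$-dependent in general; $z$-independence follows from two further inputs that your proposal never uses. First, in the $y$-directions the fundamental solution is a series in $z^{-1}$ with constant term the identity, so $G(0,y,z)$ is holomorphic at $z=\infty$ with value $G_0$ there; since it is also holomorphic at $z=0$ by hypothesis, it is constant in $z$ by a Liouville argument. Second, in the $x$-directions one expands $G_k=\sum_K G^{k,K}(y)x^K$ and inducts on $|K|$: the cascade gives $k_i\,G^{k,K}=-(A_i^{\circ})^{\mathrm T}G^{k+1,K}+G^{k+1,K}A_i^{\circ}$ modulo terms already known to vanish, and one kills $G^{k,K}$ by iterating this and using that $M\mapsto -(A_i^{\circ})^{\mathrm T}M+MA_i^{\circ}$ is a \emph{nilpotent} operator, because the residue $A_i^{\circ}$ is nilpotent. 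That nilpotence hypothesis is doing real work (it rules out resonances between the multi-index weight and the eigenvalues of this adjoint operator), and its complete absence from your argument --- together with the reliance on a closed ODE that the equations do not provide --- is where the proof breaks down. The $z\partial_z$-equation and the symmetry $G(z)\leftrightarrow G(-z)^{\mathrm T}$ that you plan to juggle are, by contrast, not needed at all.
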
 

\begin{proof} 
In the new trivialization after the gauge transformation by $L_+$, the connection takes the form \eqref{eq:after_gauge} 
and the pairing is flat with respect to it. 
Let $G$ be the Gram matrix of $(\cdot,\cdot)_E$ 
in the new trivialization. We expand 
$G = \sum_{n\ge 0} G^{(n)}(x,y) z^n$. 
The flatness of the pairing with respect to the 
connection \eqref{eq:after_gauge} implies that 
\begin{align*} 
x_i \parfrac{}{x_i}G^{(n)} & = 
- A_i^{\rm T} G^{(n+1)} + G^{(n+1)} A_i \\ 
\parfrac{}{y_i} G^{(n)} & = 
- B_i^{\rm T} G^{(n+1)} + G^{(n+1)} B_i  
\end{align*} 
By assumption, we have $G^{(n)}(0,0)=0$ for $n>0$. 
The second equation then implies that 
$G^{(n)}(0,y)$ is independent of $y$ and is zero 
for $n>0$. Expand: 
\[
A_i(x,y) = \sum_{I} A_i^I (y) x^I, \quad 
G^{(n)}(x,y) = \sum_{I} G^{n,I}(y) x^I
\]
where $I \in \N^r$ is a multi-index. 
We have from the first equation that 
\[
k_i G^{n,K} = \sum_{K = I + J} ( - (A_i^J)^{\rm T} G^{n+1,I} 
+ G^{n+1,I} A_i^J) 
\]
Suppose by induction that
$G^{n,K}=0$ for all $K$ with $0\le |K|\le m$ and all $n\ge 0$ 
except for the case $(n,K) = (0,0)$. 
For a multi-index $K$ with $|K| = m+1$, we have 
\[
k_i G^{n,K} = - (A_i^0)^{\rm T} G^{n+1,K}
+ G^{n+1,K} A_i^0.
\]
We can choose $1\le i\le r$ such that $k_i\neq 0$, because $|K| = m+1 >0$. 
Note that $A_i^0 = A_i(0,y) = A_i(0,0)$ is nilpotent. 
Using the above equation recursively, 
we find that $G^{n,K} =0$ using the nilpotence of $A_i^0$. 
This completes the induction step and we have 
that $G$ is constant. 
This completes the proof.
\end{proof} 

\section{Notation for Graphs}
\label{sec:graph_notation}

We fix terminology for graphs as follows.  A graph $\Gamma$ is given
by four finite sets $V(\Gamma)$, $E(\Gamma)$, $L(\Gamma)$, $F(\Gamma)$
called (the set of) \emph{vertices}, \emph{edges}, \emph{legs} and
\emph{flags} respectively, together with incidence maps
\begin{align*}
  \pi_V \colon F(\Gamma)  \to V(\Gamma) , &&
  \pi_E \colon F(\Gamma) \to E(\Gamma) \sqcup L(\Gamma) 
\end{align*}
such that $|\pi_E^{-1}(e)|=2$ for each $e\in E(\Gamma)$ and
$|\pi_E^{-1}(l)|=1$ for each $l \in L(\Gamma)$.  We assign to an edge
$e$ a closed interval $I_e \cong [0,1]$, to a leg $l$ a half-open
interval $H_l \cong [0,1)$ and to a vertex $v$ a point $p_v$, and fix
identifications $\pi_E^{-1}(e) \cong \partial I_e$, $\pi_L^{-1}(l)
\cong \partial H_l$.  By identifying $I_e$,~$H_l$,~$p_v$ via the map $\pi_V
\colon F(\Gamma) \cong \bigsqcup \partial I_e \sqcup
\bigsqcup \partial H_l \to V(\Gamma) \cong \{p_v\}$, we get a
topological realization $|\Gamma|$ of the graph $\Gamma$.  We say that
$\Gamma$ is connected if $|\Gamma|$ is connected, and write
$\chi(\Gamma)=\chi(|\Gamma|)$ for the topological Euler
characteristic of $|\Gamma|$.

\begin{landscape}
  \setlength{\textwidth}{20cm}

  \section{Tables of Gromov--Witten and Gopakumar--Vafa Invariants}
  \label{sec:tables}
  This section records the results of the calculations described in \S\ref{sec:calculation}.  Entries in {\bf bold face} are input to the calculation: everything else is derived from these.  Our results are in agreement with calculations and conjectures in the literature, except for a handful of cases where we correct typographical errors.  These are indicated in {\ttfamily typewriter font}.

  \begin{table}[h]
\centering
\resizebox{\textwidth}{!}{
\begin{tabular}{cccccccccccccccc}
\toprule
&\multicolumn{15}{c}{Degree} \\\cmidrule(r){2-16} Genus &
1 &
2 &
3 &
4 &
5 &
6 &
7 &
8 &
9 &
10 &
11 &
12 &
13 &
14 &
15 \\
\midrule
0 &
$ 3 $ &
$ -\frac{45}{8} $ &
$ \frac{244}{9} $ &
$ -\frac{12333}{64} $ &
$ \frac{211878}{125} $ &
$ -\frac{102365}{6} $ &
$ \frac{64639725}{343} $ &
$ -\frac{1140830253}{512} $ &
$ \frac{6742982701}{243} $ &
$ -\frac{36001193817}{100} $ &
$ \frac{6425982732150}{1331} $ &
$ -\frac{9581431054999}{144} $ &
$ \frac{2061386799232608}{2197} $ &
$ -\frac{37021055156692659}{2744} $ &
$ \frac{73982838271394248}{375} $\\ \addlinespace[0.6ex]
1 &
$ \frac{1}{4} $ &
$ -\frac{3}{8} $ &
$ -\frac{23}{3} $ &
$ \frac{3437}{16} $ &
$ -\frac{43107}{10} $ &
$ 79522 $ &
$ -\frac{39826681}{28} $ &
$ \frac{803703117}{32} $ &
$ -\frac{15878598203}{36} $ &
$ \frac{154610243281}{20} $ &
$ -\frac{2979940731399}{22} $ &
$ \frac{7124283102275}{3} $ &
$ -\frac{541814449674696}{13} $ &
$ \frac{41013714834701487}{56} $ &
$ -\frac{64436279290065616}{5} $\\ \addlinespace[0.6ex]
2 &
$ \frac{1}{80} $ &
$ 0 $ &
$ \frac{3}{20} $ &
$ -\frac{514}{5} $ &
$ \frac{43497}{8} $ &
$ -\frac{1552743}{8} $ &
$ \frac{92569957}{16} $ &
$ -\frac{776658618}{5} $ &
$ \frac{311565686229}{80} $ &
$ -\frac{186103710373}{2} $ &
$ \frac{17161329260151}{8} $ &
$ -\frac{962191179023583}{20} $ &
$ \frac{5278121482133523}{5} $ &
$ -\frac{910206655959750921}{40} $ &
$ \frac{966725384014894851}{2} $\\ \addlinespace[0.6ex]
3 &
$ \frac{1}{2016} $ &
$ \frac{1}{336} $ &
$ \frac{1}{56} $ &
$ \frac{1480}{63} $ &
$ -\frac{1385717}{336} $ &
$ \frac{34386105}{112} $ &
$ -\frac{4563656185}{288} $ &
$ \frac{27816690931}{42} $ &
$ -\frac{771022095237}{32} $ &
$ \frac{400254094073885}{504} $ &
$ -\frac{2722614157619637}{112} $ &
$ \frac{9834759858880697}{14} $ &
$ -\frac{1628439950424111871}{84} $ &
$ \frac{4121486387127690091}{8} $ &
$ -13266967197002009748 $\\ \addlinespace[0.6ex]
4 &
$ \frac{1}{57600} $ &
$ \frac{1}{1920} $ &
$ \frac{7}{1600} $ &
$ -\frac{2491}{900} $ &
$ \frac{3865243}{1920} $ &
$ -\frac{217225227}{640} $ &
$ \frac{364416184789}{11520} $ &
$ -\frac{316806697367}{150} $ &
$ \frac{726200060335821}{6400} $ &
$ -\frac{15051658211781731}{2880} $ &
$ \frac{137299697068139103}{640} $ &
$ -\frac{3220668414546452353}{400} $ &
$ \frac{84382375637970689569}{300} $ &
$ -\frac{14824230312581305514377}{1600} $ &
$ \frac{46493722208852997775773}{160} $\\ \addlinespace[0.6ex]
5 &
$ \frac{1}{1774080} $ &
$ \frac{1}{14080} $ &
$ \frac{61}{49280} $ &
$ \frac{4471}{22176} $ &
$ -\frac{65308319}{98560} $ &
$ \frac{5383395285}{19712} $ &
$ -\frac{17012987874515}{354816} $ &
$ \frac{64688948714407}{12320} $ &
$ -\frac{11945278310269797}{28160} $ &
$ \frac{350595910152610339}{12672} $ &
$ -\frac{13785482612596271967}{8960} $ &
$ \frac{465731911358273599411}{6160} $ &
$ -\frac{248785036687799870780761}{73920} $ &
$ \frac{6809369636793660022747587}{49280} $ &
$ -\frac{65332009871525107439528907}{12320} $\\ \addlinespace[0.6ex]
6 &
$ \frac{691}{39626496000} $ &
$ \frac{11747}{1320883200} $ &
$ \frac{377977}{1100736000} $ &
$ -\frac{4874687}{1238328000} $ &
$ \frac{202790371913}{1320883200} $ &
$ -\frac{24163714857019}{146764800} $ &
$ \frac{64139775474690313}{1132185600} $ &
$ -\frac{4310034999040379953}{412776000} $ &
$ \frac{991900415691784747}{768000} $ &
$ -\frac{239501070313053131971001}{1981324800} $ &
$ \frac{1350252537724641260419439}{146764800} $ &
$ -\frac{164221788876199036010533573}{275184000} $ &
$ \frac{2164019137440273660185654977}{63504000} $ &
$ -\frac{3029955595814315413860062951}{1728000} $ &
$ \frac{433647145446345870048459770393}{5241600} $\\ \addlinespace[0.6ex]
7 &
$ \frac{1}{1916006400} $ &
$ \frac{31}{29030400} $ &
$ \frac{703}{7603200} $ &
$ \frac{11705}{4790016} $ &
$ -\frac{8293308997}{319334400} $ &
$ \frac{540810103943}{7096320} $ &
$ -\frac{20390495664732131}{383201280} $ &
$ \frac{675146333220270311}{39916800} $ &
$ -\frac{77105305044973449611}{23654400} $ &
$ \frac{42491482875357032433349}{95800320} $ &
$ -\frac{1655992931289521245824679}{35481600} $ &
$ \frac{13400468324230111992071993}{3326400} $ &
$ -\frac{23713389495101796065291526451}{79833600} $ &
$ \frac{1025143548772512485242765294187}{53222400} $ &
$ -\frac{2489343025368827360553366826757}{2217600} $\\ \addlinespace[0.6ex]

\bottomrule\\
\end{tabular}
}
\caption{Some Gromov--Witten invariants of $Y = K_{\Proj^2}$}
\end{table}

  \bigskip

  \begin{table}[h]
\centering
\resizebox{\textwidth}{!}{
\begin{tabular}{cccccccccccccccc}
\toprule
&\multicolumn{15}{c}{Degree} \\\cmidrule(r){2-16} Genus &
1 &
2 &
3 &
4 &
5 &
6 &
7 &
8 &
9 &
10 &
11 &
12 &
13 &
14 &
15 \\
\midrule
0 &
$ 3 $ &
$ -6 $ &
$ 27 $ &
$ -192 $ &
$ 1695 $ &
$ -17064 $ &
$ 188454 $ &
$ -2228160 $ &
$ 27748899 $ &
$ -360012150 $ &
$ 4827935937 $ &
$ -66537713520 $ &
$ 938273463465 $ &
$ -13491638200194 $ &
$ 197287568723655 $\\ \addlinespace[0.6ex]
1 &
$ 0 $ &
$ 0 $ &
$ -10 $ &
$ 231 $ &
$ -4452 $ &
$ 80948 $ &
$ -1438086 $ &
$ 25301295 $ &
$ -443384578 $ &
$ 7760515332 $ &
$ -135854179422 $ &
$ 2380305803719 $ &
$ -41756224045650 $ &
$ 733512068799924 $ &
$ -12903696488738656 $\\ \addlinespace[0.6ex]
2 &
$ \bf 0 $ &
$ \bf 0 $ &
$ 0 $ &
$ -102 $ &
$ 5430 $ &
$ -194022 $ &
$ 5784837 $ &
$ -155322234 $ &
$ 3894455457 $ &
$ -93050366010 $ &
$ 2145146041119 $ &
$ -48109281322212 $ &
$ 1055620386953940 $ &
$ -22755110195405850 $ &
$ 483361869975894765 $\\ \addlinespace[0.6ex]
3 &
$ \bf 0 $ &
$ \bf 0 $ &
$ \bf 0 $ &
$ \bf 15 $ &
$ -3672 $ &
$ 290853 $ &
$ \tt -15363990 $ &
$ 649358826 $ &
$ -23769907110 $ &
$ 786400843911 $ &
$ -24130293606924 $ &
$ 698473748830878 $ &
$ -19298221675559646 $ &
$ 513289541565539286 $ &
$ -13226687073790872894 $\\ \addlinespace[0.6ex]
4 &
$ \bf 0 $ &
$ \bf 0 $ &
$ \bf 0 $ &
$ \bf 0 $ &
$ \bf 1386 $ &
$ \bf -290400 $ &
$ 29056614 $ &
$ -2003386626 $ &
$ 109496290149 $ &
$ -5094944994204 $ &
$ 210503102300868 $ &
$ -7935125096754762 $ &
$ 278055282896359878 $ &
$ -9179532480730484952 $ &
$ 288379973286696180135 $\\ \addlinespace[0.6ex]
5 &
$ \bf 0 $ &
$ \bf 0 $ &
$ \bf 0 $ &
$ \bf 0 $ &
$ \bf -270 $ &
$ \bf 196857 $ &
$ \bf -40492272 $ &
$ \bf 4741754985 $ &
$ -396521732268 $ &
$ 26383404443193 $ &
$ -1485630816648252 $ &
$ 73613315148586317 $ &
$ -3295843339183602162 $ &
$ 135875843241729533613 $ &
$ -5230662528295888702200 $\\ \addlinespace[0.6ex]
6 &
$ \bf 0 $ &
$ \bf 0 $ &
$ \bf 0 $ &
$ \bf 0 $ &
$ \bf 21 $ &
$ \bf -90390 $ &
$ \bf 42297741 $ &
$ \bf -8802201084 $ &
$ \bf 1156156082181 $ &
$ \bf -111935744536416 $ &
$ 8698748079113310 $ &
$ -572001241783007370 $ &
$ 32970159716836634586 $ &
$ -1707886552705077581628 $ &
$ 80979854504456065293006 $\\ \addlinespace[0.6ex]
7 &
$ \bf 0 $ &
$ \bf 0 $ &
$ \bf 0 $ &
$ \bf 0 $ &
$ \bf 0 $ &
$ \bf 27538 $ &
$ \bf -33388020 $ &
$ \bf 12991744968 $ &
$ \bf -2756768768616 $ &
$ \bf 395499033672279 $ &
$ \bf -42968546119317066 $ &
$ \bf 3786284014554551293 $ &
$ -283123099266200799858 $ &
$ 18542695412600660315361 $ &
$ -1088520963699453440916068 $\\ \addlinespace[0.6ex]

\bottomrule\\
\end{tabular}
}
\caption{Some Gopakumar--Vafa invariants of $Y = K_{\Proj^2}$}
\end{table}

  The input values for $2 \leq g \leq 4$ are taken from work of Klemm--Zaslow~\cite{Klemm--Zaslow}.  The input values for $5 \leq g \leq 7$ are taken from work of Haghighat--Klemm--Rauch~\cite{Haghighat--Klemm--Rauch}.  The $g=3$, $d=7$ Gopakumar--Vafa invariant corrects a typographical error in~\cite{Klemm--Zaslow}*{Figure~2}.

  \raggedbottom
  \pagebreak

  \begin{table}[h]
\centering
\resizebox{\textwidth}{!}{
\begin{tabular}{cccccccccccccc}
\toprule
&\multicolumn{13}{c}{$i$} \\\cmidrule(r){2-14} Genus &
0 &
1 &
2 &
3 &
4 &
5 &
6 &
7 &
8 &
9 &
10 &
11 &
12 \\
\midrule
2 &
$ -\frac{1}{2160} $ &
$ \frac{1}{4320} $ &
$ -\frac{1}{7680} $ &
$ 0 $ &
$ 0 $ &
$ 0 $ &
$ 0 $ &
$ 0 $ &
$ 0 $ &
$ 0 $ &
$ 0 $ &
$ 0 $ &
$ 0 $\\ \addlinespace[0.6ex]
3 &
$ \frac{1}{544320} $ &
$ -\frac{41}{1088640} $ &
$ \frac{673}{4354560} $ &
$ -\frac{809}{3870720} $ &
$ \frac{373}{4128768} $ &
$ 0 $ &
$ 0 $ &
$ 0 $ &
$ 0 $ &
$ 0 $ &
$ 0 $ &
$ 0 $ &
$ 0 $\\ \addlinespace[0.6ex]
4 &
$ -\frac{7}{41990400} $ &
$ \frac{863}{48988800} $ &
$ -\frac{37403}{209018880} $ &
$ \frac{727859}{1175731200} $ &
$ -\frac{5397557}{5573836800} $ &
$ \frac{10531379}{14863564800} $ &
$ -\frac{10450031}{52848230400} $ &
$ 0 $ &
$ 0 $ &
$ 0 $ &
$ 0 $ &
$ 0 $ &
$ 0 $\\ \addlinespace[0.6ex]
5 &
$ \frac{3161}{77598259200} $ &
$ -\frac{223829}{19399564800} $ &
$ \frac{11888603}{51732172800} $ &
$ -\frac{249278237}{165542952960} $ &
$ \frac{46781311369}{9932577177600} $ &
$ -\frac{140380033}{17517772800} $ &
$ \frac{20544043433}{2690729902080} $ &
$ -\frac{2921058683}{761014517760} $ &
$ \frac{117292943}{147639500800} $ &
$ 0 $ &
$ 0 $ &
$ 0 $ &
$ 0 $\\ \addlinespace[0.6ex]
6 &
$ -\frac{6261257}{317764871424000} $ &
$ \frac{13190237077}{1271059485696000} $ &
$ -\frac{7195198914889}{20336951771136000} $ &
$ \frac{19504344827507}{5084237942784000} $ &
$ -\frac{12996726530356181}{650782456676352000} $ &
$ \frac{1825708681106329}{30989640794112000} $ &
$ -\frac{24530495575515863}{231389317929369600} $ &
$ \frac{293773393899971387}{2468152724579942400} $ &
$ -\frac{81366182634551611}{997233424072704000} $ &
$ \frac{228929596808349497}{7313045109866496000} $ &
$ -\frac{26830970439753803}{5200387633682841600} $ &
$ 0 $ &
$ 0 $\\ \addlinespace[0.6ex]
7 &
$ \frac{263323}{17958454272000} $ &
$ -\frac{3447502789}{277322069606400} $ &
$ \frac{242936799515323}{366065131880448000} $ &
$ -\frac{5348522046750959}{488086842507264000} $ &
$ \frac{28861258637561351}{334688120576409600} $ &
$ -\frac{506419154595610651}{1301564913352704000} $ &
$ \frac{78865187127719825617}{71400132389634048000} $ &
$ -\frac{689067633277627125149}{333200617818292224000} $ &
$ \frac{128297351013478670161}{49363054491598848000} $ &
$ -\frac{23861155478166955057}{10969567664799744000} $ &
$ \frac{4906681264436796773633}{4212313983283101696000} $ &
$ -\frac{676965266387896064113}{1872139548125822976000} $ &
$ \frac{11779039621933858193}{237732006111215616000} $\\ \addlinespace[0.6ex]

\bottomrule\\
\end{tabular}
}
\caption{The coefficient of $(27y+1)^{-i}$ in the expansion of the genus-$g$ holomorphic ambiguity}
\end{table}

  This table records the Laurent expansion at the conifold point of the genus-$g$ holomorphic ambiguity $\hC^{(g)}_{\alg,0}$.  Here $a=\frac{1}{12}$.
  
  \bigskip

  \begin{table}[h]
\centering
\resizebox{\textwidth}{!}{
\begin{tabular}{cccccccccc}
\toprule
&\multicolumn{9}{c}{Number of insertions of $\unit_{\frac{1}{3}}$} \\\cmidrule(r){2-10} Genus &
3 &
6 &
9 &
12 &
15 &
18 &
21 &
24 &
27 \\
\midrule
0 &
$ \frac{1}{3} $ &
$ -\frac{1}{27} $ &
$ \frac{1}{9} $ &
$ -\frac{1093}{729} $ &
$ \frac{119401}{2187} $ &
$ -\frac{27428707}{6561} $ &
$ \frac{102777653467}{177147} $ &
$ -\frac{210755831694887}{1594323} $ &
$ \frac{24487690049215235}{531441} $\\ \addlinespace[0.6ex]
1 &
$ 0 $ &
$ \frac{1}{243} $ &
$ -\frac{14}{243} $ &
$ \frac{13007}{6561} $ &
$ -\frac{8354164}{59049} $ &
$ \frac{9730293415}{531441} $ &
$ -\frac{6226371315626}{1594323} $ &
$ \frac{18368708604854737}{14348907} $ &
$ -\frac{8774479580126347360}{14348907} $\\ \addlinespace[0.6ex]
2 &
$ \tt \frac{1}{19440} $ &
$ -\frac{13}{11664} $ &
$ \frac{20693}{524880} $ &
$ -\frac{12803923}{4723920} $ &
$ \tt \frac{314291111}{944784} $ &
$ -\frac{8557024202467}{127545840} $ &
$ \frac{7964469005139389}{382637520} $ &
$ -\frac{2165652826095222589}{229582512} $ &
$ \frac{12381768449768685533597}{2066242608} $\\ \addlinespace[0.6ex]
3 &
$ \tt -\frac{31}{2449440} $ &
$ \frac{11569}{22044960} $ &
$ -\frac{2429003}{66134880} $ &
$ \tt \frac{871749323}{198404640} $ &
$ -\frac{1520045984887}{1785641760} $ &
$ \frac{450933448038569}{1785641760} $ &
$ -\frac{39050288662607269}{357128352} $ &
$ \frac{9607109907927162237163}{144636982560} $ &
$ -\frac{14262745321381354134470275}{260346568608} $\\ \addlinespace[0.6ex]
4 &
$ \frac{313}{62985600} $ &
$ \tt -\frac{1889}{5038848} $ &
$ \frac{115647179}{2550916800} $ &
$ -\frac{29321809247}{3401222400} $ &
$ \frac{22766570703031}{9183300480} $ &
$ -\frac{855627159576453613}{826497043200} $ &
$ \frac{562917323177869058989}{929809173600} $ &
$ -\frac{2145113324078184246985223}{4463084033280} $ &
$ \frac{10131783225119510967855855547}{20083878149760} $\\ \addlinespace[0.6ex]
5 &
$ -\frac{519961}{174596083200} $ &
$ \frac{196898123}{523788249600} $ &
$ -\frac{339157983781}{4714094246400} $ &
$ \frac{78658947782147}{3856986201600} $ &
$ -\frac{1057430723091383537}{127280544652800} $ &
$ \frac{5402182862315780935057}{1145524901875200} $ &
$ -\frac{37382736073982969299220839}{10309724116876800} $ &
$ \frac{1518869184768410803025171219}{412388964675072} $ &
$ -\frac{17948119840640630466672146029249}{3711500682075648} $\\ \addlinespace[0.6ex]
6 &
$ \frac{14609730607}{5719767685632000} $ &
$ -\frac{258703053013}{514779091706880} $ &
$ \frac{2453678654644313}{17159303056896000} $ &
$ -\frac{40015774193969601803}{694951773804288000} $ &
$ \frac{5342470197951654213739}{166788425713029120} $ &
$ -\frac{37563825079969605317846887}{1563641491059648000} $ &
$ \frac{8019780014405254969486119183119}{337746562068883968000} $ &
$ -\frac{3070502316719712753650867310009617}{101323968620665190400} $ &
$ \frac{29810463687802263588732581882184525323}{607943811723991142400} $\\ \addlinespace[0.6ex]
7 &
$ -\frac{1122101011}{377127539712000} $ &
$ \frac{2196793414201}{2545610893056000} $ &
$ -\frac{2127526097369539}{6109466143334400} $ &
$ \frac{26373375124439869913}{137462988225024000} $ &
$ -\frac{350087626432381439100911}{2474333788050432000} $ &
$ \frac{1155026829373310723028673}{8435228822899200} $ &
$ -\frac{11403268061303112561993941625137}{66807012277361664000} $ &
$ \frac{16199636944299442178776584530770121}{60126311049625497600} $ &
$ -\frac{4251179144363528862957474967991845811}{8016841473283399680} $\\ \addlinespace[0.6ex]

\bottomrule\\
\end{tabular}
}
\caption{Some Gromov--Witten invariants of $\cX = \big[\C^3/\mu_3\big]$}
\end{table}

  The five entries in {\tt typewriter font} here correct typographical errors in the
  corresponding table on page 808 of~\cite{ABK}.

  \pagebreak

  \begin{table}[h]
\centering
\resizebox{\textwidth}{!}{
\begin{tabular}{cccccccccccccccccc}
\toprule
&\multicolumn{17}{c}{Degree} \\\cmidrule(r){2-18} Genus &
-12 &
-11 &
-10 &
-9 &
-8 &
-7 &
-6 &
-5 &
-4 &
-3 &
-2 &
-1 &
0 &
1 &
2 &
3 &
4 \\
\midrule
2 &
$ 0 $ &
$ 0 $ &
$ 0 $ &
$ 0 $ &
$ 0 $ &
$ 0 $ &
$ 0 $ &
$ 0 $ &
$ 0 $ &
$ 0 $ &
$ -\frac{1}{80} $ &
$ 0 $ &
$ -\frac{13}{25920} $ &
$ \frac{1}{19440} $ &
$ -\frac{3187}{377913600} $ &
$ \frac{239}{255091680} $ &
$ -\frac{19151}{257132413440} $\\ \addlinespace[0.6ex]
3 &
$ 0 $ &
$ 0 $ &
$ 0 $ &
$ 0 $ &
$ 0 $ &
$ 0 $ &
$ 0 $ &
$ 0 $ &
$ \frac{1}{112} $ &
$ 0 $ &
$ 0 $ &
$ 0 $ &
$ \frac{121}{58786560} $ &
$ -\frac{1}{1469664} $ &
$ \frac{23855}{179992689408} $ &
$ -\frac{557}{24794911296} $ &
$ \frac{15575867}{4498788705546240} $\\ \addlinespace[0.6ex]
4 &
$ 0 $ &
$ 0 $ &
$ 0 $ &
$ 0 $ &
$ 0 $ &
$ 0 $ &
$ -\frac{3}{160} $ &
$ 0 $ &
$ 0 $ &
$ 0 $ &
$ 0 $ &
$ 0 $ &
$ -\frac{1093}{31744742400} $ &
$ \frac{7}{377913600} $ &
$ -\frac{6830569}{1190155742208000} $ &
$ \frac{1561279}{1205032688985600} $ &
$ -\frac{1444309519}{5726315338059571200} $\\ \addlinespace[0.6ex]
5 &
$ 0 $ &
$ 0 $ &
$ 0 $ &
$ 0 $ &
$ \frac{27}{352} $ &
$ 0 $ &
$ 0 $ &
$ 0 $ &
$ 0 $ &
$ 0 $ &
$ 0 $ &
$ 0 $ &
$ \frac{9841}{8380611993600} $ &
$ -\frac{809}{942818849280} $ &
$ \frac{118418785}{326612060022657024} $ &
$ -\frac{113975899}{1002105184160424960} $ &
$ \frac{11188464609233}{393947589997146260275200} $\\ \addlinespace[0.6ex]
6 &
$ 0 $ &
$ 0 $ &
$ -\frac{18657}{36400} $ &
$ 0 $ &
$ 0 $ &
$ 0 $ &
$ 0 $ &
$ 0 $ &
$ 0 $ &
$ 0 $ &
$ 0 $ &
$ 0 $ &
$ -\frac{61203943}{926602365072384000} $ &
$ \frac{1276277}{21059144660736000} $ &
$ -\frac{279842720162009}{9052836032762704465920000} $ &
$ \frac{984486511}{81990424158580224000} $ &
$ -\frac{338480893523407}{87026603972096855678976000} $\\ \addlinespace[0.6ex]
7 &
$ \frac{81}{16} $ &
$ 0 $ &
$ 0 $ &
$ 0 $ &
$ 0 $ &
$ 0 $ &
$ 0 $ &
$ 0 $ &
$ 0 $ &
$ 0 $ &
$ 0 $ &
$ 0 $ &
$ \frac{550838251}{100073055427817472000} $ &
$ -\frac{7943}{1309171316428800} $ &
$ \frac{27776712091}{7792369912031464488960} $ &
$ -\frac{1177971963811}{788977453593185779507200} $ &
$ \frac{11286380576743987}{21070001524321295871639552000} $\\ \addlinespace[0.6ex]

\bottomrule\\
\end{tabular}
}
\caption{Some conifold Gromov--Witten invariants}
\end{table}

  This table records the expansion coefficients of the genus-$g$ conifold correlation function $C^{(g)}_{\con,0}$ as a Laurent series in the conifold flat co-ordinate:
  \[
  C^{(g)}_{\con,0} = \sum_{d \in \Z} n^{\con}_{g,d} \, x_{\con}^d
  \]
  See Notation~\ref{nota:Darboux_cusps} and Notation~\ref{nota:corr_wave_CY} for precise definitions.  The genus-$g$ conifold correlation function has a pole of order $2g-2$ at the conifold point, so $n_{g,d}^{\con}$ vanishes for $d<2-2g$ and $n_{g,2-2g}^{\con}$ is non-zero.  The leading term
\[
C^{(g)}_{\con,0} = \frac{B_{2g}}{2g(2g-2)} 3^{g-1} x_{\con}^{2-2g} + \cdots
\]
agrees with predictions in the literature, up to rescaling $x_{\con}$ by $\sqrt{3}$ to match with~\cite{BCOV,Ghoshal--Vafa} or $\sqrt{-3}$ to match with~\cite{Huang--Klemm--Quackenbush}.  Note that no other negative powers of $x_{\con}$ occur, as predicted by~\cite{Huang--Klemm, Huang--Klemm--Quackenbush}: this is the ``conifold gap''.

 \end{landscape}

\begin{bibdiv}
  \begin{biblist}
    \bib{AGV}{article}{
      author={Abramovich, Dan},
      author={Graber, Tom},
      author={Vistoli, Angelo},
      title={Gromov-Witten theory of Deligne-Mumford stacks},
      journal={Amer. J. Math.},
      volume={130},
      date={2008},
      number={5},
      pages={1337--1398},
      issn={0002-9327},
    }
    
    \bib{ABK}{article}{
      author={Aganagic, Mina},
      author={Bouchard, Vincent},
      author={Klemm, Albrecht},
      title={Topological strings and (almost) modular forms},
      journal={Comm. Math. Phys.},
      volume={277},
      date={2008},
      number={3},
      pages={771--819},
      issn={0010-3616},
    }

    \bib{AKMV}{article}{
      author={Aganagic, Mina},
      author={Klemm, Albrecht},
      author={Mari\~no, Marcos},
      author={Vafa, Cumrun},
      title={The topological vertex},
      journal={Comm. Math. Phys.},
      volume={254},
      date={2005},
      number={2},
      pages={425--478},
      issn={0010-3616},
    }

\bib{ASYZ}{article}{
   author={Alim, Murad},
   author={Scheidegger, Emanuel},
   author={Yau, Shing-Tung},
   author={Zhou, Jie},
   title={Special polynomial rings, quasi modular forms and duality of
   topological strings},
   journal={Adv. Theor. Math. Phys.},
   volume={18},
   date={2014},
   number={2},
   pages={401--467},
   issn={1095-0761},
}

    \bib{Barannikov}{article}{
      author={Barannikov, Serguei},
      title={Quantum periods. I. Semi-infinite variations of Hodge structures},
      journal={Internat. Math. Res. Notices},
      date={2001},
      number={23},
      pages={1243--1264},
      issn={1073-7928},
    }

    \bib{Batyrev:MHS_affine}{article}{
      author={Batyrev, Victor V.},
      title={Variations of the mixed Hodge structure of affine hypersurfaces in
        algebraic tori},
      journal={Duke Math. J.},
      volume={69},
      date={1993},
      number={2},
      pages={349--409},
      issn={0012-7094},
    }

    \bib{Behrend--Fantechi}{article}{
      author={Behrend, Kai},
      author={Fantechi, Barbara},
      title={The intrinsic normal cone},
      journal={Invent. Math.},
      volume={128},
      date={1997},
      number={1},
      pages={45--88},
      issn={0020-9910},
    }
   \bib{BCOV:HA}{article}{
AUTHOR = {Bershadsky, M.}, 
author={Cecotti, S.}, 
author={Ooguri, H.}, 
author={Vafa, C.},
TITLE = {Holomorphic anomalies in topological field theories},
JOURNAL = {Nuclear Phys. B},
VOLUME = {405},
YEAR = {1993},
NUMBER = {2-3},
PAGES = {279--304},
ISSN = {0550-3213},
}

    \bib{BCOV}{article}{
      author={Bershadsky, M.},
      author={Cecotti, S.},
      author={Ooguri, H.},
      author={Vafa, C.},
      title={Kodaira-Spencer theory of gravity and exact results for quantum
        string amplitudes},   
      journal={Comm. Math. Phys.},
      volume={165},
      date={1994},
      number={2},
      pages={311--427},
      issn={0010-3616},
    }

\bib{BIZ}{article}{
   author={Bessis, D.},
   author={Itzykson, C.},
   author={Zuber, J. B.},
   title={Quantum field theory techniques in graphical enumeration},
   journal={Adv. in Appl. Math.},
   volume={1},
   date={1980},
   number={2},
   pages={109--157},
   issn={0196-8858},
   doi={10.1016/0196-8858(80)90008-1},
}

    \bib{Bloch--Okounkov}{article}{
   author={Bloch, Spencer},
   author={Okounkov, Andrei},
   title={The character of the infinite wedge representation},
   journal={Adv. Math.},
   volume={149},
   date={2000},
   number={1},
   pages={1--60},
   issn={0001-8708},
}
    
    \bib{Borisov-Horja:FM}{article}{
      author={Borisov, Lev A.},
      author={Horja, R. Paul},
      title={Mellin-Barnes integrals as Fourier-Mukai transforms},
      journal={Adv. Math.},
      volume={207},
      date={2006},
      number={2},
      pages={876--927},
      issn={0001-8708},
    }
    
    \bib{Bouchard--Cavalieri}{article}{
   author={Bouchard, Vincent},
   author={Cavalieri, Renzo},
   title={On the mathematics and physics of high genus invariants of $[\Bbb
   C^3/\Bbb Z_3]$},
   journal={Adv. Theor. Math. Phys.},
   volume={13},
   date={2009},
   number={3},
   pages={695--719},
   issn={1095-0761},
}

\bib{BKMP}{article}{
   author={Bouchard, Vincent},
   author={Klemm, Albrecht},
   author={Mari\~no, Marcos},
   author={Pasquetti, Sara},
   title={Remodeling the B-model},
   journal={Comm. Math. Phys.},
   volume={287},
   date={2009},
   number={1},
   pages={117--178},
   issn={0010-3616},
}

\bib{Bryan--Graber}{article}{
  author={Bryan, Jim},
  author={Graber, Tom},
  title={The crepant resolution conjecture},
  conference={
    title={Algebraic geometry---Seattle 2005. Part 1},
  },
  book={
    series={Proc. Sympos. Pure Math.},
    volume={80},
    publisher={Amer. Math. Soc., Providence, RI},
  },
  date={2009},
  pages={23--42},
}
    \bib{CKYZ:local_mirror}{article}{
      author={Chiang, T.-M.},
      author={Klemm, A.},
      author={Yau, S.-T.},
      author={Zaslow, E.},
      title={Local mirror symmetry: calculations and interpretations},
      journal={Adv. Theor. Math. Phys.},
      volume={3},
      date={1999},
      number={3},
      pages={495--565},
      issn={1095-0761},
    }

\bib{Coates:local}{article}{
author={Tom Coates}, 
title = {On the crepant resolution conjecture in the local case},  
journal ={Comm. Math. Phys.}, 
volume ={287}, 
year ={2009}, 
number = {3}, 
pages ={1071--1108}, 
}

    \bib{CCIT:computing}{article}{
      author={Coates, Tom},
      author={Corti, Alessio},
      author={Iritani, Hiroshi},
      author={Tseng, Hsian-Hua},
      title={Computing genus-zero twisted Gromov-Witten invariants},
      journal={Duke Math. J.},
      volume={147},
      date={2009},
      number={3},
      pages={377--438},
      issn={0012-7094},
    }

\bib{CCIT:mirror_theorem}{article}{
   author={Coates, Tom},
   author={Corti, Alessio},
   author={Iritani, Hiroshi},
   author={Tseng, Hsian-Hua},
   title={A mirror theorem for toric stacks},
   journal={Compos. Math.},
   volume={151},
   date={2015},
   number={10},
   pages={1878--1912},
   issn={0010-437X},
}

\bib{CCIT:MS}{article}{
author={Coates, Tom}, 
author={Corti, Alessio}, 
author={Iritani, Hiroshi},
author={Tseng, Hsian-Hua}, 
title = {Hodge-theoretic mirror symmetry for toric stacks}, 
eprint={arXiv:1606.07254}, 
note={To appear in Journal of Differential Geometry},
} 
    \bib{CCLT}{article}{
      author={Coates, Tom},
      author={Corti, Alessio},
      author={Lee, Yuan-Pin},
      author={Tseng, Hsian-Hua},
      title={The quantum orbifold cohomology of weighted projective spaces},
      journal={Acta Math.},
      volume={202},
      date={2009},
      number={2},
      pages={139--193},
      issn={0001-5962},
    }
       
    \bib{Coates-Givental}{article}{ 
      author={Coates, Tom},
      author={Givental, Alexander},
      title={Quantum Riemann-Roch, Lefschetz and Serre},
      journal={Ann. of Math. (2)},
      volume={165},
      date={2007},
      number={1},
      pages={15--53},
      issn={0003-486X},
    }

    \bib{Coates--Iritani:convergence}{article}{
      author={Coates, Tom},
      author={Iritani, Hiroshi},
      title={On the convergence of Gromov-Witten potentials and Givental's
        formula},
      journal={Michigan Math. J.},
      volume={64},
      date={2015},
      number={3},
      pages={587--631},
      issn={0026-2285},
    }

      \bib{Coates--Iritani:gluing}{article}{
        author={Coates, Tom},
        author={Iritani, Hiroshi},
        title={On the existence of a global neighbourhood},
        journal={Glasg. Math. J.},
        volume={58},
        date={2016},
        number={3},
        pages={717--726},
        issn={0017-0895},
      }
     
      \bib{Coates--Iritani:Fock}{article}{
        author={Coates, Tom},
        author={Iritani, Hiroshi},
        title={A Fock sheaf for Givental quantization},
        eprint={arXiv:1411.7039 [math.AG]}
      }
   
      \bib{Coates--Iritani--Jiang}{article}{ 
        author = {Coates, Tom}, 
        author ={Iritani, Hiroshi}, 
        author ={Jiang, Yunfeng}, 
        title ={The Crepant Transformation Conjecture for Toric Complete Intersections}, 
        eprint={arXiv:1410.0024}, 
        note={To appear in Advances in Mathematics},
      }
      
      \bib{CIT:wall-crossings}{article}{
        author={Coates, Tom},
        author={Iritani, Hiroshi},
        author={Tseng, Hsian-Hua},
        title={Wall-crossings in toric Gromov-Witten theory. I. Crepant examples},
        journal={Geom. Topol.},
        volume={13},
        date={2009},
        number={5},
        pages={2675--2744},
        issn={1465-3060},
      }

      \bib{Coates--Ruan}{article}{
        author={Coates, Tom},
        author={Ruan, Yongbin},
        title={Quantum cohomology and crepant resolutions: a conjecture},
        language={English, with English and French summaries},
        journal={Ann. Inst. Fourier (Grenoble)},
        volume={63},
        date={2013},
        number={2},
        pages={431--478},
        issn={0373-0956},
      }
      
      \bib{CC0}{article}{
        title = {{C}reative {C}ommons {C}{C}0 License},
        note = {\url{https://creativecommons.org/publicdomain/zero/1.0/} and \url{https://creativecommons.org/publicdomain/zero/1.0/legalcode}},
      }
      
      \bib{Deligne}{book}{
        author={Deligne, Pierre},
        title={\'Equations diff\'erentielles \`a points singuliers r\'eguliers},
        language={French},
        series={Lecture Notes in Mathematics, Vol. 163},
        publisher={Springer-Verlag},
        place={Berlin},
        date={1970},
        pages={iii+133},
      }
      

      \bib{Doran--Kerr}{article}{
        author={Doran, Charles F.},
        author={Kerr, Matt},
        title={Algebraic cycles and local quantum cohomology},
        journal={Commun. Number Theory Phys.},
        volume={8},
        date={2014},
        number={4},
        pages={703--727},
        issn={1931-4523},
      }

      \bib{DFR:noncompact_CY}{article}{
        author={Douglas, Michael R.},
        author={Fiol, Bartomeu},
        author={R\"omelsberger, Christian},
        title={The spectrum of BPS branes on a noncompact Calabi-Yau},
        journal={J. High Energy Phys.},
        date={2005},
        number={9},
        pages={057, 40},
        issn={1126-6708},
      }



\bib{Eynard--Orantin}{article}{
   author={Eynard, B.},
   author={Orantin, N.},
   title={Invariants of algebraic curves and topological expansion},
   journal={Commun. Number Theory Phys.},
   volume={1},
   date={2007},
   number={2},
   pages={347--452},
   issn={1931-4523},
}


\bib{Fang--Liu--Zong}{article}{
  author={Fang, Bohan},
  author={Liu, Chiu-Chu Melissa},
  author={Zong, Zhengyu},
  title={On the Remodeling Conjecture for Toric {C}alabi-{Y}au 3-Orbifolds},
  date={2016},
  eprint={arXiv:1604.07123 [math.AG]},
}

    \bib{GKZ}{article}{
      author={Gel{\cprime}fand, I. M.},
      author={Zelevinski{\u\i}, A. V.},
      author={Kapranov, M. M.},
      title={Hypergeometric functions and toric varieties},
      language={Russian},
      journal={Funktsional. Anal. i Prilozhen.},
      volume={23},
      date={1989},
      number={2},
      pages={12--26},
      issn={0374-1990},
      translation={
        journal={Funct. Anal. Appl.},
        volume={23},
        date={1989},
        number={2},
        pages={94--106},
        issn={0016-2663},
      },
    }

    \bib{Ghoshal--Vafa}{article}{
      author={Ghoshal, Debashis},
      author={Vafa, Cumrun},
      title={$c=1$ string as the topological theory of the conifold},
      journal={Nuclear Phys. B},
      volume={453},
      date={1995},
      number={1-2},
      pages={121--128},
      issn={0550-3213},
    }

    \bib{Givental:homological}{article}{
      author={Givental, Alexander B.}, 
      title={Homological geometry. I. Projective hypersurfaces},
      journal={Selecta Math. (N.S.)},
      volume={1},
      date={1995},
      number={2},
      pages={325--345},
    }
    
    \bib{Givental:equivariantGW}{article}{
      author = {Givental, Alexander B.}, 
      title = {Equivariant Gromov-Witten Invariants}, 
      journal ={Internat. Math. Res. Notices}, 
      year ={1996}, 
      number ={13}, 
      pages = {613--663}, 
    }

    \bib{Givental:toric}{article}{
      author={Givental, Alexander B.},
      title={A mirror theorem for toric complete intersections},
      conference={
        title={Topological field theory, primitive forms and related topics
          (Kyoto, 1996)},
      },
      book={
        series={Progr. Math.},
        volume={160},
        publisher={Birkh\"auser Boston},
        place={Boston, MA},
      },
      date={1998},
      pages={141--175},
    }
    
    \bib{Givental:elliptic}{article}{
      author={Givental, Alexander B.},
      title={Elliptic Gromov-Witten invariants and the generalized mirror
        conjecture},
      conference={
        title={Integrable systems and algebraic geometry},
        address={Kobe/Kyoto},
        date={1997},
      },
      book={
        publisher={World Sci. Publ., River Edge, NJ},
      },
      date={1998},
      pages={107--155},
    }
	
    \bib{Givental:semisimple}{article}{
      author={Givental, Alexander B.},
      title={Semisimple Frobenius structures at higher genus},
      journal={Internat. Math. Res. Notices},
      date={2001},
      number={23},
      pages={1265--1286},
      issn={1073-7928},
    }

    \bib{Givental:quantization}{article}{
      author={Givental, Alexander B.},
      title={Gromov-Witten invariants and quantization of quadratic
        Hamiltonians},
      language={English, with English and Russian summaries},
      note={Dedicated to the memory of I. G.\ Petrovskii on the occasion of his
        100th anniversary},
      journal={Mosc. Math. J.},
      volume={1},
      date={2001},
      number={4},
      pages={551--568, 645},
      issn={1609-3321},
    }

    \bib{Givental:symplectic}{article}{ 
      author={Givental, Alexander B.},
      title={Symplectic geometry of Frobenius structures},
      conference={
        title={Frobenius manifolds},
      },
      book={
        series={Aspects Math., E36},
        publisher={Vieweg},
        place={Wiesbaden},
      },
      date={2004},
      pages={91--112},
    }    

    \bib{Guest}{article}{
      author={Guest, Martin A.},
      title={Quantum cohomology via $D$-modules},
      journal={Topology},
      volume={44},
      date={2005},
      number={2},
      pages={263--281},
      issn={0040-9383},
    }
    
    \bib{Haghighat--Klemm--Rauch}{article}{
      author={Haghighat, Babak},
      author={Klemm, Albrecht},
      author={Rauch, Marco},
      title={Integrability of the holomorphic anomaly equations},
      journal={J. High Energy Phys.},
      date={2008},
      number={10},
      pages={097, 37},
      issn={1126-6708},
}

       \bib{Hertling:book}{book}{ 
    author ={Hertling, Claus}, 
    title ={Frobenius Manifolds and Moduli Spaces for Singularities}, 
    series ={Cambridge Tracts in Mathematics}, 
    publisher={Cambridge University Press},
    date = {2002} 
    }

    \bib{Hertling:ttstar}{article}{ 
      author={Hertling, Claus},
      title={$tt^*$ geometry, Frobenius manifolds, their connections, and
        the construction for singularities},
      journal={J. Reine Angew. Math.},
      volume={555},
      date={2003},
      pages={77--161},
      issn={0075-4102},
    }

    
     
    \bib{Hori--Vafa}{article}{
      author = {Hori, Kentaro},
      author = {Vafa, Cumrun},
      title= {Mirror Symmetry},
      date={2000},
      eprint={hep-th/0002222},
    }

    \bib{Horja}{article}{
      author={Horja, R. Paul}, 
      title={Hypergeometric functions and mirror symmetry in toric
        varieties}, 
      eprint={arXiv:math/9912109}, 
    }

    \bib{Hosono:typeIIA}{article}{
      author={Hosono, Shinobu},
      title={Local mirror symmetry and type IIA monodromy of Calabi-Yau
        manifolds},
      journal={Adv. Theor. Math. Phys.},
      volume={4},
      date={2000},
      number={2},
      pages={335--376},
      issn={1095-0761},
    }
 
    \bib{Hosono:central_charges}{article}{ 
      author={Hosono, Shinobu},
      title={Central charges, symplectic forms, and hypergeometric series in
        local mirror symmetry},
      conference={
        title={Mirror symmetry. V},
      },
      book={
        series={AMS/IP Stud. Adv. Math.},
        volume={38},
        publisher={Amer. Math. Soc.},
        place={Providence, RI},
      },
      date={2006},
      pages={405--439},
    }


    \bib{Huang--Klemm}{article}{
      author={Huang, Min-xin},
      author={Klemm, Albrecht},
      title={Holomorphic anomaly in gauge theories and matrix models},
      journal={J. High Energy Phys.},
      date={2007},
      number={9},
      pages={054, 33},
      issn={1126-6708},
    }

    \bib{Huang--Klemm--Quackenbush}{article}{
      author={Huang, Min-xin},
      author={Klemm, Albrecht},
      author={Quackenbush, Seth},
      title={Topological string theory on compact Calabi-Yau: modularity and
        boundary conditions},
      conference={
        title={Homological mirror symmetry},
      },
      book={
        series={Lecture Notes in Phys.},
        volume={757},
        publisher={Springer, Berlin},
      },
      date={2009},
      pages={45--102},
    }
    


    \bib{Iritani:integral}{article}{
      author={Iritani, Hiroshi},
      title={An integral structure in quantum cohomology and mirror symmetry
        for toric orbifolds},
      journal={Adv. Math.},
      volume={222},
      date={2009},
      number={3},
      pages={1016--1079},
      issn={0001-8708},
    }

    \bib{Iritani:Ruan}{article}{ 
      author={Iritani, Hiroshi},
      title={Ruan's conjecture and integral structures in quantum cohomology},
      conference={
        title={New developments in algebraic geometry, integrable systems 
      and mirror  symmetry},
        address={RIMS, Kyoto},
        date={2008},
      },
      book={
        series={Adv. Stud. Pure Math.},
        volume={59},
        publisher={Math. Soc. Japan},
        place={Tokyo},
      },
      date={2010},
      pages={111--166},
    }


    \bib{Iritani:qcohperiod}{article}{
      author={Iritani, Hiroshi},
      title={Quantum cohomology and periods},
      language={English, with English and French summaries},
      journal={Ann. Inst. Fourier (Grenoble)},
      volume={61},
      date={2011},
      number={7},
      pages={2909--2958},
      issn={0373-0956},
    }
    
\bib{Kaneko--Zagier}{article}{
   author={Kaneko, Masanobu},
   author={Zagier, Don},
   title={A generalized Jacobi theta function and quasimodular forms},
   conference={
      title={The moduli space of curves},
      address={Texel Island},
      date={1994},
   },
   book={
      series={Progr. Math.},
      volume={129},
      publisher={Birkh\"auser Boston, Boston, MA},
   },
   date={1995},
   pages={165--172},
}

    \bib{Katzarkov--Kontsevich--Pantev}{article}{
      author={Katzarkov, Ludmil},
      author={Kontsevich, Maxim},
      author={Pantev, Tony},
      title={Hodge theoretic aspects of mirror symmetry},
      conference={
        title={From Hodge theory to integrability and TQFT tt*-geometry},
      },
      book={
        series={Proc. Sympos. Pure Math.},
        volume={78},
        publisher={Amer. Math. Soc.},
        place={Providence, RI},
      },
      date={2008},
      pages={87--174},
    }
    
    \bib{Kim--Sabbah}{article}{
      author={Kim, Bumsig},
      author={Sabbah, Claude},
      title={Quantum cohomology of the Grassmannian and alternate
        Thom-Sebastiani},
      journal={Compos. Math.},
      volume={144},
      date={2008},
      number={1},
      pages={221--246},
      issn={0010-437X},
    }
    
    \bib{Kirillov:quantization}{article}{
    AUTHOR = {Kirillov, A. A.},
     TITLE = {Geometric quantization},
 BOOKTITLE = {Dynamical systems, {IV}},
    SERIES = {Encyclopaedia Math. Sci.},
    VOLUME = {4},
     PAGES = {139--176},
 PUBLISHER = {Springer, Berlin},
      YEAR = {2001},
}

    \bib{Klemm--Zaslow}{article}{
      author={Klemm, Albrecht},
      author={Zaslow, Eric},
      title={Local mirror symmetry at higher genus},
      conference={
        title={Winter School on Mirror Symmetry, Vector Bundles and Lagrangian
          Submanifolds},
        address={Cambridge, MA},
        date={1999},
      },
      book={
        series={AMS/IP Stud. Adv. Math.},
        volume={23},
        publisher={Amer. Math. Soc., Providence, RI},
      },
      date={2001},
      pages={183--207},
    }

   \bib{Konishi--Minabe:local_B}{article}{
      author={Konishi, Yukiko},
      author={Minabe, Satoshi},
      title={Local B-model and mixed Hodge structure},
      journal={Adv. Theor. Math. Phys.},
      volume={14},
      date={2010},
      number={4},
      pages={1089--1145},
      issn={1095-0761},
    }

    \bib{Konishi--Minabe}{article}{
    author ={Konishi, Yukiko}, 
    author ={Minabe, Satoshi}, 
    title ={Mixed Frobenius Structure and Local A-model}
    eprint ={arXiv:1209.5550} 
    } 


    \bib{Kontsevich--Manin:relations}{article}{
      author={Kontsevich, M.},
      author={Manin, Yu.},
      title={Relations between the correlators of the topological sigma-model
        coupled to gravity},
      journal={Comm. Math. Phys.},
      volume={196},
      date={1998},
      number={2},
      pages={385--398},
      issn={0010-3616},
    }
    
    \bib{Lho-Pandharipande:SQ_HAE}{article}
    {
      author ={Lho, Hyenho}, 
      author ={Pandharipande, Rahul}, 
      title ={Stable quotients and the holomorphic anomaly equation}
      eprint ={arXiv:1702.06096}
    }
    
    \bib{Lho-Pandharipande:CRC}{article}
    {
      author ={Lho, Hyenho}, 
      author ={Pandharipande, Rahul}, 
      title ={Crepant Resolution and the Holomorphic Anomaly Equation for $[\C^3/\Z_3]$}, 
      eprint={arXiv:1804.03168}
    }

    \bib{Li--Liu--Liu--Zhou}{article}{
      author={Li, Jun},
      author={Liu, Chiu-Chu Melissa},
      author={Liu, Kefeng},
      author={Zhou, Jian},
      title={A mathematical theory of the topological vertex},
      journal={Geom. Topol.},
      volume={13},
      date={2009},
      number={1},
      pages={527--621},
      issn={1465-3060},
    }

    \bib{Li--Tian}{article}{
      author={Li, Jun},
      author={Tian, Gang},
      title={Virtual moduli cycles and Gromov-Witten invariants of algebraic
        varieties},
      journal={J. Amer. Math. Soc.},
      volume={11},
      date={1998},
      number={1},
      pages={119--174},
      issn={0894-0347},
    }
        
    \bib{Maggiolo--Pagani}{article}{
      author={Maggiolo, Stefano},
      author={Pagani, Nicola},
      title={Generating stable modular graphs},
      journal={J. Symbolic Comput.},
      volume={46},
      date={2011},
      number={10},
      pages={1087--1097},
      issn={0747-7171},
    }

    

    \bib{Pressley--Segal}{book}{
      author={Pressley, Andrew},
      author={Segal, Graeme},
      title={Loop groups},
      series={Oxford Mathematical Monographs},
      note={Oxford Science Publications},
      publisher={The Clarendon Press Oxford University Press},
      place={New York},
      date={1986},
      pages={viii+318},
      isbn={0-19-853535-X},
    }

    \bib{Reichelt}{article}{
      author={Reichelt, Thomas},
      title={A construction of Frobenius manifolds with logarithmic poles and
        applications},
      journal={Comm. Math. Phys.},
      volume={287},
      date={2009},
      number={3},
      pages={1145--1187},
      issn={0010-3616},
    }

    \bib{Reichelt--Sevenheck}{article}{
      author={Reichelt, Thomas},
      author={Sevenheck, Christian},
      title={Logarithmic Frobenius manifolds, hypergeometric systems and
        quantum $\cD$-modules},
      journal={J. Algebraic Geom.},
      volume={24},
      date={2015},
      number={2},
      pages={201--281},
      issn={1056-3911},
    }

\bib{Ruan:crepant}{article}{
   author={Ruan, Yongbin},
   title={The cohomology ring of crepant resolutions of orbifolds},
   conference={
      title={Gromov-Witten theory of spin curves and orbifolds},
   },
   book={
      series={Contemp. Math.},
      volume={403},
      publisher={Amer. Math. Soc., Providence, RI},
   },
   date={2006},
   pages={117--126},
   doi={10.1090/conm/403/07597},
}

\bib{SaitoK:higher_residue}{article}{
author={Saito, Kyoji},
   title={The higher residue pairings $K_{F}^{(k)}$\ for a family of
   hypersurface singular points},
   conference={
      title={Singularities, Part 2},
      address={Arcata, Calif.},
      date={1981},
   },
   book={
      series={Proc. Sympos. Pure Math.},
      volume={40},
      publisher={Amer. Math. Soc., Providence, RI},
   },
   date={1983},
   pages={441--463},
}

   
    \bib{SaitoM}{article}{
      author={Saito, Morihiko},
      title={On the structure of Brieskorn lattice},
      language={English, with French summary},
      journal={Ann. Inst. Fourier (Grenoble)},
      volume={39},
      date={1989},
      number={1},
      pages={27--72},
      issn={0373-0956},
    }

    \bib{Schmid}{article}{
      author={Schmid, Wilfried},
      title={Variation of Hodge structure: the singularities of the period
        mapping},
      journal={Invent. Math.},
      volume={22},
      date={1973},
      pages={211--319},
      issn={0020-9910},
    }

    
    \bib{Stienstra:resonant}{article}{
      author={Stienstra, Jan},
      title={Resonant hypergeometric systems and mirror symmetry},
      conference={
        title={Integrable systems and algebraic geometry},
        address={Kobe/Kyoto},
        date={1997},
      },
      book={
        publisher={World Sci. Publ., River Edge, NJ},
      },
      date={1998},
      pages={412--452},
    }

    \bib{Takahashi:log_mirror}{article}{
      author={Takahashi, Nobuyoshi},
      title={Log mirror symmetry and local mirror symmetry},
      journal={Comm. Math. Phys.},
      volume={220},
      date={2001},
      number={2},
      pages={293--299},
      issn={0010-3616},
    }

    \bib{Sage}{manual}{
      author={Developers, The~Sage},
      title={{S}agemath, the {S}age {M}athematics {S}oftware {S}ystem
        ({V}ersion 8.1)},
      date={2017},
      note={{\tt http://www.sagemath.org}},
    }

    \bib{Teleman}{article}{
      author={Teleman, Constantin},
      title={The structure of 2D semi-simple field theories},
      journal={Invent. Math.},
      volume={188},
      date={2012},
      number={3},
      pages={525--588},
      issn={0020-9910},
    }

\bib{Witten:background}{article}{ 
author = {Witten, Edward}, 
 title = {Quantum Background Independence in String Theory},  
date = {1993},
eprint = {arXiv:hep-th/9306122}
}
 
 \bib{Zee}{book}{
   author={Zee, A.},
   title={Quantum field theory in a nutshell},
   edition={2},
   publisher={Princeton University Press, Princeton, NJ},
   date={2010},
   pages={xxviii+576},
   isbn={978-0-691-14034-6},
}
  \end{biblist}
\end{bibdiv}

\end{document}